\def\phi{{\varphi}}
\def\P{{\mathcal P}}
\DeclareSymbolFont{AMSb}{U}{msb}{m}{n}
\DeclareMathSymbol{\N}{\mathbin}{AMSb}{"4E}
\DeclareMathSymbol{\Z}{\mathbin}{AMSb}{"5A}
\DeclareMathSymbol{\R}{\mathbin}{AMSb}{"52}
\DeclareMathSymbol{\Q}{\mathbin}{AMSb}{"51}
\DeclareMathSymbol{\I}{\mathbin}{AMSb}{"49}
\DeclareMathSymbol{\C}{\mathbin}{AMSb}{"43}
\def\be{\begin{equation}}
\def\ber{\begin{eqnarray}}
\def\eer{\end{eqnarray}}
\def\vv{{\bf v}}
\def\beq{\begin{equation}}
\def\eeq{\end{equation}}
\newcommand{\E}{\mathbb{E}}
 \newcommand{\tens}{%
  \mathbin{\mathop{\otimes}}%
}
\begin{document}

\addtolength{\textheight}{0 cm} \addtolength{\hoffset}{0 cm}
\addtolength{\textwidth}{0 cm} \addtolength{\voffset}{0 cm}
\def\XX{\mathcal X}

\newcommand{\aaa}{\mathbb{A}}
\newcommand{\bb}{\mathbb{B}}
\newcommand{\cc}{\mathbb{C}}
\newcommand{\dd}{\mathbb{D}}
\newcommand{\ee}{\mathbb{E}}
\newcommand{\mm}{\mathbb{M}}
\newcommand{\rr}{\mathbb{R}}
\newcommand{\pp}{\mathbb{P}}
\newcommand{\qq}{\mathbb{Q}}
\newcommand{\ttt}{\mathbb{T}}
\newcommand{\zz}{\mathbb{Z}}

\def\vep{\varepsilon}
\def\<{\langle}
\def\>{\rangle}
\def\dsubset{\subset\subset}
\newcommand{\as}[1]{\begin{align*}#1\end{align*}}
\newcommand{\ald}[1]{\begin{aligned}#1\end{aligned}}

\newcommand{\Var}{\mathrm{Var}}
\newcommand{\mcal}[1]{\mathcal{#1}}

\def\AA{\mathcal A}
\def\BB{\mathcal B}
\def\CC{\mathcal C}
\def\DD{\mathcal D}
\def\FF{\mathcal F}
\def\GG{\mathcal G}
\def\EE{\mathcal E}
\def\JJ{\mathcal J}
\def\KK{\mathcal K}
\def\LL{\mathcal L}
\def\MM{\mathcal M}
\def\PP{\mathcal P}
\def\SS{\mathcal S}
\def\VV{\mathcal V}
\def\T{\mathcal T}
\def\d{\, {\rm d}}
 \newcommand{\lf}{\left}
\newcommand{\rt}{\right}
\newcommand{\vphi}{\varphi}
\newcommand{\no}{\nonumber}
\newcommand{\eq}[1]{\begin{equation}#1\end{equation}}
\newcommand{\eqs}[1]{\begin{equation*}#1\end{equation*}}

\newcommand{\ZZ}{\mathbb{Z}}
\newcommand{\Rm}{\mathbb{R}}
\newcommand{\RR}{\mathbb{R}}
\newcommand{\NN}{\mathbb{N}}
\newcommand{\sU}{\mathcal{U}}
\newcommand{\sF}{\mathcal{F}}
\newcommand{\sM}{\mathcal{M}}
\newcommand{\sS}{\mathcal{S}}
\newcommand{\mL}{\mathcal{L}}
\newcommand{\ac}{\hbox{\small ac}}
\newcommand{\mC}{\ensuremath{\mathcal{C}}}
\newcommand{\mU}{\ensuremath{\mathcal{U}}}
\newcommand{\mT}{\ensuremath{\mathcal{T}}}
\newcommand{\mS}{\ensuremath{\mathcal{S}}}
\newcommand{\mF}{\ensuremath{\mathcal{F}}}
\newcommand{\Nm}{\ensuremath{\mathbb{N}}}
\newcommand{\Zm}{\ensuremath{\mathbb{Z}}}
\newcommand{\Hm}{\ensuremath{\mathbb{H}}}
\newcommand{\mM}{\ensuremath{\mathcal{M}}}
\newcommand{\mK}{\ensuremath{\mathcal{K}}}
\newcommand{\mD}{\ensuremath{\mathcal{D}}}
\newcommand{\mA}{\ensuremath{\mathcal{A}}}
\newcommand{\mO}{\ensuremath{\mathcal{O}}}
\newcommand{\mI}{\ensuremath{\mathcal{I}}}
\newcommand{\mB}{\ensuremath{\mathcal{B}}}
\newcommand{\Tm}{\ensuremath{\mathbb{T}}}
\newcommand{\mE}{\ensuremath{\mathcal{E}}}
\newcommand{\vs}{\vspace{.5cm}}

\newcommand{\prf}[1]{\begin{proof}#1\end{proof}}
\def\qed {\mbox{}\hfill {\small \fbox{}} \\}  
\def\lto{\longrightarrow}
\def\lmto{\longmapsto}
\def\eq{\Longleftrightarrow}
\def\leq{\leqslant}
\def\geq{\geqslant}
\def \uK {K^+}
\def \oK {K^-}
\def \calT {\mathcal T}

\newtheorem{thm}{Theorem}[section]
\newtheorem{lem}[thm]{Lemma}
\newtheorem{cor}[thm]{Corollary}
\newtheorem{prop}[thm]{Proposition}
\newtheorem{defn}[thm]{Definition}
\newtheorem{rmk}[thm]{Remark}
\def\proof {\noindent{\sc{Proof. }}}
\def\qed {\mbox{}\hfill {\small \fbox{}} \\}  

\newcommand{\grad}{\operatorname{grad}}
\newcommand{\Leg}{\mathcal{L}}

\newcommand{\lbstoc}[0]{\underline{B}}
\newcommand{\ubstoc}[0]{\overline{B}^{\text{stoc}}}
\newcommand{\deriv}[2]{\ensuremath{\frac{d{#1}}{d{#2}}}}
\newcommand{\Id}[1]{\ensuremath{\boldsymbol{1}[#1]}}
\newcommand{\abs}[1]{\ensuremath{\left\lvert#1\right\rvert}}

\renewcommand{\P}{\ensuremath{\mathbb{P}}}

\newcommand{\bracket}[1]{\ensuremath{\left(#1\right)}}
\newcommand{\expect}[2][]{\ensuremath{\mathbb{E}_{#1}\left[#2\right]}}
\newcommand{\expcond}[2]{\ensuremath{\mathbb{E}\left[#1\middle\rvert #2\right]}}
\newcommand{\ball}[2]{\ensuremath{B(#1,#2)}}
\newcommand{\proj}[1]{\ensuremath{\text{proj}_{#1}}}
\newcommand{\pderiv}[2]{\ensuremath{\frac{\partial{#1}}{\partial{#2}}}}
\newcommand{\Cmik}[0]{\ensuremath{C}} 


\makeatletter
\DeclareRobustCommand\widecheck[1]{{\mathpalette\@widecheck{#1}}}
\def\@widecheck#1#2{%
    \setbox\z@\hbox{\m@th$#1#2$}%
    \setbox\tw@\hbox{\m@th$#1%
       \widehat{%
          \vrule\@width\z@\@height\ht\z@
          \vrule\@height\z@\@width\wd\z@}$}%
    \dp\tw@-\ht\z@
    \@tempdima\ht\z@ \advance\@tempdima2\ht\tw@ \divide\@tempdima\thr@@
    \setbox\tw@\hbox{%
       \raise\@tempdima\hbox{\scalebox{1}[-1]{\lower\@tempdima\box
\tw@}}}%
    {\ooalign{\box\tw@ \cr \box\z@}}}
\makeatother

\newcommand{\enum}[1]{\begin{enumerate}#1\end{enumerate}}
\newcommand{\items}[1]{\begin{itemize}#1\end{itemize}}
\newcommand{\lbl}[1]{\label{#1}}
\newcommand{\coro}[1]{\begin{corollary}#1\end{corollary}}
\newcommand{\rem}[1]{\begin{remark}#1\end{remark}}
\newcommand{\refn}[1]{(\ref{#1})}


\title{Mather Measures and Ergodic Properties of Kantorovich Operators associated to General Mass Transfers}
\author{Malcolm Bowles\thanks{This is part of the PhD dissertation of this author at the University of British Columbia.} \quad   and \quad Nassif  Ghoussoub\thanks{Both authors have been partially supported by a grant from the Natural Sciences and Engineering Research Council of Canada. } 
\\ \\
{\it\small Department of Mathematics,  The University of British Columbia}\\
{\it\small Vancouver BC Canada V6T 1Z2}\\
}

\date{May 07, 2019 (revised on June 20, 2019)}
\maketitle

\begin{abstract} We introduce and study the class of {\it linear transfers} between probability distributions and {\it the dual class of Kantorovich operators} between function spaces. Linear transfers can be seen as an extension of convex lower semi-continuous energies on Wasserstein space, of cost minimizing mass transports, as well as many other couplings between probability measures to which Monge-Kantorovich theory does not readily apply. Basic examples include {\it balayage of measures},  {\it martingale transports},  {\it optimal Skorokhod embeddings}, and  the {\it weak mass transports} of Talagrand, Marton, Gozlan and others. The class also includes various stochastic mass transports such as the {\it Schr\"odinger bridge} associated to a reversible Markov process, and the Arnold-Brenier variational principle for the incompressible Euler equations.  

We associate to most linear transfers, a critical constant, a corresponding {\it effective linear transfer} and {\it additive eigenfunctions} to their dual Kantorovich operators, that extend Man\'e's critical value,  Aubry-Mather invariant tori, and Fathi's weak KAM solutions for Hamiltonian systems. This amounts to studying the asymptotic properties of the nonlinear Kantorovich operators as opposed to classical ergodic theory, which deals with linear Markov operators. This allows for the extension of Mather theory to other settings such as its stochastic counterpart. 

We also introduce {\it the class of convex transfers}, which includes $p$-powers ($ p \geq 1$) of linear transfers, 
the logarithmic entropy, the Donsker-Varadhan information, optimal mean field plans, and certain free energies as functions of two probability measures, i.e., where the reference measure is also a variable.  Duality formulae for general transfer inequalities follow in a very natural way. This paper is an expanded version of a previously posted but not published work by the authors \cite{B-G1}.
  
\end{abstract} 
\newpage
\tableofcontents

\section{Introduction} Our main objective is to study the ergodic properties of {\it Kantorovich operators,} which are at the heart of the so-called weak KAM theory developed by Mather \cite{Mat}, Fathi \cite{Fa}, Aubry \cite{Au}, Man\'e \cite{Man} and many others. Consider two compact spaces $X$ and $Y$, and the corresponding spaces $C(Y)$ (resp., $USC (X)$) 
of continuous functions on $Y$ (resp., bounded above upper semi-continuous functions on $X$).  
A {\em backward Kantorovich operator} is a map (mostly non-linear) $T^-:C(Y) \to USC (X)$ verifying the following 3 properties:
 
 \begin{enumerate}[label=\alph*)]
 \item $T^-$ is {\it monotone}, i.e., $f_1\leq f_2$ in $C(Y)$, then $T^-f_1\leq T^-f_2$. 
 \item  $T^-$ is {\it a convex operator}, that is for any $\lambda \in [0, 1]$, $f_1, f_2$ in $C(Y)$, we have 
\begin{equation*}
\T^-(\lambda f_1+(1-\lambda)f_2)\leq \lambda T^-f_1+(1-\lambda)T^-f_2. 
\end{equation*}
\item $T^-$ is {\em affine on the constants}, i.e., for any $c\in \R$ and $f\in C(Y)$,  there holds $$T^-(f+c)=T^-f +c.$$ 
 \end{enumerate}

  {\em Forward Kantorovich operators} $T^+:C(X) \to LSC (Y)$ are those that verify (a), (c), and the {\em concave counterpart} of (b), that is

\begin{equation*}
T^-(\lambda f_1+(1-\lambda)f_2)\geq \lambda T^-f_1+(1-\lambda)T^-f_2,  
\end{equation*}
 where $LSC (Y)$ is the space of bounded below lower semi-continuous functions on $Y$.\\
  We shall say that $T^-$ (resp., $T^+$) is non-trivial if there is at least one function $f\in C(Y)$ (resp., $C(X)$) such that $T^-f\not \equiv -\infty$ (resp., $T^+f\not \equiv +\infty$).

 Kantorovich operators are important extensions of Markov operators and are ubiquitous in mathematical analysis and differential equations. They appear for example as the maps that associate to an initial state of a Hamilton-Jacobi equation the solution at a given time $t$, as general value functions in dynamic programming principles (\cite{FS} Section II.3), and also in the mathematical theory of image processing \cite{A-G-L}.  But as we shall see, their rich structure stems from their duality -via Legendre transform- with certain lower semi-continuous and convex functionals $\T$ on ${\mathcal M}(X)\times {\mathcal M}(Y)$, where  ${\mathcal M}(K)$ is the space of signed measures on a compact space $K$ equipped with the weak$^*$-topology in duality with $C(K)$. 
 
Indeed,  to any  map $T^-:C(Y) \to USC (X)$ (resp., $T^+:C(X) \to LSC (Y)$), one can associate a corresponding 
convex and lower semi-continuous functional  $\T_{T^-}$ (resp., $\T_{T^+}$) on ${\mathcal M}(X)\times {\mathcal M}(Y)$ via the following --possibly infinite-- expressions:

  If $(\mu, \nu)\in {\mathcal P}(X)\times {\mathcal P}(Y)$, where ${\cal P}(K)$ denotes the space of probability measures on $K$, then set
   \begin{equation}\label{back}
{\mathcal T}_{T^-}(\mu, \nu)= 
\sup\big\{\int_{Y}g\, d\nu-\int_{X}{T^-}g\, d\mu;\,  g \in C(Y)\big\}, 
\end{equation}
(resp.,
\begin{equation}\label{fore}
{\mathcal T}_{T^+}(\mu, \nu)= 
\sup\big\{\int_{Y}{T ^+}f\, d\nu-\int_{X}f\, d\mu;\,  f \in C(X)\}), 
 \end{equation} 

If $(\mu, \nu)\notin {\mathcal P}(X)\times {\mathcal P}(Y)$, then set ${\mathcal T}_{T^-}(\mu, \nu)=+\infty$ (resp., ${\mathcal T}_{T^+}(\mu, \nu)=+\infty$). \\

Dually, we introduce the following notions.

\begin{defn} \rm Let ${\mathcal T}: {\mathcal M}(X)\times {\mathcal M}(Y)  \to \R\cup \{+\infty\}$ be a bounded below functional with  a non-empty effective 
 domain $D(\T)$. 
 \begin{enumerate}
 \item We say that  $\T$ is {\em a backward (resp., forward) linear coupling}, if
\begin{equation}
D({\T})\subset {\mathcal P}(X)\times {\mathcal P}(Y), 
\end{equation}
 and 
 \begin{equation}
 \T=\T_{T^-}  \quad \hbox{(resp., $\T={\T}_{T^+}),$} 
 \end{equation} 
 for some $T^-:C(Y) \to USC (X)$ (resp., $T^+:C(X) \to LSC (Y)$).
\item We say that $\T$ is {\em a backward (resp., forward) linear transfer}, if it is a linear coupling with $T^-$ (resp., $T^+$) being backward (resp., forward) Kantorovich operators. 
 
 \end{enumerate}
\end{defn}

It is easy to see that in either case, $\T$ is then a proper, bounded below,  lower semi-continuous and convex functional on ${\mathcal M}(X)\times {\mathcal M}(Y)$. Moreover, if we 
 consider for each $\mu \in {\mathcal M}(X)$ (resp.,  $\nu \in {\mathcal M}(Y)$) the partial maps ${\mathcal T}_\mu$ on ${\mathcal P}(Y)$ (resp., ${\mathcal T}_\nu$ on ${\mathcal P}(X)$) given by $\nu \to {\mathcal T} (\mu, \nu)$ (resp., $\mu \to {\mathcal T} (\mu, \nu)$), their  Legendre transforms are then the following functionals on $C(Y)$ (resp., $C(X)$) defined by,
\[
{\mathcal T}^*_\mu (g)=\sup\{\int_Xg d\nu -{\mathcal T}_\mu(\nu);\mu \in {\mathcal P}(X) \}=\sup\{\int_X g d\nu -{\mathcal T}(\mu, \nu); \, \mu\in {\mathcal P}(X) \}, 
\]
and 
\[
{\mathcal T}^*_\nu (f)=\sup\{\int_Xf d\mu -{\mathcal T}_\nu(\mu);\mu \in {\mathcal P}(X) \}=\sup\{\int_X f d\mu -{\mathcal T}(\mu, \nu); \, \mu\in {\mathcal P}(X) \}, 
\]
respectively, since ${\mathcal T}_\nu$ and ${\mathcal T}_\mu$   are equal to $+\infty$ whenever $\mu$ and $\nu$ are not probability measures. Note also that 
 \begin{equation}\label{Lower}
\hbox{${\mathcal T}^*_\mu (g)\leq \int_XT^- g(x) \, d\mu(x)$ \quad for any $g\in C(Y)$,}
 \end{equation}
 (resp., 
  \begin{equation}\label{upper}
 \hbox{${\mathcal T}^*_\nu (g)\leq -\int_YT^+(-f)(x) \, d\nu(x)$ \quad for any $f\in C(X)$.}
\end{equation}
We shall later prove that we have equality if and only if $\T$ is a linear transfer.

Note that if ${\cal T}$ is a backward linear coupling with an operator $T^-$, then $\tilde {\cal T}(\mu, \nu):={\cal T}(\nu, \mu)$ is a forward linear coupling with the operator ${\tilde T}^+f=-T^-(-f)$. We shall therefore focus on the properties of backward linear couplings and transfers since their ``forward counterparts" could be derived from that relation.  There are however special characteristics to those that are simultaneously forward and backward linear transfers (see Sections 3 and 6). 
We shall say that {\it a coupling ${\cal T}$ is symmetric} if 
$$\hbox{${\cal T}(\nu, \mu):={\cal T}(\nu, \mu)$ for all $\mu, \nu \in {\mathcal P}(X)$.}
$$
Note that in this case,  
$T^+f=-T^-(-f)$.

The ``partial domain" of ${\mathcal T}$ will be denoted by
$$D_1(\calT)=\{\mu \in {\mathcal P}(X); \exists \nu\in {\mathcal P}(Y), (\mu, \nu)\in D(\calT)\},$$

The following characterization of linear transfers is the starting point of our analysis. 
  \begin{thm}\label{zero.1000} Let $\T:{\mathcal M}(X)\times {\mathcal M}(Y) $ be a functional such that  $D(\calT)\subset {\mathcal P}(X)\times {\mathcal P}(Y)$ and $\{\delta_x; x\in X\}\subset D_1 ({\mathcal T})$. 
 Then, the following are equivalent:
\begin{enumerate}
\item $\T$ is a backward linear transfer.

\item  There is a map $T: C(Y) \to USC(X)$ such that for each $\mu \in D_1(\calT)$, $\T_\mu$ is convex lower semi-continuous on ${\cal P}(Y)$ and 
 \begin{equation}\label{LTb}
\hbox{${\mathcal T}^*_\mu (g)=\int_XT g(x) \, d\mu(x)$ \quad for any $g\in C(Y)$}.
\end{equation}

\item  There exists a proper bounded below  lower semi-continuous function $c: X\times {\mathcal P}(Y)\to  \R \cup\{+\infty\}$ with $\sigma \to c(x, \sigma)$ convex  such that for any $(\mu, \nu)  \in {\mathcal M}(X) \times {\mathcal M}(Y)$, 
\begin{equation}\label{weak.0}
{\mathcal T}(\mu, \nu)=\left\{ \begin{array}{llll}
\inf_\pi\{\int_X c(x, \pi_x)\, d\mu(x); \pi \in {\mathcal K}(\mu, \nu)\},    \,  &\hbox{if $\mu, \nu \in {\mathcal P}(X)\times {\mathcal P}(Y)$,}\\
+\infty \quad &\hbox{\rm otherwise.}
\end{array} \right.
\end{equation}
 where $\mK(\mu,\nu)$ is the set of probability measures $\pi$ on $X\times Y$ whose marginal on $X$ (resp. on $Y$) is $\mu$ (resp., $\nu$) {\it (i.e., the transport plans)}, and $(\pi_x)_x$ is the disintegration of $\pi$ with respect to $\mu$. 
 \end{enumerate}
\end{thm}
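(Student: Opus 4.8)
The plan is to make the one-point cost $c(x,\sigma):=\mathcal{T}(\delta_x,\sigma)$ and the pointwise Legendre duality between $c(x,\cdot)$ and the operator the hub of the argument, proving $(1)\Leftrightarrow(2)$ by Fenchel--Moreau duality and $(2)\Leftrightarrow(3)$ by a weak optimal transport duality.

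For $(1)\Rightarrow(2)$, suppose $\mathcal{T}=\mathcal{T}_{T^-}$. The representation \eqref{back} writes $\mathcal{T}_\mu$ as a supremum of weak$^*$-continuous affine functionals of $\nu$, so $\mathcal{T}_\mu$ is automatically convex and weak$^*$ lower semi-continuous. The key point is that \emph{monotonicity} (a) together with \emph{affineness on constants} (c) forces $T^-$ to be non-expansive for the supremum norm: if $\|g_1-g_2\|_\infty\le\varepsilon$ then $g_2-\varepsilon\le g_1\le g_2+\varepsilon$, and applying (a) and then (c) gives $T^-g_2-\varepsilon\le T^-g_1\le T^-g_2+\varepsilon$. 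Hence $g\mapsto\int_X T^-g\,d\mu$ is continuous, in particular convex and lower semi-continuous on $C(Y)$; since its Legendre transform in the $\nu$-variable is precisely $\mathcal{T}_\mu$, Fenchel--Moreau returns $\int_X T^-g\,d\mu=\mathcal{T}^*_\mu(g)$, upgrading \eqref{Lower} to an equality, which is (2) with $T=T^-$. Conversely, assuming (2), evaluate the identity at $\mu=\delta_x$ (legitimate since $\delta_x\in D_1(\mathcal{T})$) to get $Tg(x)=\mathcal{T}^*_{\delta_x}(g)$; being a Legendre transform this is convex in $g$, it is affine on constants and monotone because the test measures $\nu$ range over $\mathcal{P}(Y)$, and it takes values in $USC(X)$ by hypothesis, so $T$ is a backward Kantorovich operator. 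Applying Fenchel--Moreau to the convex lower semi-continuous $\mathcal{T}_\mu$ then gives $\mathcal{T}_\mu(\nu)=\sup_g\{\int_Y g\,d\nu-\int_X Tg\,d\mu\}=\mathcal{T}_T(\mu,\nu)$, so $\mathcal{T}$ is a backward linear transfer.

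For $(2)\Leftrightarrow(3)$, set $c(x,\sigma):=\mathcal{T}(\delta_x,\sigma)$ on $X\times\mathcal{P}(Y)$. It is bounded below and jointly lower semi-continuous (inherited from $\mathcal{T}$, using that $x\mapsto\delta_x$ is weak$^*$ continuous), and $\sigma\mapsto c(x,\sigma)$ is convex by (2); moreover $c(x,\cdot)^*(g)=\mathcal{T}^*_{\delta_x}(g)=Tg(x)$, so $c$ and $T$ are $x$-wise Legendre conjugates, and Fenchel--Moreau on the convex lower semi-continuous $c(x,\cdot)$ gives back $c(x,\sigma)=\sup_g\{\langle g,\sigma\rangle-Tg(x)\}$. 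Everything then reduces to the weak transport duality
\begin{equation*}
\inf_{\pi\in\mathcal{K}(\mu,\nu)}\int_X c(x,\pi_x)\,d\mu(x)=\sup_{g\in C(Y)}\Big\{\int_Y g\,d\nu-\int_X Tg\,d\mu\Big\},
\end{equation*}
whose right-hand side equals $\mathcal{T}(\mu,\nu)$ by the Fenchel--Moreau representation above; combined with the conjugacy this yields (3), and, reading $T^-g(x):=\sup_\sigma\{\langle g,\sigma\rangle-c(x,\sigma)\}$ as a backward Kantorovich operator (a supremum over the compact set $\mathcal{P}(Y)$ of the jointly upper semi-continuous integrand $\langle g,\sigma\rangle-c(x,\sigma)$ is upper semi-continuous in $x$, while monotonicity, convexity and affineness on constants are immediate), it also yields $(3)\Rightarrow(1)$.

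The inequality ``$\ge$'' in the displayed identity is the easy weak-duality direction: for any plan $\pi$ and test function $g$, $c(x,\pi_x)\ge\langle g,\pi_x\rangle-Tg(x)$, and integrating against $\mu$ while using $\int_X\langle g,\pi_x\rangle\,d\mu=\int_Y g\,d\nu$ gives it. The reverse inequality is the heart of the matter and the step I expect to be the main obstacle. I would obtain it by Lagrangian duality in $\pi$: relax the constraint $\pi_Y=\nu$ through a multiplier $g\in C(Y)$ and write
\begin{equation*}
\inf_{\pi\in\mathcal{K}(\mu,\nu)}\int_X c(x,\pi_x)\,d\mu=\inf_{\pi:\,\pi_X=\mu}\ \sup_{g\in C(Y)}\Big\{\int_X c(x,\pi_x)\,d\mu+\int_Y g\,d\nu-\int_{X\times Y}g\,d\pi\Big\}.
\end{equation*}
The set $\{\pi\in\mathcal{P}(X\times Y):\pi_X=\mu\}$ is convex and weak$^*$ compact (as $X,Y$ are compact), the bracket is convex and weak$^*$ lower semi-continuous in $\pi$ and affine in $g$, so Sion's minimax theorem permits the exchange of $\inf_\pi$ and $\sup_g$. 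For fixed $g$ the inner infimum decouples over $x$, since $\int_{X\times Y}g\,d\pi=\int_X\langle g,\pi_x\rangle\,d\mu$, and a measurable selection (justified by the joint lower semi-continuity of $c$ and compactness of $\mathcal{P}(Y)$) pulls the minimum through the integral to give $\inf_{\pi_x}\int_X\{c(x,\pi_x)-\langle g,\pi_x\rangle\}\,d\mu=-\int_X Tg\,d\mu$. Hence the value equals $\sup_g\{\int_Y g\,d\nu-\int_X Tg\,d\mu\}=\mathcal{T}(\mu,\nu)$, closing the argument. The two places demanding genuine care are the no-duality-gap exchange (compactness and lower semi-continuity must be verified for the weak$^*$ topology) and the measurable-selection step legitimizing the passage of the pointwise minimum through the $\mu$-integral.
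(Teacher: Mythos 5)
Your overall skeleton matches the paper's: the equivalence $(1)\Leftrightarrow(2)$ is obtained by Fenchel--Moreau biconjugation with the Kantorovich operator recovered at Dirac masses via $Tg(x)=\mathcal{T}^*_{\delta_x}(g)$, and the equivalence with $(3)$ runs through the one-point cost $c(x,\sigma)=\mathcal{T}(\delta_x,\sigma)$; this is exactly how the paper organizes Propositions \ref{prop.one}, \ref{prop.two} and \ref{prop.three}. Where you genuinely diverge is the hard direction of the weak-transport duality. The paper never invokes a minimax theorem: in Proposition \ref{prop.two} it computes $(\mathcal{T}_c)^*_\mu(g)=\int_X T_c^-g\,d\mu$ directly, using a measurable selection $x\mapsto\bar\pi_x$ of pointwise optimizers to assemble a competitor plan, proves convexity of $(\mathcal{T}_c)_\mu$ by mixing disintegrations, and then biconjugates; the implication $(1)\Rightarrow(3)$ is then extracted in Proposition \ref{prop.three} from the envelope $\overline{\mathcal{T}}$ together with a Jensen-inequality argument, whereas you get it in one stroke from Sion's theorem applied to the Lagrangian relaxation of the marginal constraint. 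Your route is legitimate and arguably more self-contained, but note that it buys nothing for free: the two delicate points you flag --- weak$^*$ lower semi-continuity of $\pi\mapsto\int_X c(x,\pi_x)\,d\mu$ (equivalently, of $(\mathcal{T}_c)_\mu$, which the paper's biconjugation step also silently requires) and the measurable-selection step --- are precisely the facts both arguments share.

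There is, however, one real gap in your $(1)\Rightarrow(2)$. You assert that the Legendre transform of $\Gamma(g):=\int_X T^-g\,d\mu$ ``is precisely $\mathcal{T}_\mu$''. By the definition (\ref{back}) this holds on $\mathcal{P}(Y)$, but $\mathcal{T}_\mu\equiv+\infty$ off $\mathcal{P}(Y)$, while $\Gamma^*$ could a priori be finite there; in that case Fenchel--Moreau only returns $\Gamma=\Gamma^{**}=\sup_{\nu\in\mathcal{M}(Y)}\{\langle g,\nu\rangle-\Gamma^*(\nu)\}$, which dominates $\mathcal{T}^*_\mu(g)=\sup_{\nu\in\mathcal{P}(Y)}\{\langle g,\nu\rangle-\Gamma^*(\nu)\}$, i.e.\ you recover only the known inequality (\ref{Lower}) and not the equality (\ref{LTb}). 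The missing step --- and it is exactly the content of the paper's proof of Proposition \ref{prop.one} --- is that $\Gamma^*(\nu)=+\infty$ whenever $\nu\notin\mathcal{P}(Y)$: for positive $\nu$ with $\nu(Y)=\lambda\neq 1$, test against constants $g\equiv n$ and use property (c), so that $\langle n,\nu\rangle-\Gamma(n)=n(\lambda-1)-\Gamma(0)\to+\infty$ as $n\to\pm\infty$; for $\nu$ with a nonzero negative part, test against suitable $g\leq 0$ supported near the negative part and use monotonicity (a) to bound $\Gamma(g)\leq\Gamma(0)$. Your non-expansiveness observation, while correct, cannot substitute for this: a $1$-Lipschitz $\Gamma$ only forces $\Gamma^*(\nu)=+\infty$ when $\|\nu\|_{TV}>1$ (indeed $\Gamma(g)=\sup_Y|g|$ is $1$-Lipschitz with $\Gamma^*(0)=0$ finite). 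Once this verification is inserted, your proof of $(1)\Rightarrow(2)$, and with it the whole proposal, is sound.
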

\noindent This characterization makes a link between linear transfers and mass transport theory, and also explain the terminology we chose.  Indeed, 
the class of linear transfers contains all cost minimizing mass transports, that is functionals on ${\mathcal P}(X)\times {\mathcal P}(Y)$ of the form,
\begin{eqnarray}
{\mathcal T}_c(\mu, \nu):=\inf\big\{\int_{X\times Y} c(x, y)) \, d\pi; \pi\in \mK(\mu,\nu)\big\},
\end{eqnarray}
where $c(x, y)$ is a continuous cost function on the product measure space $X\times Y$. A consequence of the Monge-Kantorovich theory is that cost minimizing transports ${\mathcal T}_c$ are both forward and backward linear transfers with Kantorovich operators given for any $f\in C(X)$ (resp., $g\in C(Y)$) by 
\begin{equation}
T ^+_cf(y)=\inf_{x\in X} \{c(x, y)+f(x)\} \quad {\rm and} \quad T ^-_cg(x)=\sup_{y\in Y} \{g(y)-c(x, y)\}. 
\end{equation}
However, many couplings between probability measures cannot be formulated as optimal mass transportation problems, since they do not arise as cost minimizing problems associated to functionals $c(x, y)$ that assign a price for moving one particle $x$ to another $y$. Moreover, they are 
often not symmetric, meaning that the problem imposes a specific direction from one of the marginal distributions to the other. The notion of {\it transfers} between probability measures is therefore much more encompassing than mass transportation, yet is still amenable to --at least a one-sided version-- of the duality theory of Monge-Kantorovich \cite{V}.

The notion of linear transfer is general enough to encapsulate all bounded below convex lower semi-continuous functions on Wasserstein space and Markov operators,  but also the Choquet-Mokobodzki balayage theory \cite{Ch, Moko}, the deterministic version of optimal mass transport (e.g., Villani \cite{V}, Ambrosio-Gigli-Savare \cite{A-G-S}),  their  stochastic counterparts (Mikami-Thieulin \cite{M-T}), Barton-Ghoussoub \cite{B-G} and others), optimal Skorokhod embeddings (Ghoussoub-Kim-Pallmer \cite{G-K-P2, G-K-P3}), 
the Schr\"odinger bridge, and the Arnold-Brenier variational descriptions of the incompressible Euler equation. Linear transfers turned out to be essentially equivalent to the notion of {\it weak mass transports} recently developed  by Gozlan et al. \cite{GL, Go4}), and motivated by earlier work of Talagrand \cite{Ta1, Ta2}, Marton \cite{Ma1, Ma2} and others.

 This paper has two objectives.  First, it introduces the unifying concepts of {\it linear and convex mass transfers} and exhibits several examples that illustrate the potential scope of this approach. The underlying idea has been implicit in many related works and should be familiar to the experts. But, as we shall see, the systematic study of these structures add clarity and understanding, allow for non-trivial extensions, and open up a whole new set of interesting problems. In other words, there are by now enough examples that share common structural features that the situation warrants the formalization of their unifying concept.  
 The ultimate purpose is to extend many of the remarkable properties enjoyed by energy functionals on Wasserstein space and standard optimal mass transportations to a larger class of couplings that is stable under addition, convex combinations,  convolutions, and tensorizations. We exhibit the basic permanence properties of the convex cones of transfers, and extend several results known for mass transports including general duality formulas for inequalities between various transfers that extend the work of Bobkov-G\"{o}tze \cite{B-G}, Gozlan-Leonard \cite{GL}, Maurey \cite{Mau} and others. 

The second objective is to show that the approach of Bernard-Buffoni \cite{B-B1, B-B2} to the Fathi-Mather weak KAM theory (\cite{Fa} \cite{Mat}), which is based on optimal mass transport associated to a cost given by a generating function of a  Lagrangian, extend to transfers and therefore applies to other couplings, including the stochastic case.
 We do that by associating to any linear transfer a corresponding {\it effective linear transfer} in the same way that weak KAM theory associates an {\it effective Lagrangian} (and Hamiltonian) to many problems of the calculus of variations \cite{Fa, Evans2}. With such a perspective, Mather theory seems to rely on the ergodic properties of the nonlinear Kantorovich operators as opposed to classical ergodic theory, which deals with linear Markov operators.

  We shall focus here on probability measures on compact spaces, even though the right settings for most applications and examples are complete metric spaces, Riemannian manifolds, or at least $\R^n$. This will allow us to avoid the usual functional analytic complications, and concentrate on the algebraic aspects of the theory. The simple compact case will at least point to results that can be expected to hold and be proved --albeit with additional analysis and suitable hypothesis -- in more general situations. In the case of $\R^n$, which is the setting for many examples stated below, the right duality is between the space $Lip (\R^n)$ of all bounded and Lipschitz functions and the space of Radon measures with finite first moment. 
 
In Section 3, we study in detail the duality between Kantorovich operators and linear transfers. We actually associate to essentially any map $T: C(Y)\to USC (X)$ (resp., any convex functional $\T$ on ${\mathcal P}(X)\times {\mathcal P}(Y)$) an "optimal" Kantorovich map $\overline T$ (resp., linear transfer ${\overline \T}$) that can be seen as ``envelopes".  

  \begin{prop} {\bf (The transfer envelope of a correlation functional)} 
 Let  ${\mathcal T}: {\mathcal P}(X)\times {\mathcal P}(Y)\to \R \cup\{+\infty\}$ be a bounded below lower semi-continuous functional that is convex  in each of the variables   such that $\{\delta_x; x\in X\}\subset D_1 ({\mathcal T})$.
  Then, there exists a functional ${\overline \T}\geq \T$ on ${\mathcal P}(X)\times {\mathcal P}(Y)$ that is the smallest backward linear transfer above $\T$. 
 \end{prop}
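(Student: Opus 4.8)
The plan is to work entirely at the level of Kantorovich operators and to exploit the order-reversing duality between operators and transfers recorded in \eqref{back}. First I would write down the candidate operator $\overline{T}:C(Y)\to USC(X)$ dictated by the values of $\mathcal T$ on Dirac masses,
\[
\overline{T}g(x):=\mathcal T^*_{\delta_x}(g)=\sup\Big\{\int_Y g\,d\sigma-\mathcal T(\delta_x,\sigma);\ \sigma\in\mathcal P(Y)\Big\},
\]
which is meaningful and not identically $-\infty$ precisely because $\{\delta_x;x\in X\}\subset D_1(\mathcal T)$, and is finite-valued since $\mathcal T$ is bounded below and $g$ is bounded. I then set $\overline{\mathcal T}:=\mathcal T_{\overline T}$ via \eqref{back} and aim to show this is the desired smallest backward linear transfer above $\mathcal T$.

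Next I would verify that $\overline T$ is genuinely a backward Kantorovich operator. Monotonicity and affineness on the constants are immediate from the definition (using $\int_Y c\,d\sigma=c$ for $\sigma\in\mathcal P(Y)$), and convexity of $g\mapsto\overline T g$ holds because a supremum of maps each affine in $g$ is convex. The one point requiring an argument is that $\overline T g\in USC(X)$: since $x\mapsto\delta_x$ is weak$^*$-continuous and $\mathcal T$ is jointly lower semi-continuous, the map $(x,\sigma)\mapsto\int_Y g\,d\sigma-\mathcal T(\delta_x,\sigma)$ is upper semi-continuous on $X\times\mathcal P(Y)$, and taking the supremum over the weak$^*$-compact set $\mathcal P(Y)$ preserves upper semi-continuity. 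Hence $\overline{\mathcal T}=\mathcal T_{\overline T}$ is, by construction, a backward linear transfer with domain in $\mathcal P(X)\times\mathcal P(Y)$.

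Minimality is the easy half, and I would treat it via the same duality. Let $\mathcal S=\mathcal T_{S^-}$ be any backward linear transfer with $\mathcal S\geq\mathcal T$. Taking partial Legendre transforms in the second variable reverses the inequality, so $\mathcal S^*_\mu\leq\mathcal T^*_\mu$ for every $\mu$; and since $\mathcal S$ is a \emph{transfer}, equality holds in \eqref{Lower}, i.e.\ $\int_X S^-g\,d\mu=\mathcal S^*_\mu(g)$ by Theorem~\ref{zero.1000}. Evaluating at $\mu=\delta_x$ yields $S^-g(x)=\mathcal S^*_{\delta_x}(g)\leq\mathcal T^*_{\delta_x}(g)=\overline T g(x)$, so $S^-\leq\overline T$ pointwise. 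Because a larger operator produces a smaller transfer (read off directly from \eqref{back}), this gives $\mathcal S=\mathcal T_{S^-}\geq\mathcal T_{\overline T}=\overline{\mathcal T}$. Thus $\overline{\mathcal T}$ lies below every backward linear transfer dominating $\mathcal T$.

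The hard part will be the domination $\overline{\mathcal T}\geq\mathcal T$. Since each partial map $\mathcal T_\mu$ is convex and lower semi-continuous on $\mathcal P(Y)$, Fenchel--Moreau gives $\mathcal T(\mu,\nu)=\sup_g\{\int_Y g\,d\nu-\mathcal T^*_\mu(g)\}$, so it suffices to prove the single inequality $\int_X\overline T g\,d\mu=\int_X\mathcal T^*_{\delta_x}(g)\,d\mu(x)\leq\mathcal T^*_\mu(g)$ for all $g\in C(Y)$ and $\mu\in\mathcal P(X)$. My plan is: for each measurable selection $x\mapsto\sigma_x\in\mathcal P(Y)$, setting $\nu:=\int_X\sigma_x\,d\mu(x)$, the integral $\int_X\big(\int_Y g\,d\sigma_x-\mathcal T(\delta_x,\sigma_x)\big)\,d\mu(x)=\int_Y g\,d\nu-\int_X\mathcal T(\delta_x,\sigma_x)\,d\mu(x)$ should be bounded above by $\mathcal T^*_\mu(g)$; taking the supremum over selections then realizes $\int_X\overline T g\,d\mu$ and yields the claim. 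The two delicate points, where I expect the genuine work to lie, are (i) the measurable-selection step that expresses the integrated pointwise supremum $\int_X\mathcal T^*_{\delta_x}(g)\,d\mu$ as a supremum of integrals, and (ii) the Jensen step controlling $\mathcal T$ along the barycentric superposition $(\mu,\nu)=\int_X(\delta_x,\sigma_x)\,d\mu(x)$, namely $\int_X\mathcal T(\delta_x,\sigma_x)\,d\mu(x)\geq\mathcal T(\mu,\nu)$. This last inequality compares $\mathcal T$ at a superposition in which \emph{both} arguments vary simultaneously, so it is precisely here that the convexity of $\mathcal T$ is used in an essential way.
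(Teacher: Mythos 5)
Your construction coincides with the paper's own (Propositions~\ref{prop.two} and~\ref{prop.three}): the operator $\overline{T}g(x)=\T^*_{\delta_x}(g)$, the dual definition $\overline{\T}=\T_{\overline T}$, and the minimality argument are the same, and the parts you actually complete are correct --- in particular your compactness argument for upper semi-continuity of $x \mapsto \sup_{\sigma}\{\int_Y g\,d\sigma - \T(\delta_x,\sigma)\}$ is exactly right, and verifying the Kantorovich axioms directly lets you bypass the weak-transport representation of Proposition~\ref{prop.two} through which the paper routes. Where you diverge is on the one step you leave as a plan, namely
\begin{equation*}
\int_X \T^*_{\delta_x}(g)\,d\mu(x)\ \leq\ \T^*_\mu(g),\qquad g\in C(Y),\ \mu\in{\mathcal P}(X).
\end{equation*}
You propose a measurable selection $x\mapsto\sigma_x$ plus a two-variable Jensen inequality for $\T$ along the superposition $(\mu,\nu)=\int_X(\delta_x,\sigma_x)\,d\mu(x)$. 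The paper dispenses with both of your ``delicate points'' at once: joint convexity and lower semi-continuity of $\T$ make the partial Legendre transform $\mu\mapsto\T^*_\mu(g)$ \emph{concave} and weak$^*$ upper semi-continuous on ${\mathcal P}(X)$, so the one-variable Jensen inequality, applied to the image of $\mu$ under $x\mapsto\delta_x$ (whose barycenter is $\mu$), yields the displayed inequality immediately --- no selection theorem, no two-variable Jensen. I would close your proof this way; it is shorter and avoids the measurability technology altogether.

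One caution, which applies to your step (ii) and equally to the paper: what is genuinely needed is \emph{joint} convexity of $\T$, not convexity in each variable separately, although the statement assumes only the latter (the paper's proof silently upgrades, reading ``since $\T$ is jointly convex and lower semi-continuous''). Under separate convexity alone, $\T^*_\mu(g)$ is a supremum of concave functions of $\mu$, which need not be concave, and your superposition inequality produces uncontrollable cross terms of the form $\T(\delta_{x_1},\sigma_{x_2})$. In fact the displayed inequality can fail: take $X=Y=\{0,1\}$, write $\mu_s=(1-s)\delta_0+s\delta_1$, and set $\T(\mu_s,\mu_t)=-(2s-1)(2t-1)$, which is continuous, bounded, and affine (hence convex) in each variable separately, but not jointly convex. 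Then $\T^*_{\delta_0}(g)=\max\{g(0)+1,g(1)-1\}$ and $\T^*_{\delta_1}(g)=\max\{g(0)-1,g(1)+1\}$, so for $g\equiv 0$ and $\mu=\mu_{1/2}$ the left-hand side equals $1$ while $\T^*_{\mu_{1/2}}(0)=0$; accordingly one computes $\overline\T(\mu_{1/2},\mu_{1/2})=-1<0=\T(\mu_{1/2},\mu_{1/2})$, and the envelope fails to dominate $\T$. So you are right that convexity enters essentially at your Jensen step, but be precise about which convexity: state joint convexity as the hypothesis, and with it both your route (selection plus Jensen for $\T$) and the paper's (concavity of $\mu\mapsto\T^*_\mu(g)$ plus Jensen) are sound.
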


Dually, we say that {\em $T^-$ is proper at $x\in X$}, if
\begin{equation}\label{proper.0} 
\hbox{$\inf\limits_{\nu \in {\cal P}(Y)}\sup\limits_{g \in C(Y)}\big\{\int_{Y}g\, d\nu-T^-g (x)\big\}<+\infty.$}
\end{equation}
This then implies that $Tf(x)>-\infty$ for every $f\in C(Y)$, and translates into the condition that the associated coupling $\T$ is {\em proper} as a convex function in the following way:  
\begin{equation}\label{proper.1}
\delta_x \in D_1(\T):=\{\mu \in {\cal P}(X);\, \exists \nu \in {\cal P}(Y), \T^-(\mu, \nu)<+\infty\}.
\end{equation}

\begin{prop} {\bf (The Kantorovich envelope of a non-linear map)} Let $T:C(Y)\to USC (X)$ be a proper map.
Then, there exists ${\overline T}:C(Y)\to USC (X)$ that is 
 the largest Kantorovich operator below $T$ on $C(Y)$.
 \end{prop}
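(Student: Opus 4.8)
The plan is to build $\overline{T}$ by a one-sided Legendre duality and then regularize, rather than by taking a supremum over the (a priori possibly empty) family of Kantorovich operators below $T$. First I would associate to $T$ the functional $\mathcal{T}_T$ on $\mathcal{M}(X)\times\mathcal{M}(Y)$ defined through (\ref{back}), and set, for $g\in C(Y)$ and $x\in X$,
\[
S_0 g(x):=\mathcal{T}^*_{\delta_x}(g)=\sup_{\nu\in\mathcal{P}(Y)}\Big\{\int_Y g\,d\nu-\mathcal{T}_T(\delta_x,\nu)\Big\}.
\]
Properness of $T$ at $x$ is exactly the statement that $\inf_\nu\mathcal{T}_T(\delta_x,\nu)<+\infty$, so the supremum is $>-\infty$, while (\ref{Lower}) applied with $\mu=\delta_x$ gives $S_0g(x)\le Tg(x)<+\infty$. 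Thus $S_0g$ is finite and bounded above by the upper semicontinuous function $Tg$. Since $g\mapsto S_0g(x)$ is a supremum of maps affine in $g$ it is convex, and because each $\nu$ is a probability measure $S_0$ is monotone and satisfies $S_0(g+c)=S_0g+c$. Hence $S_0$ already obeys (a), (b), (c) pointwise in $x$; the only defect is that, being a supremum of upper semicontinuous functions of $x$, $S_0g$ need not itself be upper semicontinuous.

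Second, I would define $\overline{T}g$ to be the upper semicontinuous envelope of $S_0g$, i.e. the smallest upper semicontinuous majorant. As $Tg$ is upper semicontinuous and $Tg\ge S_0g$, we obtain $S_0g\le\overline{T}g\le Tg$, so $\overline{T}$ maps $C(Y)$ into $USC(X)$ and $\overline{T}\le T$. The three Kantorovich properties survive the regularization: for convexity, $\lambda\,\overline{T}f_1+(1-\lambda)\overline{T}f_2$ is an upper semicontinuous majorant of $S_0(\lambda f_1+(1-\lambda)f_2)\le\lambda S_0f_1+(1-\lambda)S_0f_2$, hence dominates the envelope of the left side; monotonicity and affinity on constants follow in the same way, since comparison and addition of a constant commute with passing to the envelope. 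So $\overline{T}$ is a backward Kantorovich operator with $\overline{T}\le T$.

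Finally, for maximality I would first record the elementary fact that any backward Kantorovich operator $S$ is nonexpansive for the sup-norm: from $h-\|h-h'\|_\infty\le h'\le h+\|h-h'\|_\infty$ together with (a) and (c) one gets $|Sh(x)-Sh'(x)|\le\|h-h'\|_\infty$. In particular $g\mapsto Sg(x)$ is continuous, hence convex and lower semicontinuous, so it coincides with its biconjugate, which reads $Sg(x)=\sup_{\nu\in\mathcal{P}(Y)}\{\int_Y g\,d\nu-\mathcal{T}_S(\delta_x,\nu)\}$. Now if $S\le T$ then $\mathcal{T}_S\ge\mathcal{T}_T$ directly from the definition (\ref{back}), whence
\[
Sg(x)=\sup_{\nu\in\mathcal{P}(Y)}\Big\{\int_Y g\,d\nu-\mathcal{T}_S(\delta_x,\nu)\Big\}\le\sup_{\nu\in\mathcal{P}(Y)}\Big\{\int_Y g\,d\nu-\mathcal{T}_T(\delta_x,\nu)\Big\}=S_0g(x)\le\overline{T}g(x).
\]
Thus $\overline{T}$ dominates every Kantorovich operator below $T$ and is itself such an operator, so it is the largest. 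I expect the main obstacle to be the upper-semicontinuity bookkeeping in the second step, namely verifying that the envelope of the pointwise construction $S_0$ remains convex, monotone and affine on constants while staying below $T$; the nonexpansiveness and the resulting Fenchel–Moreau identity of the third step are the other essential, though routine, ingredient.
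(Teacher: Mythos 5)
Your construction is correct, and its core is the same as the paper's: both take $\overline{T}g(x)=({\mathcal T}_T)^*_{\delta_x}(g)=\sup_{\sigma\in{\mathcal P}(Y)}\{\int_Y g\,d\sigma-{\mathcal T}_T(\delta_x,\sigma)\}$, i.e.\ the biconjugate of $T$ at Dirac masses, and then verify $\overline{T}\le T$ and maximality. Two points of divergence are worth recording. First, your upper semi-continuous regularization step is unnecessary, and the worry motivating it is unfounded in this compact setting: the map $(x,\sigma)\mapsto{\mathcal T}_T(\delta_x,\sigma)=\sup_{g}\{\int_Y g\,d\sigma-Tg(x)\}$ is \emph{jointly} lower semi-continuous (a supremum of jointly lsc functions, since $Tg\in USC(X)$), so by weak$^*$ compactness of ${\mathcal P}(Y)$ any near-maximizing sequence $\sigma_n$ for $S_0g(x_n)$ has a convergent subsequence and one gets $\limsup_n S_0g(x_n)\le S_0g(x)$ directly; thus $S_0g$ is already in $USC(X)$. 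This is precisely what the paper exploits by viewing $\overline{\mathcal T}$ as a weak transport with cost $c(x,\sigma)={\mathcal T}_T(\delta_x,\sigma)$ and invoking Proposition \ref{prop.two}. Your envelope step is harmless (it returns $S_0g$ itself, and your checks that (a), (b), (c) and the bound by $T$ survive it are sound), but it buys nothing. Second, for maximality the paper swaps $\sup$ and $\inf$ and then uses monotonicity and affinity on constants of $S$ to conclude $\inf_g\{\sup(f-g)+Sg(x)\}\ge Sf(x)$; you instead use nonexpansiveness plus Fenchel--Moreau to write $Sg(x)=({\mathcal T}_S)^*_{\delta_x}(g)$ and compare ${\mathcal T}_S\ge{\mathcal T}_T$. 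These are two dressings of the same duality fact (it is Proposition \ref{prop.one} of the paper), and yours is arguably the more transparent, provided you make explicit that the Fenchel--Moreau supremum over all of ${\mathcal M}(Y)$ may be restricted to ${\mathcal P}(Y)$ --- this requires the scaling argument of Proposition \ref{prop.one}, based on (a) and (c), showing the conjugate is $+\infty$ off the probability measures --- and that at a point $x$ where $Sg(x)=-\infty$ for all $g$ (allowed for maps into $USC(X)$) Fenchel--Moreau does not apply but the desired inequality $Sg(x)\le S_0g(x)$ is trivial.
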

  In anticipation to the study of the ergodic properties of a Kantorovich operators, where we will need to consider iterates of $T$, we proceed to extend in Section 4 any Kantorovich operator $T: C(Y)\to USC (X)$ to a map from $USC(Y)$ into $USC(X)$ while keeping properties (a), (b) and (c) that characterize Kantorovich operators. 
 
 In section 5, we exhibit a large number of (basic) examples of linear transfers which do not fit in standard mass transport theory.
 The various optimal {\it martingale mass transports} and {\it weak mass transports} of Marton, Gozlan and collaborators are examples of  one-directional linear transfers.  However, what motivated us to develop the concept of transfers are the stochastic mass transports, which do not minimize a given cost function between point particles, since the cost of transporting a Dirac measure to another is often infinite.

In Section 6, we show that the class of  linear transfers has remarkable permanence properties under various operations. The most important one for our study is the   stability under inf-convolution: If  ${\mathcal T}_1$ (resp., ${\mathcal T}_2$) are backward linear transfers on  ${\mathcal P}( X_1)\times {\mathcal P}(X_2)$ (resp., ${\mathcal P}( X_2)\times {\mathcal P}(X_3)$), then their {\it inf-convolution}  
 \begin{equation}
{\mathcal T}(\mu, \nu):={\mathcal T}_{1}\star{\mathcal T}_{2}(\mu, \nu)=
\inf\{{\mathcal T}_{1}(\mu, \sigma) + {\mathcal T}_{2}(\sigma, \nu);\, \sigma \in {\mathcal P}(X_2)\}
\end{equation}
is a backward linear transfer on ${\mathcal P}( X_1)\times {\mathcal P}(X_3)$.
  This leads to an even richer class of transfers, such as {\it the ballistic stochastic optimal transport}, {\it broken geodesics of transfers}, and projections onto certain subsets of Wasserstein space.   
  
  In anticipation to the extension of Mather theory, and motivated by the work of Bernard-Buffoni \cite{B-B1}, we study in Section 7 those  linear transfers that satisfy the {\em triangular inequality}, 
  \begin{equation}\label{triangle}
  \T(\mu, \nu)\leq \T(\mu, \sigma)+\T(\sigma, \nu) \quad \hbox{for all $\mu, \nu, \sigma \in {\cal P}(X)$},
  \end{equation}
  as well as the $\T$-Lipschitz functionals on the set ${\cal A}=\{\mu \in {\cal P}(X); \T(\mu, \mu)=0\}$.

 \begin{thm} Let $\T$ is a backward linear transfer on $\mcal{P}(X) \times \mcal{P}(X)$ with $T^-$ as a Kantorovich operator. Assume that $\T$ satisfies (\ref{triangle}) and that for all $\mu, \nu \in {\cal P}(X)$,  
   \begin{equation}
  \T(\mu, \nu)=\inf\{ \T(\mu, \sigma)+\T(\sigma, \nu); \, \sigma \in {\cal A}\}.
  \end{equation}
  The following then hold:
  \begin{enumerate}
  \item  A functional $\Phi$ on ${\cal A}$ is $\T$-Lipschitz if and only if there exists a  function $f\in C(X)$ such that 
   \begin{equation}
  \Phi (\mu)=\int_X f d \mu =\int_XT^-f d \mu \quad \hbox{for all $\mu \in  {\cal A}$}.
  \end{equation}
 \item  If $\T$ is also a forward transfer with $T^+$ as a Kantorovich operator, then 
   \begin{equation}
  \Phi (\mu)=\int_X f d \mu =\int_XT^-f d \mu=\int_XT^+\circ T^-f d \mu \quad \hbox{for all $\mu \in {\cal A}$}.
  \end{equation}
 \end{enumerate}
   \end{thm}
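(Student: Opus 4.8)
The plan is to prove the two implications of part (1) separately and then to obtain part (2) as a short consequence of (1) using only the duality between the Kantorovich operators and the transfer. Throughout I write $\mathcal A=\{\mu\in\mathcal P(X);\ \T(\mu,\mu)=0\}$.

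I begin with the easy implication of (1). Suppose $f\in C(X)$ satisfies $\Phi(\mu)=\int_X f\,d\mu=\int_X T^-f\,d\mu$ for every $\mu\in\mathcal A$. By the very definition of the backward linear transfer one has $\int_X g\,d\nu-\int_X T^-g\,d\mu\le\T(\mu,\nu)$ for all $g\in C(X)$ and all $\mu,\nu$; taking $g=f$ gives $\int_X f\,d\nu-\int_X T^-f\,d\mu\le\T(\mu,\nu)$. For $\mu,\nu\in\mathcal A$ the left-hand side equals $\Phi(\nu)-\Phi(\mu)$, whence $\Phi(\nu)-\Phi(\mu)\le\T(\mu,\nu)$; exchanging $\mu$ and $\nu$ and again using $\int_X T^-f\,d\nu=\int_X f\,d\nu$ on $\mathcal A$ yields the companion bound $\Phi(\mu)-\Phi(\nu)\le\T(\nu,\mu)$, so $\Phi$ is $\T$-Lipschitz on $\mathcal A$.

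For the converse I would first record the geometry of $\mathcal A$. Taking $\mu=\nu=\sigma$ in (\ref{triangle}) gives $\T(\mu,\mu)\ge 0$, so $\mathcal A$ is the zero sublevel set of the weak$^*$-lower semicontinuous map $\mu\mapsto\T(\mu,\mu)$ and is therefore weak$^*$-compact, while joint convexity of $\T$ makes it convex. Given a $\T$-Lipschitz $\Phi$ I would extend it to all of $\mathcal P(X)$ by the sup-convolution $\hat\Phi(\mu):=\sup_{\sigma\in\mathcal A}\{\Phi(\sigma)-\T(\mu,\sigma)\}$. The Lipschitz bound and $\T(\sigma,\sigma)=0$ give $\hat\Phi=\Phi$ on $\mathcal A$, and the triangle inequality (\ref{triangle}) together with the factorization hypothesis $\T(\mu,\nu)=\inf_{\sigma\in\mathcal A}\{\T(\mu,\sigma)+\T(\sigma,\nu)\}$ yields the fixed-point relation $\hat\Phi(\mu)=\sup_{\rho\in\mathcal P(X)}\{\hat\Phi(\rho)-\T(\mu,\rho)\}$, the transfer analogue of a weak KAM solution.

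The heart of the matter is then to show that this weak KAM functional is linear, i.e. that there is $f\in C(X)$ with $\hat\Phi(\mu)=\int_X f\,d\mu$ for all $\mu\in\mathcal P(X)$. Granting this, calibration is automatic: the triangle inequality makes $\hat\Phi$ globally $\T$-Lipschitz, so for $\mu\in\mathcal A$ one has $\int_X f\,d\nu-\int_X f\,d\mu\le\T(\mu,\nu)$ for every $\nu$, whence by (\ref{LTb}) $\int_X T^-f\,d\mu=\sup_\nu\{\int_X f\,d\nu-\T(\mu,\nu)\}\le\int_X f\,d\mu$; since $\mu\in\mathcal A$ forces $\int_X T^-g\,d\mu\ge\int_X g\,d\mu$ for every $g$, the reverse inequality also holds and we conclude $\int_X f\,d\mu=\int_X T^-f\,d\mu=\Phi(\mu)$ on $\mathcal A$, which is exactly (1). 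Thus the entire difficulty is concentrated in the linear representation of $\hat\Phi$, and I expect this to be the main obstacle: one must upgrade an abstract weak$^*$-continuous $\T$-Lipschitz functional to integration against a single continuous function, and it is precisely here that the exact meaning of ``$\T$-Lipschitz on $\mathcal A$'', the factorization through $\mathcal A$, and the compactness and convexity of $\mathcal A$ must all be brought to bear (indeed some such structure is indispensable, since for a genuine metric transfer a mere Lipschitz bound would not force linearity).

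Finally, part (2) follows cleanly from (1) and the forward structure. When $\T$ is also a forward transfer, equality in (\ref{upper}) gives the dual identity $\int_X T^+h\,d\nu=\inf_\mu\{\int_X h\,d\mu+\T(\mu,\nu)\}$. Applying this to $h=T^-f$ and using $\T(\mu,\nu)\ge\int_X f\,d\nu-\int_X T^-f\,d\mu$ from (\ref{LTb}) bounds each competitor below by $\int_X f\,d\nu$, so $\int_X T^+T^-f\,d\nu\ge\int_X f\,d\nu$ for every $\nu$ (equivalently $T^+\circ T^-f\ge f$ pointwise), giving $\int_X T^+T^-f\,d\mu\ge\Phi(\mu)$ on $\mathcal A$. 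Choosing the competitor $\mu=\nu$ in the infimum gives, for $\nu\in\mathcal A$, $\int_X T^+T^-f\,d\nu\le\int_X T^-f\,d\nu+\T(\nu,\nu)=\Phi(\nu)$. The two bounds coincide, so $\int_X T^+\circ T^-f\,d\mu=\Phi(\mu)$ for all $\mu\in\mathcal A$, as claimed.
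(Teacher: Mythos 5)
Your first implication and your part (2) are fine: the calibration bound $\int_X g\,d\nu-\int_X T^-g\,d\mu\le\T(\mu,\nu)$ with $g=f$, and the computation of $\int_X T^+\circ T^-f\,d\mu$ on ${\cal A}$ from the identity $\int_X T^+h\,d\nu=\inf_\mu\{\int_X h\,d\mu+\T(\mu,\nu)\}$, both match what the paper does (Proposition \ref{Lips} and part (3) of the proof of Theorem \ref{nice}). The problem is the converse direction of part (1): you reduce everything to the assertion that your sup-convolution $\hat\Phi$ admits a linear representation, and then you explicitly leave that step unproven ("I expect this to be the main obstacle"). Since that assertion is the entire content of the theorem, the proposal is incomplete at its core.

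Moreover, the target you reduce to is the wrong one: you ask for $f\in C(X)$ with $\hat\Phi(\mu)=\int_X f\,d\mu$ for \emph{all} $\mu\in{\cal P}(X)$, and this is in general unattainable. For instance, when $\T$ is the Kantorovich--Rubinstein transfer one has ${\cal A}={\cal P}(X)$ and the sup-convolution of any $\T$-Lipschitz $\Phi$ is $\Phi$ itself, so $\hat\Phi$ is exactly as non-affine as $\Phi$ is; no choice of $f$ can represent it globally. What the paper does instead (Theorem \ref{nice}) is to work with a \emph{pair} of envelopes: your $\hat\Phi$, denoted $\Phi_0(\mu)=\sup_{\sigma\in{\cal A}}\{\Phi(\sigma)-\T(\mu,\sigma)\}$, together with the inf-convolution $\Phi_1(\nu)=\inf_{\sigma\in{\cal A}}\{\Phi(\sigma)+\T(\sigma,\nu)\}$. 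Using (\ref{triangle}) and the Lipschitz bound, one shows $\Phi_0\le\Phi_1$ on all of ${\cal P}(X)$, while $\Phi_0=\Phi=\Phi_1$ on ${\cal A}$; the factorization hypothesis then yields the conjugacy relation $\Phi_0(\mu)=\sup\{\Phi_1(\sigma)-\T(\mu,\sigma);\,\sigma\in{\cal P}(X)\}$ (equation (\ref{BB22})). The function $f$ is then produced by a Hahn--Banach sandwich argument between the concave, weak$^*$ upper semi-continuous $\Phi_0$ and the convex, weak$^*$ lower semi-continuous $\Phi_1$: there exists $f\in C(X)$ with $\Phi_0(\mu)\le\int_X f\,d\mu\le\Phi_1(\mu)$ on ${\cal P}(X)$, and on ${\cal A}$ the three coincide, which is exactly the required representation --- no global linearity of $\Phi_0$ is needed or true. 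The identity $\int_X T^-f\,d\mu=\int_X f\,d\mu$ on ${\cal A}$ then also comes from the sandwich rather than from your argument: $\int_X T^-f\,d\mu=\sup_\sigma\{\int_X f\,d\sigma-\T(\mu,\sigma)\}\le\sup_\sigma\{\Phi_1(\sigma)-\T(\mu,\sigma)\}=\Phi_0(\mu)$, while $\int_X T^-f\,d\mu\ge\int_X f\,d\mu-\T(\mu,\mu)=\int_X f\,d\mu$ for $\mu\in{\cal A}$. So the missing idea is precisely the second envelope together with the separation theorem; without it, your outline cannot be completed.
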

\noindent   We note that the functions $\psi_0=T^-f$ and $\psi_1=T^+\circ T^-f$, are {\em conjugate} in the sense  that $\psi_0=T^-\psi_1$ and $\psi_1=T^+\psi_0$.
   
 In Sections 8-10 we associate to any given linear transfer $\T$, a distance-like transfer $\T_\infty$, by exploiting the ergodic properties of the corresponding Kantorovich operators. For each $n\in \N$, we let ${\cal T}_n= {\cal T}\star {\cal T}\star ....\star {\cal T}$ be the transfer obtained from a backward linear transfer $\T$ by iterating its convolution $n$-times. The Kantorovich operator associated to $\T_n$ is given by the $n$-th iterate $(T^-)^n$ of the Kantorovich operator $T^-$ associated to $\T$. We will be interested in the limiting behavior of $\T_n$ and $(T^-)^n$ as $n$ goes to infinity. The following identifies a critical constant associated to a given linear transfer.

   \begin{thm}\label{One.1} Suppose $\T$ is a backward linear transfer on $\mcal{P}(X) \times \mcal{P}(X)$ and let $T:=T^-$ be its backward Kantorovich operator. Assume
    \begin{equation}\label{properly}
  \hbox{$\T(\mu_0, \mu_0) <+\infty$ for some probability measure $\mu_0$.}
  \end{equation}
   \begin{enumerate}
\item   Then, there exists a finite constant $c(\T)$  such that 
\begin{equation}
c(\T):=  \sup_{n} \frac{1}{n}\inf_{\mu,\nu \in \mcal{P}(X)}\T_n(\mu,\nu)=\inf_n\frac{1}{n}\inf_{\mu \in \mcal{P}(X)}\T_n(\mu,\mu).
\end{equation}
It will be called the ``Man\'e constant" associated to $\T$. 
\item It is also characterized by
\begin{equation}
c(\T)=\inf\limits_{\mu \in \mcal{P}(X)}\T(\mu,\mu),
\end{equation} 
and the probabilitiy distributions where the infimum is attained will be called ``Mather measures" for $\T$.
\item Moreover, $c(\T)$ is the unique constant for which there may be  $u\in C(X)$ such that 
\begin{equation}
Tu + c = u.  
\end{equation}
 Such a function $u$ will be called a {\it ``backward weak KAM solution"} for $T$. 
 \end{enumerate}
  \end{thm}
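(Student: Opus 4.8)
The plan is to track the two quantities $a_n := \inf_{\mu,\nu \in \mcal{P}(X)} \T_n(\mu,\nu)$ and $b_n := \inf_{\mu \in \mcal{P}(X)} \T_n(\mu,\mu)$, exploiting two structural features recorded after the definition of a linear transfer: that $\T$ is jointly convex and lower semi-continuous (being a supremum of jointly affine functionals), and that $\T_n$ is the $n$-fold inf-convolution, so that $\T_n(\mu,\nu) = \inf\{\sum_{k=1}^n \T(\sigma_{k-1},\sigma_k) : \sigma_0 = \mu,\ \sigma_n = \nu,\ \sigma_k \in \mcal{P}(X)\}$. First I would record that $a_n$ is superadditive, since $a_{n+m} = \inf_\sigma\{\inf_\mu \T_n(\mu,\sigma) + \inf_\nu \T_m(\sigma,\nu)\} \ge a_n + a_m$. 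As $\T$ is bounded below (say by $-M$) one gets $a_n \ge -Mn$, while the constant chain $\sigma_k \equiv \mu$ gives $\T_n(\mu,\mu) \le n\,\T(\mu,\mu)$ and hence $a_n \le b_n \le n c_1$, where $c_1 := \inf_\mu \T(\mu,\mu) \in [-M,\,\T(\mu_0,\mu_0)] \subset \R$ by \eqref{properly}. Fekete's lemma then yields that $\tfrac{a_n}{n} \nearrow \sup_n \tfrac{a_n}{n} =: c(\T)$ is a finite limit.

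The heart of the argument is an averaging inequality: for any chain $\sigma_0,\dots,\sigma_n$, joint convexity (Jensen) gives
\[ \T\Big(\tfrac1n\sum_{k=0}^{n-1}\sigma_k,\ \tfrac1n\sum_{k=1}^{n}\sigma_k\Big) \le \tfrac1n\sum_{k=1}^n \T(\sigma_{k-1},\sigma_k). \]
When the chain is closed ($\sigma_0 = \sigma_n$) both arguments equal $\bar\sigma := \tfrac1n\sum_{k=0}^{n-1}\sigma_k$, so the left side is $\T(\bar\sigma,\bar\sigma) \ge c_1$; taking the infimum over closed chains yields $b_n \ge n c_1$, and with the reverse bound above $b_n = n c_1$, whence $\inf_n \tfrac{b_n}{n} = c_1$. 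For an open chain the two arguments differ by $\tfrac1n(\sigma_0 - \sigma_n)$, of total variation at most $2/n$, so $\tfrac{a_n}{n} \ge m_n := \inf\{\T(\alpha,\beta) : \alpha,\beta \in \mcal{P}(X),\ \|\alpha-\beta\| \le 2/n\}$. Using weak$^*$ compactness of $\mcal{P}(X)$ and lower semi-continuity of $\T$, any sequence with $\|\alpha_n - \beta_n\| \to 0$ has a subsequence converging to a diagonal point $(\alpha,\alpha)$, so $\liminf_n m_n \ge \inf_\alpha \T(\alpha,\alpha) = c_1$; since also $m_n \le c_1$, we get $m_n \to c_1$ and hence $c(\T) \ge c_1$. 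Combined with $\tfrac{a_n}{n} \le c_1$ this gives $c(\T) = c_1 = \inf_n \tfrac{b_n}{n}$, proving (1) and (2); the infimum defining $c_1$ is attained because $\mu \mapsto \T(\mu,\mu)$ is lower semi-continuous on the compact set $\mcal{P}(X)$ and finite at $\mu_0$, so Mather measures exist.

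For (3), suppose $u \in C(X)$ and $c \in \R$ satisfy $Tu + c = u$. The defining inequality \eqref{back} with $g=u$ gives $\T(\mu,\nu) \ge \int u\,d\nu - \int Tu\,d\mu = c + \int u\,d\nu - \int u\,d\mu$, so on the diagonal $\T(\mu,\mu) \ge c$ and therefore $c(\T) = c_1 \ge c$. For the reverse inequality I would use the equality \eqref{LTb}, namely $\int_X Tu\,d\mu = \sup_\nu\{\int u\,d\nu - \T(\mu,\nu)\}$ for $\mu \in D_1(\T)$: since $Tu = u-c$, for each such $\mu$ the upper semi-continuous functional $\nu \mapsto \int u\,d\nu - \T(\mu,\nu)$ attains its supremum $\int u\,d\mu - c$ at some $\nu(\mu) \in D_1(\T)$, with $\T(\mu,\nu(\mu)) = c + \int u\,d\nu(\mu) - \int u\,d\mu$. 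Starting from $\mu_0 \in D_1(\T)$ and iterating yields a chain with $\sum_{k=0}^{n-1}\T(\mu_k,\mu_{k+1}) = nc + \int u\,d\mu_n - \int u\,d\mu_0 \le nc + 2\|u\|_\infty$. Feeding this into the open-chain averaging inequality and passing to a weak$^*$ limit of $\tfrac1n\sum_{k=0}^{n-1}\mu_k$ (the two marginals again differ by $O(1/n)$, so lower semi-continuity applies) produces a measure $\bar\sigma$ with $\T(\bar\sigma,\bar\sigma) \le c$, so $c_1 \le c$. Hence $c = c_1 = c(\T)$, which is precisely the claimed uniqueness.

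I expect the main obstacle to be the averaging-plus-lower-semicontinuity step, which is what converts chains (open, closed, or calibrated) into evaluations on the diagonal and thereby forces all three expressions to collapse onto $\inf_\mu\T(\mu,\mu)$; compactness of $X$ and joint convexity and lower semi-continuity of $\T$ are indispensable here. The remaining delicate bookkeeping is verifying that the iterated maximizers $\nu(\mu)$ stay in $D_1(\T)$ so the calibrated chain is genuinely finite-valued — this follows because the finite right-hand side $\int u\,d\mu - c$ of \eqref{LTb} cannot coincide with the value $-\infty$ that the left-hand supremum would take were $\mu \notin D_1(\T)$.
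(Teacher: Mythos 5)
Your proof is correct, but it takes a genuinely different route from the paper's. The paper obtains part (1) by the same superadditivity/Fekete argument (Lemma \ref{limits}), but it proves the key identity $c(\T)=\inf_{\mu}\T(\mu,\mu)$ of part (2) by a regularization detour: it inf-convolves $\T$ on both sides with $\frac{1}{\epsilon}W_1$ (Lemma \ref{regularise}), invokes the equicontinuous semigroup theory of Section 8 --- where a common modulus of continuity forces $\max$ and $\min$ of $\T_n$ to differ by $O(1)$ and hence to have the same linear growth rate (Proposition \ref{estimation}) --- and then sends $\epsilon \to 0$ using monotonicity and $\Gamma$-convergence (Lemma \ref{c_epsilon}). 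Part (3) is then Corollary \ref{useful}, proved by iterating $T$ on $USC(X)$ and integrating against a Mather measure, under the additional standing hypotheses that the Dirac masses lie in $D_1(\T)$ and that (\ref{T1b}) holds. Your Jensen/Ces\`aro averaging of chains --- closed chains give $b_n = nc_1$ exactly, open chains give $a_n/n \geq m_n \to c_1$ by weak$^*$ compactness and joint lower semicontinuity --- replaces all of that machinery by a two-line convexity estimate, and your calibrated-chain argument for (3) needs neither of the paper's extra hypotheses. (The averaging idea does appear in the paper, but only inside the equicontinuous case, where Ces\`aro averages of calibrated chains produce Mather measures in the proof of Theorem \ref{weakKAMthm}(5).) What the paper's heavier route buys is its by-products, which are needed later: the continuous transfers $\T_\epsilon$, their weak KAM solutions, and ultimately the effective transfer $\T_\infty$; what your route buys is economy of hypotheses and a proof that stays entirely at the level of the definition of a linear transfer.

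One step needs tightening. In part (3) you apply (\ref{LTb}) at every $\mu$ along the chain, and you justify that the maximizers stay in $D_1(\T)$ by noting that the supremum in (\ref{LTb}) would equal $-\infty$ if $\mu \notin D_1(\T)$; but the paper asserts (\ref{LTb}) only for $\mu \in D_1(\T)$, so as phrased this is circular. The repair is the properness propagation you gesture at, carried out in the right order: since $Tu = u - c$ is finite and $T$ is monotone and affine on constants, one has $Tg \geq Tu - \|g-u\|_\infty > -\infty$ for every $g \in C(X)$, so $\Gamma_\mu(g) := \int_X Tg\, d\mu$ is a proper, convex, sup-norm continuous functional for \emph{every} $\mu \in \mcal{P}(X)$; since the definition of a backward linear transfer says precisely that $\T_\mu = \Gamma_\mu^*$ on $\mcal{M}(X)$ (this is the content of Proposition \ref{prop.one}, whose biconjugation proof uses no Dirac hypothesis), Fenchel--Moreau yields $\T_\mu^* = \Gamma_\mu$, i.e.\ (\ref{LTb}), together with the properness of $\T_\mu$ --- hence $D_1(\T) = \mcal{P}(X)$ --- for every $\mu$. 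With that observation inserted, your chain construction, and with it the whole proof, is sound.
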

\noindent Similar definitions can be made for forward linear transfers.  Actually, when $\T$ is continuous on  $\mcal{P}(X) \times \mcal{P}(X)$ for the Wasserstein metric, much more can be said since we should be able to associate to $\T$ an {\it idempotent transfer} $\T_\infty$, i.e., one that verify $\T\star \T=\T$, in which case its corresponding Kantorovich map $T_\infty$  is idempotent for the composition operation (i.e., $T_\infty^2=T_\infty$), while its range correspond to all weak KAM solutions for $\T$.  The most known ones are the Monge optimal mass transport or more generally, the Rubinstein-Kantorovich mass transports, where the cost $c(x, y)$ is a distance on a metric space. In reality, many more examples satisfy this property, such as transfers induced by convex energies with $0$ as an infimum, the balayage transfer, and certain optimal Skorokhod embeddings in Brownian motion.  The following shows that one can associate such an idempotent transfer under equi-continuity conditions on $\T$.

  \begin{thm} \label{cont} Let $\T$ be a backward linear transfer on $\mcal{P}(X) \times \mcal{P}(X)$
  that is  continuous  for the Wasserstein metric, 
  and let $T:=T^-: C(X)\to C(X)$ be the corresponding backward Kantorovich operator.
  Then, there exist a Man\'e critical value $ c = c(\T)\in \R$ and an idempotent backward linear transfer $\T_\infty$ such that if  
  $T_\infty$ is its corresponding idempotent Kantorovich operator, then the following hold:   
 
 \begin{enumerate}

\item For every $f\in C(X)$ and $x\in X$, $\lim\limits_{n\to +\infty} \frac{T^nf(x)}{n}=-c(\T)$:
 
\item  $\T_\infty$ is the largest linear transfer envelope below $\liminf_n \T_n$ and  
 $\T_\infty= (\T - c) \star \T_\infty$;
 \item $T\circ T_\infty f + c = T_\infty f$ for all $f \in C(X)$, that is $u:=T_\infty f$ is a backward weak KAM solution.
\item The set ${\cal A}:=\{\mu \in {\cal P}(X); {\cal T}_\infty(\mu, \mu)=0\}$ is non-empty and for every 
$\mu, \nu \in {\cal P}(X)$, we have 
\begin{equation}
{\cal T}_\infty(\mu, \nu)=\inf\{ {\cal T}_\infty(\mu, \sigma)+{\cal T}_\infty(\sigma, \nu), \sigma \in {\cal A}\}, 
\end{equation}
and the infimum on ${\cal A}$ is attained. 
  \item  The Man\'e constant $c(\T)=\inf\{{\cal T}(\mu, \mu); \mu\in {\cal P}(X)\}$ is attained by a probability $\bar{\mu}$ in ${\cal A}$.
  \item If $\T$ is also a forward transfer, then similar results hold for the forward operator $T^+$. Moreover, 
   the associated effective transfer $\T_\infty$ can then be expressed as 
\begin{eqnarray}\label{KAM.duals}
{\mathcal T}_\infty(\mu, \nu)=\sup\big\{\int_{X}f^+\, d\nu-\int_{X}f^-\, d\mu;\, (f^-, f^+)\in {\cal I}\big\},
\end{eqnarray}
where 
\begin{align*}
{\cal I}=\big\{(f^-, f^+); &\hbox{ $f^-$ (resp., $f^+$) is a backward (resp., forward) weak KAM solution}\\
& \quad \hbox{and $\int_Xf^-d\mu=\int_Xf^+d\mu $ for all  $\mu\in {\cal A}$\big\}.}
\end{align*}
\end{enumerate}
\end{thm}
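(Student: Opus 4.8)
The plan is to run a nonlinear analogue of the Lax--Oleinik/weak-KAM scheme, exploiting that Wasserstein-continuity of $\T$ forces strong equicontinuity on the range of $T$. First I would record the structural consequence of the hypothesis: writing $Tg(x)=\mathcal T^{*}_{\delta_x}(g)=\sup_{\nu}\{\int_X g\,d\nu-\T(\delta_x,\nu)\}$ as in Theorem \ref{zero.1000}, uniform continuity of $\T$ on the (compact) Wasserstein space yields a single modulus $\omega$ with $|Tg(x)-Tg(x')|\le\omega(d(x,x'))$ for \emph{all} $g\in C(X)$. Hence every iterate $T^{n}f$ with $n\ge 1$ lies in one equicontinuous family and has oscillation bounded by a constant $D$ independent of $n$. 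Setting $s_n:=\max_x T^{n}\mathbf 0$, monotonicity and affineness on constants give $s_{n+m}\le s_n+s_m$, so by Fekete $s_n/n\to\inf_n s_n/n=:\ell$; the uniform oscillation bound promotes this to $\tfrac1n T^{n}\mathbf 0\to\ell$ uniformly, and then $|T^{n}f-T^{n}\mathbf 0|\le\mathrm{osc}(f)$ gives $\tfrac1n T^{n}f\to\ell$ for every $f$. Comparing with the Man\'e constant of Theorem \ref{One.1} through the minimax identities $\inf_{\mu,\nu}\T_n(\mu,\nu)=\sup_g\{\min_x g-\max_x T^{n}g\}$ and $\inf_\mu\T_n(\mu,\mu)=\sup_g\min_x(g-T^{n}g)$ pins down $\ell=-c(\T)$, which is item (1).

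Next I would build the effective transfer as a Peierls barrier. Using $\inf_{\mu,\nu}\T_n\le nc\le\inf_\mu\T_n(\mu,\mu)$ from Theorem \ref{One.1} together with the oscillation bound, the corrected functionals $\T_n-nc$ are bounded below, and their liminf $h:=\liminf_n(\T_n-nc)$ is a proper (a Mather measure $\bar\mu$ gives $h(\bar\mu,\bar\mu)=0$), bounded-below, lower semicontinuous functional that is convex in each variable. Applying the Proposition on the transfer envelope of a correlation functional, I define $\T_\infty$ to be the largest backward linear transfer below $h$; this is item (2). The idempotency $\T_\infty\star\T_\infty=\T_\infty$ and the one-step absorption $(\T-c)\star\T_\infty=\T_\infty$ I would deduce from the additive law $(\T_m-mc)\star(\T_n-nc)=\T_{m+n}-(m+n)c$ passed through the liminf, combined with the maximality of the envelope. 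Dualizing via the correspondence between inf-convolution of transfers and composition of Kantorovich operators, the operator $T_\infty$ of $\T_\infty$ is then idempotent and satisfies $\hat T\circ T_\infty=T_\infty$ with $\hat T:=T+c$, i.e.\ $T\circ T_\infty f+c=T_\infty f$; this is item (3), and the range of $T_\infty$ consists exactly of backward weak KAM solutions.

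For items (4) and (5) I would use that an idempotent transfer automatically satisfies the triangular inequality (\ref{triangle}), so $\mathcal A=\{\mu:\T_\infty(\mu,\mu)=0\}$ is precisely the set on which the Section 7 machinery applies. Nonemptiness follows since a Mather measure $\bar\mu$, which exists and attains $c(\T)=\inf_\mu\T(\mu,\mu)$ by Theorem \ref{One.1}, satisfies $\T_\infty(\bar\mu,\bar\mu)=0$ and so lies in $\mathcal A$, giving item (5). The decomposition $\T_\infty(\mu,\nu)=\inf\{\T_\infty(\mu,\sigma)+\T_\infty(\sigma,\nu):\sigma\in\mathcal A\}$ with attained infimum then follows from idempotency together with weak-$*$ compactness of $\mathcal A$ (closedness from lower semicontinuity of $\T_\infty$), which is item (4). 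Finally, for item (6) I would run the construction verbatim for $T^{+}$, observe that $\T_\infty$ is simultaneously a forward and backward transfer, and read off the representation (\ref{KAM.duals}) by pairing each backward weak KAM solution $f^{-}$ with its conjugate forward solution $f^{+}$ exactly as in the conjugacy relation $\psi_0=T^{-}\psi_1$, $\psi_1=T^{+}\psi_0$ following the Section 7 theorem, the constraint $\int_X f^{-}d\mu=\int_X f^{+}d\mu$ on $\mathcal A$ being precisely the condition that the two representatives agree on Mather measures.

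The main obstacle, I expect, is establishing genuine idempotency $\T_\infty\star\T_\infty=\T_\infty$ (both inequalities) and the finiteness/properness of the barrier $h$. The subtlety is that the naive iterates $\hat T^{n}f$ need not converge, so $T_\infty$ cannot be defined as a plain limit; every bit of control must instead come from the uniform equicontinuity forced by Wasserstein continuity, combined with the sub/superadditive bookkeeping of the corrected transfers $\{\T_n-nc\}$ and the envelope construction. Verifying that these pieces interlock so that the largest linear transfer below $\liminf_n(\T_n-nc)$ is truly a fixed point of convolution with $\T-c$ is the technical heart of the argument.
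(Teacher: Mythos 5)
Your item (1) is sound, and it is essentially the paper's own argument transposed to the operator side: Fekete subadditivity of $s_n=\max_x T^n\mathbf{0}$ plus the uniform modulus forced by Wasserstein continuity is exactly Proposition \ref{estimation} and Lemma \ref{Kant} in transfer language. The genuine gap is in your construction of $\T_\infty$, which is the heart of items (2)--(3). You define $\T_\infty$ as ``the largest backward linear transfer below $h:=\liminf_n(\T_n-nc)$'' and invoke the transfer-envelope proposition, but that proposition produces the \emph{smallest} linear transfer \emph{above} a given functional; its largest-below statement (Proposition \ref{prop.three}) applies only to the special functional $\tilde\T(\mu,\nu)=\int_X\T(\delta_x,\nu)\,d\mu(x)$, not to a general $h$. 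Since the class of backward linear transfers is not stable under pointwise suprema (a supremum is in general only a convex coupling -- this is precisely why the paper introduces convex transfers), the existence of a largest linear transfer below $h$ is not free; moreover $h$, being a liminf of convex functionals, need not be convex in each variable, so even the hypotheses of the envelope machinery you cite are unverified.

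Second, even granting existence, your derivation of $(\T-c)\star\T_\infty=\T_\infty$ and of idempotency from ``maximality plus the additive law passed through the liminf'' yields at best one inequality: after proving $(\T-c)\star h=h$ (which itself requires an equicontinuity/compactness argument you do not give), maximality gives $(\T-c)\star\T_\infty\leq\T_\infty$, i.e.\ $T\circ T_\infty f+c\geq T_\infty f$; the reverse inequality $T\circ T_\infty f+c\leq T_\infty f$, which is what makes $T_\infty f$ a weak KAM solution, does not follow from maximality in any way I can see. The paper avoids this entirely by constructing $T_\infty$ explicitly on the operator side: $\bar Tf:=\limsup_n(T^nf+nc)$, then $T_\infty f:=\lim_n\big(T^n\bar Tf+nc\big)$, a bounded monotone increasing limit, and the exact identity $T^nT_\infty f+nc=T_\infty f$ comes out of the monotone-convergence Lemma \ref{monotone}; $\T_\infty$ is then \emph{defined} by duality from $T_\infty$, so it is automatically a linear transfer and both inequalities hold at once. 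Finally, your one-line justification of item (4) (``idempotency plus compactness of $\mathcal A$'') is too thin: idempotency gives attainment of the infimum over all of $\mathcal{P}(X)$, but showing that the intermediate measure can be taken in $\mathcal A$ -- and indeed that $\mathcal A\neq\emptyset$, which the paper extracts from this very argument rather than from Mather measures -- requires the Bernard--Buffoni iteration: splitting $\T_\infty$ repeatedly, extracting weak$^*$ limits of the intermediate measures, and using lower semicontinuity to produce $\bar\sigma$ with $\T_\infty(\bar\sigma,\bar\sigma)=0$.
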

  By analogy with the weak KAM theory of Mather-Aubry-Fathi --briefly described in the next paragraph-- we shall say that $\T_\infty$ (resp., $T^\infty$) is {\em the effective transfer} or the {\em generalized Peierls barrier} (resp., {\em effective Kantorovich operator}) associated to $\T$.     The set ${\mathcal A}$ is the analogue of the {\em projected Aubry set}, and 
   \[
   \mathcal{D}:= \{ (\mu,\nu) \in \mathcal{P}(X)\times\mathcal{P}(X)\,:\, \T(\mu,\nu) + \T_\infty(\nu,\mu) = c(\T)\}
   \]
  can be seen as a {\em generalized Aubry set} \cite{Fa}.

As mentioned above, the effective transfer ${\mathcal T}_\infty$ is obtained by an infinite  inf-convolution process, while $T_\infty$ is obtained by an infinite iteration procedure, which lead to fixed points (additive eigenfunctions) for such a non-linear operator. The same procedure actually applies for any semi-group of backward linear transfers (for the convolution operation) and the corresponding semi-group of Kantorovich maps (for the composition operation). This will be established in Section 7 for an equicontinuous semi-group of backward linear transfers.

In Section 9, we deal with the case of a general linear transfer, where we do not assume continuity of $\T$, but that the corresponding Kantorovich operator $T$ maps $C(X)$ to $USC (X)$.  
We then consider the following measure of the oscillation of the iterates of $\T$:
\begin{equation}
K(n):=\inf\limits_{\mu \in {\cal P}(X)}\T_n (\mu, \mu)-\inf\limits_{\mu, \nu \in {\cal P}(X)}\T_n (\mu, \nu).
\end{equation} 
Note that Theorem \ref{One.1} already asserts that $\frac{K(n)}{n}$ decreases to zero, but we shall need a slightly stronger condition to prove in section 8 the existence of weak KAM solutions.  
 
  \begin{thm}\label{One} Let $\T$ be a backward linear transfer on $\mcal{P}(X) \times \mcal{P}(X)$ such that its corresponding Kantorovich operator maps $C(X)$ to $USC(X)$. Assume (\ref{properly}) and the following two conditions:  
  \begin{equation}\label{T1}
   \sup_{x\in X}\inf_{\sigma \in {\mathcal P}(X)} \T(x, \sigma) <+\infty, 
   \end{equation}
   and 
     \begin{equation}\label{stronger.0}
     \liminf_n K(n) <+\infty.
     \end{equation}
     \begin{enumerate}
     \item Then, there exists a backward weak KAM solution  for $\T$ at the level $c:=c(\T)$.
     \item The Man\'e constant $c$ is unique in the following sense
     \begin{align}
c(\T)&=\sup \{d\in \R; \hbox{\rm there exists $u\in USC(X)$ with $Tu+d  \leq u$}\} \\
&=\inf \{d\in \R; \hbox{\rm there exists $v\in USC(X)$ with $Tv+d  \geq v$}\}. \nonumber
\end{align}
\end{enumerate}
  \end{thm}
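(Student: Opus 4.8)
The plan is to first prove the existence of a backward weak KAM solution (part 1), and then to deduce the variational characterization of $c:=c(\T)$ (part 2) from it together with a comparison principle.

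\textbf{Structural preliminaries.} I would begin by recording two elementary but crucial features of the Kantorovich operator $T$. Because $T$ is monotone and affine on the constants, it is nonexpansive for the uniform norm: $g_1\le g_2+\|g_1-g_2\|_\infty$ gives $Tg_1\le Tg_2+\|g_1-g_2\|_\infty$, and symmetrically, so $\|Tg_1-Tg_2\|_\infty\le\|g_1-g_2\|_\infty$. Next, hypothesis \eqref{T1} unwinds, via the one-sided minimax identity $\inf_\sigma\T(\delta_x,\sigma)=\sup_g\{\min_X g-Tg(x)\}$, into the uniform lower bound $Tg\ge\min_X g-M$ for all $g$, where $M:=\sup_x\inf_\sigma\T(\delta_x,\sigma)<+\infty$; iterating gives $T^n g\ge\min_X g-nM$, so in particular $c\le M<+\infty$. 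Finally, since the Kantorovich operator of $\T_n$ is $T^n$, Sion's minimax theorem on the compact set $\mcal{P}(X)$ yields the representations $\inf_\mu\T_n(\mu,\mu)=\sup_g\min_X(g-T^n g)=:b_n$ and $\inf_{\mu,\nu}\T_n(\mu,\nu)=\sup_g\{\min_X g-\max_X T^n g\}=:e_n$, so that $e_n\le nc\le b_n$ and $K(n)=b_n-e_n$ by Theorem \ref{One.1}.

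\textbf{Construction of the corrector (main step).} Let $n_k\to\infty$ realize $\liminf_n K(n)<+\infty$, so $K(n_k)\le C_0$. For each $k$ I would pick a near-optimizer $g_k$ for $b_{n_k}$, normalized by $\min_X g_k=0$; it nearly solves the periodic relation $T^{n_k}g_k\le g_k-b_{n_k}+o(1)$. I then pass from such a $T^{n_k}$-subfixed point to a genuine subsolution of $T$ at level $\mu_k:=b_{n_k}/n_k=c+O(1/n_k)\to c$, via the Lax--Oleinik type function $u_k:=\min_{0\le j<n_k}\{T^j g_k+j\mu_k\}$: using $T(\min_j h_j)\le\min_j Th_j$ (monotonicity), affineness on constants, and $T^{n_k}g_k+n_k\mu_k\le g_k$, one gets $Tu_k+\mu_k\le u_k$. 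The boundedness of $K(n_k)$ is exactly what I would use to control the oscillation of the normalized $u_k$ from above, while $T^j g_k\ge-jM$ controls it from below; after subtracting $\min_X u_k$, I would extract from $\{u_k\}$ a nontrivial limit $u\in USC(X)$ (working on the dual side, where $\mcal{P}(X)$ is weak-$*$ compact, since equicontinuity is unavailable). Passing the subsolution inequalities to the limit, and using nonexpansiveness of $T$ to control $T$ along it, yields $Tu+c\le u$. To upgrade this to equality I would run a Perron/bumping argument: if $Tu+c<u$ on a nonempty set, one could raise $u$ there while preserving the subsolution property and staying below a fixed supersolution barrier, contradicting the criticality of $c$ (equivalently, contradicting $c_-=c$ below). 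Hence $Tu+c=u$, proving part 1.

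\textbf{Characterization of $c$ (part 2).} Write $c_-:=\sup\{d:\exists u\in USC(X),\,Tu+d\le u\}$ and $c_+:=\inf\{d:\exists v\in USC(X),\,Tv+d\ge v\}$. First, a comparison principle gives $c_-\le c_+$: if $Tu+d_1\le u$ and $Tv+d_2\ge v$, evaluate at a minimizer $x_1$ of $u-v$; then $u\ge v+(u-v)(x_1)$, so monotonicity and affineness give $Tu(x_1)\ge Tv(x_1)+(u-v)(x_1)$, whence $(u-Tu)(x_1)\le(v-Tv)(x_1)$ and therefore $d_1\le(u-Tu)(x_1)\le(v-Tv)(x_1)\le d_2$. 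Second, $c_-=c$ holds directly, since unwinding the definitions gives $c_-=\sup_g\min_X(g-Tg)=b_1=\inf_\mu\T(\mu,\mu)=c$ by the minimax identity above and Theorem \ref{One.1}. Finally, the weak KAM solution $u$ from part 1 is simultaneously a subsolution and a supersolution at level $c$, so $c_+\le c$; combined with $c=c_-\le c_+$ this gives $c_-=c_+=c$, which is the asserted characterization. The main obstacle is the limit extraction in the corrector construction: because $T$ only maps into $USC(X)$ and need not be continuous, the normalized iterates are not equicontinuous and Arzel\`a--Ascoli is unavailable. This is precisely where \eqref{T1} and \eqref{stronger.0} enter --- the former keeps the iterates bounded below and $c$ finite, the latter bounds their oscillation along a subsequence --- so that a nontrivial upper-semicontinuous limit exists and the fixed-point equation survives the passage to the limit; the Perron upgrade and the comparison principle are then comparatively routine.
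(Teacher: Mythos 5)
Your route---a discrete Lax--Oleinik corrector $u_k=\min_{0\le j<n_k}\{T^jg_k+j\mu_k\}$ built from near-maximizers $g_k$ of $b_{n_k}=\inf_\mu\T_{n_k}(\mu,\mu)$, followed by a compactness extraction and a comparison principle---is the classical discrete weak KAM scheme and is genuinely different from the paper's proof; unfortunately it breaks down precisely at the steps you yourself flag as delicate. First, the limit extraction is a real gap, not a technicality: weak$^*$ compactness of $\mcal{P}(X)$ yields convergent subsequences of \emph{measures}, not of the functions $u_k$, and you never specify a mode of convergence for which (i) a subsequential limit exists in $USC(X)$, (ii) the inequalities $Tu_k+\mu_k\le u_k$ pass to the limit (nonexpansiveness of $T$ only helps along uniformly convergent sequences, which you do not have), and (iii) the limit is proper and bounded above. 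Second, the asserted oscillation control is unproved and doubtful: the bound $T^jg_k\ge -jM$ coming from (\ref{T1}) degenerates linearly in $j\le n_k$, and boundedness of $K(n_k)=b_{n_k}-e_{n_k}$ places no upper bound on $\max_X g_k$ or on the spread of the iterates (if anything, it gives the \emph{lower} bound $\max_Xg_k\ge K(n_k)-\epsilon_k$). Third, the Perron/bumping upgrade from $Tu+c\le u$ to equality is suspect because $T$ is nonlocal: raising $u$ on a set changes $Tu$ everywhere, so preservation of the subsolution property is unjustified. The paper's proof of Theorem \ref{gen} is organized exactly to avoid all of this: it runs a dichotomy (either some $f\in C(X)$ satisfies, at every $x$, $T^nf(x)+nc<f(x)$ for some $n$, or some $f$ admits a point $x$ with $T^nf(x)+nc\ge f(x)$ for all $n$), and in each case produces a \emph{monotone} sequence of iterates---decreasing from $g_r=\inf_{1\le i\le r}(T^if+ic)$ in the first case, increasing from $\liminf_n(T^nf+nc)$ in the second---whose pointwise limit exists for free and along which $T$ passes to the limit by Lemma \ref{monotone}; condition (\ref{stronger.0}) enters only through part (5) of Lemma \ref{limits} to bound $\liminf_n(T^nf+nc)$ from above, and properness of the limit is a separate argument via part (4) of Lemma \ref{limits} and compactness, not an oscillation bound.

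Part 2 has gaps of its own. Your comparison principle evaluates $u-v$ at a minimizer, but $u-v$ is the difference of two possibly $[-\infty,+\infty)$-valued $USC$ functions, hence neither upper nor lower semicontinuous: its infimum need not be attained, and $u-v$ is not even defined where both functions equal $-\infty$. (For real-valued sub/supersolutions the iteration fix $v-nd_2\le T^nv\le T^nu+C\le u-nd_1+C$ works and forces $d_1\le d_2$, but the $-\infty$ issue remains.) Moreover, ``$c_-=\sup_g\min_X(g-Tg)$ by unwinding definitions'' only yields one inequality, $c_-\ge\sup_{g\in C(X)}\min_X(g-Tg)$: the supremum defining $c_-$ ranges over $USC(X)$ subsolutions, and showing that a $USC$ subsolution at level $d$ forces $d\le c$ is precisely the nontrivial content of the paper's Corollary \ref{useful}. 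The paper proves this by iterating the subsolution, extracting its (monotone, decreasing) limit, and integrating against a Mather measure $\bar\mu$ with $\T(\bar\mu,\bar\mu)=c(\T)$---an identity which the paper itself obtains not by minimax but through the regularization machinery of Lemma \ref{regularise} and Lemma \ref{c_epsilon}. So both halves of your part 2 lean on the existence and criticality of Mather measures in a way your argument does not supply independently.
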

 Note that  (\ref{T1}) merely states that the function $T1$ is bounded below, while (\ref{properly}) yields that $T 1$ is not identically $-\infty$. 
  This will allow us to prove the following.
 
 \begin{thm}  
 Let $\T$ be a backward linear transfer that is also bounded above on ${\cal P}(X)\times {\cal P}(X)$, then 
 \begin{equation}
\frac{\T_n(\mu, \nu)}{n} \to c \quad \hbox{uniformly on ${\cal P}(X)\times {\cal P}(X)$.}
\end{equation}
Moreover, there exists an idempotent  operator $T_\infty: C(X)\to USC (X)$ such that for each $f\in C(X)$,  $T_\infty f$ is a backward weak KAM solution for $\T$. 
\end{thm}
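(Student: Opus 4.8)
The plan is to treat the two assertions separately, with the boundedness hypothesis feeding both through a single estimate on the oscillation of the iterates. Throughout write $c=c(\T)$, $M:=\sup_{\mathcal P(X)\times\mathcal P(X)}\T<+\infty$, $a_n:=\inf_{\mu,\nu}\T_n(\mu,\nu)$ and $b_n:=\inf_\mu\T_n(\mu,\mu)$, and recall from Theorem~\ref{One.1} that $a_n\le nc\le b_n$.

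\textbf{Part 1 (uniform convergence).} Since $\mathcal P(X)$ is weak$^*$-compact and $\mu\mapsto\T(\mu,\mu)$ is lower semi-continuous, the Mather infimum is attained at some $\bar\mu$ with $\T(\bar\mu,\bar\mu)=c$. Inserting the path $\mu\to\bar\mu\to\cdots\to\bar\mu\to\nu$ into the $n$-fold inf-convolution defining $\T_n$ gives
\[
\T_n(\mu,\nu)\le \T(\mu,\bar\mu)+(n-2)c+\T(\bar\mu,\nu)\le nc+2(M-c),
\]
so $\sup_{\mu,\nu}\T_n(\mu,\nu)-nc$ is bounded above uniformly in $n$. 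For a lower bound I take an almost-minimizer $(\mu_n,\nu_n)$ of $\T_n$ and use the closed path $\mu_n\to\nu_n\to\bar\mu\to\mu_n$ to get $b_{n+2}\le a_n+2M$, while superadditivity $a_{n+2}\ge a_n+a_2$ (immediate from $\T_{n+m}=\T_n\star\T_m$) yields that the oscillation $K(n)=b_n-a_n$ is bounded, say by $C_0:=\max\{K(1),K(2),\,2M-a_2\}<+\infty$. Because $b_n-nc\ge0$ we have $nc-a_n\le K(n)\le C_0$, hence $c-\tfrac{C_0}{n}\le\tfrac1n a_n\le\tfrac1n\T_n(\mu,\nu)\le c+\tfrac{2(M-c)}{n}$ for all $\mu,\nu$, and the two outer bounds converge to $c$ independently of $\mu,\nu$. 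This is the claimed uniform convergence.

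\textbf{Part 2 (construction of $T_\infty$).} The bound on $K(n)$ shows that hypotheses (\ref{properly}), (\ref{T1}) and (\ref{stronger.0}) of Theorem~\ref{One} all hold, so backward weak KAM solutions exist at level $c$. To produce the operator I set $Q_n:=\T_n-nc$, which is uniformly bounded in $[-C_0,\,2(M-c)]$ by Part 1 and satisfies the semigroup relations $Q_{n+m}=Q_n\star Q_m$ and $Q_{n+1}=(\T-c)\star Q_n$ (obtained from $\T_{n+m}=\T_n\star\T_m$ after subtracting constants). I then define $\T_\infty$ to be the linear-transfer relaxation of $\liminf_n Q_n$, exactly as in Theorem~\ref{cont}: the uniform bounds guarantee that $\liminf_n Q_n$ is finite, bounded below and has $\{\delta_x\}\subset D_1$, so the transfer-envelope Proposition produces a genuine backward linear transfer $\T_\infty$, and I let $T_\infty:C(X)\to USC(X)$ be its Kantorovich operator.

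\textbf{Weak KAM and idempotency.} The point is to promote the relations for $Q_n$ to the exact identities $\T_\infty=(\T-c)\star\T_\infty$ and $\T_\infty\star\T_\infty=\T_\infty$. Granting these I dualize, using that inf-convolution of linear transfers corresponds to composition of Kantorovich operators (via \eqref{LTb}) and that the operator of $\T-c$ is $g\mapsto Tg+c$: the first identity becomes $T_\infty f=T(T_\infty f)+c$, so $T_\infty f$ is a backward weak KAM solution for every $f\in C(X)$, and the second becomes $T_\infty\circ T_\infty=T_\infty$. Idempotency also reads directly: once $u=T_\infty f$ satisfies $Tu+c=u$, the extended relation $T^nu+nc=u$ shows $u$ is fixed by the whole limiting procedure, whence $T_\infty u=u$.

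\textbf{Main obstacle.} The only substantial difficulty is the limit passage that upgrades $Q_{n+1}=(\T-c)\star Q_n$ and $Q_{n+m}=Q_n\star Q_m$ to the fixed-point identities for $\T_\infty$, since $\liminf$ does not commute with inf-convolution or with the envelope operation for free. I would control this using the uniform two-sided bounds of Part 1 together with the weak$^*$-compactness of $\mathcal P(X)$ and lower semi-continuity, which render the infimum in each $\star$ attained and permit a Dini/minimax interchange; equivalently, one verifies that $T_\infty$ is continuous from above along the decreasing envelopes $\sup_{n\ge m}\bigl(T^nf+nc\bigr)$. It is precisely here that the hypothesis $T:C(X)\to USC(X)$ (rather than into $C(X)$) obliges us to accept a merely upper semi-continuous additive eigenfunction.
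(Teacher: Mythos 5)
Your Part 1 is correct, and it is in fact more self-contained than the paper's own route to uniform convergence: the paper first verifies the bounded-oscillation condition, invokes Theorem \ref{gen} to produce a weak KAM solution $h$, and only then uses $T^n h + nc = h$ together with the boundedness of $h$ to squeeze $\T_n - nc$ between two constants; you instead obtain the two-sided bound directly from the attained Mather infimum $\T(\bar\mu,\bar\mu)=c$ (upper bound via the path through $\bar\mu$) and from the oscillation estimate $b_{n+2}\le a_n+2M$, $a_{n+2}\ge a_n+a_2$ (lower bound). Both arguments rest on the same elementary inequality $\inf\T_n+2\inf\T\le\T_{n+2}\le 2\sup\T+\inf\T_n$, so this half of your proposal stands on its own.

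The genuine gap is in Part 2, and you have flagged it yourself: everything after ``Granting these'' is conditional on the identities $\T_\infty=(\T-c)\star\T_\infty$ and $\T_\infty\star\T_\infty=\T_\infty$, and those identities \emph{are} the hard content of the statement. Your proposed repair (compactness of $\mathcal{P}(X)$ plus a ``Dini/minimax interchange'') is not an argument: by construction the Kantorovich operator of the envelope of $\liminf_n(\T_n-nc)$ is $f\mapsto\sup_{\sigma}\{\int f\,d\sigma-\liminf_n(\T_n(\delta_x,\sigma)-nc)\}$, and there is no general principle allowing the $\sup_\sigma$ (or the envelope/Legendre operation) to be commuted with $\liminf_n$; compactness does not help because the near-optimal $\sigma$'s move with $n$. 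The paper sidesteps the transfer-level limit entirely and works at the level of the operator iterates, where monotonicity makes the limit interchange exact: from the uniform bound $-K\le\T_n-nc\le K'$ (which your Part 1 also yields) one gets $\|T^nf+nc\|_\infty\le\|f\|_\infty+C$, sets $\hat Tf:=\liminf_n(T^nf+nc)$, establishes the one-sided inequality $T\hat Tf+c\ge\hat Tf$, so that $(T^n\hat Tf+nc)_n$ is a \emph{monotone} bounded sequence, and defines $T_\infty f$ as its limit; the exact equation $T(T_\infty f)+c=T_\infty f$ then follows from Lemma \ref{monotone} (continuity of the extended Kantorovich operator along monotone sequences), and idempotency follows since any $u$ with $Tu+c=u$ is fixed by the whole procedure. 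This mechanism --- first a one-sided inequality, then iteration so that only monotone limits ever need to be passed through $T$ --- is precisely the idea missing from your proposal; without it, the promotion step you defer remains unproven.
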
 
In Section 10, we use a regularization procedure to show that many of the conclusions in Theorem \ref{cont} can hold  for transfers that are neither necessarily continuous nor bounded. This holds for example when the following condition is satisfied.
\begin{equation}
\inf_{\mu \in \mcal{P}(X)}\T(\mu,\mu)=\inf_{\mu, \nu \in \mcal{P}(X)}\T(\mu,\nu),
\end{equation} 
which holds in many situations. This will allow us to prove the following general result.
\begin{thm}
Let $\T$ be a backward linear transfer on $\mcal{P}(X)\times \mcal{P}(X)$, where $X$ is a bounded domain in $\R^n$. Then, for every $\lambda \in (0, 1)$, there exists a convex function $\phi$, a constant $c\in \R$ and a function $g\in USC(X)$
such that 
\begin{equation}
T^-g   +c= \lambda \, g (\nabla \phi) +(1-\lambda)\, g.
\end{equation}
\end{thm}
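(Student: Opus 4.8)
The plan is to reduce the statement to the existence theory already established for well-behaved transfers (Theorems~\ref{cont} and~\ref{One}) by coupling the given $\T$ with an optimal-transport term parametrized by $\lambda$, chosen precisely so that the resulting object satisfies the equal-infima hypothesis $\inf_\mu \T_\lambda(\mu,\mu)=\inf_{\mu,\nu}\T_\lambda(\mu,\nu)$ that powers the Section~10 machinery; the gradient of the convex function $\phi$ will then be read off from the optimal plan by Brenier's theorem, which is why the hypothesis $X\subset\R^n$ is needed.

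First I would introduce, for fixed $\lambda\in(0,1)$, the auxiliary \emph{mixture transfer} $\mcal{W}_\lambda$ whose disintegration sends each point $x$ to the mixture $(1-\lambda)\delta_x+\lambda\delta_y$ with quadratic displacement cost; by the weak-transfer representation of Theorem~\ref{zero.1000} this is a backward linear transfer, and a direct computation of $\T^*_\sigma$ gives its Kantorovich operator as $T^-_{\mcal{W}_\lambda}g(x)=(1-\lambda)g(x)+\lambda\,\sup_{y}\{g(y)-\tfrac12|x-y|^2\}$. By Brenier's theorem the inner supremum is attained at $y=\nabla\phi(x)$ for the convex function $\phi(x)=\tfrac12|x|^2+\sup_y\{g(y)-\tfrac12|x-y|^2\}$, the Legendre transform of $y\mapsto\tfrac12|y|^2-g(y)$. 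I would then combine $\T$ with $\mcal{W}_\lambda$ to form a transfer $\T_\lambda$ which is bounded and continuous for the Wasserstein metric on the $\mcal{W}_\lambda$ part, so that the weight $1-\lambda>0$ on the identity component supplies the ``self-loops'' forcing the equal-infima identity. Granting this, the Section~10 refinement of Theorem~\ref{cont} (which does not require continuity or boundedness of the original $\T$) produces a Mané constant $c$, a Mather measure $\bar\mu$, and a backward weak KAM solution $g\in USC(X)$.

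Finally I would unfold the weak KAM relation for $\T_\lambda$ into the stated form $T^-g+c=\lambda\,g(\nabla\phi)+(1-\lambda)\,g$, substituting the Brenier selection $y=\nabla\phi(x)$ and using the Mather-measure optimality to pin down $c$. The convex function $\phi$ is the one furnished by Brenier's theorem attached to $g$, and $g$ is the weak KAM solution. If the residual structure cannot be read off directly from a single effective/idempotent operator $T_\infty$, the alternative is to freeze $\phi$, solve the (now well-behaved, since $A_\phi g:=(1-\lambda)g+\lambda\,g\circ\nabla\phi$ is a bounded Wasserstein-continuous linear Markov operator) relative problem $T^-g+c=A_\phi g$ via Section~10, update $\phi$ from the new optimal map, and close the loop with a Kakutani--Fan--Glicksberg fixed point on the compact convex set of admissible $\phi$.

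The hard part will be twofold. The analytic obstacle is absolute continuity: Brenier's theorem, hence the single-valued map $\nabla\phi$, requires the measure carrying the transport to be absolutely continuous with respect to Lebesgue measure, which is exactly where $X\subset\R^n$ and an additional regularization (or approximation) of the Mather measure $\bar\mu$ must be invoked while keeping $\lambda$ fixed. The structural obstacle is reconciling the \emph{clean} evaluation $g(\nabla\phi)$ on the right-hand side with the sup-convolution $\sup_y\{g(y)-\tfrac12|x-y|^2\}=g(\nabla\phi(x))-\tfrac12|x-\nabla\phi(x)|^2$ that the quadratic coupling naturally produces: one must verify that the displacement cost is absorbed consistently into the constant $c$ and the potential $\phi$, using that along the Mather measure the associated self-map is measure preserving so that the quadratic term drops out of the optimality condition. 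Verifying the equal-infima identity for $\T_\lambda$, and the weak-$*$ semicontinuity needed to close the fixed point, are the remaining essential but more routine checks.
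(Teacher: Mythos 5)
Your primary construction cannot produce the stated identity, for a structural reason. Coupling $\T$ with the quadratic-cost mixture transfer $\mcal{W}_\lambda$ by inf-convolution yields, by Proposition \ref{inf.tens}, the Kantorovich operator $T^-\circ T^-_{\mcal{W}_\lambda}$ (or $T^-_{\mcal{W}_\lambda}\circ T^-$ in the other order), so the weak KAM identity you would extract reads $T^-\bigl[(1-\lambda)g+\lambda g(\nabla\phi)-\tfrac{\lambda}{2}|\cdot-\nabla\phi|^2\bigr]+c=g$: the operator $T^-$ ends up applied to the mixture while the bare $g$ sits on the other side --- the reverse of the target --- and the displacement cost $\tfrac{\lambda}{2}|x-\nabla\phi(x)|^2$ is a nonconstant function of $x$ that cannot be absorbed into the constant $c$. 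Your proposed repair (the quadratic term ``drops out'' because the self-map preserves the Mather measure) does not close this gap: the identity to be proven is a pointwise equation of functions on $X$, not an equation integrated against $\bar\mu$; the measure-preservation claim is itself unjustified in your construction (the map produced by $\mcal{W}_\lambda$ pushes forward the intermediate measure $\sigma$ of the convolution, not $\bar\mu$, and only a $\lambda$-fraction of the mass moves); and if one did have $(\nabla\phi)_\#\bar\mu=\bar\mu$ with $\bar\mu$ absolutely continuous, Brenier--McCann uniqueness would force $\nabla\phi=\mathrm{id}$ $\bar\mu$-a.e., collapsing your equation to a genuine weak KAM identity $T^-g+c=g$ --- something that cannot hold for arbitrary backward linear transfers, which is precisely why the theorem is stated in this relaxed $\lambda$-mixture form.

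The paper avoids both difficulties by attaching no cost to the map and by fixing $\phi$ before $g$ exists. It takes a minimizing pair $\T(\mu_0,\nu_0)=\inf_{\mu,\nu}\T(\mu,\nu)$, applies Brenier's theorem (this is where $X\subset\R^n$ enters) to get a convex $\phi$ with $(\nabla\phi)_\#\mu_0=(1-\tfrac{1}{\lambda})\mu_0+\tfrac{1}{\lambda}\nu_0$, i.e. $\lambda(\nabla\phi)_\#\mu_0+(1-\lambda)\mu_0=\nu_0$, and then perturbs $\T$ in its measure argument: $\tilde\T(\mu,\nu):=\T(\mu,\lambda(\nabla\phi)_\#\mu+(1-\lambda)\nu)$. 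By Proposition \ref{perturbedtransfer} this is again a backward linear transfer with Kantorovich operator $\tilde T^-f=T^-(\tfrac{1}{1-\lambda}f)-\tfrac{\lambda}{1-\lambda}\,f\circ\nabla\phi$ --- no quadratic term anywhere --- and the Brenier choice gives $\tilde\T(\mu_0,\mu_0)=\T(\mu_0,\nu_0)=\inf\T\leq\inf\tilde\T$, so the equal-infima hypothesis of Section 10 holds by construction and Theorem \ref{fixedpointgeneral} applies; setting $g=f/(1-\lambda)$ in $\tilde T^-f+c=f$ gives exactly the claim. Note that the theorem asks only for the existence of \emph{some} convex $\phi$, so your insistence that $\phi$ be the Legendre-type potential generated by $g$ introduces a circularity the statement never requires. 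Your frozen-$\phi$ relative problem $T^-g+c=A_\phi g$ is in fact precisely the paper's perturbed problem; what is missing is the observation that the single Brenier choice of $\phi$ tied to $(\mu_0,\nu_0)$ makes the equal-infima condition automatic, so no Kakutani--Fan--Glicksberg iteration is needed --- and indeed that fallback inherits the real obstruction, since for an arbitrary frozen $\phi$ the perturbed transfer has no reason to satisfy the equal-infima condition, while $\nabla\phi$ is in general only Borel measurable, so your operator $A_\phi$ need not preserve $C(X)$ nor be Wasserstein-continuous.
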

\noindent Note that if $\phi$ is the quadratic function, then $g$ is a weak KAM solution for $T$.
 
To make the connection with Mather-Aubry-Fathi theory, consider $\T_t$ to be the cost minimizing transport 
\begin{equation}
\T_t(\mu,\nu) = \inf\{\int_{M\times M} c_t(x,y)\d\pi(x,y)\,;\, \pi \in \mcal{K}(\mu,\nu)\},
\end{equation}  
where \begin{equation}
c_t(x,y) := \inf\{\int_{0}^{t}L(\gamma(s), \dot{\gamma}(s))\d s\,;\, \gamma \in C^1([0,t];M), \gamma(0)=x, \gamma(t)=y\},
\end{equation} 
for some given (time-independent) \textit{Tonelli Lagrangian} $L$ possessing suitable regularity properties on a compact state space $M$.  
The backward Kantorovich operators associated to $\T_t$ are nothing but the Lax-Oleinik semi-group $S_t^-$, $t > 0$, defined as
\begin{equation}
S_t^- u(x) := \inf\{ u(\gamma(0)) + \int_{0}^{t}L(\gamma(s), \dot{\gamma}(s))\d s\,;\, \gamma \in C^1([0,t]; M), \gamma(t) = x\}.
\end{equation}
Recall from \cite{Fa} that a function $u \in C(M)$ is said to be a \textit{negative weak KAM solution} if for some $c\in \R$, we have 
\begin{equation}S_t^-u + ct = u \quad \hbox{for all $t \geq 0$},
\end{equation}
these solutions are then given by any function in the range of the effective Kantorovich map associated to $(S_t^-)_t$. 
Actually, these solutions were obtained this way by Bernard and Buffoni \cite{B-B1, B-B2}, who capitalized on the fact that in this case, the transfers $(\T_t)_t$ are actually given by optimal mass transports associated to the cost $c_t$, and that the Lax-Oleinik semi-groups are obtained via Monge-Kantorovich theory. 
 This general asymptotic theory applies to both the linear setting such as the heat semi-group and to non-linear contexts including the Schr\"odinger bridge.
  It also applies to settings where transfers are neither given by optimal transport problems nor are they not continuous on Wasserstein space. 

In section 11, we apply the general theory to the following semi-group of stochastic optimal mass transports:
Let $(\Omega, \mcal{F}, \P)$ be a complete probability space with normal filtration $\{\mcal{F}_t\}_{t \geq 0}$, and define $\mcal{A}_{[0,t]}$ to be the set of continuous semi-martingales $X: \Omega \times [0,t] \to M$ such that there exists a Borel measurable drift $\beta: [0,t] \times C([0,t]) \to \R^d$ for which
\enum{
\item $\omega \mapsto \beta (s,\omega)$ is $\mcal{B}(C([0,s]))_{+}$-measurable for all $s \in [0,t]$, where $\mcal{B}(C([0,s]))$ is the Borel $\sigma$-algbera of $C[0,s]$.
\item $W (s) := X(s) - X(0) - \int_{0}^{s}\beta (s')\d s'$ is a $\sigma(X(s)\,;\, 0 \leq s \leq t)$  
is an $M$-valued Brownian motion. 
}
For each $\beta$, we shall denote the corresponding $X$ by $X^\beta$ in such a way that 
\begin{equation}
d X^\beta(t)=\beta (t) dt + d W(t).
\end{equation}
 The stochastic transport from $\mu \in \mcal{P}(M)$ to $\nu\in \mcal{P}(M)$ on the interval $[0,t]$, $t > 0$, is then defined as 
\begin{equation}
\T_{t}(\mu,\nu) := \inf\lf\{\E \int_{0}^{t} L(X^\beta (s), \beta (s))\d s\,;\, X^\beta (0) \sim \mu, X^\beta (t) \sim \nu, X^\beta \in \mcal{A}_{[0,t]}\rt\}.
\end{equation}
Note that these couplings do not fit in the Monge-Kantorovich framework as they are not optimal mass transportations that correspond to a cost function between two states, but they are  backward linear transfers according to our definition thanks to the work of Mikami-Tieullin \cite{M-T}.  In this case, they only have backward Kantorovich operators given by the stochastic Lax-Oleinik operator,
\begin{equation}
S_{t} f(x) := \sup_{X \in \mcal{A}_{[0,t]}}\lf\{\E\lf[\lf(f(X(t)) - \int_{0}^{t} L(X(s),\beta_X(s,X))\d s\rt)|X(0) = x\rt]\rt\}, 
\end{equation}
in such a way that 
\begin{equation}
{\mathcal T}_t(\mu,  \nu)=\sup\big\{\int_{M}u(y)\, d\nu(y)-\int_{M}S_tu(x)\, d\mu(x); u\in C(M)\big\}. 
\end{equation}
In addition, for each end-time $T>0$, $ u(t, x)=S_{T-t}u(x)$ is a viscosity solution to the following backward Hamilton-Jacobi-Bellman equation 
\begin{align}
\begin{cases}\frac{\partial u}{\partial t}(t,x) + \frac{1}{2}\Delta u (t,x) + H(x,\nabla u(t,x)) &= 0,\quad \hbox{on $[0,T)\times M$} \lbl{HJB-time}\\
\hfill u(T,x) &= u(x) \quad \hbox{on $M$}.  
\end{cases}
\end{align}
The existing of corresponding {\it stochastic weak KAM solutions} (i.e., fixed points for $u\to S_tu+ct$) will then be viscosity solutions of second order stationary Hamilton-Jacobi-Bellman equation  
\begin{equation}\label{HJB-stat0}
\frac{1}{2}\Delta u(x) + H(x,\nabla u (x)) = c,\quad x \in M. 
\end{equation}
We shall consider the case of a torus, already studied by Gomez \cite{Gom}, and capitalize on his work to show that   just like in the deterministic case, the Man\'e constant $c$,  for which there exists a backward weak KAM solution is unique and is connected to a stochastic analogue of Mather's problem via
\begin{equation}\label{mather0}
c =\inf\{\T_1(\mu, \mu); \mu \in {\cal P}(M)\}= \inf\{\int_{TM} L(x,v)\d m(x,v); m \in {\cal N}_0(TM)\},
\end{equation}
where ${\cal N}_0(TM)$ is the set of probability measures $m$ on phase space that verify for every $\vphi \in C^{1,2}([0,1]\times M)$, 
\begin{equation}
\int_{[0,1]}\int_{TM}\lf[\partial_t \vphi(x,t) + v\cdot \nabla \vphi(x,t) + \frac{1}{2}\Delta \vphi(x,t)\rt]d m(x,v)\d t = \int_{TM}[\vphi(x,1) - \vphi(x,0)]\d m(x,v).
\end{equation}
 The {\em stochastic Mather measures} are those that are minimizing Problem (\ref{mather0}).

 In section 12, we introduce  
a natural and  richer family of transfers: the class of {\it convex transfers.}  
\begin{defn} \rm 
A proper convex and weak$^*$ lower semi-continuous functional ${\mathcal T}: {\mathcal M}(X)\times {\mathcal M}(Y) \to \R\cup \{+\infty\}$ is said to be a {\it backward convex coupling} (resp., {\it forward convex coupling}), if there exists a family of maps $T^-_i: C(Y)\to USC (X)$ (resp., $T^+_i: C(X)\to LSC(Y)$) such that:\\ 
If $(\mu, \nu)\in {\mathcal P}(X)\times {\mathcal P}(Y)$, then 
 \begin{equation}
{\mathcal T}(\mu, \nu)=\sup\big\{\int_{Y}g(y)\, d\nu(y)-\int_{X}{T_i^-}g(x)\, d\mu(x);\,  g \in C(Y), i\in I\big\},
\end{equation}
(resp.,
  \begin{equation}
{\mathcal T}(\mu, \nu)= 
\sup\big\{\int_{Y}{T_i^+}f(y)\, d\nu(y)-\int_{X}f(x)\, d\mu(x);\,  f \in C(X), i\in I \big\}, 
\end{equation}
If $(\mu, \nu)\notin {\mathcal P}(X)\times {\mathcal P}(Y)$, then ${\mathcal T}(\mu, \nu)+\infty$. 
 \end{defn}
\noindent In other words,  
 \begin{equation}\label{supT}
{\mathcal T}(\mu, \nu)=\sup_{i\in I}{\mathcal T}_i(\mu, \nu), 
\end{equation}
where each ${\mathcal T}_i$ is a linear transfer on ${\mathcal P}(X)\times {\mathcal P}(Y)$ induced by each $T_i^-$ (resp., $T_i^+$).  
Note that we do not assume in general that each $T_i^-$ (resp., $T_i^+$) is a Kantorovich operator. 
 Typical examples are $p$-powers (for $ p \geq 1$) of a linear transfer, which will then be a convex couplings in the same direction.   More generally, for any convex increasing real function $\gamma$ on $\R^+$ and any linear backward (resp., forward) transfer, the map $\gamma ({\cal T})$ is a backward (resp., forward) convex coupling.  Actually,  in this case, each of the associated $\T_i$ can be taken to be a linear transfer.

Note that a convex coupling $\T$ of the form (\ref{supT}) only implies that for $g\in C(Y)$ (resp., $f\in C(X)$), 
\begin{equation}
\hbox{${\mathcal T}_\mu^*(g)\leq \inf\limits_{i\in I}\int_XT_i ^-g(x) \, d\mu(x)$\quad  and \quad ${\mathcal T}_\nu^*(f)\leq \inf\limits_{i\in I}\int_Y {-T_i ^+}(-f)(y) \, d\nu(y)$}.
\end{equation}
We therefore introduce the following stronger notion. 
\begin{defn} \rm Say that $\T$ is a backward convex transfer (resp.,  forward  convex transfer) if for $g\in C(Y)$ (resp., $f\in C(X)$), 
 \begin{equation}
\hbox{${\mathcal T}^*_\mu (g)=\inf\limits_{i\in I}\int_XT_i ^-g(x) \, d\mu(x)$} \quad  \hbox{(resp.,  ${\mathcal T}^*_\nu (f)=\inf\limits_{i\in I}\int_Y {-T_i ^+}(-f)(y) \, d\nu(y)$)}. 
\end{equation}
\end{defn}
\noindent Again, the $T_i's$ are not necessarily Kantorovich maps, i.e., they don't correspond to Legendre transforms of  linear transfers $\T_i's$, however, the map $g\to  \inf\limits_{i\in I}\int_XT_i ^-g(x) \, d\mu(x)$ does in this case possess the properties of a Legendre transform. 
We give an example in Section 12 of  a convex coupling that is not a convex transfer. 

 Typical examples of convex backward transfers include {\it generalized entropies} of the following form, but as a function of both measures, i.e., including the reference measure, 
 \begin{equation}
 {\cal T}(\mu, \nu)=\int_X \alpha (\frac{d\nu}{d\mu})\,  d\mu, \quad \hbox{if $\nu<<\mu$ and $+\infty$ otherwise,}
 \end{equation}
whenever $\alpha$ is a strictly convex lower semi-continuous superlinear real-valued function on $\R^+$. 

The {\em Donsker-Varadhan information} is defined 
as 
\begin{equation}
{\cal I}(\mu, \nu):=\begin{cases}\EE(\sqrt{f}, \sqrt{f}), \ \ &\text{ if }\ \mu=f\nu, \sqrt{f}\in\dd(\EE)\\
+\infty, &\text{ otherwise,}
\end{cases}
\end{equation}
where $\EE$ is a Dirichlet form  with domain $\dd(\EE)$ on
$L^2(\nu)$. It  is another example of a backward completely convex transfer, since it can also be written as
\begin{equation}
{\cal I}(\mu, \nu)=\sup\{\int_X f\, d\nu-\log \|P_1^f\|_{L^2(\mu)};\,  f\in C(X)\}, 
\end{equation}
 where $P_t^f$ is an associated (Feynman-Kac) semi-group of operators on $L^2(\mu)$.

 The important example of the {\it logarithmic entropy} 
 \begin{equation}
 {\cal H}(\mu, \nu)=\int_X \log (\frac{d\nu}{d\mu})\,  d\nu, \quad \hbox{if $\nu<<\mu$ and $+\infty$ otherwise,}
\end{equation}
is of course one of them, but it is much more as we  now focus on  a remarkable subset of the class of convex transfers: the class of {\it entropic transfers}, defined as follows:

\begin{defn} \rm  Let $\alpha$ (resp., $\beta$) be a convex increasing (resp., concave increasing) real function on $\R$, and let ${\mathcal T}: {\mathcal P}(X)\times {\mathcal P}(Y) \to \R\cup \{+\infty\}$ be a proper (jointly) convex and weak$^*$ lower semi-continuous functional. We say that 
\begin{itemize}
\item ${\mathcal T}$ is a {\it $\beta$-entropic backward transfer}, if there exists a map $T ^-: C(Y) \to USC (X)$ such that for each $\mu \in D_1(\calT)$, the Legendre transform of ${\mathcal T}_\mu$ on ${\mathcal M}(Y)$ satisfies:
\begin{equation}\hbox{${\mathcal T}^*_\mu (g)=\beta \left(\int_XT ^-g(x) \, d\mu(x)\right)$ \quad for any $g\in C(Y)$}.
\end{equation}

\item $\calT$ is an {\it $\alpha$-entropic forward transfer}, if there exists a map $T ^+: C(X)\to LSC (Y)$ such that for each $\nu \in D_2(\calT)$, the Legendre transform of  ${\mathcal T}_\nu$ on ${\mathcal M}(X)$ satisfies: 
\begin{equation}\hbox{${\mathcal T}^*_\nu (f)=-\alpha \left(\int_Y {T ^+}(-f)(y) \, d\nu(y)\right)$ \quad for any $f\in C(X)$. }
\end{equation} 
  \end{itemize}
 \end{defn}
\noindent So, if ${\mathcal T}$ is an $\alpha$-entropic forward transfer on $X\times Y$, then for any probability measures $(\mu, \nu) \in D(\T)$, %
we have
 \begin{equation}
{\mathcal T}(\mu, \nu)= 
\sup\big\{\alpha\left(\int_{Y}{T ^+}f(y)\, d\nu(y)\right)-\int_{X}f(x)\, d\mu(x);\,  f \in C(X) \big\},
\end{equation}
while if ${\mathcal T}$ is a $\beta$-entropic backward transfer, then 
 \begin{equation}
{\mathcal T}(\mu, \nu)=\sup\big\{\int_{Y}g(y)\, d\nu(y)-\beta \left(\int_{X}{T ^-}g(x)\, d\mu(x)\right);\,  g \in C(Y)\big\}.
\end{equation}
Again, the associated maps $T^-$ and $T^+$ are not necessarily Kantorovich maps, 
 however, the map $g\to \beta \left(\int_XT ^-g(x)\, d\mu(x)\right)$ and $f\to \alpha \left(\int_XT ^+f(x)\, d\nu(x)\right)$ inherit special (convexity and lower semi-continuity) properties from the fact that they are Legendre transforms. 

We observe in Section 12 that entropic transfers are completely convex transfers. A typical example is  of course the logarithmic entropy, since it can be written as 
\begin{equation}
 {\cal H}(\mu, \nu)=\sup\{\int_X f\, d\nu-\log(\int_Xe^{f}\, d\mu);\,  f\in C(X)\}, 
\end{equation}
making it a $\log$-entropic backward transfer. 
 More examples of $\alpha$-entropic forward transfers and $\beta$-entropic backward transfers can be obtained by convolving entropic transfers  with linear transfers of the same direction. 
 
 In section 13, we show how the concepts of linear and convex transfers lead naturally to more transparent proofs and vast extensions of many well known duality formulae for transport-entropy inequalities, such as Maurey-type inequalities of the following type \cite{Mau}: \\
 Given linear transfers $\T_1, \T_2$, entropic transfers ${\mathcal H}_1, {\mathcal H}_2$ and a convex transfer ${\cal F}$,  find a reference pair $(\mu, \nu)\in {\cal P}(X_1)\times {\cal P}(X_2)$ such that 
\begin{equation}
{\mathcal F}(\sigma_1, \sigma_2) \leq \lambda_1 {\mathcal T}_1\star {\mathcal H}_1(\sigma_1, \mu)+\lambda_2 {\mathcal T}_2\star {\mathcal H}_2( \sigma_2, \nu) \quad \hbox{for all $(\sigma_1, \sigma_2)\in {\mathcal P}(X_1) \times {\mathcal P}(X_2)$.}
\end{equation}
This is then equivalent to the non-negativity of an expression of the form $
\tilde {\mathcal E}_1\star (-{\mathcal T})\star  {\mathcal E}_2,$ which could be obtained from the following dual formula:
\begin{equation}
\tilde {\mathcal E}_1\star (-{\mathcal F}) \star  {\mathcal E}_2 \, (\mu, \nu)=\inf\limits_{i\in I} \inf\limits_{f\in C(X_3)} \left\{\alpha_1 \big(\int_{X_1} E_1^+\circ F_{i}^-f\, d\mu)+ \alpha_2 (\int_{X_2}E_2^+(f)\, d\nu)\right\}, 
\end{equation}
where 
${\mathcal F}$ is a convex backward  transfer on $Y_1\times Y_2$ with Kantorovich family $(F^-_i)_{i\in I}$, ${\mathcal E}_1$ (resp., ${\mathcal E}_2$) is a forward $\alpha_1$-transfer on $Y_1\times X_1$ (resp., a  forward $\alpha_2$-transfer on $Y_2\times X_2$) with Kantorovich operator  $E_1^+$ (resp., $E_2^+$).

 \section{First examples of linear mass transfers} 

The class of linear transfers is quite large and ubiquitous in analysis. 

\subsection{Convex energies on Wasserstein space are linear transfers}

The class of linear transfers is a natural extension of the convex energies on Wasserstein space. \\

\noindent {\bf Example 2.1: Convex energies}

If $I: {\mathcal P}(Y) \to \R$ is a  bounded below convex weak$^*$-lower semi-continuous functions  on ${\mathcal P}(Y)$. One can then associate a  backward linear transfer 
\begin{equation}
\T(\mu, \nu)=I(\nu)\quad  \hbox {for all  $(\mu, \nu) \in {\mathcal P}(X) \times {\mathcal P}(Y),$}
\end{equation} 
 in such a way that the corresponding Kantorovich map is $T^-: C(Y) \to \R \subset C(X)$ is $T^-f(x)=I^*(f)$ for every $x\in X$.

For example, if $I$ is the linear functional $I(\nu)=\int_YV(y)\, d\nu (y)$, where $V$ is a lower semi-continuous potential on $Y$, then for every $x\in X$, 
$$T^-f(x)=\sup_{y\in Y} (f(y)-V(y)).$$

If $I$ is the relative entropy with respect to Lebesgue measure, that is $I(\nu)=\int_Y \log \frac{d\nu}{dy} dy$ when $\nu$ is absolutely continuous with respect to Lebesgue measure and $+\infty$ otherwise, then it induces a linear transfer with backward Kantorovich map being for all $x$,
$$T^-f(x)=\log \int_Ye^f\, dy.$$ 

The same holds for the variance functional $I(\nu):=-{\rm var} (\nu):=|\int_Yy\ d\nu|^2-\int_Y|y|^2\, d\nu(y)$, where the associated Kantorovich map is given by 
\[
T ^-f(x)=\sup\{ {\widehat {f+q}}(z)-|z|^2; z\in Y\},
\]
 where $q$ is the quadratic function $q(x)=\frac{1}{2}|x|^2$ and $\hat g$ is the concave envelope of the function $g$. See (\ref{var}) below. 

\subsection{Mass transfers with positively homogenous Kantorovich operators}

    To any Markov operator, i.e., bounded linear positive operator $T: C(Y) \to C(X)$ such that $T1=1$, one can associate a backward linear transfer in the following way:
\begin{equation}
{\mathcal T}_T(\mu, \nu)=\left\{ \begin{array}{llll}
0 \quad &\hbox{if $T^*(\mu)=\nu $}\\
+\infty \quad &\hbox{\rm otherwise,}
\end{array} \right.
\end{equation}
 where $T^*:{\mathcal M}(X) \to {\mathcal M}(Y)$ is the adjoint operator. It is then easy to see that $T^-=T$ is the corresponding backward Kantorovich map. If now $\pi_x= T^*(\delta_x)$, then one can easily see that $T^-f(x)=\int_Yf(y)d\, \pi_x(y)$ and that 
$${\mathcal T}_T(\mu, \nu)=0 \quad \hbox{if and only if \quad $\nu (B)=\int_X\pi_x(B)\, \, d\mu (x)$ for any Borel $B\subset Y$. }
$$
Conversely, any probability measure $\pi$ on $X\times Y$ induces a forward and backward linear transfer in the following way:
\begin{equation} \label{plan}
{\mathcal I}_\pi (\mu, \nu)=\left\{ \begin{array}{llll}
0 \quad &\hbox{if $\mu=\pi_1$ and $\nu=\pi_2.$}\\
+\infty \quad &\hbox{\rm otherwise,}
\end{array} \right.
\end{equation}
where $\pi_1$ (resp., $\pi_2$) is the first (resp., second) marginal of $\pi$. In this case,
\begin{equation}
T^-f(x)=\int_Yf(y)d\, \pi_x(y) \quad \hbox{and \quad $T^+f(y)=\int_Xf(x)d\, \pi_y(x)$,}
\end{equation}
where $(\pi_x)_x$ (resp., $(\pi_y)_y$) is the disintegration of $\pi$ with respect to $\pi_1$ (resp.,  $\pi_2$). Note however, that we don't necessarily have here that $x\to \pi_x$ is weak$^*$-continuous, that is T maps $L^1(Y, \pi_2) \to L^1(X, \pi_1)$ and not necessarily $C(Y)$ to $C(X)$.\\


\noindent {\bf Example 2.2: The prescribed push-forward transfer}
    
 If $\sigma$ is a continuous map from $X$ to $Y$, then 
   \begin{equation}
{\mathcal I}_\sigma (\mu, \nu)=\left\{ \begin{array}{llll}
0 \quad &\hbox{if $\sigma_\#\mu=\nu $}\\
+\infty \quad &\hbox{\rm otherwise,}
\end{array} \right.
\end{equation}
is a backward linear transfer with Kantorovich operator given by $T^-f=f\circ \sigma$. \\

{\it The identity transfer} corresponds to when $X=Y$ and $\sigma (x)=x$, in which case the corresponding Kantorovich operators are the identity map, that is $T^+f=T^-f=f$.\\

 \noindent {\bf Example 2.3: The prescribed Balayage transfer}
 
  Given a convex cone of continuous functions ${\mathcal A} \subset C(X)$, where $X$ is a compact space, one can define an order relation between probability measures $\mu, \nu$ on $X$, called the  ${\mathcal A}$-balayage, in the following way. 
 \[
 \mu \prec_{\mathcal A} \nu \quad \hbox{ if and only if \quad $\int_X\phi \, d\mu \leq \int_X\phi \, d\nu$ for all $\phi$ in  ${\mathcal A}$.}
 \]
 Suppose now that $T:C(X)\to C(X)$ is a Markov operator such that  $\delta_x \prec_{\mathcal A} \pi_x:=T^*(\delta_x)$ for all $x\in X$, we will then call it -- as well as its associated transfer ${\mathcal T}_T$ -- an ${\cal A}$-dilation.  Similarly, a probability measure $\pi$ on $X\times X$ is {\it an ${\cal A}$-dilation} if $\delta_x \prec_{\cal A} \pi_x$, where $(\pi_x)_x$ is the disintegration of $\pi$ with respect to its first marginal $\pi_1$. 
To each ${\cal A}$-dilation $\pi$, one can define a backward linear transfer as above.\\

 \noindent {\bf Example 2.4: The prescribed Skorokhod transfer}
 
  Writing $Z\sim \rho$ if $Z$ is a random variable with distribution $\rho$, and letting $(B_t)_t$ denote Brownian motion, and ${\mathcal S}$  the corresponding class of --possibly randomized-- stopping times.  
   For a fixed $\tau\in {\mathcal S}$, one can associate a backward linear transfer in the following way:
  \begin{equation}
{\mathcal T}_{\tau}(\mu, \nu)=\left\{ \begin{array}{llll}
0 \quad &\hbox{if $B_0\sim \mu$ and $B_\tau \sim\nu$.}\\
+\infty \quad &\hbox{\rm otherwise.}
\end{array} \right.
\end{equation}  
Its backward Kantorovich operator is then $T^-f(x)=\mathbb{E}^{x}[f(B_\tau)]$, where the expectation is with respect to Brownian motion satisfying $B_0=x$.

\subsection{Optimal linear transfers with zero cost}

Let ${\mathcal C}$ be a class of positive bounded linear operators $T$  from $C(Y)\to C(X)$ such that $T1=1$. We can then consider the following correlation,
  \begin{equation}
{\mathcal T}_{\mathcal C}(\mu, \nu)=\left\{ \begin{array}{llll}
0 \quad &\hbox{if there exists $T\in {\mathcal C}$ with $T^*(\mu)=\nu$}\\
+\infty \quad &\hbox{\rm otherwise.}
\end{array} \right.
\end{equation} 
In other words, 
 \begin{equation}
{\mathcal T}_{\mathcal C}(\mu, \nu)=\inf\{{\mathcal T}_T (\mu, \nu); T\in {\mathcal C}\}.
 \end{equation}
We now give a few interesting examples, where ${\mathcal T}_{\mathcal C}$ is again a  linear mass transfer.\\

 \noindent {\bf Example 2.5: The null transfer} 
  
  This is simply the map ${\mathcal N}(\mu, \nu)=0$ for all probability measures $\mu$ on $X$ and $\nu$ on $Y$. 
  It is easy to see that it is both a backward and forward linear transfer with Kantorovich operators, 
 \begin{equation}
 \hbox{$T^-f\equiv \sup_{y\in Y}f(y)$ \quad and \quad $T^+f\equiv\inf_{x\in X}f(x)$.}
 \end{equation}
 Note that 
 \begin{eqnarray*}
 {\mathcal N}(\mu, \nu)&=&\inf\{{\mathcal T}_\sigma (\mu, \nu); \sigma \in C(X;Y)\}\\
 &=&\inf\{{\mathcal T}_\pi (\mu, \nu); \pi \hbox{ is a transfer plan on $X\times Y$}\}\\
  &=&\inf\{{\mathcal T}_T (\mu, \nu); T \in C(Y. X), \hbox{$T$ positive and $T1=1$}\}, 
 \end{eqnarray*}
 where ${\mathcal I}_\sigma$ and ${\mathcal I}_\pi$ are the push-forward transfers defined in Example 2.2. This is a particular case, i.e., when the cost is trivial,  of a relaxation result of Kantorovich (e.g., see Villani \cite{V}).\\

 \noindent {\bf Example 3.6: The Balayage transfer} 
  
 Let ${\mathcal A}$ be a proper closed convex cone in $C(X)$, and define now {\em the balayage transfer} ${\cal B}$ on ${\cal P}(X)\times {\cal P}(X)$ via
 \begin{equation}
{\mathcal B}(\mu, \nu)=\left\{ \begin{array}{llll}
0 \quad &\hbox{if $\mu \prec_{\mathcal A} \nu$}\\
+\infty \quad &\hbox{\rm otherwise.}
\end{array} \right.
\end{equation} 
A generalized version of a Theorem of Strassen \cite{St} yields the following relations:

\begin{prop} Assume the cone ${\mathcal A}$ separates the points of $X$ and that it is stable under finite suprema. Then, for any two probability measures $\mu, \nu$ on $X$, the following are equivalent:
\begin{enumerate}
\item $\mu \prec_{\mathcal A} \nu$.
\item There exists an $\cal A$-dilation $\pi$ on $X\times X$ such that $\mu=\pi_1$ and $\nu=\pi_2$.
\end{enumerate}
\end{prop}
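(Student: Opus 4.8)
The plan is to prove the equivalence by establishing both implications, with the harder direction $(1)\Rightarrow(2)$ being a Hahn--Banach / minimax separation argument in the spirit of Strassen's theorem, and the easier direction $(2)\Rightarrow(1)$ following immediately from the defining properties of an $\mathcal{A}$-dilation. I would begin with $(2)\Rightarrow(1)$: suppose $\pi$ is an $\mathcal{A}$-dilation with marginals $\mu=\pi_1$ and $\nu=\pi_2$. For any $\phi\in\mathcal{A}$, disintegrating $\pi$ against its first marginal and using $\delta_x\prec_{\mathcal{A}}\pi_x$ pointwise gives
\[
\int_X\phi\,d\nu=\int_X\Big(\int_X\phi\,d\pi_x\Big)\,d\mu(x)\geq\int_X\phi(x)\,d\mu(x),
\]
so $\mu\prec_{\mathcal{A}}\nu$. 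The only point to be careful about here is measurability of $x\mapsto\int_X\phi\,d\pi_x$, which is standard for a Borel disintegration.

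For the substantive direction $(1)\Rightarrow(2)$, I would set up a convex-analytic duality. Consider the set $\mathcal{K}(\mu,\nu)$ of transport plans with the prescribed marginals, a weak$^*$-compact convex subset of $\mathcal{P}(X\times X)$, and I want to exhibit a member that is also an $\mathcal{A}$-dilation, i.e.\ one whose disintegration satisfies $\delta_x\prec_{\mathcal{A}}\pi_x$ $\mu$-a.e. The natural route is to define, for each $x$, the constraint that $\int_X\phi\,d\pi_x\geq\phi(x)$ for all $\phi\in\mathcal{A}$, and to realize the existence of such a plan as a dual feasibility statement. Concretely I would consider the linear transfer $\mathcal{T}_T$ (or the associated zero-cost transfer) built from $\mathcal{A}$-dilations and show that $\mu\prec_{\mathcal{A}}\nu$ forces the relevant value to be finite (equal to $0$), which by the duality already established in the paper (Theorem~\ref{zero.1000} and the Kantorovich-operator formalism) produces the desired plan. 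The hypotheses that $\mathcal{A}$ separates points and is stable under finite suprema are exactly what is needed to guarantee that the cone of ``$\mathcal{A}$-excessive'' data behaves like a lattice, so that a Daniell--Stone or Choquet-type representation applies fiberwise and yields, for each $x$, a probability measure $\pi_x$ with barycenter dominating $\delta_x$ in the $\mathcal{A}$-order.

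The main obstacle I anticipate is twofold. First, one must pass from the \emph{global} inequality $\int\phi\,d\mu\leq\int\phi\,d\nu$ for all $\phi\in\mathcal{A}$ to a \emph{fiberwise} dilation statement $\delta_x\prec_{\mathcal{A}}\pi_x$; this is precisely where a measurable selection or disintegration theorem must be invoked, and where stability under finite suprema is essential to keep the fiberwise constraint set a genuine closed convex cone whose polar is tractable. Second, one must ensure the selected fibers $x\mapsto\pi_x$ assemble into a bona fide probability measure $\pi$ on $X\times X$ with second marginal \emph{exactly} $\nu$, not merely dominating it; closing this gap is the crux and is typically handled by a minimax argument over $\mathcal{K}(\mu,\nu)$ together with the separation hypothesis, which rules out degenerate directions. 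I would carry this out by applying a Hahn--Banach separation to the convex sets ``plans with marginals $(\mu,\nu)$'' and ``measures whose disintegration is $\mathcal{A}$-dilating,'' checking that a separating functional would contradict $\mu\prec_{\mathcal{A}}\nu$, thereby forcing a nonempty intersection. The compactness of $X$ and weak$^*$-compactness of the marginal-constrained plans make all the limiting arguments routine once the separation step is in place.
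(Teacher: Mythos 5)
The paper itself offers no proof of this proposition: it is invoked as ``a generalized version of a Theorem of Strassen'' with the citation [St], and the material that follows (that the balayage $\mathcal{B}$ is a backward linear transfer with Kantorovich operator $T^-f=\hat f$) is \emph{deduced from} this proposition together with Mokobodzki's envelope theorem. So the benchmark for your attempt is Strassen's classical argument. Your direction (2)$\Rightarrow$(1) is correct and complete. Your direction (1)$\Rightarrow$(2) correctly identifies the standard strategy: the set $D$ of $\mathcal{A}$-dilations with first marginal $\mu$ is convex and weak$^*$-compact (for closedness one should rewrite the $\mu$-a.e.\ fiberwise condition as $\int h(x)\left(\phi(y)-\phi(x)\right)d\pi(x,y)\geq 0$ for all $\phi\in\mathcal{A}$ and all nonnegative $h\in C(X)$, a point your sketch skips), hence so is its set $M$ of second marginals, and if $\nu\notin M$ one separates $\nu$ from $M$ by some $f\in C(X)$.

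The genuine gap is that your proposal stops exactly where the work begins, at the two steps you yourself flag as ``the crux'' but then only assert. First, to contradict $\mu\prec_{\mathcal{A}}\nu$ you must compute the support functional of $M$: $\sup_{\sigma\in M}\int f\,d\sigma=\int_X\hat f\,d\mu$, where $\hat f(x)=\sup\{\int f\,d\sigma;\ \delta_x\prec_{\mathcal{A}}\sigma\}$. The inequality $\geq$ requires a \emph{measurable} selection $x\mapsto\pi_x$ with $\delta_x\prec_{\mathcal{A}}\pi_x$ nearly attaining $\hat f(x)$; this is the actual content of Strassen's disintegration theorem and cannot be dismissed as routine compactness. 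Second, you must show that $\mu\prec_{\mathcal{A}}\nu$ forces $\int f\,d\nu\leq\int\hat f\,d\mu$; this uses the Choquet--Mokobodzki identity $\hat f(x)=\inf\{g(x);\ g\in-\mathcal{A},\ g\geq f\}$ (a fiberwise Hahn--Banach argument) together with the fact that this defining family is downward directed --- which is precisely where stability of $\mathcal{A}$ under finite suprema enters --- so that $\inf_g\int g\,d\mu=\int\hat f\,d\mu$; only then does $\int f\,d\nu\leq\int g\,d\nu\leq\int g\,d\mu$ for every competitor $g$ yield the contradiction. Neither computation appears in your text; both are compressed into ``checking that a separating functional would contradict $\mu\prec_{\mathcal{A}}\nu$.'' Finally, the shortcut you float --- obtaining the plan from Theorem~\ref{zero.1000} applied to the zero-cost coupling built from $\mathcal{A}$-dilations --- is circular: identifying that coupling's partial Legendre transform as $\int\hat f\,d\mu$, equivalently verifying the weak-transport representation in item (3) of Theorem~\ref{zero.1000} for it, is exactly the statement being proved, and in the paper the transfer structure of $\mathcal{B}$ is a \emph{consequence} of this proposition, not a tool available to prove it.
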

\noindent From this follows that 
\begin{equation}
{\mathcal B}(\mu, \nu)=\inf\{{\mathcal B}_\pi(\mu, \nu);  \hbox{$\pi$ is an $\cal A$-dilation}\}.
\end{equation}
Moreover, a generalization of Choquet theory developed by Mokobodoski and others \cite{Moko} yields that for every $\mu \in {\mathcal P}(X)$, we have 
\[
\sup \{\int_X f\ d\sigma;\, \mu \prec_{\mathcal A}\sigma \}=\int_X{\hat f}\, d\mu, 
\]
where   
\[
{\hat f}(x)=\inf \{ g(x); g\in -{\mathcal A},\,  g\geq f \, \hbox{on $X$}\}=\sup\{ \int_Xf d\sigma;\, \epsilon_x  \prec_{\mathcal A} \sigma\}.
\]
It follows that 
$
{\mathcal B}_\mu^*(f)=\int_X{\hat f}\, d\mu$,  which means that ${\mathcal B}$ is a backward linear transfer whose Kantorovich operator is $T^-f={\hat f}$.

${\mathcal B}$ is also a forward linear transfer 
with a forward Kantorovich operator is $T^+f=\check f$, where 
\[
{\check f}(x)=\sup \{ h(x); h\in {\mathcal A},\,  h \leq f \, \hbox{on $X$}\}=\inf\{ \int_Xf d\sigma;\, \epsilon_x  \prec_{\mathcal A} \sigma\}.
\]
\begin{itemize}
\item A typical example is when $X$ is a convex compact space in a locally convex topological vector space and ${\mathcal A}$ is the cone of continuous convex functions. In this case, $T^-f=\hat f$ (resp., $T^+f=\check f$)  is the concave (resp., convex) envelope of $f$, and which was the context of the original Choquet theory. 

\item If $X$ is a bounded subset of a normed space $(E, \|\cdot\|)$, then ${\mathcal A}$ can be taken to be the cone of all norm-Lipschitz convex functions.  

\item If $X$ is an interval of the real line, then one can consider ${\mathcal A}$ to be the cone of increasing functions.

\item If $X$ is a pseudo-convex domain of $\C^n$, then one can take ${\mathcal A}$ to be the cone of Lipschitz plurisubharmonic functions (see \cite{GM}). In this case, if $\phi$ is a Lipschitz function, then the Lipschitz plurisubharmonic envelope of $\phi$, i.e., the largest Lipschitz PSH function below $\phi$ is given by the formula
\[
{\check \phi}(x)=\inf\{\int_0^{2\pi}\phi (P(e^{i\theta})\, \frac{d\theta}{2\pi}; P:\C\to X \,  \hbox{polynonial with}\,  P(0)=x\}.
\] 
Note that $\hat \phi=-\check \psi$, where $\psi=-\phi$. 
\end{itemize}

 \noindent {\bf Example 2.7: The Skorokhod transfer}
 
  Again, letting ${\mathcal S}$ be the class of --possibly randomized-- Brownian stopping times, and  
     define 
 \begin{equation}
{\mathcal SK}(\mu, \nu)=\left\{ \begin{array}{llll}
0 \quad &\hbox{if $B_0\sim \mu$ and $B_\tau \sim\nu$ for some $\tau \in {\mathcal S}$, }\\
+\infty \quad &\hbox{\rm otherwise.}
\end{array} \right.
\end{equation}  
The following is a  classical result of Skorokhod. See, for example \cite{G-K-L2} for a proof in higher dimension.
\begin{prop} Let ${\mathcal A}$ be the cone of Lipschitz subharmonic functions on a domain $\Omega$ in $\R^n$. Then, the following are equivalent for two probability measures $\mu$ and $\nu$ on $\Omega$.
\begin{enumerate}
\item $\mu \prec_{\mathcal A} \nu$ (i.e, $\mu$ and $\nu$ are in subharmonic order).
\item There exists a stopping time $\tau \in {\mathcal S}$ such that $B_0\sim \mu$ and $B_\tau \sim\nu$. 
\end{enumerate}
\end{prop}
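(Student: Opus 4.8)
The plan is to establish the two implications separately; $(2)\Rightarrow(1)$ is an elementary martingale argument, while $(1)\Rightarrow(2)$ is the substantive Skorokhod embedding and is where essentially all the work lies.

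For $(2)\Rightarrow(1)$, I would fix a Lipschitz subharmonic $\phi\in\mathcal A$ and a stopping time $\tau\in\mathcal S$ with $B_0\sim\mu$ and $B_\tau\sim\nu$. Since $\phi$ is subharmonic, $\phi(B_t)$ is a submartingale, which I would justify by mollifying $\phi$, applying It\^o's formula together with $\Delta\phi\ge 0$, and passing to the limit using the Lipschitz bound on $\phi$. Optional sampling along $\tau\wedge n$ then gives $\E[\phi(B_{\tau\wedge n})]\ge\E[\phi(B_0)]=\int\phi\,d\mu$. Because $B_\tau\sim\nu$ is a probability measure, $\tau<\infty$ almost surely; on a bounded $\Omega$ the paths stay in $\overline\Omega$ and $\phi$ is bounded there, so dominated convergence yields $\int\phi\,d\nu\ge\int\phi\,d\mu$ as $n\to\infty$ (for unbounded $\Omega$ one uses finite first moments together with the Lipschitz growth of $\phi$). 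As $\phi\in\mathcal A$ was arbitrary, $\mu\prec_{\mathcal A}\nu$.

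For $(1)\Rightarrow(2)$ I would first recast the subharmonic order in terms of Green potentials. Let $G$ be the Green function of Brownian motion on $\Omega$ and set $U^\rho(x)=\int_\Omega G(x,y)\,d\rho(y)$, a superharmonic function. For fixed $x$, the truncations $-\min(G(x,\cdot),M)$ are Lipschitz and subharmonic, hence lie in $\mathcal A$; testing $\mu\prec_{\mathcal A}\nu$ against them and letting $M\to\infty$ by monotone convergence yields $U^\nu\le U^\mu$ pointwise. Conversely, since every nonnegative superharmonic function arises (up to a harmonic part) as an increasing limit of potentials of measures, the domination $U^\nu\le U^\mu$ returns $\int s\,d\nu\le\int s\,d\mu$ for all nonnegative superharmonic $s$, i.e. $\mu\prec_{\mathcal A}\nu$. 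Thus the subharmonic order is exactly the potential-domination order $U^\nu\le U^\mu$, together with the automatic equality of masses. With this reformulation, the existence of $\tau$ follows from the filling-scheme and balayage theory of Rost and Baxter--Chacon: the nonnegative excess $U^\mu-U^\nu$ is realized as the Green potential of the expected occupation measure of $B$ up to a stopping time, and the iterative balayage of $\mu$ toward $\nu$ converges to a (possibly randomized) $\tau$ with $B_\tau\sim\nu$.

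The main obstacle is this construction step: proving that the balayage iteration converges and that the stopped law is exactly $\nu$ requires the full potential-theoretic machinery, and the recurrent dimensions $n=1,2$ force replacing the Green kernel by a renormalized (logarithmic in the plane) potential while preserving the domination characterization. A secondary technical point is compatibility with the domain: on a bounded $\Omega$ one must absorb the exit time of $\Omega$ into $\tau$, and on an unbounded $\Omega$ one must control transience so that the potentials are finite and no mass leaks to infinity. These are precisely the issues addressed in the higher-dimensional treatment cited as \cite{G-K-L2}.
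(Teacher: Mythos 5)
Your proposal is correct in outline, but note that the paper itself offers no proof of this proposition: it is stated as a classical result of Skorokhod, with the reader referred to \cite{G-K-L2} for the higher-dimensional treatment. What you have written is essentially a reconstruction of the standard argument from that potential-theoretic literature, so the comparison is between your sketch and the references the paper leans on rather than between two proofs in the text. Your easy direction (submartingale property of $\phi(B_t)$ for subharmonic $\phi$, plus optional sampling) is the right argument, with the caveat that it silently uses the convention $\tau \leq \tau_\Omega$ (otherwise $\phi(B_{\tau\wedge n})$ is not even defined once the path leaves $\Omega$), and that the passage to the limit needs uniform integrability, which your Lipschitz-growth remark handles only when stopping times have finite expectation. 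Your hard direction correctly identifies the two key steps: (i) converting the order $\mu \prec_{\mathcal A} \nu$ into Green-potential domination $U^\nu \leq U^\mu$ by testing against truncations $-\min(G(x,\cdot),M)$, and (ii) invoking Rost's filling scheme / Baxter--Chacon to produce the randomized embedding. Two technical points deserve flagging beyond what you say: the truncated Green functions are Lipschitz on compact subsets of $\Omega$ but not in general up to $\partial\Omega$, so membership in the cone $\mathcal A$ of \emph{Lipschitz} subharmonic functions requires either boundary regularity of $\Omega$ or an exhaustion argument; and your converse recast (potential domination implies subharmonic order) has a gap for the harmonic part of the Riesz decomposition, though this direction is not actually needed for the proposition. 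As a blueprint consistent with the cited sources, the proposal is sound; as a self-contained proof it still imports the full strength of Rost's theorem, which is exactly the content the paper chose to cite rather than prove.
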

This means that ${\mathcal SK}$ is a backward linear transfer with Kantorovich operator given by $T^-f=f_{**}$, which is the smallest Lipschitz superharmonic function above $f$. This can also be written as  $T^-f=J_f$, where $J_f(x)$ is a viscosity solution for the heat variational inequality,
 	\begin{equation}
		 \max\left\{f(x)-J(x), \Delta J(x)\right\}=0.
	\end{equation}
Another representation for $J_f$ is given 
  by  the following  dynamic programming principle,
	\begin{align} 
		J_f(x) := \sup_{\tau \in \mathcal S}\mathbb{E}^{x}\Big[f(B_\tau)\Big].
	\end{align}

 \subsection{Mass transfers minimizing a transport cost between two points}

The examples in this subsection correspond to cost minimizing transfers, where a cost $c(x,y)$ of moving state $x$ to $y$  is given. \\

   \noindent {\bf Example 2.8: Monge-Kantorovich transfers} 
  
Any proper, bounded below, function $c$ on $X\times Y$ determines a backward and forward linear transfer. This is Monge-Kantorovich theory of optimal transport. One associates the map ${\calT}_c$ on  ${\mathcal P}(X)\times  {\mathcal P}(Y)$ to be the optimal mass transport  between two probability measures $\mu$ on $X$ and $\nu$ on  $Y$, that is \begin{eqnarray}
{\mathcal T}_c(\mu, \nu):=\inf\big\{\int_{X\times Y} c(x, y)) \, d\pi; \pi\in \mK(\mu,\nu)\big\},
\end{eqnarray}
where $\mK(\mu,\nu)$ is the set of probability measures $\pi$ on $X\times Y$ whose marginal on $X$ (resp. on $Y$) is $\mu$ (resp., $\nu$) {\it (i.e., the transport plans)}. Monge-Kantorovich theory readily yields  that ${\mathcal T}_c$ is a linear transfer. Indeed, 
if we define the operators 
\begin{equation}
T ^+_cf(y)=\inf_{x\in X} \{c(x, y)+f(x)\} \quad {\rm and} \quad T ^-_cg(x)=\sup_{y\in Y} \{g(y)-c(x, y)\}, 
\end{equation}
for any $f\in C(X)$ (resp., $g\in C(Y)$), then Monge-Kantorovich duality yields that for any probability measures $\mu$ on $X$ and $\nu$ on  $Y$, we have 
\begin{eqnarray*}
{\mathcal T}_c(\mu, \nu)&=& 
\sup\big\{\int_{Y}T ^+_cf(y)\, d\nu(y)-\int_{X}f(x)\, d\mu(x);\,  f \in C(X)\big\}\\
&=&\sup\big\{\int_{Y}g(y)\, d\nu(y)-\int_{X}T _c^-g(x)\, d\mu(x);\,  g \in C(Y)\big\}.
\end{eqnarray*}
This means that the Legendre transform $({\mathcal T}_c)^*_\mu (g)=\int_{X}T _c^-g(x)\, d\mu(x)$ and $T_c^-$ is the corresponding backward Kantorovich operator. Similarly,  $({\mathcal T}_c)^*_\nu (f)=-\int_Y T ^+_c(-f)(y) \, d\nu(y)$ on $C(X)$ and $T ^+_c$ is the corresponding forward Kantorovich operator. See for example Villani \cite{V}. \\

\noindent {\bf Example 2.9: The trivial Kantorovich transfer}
 
  Any pair of functions $c_1\in USC(X)$, $c_2\in LSC (Y)$ defines trivially a linear transfer via
$$
{\cal T}(\mu, \nu)=\int_Yc_2\, d\nu -\int_X c_1\, d\mu.
$$
The Kantorovich operators are then
$T^+f=c_2+\inf (f-c_1)$  
and  $T^-g=c_1+\sup(g-c_2)$.\\

 \noindent {\bf Example 2.10: The Kantorovich-Rubinstein transport}
 
  If $d: X\times X\to \R$ is a lower semi-continuous metric on $X$, then 
 \begin{equation}
  {\cal T}(\mu,\nu) = \|\nu-\mu\|^*_\mathrm{Lip}:=\sup\left\{\int_X u \, d(\nu-\mu); u
\mathrm{\ measurable}, \|u\|_\mathrm{Lip}\leq 1\right\}
 \end{equation}
 is a linear transfer, where here    $
\|u\|_\mathrm{Lip}:=\sup_{x\not =y}\frac{|u(y)-u(x)|}{d(x,y)}$. The corresponding forward Kantorovich operator is then the Lipschitz regularization $T^+f(x)=\inf\{f(y)+d(y,x); y\in X\}$, while $T^-f(x)=\sup\{f(y)-d(x,y); y\in X\}$.  Note that $T^+\circ T^-f=T^-f$. \\

 \noindent {\bf Example 2.11: The Brenier-Wasserstein distance} \cite{Br}
 
 We mention this important example even though it is not in a compact setting.  If $c(x, y)=\langle x, y\rangle$ on $\R^d\times \R^d$, and  $\mu, \nu$ are two probability measures of compact support on $\R^d$, then 
 \begin{eqnarray*}
{\cal W}_2(\mu, \nu)=\inf\big\{\int_{\R^d\times \R^d} \langle x,y\rangle \, d\pi; \pi\in \mK(\mu,\nu)\big\}.
\end{eqnarray*}
Here, the Kantorovich operators are  
\begin{equation}T ^+f(x) 
=-f^*(-x) \quad \hbox{and \quad $T ^-g(y)=  (-g)^*(-y)$,}
\end{equation}
 where $f^*$ is the convex Legendre transform of $f$. \\

 \noindent {\bf Example 2.12:  Optimal transport for a cost given by a generating function} (Bernard-Buffoni \cite{B-B1})
 
  This important example links the Kantorovich backward and forward operators with the forward and backward Hopf-Lax operators that solve first order Hamilton-Jacobi equations. Indeed, on a given compact manifold $M$, consider the cost:
\begin{equation}\label{BB}
c^L(y,x):=\inf\{\int_0^1L(t, \gamma(t), {\dot \gamma}(t))\, dt; \gamma\in C^1([0, 1), M);  \gamma(0)=y, \gamma(1)=x\},
\end{equation}
where $[0, 1]$ is a fixed time interval, and $L: TM \to \R\cup\{+\infty\}$ is a given Tonelli Lagrangian that is convex in the second variable of the tangent bundle $TM$. If now $\mu$ and $\nu$ are two probability measures on $M$, then 
 \begin{eqnarray*}
{\mathcal T}_{L}(\mu, \nu):=\inf\big\{\int_{M\times M} c^L(y, x) \, d\pi; \pi\in \mK(\mu,\nu)\big\}
 \end{eqnarray*}
is a linear transfer with forward Kantorovich operator given by 
 $T ^+_1f(x)=V_f(1, x)$, where
  $V_f(t, x)$ being the value functional
\begin{equation}\label{value.1}
V_f(t,x)=\inf\Big\{f(\gamma (0))+\int_0^tL(s,\gamma (s), {\dot \gamma}(s))\, ds; \gamma \in C^1([0, 1), M);   \gamma(t)=x\Big\}.
\end{equation}
Note that $V_f$ is --at least formally-- a solution for the Hamilton-Jacobi equation 
\begin{eqnarray}\label{HJ.0} 
\left\{ \begin{array}{lll}
\partial_tV+H(t, x, \nabla_xV)&=&0 \,\, {\rm on}\,\,  [0, 1]\times M,\\
\hfill V(0, x)&=&f(x). 
\end{array}  \right.
 \end{eqnarray}
 Similarly, the backward Kantorovich potential is given by  
 $T ^-_1g(y)=W_g(0, y),$ $W_g(t, y)$ being the value functional
 \begin{equation}\label{value.2}
W_g(t,y)=\sup\Big\{g(\gamma (1))-\int_t^1L(s,\gamma (s), {\dot \gamma}(s))\, ds; \gamma \in C^1([0, 1), M);   \gamma(t)=y\Big\},
\end{equation}
 which is a solution for the backward Hamilton-Jacobi equation 
\begin{eqnarray}\label{HJ.0} 
\left\{ \begin{array}{lll}
\partial_tW+H(t, x, \nabla_xW)&=&0 \,\, {\rm on}\,\,  [0, 1]\times M,\\
\hfill W(1, y)&=&g(y). 
\end{array}  \right.
 \end{eqnarray}

  \section{Envelopes and representation of Linear transfers}

\noindent The following relates mass transfers with the optimal weak transports of Gozlan et al. \cite{Go4}. 

\subsection{Representation of linear transfers as weak transports}

 \begin{thm}\label{prop.zero} 
 Let  ${\mathcal T}:{\mathcal M}(X)\times {\mathcal M}(Y)\to \R \cup\{+\infty\}$ be a functional  
such that $\{\delta_x; x\in X\}\subset D_1 ({\mathcal T})$. Then, the following are equivalent:
\begin{enumerate}
\item $\T$ is a backward linear transfer.

 \item  There is a map $T: C(Y) \to USC(X)$, such that for each $\mu \in D_1(\calT)$, $\T_\mu$ is convex lower semi-continuous on ${\cal P}(Y)$ and 
 \begin{equation}\label{LTb}
\hbox{${\mathcal T}^*_\mu (g)=\int_XT g(x) \, d\mu(x)$ \quad for any $g\in C(Y)$}.
\end{equation}

\item There exists a bounded below  lower semi-continuous function $c: X\times {\mathcal P}(Y)\to  \R \cup\{+\infty\}$ with $\sigma \to c(x, \sigma)$ convex  such that for any pair $(\mu, \nu)  \in {\mathcal P}(X) \times {\mathcal P}(Y)$,
\begin{equation}\label{weak.0}
{\mathcal T}(\mu, \nu)=\left\{ \begin{array}{llll}
\inf_\pi\{\int_X c(x, \pi_x)\, d\mu(x); \pi \in {\mathcal K}(\mu, \nu)\},    \,  &\hbox{if $\mu, \nu \in {\mathcal P}(X)\times {\mathcal P}(Y)$,}\\
+\infty \quad &\hbox{\rm otherwise.}
\end{array} \right.
\end{equation}
  where $(\pi_x)_x$ is the disintegration of $\pi$ with respect to $\mu$.

\end{enumerate} 

\end{thm}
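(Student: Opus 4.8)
The plan is to prove the cycle (1) $\Rightarrow$ (2) $\Rightarrow$ (3) $\Rightarrow$ (1), with a Kantorovich-type duality for weak transport costs serving as the common engine for the last two implications. Throughout, for fixed $\mu$ I regard $\mathcal{T}_\mu(\nu):=\mathcal{T}(\mu,\nu)$ as a functional on $\mathcal{M}(Y)$ (equal to $+\infty$ off $\mathcal{P}(Y)$), with Legendre transform $\mathcal{T}^*_\mu(g)=\sup_{\nu}\{\int_Y g\,d\nu-\mathcal{T}_\mu(\nu)\}$ on $C(Y)$. For (1) $\Rightarrow$ (2): by definition of a backward linear coupling, $\mathcal{T}_\mu$ is the Legendre transform of $\Lambda_\mu(g):=\int_X T^-g\,d\mu$. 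The crucial point is that any backward Kantorovich operator is non-expansive for the uniform norm — monotonicity together with affineness on constants gives $T^-g_1\le T^-g_2+\|g_1-g_2\|_\infty$ and symmetrically — so $\Lambda_\mu$ is convex, finite (properness at $\mu\in D_1(\mathcal{T})$ rules out $\Lambda_\mu\equiv-\infty$) and Lipschitz, hence continuous. Fenchel--Moreau then yields $\Lambda_\mu=\Lambda_\mu^{**}=\mathcal{T}^*_\mu$, which is exactly (\ref{LTb}) with $T=T^-$; and $\mathcal{T}_\mu$, being a supremum of affine weak$^*$-continuous functionals of $\nu$, is convex and weak$^*$ lower semi-continuous.

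The heart of the argument is the following duality, which I would isolate as a lemma: if $c:X\times\mathcal{P}(Y)\to\mathbb{R}\cup\{+\infty\}$ is bounded below, jointly lower semi-continuous and convex in $\sigma$, and $c^*(x,g):=\sup_{\sigma\in\mathcal{P}(Y)}\{\int_Y g\,d\sigma-c(x,\sigma)\}$ is its partial Legendre transform, then for all $(\mu,\nu)\in\mathcal{P}(X)\times\mathcal{P}(Y)$,
\[
\inf_{\pi\in\mathcal{K}(\mu,\nu)}\int_X c(x,\pi_x)\,d\mu(x)=\sup_{g\in C(Y)}\Big\{\int_Y g\,d\nu-\int_X c^*(x,g)\,d\mu(x)\Big\}.
\]
I would prove this by a minimax computation: encode the constraint ``second marginal $=\nu$'' with a multiplier $g\in C(Y)$, writing the left side as $\inf_{\pi_1=\mu}\sup_g\{\int_X c(x,\pi_x)\,d\mu+\int_Y g\,d\nu-\int_X\!\int_Y g\,d\pi_x\,d\mu\}$. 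The set of plans with first marginal $\mu$ is convex and weak$^*$ compact while the bracket is convex and lower semi-continuous in $\pi$ and affine continuous in $g$, so Sion's minimax theorem permits interchanging $\inf$ and $\sup$. After the interchange the inner infimum decouples over $x$ through the disintegration, and a measurable-selection argument identifies it with $-\int_X c^*(x,g)\,d\mu$, giving the claimed identity. Here $c^*(\cdot,g)\in USC(X)$ — needed for measurability and for the interchange — follows from Berge's maximum theorem, since $(x,\sigma)\mapsto\int_Y g\,d\sigma-c(x,\sigma)$ is jointly upper semi-continuous and $\mathcal{P}(Y)$ is compact. I expect the justification of this interchange and of the measurable selection to be the main obstacle; the remaining steps are largely bookkeeping.

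Granting the lemma, (2) $\Rightarrow$ (3): set $c(x,\sigma):=\mathcal{T}(\delta_x,\sigma)$, which is bounded below, convex in $\sigma$ and jointly lower semi-continuous (the last because $x\mapsto\delta_x$ is weak$^*$ continuous and $\mathcal{T}$ is jointly lower semi-continuous, or directly because $c(x,\sigma)=\sup_g\{\int_Y g\,d\sigma-Tg(x)\}$ with each $Tg$ upper semi-continuous). Taking $\mu=\delta_x$ in (\ref{LTb}) gives $c^*(x,g)=\mathcal{T}^*_{\delta_x}(g)=Tg(x)$, while Fenchel--Moreau (using that $\mathcal{T}_\mu$ is convex lower semi-continuous) gives $\mathcal{T}(\mu,\nu)=\mathcal{T}_\mu^{**}(\nu)=\sup_g\{\int_Y g\,d\nu-\int_X Tg\,d\mu\}$. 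The lemma turns this supremum into $\inf_{\pi\in\mathcal{K}(\mu,\nu)}\int_X c(x,\pi_x)\,d\mu$, which is precisely (\ref{weak.0}).

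Finally (3) $\Rightarrow$ (1): define $T^-g(x):=c^*(x,g)$. It maps $C(Y)$ into $USC(X)$ by the Berge argument above, and it is a backward Kantorovich operator — monotone and convex because it is a supremum of affine monotone functionals of $g$, and affine on constants because $\int_Y a\,d\sigma=a$ for $\sigma\in\mathcal{P}(Y)$. Applying the lemma to this $c$ shows
\[
\mathcal{T}_{T^-}(\mu,\nu)=\sup_g\Big\{\int_Y g\,d\nu-\int_X T^-g\,d\mu\Big\}=\inf_{\pi\in\mathcal{K}(\mu,\nu)}\int_X c(x,\pi_x)\,d\mu=\mathcal{T}(\mu,\nu),
\]
so $\mathcal{T}$ is a backward linear transfer, closing the cycle.
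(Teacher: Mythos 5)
Your cycle is structurally sound, but there is one genuine gap, in (1)~$\Rightarrow$~(2). You assert that ``by definition of a backward linear coupling, $\mathcal{T}_\mu$ is the Legendre transform of $\Lambda_\mu(g):=\int_X T^-g\,d\mu$.'' The definition only gives $\mathcal{T}_\mu=\Lambda_\mu^*$ \emph{on} $\mathcal{P}(Y)$, while $\mathcal{T}_\mu:=+\infty$ on $\mathcal{M}(Y)\setminus\mathcal{P}(Y)$. Your application of Fenchel--Moreau, $\mathcal{T}_\mu^*=(\Lambda_\mu^*)^*=\Lambda_\mu$, requires the identity $\mathcal{T}_\mu=\Lambda_\mu^*$ on all of $\mathcal{M}(Y)$, i.e.\ it requires showing that $\Lambda_\mu^*(\nu)=+\infty$ whenever $\nu$ is not a probability measure. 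Without this you only obtain $\mathcal{T}_\mu^*\leq \Lambda_\mu$, which is inequality (\ref{Lower}), valid for any backward linear coupling; the content of (\ref{LTb}) is precisely that the supremum defining $\mathcal{T}^*_\mu$ may be restricted to probability measures. This is where the Kantorovich axioms do real work, and it is essentially the whole of the paper's proof of this implication: for positive $\nu$ with mass $\lambda\neq 1$, test with constants $g\equiv n$ and use $T^-(n)=T^-(0)+n$ to get $\Lambda_\mu^*(\nu)\geq n(\lambda-1)-\int_X T^-(0)\,d\mu$, which tends to $+\infty$ as $n\to+\infty$ if $\lambda>1$ and as $n\to-\infty$ if $\lambda<1$; for $\nu$ not positive, pick $g\leq 0$ in $C(Y)$ with $\int_Y g\,d\nu>0$ and use monotonicity, $\Lambda_\mu(tg)\leq\Lambda_\mu(0)$, letting $t\to+\infty$. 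You have these tools in hand (you already use properties (a) and (c) for non-expansiveness), so the repair is short; but as written, the step that carries the substance of the equivalence is missing.

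The rest of the proposal is correct at the same level of rigor as the paper, and it takes a genuinely different route for the weak-transport part. The paper proves (3)~$\Rightarrow$~(1) by computing the partial conjugate $(\mathcal{T}_c)_\mu^*(g)=\int_X T_c^-g\,d\mu$ directly (one inequality by estimation, the other by a measurable selection) and then biconjugating, and it gets (1)~$\Rightarrow$~(3) through the transfer-envelope construction with a Jensen-inequality argument; you instead isolate a single Sion-minimax duality lemma from which both (2)~$\Rightarrow$~(3) and (3)~$\Rightarrow$~(1) fall out, which is cleaner and more modular. Be aware, though, that the obstacle you flag --- weak$^*$ lower semicontinuity of $\pi\mapsto \int_X c(x,\pi_x)\,d\mu$, needed for Sion --- is the same difficulty hidden in the paper's route: there, invoking $(\mathcal{T}_c)_\mu=(\mathcal{T}_c)_\mu^{**}$ needs weak$^*$ lower semicontinuity of $\nu\mapsto\mathcal{T}_c(\mu,\nu)$, which the paper's proof also does not establish (it is in effect deferred to the weak-transport literature, e.g.\ \cite{Go4}, \cite{BBP}); likewise both routes invoke a measurable selection without proof. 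So your lemma is an acceptable substitute for the paper's conjugate computation, but neither argument as written closes those two technical points from scratch.
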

The proof of this theorem will be split in Propositions \ref{prop.one}, \ref{prop.two} and \ref{prop.three}, where we can provide more details about the needed conditions.
The first establishes the easy equivalence between (1) and (2).
    \begin{prop}\label{prop.one} 1) If $\T$ is a backward linear transfer with Kantorovich operator $T$, then 
  \begin{equation}\label{LTB}
\hbox{${\mathcal T}^*_\mu (g)=\int_XT g(x) \, d\mu(x)$ \quad for any $g\in C(Y)$}.
\end{equation}
2) Conversely, if $\T$ satisfies (2) in the above Theorem, then $\T$ is a backward linear transfer and $T$ is a Kantorovich operator.

  \end{prop}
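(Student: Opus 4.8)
The plan is to prove the two implications separately; both rest on the elementary convex duality between the map $\Phi_\mu(g):=\int_X Tg\,d\mu$ on $C(Y)$ and the partial functional $\T_\mu:=\T(\mu,\cdot)$ on $\mathcal M(Y)$, together with the standing hypothesis $\{\delta_x;\,x\in X\}\subset D_1(\T)$, which is what allows integrated identities to be localised at points.

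For the forward implication (1)$\Rightarrow$(2), fix $\mu\in D_1(\T)$. The inequality $\T^*_\mu\le\Phi_\mu$ is exactly \refn{Lower}, so only the reverse is at issue. By definition of a backward linear coupling, $\T_\mu(\nu)=\Phi_\mu^*(\nu)$ for $\nu\in\mathcal P(Y)$ and $\T_\mu(\nu)=+\infty$ otherwise, so the first task is to show that $\Phi_\mu^*$ is itself $+\infty$ off $\mathcal P(Y)$, giving $\T_\mu=\Phi_\mu^*$ on all of $\mathcal M(Y)$. If $\nu$ is not positive I would pick $h\ge0$ with $\int_Y h\,d\nu<0$ and use monotonicity, $T(-\lambda h)\le T(0)$, to obtain $\Phi_\mu^*(\nu)\ge-\lambda\int_Y h\,d\nu-\int_X T(0)\,d\mu\to+\infty$ as $\lambda\to+\infty$; if $\nu$ has mass $\ne1$ I would test against constants and use $T(c)=T(0)+c$ to drive the expression to $+\infty$. (Properness at $\mu\in D_1(\T)$ keeps $\int_X T(0)\,d\mu$ finite, via \refn{Lower} and the fact that $T(0)$ is bounded above.) Next I would observe that $\Phi_\mu$ is convex --- integrate property (b) against $\mu\ge0$ --- and $1$-Lipschitz for the sup-norm, since monotonicity and affineness on constants give $|Tg_1-Tg_2|\le\|g_1-g_2\|_\infty$ pointwise and hence after integration. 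Thus $\Phi_\mu$ is proper, convex and norm-continuous, so Fenchel--Moreau applies and
\[
\T^*_\mu=(\Phi_\mu^*)^*=\Phi_\mu^{**}=\Phi_\mu,
\]
which is \refn{LTB}.

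For the converse (2)$\Rightarrow$(1), assume \refn{LTB} and that each $\T_\mu$ is convex and weak$^*$-lower semicontinuous on $\mathcal P(Y)$. Extending $\T_\mu$ by $+\infty$ outside the weak$^*$-closed convex set $\mathcal P(Y)$ preserves convexity and lower semicontinuity on $\mathcal M(Y)$, so $\T_\mu=\T_\mu^{**}=(\T^*_\mu)^*=\Phi_\mu^*$; writing this out gives $\T(\mu,\nu)=\sup_{g\in C(Y)}\{\int_Y g\,d\nu-\int_X Tg\,d\mu\}$ for $\mu\in D_1(\T)$, i.e. $\T=\T_T$ is a backward linear coupling (the remaining measures $\mu\in\mathcal P(X)\setminus D_1(\T)$ being handled by the $+\infty$ convention). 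It then remains to check that $T$ is a Kantorovich operator, and the key point is that $\Phi_\mu=\T^*_\mu$ is a Legendre transform, so it automatically inherits structure in the variable $g$, which I localise by taking $\mu=\delta_x$ --- legitimate precisely because $\delta_x\in D_1(\T)$, and then $\Phi_{\delta_x}(g)=Tg(x)$. Concretely, monotonicity of $g\mapsto\T^*_\mu(g)$ (the competing measures $\nu$ are nonnegative) yields property (a); convexity of a Legendre transform yields (b); and $\int_Y c\,d\nu=c$ for $\nu\in\mathcal P(Y)$ yields $T(g+c)(x)=Tg(x)+c$, property (c). Hence $T$ is a backward Kantorovich operator and $\T$ a backward linear transfer.

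The crux in both directions is the matching of effective domains: upgrading the trivial inequality \refn{Lower} to the identity \refn{LTB} is exactly the statement that $\Phi_\mu^*$ concentrates on $\mathcal P(Y)$, and it is monotonicity together with affineness on constants that force the conjugate measure to be a probability measure. In the converse, the only genuine use of the hypothesis $\{\delta_x\}\subset D_1(\T)$ is the localisation step: without it one could still integrate the three defining identities against measures in $D_1(\T)$ but could not conclude them pointwise, and $T$ would fail to be recognisable as a Kantorovich operator.
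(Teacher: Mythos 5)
Your proof is correct and follows essentially the same route as the paper's: identify $\T_\mu$ with the conjugate of $\Phi_\mu(g)=\int_X Tg\,d\mu$ on all of $\mathcal{M}(Y)$ (using monotonicity and affineness on constants to force $+\infty$ off $\mathcal{P}(Y)$), apply Fenchel--Moreau, and in the converse localise at Dirac masses to read off the Kantorovich properties. The only difference is that you spell out details the paper leaves implicit --- the signed-measure case, the $1$-Lipschitz continuity of $\Phi_\mu$ justifying biconjugation, and the verification of properties (a), (b), (c) at $\delta_x$ --- which the paper compresses into ``a similar reasoning applies'' and ``easily implies.''
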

\noindent{\bf Proof:} 1) Since
\begin{equation}
{\mathcal T}(\mu, \nu)=\left\{ \begin{array}{llll}
\sup\big\{\int_{Y}g\, d\nu-\int_{X}{T^-}g\, d\mu;\,  g \in C(Y)\big\}\,  &\hbox{if $\mu, \nu \in {\mathcal P}(X)\times {\mathcal P}(Y)$,}\\
+\infty \quad &\hbox{\rm otherwise.}
\end{array} \right.
\end{equation}
we have that ${\mathcal T}_\mu  \geq \Gamma_{_{T, \mu}}^*$, where $\Gamma_{_{T, \mu}}$ is the convex lower semi-continuous function on $C(Y)$ defined by $\Gamma_{_{T, \mu}}(g)=\int_{X}{T }g(x)\, d\mu(x)$ since $T$ is a Kantorovich operator.  Moreover, ${\mathcal T}_\mu  = \Gamma_{_{T, \mu}}^*$ on the probability measures on $Y$.  If now $\nu$ is a positive measure with $\lambda:=\nu (Y) >1$, then 
\begin{align*}
\Gamma_{_{T, \mu}}^*(\nu)&=\sup\big\{\int_{Y}g(y)\, d\nu(y)-\int_{X}{T }g(x)\, d\mu(x);\,  g \in C(Y)\big\}\\
&\geq n\lambda -\int_XT(n)\d\mu\\
&=n(\lambda -1)-\int_XT(0)\d\mu,
\end{align*}
where we have used property (3) to say that $T(n)=n+T(0)$. Hence $\Gamma_{_{T, \mu}}^*(\nu)=+\infty$. A similar reasoning applies to when $\lambda<1$, and it follows that  $\T_\mu = \Gamma_{_{T, \mu}}^*$ and therefore $(\T_\mu)^*  = \Gamma_{_{T, \mu}}^{**}=\Gamma_{_{T, \mu}}$ since the latter is convex and lower semi-continuous on $C(Y)$.

2) Conversely, it is clear that since $\T_\mu$ is convex lower semi-continuous, we have for any $\mu \in D_1(\calT)$,
\[ 
{\mathcal T}(\mu, \nu)=\T_\mu(\nu)=(\T_\mu)^{**}(\nu)=\sup\big\{\int_{Y}g\, d\nu-\int_{X}{T^-}g\, d\mu;\,  g \in C(Y)\big\}.
\] 
Moreover, $Tg(x)=(\T_{\delta_x})^{*}(g)$, which easily implies that $T$ is a Kantorovich operator.

\subsection{Linear transfers and Kantorovich operators as envelopes} 

 We now associate to any convex lower semi-continuous functional on ${\mathcal P}(X)\times {\mathcal P}(Y)$ a backward and a forward linear transfer.  
This is closely related to the work of Gozlan et al. \cite{Go4}, who introduced the notion of {\it weak transport}. These are cost minimizing transport plans, where cost functions between two points are replaced by  {\it generalized costs} $c$ on $X\times {\mathcal P}(Y)$, 
 where $\sigma \to c(x, \sigma)$ is convex and lower semi-continuous. We now show that this notion is essentially equivalent to the notion of backward linear transfer, at least in the case where Dirac measures belong to the first partial effective domain of the map $\T$, that is when $\{\delta_x; x\in X\}\subset D_1 ({\mathcal T})$. We shall prove the following.
 
   \begin{prop}\label{prop.two} 
Let $c: X\times {\mathcal P}(Y)\to  \R \cup\{+\infty\}$ be a bounded below,  lower semi-continuous function such that $\sigma \to c(x, \sigma)$ is convex, and define for any pair $(\mu, \nu)  \in {\mathcal M}(X) \times {\mathcal M}(Y)$, the functional
 \begin{equation}\label{weak.0}
{\mathcal T}_c(\mu, \nu)=\left\{ \begin{array}{llll}
\inf_\pi\{\int_X c(x, \pi_x)\, d\mu(x); \pi \in {\mathcal K}(\mu, \nu)\},    \,  &\hbox{if $\mu, \nu \in {\mathcal P}(X)\times {\mathcal P}(Y)$,}\\
+\infty \quad &\hbox{\rm otherwise.}
\end{array} \right.
\end{equation}
 where $(\pi_x)_x$ is the disintegration of $\pi$ with respect to $\mu$. 
 
 Then, $\T_c$ is a backward linear transfer with Kantorovich operator 
 \begin{equation}\label{Kminus}
 T_c ^-g(x)=\sup\{\int_Yg(y)\, d\sigma (y)- c(x, \sigma); \sigma \in {\mathcal P}(Y)\}.
 \end{equation} 
\end{prop}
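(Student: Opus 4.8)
The plan is to prove the duality identity $\mathcal{T}_c=\mathcal{T}_{T_c^-}$, namely
\[ \mathcal{T}_c(\mu,\nu)=\sup\Big\{\textstyle\int_Y g\,d\nu-\int_X T_c^- g\,d\mu;\ g\in C(Y)\Big\}\quad\text{for }(\mu,\nu)\in\mathcal{P}(X)\times\mathcal{P}(Y), \]
together with the verification that $T_c^-$ is a backward Kantorovich operator; by Proposition \ref{prop.one} this is exactly what is required. The Kantorovich properties of $T_c^-$ are immediate from its definition as the Legendre transform of the convex lsc map $\sigma\mapsto c(x,\sigma)$: monotonicity follows since $g_1\le g_2$ forces $\int g_1\,d\sigma\le\int g_2\,d\sigma$; convexity holds because $g\mapsto\int_Y g\,d\sigma-c(x,\sigma)$ is affine, so $T_c^- g$ is a supremum of affine functions of $g$; and $T_c^-(g+a)=T_c^- g+a$ for constants $a$ because $\int_Y a\,d\sigma=a$ on probability measures. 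Finally, since $(x,\sigma)\mapsto\int_Y g\,d\sigma-c(x,\sigma)$ is upper semi-continuous on $X\times\mathcal{P}(Y)$ and $\mathcal{P}(Y)$ is weak$^*$-compact, the partial supremum $T_c^- g$ is upper semi-continuous on $X$, so indeed $T_c^-\colon C(Y)\to USC(X)$.

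I would then fix $\mu\in D_1(\mathcal{T}_c)$ and work with the partial map $F:=\mathcal{T}_c(\mu,\cdot)$, extended by $+\infty$ outside $\mathcal{P}(Y)$, proving the identity via Fenchel--Moreau biduality. This requires $F$ to be convex and weak$^*$-lower semi-continuous. For convexity, the key observation is that two plans $\pi^0,\pi^1$ with the same first marginal $\mu$ disintegrate against the same $\mu$, so for $\lambda\in[0,1]$ one has $(\lambda\pi^0+(1-\lambda)\pi^1)_x=\lambda\pi^0_x+(1-\lambda)\pi^1_x$; convexity of $\sigma\mapsto c(x,\sigma)$ then yields $\mathcal{T}_c(\mu,\lambda\nu_0+(1-\lambda)\nu_1)\le\lambda\mathcal{T}_c(\mu,\nu_0)+(1-\lambda)\mathcal{T}_c(\mu,\nu_1)$ after passing to near-optimal plans. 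Lower semi-continuity follows from the weak$^*$-compactness of the set of plans with first marginal $\mu$ together with the lower semi-continuity of the integral functional $\pi\mapsto\int_X c(x,\pi_x)\,d\mu$ (a standard consequence of $c$ being lsc and convex in its second argument): given $\nu_n\to\nu$, extract a weak$^*$-limit of near-optimal plans and pass to the $\liminf$.

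The crux is the computation of the conjugate $F^*$ on $C(Y)$. For $g\in C(Y)$,
\[ F^*(g)=\sup_{\nu\in\mathcal{P}(Y)}\ \sup_{\pi\in\mathcal{K}(\mu,\nu)}\Big\{\textstyle\int_Y g\,d\nu-\int_X c(x,\pi_x)\,d\mu\Big\}=\sup_{\pi:\,\pi_1=\mu}\Big\{\textstyle\int_X\big(\int_Y g\,d\pi_x-c(x,\pi_x)\big)\,d\mu\Big\}, \]
since letting $\nu$ range over $\mathcal{P}(Y)$ and $\pi$ over $\mathcal{K}(\mu,\nu)$ is the same as letting $\pi$ range over all plans with first marginal $\mu$ (the second marginal being free), and $\int_Y g\,d\nu=\int_X\int_Y g\,d\pi_x\,d\mu$. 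It then remains to exchange the supremum with the integral,
\[ \sup_{(\pi_x)_x}\int_X\Big(\int_Y g\,d\pi_x-c(x,\pi_x)\Big)d\mu(x)=\int_X\sup_{\sigma\in\mathcal{P}(Y)}\Big(\int_Y g\,d\sigma-c(x,\sigma)\Big)d\mu(x)=\int_X T_c^- g\,d\mu. \]
The inequality $\le$ is trivial; the reverse inequality is the \emph{main technical obstacle}. Because $(x,\sigma)\mapsto\int_Y g\,d\sigma-c(x,\sigma)$ is upper semi-continuous on $X\times\mathcal{P}(Y)$ with $\mathcal{P}(Y)$ compact metric, the supremum defining $T_c^- g(x)$ is attained for each $x$, and a measurable selection theorem (equivalently, the theory of normal integrands and Rockafellar's formula for the conjugate of an integral functional) produces, for every $\varepsilon>0$, a $\mu$-measurable family $x\mapsto\sigma_x$ with $\int_Y g\,d\sigma_x-c(x,\sigma_x)\ge T_c^- g(x)-\varepsilon$, which gives the reverse inequality.

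With $F$ convex and weak$^*$-lsc and $F^*(g)=\int_X T_c^- g\,d\mu$ established, Fenchel--Moreau gives $F=F^{**}$, that is
\[ \mathcal{T}_c(\mu,\nu)=F(\nu)=\sup_{g\in C(Y)}\Big\{\textstyle\int_Y g\,d\nu-\int_X T_c^- g\,d\mu\Big\}\qquad(\nu\in\mathcal{P}(Y)). \]
This is precisely $\mathcal{T}_{T_c^-}(\mu,\nu)$, so by Proposition \ref{prop.one} $\mathcal{T}_c$ is a backward linear transfer with Kantorovich operator $T_c^-$. I expect the only delicate points to be the measurable-selection step underlying the sup/integral exchange and the lower semi-continuity of the integral cost functional; the remaining verifications are routine.
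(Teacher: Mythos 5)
Your proposal is correct and follows essentially the same route as the paper's own proof: compute $({\mathcal T}_c)^*_\mu(g)=\int_X T_c^-g\,d\mu$ (the easy inequality by exchanging sup and integral, the reverse via a measurable selection $x\mapsto\bar\pi_x$ glued into a plan with first marginal $\mu$), establish convexity of $\nu\mapsto\T_c(\mu,\nu)$ by mixing disintegrations of near-optimal plans, invoke Fenchel--Moreau biduality, and check the Kantorovich axioms for $T_c^-$. If anything, you are more careful than the paper, which asserts $\T_\mu=\T_\mu^{**}$ after proving only convexity, whereas you explicitly supply the weak$^*$ lower semi-continuity of the partial functional needed for that step.
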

\noindent{\bf Proof:} We first compute the Legendre transform of the functional $(\T_c)_\mu$. Since $\T_c$ is $+\infty$ outside of the probability measures, we can write
  \begin{eqnarray*}
 ({\mathcal T}_c)^*_\mu (g)&=&\sup\{\int_Yg\, d\nu -  {\mathcal T}_c(\mu, \nu); \nu\in {\mathcal P}(Y)\}\\
 &=&\sup\{\int_Yg(y)\, d\nu (y) -  \int_Xc(x, \pi_x)\, d\mu(x); \nu\in {\mathcal P}(Y), \pi \in {\mathcal K}(\mu, \nu)\}\\
&=&\sup\{\int_X\int_Yg(y) d\pi_x(y)\, d\mu (x) -  \int_Xc(x, \pi_x)\, d\mu(x);  
\pi \in {\mathcal K}(\mu, \nu)\}\\
&\leq &\sup\{\int_X\int_Yg(y) d\sigma(y)\, d\mu (x) -  \int_Xc(x, \sigma)\, d\mu(x); \sigma \in {\mathcal P}(Y)\}\\
&\leq &\int_X\{\sup\limits_{\sigma \in {\mathcal P}(Y)}\{\int_Yg(y) d\sigma(y) - c(x, \sigma)\, d\mu\}\}\\
&=&\int_XT_c^-g(x) d \mu (x).
 \end{eqnarray*}
On the other hand, use your favorite selection theorem to find a measurable selection $x\to {\bar \pi}_x$ from $X$ to ${\mathcal P}(Y)$ such that
$
T_c^-g(x)=\int_Yg(y) d {\bar \pi}_x(y) - c(x, {\bar \pi}_x) \,\, \hbox{for every $x\in X$.}
$
It follows that 
 \begin{eqnarray*}
({\mathcal T}_c)^*_\mu (g)&=&\sup\{\int_Yg\, d\nu -  {\mathcal T}_c(\mu, \nu); \nu\in {\mathcal P}(Y)\}\\
&= &\sup\{\int_Yg\, d\nu - \int_X c(x, \pi_x) d\mu(x);\,\,  \nu\in {\mathcal P}(Y), \pi \in {\cal K}(\mu, \nu)\}.
\end{eqnarray*}
Let ${\bar \nu} (A)=\int_X {\bar \pi}_x(A)\ d\mu (x)$. Then, ${\bar \pi}(A\times B)=\int_A{\bar \pi}_x(B)\, d\mu $ belongs to ${\cal K}(\mu, {\bar \nu})$, hence 
\begin{eqnarray*}
({\mathcal T}_c)^*_\mu (g)&\geq &\int_Yg\, d{\bar \nu} - \int_X c(x, {\bar \pi}_x) d\mu(x)\\
&= & \int_Y\int_X g(y) d{\bar \pi}_x(y)\ d\mu (x) - \int_X c(x, {\bar \pi}_x) d\mu(x)\\
&=& \int_X\{\int_Yg(y) d {\bar \pi}_x(y) - c(x, {\bar \pi}_x)\}\, d\mu (x)\\
&=& \int_XT^-_cg(x) d \mu (x), 
 \end{eqnarray*}
 hence   $({\mathcal T}_c)^*_\mu (g)=\int_XT^-_cg(x) d \mu (x)$. 
 
 We now show that $\T_\mu$ is convex. For that let $\nu=\lambda \nu_1+(1-\lambda)\nu_2$ and find $(\pi^1_x)_x$ and  $(\pi^2_x)_x$ in ${\cal P}(Y)$ such that 
 $$\hbox{$\int_X\pi_x^id\mu(x)=\nu_i$ \quad and \quad $\int_X c(x, \pi^i_x)\, d\mu(x) \leq {\mathcal T}_c(\mu, \nu_i) +\epsilon$ for $i=1, 2$.}
 $$
  It is clear that the plan defined by 
 $\pi (A\times B):=\int_A(\lambda \pi^1_x (B)+(1-\lambda)\pi^2_x(B)) d\mu (x)$ belongs to ${\cal K}(\mu, \nu)$ and therefore, using the convexity of $c$ in the second variable, we have
 \begin{align*}
 \T_c(\mu, \nu)&\leq \int_X c(x, \pi_x)\, d\mu(x) \leq \int_X \lambda c(x, \pi^i_x)\, d\mu(x) + \int_X (1-\lambda) c(x, \pi^i_x)\, d\mu(x)\\
 & \leq \lambda {\mathcal T}_c(\mu, \nu_1) + (1-\lambda) {\mathcal T}_c(\mu, \nu_2) + \epsilon.
 \end{align*}
 It follows that for every $\nu \in {\cal P}(Y)$, 
 \[
 \T_\mu (\nu)=({\mathcal T}_c)_\mu^{**}=\sup\{\int_Yf(y) d\nu-\int_X T^-_cf d\mu; f\in C(Y)\}.
 \]
 Moreover, it is easy to see that $T_c$ satisfies properties a), b) and c) of a Kantorovich operator. We can therefore conclude that $\T$ is a backward linear transfer. This establishes that 3) implies 1) in Theorem \ref{prop.zero}. \qed

That  (1) implies 3) in Theorem \ref{prop.zero} will follow from the following general result.
  \begin{prop}\label{prop.three} 
 Let  ${\mathcal T}: {\mathcal P}(X)\times {\mathcal P}(Y)\to \R \cup\{+\infty\}$ be a bounded below lower semi-continuous functional that is convex  in each of the variables 
such that $\{\delta_x; x\in X\}\subset D_1 ({\mathcal T})$, and  
consider ${\overline \T}$ to be the backward linear transfer associated to $c(x, \sigma)=\T(\delta_x, \sigma)$ by the previous proposition, and let 
\begin{equation}
{\tilde \T}(\mu, \nu):=\int_X \T^{} (x, \nu) d \mu (x).
\end{equation}
 Then,  $\T \leq \overline {\cal T} \leq {\tilde \T}$, and  
 \begin{enumerate}
 \item $\overline {\cal T}$ is the smallest backward  linear transfer greater than $\T$.
 \item $\overline {\cal T}$ is the largest backward linear transfer smaller than  ${\tilde \T}$.
 \end{enumerate}
 \end{prop}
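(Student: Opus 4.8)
The plan is to establish the chain $\T \leq \overline{\T} \leq \tilde{\T}$ first, and then prove the two extremality statements, using Proposition \ref{prop.two} as the workhorse that guarantees $\overline{\T}$ is genuinely a backward linear transfer. First I would verify the lower bound $\T \leq \overline{\T}$. By construction $\overline{\T} = \T_c$ for $c(x,\sigma) = \T(\delta_x,\sigma)$, so I must show $\T(\mu,\nu) \leq \T_c(\mu,\nu)$ for all $(\mu,\nu) \in \mcal{P}(X)\times\mcal{P}(Y)$. Given any plan $\pi \in \mcal{K}(\mu,\nu)$ with disintegration $(\pi_x)_x$, I would write $\mu = \int_X \delta_x \, d\mu(x)$ and $\nu = \int_X \pi_x \, d\mu(x)$, and invoke the convexity of $\T$ in each variable together with its lower semi-continuity (a Jensen-type inequality for convex l.s.c.\ functionals on the barycenter) to obtain $\T(\mu,\nu) \leq \int_X \T(\delta_x,\pi_x)\, d\mu(x) = \int_X c(x,\pi_x)\, d\mu(x)$. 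Taking the infimum over $\pi$ yields $\T \leq \overline{\T}$.

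Next I would verify the upper bound $\overline{\T} \leq \tilde{\T}$. Here I would exploit the fact that $\tilde{\T}(\mu,\nu) = \int_X \T(x,\nu)\, d\mu(x)$ corresponds to the specific (product-like) choice of plan $\pi_x = \nu$ for $\mu$-a.e.\ $x$, which indeed lies in $\mcal{K}(\mu,\nu)$. For that choice $\int_X c(x,\pi_x)\, d\mu(x) = \int_X \T(\delta_x,\nu)\, d\mu(x) = \tilde{\T}(\mu,\nu)$, and since $\overline{\T} = \T_c$ is defined as an infimum over all admissible plans, it is bounded above by the value at this particular plan, giving $\overline{\T} \leq \tilde{\T}$.

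For the extremality statements, I would argue as follows. For (1), suppose $\mcal{S}$ is any backward linear transfer with $\mcal{S} \geq \T$; I want $\mcal{S} \geq \overline{\T}$. Since $\mcal{S}$ is a linear transfer, by Proposition \ref{prop.one} its partial Legendre transform is $\mcal{S}^*_\mu(g) = \int_X S g \, d\mu$ for its Kantorovich operator $S$, and evaluating at Dirac masses gives $Sg(x) = \mcal{S}^*_{\delta_x}(g) \geq \T^*_{\delta_x}(g)$ because $\mcal{S}_{\delta_x} \geq \T_{\delta_x}$ implies the reverse inequality passes correctly through the Legendre transform (monotonicity of $*$ reverses order, so $\mcal{S}_{\delta_x} \geq \T_{\delta_x}$ gives $\mcal{S}^*_{\delta_x} \leq \T^*_{\delta_x}$ — I must track this sign carefully). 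The cleaner route is: $\overline{\T}$ has Kantorovich operator $T_c^-g(x) = \sup_\sigma\{\int g\, d\sigma - c(x,\sigma)\} = \sup_\sigma\{\int g\, d\sigma - \T(\delta_x,\sigma)\} = \T^*_{\delta_x}(g)$, and for any linear transfer $\mcal{S}\geq\T$ its operator satisfies $Sg(x) = \mcal{S}^*_{\delta_x}(g) \leq \T^*_{\delta_x}(g) = T_c^- g(x)$, and a smaller Kantorovich operator produces a \emph{larger} transfer via the defining sup-formula \eqref{back}. Hence $\mcal{S} \geq \overline{\T}$, establishing minimality. For (2), if $\mcal{R}$ is a backward linear transfer with $\mcal{R} \leq \tilde{\T}$, I would use $\tilde{\T}(\delta_x,\nu) = \T(\delta_x,\nu) = c(x,\nu)$, so $\mcal{R}(\delta_x,\nu) \leq c(x,\nu)$; evaluating the Kantorovich operator of $\mcal{R}$ at Diracs and comparing against $T_c^-$ through the same Legendre-transform bookkeeping gives $R g(x) \geq T_c^- g(x)$, whence $\mcal{R} \leq \overline{\T}$.

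The main obstacle I anticipate is the Jensen-type inequality $\T(\mu,\nu) \leq \int_X \T(\delta_x,\pi_x)\, d\mu(x)$ in the first step: this requires justifying that a jointly-marginally-convex l.s.c.\ functional on $\mcal{M}(X)\times\mcal{M}(Y)$ satisfies an integral form of Jensen's inequality against the barycentric decomposition $(\mu,\nu) = \int_X (\delta_x,\pi_x)\, d\mu(x)$, which is not completely automatic from separate convexity and needs either a direct duality argument (writing $\T$ as a supremum of affine functionals and exchanging integral and supremum) or an approximation by finitely-supported $\mu$. The rest is careful sign-tracking through the Legendre transform, where the key structural fact — repeatedly used — is that on Kantorovich operators the correspondence $T \mapsto \T_T$ given by \eqref{back} is order-reversing, so that the smallest operator yields the largest transfer and vice versa.
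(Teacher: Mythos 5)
Your proposal is correct, and it reaches the two extremality statements essentially as the paper does: evaluate the Kantorovich operator of a competing transfer at Dirac masses via Proposition \ref{prop.one}, use that the Legendre transform reverses order, and use that a smaller operator produces a larger transfer through the sup-formula (\ref{back}); your sign slip is caught mid-argument and the final chain of inequalities is right. Where you genuinely depart from the paper is in the two comparison inequalities. For $\overline{\T}\leq\tilde{\T}$ you plug the product plan $\pi=\mu\otimes\nu$ (so $\pi_x=\nu$) into the inf-over-plans formula for $\overline{\T}=\T_c$ supplied by Proposition \ref{prop.two}; the paper instead proves $(\overline{\T}_\mu)^*(g)=\int_X T_c^-g\,d\mu\geq (\tilde{\T}_\mu)^*(g)$ by exchanging $\int_X$ with $\sup_{\sigma}$, and then dualizes back using convexity in the second variable — your argument is the more economical of the two. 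For $\T\leq\overline{\T}$ you invoke a primal, barycentric Jensen inequality $\T(\mu,\nu)\leq\int_X\T(\delta_x,\pi_x)\,d\mu(x)$ over every plan $\pi$, which is exactly the step you flag as delicate; the paper sidesteps it by working in the dual: it observes that $\mu\mapsto(\T_\mu)^*(f)$ is concave and upper semi-continuous (it is minus an infimal projection, over the compact set $\mcal{P}(Y)$, of a jointly convex lsc functional), applies scalar Jensen for this concave function of the barycenter to get $(\T_\mu)^*(f)\geq\int_X(\T_{\delta_x})^*(f)\,d\mu=\int_X T_c^-f\,d\mu$, and finishes by biconjugation $\T(\mu,\nu)=(\T_\mu)^{**}(\nu)\leq\overline{\T}(\mu,\nu)$. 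Each route has its merits: yours is shorter once the primal Jensen inequality is granted (and the sup-of-affine-functionals argument you sketch does grant it, since $\mcal{P}(X)\times\mcal{P}(Y)$ is weak$^*$ compact convex and $x\mapsto(\delta_x,\pi_x)$ pushes $\mu$ to a measure with barycenter $(\mu,\nu)$), while the paper's needs Jensen only for a real-valued concave usc function, which is immediate.

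One further remark: your worry about separate versus joint convexity is well-founded, but it is not a gap in your proof relative to the paper's. Although the proposition hypothesizes only convexity in each variable separately, the paper's own proof explicitly invokes \emph{joint} convexity at the corresponding step — concavity of $\mu\mapsto(\T_\mu)^*(f)$ fails under merely separate convexity, for precisely the same reason your primal Jensen inequality does. So both arguments require the same (tacit) strengthening of the stated hypothesis, and your proposal stands on equal footing with the published proof.
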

\noindent{\bf Proof:} Note that 
\begin{equation}\label{Kminus}
 T ^-g(x)=\sup\{\int_Yg(y)\, d\sigma (y)-{\mathcal T}(\delta_x, \sigma); \sigma \in {\mathcal P}(Y)\}, 
 \end{equation} 
and therefore for each  $x\in X$, we have for each $g\in C(Y)$
\begin{equation}
T^-g(x)=({\overline \T}_{\delta_x})^*(g)=(\T^{}_{\delta_x})^*(g). 
\end{equation}
To show that ${\overline {\T}} \leq {\tilde  \T}$, 
write for an arbitrary $\mu \in {\cal P}(X)$,  
\begin{align*}
({\overline {\mathcal T}}_\mu)^*(g)&=\int_XT^-_cg(x) d \mu (x)\\
&=\int_X\sup_\sigma\{\int_Y g\, d\sigma -\T^{} (x, \sigma); \sigma\in {\mathcal P}(Y)\} d \mu (x)\\
&\geq \sup_\sigma\{ \int_Y g\, d\sigma -\int_X\T^{} (x, \sigma) d \mu (x); \sigma\in {\mathcal P}(Y)\}\\
&= \sup\{\int_Yg\, d\sigma -  {\tilde \T} (\mu, \sigma); \sigma\in {\mathcal P}(Y)\}\\
&=({\tilde \T}^{}_\mu)^* (g),
\end{align*}
hence ${\overline \T}\leq {\tilde \T}$ since both of them are convex in the second variable.\\
Note that $\T (\delta_x, \nu)= {\tilde \T}(\delta_x, \nu)$, hence 
 if   ${\cal S} \leq {\tilde \T}$ and ${\cal S}$ is a backward linear transfer with $S^-$ as a Kantorovich operator, then  
  \[
 S^-g(x)=(S_{\delta_x})^*(g) \geq (\tilde \T^{}_{\delta_x})^*(g)=({\overline \T}_{\delta_x}^*(g)=T^-g(x), 
 \]
and therefore ${\cal S} \leq {\overline \T}$. It follows that ${\overline \T}$ is the greatest backward linear transfer smaller than ${\tilde \T}$.

To show that $\T \leq {\overline \T}$, note that since $\T$ is jointly convex and lower semi-continuous, then 
 for each $f\in C(Y)$, the functional 
$$\mu \to (\T_\mu)^*(f):=\sup\{\int_Yfd\sigma -\T(\mu, \sigma); \sigma \in {\mathcal P}(Y)\}$$
is upper semi-continuous and concave. It follows from Jensen's inequality that 
\[
(\T_\mu)^*(f) \geq \int_X(\T_{\delta_x})^*(f) d\mu (x)=\int_XT^-f (x) d\mu (x), 
\]
hence 
$$\T (\mu, \nu)=(\T_\mu)^{**}(\nu) \leq \sup\{\int_Yfd\nu -\int_XT^-f d\mu; f\in C(Y)\}={\overline \T}(\mu, \nu). $$
If now ${\cal S} \geq \T$, then ${\overline {\cal S}}\geq {\overline \T}$, and if ${\cal S}$ is a linear transfer, then 
${\cal S}={\overline {\cal S}}\geq {\overline \T}$, and therefore $\overline {\cal T}$ is the smallest backward  linear transfer greater than $\T$.
\qed

  \begin{rmk} \rm A similar construction can be done to associate a forward linear transfer $\overline \T_+$ to a given functional $\T$ on ${\mathcal P}(X)\times {\mathcal P}(Y)$ provided $\{\delta_y; y\in Y\}\subset D_2 ({\mathcal T})$. Note that one can then define $\T$ as a backward (resp., forward) linear transfer if $\T={\overline \T}_-$ (resp.,  if $\T={\overline \T}_+$).
  
  \end{rmk}

 \begin{rmk} \rm Any  lower continuous convex functional ${\mathcal T}$ on ${\mathcal P}(X)\times {\mathcal P}(Y)$  that is finite on the set of Dirac measures 
 gives rise to a backward and forward optimal mass transport  ${\mathcal T}_c(\mu, \nu)$ associated to the cost function  $c(x,y)=\calT (\delta_x, \delta_y)$. It is then easy to see that  
 \[
\hbox{
${\calT}_{\delta_x}^*(g)=\sup\{\int_Y g d\nu -{\mathcal T}(\delta_x, \nu); \, \nu\in {\mathcal P}(Y) \}\geq \sup\{g(y) -c(x,y); y\in Y\}=T _c^-g(x)
$, }
\]
hence 
  \begin{equation}\label{compare.00}
{\mathcal T}_c(\mu, \nu) \geq {\overline \T}(\mu, \nu)  \geq {\mathcal T}(\mu, \nu). 
\end{equation}
However, the inequality (\ref{compare.00}) is often strict.
 Moreover,  transfers need not be  defined on Dirac measures, a prevalent situation in stochastic transport problems. 
\end{rmk}

Dually, we give the following characterization of Kantorovich operators, which in particular, yields a uniqueness statement for the duality between them and linear transfers.  

\begin{thm} Let $T:C(Y)\to USC (X)$ be a map such that for every $x\in X$, 
\begin{equation}\label{proper}
\hbox{$\inf\limits_{\nu \in {\cal P}(Y)}\sup\limits_{g \in C(Y)}\big\{\int_{Y}g\, d\nu-T g (x)\big\}<+\infty.$}
\end{equation}
Then, there exists a Kantorovich operator ${\overline T}$ such that 
\begin{enumerate}
\item $ {\overline T}$ is the largest Kantorovich operator smaller than $T$ on $C(Y)$.
\item $\overline T$ can be written as 
\begin{equation}
{\overline T}f(x)=\sup\limits_{\sigma \in {\cal P}(Y)}\inf\limits_{g\in C(Y)}\{\int_Y (f-g)\, d\sigma + T g(x)\}.
\end{equation}
 
\end{enumerate}
\end{thm}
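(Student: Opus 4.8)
The plan is to realize $\overline T$ as a restricted Legendre biconjugate of the \emph{nonlinear} functional $g\mapsto Tg(x)$. Fix $x\in X$ and set
\[
c_x(\sigma):=\sup_{g\in C(Y)}\left\{\int_Y g\,d\sigma - Tg(x)\right\},\qquad \sigma\in\mathcal P(Y).
\]
The properness hypothesis (\ref{proper}) reads precisely $\inf_{\nu\in\mathcal P(Y)}c_x(\nu)<+\infty$, so $c_x\not\equiv+\infty$ on $\mathcal P(Y)$; moreover it forces $Tg(x)>-\infty$ for every $g$, since otherwise $c_x\equiv+\infty$. I would then \emph{define}
\[
\overline T f(x):=\sup_{\sigma\in\mathcal P(Y)}\left\{\int_Y f\,d\sigma - c_x(\sigma)\right\},
\]
and observe that substituting the definition of $c_x$ turns this immediately into the double-envelope $\overline T f(x)=\sup_{\sigma}\inf_{g}\{\int_Y(f-g)\,d\sigma+Tg(x)\}$, which is exactly the formula in part (2). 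Properness guarantees $\overline T f(x)>-\infty$.

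\textbf{The bound and the algebraic properties.} The inequality $\overline T\le T$ is immediate: taking the competitor $g=f$ in the supremum defining $c_x(\sigma)$ gives $c_x(\sigma)\ge\int_Y f\,d\sigma-Tf(x)$, hence $\int_Y f\,d\sigma-c_x(\sigma)\le Tf(x)$ for every $\sigma$, and taking the supremum yields $\overline T f(x)\le Tf(x)$. The three defining properties of a backward Kantorovich operator follow purely from the support-function shape $f\mapsto\sup_\sigma\{\int f\,d\sigma-c_x(\sigma)\}$: convexity in $f$ holds as a supremum of affine functionals; monotonicity holds because each $\sigma$ is a positive measure; and affineness on constants holds because each $\sigma$ is a \emph{probability} measure, so $\int_Y(f+r)\,d\sigma=\int_Y f\,d\sigma+r$.

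\textbf{Maximality.} The engine for part (1) is the elementary lemma that any map that is monotone and affine on the constants is automatically $1$-Lipschitz for $\|\cdot\|_\infty$: from $g_2-\|g_1-g_2\|_\infty\le g_1\le g_2+\|g_1-g_2\|_\infty$ together with properties (a) and (c) one gets $|Sg_1(x)-Sg_2(x)|\le\|g_1-g_2\|_\infty$. Consequently, for any Kantorovich operator $S$ the functional $g\mapsto Sg(x)$ is convex and continuous, hence equal to its biconjugate; monotonicity and affineness on constants force its conjugate $c^S_x$ to be $+\infty$ off $\mathcal P(Y)$ (test additive constants $r\to\pm\infty$ to rule out $\sigma(Y)\neq1$, and $-\lambda g$ with $g\ge0$, $\lambda\to+\infty$ to rule out measures with a negative part), exactly as in Proposition \ref{prop.one}. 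Thus $Sg(x)=\sup_{\sigma\in\mathcal P(Y)}\{\int g\,d\sigma-c^S_x(\sigma)\}$. If now $S\le T$, then $c^S_x(\sigma)=\sup_g\{\int g\,d\sigma-Sg(x)\}\ge\sup_g\{\int g\,d\sigma-Tg(x)\}=c_x(\sigma)$, whence $Sf(x)\le\overline T f(x)$ for all $f,x$. Granting the semicontinuity discussed below, $\overline T$ is itself a Kantorovich operator below $T$, so it is the largest one.

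\textbf{The main obstacle.} The only genuinely delicate point is showing $\overline T f\in USC(X)$, i.e.\ upper semicontinuity in the state variable $x$. Each $x\mapsto c_x(\sigma)$ is lower semicontinuous, being a supremum over $g$ of the functions $x\mapsto\int g\,d\sigma-Tg(x)$ (lower semicontinuous since $Tg\in USC(X)$), so each inner term $x\mapsto\int f\,d\sigma-c_x(\sigma)$ is USC; but an arbitrary supremum of USC functions need not be USC. I expect to resolve this either by a Sion-type minimax argument that swaps $\sup_\sigma\inf_g$ into $\inf_g\sup_\sigma$ — legitimate because $\mathcal P(Y)$ is weak$^*$-compact and the integrand is affine in $\sigma$ and convex in $g$, provided $g\mapsto Tg(x)$ carries enough lower semicontinuity — after which $\overline T f$ appears as an infimum of USC functions and is USC; or, absent such regularity, by passing to the upper semicontinuous envelope in $x$, which preserves properties (a)--(c) and still lies below $T$ since $Tf$ is a USC majorant of $\overline T f$. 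This semicontinuity bookkeeping, rather than the algebra of the envelope itself, is where the care is needed.
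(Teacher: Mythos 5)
Your construction is exactly the paper's, just phrased without naming the intermediate transfer: your cost $c_x(\sigma)$ is the coupling ${\mathcal T}(\delta_x,\sigma)$ that the paper attaches to $T$, your $\overline T$ is the Kantorovich operator that the paper then extracts from Proposition \ref{prop.two}, and your maximality argument --- order-reversal of conjugation plus the biconjugation identity $Sf(x)=\sup_{\sigma\in\mathcal{P}(Y)}\{\int_Y f\,d\sigma-c^S_x(\sigma)\}$ valid for every Kantorovich operator $S$ --- is the same as the paper's $\sup\inf$/$\inf\sup$ chain, whose interchange is justified there by the fact that $S$ is a Kantorovich operator, i.e.\ by Proposition \ref{prop.one} at $\mu=\delta_x$. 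The bound $\overline T\le T$ via $g=f$ and the derivation of properties (a)--(c) from the support-function form also match.

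The genuine gap is the one you flag yourself: upper semicontinuity of $\overline Tf$ in $x$, and neither of your proposed repairs works under the stated hypotheses. Sion's theorem would need $g\mapsto \int_Y(f-g)\,d\sigma+Tg(x)$ to be (quasi-)convex and lower semicontinuous in $g$, but $T$ is an \emph{arbitrary} map subject only to (\ref{proper}); no convexity or semicontinuity in $g$ is assumed, so the interchange is unavailable for $T$ itself (it holds for a Kantorovich $S$, which is useless here). The envelope fallback does produce a Kantorovich operator below $T$ that dominates every Kantorovich $S\le T$, so part (1) survives, but part (2) then claims the \emph{un-regularized} sup-inf formula is that operator --- precisely what remains unproven; you would still have to show the envelope step is vacuous. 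The fix is more elementary than either route: $(x,\sigma)\mapsto c_x(\sigma)$ is \emph{jointly} lower semicontinuous, being a supremum of the jointly LSC functions $(x,\sigma)\mapsto\int_Y g\,d\sigma-Tg(x)$ (this is where $Tg\in USC(X)$ enters); hence $F(x,\sigma):=\int_Y f\,d\sigma-c_x(\sigma)$ is jointly USC, and since $\mathcal{P}(Y)$ is weak$^*$-compact the supremum $\overline Tf(x)=\max_{\sigma\in\mathcal{P}(Y)}F(x,\sigma)$ is attained and is USC in $x$: if $x_\alpha\to x$ realizes $\limsup_\alpha \overline Tf(x_\alpha)$ and $\sigma_\alpha$ are maximizers, a convergent subnet $\sigma_\beta\to\bar\sigma$ gives $\limsup_\beta F(x_\beta,\sigma_\beta)\le F(x,\bar\sigma)\le \overline Tf(x)$. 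Boundedness above follows from $\overline Tf\le Tf$. This compactness observation is what implicitly underlies the paper's appeal to Proposition \ref{prop.two}; with it, your argument is complete and coincides with the paper's proof.
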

\begin{proof}  Consider the functional on ${\cal M}(X)\times {\cal M}(Y)$ given by 
 \begin{equation}\label{speciale}
{\mathcal T}(\mu, \nu)=\left\{ \begin{array}{llll}
\sup\big\{\int_{Y}g\, d\nu-\int_{X}{T}g\, d\mu;\,  g \in C(Y)\big\}\,  &\hbox{if $\mu, \nu \in {\mathcal P}(X)\times {\mathcal P}(Y)$,}\\
+\infty \quad &\hbox{\rm otherwise.}
\end{array} \right.
\end{equation}
Note that $\T$ is bounded below, convex, lower semi-continuous functional
and condition (\ref{proper}) means that $\{\delta_x; x\in X\}\subset D_1 ({\mathcal T})$. 
 Hence Proposition \ref{prop.two} applies to yield a backward linear transfer ${\overline \T}$ with a corresponding backward Kantorovich operator defined as 
${\overline T}f(x)=\sup\limits_\sigma \{\int fd\sigma - {\mathcal T}(\delta_x, \sigma)\}$.
Note now that 
\begin{align*}
(\T_{\delta_x})^*f&={\overline T}f(x)\\
&=\sup\limits_\sigma \{\int fd\sigma - {\mathcal T}(\delta_x, \sigma)\}\\
&=\sup\limits_\sigma \inf\limits _g \{\int fd\sigma - \int gd\sigma + Tg(x) \}\\
&\leq \inf\limits _g \sup\limits_\sigma \{\int fd\sigma - \int gd\sigma + Tg(x) \}\\
&=\inf\limits _g \{\sup (f -g) + Tg(x) \}\\
&=Tf (x), 
\end{align*}
If now $S$ is a Kantorovich map such that $S\leq T$, then 
\begin{align*}
{\overline T}f(x)&=\sup\limits_\sigma \{\int fd\sigma - {\mathcal T}(\delta_x, \sigma)\}\\
&=\sup\limits_\sigma \inf\limits _g \{\int fd\sigma - \int gd\sigma + Tg(x) \}\\
&\geq \sup\limits_\sigma \inf\limits _g \{\int fd\sigma - \int gd\sigma + Sg(x) \}\\
&= \inf\limits _g \sup\limits_\sigma \{\int fd\sigma - \int gd\sigma + Sg(x) \}\\
&=\inf\limits _g \{\sup (f -g) + Sg(x) \}\\
&=\inf\limits _g \{S[\sup (f -g) + g](x) \}\\
&\geq Sf (x).
\end{align*}
where the last three steps used the fact that $S$ satisfies properties (a), (b) and (c) of a Kantorovich operator. \qed
\end{proof}

\subsection{Powers and recessions of linear transfers}

\begin{prop}\label{sup} Let $\T$ be a convex coupling on ${\mathcal P}(X)\times {\mathcal P}(Y)$ of the form
\begin{equation}
\T (\mu, \nu):=\sup_{i\in I} \T_i(\mu, \nu)
\end{equation}
where for each $i\in I$, 
$\T_i(\mu, \nu)=\sup\limits_ {f\in C(Y)}\{\int_Yf d\nu -\int_X T_if d\mu\} $ 
for some map $T_i: C(Y)\to USC(X)$. Assume $\{\delta_x; x\in X\}\subset D_1(\T)$ and consider the envelope ${\overline \T}$ of $\T$ and the corresponding Kantorvich operator $\overline T$. Then,
\begin{enumerate}
\item $\overline T$ is given by the formula
\begin{equation}
{\overline T}f(x)=\sup\limits_{\sigma \in {\cal P}(Y)}\inf\limits_{g\in C(Y)}\{\int_Y (f-g)\, d\sigma +\inf\limits_i T_i g(x)\}.
\end{equation}
and therefore satsifies ${\overline T}f \leq \inf_i T_if$ on $C(Y)$. 
\item If each $T_i$ is a Kantorovich operator, then ${\overline T}f = \inf_i T_if$ if and only if $f\to \inf\limits_i T_if(x)$ is convex .
 \end{enumerate}
 \end{prop}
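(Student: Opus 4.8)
The plan is to first compute $\overline T$ explicitly by feeding the convex-coupling structure $\T=\sup_i\T_i$ into the envelope formula of Proposition \ref{prop.three}, and then, in part (2), to recognize $\overline T$ as the \emph{largest Kantorovich operator lying below} the map $S:=\inf_{i}T_i$, after which the stated equivalence reduces to a short logical argument.

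For part (1) I would start from Proposition \ref{prop.three}, which identifies the envelope $\overline{\T}$ with the backward linear transfer attached to the cost $c(x,\sigma)=\T(\delta_x,\sigma)$ and gives its Kantorovich operator as $\overline T f(x)=\sup_{\sigma\in{\mathcal P}(Y)}\{\int_Y f\,d\sigma-\T(\delta_x,\sigma)\}$. The key computation is to evaluate $\T(\delta_x,\sigma)$: since $\T=\sup_i\T_i$ with $\T_i(\delta_x,\sigma)=\sup_{g\in C(Y)}\{\int_Y g\,d\sigma-T_i g(x)\}$, and iterated suprema commute, one gets
\[
\T(\delta_x,\sigma)=\sup_{i}\sup_{g}\Big\{\int_Y g\,d\sigma-T_i g(x)\Big\}=\sup_{g\in C(Y)}\Big\{\int_Y g\,d\sigma-\inf_i T_i g(x)\Big\}.
\]
Substituting this into the formula for $\overline T$ and rewriting $-\sup_g(\cdots)=\inf_g(-(\cdots))$ yields exactly
\[
\overline T f(x)=\sup_{\sigma\in{\mathcal P}(Y)}\inf_{g\in C(Y)}\Big\{\int_Y (f-g)\,d\sigma+\inf_i T_i g(x)\Big\}.
\]
The inequality $\overline T f\le\inf_i T_i f$ is then immediate: choosing $g=f$ in the inner infimum bounds it by $\inf_i T_i f(x)$ uniformly in $\sigma$, and then taking the supremum over $\sigma$ preserves the bound.

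For part (2) I would first record that, when each $T_i$ is a Kantorovich operator, the map $S:=\inf_i T_i$ is a well-defined map $C(Y)\to USC(X)$ (an infimum of upper semi-continuous functions is upper semi-continuous) which automatically inherits monotonicity (property (a)) and affineness on constants (property (c)) from the $T_i$; only convexity (property (b)) is in question, since an infimum of convex operators need not be convex. The second ingredient is to identify $\overline T$ with the Kantorovich envelope of $S$: because $\T(\delta_x,\sigma)=\sup_g\{\int_Y g\,d\sigma-Sg(x)\}$ coincides with the coupling attached to $S$ in the preceding theorem on Kantorovich envelopes, and the properness hypothesis of that theorem is precisely $\delta_x\in D_1(\T)$ (which is our standing assumption), that theorem shows that $\overline T$ is the largest Kantorovich operator below $S$. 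Consequently $\overline T=S$ if and only if $S$ is \emph{itself} a Kantorovich operator; and since (a) and (c) hold for free, this occurs if and only if $S=\inf_i T_i$ is convex, i.e. if and only if $f\mapsto\inf_i T_i f(x)$ is convex, which is the asserted criterion.

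The only genuinely delicate point is this identification in part (2): one must verify that $S$ satisfies the properness condition of the envelope theorem (which, as noted, reduces cleanly to $\{\delta_x\}\subset D_1(\T)$) and that the coupling generated by $S$ agrees with $\T$ on the Dirac slices $(\delta_x,\sigma)$, so that the two Kantorovich operators literally coincide. Everything else—the commutation of the two suprema in part (1), the $g=f$ comparison, and the verification of (a) and (c) for $S$—is routine and can be dispatched in a line each.
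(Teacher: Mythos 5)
Your proof is correct; part (1) coincides with the paper's computation (commute the two suprema so that $\T_{\delta_x}$ becomes the conjugate of $g\mapsto \inf_i T_ig(x)$, substitute into the envelope formula of Proposition \ref{prop.three}, and take $g=f$ for the upper bound). Part (2), however, takes a genuinely different route. The paper argues by biconjugation: $f\mapsto \overline{T}f(x)=(\T_{\delta_x})^*(f)$ is the convex lower semi-continuous envelope of $f\mapsto \inf_i T_i f(x)$, so the two coincide precisely when the latter is convex (the paper's proof in fact writes ``convex and lower semi-continuous''). You instead identify $\overline{T}$ with the Kantorovich envelope of $S:=\inf_i T_i$, i.e.\ the largest Kantorovich operator below $S$ furnished by the theorem of Section 3.2, and you correctly isolate the two points that make this identification legitimate: the coupling generated by $S$ agrees with $\T$ on the Dirac slices $(\delta_x,\sigma)$ --- though not globally, since $\int_X \inf_i T_i g\,d\mu \le \inf_i \int_X T_i g\,d\mu$ makes the coupling of $S$ larger off the slices --- and the properness hypothesis of that theorem is exactly $\{\delta_x; x\in X\}\subset D_1(\T)$. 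From there, $\overline{T}=S$ iff $S$ is itself a Kantorovich operator, and since monotonicity, affinity on constants, and $USC$-valuedness pass to infima, this holds iff $S$ is convex. What your route buys is that it delivers the equivalence with convexity alone, exactly as the proposition states, because lower semi-continuity never enters the definition of a Kantorovich operator; in the paper's route that caveat is harmless but silent (properties (a) and (c) make each $T_i$, hence $S$, $1$-Lipschitz for the sup-norm, so norm-lsc is automatic and Fenchel--Moreau applies, a fact the paper does not record). What the paper's route buys is brevity, together with the explicit observation, which your formula also encodes, that $\overline{T}f(x)$ is the pointwise convex relaxation of $\inf_i T_i f(x)$.
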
  
\begin{proof} Note first that 
\begin{align*}
\T (\mu, \nu)=\sup_i \T_i(\mu, \nu)&=\sup\limits_{i}\sup\limits_ {f\in C(Y)}\{\int_Yf d\nu -\int_X T_if d\mu\}\\
&=\sup\{\int_Yf d\nu -\inf\limits_i\int_X T_if d\mu; f\in C(Y)\}
\end{align*}
and $(\T_\mu)^*(f)\leq \inf\limits_{i}\int_X T_i f d\mu$. Moreover, $f\to (\T_\mu)^*(f)$ is the convex envelope of $f\to \inf\limits_{i}\int_X T_i f d\mu$. 
If now ${\overline T}$ is the Kantorovich operator for the envelope $\overline \T$, then 
\[
{\overline T}f(x)=\sup\limits_{\sigma \in {\cal P}(Y)}\inf\limits_{g\in C(Y)}\{\int_Y (f-g)\, d\sigma +\inf\limits_i T_i f\}, 
\]
and consequently, 
${\overline T}f(x)=(\T_{\delta_x})^*(f) \leq \inf_i T_if(x)$.
 
If $f\to \inf\limits_i T_if(x)$ is convex and lower semi-continuous for any $x\in X$, then the envelope property of $f\to {\overline T}f(x)$ yield that  ${\overline T}f (x)=\inf_i T_if(x)$. Note that if each $T_i$ is a Kantorovich operator, then $f\to \inf_i T_if$ satisfies properties (a) and (c) of a Kantorovich operator but not necessarily the convexity assumption (b). 

\end{proof}

\begin{cor} Let ${\cal T}$ be a  backward linear transfer with  Kantorovich operator $T^-$.
\begin{enumerate}
\item If $\alpha: \R^+\to \R$ is a convex increasing function on $\R$, then $\alpha ({\cal T})$ is a backward convex transfer, whose envelope ${\overline{\alpha (\T)}}$ has a Kantorovich operator  equal to 
 \begin{equation}\label{mir}
T_\alpha^- f= \inf\limits_{s>0}\{sT^-(\frac{f}{s})+\alpha^\oplus(s)\},
\end{equation}
where
 $ \alpha^{\oplus}(t)=\sup\{ts-\alpha(s); s\geq 0\}$.
 
\item  In particular,  if  ${\mathcal W}_c(\mu, \nu):=\inf\big\{\int_{X\times Y} c(x, y)) \, d\pi; \pi\in \mK(\mu,\nu)\big\}$   is the Monge-Kantorovich transport associated to a cost $c$, and $p>1$,   
then 
 \[
{\cal W}_c^p(\mu, \nu)\leq {\overline {\cal W}_c^p}(\mu, \nu)= 
\inf\limits_\pi\{\int_X {\cal W}_c^p(\delta_x, \pi_x)\, d\mu(x); \pi \in {\mathcal K}(\mu, \nu)\}.     
\]

\end{enumerate}
\end{cor}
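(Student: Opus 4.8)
The plan for Part 1 is to dualize $\alpha$ and recognize $\alpha(\mathcal{T})$ as a supremum of rescaled copies of $\mathcal{T}$, then invoke Proposition \ref{sup} (whose standing hypothesis $\{\delta_x;x\in X\}\subset D_1$ I take to be in force). Since $\alpha$ is convex and increasing on $\R^+$, Fenchel--Moreau duality gives $\alpha(t)=\sup_{s>0}\{st-\alpha^\oplus(s)\}$. Applying this to $t=\mathcal{T}(\mu,\nu)$ and, for each fixed $s>0$, making the substitution $h=sg$ while using property (c) of $T^-$ (affineness on constants, with $\mu$ a probability measure), I would rewrite
\[
s\,\mathcal{T}(\mu,\nu)-\alpha^\oplus(s)=\sup_{h\in C(Y)}\Big\{\int_Y h\,d\nu-\int_X T_s^- h\,d\mu\Big\},\qquad T_s^- h:=sT^-(h/s)+\alpha^\oplus(s).
\]
A direct check shows each $T_s^-$ inherits monotonicity, convexity and affineness on constants from $T^-$, so each is a Kantorovich operator; taking $\sup_{s>0}$ exhibits $\alpha(\mathcal{T})=\sup_{s>0}\mathcal{T}_{T_s^-}$ as a backward convex coupling.

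To upgrade ``coupling'' to ``transfer'' I would show $\big(\alpha(\mathcal{T})\big)^*_\mu(g)=\inf_{s>0}\int_X T_s^- g\,d\mu$. Writing $\int_X T_s^- g\,d\mu=s\int_X T^-(g/s)\,d\mu+\alpha^\oplus(s)$ and noting that $g\mapsto\int_X T^- g\,d\mu$ is convex (integration against $\mu$ being linear and monotone, and $T^-$ convex), the map $(g,s)\mapsto s\int_X T^-(g/s)\,d\mu+\alpha^\oplus(s)$ is the perspective of a convex functional plus a convex term, hence jointly convex for $s>0$. Partial minimization preserves convexity, so $g\mapsto\inf_{s>0}\int_X T_s^- g\,d\mu$ is convex and therefore, by Proposition \ref{sup}, coincides with $\big(\alpha(\mathcal{T})\big)^*_\mu(g)$; this is exactly the defining property of a backward convex transfer.

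The same perspective computation, now pointwise in $x$, identifies the envelope's operator. By Proposition \ref{sup} the Kantorovich operator $\overline T$ of $\overline{\alpha(\mathcal{T})}$ obeys $\overline T f\leq \inf_{s>0}T_s^- f=T_\alpha^- f$, with equality exactly when $f\mapsto T_\alpha^- f$ is convex. Since $(f,s)\mapsto sT^-(f/s)(x)+\alpha^\oplus(s)$ is jointly convex (perspective of the convex operator $T^-$ evaluated at $x$), minimizing over $s>0$ makes $T_\alpha^- f(x)$ convex in $f$; monotonicity and affineness on constants are immediate, so $T_\alpha^-$ is a bona fide Kantorovich operator and $\overline T=T_\alpha^-$. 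This convexity of the infimum-defined $T_\alpha^-$ is the only genuinely non-formal point of the argument, and it is precisely what the perspective/partial-minimization fact delivers.

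For Part 2 I would take $\alpha(t)=t^p$ with $p>1$ and $\mathcal{T}=\mathcal{W}_c$, a backward linear transfer by Monge--Kantorovich theory (Example 2.8); the inequality $\mathcal{W}_c^p\leq\overline{\mathcal{W}_c^p}$ is then the envelope property from Part 1, as $\overline{\mathcal{W}_c^p}$ is the smallest backward linear transfer above $\alpha(\mathcal{T})$. For the displayed identity I would invoke the weak-transport representation of backward linear transfers (Theorem \ref{prop.zero}), which writes $\overline{\mathcal{W}_c^p}(\mu,\nu)=\inf_\pi\{\int_X \overline c(x,\pi_x)\,d\mu(x);\pi\in\mathcal{K}(\mu,\nu)\}$ with cost $\overline c(x,\sigma)=\overline{\mathcal{W}_c^p}(\delta_x,\sigma)$. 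Finally, Proposition \ref{prop.three} gives that a transfer envelope agrees with the original functional on Dirac masses (via the squeeze $\mathcal{T}\leq\overline{\mathcal{T}}\leq\tilde{\mathcal{T}}$ together with $\tilde{\mathcal{T}}(\delta_x,\cdot)=\mathcal{T}(\delta_x,\cdot)$), so $\overline c(x,\sigma)=\mathcal{W}_c^p(\delta_x,\sigma)$ and the claimed formula follows.
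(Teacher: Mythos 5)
Your proof follows essentially the same route as the paper's: Fenchel--Moreau duality to write $\alpha(\mathcal{T})$ as a supremum of rescaled transfers with Kantorovich operators $T_s^- f = sT^-(f/s)+\alpha^\oplus(s)$, followed by joint convexity of the perspective function $(s,f)\mapsto sT^-(f/s)+\alpha^\oplus(s)$ to conclude that the infimum over $s$ is convex in $f$, which gives equality in (\ref{mir}) via Proposition \ref{sup}. The extra details you supply --- the integrated version of the perspective argument establishing the convex-transfer (rather than merely coupling) property, and the explicit derivation of Part 2 from the envelope construction of Proposition \ref{prop.three} together with the agreement of $\overline{\mathcal{W}_c^p}$ and $\mathcal{W}_c^p$ on Dirac masses --- are consistent with the paper, which leaves these points implicit.
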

  \noindent {\bf Proof:}
 It suffices to note that $\alpha (t)=\sup\{ts-\alpha^{\oplus} (s); s\geq 0\}$, hence
\begin{eqnarray*}
\alpha ({\mathcal T}(\mu, \nu))
&=& 
\sup\big\{s\int_{Y}f\, d\nu -s\int_{X}{T^-}f\, d\mu-\alpha^\oplus(s);\, s\in \R^+,  f \in C(Y) \big\}\\
&=& 
\sup\big\{\int_{Y}h\, d\nu -s\int_{X}{T^-}(\frac{h}{s})\, d\mu-\alpha^\oplus(s);\, s\in \R^+,  h \in C(Y) \big\}.
 \end{eqnarray*}
Therefore $\alpha (\T)$ is a convex coupling and its envelope ${\overline{\alpha (\T)}}$ has a Kantorovich operator 
 \begin{equation}
T_\alpha^- f\leq \inf\limits_{s>0}\{sT^-(\frac{f}{s})+\alpha^\oplus(s)\}.
\end{equation}
Note however that for each $s>0$, $T_s^-f:=sT^-(\frac{h}{s})+\alpha^\oplus(s)$ is a backward Kantorovich operator. Moreover, the function 
$(s, f) \to sT^-(\frac{f}{s})+\alpha^\oplus(s)$ is jointly convex on $\R^+\times C(Y)$, hence the infimum in $s$ is convex in $f$ and therefore we have equality in (\ref{mir}).  
\qed
 \begin{cor} \label{recession}Let ${\cal T}$ be a  backward linear transfer with  Kantorovich operator $T^-$. Then, the functional 
 \begin{equation}
{\T_f}(\mu, \nu)=\left\{ \begin{array}{llll}
0 \quad &\hbox{if $\T(\mu, \nu) <+\infty$}\\
+\infty \quad &\hbox{\rm otherwise.}
\end{array} \right.
\end{equation} 
is a backward linear transfer with Kantorovich operator equal to 
 \begin{equation}
  T^-_rf(x)=\lim\limits_{\lambda \to +\infty} \frac {T^-(\lambda f)(x)}{\lambda}.
  \end{equation}
\end{cor}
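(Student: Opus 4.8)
The plan is to recognize $\mathcal{T}_f$ as the backward linear transfer whose Kantorovich operator is the \emph{recession} of $T^-$, and to prove this by computing the Legendre transform $(\mathcal{T}_f)^*_\mu$ directly and identifying it with $\int_X T^-_r g\,d\mu$, after which the converse half of Proposition \ref{prop.one} finishes the job. Fix $\mu\in D_1(\mathcal{T})$; note first that $D_1(\mathcal{T}_f)=D_1(\mathcal{T})$, since $\mathcal{T}_f(\mu,\nu)<+\infty$ precisely when $\mathcal{T}(\mu,\nu)<+\infty$. Write $h:=\mathcal{T}_\mu$, so that by Proposition \ref{prop.one} its conjugate on $C(Y)$ is $h^*(g)=\int_X T^- g\,d\mu$, and observe $h^*(0)=-\inf h$ is finite, so $\int_X T^-(0)\,d\mu$ is finite.

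First I compute $(\mathcal{T}_f)^*_\mu$. Since $(\mathcal{T}_f)_\mu$ is $0$ on $\mathrm{dom}(h)$ and $+\infty$ elsewhere, its Legendre transform is the support function of the effective domain, $\sigma(g):=\sup\{\int_Y g\,d\nu\,;\,\mathcal{T}(\mu,\nu)<+\infty\}$. The key identity is that this support function is the recession function of $h^*$: after normalizing $h\ge 0$ (subtracting $\inf h$, which only shifts $h^*(0)$), one has $\tfrac1\lambda h^*(\lambda g)=\sup_\nu\{\int_Y g\,d\nu-\tfrac1\lambda h(\nu)\}$ for $\lambda>0$, and since $-\tfrac1\lambda h$ increases to $0$ on $\mathrm{dom}(h)$ while remaining $-\infty$ off it, the right-hand side increases to $\sigma(g)$ as $\lambda\to+\infty$. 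Hence $\sigma(g)=\lim_{\lambda\to+\infty}\tfrac1\lambda\int_X T^-(\lambda g)\,d\mu$.

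Second I pass the limit inside the integral. For each $x$, property (b) of $T^-$ shows $\lambda\mapsto T^-(\lambda g)(x)$ is convex, so the difference quotients $\tfrac1\lambda\big(T^-(\lambda g)(x)-T^-(0)(x)\big)$ are nondecreasing in $\lambda$ and converge to the limit defining $T^-_r g(x)$; monotone convergence applied to these quotients, together with $\tfrac1\lambda\int_X T^-(0)\,d\mu\to 0$, yields $(\mathcal{T}_f)^*_\mu(g)=\sigma(g)=\int_X T^-_r g\,d\mu$ for every $g\in C(Y)$. It remains to check that $T^-_r$ is a Kantorovich operator: monotonicity, convexity, and the affine-on-constants property each survive the monotone limit of $\tfrac1\lambda T^-(\lambda\,\cdot)$ from the corresponding properties of $T^-$, and one reads off the extra positive homogeneity $T^-_r(tf)=t\,T^-_r f$ for $t>0$. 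With $T^-_r$ Kantorovich and $(\mathcal{T}_f)^*_\mu=\int_X T^-_r(\cdot)\,d\mu$, the converse of Proposition \ref{prop.one} (condition (2) of Theorem \ref{prop.zero}) identifies $\mathcal{T}_f$ as a backward linear transfer with Kantorovich operator $T^-_r$.

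The main obstacle is lower semicontinuity. Proposition \ref{prop.one} asks that $(\mathcal{T}_f)_\mu$ be convex and weak$^*$ lsc, yet the convex indicator of $\mathrm{dom}(h)$ is lsc only when $\mathrm{dom}(h)$ is closed, whereas in general it is merely an $F_\sigma$. Passing to biconjugates gives $(\mathcal{T}_f)^{**}_\mu=\iota_{\overline{\mathrm{dom}(h)}}$, so strictly speaking the transfer generated by $T^-_r$ is the lower semicontinuous envelope of $\mathcal{T}_f$, coinciding with $\mathcal{T}_f$ exactly when each $\mathrm{dom}(\mathcal{T}_\mu)$ is weak$^*$-closed (as holds in the examples of interest). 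A secondary technical point is that $T^-_r g\in USC(X)$ and that the integrability used in the monotone-convergence step is legitimate; both follow from the properness of $\mathcal{T}$ and the upper semicontinuity of $T^- g$, and I would record them as routine verifications alongside the main computation.
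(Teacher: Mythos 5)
Your computation is correct, and its core is the same as the paper's: the identification of $(\T_f)^*_\mu$ with the support function of $\mathrm{dom}(\T_\mu)$, and of that support function with the recession limit $\lim_{\lambda\to\infty}\frac{1}{\lambda}\int_X T^-(\lambda g)\,d\mu$ via monotone difference quotients and monotone convergence. The packaging differs slightly: the paper argues by two inequalities, getting $\T_f\leq \T_r$ from the envelope machinery of Proposition \ref{prop.three} and $\T_r\leq \T_f$ from exactly your monotone-convergence step applied to $\int_X T^-(tg)\,d\mu\geq t\int_Y g\,d\nu-\T(\mu,\nu)$, whereas you compute the Legendre transform once and invoke the converse half of Proposition \ref{prop.one}.

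The more substantive point is that the obstacle you flag at the end is not a loose end in your write-up; it is a genuine gap in the statement and in the paper's own proof, and your corrected formulation is the right one. The paper's inequality $\T_f\leq\overline{\T_f}=\T_r$ rests on Proposition \ref{prop.three}, whose proof uses $\T(\mu,\nu)=(\T_\mu)^{**}(\nu)$ and therefore requires $(\T_f)_\mu$ to be weak$^*$ lower semi-continuous --- precisely what fails when $\mathrm{dom}(\T_\mu)$ is not weak$^*$-closed. A concrete counterexample already sits in the paper's Example 2.1: take $\T(\mu,\nu)=I(\nu)$ with $I$ the entropy relative to Lebesgue measure on $Y=[0,1]$, so that $T^-g=\log\int_Y e^{g}\,dy$. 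Then $T^-_rg=\sup_Y g$, the transfer generated by $T^-_r$ is identically zero (the null transfer), yet $\T_f(\mu,\delta_{y_0})=+\infty$ because Dirac masses have infinite entropy; here $\mathrm{dom}(I)$ is weak$^*$-dense but proper, so $\T_f$ is not lower semi-continuous and hence is not a linear transfer at all. Thus the corollary holds only under the additional hypothesis you isolate (each section $\mathrm{dom}(\T_\mu)$ weak$^*$-closed, as for the balayage transfer, where the order relation is defined by closed conditions), and in general the correct conclusion is that the transfer generated by $T^-_r$ is the lower semi-continuous envelope of $\T_f$. One secondary caveat applies to both your argument and the paper's: $T^-_rg$ is an increasing supremum of upper semi-continuous functions, so it naturally lands in $USC_\sigma(X)$ rather than $USC(X)$, and even the claim that $T^-_r$ is a Kantorovich operator in the strict sense of the definition requires either this weaker target space or a further argument.
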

  \noindent {\bf Proof:}	 
  Given any bounded below, lower semi-continuous and convex  functional ${\mathcal T}: {\mathcal P}(X)\times {\mathcal P}(Y)\to \R \cup\{+\infty\}$ such that $\{\delta_x; x\in X\}\subset D_1 ({\mathcal T})$,  we can consider the Kantorovich operator that generates its backward linear transfer envelope $\bar \T$, that is 
   \begin{equation}\label{Kminus}
 T ^-g(x)=\sup\{\int_Yg(y)\, d\sigma (y)-{\mathcal T}(\delta_x, \sigma); \sigma \in {\mathcal P}(Y)\},
 \end{equation} 
and its corresponding recession function    
  \begin{equation*}
  T^-_rf(x)=\lim\limits_{\lambda \to +\infty} \frac {T^-(\lambda f)(x)}{\lambda}.
  \end{equation*}
  It is then clear that 
  $T^-_r$ is a Kantorovich operator and 
  \begin{equation} 
  T^-_rf(x)=\sup\{\int_Yf d\sigma; \sigma \in {\mathcal P}(Y), \T(\delta_x, \sigma) <+\infty\}.
   \end{equation}
  The corresponding linear transfer 
    \begin{equation}
   {\mathcal T}_r(\mu, \nu)=\sup\big\{\int_{Y}g(y)\, d\nu (y)-\int_{X}{T_r^-}g(x)\, d\mu(x);\,  g \in C(Y)\big\}.
  \end{equation}
  Since $T_r^-$ is positively homogenous, its associated transfer $\T_r$ can only take the values $0$ and $+\infty$. It is also clear that $\T_r$ is the envelope of $\T_f$, and therefore $\T_f \leq {\bar \T_f}=\T_r$. 
We now show that  if $\T$ is a linear transfer, then  $\T_r \leq \T_f$. Indeed, assume that $\T_f(\mu, \nu)=0$, then $\T(\mu, \nu)<+\infty$, hence for every $f\in C(X)$ we have 
\[
\int_X T^- (tf) d\mu \geq t\int_X f\ d\nu -\T(\mu, \nu),
\]
hence by dividing by $t$ and letting $t\to \infty$, we get from the monotone convergence theorem that $\int_XT_rfd\mu \geq \int_Xf d\nu$ and hence 
$\T_r(\mu, \nu) \leq 0=\T_f(\mu, \nu)$.

  \begin{rmk} \rm Note that the above shows that for a general backward linear transfer $\T$ with Kantorovich operator $T^-$ and Recession operator $T^-_r$, we have 
  \begin{equation}
  \T(\mu, \nu) <+\infty \quad \hbox{\rm if and only if \quad $\int_XT_rfd\mu \geq \int_Xf d\nu$ for every $f\in C(Y)$.}
  \end{equation}
  The latter condition can be seen as a {\em generalized order condition} between $\mu$ and $\nu$ that extends the notion of convex order. Indeed, if $\T$ is the balayage transfer, then $T^-f=T^-_rf=\hat f$, which is the concave envelope of $f$, and the condition does coincide with the convex order between measures. 
  \end{rmk}
  
 \section{Extension of Kantorovich operators} 
 
 In order to study the ergodic properties of a Kantorovich operator $T: C(X) \to USC (X)$, one needs to iterate it and therefore it is necessary to extend it to an operator $T: USC(X) \to USC (X)$ and eventually to $T: USC_\sigma(X) \to USC_\sigma (X)$ with the same properties (a), (b), (c) of a Kantorovich operator. 
 
 In order to define such an extension, we assume that $T$ is proper so that we can associate a linear transfer $\T$ on ${\mathcal P}(X)\times {\mathcal P}(Y)$ in such a way that 
 \begin{equation}\label{formula}
 Tf(x)=\sup\{\int_Y f d\nu -{\mathcal T}(\delta_x, \nu); \, \nu\in {\mathcal P}(Y) \} \quad \hbox{for every $f\in C(Y)$}.
 \end{equation}
 We shall then extend $T$ in such a way that (\ref{formula}) holds for every $f\in USC_\sigma(Y)$. Properties (a), (b) and (c) will then follow.
 
 \subsection{Extension of Kantorovich operators from $C(Y)$ to $USC_\sigma (Y)$}

 \begin{thm} Let $\T$ be a backward linear transfer such that  $\{\delta_x; x\in X\}\subset {\mathcal D}_1(\T)$, and let $T:C(Y) \to USC_\sigma(X)$ be the associated Kantorovich operator. 
 \begin{enumerate}
 \item For $f \in USC(Y)$,  define $T^{\widehat {\,}} f (x) := \inf\{ T g(x)\,;\, g \in C(Y), \, g \geq f \}$, then
 \begin{equation}\label{express.1}
\hbox{$T^{\widehat {\,}} f (x)=\sup\{\int_Y f d\nu -{\mathcal T}(\delta_x, \nu); \, \nu\in {\mathcal P}(Y) \}  
$,}
\end{equation}
and $T^{\widehat {\,}} $ maps $USC(Y)$ to $USC_\sigma (X)$.  

Moreover, if $T:C(Y) \to USC(X)$, then $T^{\widehat {\,}} $ maps $USC(Y)$ to $USC (X)$.
\item For $f \in USC_{\sigma}(Y)$, define $T^{\widehat {\,}}_{\widecheck {\,}} f := \sup\{ T^{\widehat {\,}} g\,;\, g \in USC(Y)\,,  g \leq f\}$, then 
 \begin{equation}\label{express.2}
T^{\widehat {\,}}_{\widecheck {\,}} f (x)=\sup\{\int_Y f d\nu -{\mathcal T}(\delta_x, \nu); \, \nu\in {\mathcal P}(Y) \},  
\end{equation}
and $T^{\widehat {\,}}_{\widecheck {\,}}$ maps $USC_{\sigma}(Y)$ to $USC_{\sigma}(X)$. 
\end{enumerate} 
 \end{thm}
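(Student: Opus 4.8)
The plan is to derive both assertions from the single representation (\ref{formula}), namely $Tg(x)=\sup\{\int_Y g\,d\nu-\T(\delta_x,\nu);\,\nu\in{\mathcal P}(Y)\}$ for $g\in C(Y)$. Fix $x\in X$ and, for $g\in USC(Y)$, write $F_g(\nu):=\int_Y g\,d\nu-\T(\delta_x,\nu)$, so that the two target identities (\ref{express.1}) and (\ref{express.2}) both read ``$T^{\widehat{\,}}f(x)=\sup_{\nu}F_f(\nu)$'' and ``$T^{\widehat{\,}}_{\widecheck{\,}}f(x)=\sup_{\nu}F_f(\nu)$''. Once these pointwise identities are in hand, the two operators are realized as countable infima (respectively suprema) of functions already lying in $USC_\sigma(X)$, and the stated range properties follow from closure of the classes $USC$ and $USC_\sigma$ under the relevant countable monotone operations.

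For Part 1, the inequality $T^{\widehat{\,}}f(x)\geq\sup_\nu F_f(\nu)$ is immediate: for every $g\in C(Y)$ with $g\geq f$ and every $\nu$, (\ref{formula}) gives $Tg(x)\geq\int_Y g\,d\nu-\T(\delta_x,\nu)\geq F_f(\nu)$, and one takes the infimum in $g$ and the supremum in $\nu$. The reverse inequality is the crux. Since $Y$ is compact metric and $f\in USC(Y)$ is bounded above, choose continuous $g_n\downarrow f$ pointwise; then $T^{\widehat{\,}}f(x)\leq\inf_n Tg_n(x)=\inf_n\sup_\nu F_{g_n}(\nu)$, so it suffices to establish the minimax exchange
\[
\inf_n\sup_{\nu\in{\mathcal P}(Y)}F_{g_n}(\nu)=\sup_{\nu\in{\mathcal P}(Y)}\inf_n F_{g_n}(\nu)=\sup_{\nu\in{\mathcal P}(Y)}F_f(\nu),
\]
where the second equality holds because $\int_Y g_n\,d\nu\downarrow\int_Y f\,d\nu$ by monotone convergence. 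The ``$\geq$'' in the first equality is automatic; for ``$\leq$'' I would run a Dini-type argument. Each $F_{g_n}$ is weak$^*$ upper semicontinuous on the compact set ${\mathcal P}(Y)$ (the map $\nu\mapsto\int_Y g_n\,d\nu$ is continuous and $\nu\mapsto\T(\delta_x,\nu)$ is lower semicontinuous, since $\T$ is jointly lower semicontinuous), so the supremum is attained at some $\nu_n$. Extracting a weak$^*$ limit $\nu_{n_k}\to\nu^\ast$ and using that $F_{g_m}\geq F_{g_{n_k}}$ whenever $n_k\geq m$, together with upper semicontinuity of $F_{g_m}$, forces $F_{g_m}(\nu^\ast)\geq\inf_n\sup_\nu F_{g_n}$ for every $m$, whence $F_f(\nu^\ast)=\inf_m F_{g_m}(\nu^\ast)\geq\inf_n\sup_\nu F_{g_n}$, which is the desired inequality.

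The pointwise identity just proved shows $T^{\widehat{\,}}f=\inf_n Tg_n$, a countable infimum of elements of $USC_\sigma(X)$, hence an element of $USC_\sigma(X)$; if moreover $T$ takes values in $USC(X)$, then each $Tg_n\in USC(X)$ and an arbitrary infimum of upper semicontinuous functions is upper semicontinuous, so $T^{\widehat{\,}}f\in USC(X)$. Part 2 is then a bootstrap. For $f\in USC_\sigma(Y)$ write $f=\sup_n f_n$ with $f_n\in USC(Y)$ increasing; monotone convergence gives $\int_Y f_n\,d\nu\uparrow\int_Y f\,d\nu$, so $\sup_\nu F_f(\nu)=\sup_n\sup_\nu F_{f_n}(\nu)=\sup_n T^{\widehat{\,}}f_n(x)$ by Part 1. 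Since each $f_n\leq f$ lies in $USC(Y)$ we get $\sup_n T^{\widehat{\,}}f_n(x)\leq T^{\widehat{\,}}_{\widecheck{\,}}f(x)$, while for any $g\in USC(Y)$ with $g\leq f$ monotonicity yields $T^{\widehat{\,}}g(x)=\sup_\nu F_g(\nu)\leq\sup_\nu F_f(\nu)$, hence $T^{\widehat{\,}}_{\widecheck{\,}}f(x)\leq\sup_\nu F_f(\nu)$. Thus all three quantities coincide, proving (\ref{express.2}) and exhibiting $T^{\widehat{\,}}_{\widecheck{\,}}f=\sup_n T^{\widehat{\,}}f_n$ as a countable supremum of $USC_\sigma(X)$ functions, so that $T^{\widehat{\,}}_{\widecheck{\,}}$ maps $USC_\sigma(Y)$ into $USC_\sigma(X)$.

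I expect the minimax interchange in Part 1 to be the only genuine obstacle; everything else is monotone convergence of integrals along monotone sequences and the stability of $USC$ and $USC_\sigma$ under countable infima and suprema. That interchange is precisely where the standing hypotheses enter: weak$^*$ compactness of ${\mathcal P}(Y)$ (compactness of $Y$), lower semicontinuity of $\T$ (so that each $F_{g_n}$ is upper semicontinuous and attains its maximum), and the approximability of an $USC$ function from above by continuous functions on a compact metric space.
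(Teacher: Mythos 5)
Your proofs of the two identities (\ref{express.1}) and (\ref{express.2}) are correct and follow essentially the paper's own route: the easy inequality by monotonicity; the hard one by taking continuous $g_n\searrow f$, picking maximizers $\nu_n$ (available because $\nu\mapsto \int_Y g_n d\nu-\T(\delta_x,\nu)$ is weak$^*$ upper semicontinuous on the compact set ${\mathcal P}(Y)$), extracting a weak$^*$ limit, using $g_{n_k}\leq g_m$ for $n_k\geq m$ together with semicontinuity, and finishing with monotone convergence; Part 2 is the same bootstrap the paper performs. Your ``minimax'' packaging is only cosmetic.

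The genuine gap is in your justification of the range assertion in Part 1. You conclude that $T^{\widehat{\,}}f=\inf_n Tg_n$ belongs to $USC_\sigma(X)$ because $USC_\sigma(X)$ is closed ``under the relevant countable monotone operations.'' It is not: $USC_\sigma(X)$ (countable \emph{increasing} suprema of $USC$ functions) is stable under countable suprema, but a countable \emph{infimum} of $USC_\sigma$ functions only lies in the strictly larger class $USC_{\sigma\delta}(X)=C_{\delta\sigma\delta}(X)$ --- a distinction the paper itself insists on in the remark following its monotone-continuity lemma, where it notes that $T$ cannot in general be extended to $USC_{\sigma\delta}$. To see that your closure principle is false, take $X=[0,1]$: the indicator $1_{U}$ of an open set $U$ lies in $USC_\sigma$ (it is an increasing limit of continuous functions), yet with $U_n=[0,1]\setminus\{q_1,\dots,q_n\}$ (an enumeration of the rationals) the infimum $\inf_n 1_{U_n}$ is the indicator of the irrationals, which is not in $USC_\sigma$: if it were $\sup_m u_m$ with $u_m\in USC$, then each closed set $\{u_m\geq 1/2\}$ would have empty interior and the irrationals would be contained in $\bigcup_m\{u_m\geq 1/2\}$, making $[0,1]$ a countable union of closed sets with empty interior, contradicting Baire's theorem. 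So from the formula $T^{\widehat{\,}}f=\inf_n Tg_n$ alone you get membership in $USC_{\sigma\delta}(X)$, not in $USC_\sigma(X)$.

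The way to close the gap --- and what the paper's terse ``therefore'' is pointing at --- is to read the regularity off the representation (\ref{express.1}) you have just established, rather than off the defining infimum. Writing $T^{\widehat{\,}}f(x)=\sup\{F(x,\nu);\,\nu\in{\mathcal P}(Y)\}$ with $F(x,\nu)=\int_Y f d\nu-\T(\delta_x,\nu)$, the first summand is weak$^*$ upper semicontinuous in $\nu$ (a decreasing limit of the continuous maps $\nu\mapsto\int_Y g_n d\nu$), and the second is jointly upper semicontinuous in $(x,\nu)$, being minus the composition of the weak$^*$ lower semicontinuous functional $\T$ with the weak$^*$ continuous map $x\mapsto\delta_x$; since a partial supremum over a compact factor of a jointly upper semicontinuous function is upper semicontinuous, $T^{\widehat{\,}}f$ is upper semicontinuous and bounded above by $\sup_Y f-\inf\T$, hence in particular lies in $USC_\sigma(X)$. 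By contrast, your treatment of the ``Moreover'' clause (an arbitrary infimum of $USC$ functions is $USC$) and of the range claim in Part 2 (countable suprema of $USC_\sigma$ functions remain in $USC_\sigma$, granted Part 1) are both correct.
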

\begin{proof} 1)
It is clear that for any $g \in C(Y)$, $g \geq f$, 
\[
\sup\{ \int_Y f\d\sigma - \T(\delta_x, \sigma)\,;\, \sigma \in \mcal{P}(Y)\}\leq \sup\{ \int_Y g\d\sigma - \T(\delta_x, \sigma)\,;\, \sigma \in \mcal{P}(Y)\} 
= T g(x).
\]
Therefore $\sup\{ \int_Y f\d\sigma - \T(\delta_x, \sigma)\,;\, \sigma \in \mcal{P}(Y)\}\leq \inf\{ T g(x)\,;\, g \in C(Y), g \geq f\} = T^{\widehat {\,}} f(x)$. 

On the other hand, let $g_n \searrow f$ be a decreasing sequence of continuous functions. Then, 
\as{
T^{\widehat {\,}} f(x) \leq T g_n(x) = \sup\{\int_Y g_n\d\sigma - \T(\delta_x, \sigma)\,;\, \sigma \in \mcal{P}(Y)\}
= \int_Y g_n \d \sigma_n - \T(\delta_x, \sigma_n),
}
for some probability measure $\sigma_n$. Consider an increasing subsequence $n_k$ so that $\sigma_{n_k} \to \bar{\sigma}$. Then for any $j \leq k$,
$
T^{\widehat {\,}}f (x) \leq \int_Y g_{n_j} \d\sigma_{n_k} - \T(\delta_x, \sigma_{n_k})
$
where we have used the fact that $g_{n_k} \leq g_{n_j}$ whenever $j \leq k$. For this fixed $j$, we have that $g_{n_j} \in C(Y)$ and so $\int g_{n_j}\d\sigma_{n_k} \to \int g_{n_j}\d\bar{\sigma}$ as $k \to \infty$. Hence we obtain
$$
T^{\widehat {\,}}f (x) \leq \lim_{k \to \infty}\int_Y g_{n_j}\d\sigma_{n_k} - \liminf_{k \to \infty}\T(\delta_x, \sigma_{n_k})\\
\leq \int_Y g_{n_j}\d\bar{\sigma} - \T(\delta_x, \bar{\sigma}).
$$
Finally we take a limit as $j \to \infty$ to obtain $T^{\widehat {\,}} f(x) \leq \sup\{ \int_Y f\d\sigma - \T(\delta_x, \sigma)\,;\, \sigma \in \mcal{P}(Y)\}$. 
It follows that $T^{\widehat {\,}} f$ satisfies (\ref{express.1}) and therefore $T^{\widehat {\,}} f\in USC_\sigma$. Note that $T^{\widehat {\,}}  f$ is bounded above since $T^{\widehat {\,}}f(x) \leq \sup_{y\in Y} f(y) - m_{\T}$,
where $m_{\T}$ is a lower bounded for $\T$.

If now $T:C(Y) \to USC(X)$, then $T^{\widehat {\,}} $ is in $USC (X)$ by its definition.

\noindent 2) For $f \in USC_{\sigma}(Y)$, we use the first part to write   
for any $g \in USC(Y)$, $g \leq f$, 
\as{
\sup\{ \int f\d\sigma - \T(\delta_x, \sigma)\,;\, \sigma \in \mcal{P}(Y)\} \geq \tilde{T}g(x)  
}
and so it is greater than $T^{\widehat {\,}}_{\widecheck {\,}}  f(x)$. On the other hand, for an increasing $g_n \nearrow f$, $g_n \in USC(Y)$, 
\as{
T^{\widehat {\,}}_{\widecheck {\,}} f (x) \geq T^{\widehat {\,}}  g_n(x)  \geq \int g_n\d\sigma - \T(\delta_x, \sigma),\quad \text{for any $\sigma \in \mcal{P}(Y)$}.
}
By the monotone convergence of $g_n$ to $f$, we may take the limit as $n \to \infty$ in the above inequality, and conclude
\eqs{
T^{\widehat {\,}}_{\widecheck {\,}} f (x) \geq \int f\d\sigma - \T(\delta_x, \sigma)\quad \text{for any $\sigma \in \mcal{P}(Y)$},
}
whereby taking the supremum in $\sigma$ yields $T^{\widehat {\,}}_{\widecheck {\,}} f (x) \geq \sup\{ \int f\d\sigma - \T(\delta_x, \sigma)\,;\, \sigma \in \mcal{P}(Y)\}$, and we are done showing that $T^{\widehat {\,}}_{\widecheck {\,}}$ maps $USC_{\sigma}(Y)$ to $USC_{\sigma}(X)$, while satisfying (\ref{express.2}).
\\

The following continuity properties of $T$ along monotone sequences of $USC(Y)$ and $USC_\sigma(Y)$ will be crucial for Sections 8 and 9.
 
\begin{lem}  \label{monotone}
Let $\T$ be a backward linear transfer as above, and let $T$ denote its corresponding Kantorovich operator, extended to $USC_\sigma(Y)$. 
\begin{enumerate}
\item If $f_n \in USC(Y)$, $f \in USC_\sigma(Y)$ with $f_n \searrow f$, then $\lim_{n \to \infty} T f_n = Tf$. 
\item If $f_n \in USC_\sigma(Y)$, $f \in USC_\sigma(Y)$ with $f_n \nearrow f$, then $\lim_{n \to \infty}T f_n = Tf$.
\end{enumerate}
\end{lem}
\prf{1) By monotonicity, $T f \leq \liminf_{n} Tf_n$. On the other hand let $\sigma_n$ achieve the supremum  in the definition of  $T f_n (x)$, i.e.,  
\eqs{
T f_n (x) = \int f_n \d\sigma_n - \T(\delta_x, \sigma_n).
}
Extract an increasing subsequence $n_k$ so that $\limsup_{n} T f_n (x) = \lim_{k}T f_{n_k}(x)$ and $\sigma_{n_k} \to \bar{\sigma}$. Then as before, we have $T f_{n_k} (x) \leq \int f_{n_j}\d\sigma_{n_k} - \T(\delta_x, \sigma_{n_k})$ for fixed $j \leq k$. As $f_{n_j} \in USC(Y)$, it follows that $\limsup_{n} T f_n (x) \leq \int f_{n_j}\d \bar{\sigma} - \T(\delta_x, \bar{\sigma})$. Then we let $j \to \infty$ and use monotone convergence.

2) Again, by monotoncity, $T f \geq \limsup_{n} T f_n(x)$. On the other hand, $T f_n(x) \geq \int f_n \d\sigma - \T(\delta_x, \sigma)$ for all $\sigma$. Hence by monotone convergence, $\liminf_{n} T f_n (x) \geq \int f \d \sigma - \T(\delta_x, \sigma)$ for all $\sigma$. Taking the supremum over $\sigma$ yields $\liminf_{n} T f_n (x) \geq T f(x)$.
}
\rmk{We note that In general, the operator $T$ cannot be extended to the class $C_{\delta\sigma\delta}(Y)=USC_{\sigma\delta}(Y)$, and the continuity property of $T$  in item (1) of the above lemma cannot be extended to sequence $f_n \in USC_\sigma(Y)$.
}
\end{proof}

 \begin{cor} Let  ${\T}: {\mathcal P}(X)\times {\mathcal P}(Y)\to \R \cup\{+\infty\}$ be a backward linear transfer
such that $\{\delta_x; x\in X\}\subset D_1 ({\mathcal T})$. Then, 
\begin{enumerate} 
\item For any $(\mu, \nu)\in {\mathcal P}(X)\times {\mathcal P}(Y)$, we have
\begin{align*}
 {\mathcal T}(\mu, \nu)&=\sup\big\{\int_{Y}g(y)\, d\nu(y)-\int_{X}{T ^-}g(x)\, d\mu(x);\,  g \in LSC(Y)\big\}\\
 &=\sup\big\{\int_{Y}g(y)\, d\nu(y)-\int_{X}{T ^-}g(x)\, d\mu(x);\,  g \in USC(Y)\big\}\\
 &=\sup\big\{\int_{Y}g(y)\, d\nu(y)-\int_{X}{T ^-}g(x)\, d\mu(x);\,  g \in USC_\sigma(Y)\big\}.
\end{align*}
\item The Legendre transform formula (\ref{LTb}) for $\T_\mu$, which holds for continuous functions on $Y$, also holds for $g\in USC_\sigma(Y)$, that is
\begin{equation}\label{extLT}
\hbox{${\mathcal T}^*_\mu (g):=\sup\{\int_Yg\d\sigma -\T(\mu. \sigma); \sigma \in {\mathcal P}(Y)\}=\int_XT ^-g \, d\mu $ \, for all $g\in USC_\sigma(Y)$}.
\end{equation}

\end{enumerate}
\end{cor}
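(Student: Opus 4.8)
The plan is to establish the Legendre identity (2) for all $g\in USC_\sigma(Y)$ first, and then to read off (1) from it by Fenchel--Young duality. Two ingredients from earlier in the paper drive everything. Since $\T$ is a backward linear transfer with $\{\delta_x\}\subset D_1(\T)$, Theorem \ref{prop.zero} (together with the construction in Proposition \ref{prop.three}) gives the weak-transport representation with cost $c(x,\sigma)=\T(\delta_x,\sigma)$, namely $\T(\mu,\nu)=\inf\{\int_X \T(\delta_x,\pi_x)\,d\mu(x);\ \pi\in\mathcal K(\mu,\nu)\}$. And the extension theorem above (formula (\ref{express.2})) tells us that for every $g\in USC_\sigma(Y)$ and every $x$ one has $T^-g(x)=\sup\{\int_Y g\,d\sigma-\T(\delta_x,\sigma);\ \sigma\in\mathcal P(Y)\}=\T^*_{\delta_x}(g)$. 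The only feature of $g$ that either ingredient uses is that $\sigma\mapsto\int_Y g\,d\sigma$ is affine, which holds for any $g$, so these remain available when we leave the continuous class.

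For the bound $\T^*_\mu(g)\le\int_X T^-g\,d\mu$, I would fix $\sigma\in\mathcal P(Y)$ and an arbitrary $\pi\in\mathcal K(\mu,\sigma)$, disintegrate, and use the displayed formula for $T^-g(x)$ in the form $\int_Y g\,d\pi_x\le T^-g(x)+\T(\delta_x,\pi_x)$ to get $\int_Y g\,d\sigma=\int_X\big(\int_Y g\,d\pi_x\big)\,d\mu(x)\le\int_X\big(T^-g(x)+\T(\delta_x,\pi_x)\big)\,d\mu(x)$. Taking the infimum over $\pi$ and invoking the weak-transport representation gives $\int_Y g\,d\sigma-\T(\mu,\sigma)\le\int_X T^-g\,d\mu$, and the supremum over $\sigma$ finishes this direction. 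This is exactly the computation in Proposition \ref{prop.two}, now carried out for $g\in USC_\sigma(Y)$ instead of $g\in C(Y)$.

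For the reverse bound $\T^*_\mu(g)\ge\int_X T^-g\,d\mu$ I would reuse the concavity/Jensen argument of Proposition \ref{prop.three}. Because $\sigma\mapsto\int_Y g\,d\sigma$ is affine even for $g\in USC_\sigma(Y)$, the map $\mu\mapsto\T^*_\mu(g)=-\inf_\sigma\{\T(\mu,\sigma)-\int_Y g\,d\sigma\}$ is still concave, being minus the infimal projection of a jointly convex functional; Jensen's inequality applied to the barycentric representation $\mu=\int_X\delta_x\,d\mu(x)$ then yields $\T^*_\mu(g)\ge\int_X\T^*_{\delta_x}(g)\,d\mu(x)=\int_X T^-g\,d\mu$. (Equivalently one may run the measurable-selection argument of Proposition \ref{prop.two}: take a near-optimal measurable $x\mapsto\bar\sigma_x$ in the formula for $T^-g(x)$, set $\bar\nu=\int_X\bar\sigma_x\,d\mu$, and use $\T(\mu,\bar\nu)\le\int_X\T(\delta_x,\bar\sigma_x)\,d\mu$.) Together with the previous paragraph this proves (2). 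Statement (1) then follows: every $g\in C(Y)$ lies in each of $LSC(Y)$, $USC(Y)$, $USC_\sigma(Y)$ — and since each bounded-below lower semi-continuous function on the compact space $Y$ is an increasing limit of continuous functions, $LSC(Y)\subset USC_\sigma(Y)$ — so all three suprema are $\ge\sup_{g\in C(Y)}\{\int_Y g\,d\nu-\int_X T^-g\,d\mu\}=\T(\mu,\nu)$, while for the reverse Fenchel--Young gives $\int_Y g\,d\nu-\int_X T^-g\,d\mu=\int_Y g\,d\nu-\T^*_\mu(g)\le\T_\mu(\nu)=\T(\mu,\nu)$ for every $g\in USC_\sigma(Y)$ by (2).

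The hard part will be the functional-analytic technicalities hidden in the reverse inequality of (2): the concavity-and-Jensen step (or the selection step), which is transparent for continuous $g$, must be justified for merely $USC_\sigma$ functions. One must check that $\mu\mapsto\T^*_\mu(g)$ retains enough upper semi-continuity to apply Jensen — equivalently, that a measurable near-optimal selection $x\mapsto\bar\sigma_x$ exists when $\sigma\mapsto\int_Y g\,d\sigma$ is only weak$^*$ lower semi-continuous — and that the integrals are well defined given that functions in these classes need not be bounded on both sides. The continuity-along-monotone-sequences Lemma \ref{monotone} is the tool that reduces each of these points back to the already-settled continuous case.
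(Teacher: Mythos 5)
Your overall architecture is sound and, in its ingredients, is essentially the paper's: the two facts you isolate (the weak-transport representation $\T(\mu,\nu)=\inf_\pi\int_X\T(\delta_x,\pi_x)\,d\mu$ and the extension formula $T^-g(x)=\T^*_{\delta_x}(g)$ for $g\in USC_\sigma(Y)$) are exactly what the paper's proof rests on; your disintegration argument for $\T^*_\mu(g)\le\int_X T^-g\,d\mu$ is the paper's computation verbatim; and deriving (1) from (2) by Fenchel--Young together with the inclusions $LSC(Y)\cup USC(Y)\subset USC_\sigma(Y)$ is a harmless reorganization (the paper proves (1) directly by the same disintegration and then reads the forward half of (2) off it).

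The genuine gap is in the reverse inequality $\T^*_\mu(g)\ge\int_X T^-g\,d\mu$. Your primary route --- Jensen applied to the concave map $\mu\mapsto\T^*_\mu(g)$ directly for $g\in USC_\sigma(Y)$ --- fails as stated: Jensen's inequality for a concave functional on $\mcal{P}(X)$ requires upper semi-continuity (one represents the functional as an infimum of continuous affine functionals, which is what the paper's own Jensen step in Proposition \ref{prop.three} invokes for continuous $f$), and for $g\in USC_\sigma(Y)$ the map $\sigma\mapsto\int_Y g\,d\sigma$ is only a countable increasing supremum of u.s.c.\ functionals, so $\mu\mapsto\T^*_\mu(g)$ is a supremum of u.s.c.\ functions of $\mu$ and need not be u.s.c.; the hypothesis of Jensen is simply not available. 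The same obstruction hits your parenthetical alternative: for such $g$ the supremum defining $T^-g(x)$ need not be attained, so one would need a merely near-optimal \emph{measurable} selection for a non-semicontinuous integrand, which is not covered by the selection argument of Proposition \ref{prop.two}. The missing idea --- and it is how the paper closes this step --- is an intermediate $USC$ stage: for $g\in USC(Y)$ (bounded above since $Y$ is compact) the functional $\sigma\mapsto\int_Y g\,d\sigma-\T(\delta_x,\sigma)$ is u.s.c.\ on the compact set $\mcal{P}(Y)$, so the supremum \emph{is} attained, an exact measurable selection $x\mapsto\bar\pi_x$ exists (equivalently, $\mu\mapsto\T^*_\mu(g)$ is u.s.c.\ and concave, so your Jensen step becomes legitimate), and (\ref{extLT}) follows at this level; one then passes to $g\in USC_\sigma(Y)$ by increasing limits $g_n\nearrow g$ with $g_n\in USC(Y)$, using Lemma \ref{monotone} and monotone convergence on the left-hand side and the commutation of suprema $\sup_n\sup_\sigma=\sup_\sigma\sup_n$ on the right. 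You do name Lemma \ref{monotone} as the tool in your closing paragraph, but without this two-stage structure the reduction you appeal to cannot be run: $USC_\sigma$ functions are not monotone limits of continuous functions in a single step, so ``reducing to the continuous case'' necessarily passes through $USC(Y)$, and that is precisely where the attainment/semicontinuity you need is restored.
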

\begin{proof} It is clear that $ {\mathcal T}(\mu, \nu)$ is smaller than all the expressions on its right. It is also clear that it suffices to show that 
$${\mathcal T}(\mu, \nu) \geq \sup\big\{\int_{Y}g(y)\, d\nu(y)-\int_{X}{T ^-}g(x)\, d\mu(x);\,  g \in USC_\sigma(Y)\big\}.$$
For that recall from Section 2 that for any $g\in USC_\sigma(Y)$ we have for every $x\in X$,
\begin{equation}\label{ext}
 T^-g(x)=\sup\{\int_Yg(y)\, d\sigma (y)-\T(x, \sigma); \sigma \in {\mathcal P}(Y)\}.
\end{equation}
Take now any $\pi \in {\mathcal K}(\mu, \nu)$ and its disintegration $(\pi_x)_x$ in such a way that $\nu (A)=\int_X\pi_x(A)\d\mu (x)$, then 
\[
 T^-g(x)\geq \int_Yg(y)\, d\pi_x (y)-\T(x, \pi_x),  
 \]
hence,
\begin{align*}
\int_{Y}g(y)\, d\nu(y) -\int_{X}{T ^-}g(x)\, d\mu(x)  &\leq \int_{Y}g(y)\, d\nu(y) -\int_X\int_Yg(y)\, d\pi_x (y)\, d\mu (x)+\int_X \T(x, \pi_x) \, d\mu (x)\\ &\leq \int_X \T(x, \pi_x) \, d\mu. 
\end{align*}
It follows from Theorem \ref{prop.zero} that 
\begin{equation}\label{oneside1}
 \int_{Y}g(y)\, d\nu(y) -\int_{X}{T ^-}g(x)\, d\mu(x) \leq \T (\mu, \nu),
\end{equation}
and (1) is done.

For (2) note first that (\ref{oneside1}) yields that $\int_XT ^-g(x) \, d\mu(x) \geq {\mathcal T}^*_\mu (g)$. On the other hand, assume $g\in USC(Y)$ and use (\ref{ext}) to find a measurable selection $x\to {\bar \pi}_x$ from $X$ to ${\mathcal P}(Y)$ such that
\[
T^-g(x)=\int_Yg(y) d {\bar \pi}_x(y) - c(x, \bar \pi_x) \quad \hbox{for every $x\in X$,}
\]
 and let $\sigma (A)=\int_X\bar \pi_x(A)\, d\mu (x)$, then 
\begin{align*}
\int_XT ^-g(x) \, d\mu(x)&=\int_X \int_Yg(y) d {\bar \pi}_x(y)\, d\mu (x) - \int_Zc(x, \pi_x)\, d\mu (x)\\ 
&\leq \int_Yg(y)\, d\sigma (y)-\T(\mu , \sigma), 
\end{align*}
hence $\int_XT ^-g(x) \, d\mu(x) \leq {\mathcal T}^*_\mu (g)$.
Note now that (\ref{extLT}) carries through increasing limits, hence it also holds for $g\in USC_\sigma(Y)$.\qed
\end{proof}
\subsection{Conjugate functions for bi-directional transfers}

Suppose now that $\T$ is both a backward and forward transfer with Kantorovich operators $T^-$ and $T^+$. We have the following notion motivated by the theory of mass transport.

\begin{defn} Say that a pair of functions $(f_1, f_2)\in USC(Y) \times LSC(X)$ are conjugate if:
\begin{equation}
\hbox{$T^-f_1=f_2$\quad  and \quad $T^+f_2=f_1$.}
\end{equation}
\end{defn} 
The following proposition shows in particular that for any function $g\in C(Y)$, the couple $(T^-g, T^+\circ T^-g)$ form a conjugate pair. 

\begin{prop}\label{few} Suppose  ${\mathcal T}: {\mathcal P}(X)\times {\mathcal P}(Y) \to \R\cup \{+\infty\}$   is both a forward and backward linear transfer, 
and that $\{(\delta_x, \delta_y); (x, y)\in X\times Y\} \subset D({\calT})$. Assume that $T^-:C(Y)\to USC(X)$ and that $T^+:C(X)\to LSC(Y)$, then for any $g\in C(Y)$  
(resp.,$f\in C(X)$)  
\begin{equation}\label{compare}
\hbox{$T ^+\circ T ^-g(y) \geq g(y)$ for $y\in Y,$ \quad  \quad $T ^-\circ T ^+f(x) \leq f(x)$ for $x\in X$,}
\end{equation}
and
 \begin{equation} 
 \hbox{$T ^-\circ T ^+\circ T ^-g=  T ^-g$ \quad and \quad $T ^+\circ T ^-\circ T ^+f = T ^+f.$}
 \end{equation} 
 In particular, 
  \begin{eqnarray}
{\mathcal T}(\mu, \nu)&=&\sup\big\{\int_{Y}T ^+\circ T ^-g(y)\, d\nu(y)-\int_{X}T^-g\, d\mu(x);\,  g \in C(Y)\big\}\label{tau-convex}\\
&=&\sup\big\{\int_{Y}T^+f(y)\, d\nu(y)-\int_{X}T^-\circ T^+f\, d\mu(x);\,  f \in C(X)\big\}.
\end{eqnarray}
\end{prop}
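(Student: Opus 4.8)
The plan is to carry out everything through the integral (Legendre) representations of the two operators. By the extension theorem of Section 4, in particular (\ref{express.1}) and (\ref{ext}), the backward operator satisfies
\[
T^- h(x)=\sup_{\sigma\in\mathcal{P}(Y)}\Big\{\int_Y h\,d\sigma-\T(\delta_x,\sigma)\Big\}
\]
for every $h\in USC_\sigma(Y)$, and the dual (forward) argument gives
\[
T^+ h(y)=\inf_{\sigma\in\mathcal{P}(X)}\Big\{\int_X h\,d\sigma+\T(\sigma,\delta_y)\Big\}
\]
for bounded lower semi-continuous $h$ on $X$. Since $T^- g\in USC(X)$ and $T^+ f\in LSC(Y)$, these two formulas make sense of the compositions $T^+\circ T^-$ and $T^-\circ T^+$ and agree with $T^\pm$ on continuous functions. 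The first task is therefore to record these representations, together with the extended Legendre identity (\ref{extLT}), which gives $\int_X T^- g\,d\mu=\T^*_\mu(g)$, and its forward counterpart $\int_Y T^+ f\,d\nu=\inf_\mu\{\int_X f\,d\mu+\T(\mu,\nu)\}$. The underlying structure is a monotone Galois-type adjunction between $T^-$ and $T^+$, and the whole statement is the standard consequence of such an adjunction.

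Next I would prove the two inequalities in (\ref{compare}) by specializing one marginal to a Dirac mass. For the first, apply the forward formula to $h=T^- g$ and use (\ref{extLT}) together with $\T^*_\mu(g)\ge\int_Y g\,d\sigma-\T(\mu,\sigma)$ evaluated at $\sigma=\delta_y$:
\[
T^+\!\circ T^- g(y)=\inf_{\mu}\Big\{\T^*_\mu(g)+\T(\mu,\delta_y)\Big\}\ge\inf_\mu\big\{g(y)-\T(\mu,\delta_y)+\T(\mu,\delta_y)\big\}=g(y).
\]
Symmetrically, applying the backward formula to $h=T^+ f$ and using the forward Legendre identity at $\mu=\delta_x$ gives $\int_Y T^+ f\,d\nu\le f(x)+\T(\delta_x,\nu)$, whence $T^-\!\circ T^+ f(x)=\sup_\nu\{\int_Y T^+ f\,d\nu-\T(\delta_x,\nu)\}\le f(x)$.

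The triple identities then follow purely from monotonicity (property (a)) combined with (\ref{compare}). Applying the monotone operator $T^-$ to $T^+\circ T^- g\ge g$ yields $T^-\circ T^+\circ T^- g\ge T^- g$, while the inequality $T^-\circ T^+ h\le h$ applied to $h=T^- g$ gives the reverse inequality, so $T^-\circ T^+\circ T^- g=T^- g$; the identity $T^+\circ T^-\circ T^+ f=T^+ f$ is obtained in the same way. Note that the inequalities of (\ref{compare}), being proved by the Dirac specialization above, hold for any argument on which the operators are defined and not merely for continuous ones, which is exactly what is used here.

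Finally I would establish the two representations by squeezing. For (\ref{tau-convex}), the lower bound $\ge\T(\mu,\nu)$ is immediate from $T^+\circ T^- g\ge g$ and the backward definition of $\T$. For the reverse, set $h:=T^+\circ T^- g\in LSC(Y)$; by the triple identity $T^- h=T^- g$, so
\[
\int_Y h\,d\nu-\int_X T^- g\,d\mu=\int_Y h\,d\nu-\int_X T^- h\,d\mu\le\T(\mu,\nu),
\]
the last step being the representation of $\T$ extended to lower semi-continuous test functions (the Corollary following Lemma \ref{monotone}). The second representation is proved identically: start from the forward definition, use $T^-\circ T^+ f\le f$ for the lower bound, and combine the triple identity $T^+\circ T^-\circ T^+ f=T^+ f$ with the forward analogue of that Corollary (representation over $USC(X)$ test functions) for the upper bound. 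The main obstacle is not any single estimate but the bookkeeping of extensions: one must be certain that $T^+$ may legitimately be evaluated on the upper semi-continuous function $T^- g$ (and $T^-$ on the lower semi-continuous $T^+ f$), and that the dual representation of $\T$ persists over these enlarged test-function classes. Both facts are supplied by the extension results of Section 4 and their forward counterparts.
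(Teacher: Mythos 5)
Your proposal is correct and follows essentially the same route as the paper: both arguments rest on the extended Legendre representations of $T^-$ and $T^+$ from Section 4 (and their forward counterparts, which is what legitimizes evaluating $T^+$ on the $USC$ function $T^-g$), obtain the inequalities (\ref{compare}) by inserting the trivial bound $\T^*_\mu(g)\geq \int_Y g\,d\sigma-\T(\mu,\sigma)$ at Dirac masses so that the transfer terms cancel, and then deduce the triple identities and the two representations of $\T$ from monotonicity. The differences are cosmetic: the paper proves the integrated inequality $\int_Y T^+\circ T^-g\,d\nu\geq \int_Y g\,d\nu$ for arbitrary $\nu$ rather than specializing to $\delta_y$, and it leaves the final assertions as consequences of monotonicity, which you spell out explicitly.
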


\noindent{\bf Proof:}  Note that $USC(X)\subset LSC_\delta(X)$, hence for  $\nu \in {\mathcal P}(Y)$, 
\begin{eqnarray*}
\int_YT ^+\circ T ^-g\, d\nu&=&-\T_\nu^*(-T ^-g)\\  
&=& -\sup\{-\int_XT_{\delta_x}^*(g)\, d\mu (x)-{\calT}(\mu, \nu); \mu \in {\mathcal P}(X)\}\\
&=& \inf \{\int_XT_{\delta_x}^*(g)\, d\mu (x)+{\calT}(\mu, \nu); \mu \in {\mathcal P}(X)\}\\
&\geq & \inf \{\int_XT_{\delta_x}^*(g)\, d\mu (x)+\int_Yg\, d\nu - \int_XT_{\delta_x}^*(g)\, d\mu; \mu \in {\mathcal P}(X)\}\\
&=&\int_Yg\, d\nu.
\end{eqnarray*}
 The last item follows from the above and the monotonicity property  of the Kantorovich operators.

\section{Linear transfers which are not mass transports}

We now give examples of linear transfers, which do not fit in the framework of Monge-Kantorovich theory. 
  
\subsection{Linear transfers associated to weak mass transports}

Weak mass transportations also arise from the work of Marton, who extended the work of Talagrand. The paper of Gozlan et al. \cite{Go4} exhibit many examples of which we single out the following. \\

 \noindent {\bf Example 4.2: Marton transports are backward linear transfers} (Marton  \cite{Ma1, Ma2})

These are transports of the following type: 
 \begin{equation}
 {\mathcal T}_{\gamma, d}(\mu, \nu)=\inf\left\{\int_X \gamma \left(\int_Y d(x,y)d\pi_x(y)\right)\, d\mu (x); \pi\in {\cal K}(\mu, \nu)\right\},
 \end{equation}
 where $\gamma$ is a convex function on $\R^+$ and $d:X\times Y \to \R$ is a lower semi-continuous functions. Marton's weak transfer correspond to $\gamma (t)=t^2$ and $d(x, y)=|x-y|$, which in probabilistic terms reduces to  
 \begin{equation}
 {\mathcal T}_2(\mu, \nu)=\inf\left\{\ensuremath{\mathbb{E}}[\ensuremath{\mathbb{E}}[|X-Y|\, |Y]^2]; X \sim \mu, Y\sim \nu \right\}. 
 \end{equation}
 This is a backward linear transfer with Kantorovich potential 
 \[
 T ^-f(x)=\sup\left\{ \int_Yf(y) d\sigma (y) - \gamma \left(\int_Y d(x, y)\, d\sigma (y)\right); \ \sigma \in {\cal P}(Y)\right\}.
 \]
 We now give applications to transfers that are mostly dependent on the barycenter of the measures involved. 
 
 \begin{prop} \label{bar} Let $\T$ be a backward linear transfer on ${\cal P}(X) \times {\cal P} (Y)$, where $Y$ is convex compact such that for some lower semi-continuous functional $c:X\times Y\to \R$, we have 
 $$\T (x, \sigma)=c(x, \int_Y y\, d\sigma (y)) \quad \hbox{for all $x\in X$ and $\sigma \in {\cal P} (Y)$},$$ 
 where $\int_Y y\, d\sigma (y)$ denotes the barycenter of $\sigma$. Then, for every $f\in C(Y)$, 
 \[
 T^- f(x)=\sup\{{\hat f}(y)-c(x,y); y\in Y\},
 \]
 where $ \hat f$ is the concave envelope of $f$, i.e., the smallest concave usc function above $f$. 
  \end{prop}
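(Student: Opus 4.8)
The plan is to start from the representation of the backward Kantorovich operator attached to a linear transfer, namely $T^-f(x)=\sup\{\int_Y f\,d\sigma-\T(\delta_x,\sigma);\,\sigma\in\mathcal{P}(Y)\}$ (formula \eqref{Kminus}), and substitute the barycentric hypothesis $\T(\delta_x,\sigma)=c(x,\int_Y y\,d\sigma(y))$. Writing $b(\sigma):=\int_Y y\,d\sigma(y)\in Y$ for the barycenter (which lies in $Y$ by convexity and compactness), this gives
\begin{equation*}
T^-f(x)=\sup_{\sigma\in\mathcal{P}(Y)}\Big\{\int_Y f\,d\sigma-c(x,b(\sigma))\Big\}.
\end{equation*}

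First I would slice the supremum according to the value of the barycenter. Since the penalty $c(x,b(\sigma))$ depends on $\sigma$ only through $b(\sigma)$, and since every point $y\in Y$ is the barycenter of some probability measure (e.g.\ $\delta_y$), I can rewrite
\begin{equation*}
T^-f(x)=\sup_{y\in Y}\Big\{\Big(\sup_{\sigma:\,b(\sigma)=y}\int_Y f\,d\sigma\Big)-c(x,y)\Big\}.
\end{equation*}
The key step is then to identify the inner supremum: for each $y\in Y$,
\begin{equation*}
\sup\Big\{\int_Y f\,d\sigma;\,\sigma\in\mathcal{P}(Y),\ b(\sigma)=y\Big\}=\hat f(y),
\end{equation*}
the value at $y$ of the concave envelope of $f$. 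This is precisely the Choquet-type identity already recorded for the balayage transfer (Example~3.6), specialized to the cone $\mathcal{A}$ of continuous convex functions: indeed $\delta_y\prec_{\mathcal{A}}\sigma$ means $\phi(y)\le\int_Y\phi\,d\sigma$ for every convex continuous $\phi$, which (testing against affine $\ell$ and $-\ell$, and invoking Jensen for the converse) is equivalent to $b(\sigma)=y$; the cited identity $\hat f(y)=\sup\{\int_Y f\,d\sigma;\,\delta_y\prec_{\mathcal{A}}\sigma\}$ then gives exactly the claim. Combining the three displays yields $T^-f(x)=\sup\{\hat f(y)-c(x,y);\,y\in Y\}$, as desired.

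The only point requiring care --- and the main (if modest) obstacle --- is the concave-envelope identity itself, should one wish to argue it directly rather than cite the balayage example. The inequality $\int_Y f\,d\sigma\le\hat f(b(\sigma))$ is immediate from Jensen applied to any concave upper semi-continuous $g\ge f$, followed by taking the infimum over such $g$. The reverse inequality requires that $\hat f(y)$ be approximated by $\int_Y f\,d\sigma$ over measures of barycenter exactly $y$; this follows from the finite-combination description
\begin{equation*}
\hat f(y)=\sup\Big\{\textstyle\sum_i\lambda_i f(y_i);\ \lambda_i\ge0,\ \sum_i\lambda_i=1,\ \sum_i\lambda_i y_i=y\Big\}
\end{equation*}
(Carath\'eodory on the compact convex set $Y$), each such combination being realized by the finitely supported measure $\sigma=\sum_i\lambda_i\delta_{y_i}\in\mathcal{P}(Y)$ with $b(\sigma)=y$. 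Since $f\in C(Y)$ and $c$ is lower semi-continuous, no further regularity is needed to make the outer supremum over $y\in Y$ meaningful, and $\hat f$ is the smallest concave upper semi-continuous function above $f$ as claimed.
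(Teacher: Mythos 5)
Your proof is correct and follows essentially the same route as the paper: represent $T^-f(x)$ as a supremum over $\sigma\in\mathcal{P}(Y)$, slice that supremum by the value of the barycenter (equivalently, by the balayage relation $\delta_y\prec_{\mathcal{C}}\sigma$ for the cone of convex functions), and invoke the Choquet--Mokobodzki identity of Example~3.6 to identify the inner supremum with $\hat f(y)$. The extra details you supply --- the affine-testing/Jensen equivalence between $b(\sigma)=y$ and $\delta_y\prec_{\mathcal{C}}\sigma$, and the self-contained Carath\'eodory argument for the envelope identity (valid in finite dimensions) --- are accurate elaborations of steps the paper merely asserts.
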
 
  \noindent {\bf Proof:} Note that $z$ is the barycenter of a probability measure $\sigma$ if and only if $\delta_z\prec_{\cal C} \sigma$ where $\cal C$ is the cone of convex functions.  Write now
  \begin{align*}
   T^-f(x)&=\sup\{\int_Y f\, d\sigma - c(x, \int_Y y\, d\sigma (y)); \sigma  \in {\cal P} (Y)\}  \\
   &=\sup_{z\in Y} \sup \{\int f\, d\sigma - c(x, y); \sigma  \in {\cal P} (Y), \delta_y\prec \sigma\}  \\
    &=\sup_{z\in Y} \{{\hat f}(z) - c(x, z)\}.
  \end{align*}

 \noindent {\bf Example 4.3:  A barycentric cost function} (Gozlan et al. \cite{Go4})
  
Consider the (weak) transport
  \begin{equation}
  {\cal T}(\mu, \nu)=\inf\left\{\int_X \|x-\int_Y y d\pi_x(y)\|\, d\mu (x); \pi\in {\cal K}(\mu, \nu)\right\}.
 \end{equation}
Again, this is a backward linear transfer, with Kantorovich potential 
 \[
 T ^-f(x)=\sup\{ {\hat f}(y) -\|y-x\|; y\in \R^n\}, 
 \]
where $ \hat f$ is the concave envelope of $f$.  

Note that the same holds if one uses other cones for balayage, such as the cone of subharmonic or plurisubharmonic functions.\\

\noindent {\bf Example 4.4: The variance functional}\label{var}

If the transfer is given by the variance functional 
$$\T(\mu, \nu):=I(\nu)=-{\rm var} (\nu):=|\int_Yy\ d\nu|^2-\int_Y|y|^2\, d\nu(y),$$ 
then, by letting $q$ be the quadratic function $q(y)=|y|^2$, we have
\begin{align*}
T ^-f(x)=&\sup\{ \int_Y f\, d\sigma - |\int_Yy\ d\sigma|^2+\int_Y|y|^2\, d\sigma(y); \sigma \in {\cal P}(Y)\}\\
 =&\sup\{ \int_Y (f +q)\, d\sigma - |\int_Yy\ d\sigma|^2; \sigma \in {\cal P}(Y)\}\\
 =&S^-(f+q)(x),
\end{align*} 
where  $S^-$ is the Kantorovich operator associated to the transfer ${\cal S}(\mu, \nu):=|\int_Yy\ d\sigma|^2$, which only depends on the barycenter and therefore $S^-g=\sup\{ {\hat g}(z)-|z|^2; z\in Y\}$. It follows that 
\[
T ^-f=\sup\{ {\widehat {f+q}}(z)-|z|^2; z\in Y\}.
\]

  Cost minimizing mass transport with additional constraints give examples of one-directional linear transfers. We single out the following: \\

 \noindent {\bf Example 4.5:  Martingale transports are backward linear transfers} 
  
  Martingale transports are $\cal C$-dilations where $\cal C$ is the cone of convex continuous functions on $\R^n$. 
   If $c:\R^d\times \R^d \to \R$ is a continuous cost function, then define the weak cost as 
  \begin{equation}
{\tilde c} (x, \sigma)=\left\{ \begin{array}{llll}
\int_{\R^d} c(x,y) \,d\sigma (y) \quad  &\hbox{if $ \delta_x \prec_C\sigma$,}\\
+\infty \quad &\hbox{\rm if not.}
\end{array} \right.
\end{equation}
  The corresponding martingale transport is then 
  \[
 {\mathcal T}_M(\mu, \nu)=\inf\{\int_{\R^d\times \R^d} {\tilde c}(x,\pi_x) \,d\ \mu; \pi\in {\cal K}(\mu,\nu)\}. 
  \]
  Equivalently, if $\mu, \nu$ are two probability measures we then consider  $MT(\mu,\nu)$ to be the subset of  ${\mathcal K}(\mu, \nu)$ consisting of {\it the martingale transport plans}, that is the set of probabilities $\pi$
on $\R^d \times \R^d$ with marginals $\mu$ and $\nu$, such that for $\mu$-almost $x\in\R^d$, 
the component $\pi_x$ of its disintegration $(\pi_x)_x$ with respect to $\mu$, i.e. $d\pi(x,y)=d\pi_x(y)d\mu(x)$, has its barycenter at $x$. As mentioned above, 
\begin{equation}
\hbox{$MT(\mu,\nu)\neq \emptyset$ if and only if $\mu \prec_{\cal C}\nu$.}
\end{equation}
 One can also use the probabilistic notation, which amounts to 
minimize 
$\E_{\rm P} \,c(X,Y)$
over all martingales $(X,Y)$ on a probability space $(\Omega, {\mathcal F}, P)$ into $\R^d \times \R^d$ (i.e. $E[Y|X]=X$) with laws $X \sim \mu$ and $Y \sim \nu$ (i.e., $P(X\in A)=\mu (A)$ and $P(Y\in A)=\nu (A)$ for all Borel set $A$ in $\R^d$). Note that in this case, the disintegration of $\pi$ can be written as the conditional probability   $\pi_x (A) =  \P {Y\in A|X=x}$. 
 
The martingale transport can be written as 
\begin{equation}
{\mathcal T}_M(\mu, \nu)=\left\{ \begin{array}{llll}
\inf\{\int_{\R^d\times \R^d} c(x,y) \,d\pi(x,y); \pi\in MT(\mu,\nu)\} \quad &\hbox{if $\mu\prec_C\nu$}\\
+\infty \quad &\hbox{\rm if not.}
\end{array} \right.
\end{equation}
This s a backward linear transfer with a backward Kantorovich operator  given by
\[
\hbox{$T _M^-f(x)=\hat{f}_{c, x}(x)$, where $\hat{f}_{c, x}$ is the concave envelope of the function $f_{c, x}:y\to f(y)-c(x, y)$.}
\]
  See Henri-Labord\`ere \cite{HL} and Ghoussoub-Kim-Lim \cite{G-K-L1} for higher dimensions.\\
 
  \noindent {\bf Example 4.6: Schr\"odinger bridge} (Gentil-Leonard-Ripani  \cite{GLR})

Let $M$ be a compact Riemannian manifold and fix some reference non-negative measure $R$ on path space $\Omega=C([0,1], M)$. Let $(X_t)_t$ be a random process on $M$ whose law is $R$, and denote by $R_{01}$ the joint law of the initial position $X_0$ and the final position $X_1$, that is $R_{01}=(X_0, X_1)_\#R$. For example (see \cite{GLR}), assume $R$ is the reversible Kolmogorov continuous Markov process associated with the generator $\frac{1}{2}(\Delta -\nabla V\cdot \nabla)$ and the initial measure $m=e^{-V(x)}dx$ for some function $V$.

For probability measures $\mu$ and $\nu$ on $M$, define
\begin{equation}\label{schrodtransfer}
\T_{R_{01}}(\mu,\nu) := \inf\{ \int_{M} \mathcal{H}(r_1^x, \pi_x)d \mu(x)\,;\, \pi \in \mathcal{K}(\mu,\nu),\, d \pi(x,y) = d \mu( x) d\pi_x(y)\}
\end{equation}
where $d R_{01}(x, y) = d m(x)d r_1^x(y)$ is the disintegration of $R_{01}$ with respect to its initial measure $m$. By Theorem \ref{zero.0}, $\T_{R_{01}}$ is a backward linear transfer (corresponding to the weak cost $c(x,p) = \mathcal{H}(r_1^x, p)$). Its Kantorovich operator is given by 
 \[
 T ^-f(x)=\log E_{R^x}e^{f(X_1)}=\log S_1(e^f)(x),
 \]
 where $(S_t)$ is the  semi-group associated to $R$.

The transfer (\ref{schrodtransfer}) is associated to the maximum entropy formulation of the Schr\"odinger bridge problem in the following way: Define the entropic transportation cost between $\mu$ and $\nu$ via the formula 
 \begin{equation}
 {\cal S}_R(\mu, \nu)=\inf\{\int_{M \times M}\log(\frac{d\pi}{dR_{01}})\, d\pi; \pi \in {\cal K}(\mu, \nu)\}.
 \end{equation} 
 Then, under appropriate conditions on $V$ (e.g., if $V$ is uniformly convex), then 
 \[
 {\cal T}_{R_{01}}(\mu, \nu)={\cal S}_R(\mu, \nu)-\int_{M}\log(\frac{d\mu}{dm})\, d\mu.
 \] 
  Note that when $V=0$, the process is Brownian motion with Lebesgue measure as its initial reversing measure, while when $V(x)=\frac{|x|^2}{2}$, $R$ is the path measure associated with the Ornstein-Uhlenbeck process with the Gaussian as its initial reversing measure. 
  
  \subsection{One-sided transfers associated to stochastic mass transport}

Let $M$ be a manifold (compact manifold or $\R^n$) and consider a Lagrangian on phase space $L: TM \to [0,\infty)$.
 Let $(\Omega, \mcal{F}, \P)$ be a complete probability space with normal filtration $\{\mcal{F}_t\}_{t \geq 0}$, and define $\mcal{A}_{[0,t]}$ to be the set of continuous semi-martingales $X: \Omega \times [0,t] \to M$ such that there exists a Borel measurable drift $\beta: [0,t] \times C([0,t]) \to \R^d$ for which
\enum{
\item $\omega \mapsto \beta (s,\omega)$ is $\mcal{B}(C([0,s]))_{+}$-measurable for all $s \in [0,t]$, where $\mcal{B}(C([0,s]))$ is the Borel $\sigma$-algbera of $C[0,s]$.
\item $W (s) := X(s) - X(0) - \int_{0}^{s}\beta (s')\d s'$ is a $\sigma(X(s)\,;\, 0 \leq s \leq t)$  
$M$-valued Brownian motion. 
}
For each $\beta$, we shall denote the corresponding $X$ by $X^\beta$ in such a way that 
\begin{equation}
d X^\beta(t)=\beta (t) dt + d W(t).
\end{equation}

\noindent {\bf Example 4.7: Stochastic mass transport between two probability measures}

Consider the following functional $\T: \mcal{P}(M)\times \mcal{P}(M) \to \R \cup \{+\infty\}$ defined for any pair of probability measures $\mu_0$ and $\mu_1$ on $M$ via the formula:
\begin{equation}\label{stoctrans}
\T (\mu_0,\mu_1) := \inf\lf\{\E \int_{0}^{1} L(X^\beta (s), \beta (s))\d s\,;\, X(0) \sim \mu_0, X(1) \sim \mu_1, X \in \mcal{A}_{[0,1]}\rt\},
\end{equation}
This stochastic transport does not fit in the standard optimal mass transport theory since it does not originate in the optimization according to a cost between two deterministic states. However, it 
 still enjoy a dual formulation (first proven in Mikami-Thieullin \cite{M-T} for the space $\R^d$) that permits it to be realised as a backward linear transfer. In fact, by introducing the operator $T_t: C(M) \to USC(M)$ via the formula
\begin{equation}\label{stochasticop}
T_{t} f(x) := \sup_{X \in \mcal{A}_{[0,t]}} \lf\{\E \lf[f(X(t))|X(0) = x\rt] - \E\lf[\int_{0}^{t} L(X(s),\beta_X(s,X))\d s|X(0) = x\rt]\rt\},
\end{equation}
then the duality relation between $\T$ and $T_t$ can be readily detailed. Indeed, 
an adaptation of the proofs of Mikami-Thieullin \cite{M-T} yields the following.  

\begin{prop}\label{backwardlintrans} Under suitable conditions on $L$ (for example if $L(x, \beta)=\frac{1}{2}|\beta|^2$),  the following assertions hold:
\begin{enumerate}
\item $\T$ is a backward linear transfer on $\mcal{P}(M)\times \mcal{P}(M)$ with Kantorovich operator $T_1$.  is the unique viscosity solution of 
\begin{equation}\label{timedep.0}
\frac{\partial u}{\partial t}(t,x) + \frac{1}{2}\Delta_x u(t,x) + H(x, \nabla_x u(t,x)) = 0,\quad (t,x) \in [0,1)\times M, 
\end{equation}
with $u(1,x) = f(x)$.

\item In particular, for any pair of probability measures $\mu_0$ and $\mu_1$ on $M$, we have  
\begin{equation}
\T (\mu_0,\mu_1)=\sup\{\int_M u(1, x)d\mu_1(x) -\int_M u(0, x) d\mu_0(x); u(t, x) \hbox{ solution of (\ref{timedep.0})}\}.
\end{equation}
\end{enumerate} 
\end{prop}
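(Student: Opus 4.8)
The plan is to verify that the operator $T_1$ defined in (\ref{stochasticop}) is a backward Kantorovich operator and that $\T$ coincides with the coupling $\T_{T_1}$ generated by $T_1$ through (\ref{back}); by the definition of a backward linear transfer this gives assertion (1), and assertion (2) is then an immediate reformulation. First I would check the three defining properties of a Kantorovich operator. Monotonicity is clear from (\ref{stochasticop}), since enlarging $f$ enlarges the admissible payoff for every semi-martingale $X$. The affine-on-constants property holds because $\E[(f+c)(X(1))\mid X(0)=x]=\E[f(X(1))\mid X(0)=x]+c$. Convexity (property (b)) follows from the fact that for each fixed admissible $X$ the map $f\mapsto \E[f(X(1))\mid X(0)=x]-\E[\int_0^1 L(X(s),\beta_X(s,X))\d s\mid X(0)=x]$ is affine in $f$, so $T_1 f$ is a pointwise supremum of affine functionals of $f$ and is therefore convex. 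Thus $T_1$ is a backward Kantorovich operator mapping $C(M)$ into $USC(M)$.

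Next I would identify $T_t$ with the value function of the underlying stochastic control problem and deduce the Hamilton--Jacobi--Bellman characterization. Setting $u(t,x):=T_{1-t}f(x)$, time-homogeneity shows that $u(t,x)$ is the optimal expected payoff of the controlled diffusion started from $x$ at time $t$, with terminal reward $f$ and running cost $L$. The dynamic programming principle (\cite{FS}) then yields that $u$ is the unique bounded viscosity solution of
\begin{equation*}
\partial_t u(t,x)+\tfrac12\Delta_x u(t,x)+\sup_{\beta}\{\langle \beta,\nabla_x u(t,x)\rangle-L(x,\beta)\}=0,
\end{equation*}
with $u(1,x)=f(x)$; since $H(x,p)=\sup_{\beta}\{\langle \beta,p\rangle-L(x,\beta)\}$ is exactly the Legendre transform of $L$ in the velocity variable, this is precisely (\ref{timedep.0}). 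Under the stated hypotheses on $L$ (e.g.\ $L(x,\beta)=\tfrac12|\beta|^2$, giving $H(x,p)=\tfrac12|p|^2$) the standard theory guarantees existence, uniqueness and enough regularity of $u$, together with an optimal drift in feedback form $\beta^*(s,x)=\nabla_p H(x,\nabla_x u(s,x))$.

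It remains to establish the duality $\T=\T_{T_1}$. One inequality is elementary: for any $f\in C(M)$ and any admissible $X$ with $X(0)\sim\mu_0$, $X(1)\sim\mu_1$, the definition (\ref{stochasticop}) gives $\E[\int_0^1 L\,\d s\mid X(0)=x]\ge \E[f(X(1))\mid X(0)=x]-T_1 f(x)$; integrating against $\mu_0$, using $\int_M\E[f(X(1))\mid X(0)=x]\d\mu_0=\int_M f\,\d\mu_1$, and then optimizing over $X$ and $f$ yields $\T(\mu_0,\mu_1)\ge \T_{T_1}(\mu_0,\mu_1)$. The reverse inequality (strong duality, i.e.\ the absence of a duality gap) is the heart of the matter and is where we adapt Mikami--Thieullin \cite{M-T}: one treats $\nu\mapsto\T(\mu_0,\nu)$ as a convex weak$^*$ lower semi-continuous functional on $\mcal{P}(M)$, computes its Legendre transform through a minimax exchange between the infimum over drifts and the supremum over test potentials $f$ (the latter acting as Lagrange multipliers for the terminal-marginal constraint), and uses the feedback-optimal semi-martingale $X^{\beta^*}$ produced by the HJB solution to exhibit a minimizer attaining the dual value. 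The main obstacle is precisely this step: one must control the regularity of the value function $T_1 f$, justify the measurable selection of the optimal drift, and verify that $X^{\beta^*}$ has the prescribed marginals, which is exactly the technical content imported from \cite{M-T}.

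Granting strong duality, $\T=\T_{T_1}$ and, $T_1$ being a Kantorovich operator, $\T$ is a backward linear transfer, which proves (1). Finally, assertion (2) follows by rewriting $\T_{T_1}$ in terms of $u(t,x)=T_{1-t}f(x)$: since $u(1,\cdot)=f$ and $u(0,\cdot)=T_1 f$, we have $\int_M u(1,x)\d\mu_1-\int_M u(0,x)\d\mu_0=\int_M f\,\d\mu_1-\int_M T_1 f\,\d\mu_0$, so that ranging over terminal data $f\in C(M)$ is the same as ranging over all viscosity solutions $u$ of (\ref{timedep.0}), which yields the claimed formula.
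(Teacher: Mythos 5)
Your proposal is correct and follows essentially the same route as the paper: the paper offers no self-contained argument for this proposition, stating only that ``an adaptation of the proofs of Mikami--Thieullin \cite{M-T} yields the following,'' and your treatment --- verifying the Kantorovich operator axioms, the HJB identification via dynamic programming, and the elementary weak-duality inequality, while importing the strong duality (no gap) from \cite{M-T} --- is precisely that adaptation, with the easy structural steps made explicit. You have correctly located the only genuinely hard point (closing the duality gap via the measurable selection of the optimal feedback drift and the marginal constraints), which is exactly the content the paper delegates to Mikami--Thieullin.
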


\noindent {\bf Example 4.8: Stochastic mass transport with fixed distribution at all time} 
 
 Suppose now $\mu_0 \in {\cal P}(M)$ and $\mu \in {\cal P}([0, 1] \times M)$. If the latter has Lebesgue measure as a first marginal, then we can disintegrate it and write it as  $d \mu =d\mu_t \, dt$, where $\mu_t$ is a probability measure on $M$. Consider the following functional on ${\cal P}(M)\times {\cal P}([0, 1] \times M)$, 
 \begin{eqnarray}\label{stoctrans}
\T (\mu_0, \nu)&:=&\T (\mu_0, (\mu_t)_{t>0})\nonumber \\
 &:= &\inf\lf\{\E \int_{0}^{1} L(X^\beta(s), \beta (s))\d s\,;\, X \in \mcal{A}_{[0,1]}, X(t) \sim \mu_t \,\, \forall t\in [0,1] \rt\},
\end{eqnarray}
if the first marginal of $\mu$ is Lebesgue measure and $+\infty$ otherwise.

\begin{prop}\label{backwardlintrans} Under suitable conditions on $L$ (for example if $L(x, \beta)=\frac{1}{2}|\beta|^2$),  the following assertions hold:
\begin{enumerate}
\item $\T$ is a backward linear transfer on ${\cal P}(M)\times {\cal P}([0, 1] \times M)$ with corresponding Kantorovich operator $T: C([0,1]\times M)\to C(M)$ defined for any $f \in C([0,1]\times M)$ as $u_f(0, x)$, where $u_f$ is a bounded continuous viscosity solution of the following Hamilton-Jacobi equation,
\begin{equation}\label{timedep.10}
\frac{\partial u}{\partial t}(t,x) + \frac{1}{2}\Delta_x u(t,x) + H(x, \nabla_x u(t,x)) + f(t, x)= 0,\quad (t,x) \in [0,1)\times M, 
\end{equation}
with $u_f(1,x) = 0$.

\item In particular, for any probability measures $\mu_0 \in {\cal P}(M)$ and $\nu \in {\cal P}([0, 1] \times M)$, we have  
\begin{equation}
\T (\mu_0,\mu)=\sup\{\int_0^1\int_M f(t, x)d\mu_t(x) dt-\int_M u_f(0, x) d\mu_0; u_f \hbox{ solves (\ref{timedep.10})}\}.
\end{equation}
 
\end{enumerate} 
\end{prop}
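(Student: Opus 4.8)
The plan is to establish both assertions at once by proving the dual (minimax) formula in (2), since that formula is exactly the assertion that $\T$ is a backward linear transfer with Kantorovich operator $T^-f=u_f(0,\cdot)$. The starting point is to encode the family of marginal constraints $X(t)\sim\mu_t$ into the action through a Lagrange multiplier. Using the disintegration $d\mu=d\mu_t\,dt$ against the first marginal and writing the constraint as $\E[\varphi(t,X(t))]=\int_M\varphi(t,\cdot)\,d\mu_t$ for every test function and every $t$, I would introduce a multiplier $f\in C([0,1]\times M)$ and write, at least formally,
\[
\T(\mu_0,\mu)=\inf_{X\in\mcal{A}_{[0,1]},\,X(0)\sim\mu_0}\ \sup_{f}\lf\{\E\int_0^1 L(X(s),\beta(s))\,ds+\int_0^1\!\!\int_M f\,d\mu_t\,dt-\E\int_0^1 f(s,X(s))\,ds\rt\}.
\]
The inner supremum equals $+\infty$ unless $X(t)\sim\mu_t$ for all $t$, in which case it returns the action, so the right-hand side reproduces the constrained infimum. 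When the first marginal of $\mu$ is not Lebesgue measure, testing with multipliers depending only on $t$ already forces the supremum to be $+\infty$, matching the convention $\T=+\infty$.

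The heart of the argument is the exchange of $\inf$ and $\sup$. Once this is justified, for each fixed $f$ the inner problem
\[
\inf_{X\in\mcal{A}_{[0,1]},\,X(0)\sim\mu_0}\ \E\int_0^1\bracket{L(X(s),\beta(s))-f(s,X(s))}\,ds
\]
is an \emph{unconstrained} stochastic control problem with running cost $L-f$, which, by conditioning on the starting point, equals $\int_M V_f(0,x)\,d\mu_0(x)$, where $V_f(t,x)$ is the value function of the control problem started at $(t,x)$ with terminal cost $0$. Dynamic programming shows $V_f$ solves $\partial_t V+\tfrac12\Delta V+\inf_{v}\{\langle\nabla V,v\rangle+L(x,v)\}-f=0$ with $V_f(1,\cdot)=0$; setting $u_f:=-V_f$ and using $\inf_v\{\langle\nabla V,v\rangle+L\}=-H(x,-\nabla V)=-H(x,\nabla u_f)$ via the Legendre duality $H(x,p)=\sup_v\{\langle p,v\rangle-L(x,v)\}$ turns this into equation (\ref{timedep.10}) with terminal data $u_f(1,\cdot)=0$. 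Hence the inner value is $-\int_M u_f(0,x)\,d\mu_0$ and the swapped expression is precisely the claimed dual formula, so that $T^-f=u_f(0,\cdot)$.

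It remains to check that $T^-$ is genuinely a backward Kantorovich operator. The control representation $u_f(0,x)=\sup_\beta\E[\int_0^1(f(s,X(s))-L)\,ds\mid X(0)=x]$ exhibits $u_f(0,x)$ as a supremum of maps that are \emph{affine} in $f$, which gives convexity (property (b)), while monotonicity (property (a)) follows from the same representation (or from the comparison principle for (\ref{timedep.10})); the identity $u_{f+c}(t,x)=u_f(t,x)+c(1-t)$, valid because $\int_0^1\!\int_M d\mu_t\,dt=1$, yields $T^-(f+c)=T^-f+c$, i.e. property (c). Continuity of $x\mapsto u_f(0,x)$, needed so that $T^-$ maps $C([0,1]\times M)$ into $C(M)$, is part of the well-posedness of the bounded viscosity solution.

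The main obstacle is the rigorous justification of the minimax exchange and of the stochastic duality feeding it, which cannot be settled by an abstract Sion-type theorem alone, since the admissible class $\mcal{A}_{[0,1]}$ is not convex or compact in any obvious topology. Here I would follow and adapt Mikami--Thieullin \cite{M-T}: their endpoint duality handles a single terminal multiplier $f(x)=f(1,x)$, and the required modification is to run the same convex-duality and tightness machinery with a time-dependent multiplier $f(t,x)$ enforcing the marginal at every time, together with the superlinearity and convexity hypotheses on $L$ (for instance $L(x,\beta)=\tfrac12|\beta|^2$) that guarantee existence of optimal drifts, uniform integrability of the actions, and the well-posedness and continuity of the bounded viscosity solution $u_f$ of (\ref{timedep.10}). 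This is the only genuinely analytic step; the remaining identifications are formal consequences of dynamic programming and Legendre duality.
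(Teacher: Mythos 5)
Your proposal is correct and is essentially the argument the paper has in mind: the paper offers no proof of this proposition beyond attributing it to an adaptation of the Mikami--Thieullin duality \cite{M-T}, and your derivation --- the Lagrange-multiplier/minimax reformulation of the marginal-flow constraint, the sign-correct reduction of the inner unconstrained control problem to the terminal-value HJB equation (\ref{timedep.10}) via $u_f=-V_f$ and $H(x,p)=\sup_v\{\langle p,v\rangle-L(x,v)\}$, and the verification of monotonicity, convexity and affinity on constants for $T^-f=u_f(0,\cdot)$ --- is precisely the skeleton of that adaptation. Deferring the rigorous interchange of infimum and supremum to the convex-duality and approximation machinery of \cite{M-T} is exactly the step the paper also outsources to that reference, so your treatment is, if anything, more detailed than the paper's own.
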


\noindent {\bf Example 4.9: The Arnold-Brenier variational principle for the incompressible Euler equation}
 
In \cite{Br1,Br2,Br3}, Brenier proposed several relaxed versions of the Arnold geodesic formulation of the incompressible Euler equation. We describe the following model which, strictly speaking is not stochastic, yet we include it in this section for comparison purposes. 
 
For  a smooth domain $D$ in $\R^d$ consider the space 
 $$H^1_t(\R^d)=\{\xi \in L^2([0,1], \R^d) \, \hbox{such that  $\dot\xi \in L^2([0,1], \R^d)$}\} 
 $$
 and denote by $H^1_t(D)$ the subset of $H^1_t(\R^d)$ consisting of those paths valued in $D$. \\
 For any $(s, t)\in [0, 1]^2$, we consider the projections  $\pi_{s, t}: C([0,1]; D) \to D\times D$ (resp., $\pi_{t}: C([0,1]; D) \to D)$ defined by $\pi_{s, t}f=(f(s), f(t))$ (resp., $\pi_tf=f(t)$). 
 
  For $\mu \in {\cal P}(C([0,1]; D))$, we denote by $\mu_{s,t}:=(\pi_{s,t})_\#\mu$ $\mu_t:=(\pi_t)_\#\mu$. Similarly, for 
  $\nu \in {\cal P}(D\times D)$, we denote by $\nu_0$ and $\nu_1$ its first (resp., second) marginal on $D$. 
  
If $\lambda$ is the normalized Lebesgue measure on $D$, we consider the functional
  \begin{align*}
   \T(\mu, \nu)=
\begin{cases}
\inf\{ \E_\mu\int_0^1\frac{1}{2}| \dot \xi|^2 dt;  &\hbox{if 
$\mu_t=\lambda,\, \forall t \in [0, 1]$ and $\mu_{0, 1}=\nu$}\\
+\infty &{\rm otherwise}.
 \end{cases}
\end{align*}
 \begin{prop}\label{backwardlintrans}   
 The following assertions hold:
\begin{enumerate}
\item $\T$ is a backward linear transfer on ${\cal P}(C([0,1]; D)) \times  {\cal P}(D\times D)$ with corresponding Kantorovich operator $T: C([0,1]\times D)\to C(D\times D)$ defined for any $f \in C([0,1]\times D)$ as 
\begin{equation}
Tf (x, y)=\inf\{\int_0^1[\frac{1}{2}| \dot \xi|^2 -f(t, \xi_t)]dt; \xi \in H_t^1(D),\, \xi (0)=x, \xi (1)=y\}. 
\end{equation}
\item $T_f(x, y)=u_f(0, x, y)$, where $u_f$ is a bounded continuous viscosity solution of the Hamilton-Jacobi equation,
\begin{equation}\label{timedep.1}
\frac{\partial u}{\partial t}(t,x,y) + \frac{1}{2}|\nabla u(t, x, y)|^2 + f(t, x)= 0,\quad (t,x,y) \in [0,1)\times D\times D, 
\end{equation}
with $u_f(1,x,y) = 0$.
 \end{enumerate} 
\end{prop}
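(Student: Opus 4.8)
The plan is to recognize $\T$ as the value of Brenier's relaxed least-action principle for incompressible flows and to extract the stated operator $T$ from its convex dual, in which the time-dependent function $f$ plays the role of the pressure. By Proposition \ref{prop.one}, it suffices to exhibit a Kantorovich operator $T$ and to identify the relevant partial Legendre transform with $\rho\mapsto\int Tf\,d\rho$; since the codomain of $T$ is $C(D\times D)$, the natural pairing is between the endpoint coupling (tested against functions on $D\times D$) and the space--time occupation of the flow (tested against pressures $f\in C([0,1]\times D)$), exactly as the second marginal lives on $[0,1]\times D$ in Example 4.8. I would therefore write the defining infimum as a constrained linear program over path/occupation measures and dualize it by Fenchel--Rockafellar, treating the incompressibility constraint $\mu_t=\lambda$ (whose multiplier is the pressure $f$) and the endpoint constraint separately, using that $\mu\mapsto\E_\mu\int_0^1\tfrac12|\dot\xi|^2\,dt$ is convex and weak-$*$ lower semicontinuous.

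After relaxing incompressibility with the multiplier $f$, the Lagrangian becomes additive over individual trajectories with prescribed endpoints, so the inner minimization decouples: for each pair $(x,y)$ one is left with
\[
\inf\Big\{\int_0^1\big[\tfrac12|\dot\xi(t)|^2-f(t,\xi(t))\big]\,dt\,;\,\xi\in H^1_t(D),\ \xi(0)=x,\ \xi(1)=y\Big\}=Tf(x,y),
\]
which is precisely the operator of the statement. I would then read the three Kantorovich properties directly off this inf-formula: monotonicity is immediate (raising $f$ lowers the running cost), convexity follows because $Tf$ is an infimum of expressions affine in $f$, and the normalization on constants holds because a constant shift of $f$ shifts the action by the same amount over the unit interval (up to the standard sign convention for the pressure). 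Finiteness and joint continuity of $Tf$ on $D\times D$ would be obtained from the controllability of the endpoint problem (any two points of $D$ joined by an $H^1_t(D)$ path in unit time) together with the boundedness of $f$ and the usual stability of value functions under perturbation of the endpoints.

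For the second item I would introduce the parametrized value function $u_f(t,x,y)$, the least action accumulated on $[t,1]$ among paths with $\xi(t)=x$, $\xi(1)=y$, so that $u_f(0,x,y)=Tf(x,y)$ and $u_f(1,\cdot,\cdot)=0$. The dynamic programming principle then shows that $u_f$ is a viscosity solution of the Hamilton--Jacobi--Bellman equation with Hamiltonian $H(x,p)=\tfrac12|p|^2$ and source term $f$, exactly as in Examples 4.7 and 4.8; existence, boundedness and uniqueness of the continuous viscosity solution I would quote from the standard optimal-control theory (Fleming--Soner \cite{FS}) together with the controllability already used above.

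The hard part is the duality step: showing there is no gap between the primal constrained infimum and the dual $\sup_f$, and that the supremum is attained by an admissible pressure. The abstract Fenchel--Rockafellar inequality gives one direction for free, but equality is precisely the content of Brenier's analysis of the relaxed incompressible Euler equations \cite{Br1,Br2,Br3}: it requires producing the pressure as a genuine (a priori only distributional) multiplier and controlling the lack of weak-$*$ compactness on path space. I would therefore model this step on Brenier's minimax/relaxation argument rather than on a soft duality theorem, and it is here that the smoothness hypotheses on $D$ and the choice of the admissible class $H^1_t(D)$ are genuinely used.
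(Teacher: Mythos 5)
You should know at the outset that the paper contains no proof of this proposition: it is asserted as part of Example 4.9, with the analysis implicitly delegated to Brenier's papers \cite{Br1,Br2,Br3}. So your proposal can only be judged against what the statement requires. Your overall strategy is certainly the intended one: dualize the incompressibility constraint with the pressure $f$ as multiplier, observe that the relaxed problem decouples over trajectories with pinned endpoints to produce the two-point action $Tf(x,y)$, and invoke Brenier's no-gap theorem for the hard direction. You are also right that the stated domain and codomain of $T$ force the transfer to be read as a coupling between the endpoint distribution on $D\times D$ and the space--time occupation measure on $[0,1]\times D$: the spaces ${\cal P}(C([0,1];D))\times{\cal P}(D\times D)$ printed in the proposition are incompatible with $T:C([0,1]\times D)\to C(D\times D)$, and for a fixed path-space measure $\mu$ the partial Legendre transform in $\nu$ never sees the pressure. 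But two of your steps are genuinely wrong.

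First, your verification of the Kantorovich axioms establishes the opposite of what is needed. An infimum of functions affine in $f$ is \emph{concave}, not convex; and ``raising $f$ lowers the running cost'' is \emph{antitonicity}, not monotonicity. The operator $T$ displayed in the proposition is concave, order-reversing, and satisfies $T(f+c)=Tf-c$, so it is not a backward Kantorovich operator in the sense of properties (a)--(c). The genuine Kantorovich operator is $T^-f(x,y):=-Tf(x,y)=\sup\{\int_0^1[f(t,\xi_t)-\frac12|\dot\xi(t)|^2]\,dt;\ \xi\in H^1_t(D),\ \xi(0)=x,\ \xi(1)=y\}$, and the duality must accordingly be written $\T(\nu,m)=\sup_f\{\int f\,dm-\int T^-f\,d\nu\}=\sup_f\{\int f\,dm+\int Tf\,d\nu\}$. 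This cannot be absorbed into a ``standard sign convention for the pressure'': as written, your argument verifies the wrong properties for the wrong operator. Relatedly, your insistence that the dual supremum be ``attained by an admissible pressure'' is both unnecessary for the linear-transfer property (only the no-gap identity is needed) and generally false, since Brenier's pressure exists only as a measure/distribution, not as an element of $C([0,1]\times D)$.

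Second, the viscosity-solution step fails as described. The two-endpoint value function $u_f(t,x,y)$ you introduce does not satisfy $u_f(1,\cdot,\cdot)=0$: pinning both endpoints forces $u_f(t,x,y)\geq \frac{|x-y|^2}{2(1-t)}-\|f\|_\infty(1-t)\to+\infty$ as $t\to 1$ whenever $x\neq y$, so the terminal condition holds only on the diagonal and $u_f$ is unbounded; moreover this inf-value function solves $\partial_t v-\frac12|\nabla_x v|^2-f=0$, not the displayed equation (it is $-v$ that solves the displayed one). Worse, the uniqueness theorem you plan to quote from \cite{FS} refutes rather than proves the claim: for $f\equiv 0$ the unique bounded continuous viscosity solution of $\partial_t u+\frac12|\nabla u|^2=0$ with $u(1,\cdot,\cdot)=0$ is $u\equiv 0$, while $Tf(x,y)=\frac12|x-y|^2$. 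The statement that is actually true, and that you would have to prove, is that $T^-f(x,y)=u_f(0,x,y)$ where $u_f$ solves the displayed equation with the \emph{singular} terminal datum equal to $0$ on the diagonal of $D\times D$ and $-\infty$ off it (a fundamental-solution/dynamic-programming argument with singular data). Item 2 as printed is false, and no appeal to the standard bounded-solution theory can establish it; a correct proof must first correct the statement.
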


   \subsection{Transfers associated to optimally stopped stochastic transports}
 
 In dimension $d\geq 2$, there are many different types of martingales. If one chooses those that essentially follow a Brownian path, then we have the following linear transfers. \\

  \noindent {\bf Example 4.10:  Optimal subharmonic Martingale transfers} (Ghoussoub-Kim-Palmer \cite{G-K-P4})  
  
 Confining the problem to a convex bounded domain $O$ in $\R^d$, then if $(\mu, \nu)$ are in {\it subharmonic order,} i.e. $\mu \prec_{SH}\nu$, where $SH$ is the cone of subharmonic functions on $O$, 
we set, 
\begin{align}\label{eqn:KantoS}
	\mathcal{P}_c (\mu, \nu)= \inf_{\pi \in \mathcal{BM}(\mu,\nu)}\int_{O \times O} c(x, y)\pi(dx,dy), 
\end{align}
where each $\pi \in \mathcal{BM}(\mu,\nu)$ is a probability measure on $O \times O$ with marginals $\mu$ and $\nu$, satisfying
$
	\delta_x\prec_{SH} \pi_x \ {\rm for\ }\mu{\rm-a.e.}\ x, 
$
where $\pi_x$ is the disintegration of $\pi(dx,dy)=\pi_x(dy)\mu(dx)$. Otherwise, set $\mathcal{P}_c (\mu, \nu)=+\infty.$

By a remarkable theorem of Skorokhod \cite{Sko},  
such transport plans $\pi$ can be seen as joint distributions of $(B_0,B_\tau)\sim \pi$, where $B_0 \sim \mu$, $B_\tau \sim \nu$ and $\tau$ is a possibly randomized stopping time for the Brownian filtration. See for example \cite{G-K-L2}. The above problem associated to a cost $c$ can then be formulated as 
\begin{align}\label{eqn:primal}
	\mathcal{P}_c(\mu, \nu) =  \inf_{\tau} \Big\{\mathbb{E} \big[ c(B_0, B_\tau)\big]; \ B_0 \sim \mu \quad \& \quad B_\tau \sim \nu\Big\}, 
\end{align}
where $(B_t)_t$ is Brownian motion starting with distribution $\mu$ and ending at a stopping time $\tau$ such that $B_\tau$ realizes the distribution $\nu$. \\
In \cite{G-K-P4} it is shown that $\mathcal{P}_c$ is a backward linear transfer with a backward Kantorovich map given by $T^-f(x)=J_f(x,x)$, where 
\begin{align}\label{eqn:J-psi} J_f (x, y)= \sup_{\tau \leq \tau_O}  \mathbb{E} \big[ \psi (B^y_\tau) - c(x, B^y_\tau)  \big], 
 \end{align}
and $\tau_O$ is the first exit time of the set $O$.  Under some regularity assumptions on $f$ and $c$, and for each fixed $x\in \overline{O}$, the function $y\mapsto J_f(x,y)$ is the unique viscosity solution to the obstacle problem for $u\in C(\overline{O})$:
\begin{align*}
\begin{cases}
			u(y)\geq  f(y)-{ c}(x,y),\ {\rm for}\ y\in O,\\
			u(y)=  f(y)-{ c}(x,y)\ {\rm for\ }y\in \partial O,\\
			\Delta u(y)\leq 0\ {\rm for }\ y\in O,\\
			\Delta u(y)= 0\ {\rm whenever }\ u(y)> f(y)-{ c}(x,y),
		\end{cases}
\end{align*}
as well as the unique minimizer of the variational problem
\begin{align*}
	\inf\Big\{\int_O \big|\nabla u \big|^2dy;\ u\geq f- c(x,\cdot),\,  u\in H^1(O)\}.  
\end{align*}

 \noindent {\bf Example 4.11:  Optimally stopped stochastic transport} \cite{G-K-P1, G-K-P4}
 
  Given a Lagrangian $L:[0,1]\times \R^d\times \R^d \rightarrow\mathbb{R}$, consider the optimal stopping problem
 \begin{equation}
    {\mathcal T}_L(\mu,\nu)=\inf\left\{ \expect{\int_0^\tau L(t,X(t),\beta_X(t,X(t)))\,dt};X(0)\sim \mu, \tau \in {\cal S},  X_\tau\sim\nu,X(\cdot)\in \mathcal{A}\right\},  
\end{equation}
where  ${\cal S}$ is the set of possibly randomized stopping times, and $\mathcal{A}$ is the class of processes defined in Section 4.3.
 In this case, ${\mathcal T}_L$ is a backward linear transfer with Kantorovich potential given by 
$T ^-_{L}f={\hat V}_f(0, \cdot)$, where 
\begin{equation}
  {\hat V}_f(t,x)=\sup_{X\in {\mathcal A}}\sup_{T\in {\cal S}}\left\{\expcond{f(X(T))-\int_t^T L(s,X(s),\beta_X(s,X))\,ds}{X(t)=x}\right\},
\end{equation}
which is --at least formally-- a solution ${\hat V}_f(t, x)$ of  the quasi-variational Hamilton-Jacobi-Bellman inequality, 
	\begin{align} \label{eqn:HJB_0}
		\min\left\{\begin{array}{r} V_f(t,x)-f(x),  -\partial_t V_f(t,x)-{H}\big(t,x,\nabla V_f(t,x)\big)-\frac{1}{2}\Delta V_f(t,x)\end{array}\right\}=0.
	\end{align}
	
In Section 9, we shall deal in detail with optimal stochastic transports as a semi-group of backward  linear transfers in conjunction with a stochastic Mather theory. 
 
  \section{Operations on linear mass transfers}  
 
 Denote by ${\cal LT}_- (X \times Y)$ (resp., ${\cal LT}_+ (X \times Y)$) the class of backward (resp., forward) linear transfers on $X\times Y$.

  \begin{prop} The class ${\cal LT}_- (X \times Y)$ (resp., ${\cal LT}_+(X \times Y)$) is a convex subcone in the cone of convex weak$^*$ lower continuous functions on ${\mathcal P}(X)\times {\mathcal P}(Y)$. 
 
 \begin{enumerate}
 
 \item {\bf (Scalar multiplication)} If $a\in \R^+\setminus \{0\}$ and ${\mathcal T}$ is a backward linear  
transfer with Kantorovich operator $T ^-$, 
then the transfer $(a{\mathcal T})$ defined by
$(a{\mathcal T})(\mu, \nu)=a{\mathcal T}(\mu, \nu)$ is also a backward linear 
transfer with Kantorovich operator on $C(Y)$ defined by,
\begin{equation}
T _a^-(f)=aT ^-(\frac{f}{a}).
\end{equation}

\item {\bf (Addition)} If ${\mathcal T}_1$ and ${\mathcal T}_2$ are backward linear transfers  on $X\times Y$ with Kantorovich operator $T_1^-$, $T_2^-$ respectively, and such that $X\subset D({\mathcal T}_1)\cap D({\mathcal T}_2)$, then the sum  defined as 
\begin{equation}
 {\mathcal T}_1 \oplus  {\mathcal T}_2) (\mu, \nu):=\inf\{ \int_X\big\{ {\mathcal T}_1 (x, \pi_x)+  {\mathcal T}_2 (x, \pi_x)\big\}\, d\mu (x); \pi \in {\mathcal K}(\mu, \nu)\}
 \end{equation}
 is a backward linear transfer  on $X\times Y$, with Kantorovich operator given on $C(Y)$ by 
\begin{eqnarray*}
T^-f(x)&=&\sup\{ \int_Y f\,d\sigma -\T_1(x, \sigma)-\T_2(x, \sigma); \sigma \in {\mathcal P}(Y)\}\nonumber\\
&=&\inf\{T_1^-g(x)+T_2^-(f-g)(x); g\in C(Y)\}.
\end{eqnarray*}

\end{enumerate}
\end{prop}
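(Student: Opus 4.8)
The plan is to handle the two parts separately, in each case pinning down the Kantorovich operator through the Legendre-transform characterization of backward linear transfers from Proposition \ref{prop.one}. For the scalar multiplication, I would compute the partial Legendre transform of $(a\mathcal{T})_\mu$ directly. Since $a>0$,
\[
(a\mathcal{T})_\mu^*(g)=\sup_{\nu\in\mathcal{P}(Y)}\Big\{\int_Y g\,d\nu-a\mathcal{T}(\mu,\nu)\Big\}=a\sup_{\nu\in\mathcal{P}(Y)}\Big\{\int_Y\tfrac{g}{a}\,d\nu-\mathcal{T}(\mu,\nu)\Big\}=a\,\mathcal{T}_\mu^*\big(\tfrac{g}{a}\big),
\]
and plugging in $\mathcal{T}_\mu^*(h)=\int_X T^-h\,d\mu$ gives $(a\mathcal{T})_\mu^*(g)=\int_X aT^-(g/a)\,d\mu$, i.e. $T_a^-g=aT^-(g/a)$. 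It then remains to verify that $T_a^-$ is a Kantorovich operator: monotonicity and convexity are immediate because $a>0$, and affineness on constants follows from $T_a^-(g+c)=aT^-(g/a+c/a)=aT^-(g/a)+c$. As $a\mathcal{T}$ is again convex and weak$^*$ lower semi-continuous, Proposition \ref{prop.one} identifies it as a backward linear transfer with operator $T_a^-$.

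For the addition, the first step is to recognize $\mathcal{T}_1\oplus\mathcal{T}_2$ as a weak transport. Writing $c_i(x,\sigma):=\mathcal{T}_i(x,\sigma)=\mathcal{T}_i(\delta_x,\sigma)$, the representation in Theorem \ref{prop.zero} shows each $c_i$ is bounded below, lower semi-continuous, and convex in $\sigma$, hence so is $c:=c_1+c_2$. By definition $\mathcal{T}_1\oplus\mathcal{T}_2=\mathcal{T}_c$ in the notation of Proposition \ref{prop.two}, so that proposition immediately yields that $\mathcal{T}_1\oplus\mathcal{T}_2$ is a backward linear transfer whose Kantorovich operator is
\[
T^-f(x)=\sup_{\sigma\in\mathcal{P}(Y)}\Big\{\int_Y f\,d\sigma-\mathcal{T}_1(x,\sigma)-\mathcal{T}_2(x,\sigma)\Big\},
\]
which is the first claimed formula.

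The substantive step is the inf-convolution identity $T^-f(x)=\inf_{g\in C(Y)}\{T_1^-g(x)+T_2^-(f-g)(x)\}$. The inequality ``$\leq$'' is elementary: for any $g$ and any $\sigma$, splitting $\int f\,d\sigma=\int g\,d\sigma+\int(f-g)\,d\sigma$ bounds the integrand defining $T^-f(x)$ by $T_1^-g(x)+T_2^-(f-g)(x)$, and taking the supremum over $\sigma$ then the infimum over $g$ gives the bound. For the reverse inequality I would introduce a second measure and set up a minimax problem:
\[
\inf_{g\in C(Y)}\big\{T_1^-g(x)+T_2^-(f-g)(x)\big\}=\inf_{g\in C(Y)}\ \sup_{(\sigma,\tau)\in\mathcal{P}(Y)\times\mathcal{P}(Y)}\Phi(g,\sigma,\tau),
\]
where $\Phi(g,\sigma,\tau)=\int_Y g\,d\sigma-\mathcal{T}_1(x,\sigma)+\int_Y(f-g)\,d\tau-\mathcal{T}_2(x,\tau)$. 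Here $\Phi$ is affine (hence continuous and convex) in $g$ and concave and upper semi-continuous in $(\sigma,\tau)$, while $\mathcal{P}(Y)\times\mathcal{P}(Y)$ is weak$^*$-compact and convex, so Sion's minimax theorem permits exchanging the infimum and supremum. For fixed $(\sigma,\tau)$ the inner infimum is $\int_Y f\,d\tau-\mathcal{T}_1(x,\sigma)-\mathcal{T}_2(x,\tau)+\inf_g\int_Y g\,d(\sigma-\tau)$, and since $\inf_g\int_Y g\,d(\sigma-\tau)$ equals $0$ when $\sigma=\tau$ and $-\infty$ otherwise, the outer supremum collapses onto the diagonal and returns exactly $T^-f(x)$.

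The main obstacle is this reverse inequality, i.e. the \emph{exactness} of the inf-convolution; this is precisely where the compactness of $\mathcal{P}(Y)$ is indispensable, as it legitimizes the minimax exchange (equivalently, it gives the Fenchel duality $(c_1(x,\cdot)+c_2(x,\cdot))^*=c_1(x,\cdot)^*\,\square\,c_2(x,\cdot)^*$ with no lower-semicontinuous closure and with attainment). The hypothesis $X\subset D(\mathcal{T}_1)\cap D(\mathcal{T}_2)$ supplies the qualification condition guaranteeing that $c_1(x,\cdot)+c_2(x,\cdot)$ is proper for each $x$, so that the conjugates are finite-valued enough for the identity to be non-vacuous.
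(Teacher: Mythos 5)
The paper itself states this proposition without proof (it is presented as one of the ``permanence properties'' immediately following the Section~3 machinery), so there is no official argument to compare against; what you have written is the proof the authors evidently intend, and it is essentially correct. Part 1 is the routine Legendre computation plus Proposition~\ref{prop.one}. For Part 2, identifying $\mathcal{T}_1\oplus\mathcal{T}_2$ as the weak transport $\mathcal{T}_c$ with $c=c_1+c_2$ and invoking Proposition~\ref{prop.two} is exactly the right use of the paper's framework, and your elementary ``$\leq$'' half of the inf-convolution identity is fine.

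Two points need tightening. First, Sion's theorem is stated for real-valued functions, while your $\Phi$ equals $-\infty$ wherever $\mathcal{T}_1(x,\sigma)$ or $\mathcal{T}_2(x,\tau)$ is $+\infty$; restricting to the effective domains restores finiteness but destroys the compactness your interchange relies on, so you should either cite a lopsided/extended-real minimax theorem (compactness and upper semicontinuity are only needed on the concave side) or argue directly. A clean direct route: since the defining suprema range over $\mathcal{P}(Y)$, the maps $g\mapsto T_i^-g(x)$ are finite, convex and $1$-Lipschitz in the sup-norm, hence the inf-convolution $I(f):=\inf_g\{T_1^-g(x)+T_2^-(f-g)(x)\}$ is either identically $-\infty$ or finite, convex and $1$-Lipschitz, thus norm-continuous; using $(F_1^*\,\square\,F_2^*)^*=F_1^{**}+F_2^{**}=\mathcal{T}_1(x,\cdot)+\mathcal{T}_2(x,\cdot)$ and Fenchel--Moreau, in either case $I$ coincides with its biconjugate, which is $(\mathcal{T}_1(x,\cdot)+\mathcal{T}_2(x,\cdot))^*(f)=T^-f(x)$. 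Second, your closing remark is inaccurate: the hypothesis $\{\delta_x;x\in X\}\subset D_1(\mathcal{T}_1)\cap D_1(\mathcal{T}_2)$ makes each $c_i(x,\cdot)$ proper, but not their sum --- the two effective domains in $\mathcal{P}(Y)$ may be disjoint. The identity survives this degeneracy (both sides are then $-\infty$ by the dichotomy above), but properness of $c_1(x,\cdot)+c_2(x,\cdot)$ is not a consequence of the stated hypothesis; that looseness is inherited from the paper's formulation rather than introduced by you.
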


\subsection{Convolution and tensor products of transfers}
 
 \begin{defn} Consider the following operations on transfers. 

 \begin{enumerate}
\item {\bf (Dual Sum)} If ${\mathcal T}_1$ and ${\mathcal T}_2$ are backward linear transfers  on $X\times Y$ with Kantorovich operator $T_1^-$, $T_2^-$ respectively, and such that $X\subset D({\mathcal T}_1)\cap D({\mathcal T}_2)$, then 
$\T_1\square \T_2$ is defined as the transfer whose Kantorovich operator is $T_1+T_2$, that is
\begin{equation}
\T_1\square \T_2 (\mu, \nu)=\sup\{\int_Y f d\nu -\int_X(T_1f +T_2f) d\mu; f\in C(Y)\}
\end{equation}

\item  ({\bf Inf-convolution}) Let $X_1, X_2, X_3$ be 3 spaces, and suppose ${\mathcal T}_1$ (resp., ${\mathcal T}_2$) are functionals on  ${\mathcal P}( X_1)\times {\mathcal P}(X_2)$ (resp., ${\mathcal P}( X_2)\times {\mathcal P}(X_3)$). The {\it convolution} of ${\mathcal T}_1$ and ${\mathcal T}_2$ is the functional on ${\mathcal P}( X_1)\times {\mathcal P}(X_3)$ given by
 \begin{equation}
{\mathcal T}(\mu, \nu):={\mathcal T}_{1}\star{\mathcal T}_{2}=
\inf\{{\mathcal T}_{1}(\mu, \sigma) + {\mathcal T}_{2}(\sigma, \nu);\, \sigma \in {\mathcal P}(X_2)\}.
\end{equation}

\item ({\bf Tensor product}) If ${\mathcal T}_1$ (resp., ${\mathcal T}_2$) are functionals  on ${\mathcal P}( X_1)\times {\mathcal P}(Y_1)$ (resp., ${\mathcal P}( X_2)\times {\mathcal P}(Y_2)$) such that $X_1\subset D({\mathcal T}_1)$ and $X_2\subset D({\mathcal T}_2)$, then the tensor product of  ${\mathcal T}_1$ and ${\mathcal T}_2$ is the functional on ${\mathcal P}(X_1\times X_2)\times {\mathcal P}(Y_1\times Y_2)$ defined by:
\begin{eqnarray*}
 {\mathcal T}_1 \tens  {\mathcal T}_2 (\mu, \nu)=\inf\left\{ \int_{X_1\times X_2}\big({\mathcal T}_1 (x_1, \pi_{x_1,x_2})+{\mathcal T}_2 (x_2, \pi_{x_1,x_2})\big)\, d \mu (x_1, x_2); \pi \in {\mathcal K}(\mu, \nu)\right\}.
\end{eqnarray*}
\end{enumerate}
 Similar statements hold for ${\cal LT}_+ (X \times Y)$.
 \end{defn}

The following easy proposition is important to what follows.

\begin{prop} \label{inf.tens} The following stability properties hold for the class of backward linear transfers.
\begin{enumerate}

\item If ${\mathcal T}_1$ (resp., ${\mathcal T}_2$) is a backward linear transfer  on  $X_1\times  X_2$ (resp., on $X_2\times X_3$) with Kantorovich operator $T _1^-$ (resp., $T _2^-$), then ${\mathcal T}_{1}\star{\mathcal T}_{2}$ is also a backward linear transfer  on  $ X_1\times X_3$ with Kantorovich operator equal to $T _1^-\circ T _2^-$. 

\item If ${\mathcal T}_1$ (resp., ${\mathcal T}_2$) is a backward linear transfer  on $X_1\times Y_1$ (resp., $X_2\times Y_2$) such that $X_1\subset D({\mathcal T}_1)$ and $X_2\subset D({\mathcal T}_2)$, then $ {\mathcal T}_1 \tens  {\mathcal T}_2$
  is a backward linear transfer  on $(X_1\times X_2)\times (Y_1\times Y_2)$, with   Kantorovich operator given by 
\begin{equation}
T ^-g(x_1, x_2)=\sup\{\int_{Y_1\times Y_2} f(y_1, y_2) d\sigma(y_1, y_2) - {\mathcal T}_1(x_1, \sigma_1)-{\mathcal T}_2(x_2, \sigma_2);\,  \sigma \in {\mathcal K}(\sigma_1, \sigma_2)  \}.
 \end{equation} 
 Moreover, \begin{equation}\label{tens}
    {\mathcal T}_1 \tens  {\mathcal T}_2(\mu,\nu_1\otimes\nu_2)
 \leq
 \mathcal{T}_{1}(\mu_1,\nu_1)+\int_{X_1}\mathcal{T}_{2}(\mu_2^{x_1},\nu_2)\,d\mu_1(x_1), 
\end{equation}
where  $d\mu(x_1,x_2)=d\mu_1(x_1)d\mu_2^{x_1}(x_2).$
 
\end{enumerate}

\end{prop}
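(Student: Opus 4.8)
The plan is to derive both parts from the envelope/duality machinery already in place, specifically Propositions \ref{prop.one} and \ref{prop.two}, Theorem \ref{prop.zero}, and the corollary following Lemma \ref{monotone} which extends the Legendre-transform identity (\ref{extLT}) to functions in $USC_\sigma$. For part (1), I would fix $\mu \in D_1(\mathcal{T}_1\star\mathcal{T}_2)$ and compute the partial Legendre transform of $\mathcal{T}:=\mathcal{T}_1\star\mathcal{T}_2$ directly. Since $\mathcal{T}(\mu,\nu)=\inf_{\sigma\in\mathcal{P}(X_2)}\{\mathcal{T}_1(\mu,\sigma)+\mathcal{T}_2(\sigma,\nu)\}$ and $\mathcal{T}$ is $+\infty$ off the probability measures, subtracting the infimum turns it into a supremum, so the resulting double supremum over $(\nu,\sigma)$ may be interchanged freely (this is a $\sup$--$\sup$ interchange, hence unconditional and needing no compactness), giving
\[
\mathcal{T}^*_\mu(g)=\sup_{\sigma\in\mathcal{P}(X_2)}\Big\{(\mathcal{T}_2)^*_\sigma(g)-\mathcal{T}_1(\mu,\sigma)\Big\}=\sup_{\sigma\in\mathcal{P}(X_2)}\Big\{\int_{X_2}T_2^-g\,d\sigma-\mathcal{T}_1(\mu,\sigma)\Big\}=(\mathcal{T}_1)^*_\mu\big(T_2^-g\big),
\]
where the middle equality uses Proposition \ref{prop.one}(1) applied to $\mathcal{T}_2$. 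The key point is that $T_2^-g$ lies in $USC(X_2)$ rather than $C(X_2)$, so to evaluate $(\mathcal{T}_1)^*_\mu$ on it I would invoke the extension corollary, which guarantees that $(\mathcal{T}_1)^*_\mu(h)=\int_{X_1}T_1^-h\,d\mu$ persists for $h\in USC_\sigma(X_2)$; this yields $\mathcal{T}^*_\mu(g)=\int_{X_1}T_1^-\circ T_2^-g\,d\mu$. It then remains to check that $T_1^-\circ T_2^-$ inherits properties (a)--(c) — monotonicity and affineness on constants are immediate, while convexity follows by composing the monotonicity and convexity of $T_1^-$ with the convexity of $T_2^-$ — and that $\mathcal{T}_\mu$ is convex and weak$^*$ lower semi-continuous, which follows from the joint convexity and lower semi-continuity of $(\sigma,\nu)\mapsto\mathcal{T}_1(\mu,\sigma)+\mathcal{T}_2(\sigma,\nu)$ upon minimizing over the weak$^*$-compact set $\mathcal{P}(X_2)$. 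Proposition \ref{prop.one}(2) then identifies $\mathcal{T}_1\star\mathcal{T}_2$ as a backward linear transfer with Kantorovich operator $T_1^-\circ T_2^-$. I expect the $USC$-regularity step to be the only delicate point here.

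For part (2), I would realize $\mathcal{T}_1\otimes\mathcal{T}_2$ as a weak transport in the sense of Proposition \ref{prop.two}. Set $c\big((x_1,x_2),\rho\big):=\mathcal{T}_1(\delta_{x_1},\rho^1)+\mathcal{T}_2(\delta_{x_2},\rho^2)$ for $\rho\in\mathcal{P}(Y_1\times Y_2)$, where $\rho^1,\rho^2$ denote the marginals of $\rho$ on $Y_1,Y_2$ (this matches the convention in the definition, where $\mathcal{T}_i(x_i,\cdot)$ acts on the relevant marginal). Since marginalization is weak$^*$-continuous and affine and each $\mathcal{T}_i(\delta_{x_i},\cdot)$ is bounded below, convex and lower semi-continuous, the cost $c$ is bounded below, jointly lower semi-continuous, and convex in $\rho$; thus Proposition \ref{prop.two} applies and produces a backward linear transfer $\mathcal{T}_c$ whose Kantorovich operator is exactly the displayed formula for $T^-g(x_1,x_2)$. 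Unwinding the disintegration shows $\mathcal{T}_c=\mathcal{T}_1\otimes\mathcal{T}_2$, which gives the claim.

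For the tensorization inequality (\ref{tens}) I would simply exhibit a competitor plan, since the statement is an upper bound. Fixing $\epsilon>0$, choose $\pi^{(1)}\in\mathcal{K}(\mu_1,\nu_1)$ that is $\epsilon$-optimal for $\mathcal{T}_1(\mu_1,\nu_1)$ and, for each $x_1$, a plan $\pi^{(2),x_1}\in\mathcal{K}(\mu_2^{x_1},\nu_2)$ that is $\epsilon$-optimal for $\mathcal{T}_2(\mu_2^{x_1},\nu_2)$, selected measurably in $x_1$. Define $\pi$ on $(X_1\times X_2)\times(Y_1\times Y_2)$ via its disintegration with respect to $\mu$ by the product $\pi_{(x_1,x_2)}:=\pi^{(1)}_{x_1}\otimes\pi^{(2),x_1}_{x_2}$, whose $Y_1$- and $Y_2$-marginals are $\pi^{(1)}_{x_1}$ and $\pi^{(2),x_1}_{x_2}$. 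A direct marginal computation — integrating out $x_2$ against $\mu_2^{x_1}$ to recover $\nu_2$, then $x_1$ against $\mu_1$ to recover $\nu_1$ — shows $\pi\in\mathcal{K}(\mu,\nu_1\otimes\nu_2)$. Feeding this plan into the variational definition of $\mathcal{T}_1\otimes\mathcal{T}_2$ and using the additive structure of $c$ splits the cost into $\int_{X_1}\mathcal{T}_1(x_1,\pi^{(1)}_{x_1})\,d\mu_1+\int_{X_1}\big(\int_{X_2}\mathcal{T}_2(x_2,\pi^{(2),x_1}_{x_2})\,d\mu_2^{x_1}\big)d\mu_1$, which is within $2\epsilon$ of the right-hand side of (\ref{tens}); letting $\epsilon\to0$ finishes. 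The main technical obstacle throughout part (2) is ensuring Borel measurability of the selection $x_1\mapsto\pi^{(2),x_1}$ and the joint measurability needed to assemble $\pi$, which I would handle with the same measurable-selection argument used in the proof of Proposition \ref{prop.two}.
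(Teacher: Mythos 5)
Your proposal is correct and, for the two main claims, follows essentially the same route as the paper: for part (1) the paper also computes $({\mathcal T}_{1}\star{\mathcal T}_{2})_\mu^*(g)$ as a double supremum, interchanges the suprema, inserts $({\mathcal T}_{2})_\sigma^*(g)=\int_{X_2}T_2^-g\,d\sigma$, and identifies the result as $({\mathcal T}_{1})_\mu^*(T_2^-g)=\int_{X_1}T_1^-\circ T_2^-g\,d\mu$; and for part (2) the paper likewise realizes ${\mathcal T}_1\tens{\mathcal T}_2$ as a generalized (weak) cost-minimizing transport with the additive marginal cost ${\mathcal T}_1(x_1,\pi_1)+{\mathcal T}_2(x_2,\pi_2)$ and invokes the weak-transport characterization. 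You are right to flag that the step $\sup_\sigma\{\int T_2^-g\,d\sigma-{\mathcal T}_1(\mu,\sigma)\}=\int T_1^-\circ T_2^-g\,d\mu$ is the delicate point: the paper writes it without comment, but it does require the extension of the Legendre identity (\ref{extLT}) to $USC$ functions from Section 4, which is exactly the justification you supply. The one place you genuinely depart from the paper is the tensorization inequality (\ref{tens}): the paper dismisses it as ``notationally cumbersome but straightforward'' via expressing the tensor Kantorovich operator in terms of $T_1^-$ and $T_2^-$ (a dual argument it never carries out), whereas you give a complete primal argument by assembling the product competitor plan $\pi_{(x_1,x_2)}=\pi^{(1)}_{x_1}\otimes\pi^{(2),x_1}_{x_2}$ from $\epsilon$-optimal plans and verifying its marginals. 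Your primal route has the advantage of being self-contained and of making the only real technical cost explicit (measurable selection of $x_1\mapsto\pi^{(2),x_1}$, handled as in Proposition \ref{prop.two}), at the price of that selection argument; the paper's suggested dual route would avoid selections but requires manipulating the extended operators on $USC$ classes. Both are sound; yours actually fills in a proof the paper omits.
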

Note that a similar statement holds for forward linear transfers, modulo order reversals. For example,   if ${\mathcal T}_1$ and ${\mathcal T}_2$) are forward linear transfer, then  ${\mathcal T}_{1}\star{\mathcal T}_{2}$ is a  forward linear transferon  $ X_1\times X_3$ with Kantorovich operator equal to $T _2^+\circ T _1^+$.  \\

 \noindent{\bf Proof:}  For 1), we note first that if ${\mathcal T}_{1}$ (resp., ${\mathcal T}_{2}$) is jointly convex and weak$^*$-lower semi-continuous on ${\mathcal P}(X_1)\times {\mathcal P}(X_2)$ (resp., ${\mathcal P}(X_2)\times {\mathcal P}(X_3)$),  
 then both  $({\mathcal T}_{1}\star{\mathcal T}_{2})_\nu: \mu \to ({\mathcal T}_{1}\star{\mathcal T}_{2})(\mu, \nu)$ and  $({\mathcal T}_{1}\star{\mathcal T}_{2})_\mu: \nu \to ({\mathcal T}_{1}\star{\mathcal T}_{2})(\mu. \nu)$ are convex and weak$^*$-lower semi-continuous. We now calculate their Legendre transform. For $g\in C(X_3)$,
 \begin{eqnarray*}
 ({\mathcal T}_{1}\star{\mathcal T}_{2})_\mu^*(g)&=&\sup\limits_{\nu \in {\mathcal P}(X_3)}\sup\limits_{\sigma \in  {\mathcal P}(X_2)} 
 \left\{\int_{X_3} g\, d\nu -{\mathcal T}_{1}(\mu, \sigma) - {\mathcal T}_{2}(\sigma, \nu)\right\}\\
&=&  
\sup\limits_{\sigma \in  {\mathcal P}(X_2)} 
 \left\{({\mathcal T}_{2})_\sigma^* (g)-{\mathcal T}_{1}(\mu, \sigma) \right\}\\
&=& \sup\limits_{\sigma \in  {\mathcal P}(X_2)} 
 \left\{\int_{X_2} T _2^-(g)\, d\sigma-{\mathcal T}_{1}(\mu, \sigma) \right\}\\
 &=&({\mathcal T}_{1})_\mu^* (T _2^-(g))\\
 &=& \int_{X_1} T _1^-\circ T _2^-g\, d\mu.
 \end{eqnarray*} 
  In other words, 
$
 {\mathcal T}_{1}\star{\mathcal T}_{2}(\mu, \nu)=
\sup\big\{\int_{X_3}g(x)\, d\nu(x)-\int_{X_1} T _1^-\circ T _2^-g\, d\mu;\,  f\in C(X_3) \big\}
$.\\

 2) follows immediately from the last section since we are defining the tensor product as a generalized cost minimizing transport, where the cost ion $X_1\times X_2 \times {\mathcal P}(Y_1\times Y_2)$ is simply,
 \[
 {\mathcal T}((x_1, x_2),  \pi)= {\mathcal T}_1(x_1, \pi_1)+{\mathcal T}_2(x_1, \pi_2), 
 \]
where $\pi_1, \pi_2$ are the marginals of $\pi$ on $Y_1$ and $Y_2$ respectively. ${\mathcal T}_1 \tens  {\mathcal T}_2$ is clearly its corresponding backward transfer  with $T ^-$ being its Kantorovich operator. 

More notationally cumbersome but straightforward  is how to write the Kantorovich operators of the tensor product $T ^-g(x_1, x_2)$ in terms of $T_1^-$ and $T_2^-$, in order to establish (\ref{tens}). 

\rmk{\rm Note that if $\T$ is any backward linear transfer on $X\times Y$, and $\T_\sigma$ is the one induced by a point transformation  $\sigma: Z\to X$, then one can easily check that for $\mu \in {\cal P}(Z)$ and $\nu \in {\cal P}(Y)$, we have
$
\T_\sigma \star \T (\mu, \nu)=\T(\sigma_\# \mu, \nu),
$
and its backward Kantorovich operator is given by ${\tilde T}f (z)= (T^-f)(\sigma (z))$. 
Similarly, if $\tau: Z\to Y$, $\mu \in {\cal P}(X)$ and $\nu \in {\cal P}(Z)$, then 
$
\T \star \T_\tau  (\mu, \nu)=\T(\mu, \tau_\# \nu), 
$
hence 
\[
\T_\sigma \star \T \star \T_\tau  (\mu, \nu)=\T(\sigma_\#\mu, \tau_\# \nu).
\]
 
\subsection{Hopf-Lax formulae and projections  in Wasserstein space}

By an obvious induction on the convolution property enjoyed by linear transfers, one can immediately show the following.

 \begin{prop}\label{n-conv} Let $X_0, X_1,...., X_n$ be $(n+1)$ compact spaces, and suppose for each $i=1,..., n$, we have functionals ${\mathcal T}_i$ on ${\mathcal P}( X_{i-1})\times {\mathcal P}(X_i)$. For any probability measures $\mu$ on $X_0$ (resp., $\nu$ on $X_n$), define
\begin{equation}
{\mathcal T}(\mu, \nu)=\inf\{{\mathcal T}_{1}(\mu, \nu_1) + {\mathcal T}_{2}(\nu_1, \nu_2) ...+{\mathcal T}_{n}(\nu_{n-1}, \nu);\, \nu_i \in {\mathcal P}(X_i), i=1,..., n-1\}.
\end{equation}
If each ${\mathcal T}_i$ is a linear forward  (resp., backward) transfer with a corresponding   Kantorovich operator $T _i^+: C(X_{i})\to C(X_{i+1})$ (resp., $T _i^-:C(X_{i})\to C(X_{i-1})$), then  ${\mathcal T}$ is a linear forward (resp., backward) transfer with a Kantorovich operator given by 
 \[
 \hbox{$T^+=T ^+_{n}\circ T ^+_{{n-1}}\circ...\circ T ^+_{1}$ (resp., $T ^-=T ^-_1\circ T ^-_2\circ...\circ T ^-_{n}$)}
 \]
In other words, the following duality formula holds:
 \begin{equation}
{\mathcal T}_c(\mu, \nu)= 
\sup\big\{\int_{X_n}T ^+_{n}\circ T ^+_{{n-1}}\circ...T ^+_{1}f(y)\, d\nu(y)-\int_{X_0}f(x)\, d\mu(x);\,  f\in C(X_0) \big\}
\end{equation}
respectively,
\begin{equation}
{\mathcal T}_c(\mu, \nu)=\sup\big\{\int_{X_n}g(y)\, d\nu(y)-\int_{X_0}T ^-_1\circ T ^-_2\circ...\circ T ^-_{n}g(x);\,  g\in C(X_n) \big\}.
\end{equation}

\end{prop}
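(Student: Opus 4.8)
The plan is to prove the statement by induction on $n$, using the binary inf-convolution result of Proposition~\ref{inf.tens}(1) as the engine, after observing that the $n$-fold inf-convolution regroups associatively into a binary one. For $n=1$ there is nothing to prove, since $\mathcal{T}=\mathcal{T}_1$ is by hypothesis a backward linear transfer with Kantorovich operator $T_1^-$. So assume the assertion holds for every chain of $n-1$ transfers.

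Given $\mathcal{T}_1,\dots,\mathcal{T}_n$, I would introduce the intermediate functional on $\mathcal{P}(X_0)\times\mathcal{P}(X_{n-1})$,
\[
\mathcal{S}(\mu,\sigma):=\inf\{\mathcal{T}_1(\mu,\nu_1)+\cdots+\mathcal{T}_{n-1}(\nu_{n-2},\sigma);\ \nu_i\in\mathcal{P}(X_i),\ 1\leq i\leq n-2\}.
\]
By the induction hypothesis, $\mathcal{S}$ is a backward linear transfer on $X_0\times X_{n-1}$ with Kantorovich operator $S^-=T_1^-\circ\cdots\circ T_{n-1}^-$. The key (and entirely routine) observation is that the $n$-fold infimum factorizes through $\sigma:=\nu_{n-1}$: taking the infimum over $\nu_1,\dots,\nu_{n-2}$ first and then over $\sigma$ gives
\[
\mathcal{T}(\mu,\nu)=\inf\{\mathcal{S}(\mu,\sigma)+\mathcal{T}_n(\sigma,\nu);\ \sigma\in\mathcal{P}(X_{n-1})\}=(\mathcal{S}\star\mathcal{T}_n)(\mu,\nu).
\]
This interchange of infima is just the associativity of inf-convolution and is immediate. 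Since $\mathcal{S}$ and $\mathcal{T}_n$ are backward linear transfers on $X_0\times X_{n-1}$ and $X_{n-1}\times X_n$ respectively, Proposition~\ref{inf.tens}(1) applies directly and yields that $\mathcal{S}\star\mathcal{T}_n$ is a backward linear transfer with Kantorovich operator $S^-\circ T_n^-=T_1^-\circ\cdots\circ T_n^-$. This closes the induction and gives the claimed operator in the backward case.

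Finally, the two duality formulae are nothing but the defining identity \eqref{back} for the backward linear transfer $\mathcal{T}$ with its Kantorovich operator $T^-=T_1^-\circ\cdots\circ T_n^-$, and dually \eqref{fore} for the forward case. The forward statement is obtained by the same induction, invoking the order-reversing version of the binary rule recorded just after Proposition~\ref{inf.tens} (so that composition accumulates as $T^+=T_n^+\circ\cdots\circ T_1^+$). I do not expect any genuine obstacle here: all the analytic content — joint convexity and weak$^*$ lower semicontinuity of the convolution, and the identification of its Legendre transform with the composition of the Kantorovich operators — was already established in Proposition~\ref{inf.tens}. The only point requiring care is that the induction hypothesis must deliver $\mathcal{S}$ as a \emph{genuine} backward linear transfer rather than a mere coupling, so that the hypotheses of Proposition~\ref{inf.tens}(1) are satisfied at each stage of the recursion; this is guaranteed precisely because the inductive conclusion is phrased at the level of linear transfers.
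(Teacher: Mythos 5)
Your proof is correct and follows essentially the same route as the paper, which establishes this proposition by an immediate induction on the two-transfer convolution result of Proposition~\ref{inf.tens} (using its order-reversed forward version for the forward case). The factorization of the $n$-fold infimum through $\nu_{n-1}$ and the verification that the induction hypothesis yields a genuine linear transfer at each stage are exactly the points the paper treats as obvious.
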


The convolution of two linear transfers associated to optimal mass transports with costs $c_1$ and $c_2$ respectively,  is also a mass transport corresponding to a cost functional given by the convolution $c_1\star c_2$.  However, the above calculus allows us to convolute a mass transport with a general linear transfer, and to define a broken geodesic problems for stochastic processes.  

 \begin{prop}{\rm (Lifting convolutions to Wasserstein space)} \label{lifting} Let $X_0, X_1,...., X_n$ be compact spaces, and suppose for each $i=1,..., n$, we have a cost function $c_i: X_{i-1}\times X_i$, its corresponding optimal mass transport
 \[
 {\mathcal T}_{c_i}(\mu, \nu)=\inf \{\int_{X_{i-1}\times X_i}c_i(x, y)\, d\pi; \pi \in {\mathcal K}(\mu. \nu)\}, 
 \]
 and its forward and backward transfers $T^+_{c_i}$ and $T^-_{c_i}$ defined in Example 3.7. Consider the following cost function on $X_0\times X_n$, defined by
\[
c(x, x')=\inf\left\{c_1(x, x_1) +c_2(x_1, x_2) ....+c_n(x_{n-1}, x'); \, x_1\in X_1, x_2\in X_2,..., x_{n-1}\in X_{n-1}\right\}. 
\]
 Let $\mu$ (resp., $\nu$ be probability measures on $X_0$ (resp., $X_n)$, then the following holds  
 \begin{equation}
{\mathcal T}_c(\mu, \nu)=\inf\{{\mathcal T}_{c_1}(\mu, \nu_1) + {\mathcal T}_{c_2}(\nu_1, \nu_2) ...+{\mathcal T}_{c_n}(\nu_{n-1}, \nu);\, \nu_i \in {\mathcal P}(X_i), i=1,..., n-1\},
\end{equation}
and the infimum is attained at $\bar \nu_1, \bar \nu_2,..., \bar \nu_{n-1}$.
 \begin{eqnarray}
{\mathcal T}_c(\mu, \nu)&=& 
\sup\big\{\int_{X_n}T^+_{c_n}\circ T^+_{c_{n-1}}\circ...T^+_{c_1}f(x)\, d\nu(x)-\int_{X_0}f(y)\, d\mu(y);\,  f\in C(X_0) \big\}\\
&=&\sup\big\{\int_{X_n}g(x)\, d\nu(x)-\int_{X_0}T ^-_{c_n}\circ T^-_{c_{n-1}}...\circ T^-_{c_1}g(x);\,  g\in C(X_n) \big\}.
\end{eqnarray}
\end{prop}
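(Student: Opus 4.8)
The plan is to recognize the right-hand side of the identity in part (1) as the $n$-fold inf-convolution $\mathcal{T}_{c_1}\star\mathcal{T}_{c_2}\star\cdots\star\mathcal{T}_{c_n}$ and to reduce everything to the calculus already developed for convolutions. By Monge-Kantorovich theory (Example 2.8), each $\mathcal{T}_{c_i}$ attached to a continuous cost $c_i$ on the compact product $X_{i-1}\times X_i$ is simultaneously a forward and a backward linear transfer, with Kantorovich operators $T^-_{c_i}g(x)=\sup_y\{g(y)-c_i(x,y)\}$ and $T^+_{c_i}f(y)=\inf_x\{c_i(x,y)+f(x)\}$. Because the sup- and inf-convolution of a continuous function against a continuous cost over a compact space is again continuous, these operators indeed map $C(X_i)$ into $C(X_{i-1})$ and $C(X_{i-1})$ into $C(X_i)$, so the hypotheses of Proposition \ref{n-conv} are met. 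That proposition then applies verbatim: the iterated convolution is a linear transfer whose backward Kantorovich operator is $T^-_{c_1}\circ\cdots\circ T^-_{c_n}$ and whose forward operator is $T^+_{c_n}\circ\cdots\circ T^+_{c_1}$, and its two dual representations are precisely the displayed formulas in the statement.

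The heart of the argument is then to identify these composed operators with those of $\mathcal{T}_c$ itself. First one checks that $c$ is a legitimate continuous cost: it is a marginal (infimum) function of the jointly continuous sum $\sum_i c_i$ over the compact set $X_1\times\cdots\times X_{n-1}$, hence continuous, so that $\mathcal{T}_c$ is a bona fide Monge-Kantorovich transfer. Unwinding the composition by a telescoping computation gives
\[
T^-_{c_1}\circ\cdots\circ T^-_{c_n}\,g(x_0)=\sup_{x_1,\ldots,x_n}\Big\{g(x_n)-\sum_{i=1}^{n}c_i(x_{i-1},x_i)\Big\}=\sup_{x_n}\Big\{g(x_n)-\inf_{x_1,\ldots,x_{n-1}}\sum_{i=1}^{n}c_i(x_{i-1},x_i)\Big\},
\]
and the inner infimum is by definition $c(x_0,x_n)$, so the composition equals $T^-_c g(x_0)=\sup_{x_n}\{g(x_n)-c(x_0,x_n)\}$; the forward operators agree by the symmetric computation. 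Since a backward linear transfer is determined by its Kantorovich operator (Proposition \ref{prop.one}, which recovers $\mathcal{T}(\mu,\nu)=(\mathcal{T}_\mu)^{**}(\nu)=\sup_g\{\int g\,d\nu-\int T^-g\,d\mu\}$), the transfers $\mathcal{T}_c$ and $\mathcal{T}_{c_1}\star\cdots\star\mathcal{T}_{c_n}$ coincide, which is exactly the asserted identity together with both duality formulas.

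For the attainment of the infimum, the functional $(\nu_1,\ldots,\nu_{n-1})\mapsto \mathcal{T}_{c_1}(\mu,\nu_1)+\sum_{i=2}^{n-1}\mathcal{T}_{c_i}(\nu_{i-1},\nu_i)+\mathcal{T}_{c_n}(\nu_{n-1},\nu)$ is a finite sum of jointly weak$^*$-lower semicontinuous functions (each $\mathcal{T}_{c_i}$ is lower semicontinuous, being a linear transfer), hence lower semicontinuous on the weak$^*$-compact product $\prod_{i=1}^{n-1}\mathcal{P}(X_i)$; a lower semicontinuous function on a compact set attains its infimum, yielding the minimizers $\bar\nu_1,\ldots,\bar\nu_{n-1}$. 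I expect the only genuinely delicate points to be the two preservation-of-continuity claims---that $c$ is continuous and that each $T^\pm_{c_i}$ sends continuous functions to continuous functions---since these are exactly what licenses the appeal to Proposition \ref{n-conv}; note that routing the proof through the Kantorovich operators sidesteps any measurable-selection argument, as the equality of the operators is a purely algebraic telescoping of suprema.
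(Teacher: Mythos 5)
Your proof is correct and takes essentially the same route as the paper's: both arguments hinge on the telescoping identification $T^-_{c_1}\circ T^-_{c_2}\circ\cdots\circ T^-_{c_n}\,g(x)=\sup_{x_n}\{g(x_n)-c(x,x_n)\}=T^-_c g(x)$ and then conclude by duality (via Proposition \ref{n-conv} and the fact that a linear transfer is determined by its Kantorovich operator) that $\mathcal{T}_c$ coincides with the iterated inf-convolution, the paper merely phrasing this as a reduction to the case $n=2$. Your extra care about the continuity of $c$ and of the operators $T^{\pm}_{c_i}$, and the weak$^*$-compactness plus lower semicontinuity argument for attainment of the infimum, supplies details that the paper's terse proof leaves implicit.
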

\noindent{\bf Proof:} It suffices to verify these formulas in the case of two cost functions. We do so using duality by noting that both ${\mathcal T}_{c_1\star c_2}$ and ${\mathcal T}_{c_1}\star {\mathcal T}_{c_2}$ have the same backward Kantorovich map equal to 
\begin{eqnarray*}
T_{c_1}^-\circ T_{c_2}^-f(x)&=&\sup_{x_1\in X_1}\{T^-_{c_2}f(x_1)-c_1(x, x_1)\}\\
&=&\sup_{x_1\in X_1, x_2\in X_2 }\{f(x_2)-c_2(x_1, x_2)-c_1(x, x_1)\}\\ &=&
\sup_{x_2\in X_2 }\{f(x_2)-c(x, x_2)\}=T_{c}^-f(x).
\end{eqnarray*}
This is illustrated  by the following example.\\

 \noindent {\bf Example 5.1:  The ballistic transfer} (Barton-Ghoussoub \cite{B-G})

 Let $L$ be a Tonelli Lagrangian, then  the deterministic ballistic mass transport is defined as 
 \begin{equation}
    \underline{\mathcal B}_d(\mu,\nu):=\inf\left\{\expect{\langle V,X(0)\rangle +\int_0^T L(t,X,\dot X(t))\,dt};\, V\sim\mu,\, X\in {\cal A},\, X(T)\sim \nu\right\},
\end{equation} 
where $\mathcal{A}$ is the space of random processes $X_t$ such that $\dot X \in L^2[0, T], M)$. 
This corresponds to the following cost functional defined on phase space $M^*\times M$ by 
 \begin{equation}\label{bal}
b (v, x):=\inf\{\langle v, \gamma (0)\rangle +\int_0^1L(t, \gamma (t), {\dot \gamma}(t))\, dt; \gamma \in C^1([0, T), M);   \gamma(1)=x\}. 
\end{equation}
It is then clear that 
\begin{equation}\label{basic}
b(t, v, x)=\inf\{\langle v, y\rangle+c(t, y, x);\, y\in M\},
\end{equation}
where the cost $c$ is given by the generating function associated to $L$ in Example 3.11, which means that $ \underline{\mathcal B}_d$ is the convolution of the Brenier cost with the cost induced by the Lagrangian $L$. 
The corresponding forward Kantorovich operator is then
\begin{equation}
T ^+_{b}f(x)=T^+ _{c}\circ T^+_2f(x)=V_{\tilde f}(1, x),
\end{equation}
where $V_{\tilde f}(T, x)$ is the final state of the solution of the Hamilton-Jacobi equation (\ref{HJ.0}) starting at $T^+_2f(x):=\tilde f(x)=-f^*(-x)$. So, if $\mu$ (resp., $\nu$) is a given probability measure on $M^*$ (resp., $M$), then we have 
\begin{eqnarray}\label{bal2}
{\mathcal T}_{b}(\mu, \nu)&=&\inf \{\int_{M^*\times M} b(v, x)\, d\pi;\, \pi\in \mK(\mu,\nu)\}\\
 &=&\sup\left\{\int_MV_{\tilde f}(T,x)\, d\nu(x)- \int_{M^*} f(v)\, d\mu(v); \,  \hbox{$f$ convex on $M^*$} \right\}.
\end{eqnarray}
A similar formula holds for the backward Kantorovich operator. 

However, we can now convolute a mass transport with a general linear transfer as in the following example. \\

 \noindent {\bf Example 5.2:  Stochastic ballistic transfer} (Barton-Ghoussoub \cite{B-G})

Consider the stochastic ballistic transportation problem defined as:
\begin{equation}
    \underline{\mathcal B}(\mu,\nu):=\inf\left\{\expect{\langle V,X(0)\rangle +\int_0^T L(t,X^\beta,\beta(t))\,dt}\middle\rvert V\sim\mu, X(\cdot)\in \mathcal{A},X(T)\sim \nu\right\},
\end{equation}
where we are using the notation of Example 4.4.  Note that this a convolution of the Brenier-Wasserstein transfer of Example 3.12 with the general stochastic transfer of Example 4.4. Under suitable conditions on $L$, one gets that 
\begin{equation}
    \underline{\mathcal B}(\mu,\nu)=\sup \left\{\int g\, d\nu-\int \widetilde{\psi_g}\,d\mu; g\in C_b\right\},
\end{equation}
where $\widetilde{h}$ is the concave legendre transform of $-h$ and $\psi_{g}$ is the solution to the Hamilton-Jacobi-Bellman equation
\begin{align}\tag{HJB}
    \pderiv{\psi}{t}+\frac{1}{2}\Delta\psi(t,x)+H(t,x,\nabla\psi)=0,\quad  \psi(1,x)=g(x).
\end{align}
In other words, $ \underline{\mathcal B}$ is a backward linear transform with Kantorovich operator $T^-g=\widetilde{\psi_g}$. \\

 \noindent {\bf Example 5.3: Broken geodesics on Wasserstein space}
 
  Let $L$ be a Lagrangian as above, then for any finite sequence of times $t_1<t_1<....<t_n$, we consider the cost functions $c_i, i=1,..., n$, 
 \[
c_i(x, y)=c_{t_i, t_{i+1}}=\inf\left\{\int_{t_i}^{t_{i+1}}L(t, \gamma (t), {\dot \gamma}(t))\, dt;  
\gamma(t_{i})=x,  \gamma(t_{i+1})=y\right\}. 
\]
The theory of broken geodesics consist of finding for any fixed $x, y$, the critical points of the function $(t_1, t_2,..., t_n)\to c_{t_1,..., t_n}(x, y)$  given by
\begin{equation}
c_{t_1,..., t_n}(x, y)=\inf\left\{c_1(x, x_1) +c_2(x_1, x_2) ....+c_n(x_{n-1}, y); \, x_1, x_2,..., x_{n-1}\in M\right\}. 
\end{equation}
Thanks to Proposition \ref{n-conv}, one can consider a broken geodesic problem for stochastic processes by considering for any finite sequence of times $t_1<t_1<....<t_n$ the backward transfer 
 \begin{equation}
   \T_{t_i, t_{i+1}}(\mu, \nu)=\inf\left\{ \expect{\int_{t_i}^{t_{i+1}}L(t,X(t),\beta_X(t,X(t)))\,dt};X(t_i)\sim \mu,  X_{t_{i+1}}\sim\nu,X(\cdot)\in \mathcal{A}\right\},  
\end{equation}
where   
again $\mathcal{A}$ is the class of processes defined in Section 4.3.

This stochastic transport does not fit in the standard optimal mass transport theory since it does not originate in optimizing a cost between two deterministic states.  However, by a result of Mikami-Thieulin \cite{M-T}, $\T_{t_i, t_{i+1}}$ is 
a backward linear transfer with Kantorovich potential given by 
$T_{i+1, i}f={V}_f(t_i, \cdot)$, where 
\begin{equation}
  {V}_f(t,x)=\sup_{X\in {\mathcal A}}\expcond{f(X(T))-\int_t^{t_{i+1}} L(s,X(s),\beta_X(s,X))\,ds}{X(t)=x},
\end{equation}
which is --at least formally-- a solution of the Hamilton-Jacobi equation
\begin{eqnarray}\label{HJ.i} 
\left\{ \begin{array}{lll}
\partial_tV+H(t, x, \nabla_xV)+\frac{1}{2} \Delta V&=&0 \,\, {\rm on}\,\,  (t_i, t_{i+1})\times M,\\
\hfill V(t_{i+1}, y)&=&f(y). 
\end{array}  \right.
 \end{eqnarray}
One can then define the bacward linear transfer 
\begin{equation}
\T_{t_1,..., t_n}(\mu, \nu)=\inf\left\{\T_{t_1, t_2}(\mu, \sigma_1) +\T_{t_2,t_3}(\sigma_1, \sigma_2) ...+\T_{t_{n-1}, t_n}(\sigma_{n-1}, \nu); \, \sigma_1,... \sigma_{n-1}\in {\cal P}(M)\right\},  
\end{equation}
in such a way that 
\begin{equation}
\T_{t_1,..., t_n}(\mu, \nu)= 
\sup\big\{\int_{M}f(x)\, d\nu(x)-\int_{M}T _{t_{2}, t_1}\circ ...\circ T _{t_{n}, t_{n-1}}f(y)\, d\mu(y);\,  f \in C(M)\big\}. 
\end{equation}
The broken stochastic geodesics consist of finding for any pair $(\mu, \nu)$, the critical points of the function $(t_1, t_2,..., t_n)\to \T_{t_1,..., t_n}(\mu, \nu)$ on Wasserstein space.  \qed

  \noindent {\bf Example 5.4: Projection on the set of balay\'ees of a given measure} 
   
Let $\T$ be a linear transfer on $X\times Y$ and $K$ a closed convex set of probability measures on $Y$. We consider the following minimization problem 
\begin{equation}
 \inf\{ {\cal T}(\mu, \sigma); \sigma \in K\},
\end{equation}
which amounts to finding ``the projection" of $\mu$ on $K$, when the ``distance" is given by the transfer $\T$. In some cases, the set $K:={\cal C}(\nu)$ is a convex compact subset of ${\mathcal P} (Y)$ that depends on a probability measure $\nu$ in such a way that the following map
\begin{equation*}
{\cal S}(\sigma, \nu)=\left\{ \begin{array}{llll}
0 \quad &\hbox{if $\sigma \in {\cal C}(\nu)$}\\
+\infty \quad &\hbox{\rm otherwise.}
\end{array} \right.
\end{equation*}
is a backward transfer on $Y\times Y$. It then follows that 
 \[
 \inf\{ {\cal T}(\mu, \sigma); \sigma \in {\cal C}(\nu)\}= \inf\{ {\cal T}(\mu, \sigma)+ {\cal S}(\sigma, \nu); \sigma \in \cal P (X)\}={\cal T}\star {\cal S} (\mu, \nu).
 \]
If now $T^-$ (resp., $S^-)$ are the backward Kantorovich operators for ${\cal T}$ (resp., ${\cal S}$), then  by Proposition \ref{con.con}, the Kantorovich operator for ${\cal T}\star {\cal S}$ is $T^-\circ S^-$, that is
 \begin{equation}
 \inf\{ {\cal T}(\mu, \sigma); \sigma \in {\cal C}(\nu)\}=\sup\{\int_Yg\, d\nu-\int_{X}T^-\circ S^-g\, d\mu;  g\in C(Y)\}.
\end{equation}
Here is an example motivated by a recent result in  
\cite{GoJu}. \\
 
 Consider now the problem 
\begin{equation}
{\cal P}(\mu, \nu)=\inf\{ {\cal T}_c(\mu, \sigma); \sigma\prec_C \nu\}, 
\end{equation}
 where  ${\cal T}_c$ is the optimal mass transport associated to a cost $c(x, y)$ on $X\times Y$, and $\prec_C$ is the convex order on a convex compact set $Y$. Then, 
 \[
  {\cal P}(\mu, \nu)= {\cal T}_c\star {\mathcal B} (\mu, \nu)  
\]
where ${\cal B}$ is the Balayage transfer. It follows that ${\cal P}$ is a  linear transfer  
with backward Kantorovich operator given by the composition of those for ${\cal T}_c$ and ${\cal B}$, that is
 \[
 T ^-f(x)=\sup\{ {\hat f} (y) - c(x,y); y\in Y\}, 
 \]
where $\hat f$ is the concave envelope of $f$ on $Y$.  
We note that this is the same Kantorovich operator as for 
 the (weak) barycentric transport (See Proposition \ref{bar}). In other words, we can then deduce the following result of Gozlan-Juillet \cite{GoJu}. 
Write $${\cal T}^c_B(\mu, \nu):=\inf\left\{\int_X c(x, \int_Y y d\pi_x(y))\, d\mu (x); \pi\in {\cal K}(\mu, \nu)\right\}.$$
 \begin{cor} Let $c$ be a lower semi-continuous cost functional on $X\times Y$, where $Y$ is convex compact. Then the following holds:
 \begin{enumerate}
  \item 
 $ {\cal T}_c\star {\cal B}= 
 {\cal T}^c_B.$  
 \item 
$
{\cal T}_c \oplus {\cal B}={\cal T}^c_M
$, where the latter is the martingale transport of Example 4.4.
 \end{enumerate}
 \end{cor}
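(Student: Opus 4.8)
The plan is to use that a backward linear transfer is uniquely determined by its backward Kantorovich operator: this is contained in Proposition~\ref{prop.one}, since for such a transfer one has $\T_\mu=(\Gamma_{T,\mu})^*$ on ${\cal P}(Y)$, so $\T=\T^{**}$ is recovered from $T^-$. Hence each identity will follow once we check that both sides are backward linear transfers and that they carry the same Kantorovich operator on $C(Y)$. The transfer status of the two sides comes for free from the permanence results: ${\cal T}_c\star{\cal B}$ is a backward linear transfer by Proposition~\ref{inf.tens}, ${\cal T}_c\oplus{\cal B}$ is one by the Addition property of the cone ${\cal LT}_-$, while ${\cal T}^c_B$ and ${\cal T}^c_M$ are backward linear transfers by Propositions~\ref{prop.two} and~\ref{bar} and by the martingale-transport example, respectively. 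Thus everything reduces to operator computations built from the two elementary operators $T_c^-g(x)=\sup_{y\in Y}\{g(y)-c(x,y)\}$ (the Monge--Kantorovich operator) and $B^-f=\hat f$, the concave envelope, which is the Kantorovich operator of the balayage ${\cal B}$ for the cone $\mathcal C$ of convex functions.

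For (1), Proposition~\ref{inf.tens} gives that the Kantorovich operator of ${\cal T}_c\star{\cal B}$ is the composition $T_c^-\circ B^-$, and a one-line evaluation yields $(T_c^-\circ B^-)f(x)=\sup_{y\in Y}\{\hat f(y)-c(x,y)\}$. Since ${\cal T}^c_B$ is exactly the weak transport attached to the barycentric cost $c\big(x,\int_Y y\,d\sigma(y)\big)$, Proposition~\ref{bar} identifies its Kantorovich operator as the same $\sup_{y\in Y}\{\hat f(y)-c(x,y)\}$. The two operators coincide, so ${\cal T}_c\star{\cal B}={\cal T}^c_B$; this is precisely the computation already made for the projection ${\cal P}(\mu,\nu)=\inf\{{\cal T}_c(\mu,\sigma);\ \sigma\prec_C\nu\}={\cal T}_c\star{\cal B}$ displayed just above the statement.

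For (2), where one takes $X=Y$ convex compact so that the martingale (barycentre) constraint is meaningful, the Addition formula gives the Kantorovich operator of ${\cal T}_c\oplus{\cal B}$ as $T^-f(x)=\sup\{\int_Y f\,d\sigma-{\cal T}_c(\delta_x,\sigma)-{\cal B}(\delta_x,\sigma);\ \sigma\in{\cal P}(Y)\}$. Here ${\cal T}_c(\delta_x,\sigma)=\int_Y c(x,y)\,d\sigma(y)$, because the only plan out of $\delta_x$ is $\delta_x\otimes\sigma$, while ${\cal B}(\delta_x,\sigma)=0$ exactly when $\delta_x\prec_{\mathcal C}\sigma$, which (as recalled in the proof of Proposition~\ref{bar}) means $\sigma$ has barycentre $x$. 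Therefore $T^-f(x)=\sup\{\int_Y(f(y)-c(x,y))\,d\sigma(y);\ \sigma\in{\cal P}(Y),\ \delta_x\prec_{\mathcal C}\sigma\}$, and the Choquet--Mokobodzki envelope identity $\sup\{\int_Y g\,d\sigma;\ \delta_x\prec_{\mathcal C}\sigma\}=\hat g(x)$ from the balayage example turns this into $\hat f_{c,x}(x)$, the value at $x$ of the concave envelope of $y\mapsto f(y)-c(x,y)$. This is exactly the Kantorovich operator $T_M^-f(x)=\hat f_{c,x}(x)$ of the martingale transport, so ${\cal T}_c\oplus{\cal B}={\cal T}^c_M$.

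The computations are short, and I expect the only real care to be bookkeeping: matching the domains (in particular $X=Y$ in part (2), so that barycentres of measures on $Y$ can equal the point $x$), correctly identifying ${\cal B}(\delta_x,\sigma)=0$ with the barycentre condition via the convex order, and forming the envelope $\hat f_{c,x}$ in the $y$-variable with $x$ frozen before evaluating at $y=x$. No step poses a genuine analytic obstacle once the duality between transfers and Kantorovich operators and the Choquet--Mokobodzki formula are in hand.
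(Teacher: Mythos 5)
Your proposal is correct and is essentially the paper's own argument: part (1) is proved there by exactly your computation --- the Kantorovich operator of $\mathcal{T}_c\star\mathcal{B}$ is $f\mapsto\sup_{y\in Y}\{\hat f(y)-c(x,y)\}$ by Proposition \ref{inf.tens}, which Proposition \ref{bar} identifies as the operator of $\mathcal{T}^c_B$, and equality of two backward linear transfers having the same Kantorovich operator follows from the duality of Proposition \ref{prop.one}. For part (2), which the paper leaves implicit as a ``similar manipulation,'' your evaluation of the addition operator at Dirac masses (using $\mathcal{T}_c(\delta_x,\sigma)=\int_Y c(x,y)\,d\sigma$, the identification of $\mathcal{B}(\delta_x,\sigma)=0$ with the barycentre constraint, and the Choquet--Mokobodzki formula) yields precisely the martingale operator $T^-_Mf(x)=\hat{f}_{c,x}(x)$, exactly as intended.
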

Similar manipulations can be done when the balayage is given by the cones of subharmonic or plurisubharmonic functions. \\ 

\section{Distance-like transfers}

Suppose now that $\T$ is a functional on ${\mathcal P}(X)\times {\mathcal P}(Y)$  satisfying the triangular inequality, that is 
\begin{equation}\label{Tr}
\T (\mu, \nu)\leq \T (\mu, \sigma)+\T (\sigma, \nu) \quad \hbox{for all $\mu$, $\nu$ and $\sigma$ in ${\mathcal P}(X)$},  
\end{equation}
which translates into $\T \leq \T\star \T$ and if $\T$ is a backward transfer to $T^-\circ T^- \leq T^-$. 

Note that if in addition $\T(\mu, \mu) =0$ for every $\mu \in {\mathcal P}(X)$, then $\T = \T\star \T$.  We shall call such a transfer {\it idempotent}. 
It is easy to see that $\T$ is idempotent if and only if $(T^-)^2=T^-$ on $USC (X)$.

\subsection{Characterization of $\T$-Lipschitz functions on Wasserstein space}

\begin{prop} \label{Lips} Let $\T$ be a backward linear transfer on a compact space $X$ and $T^-$ be its associated backward Kantorovich map. 
  If $\T$ satisfies (\ref{Tr}), then 
  \begin{equation}\label{ineq}
\T(\mu, \nu)\geq \sup\{\int_XT^-f \, d (\nu-\mu); f\in C(X)\} \quad \hbox{for any $\mu\in {\cal P}(X)$ and $\nu \in {\cal A}$}.
\end{equation}
Moreover, if $\T$ is also a forward transfer, then for any $\mu, \nu \in {\cal A}$.
  \begin{equation}\label{ineq}
\T(\mu, \nu)= \sup\{\int_XT^-f \, d (\nu-\mu); f\in C(X)\}=\sup\{\int_XT^-\circ T^+f \, d (\nu-\mu); f\in C(X)\}.
 \end{equation}
  \end{prop}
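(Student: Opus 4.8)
The plan is to establish the two inequalities underlying (\ref{ineq}) separately: the lower bound will come from the backward structure together with the triangular inequality alone, whereas the reverse bounds and the identification with $T^-\circ T^+$ will use the membership hypotheses ($\nu\in{\cal A}$, resp. $\mu,\nu\in{\cal A}$) and Proposition \ref{few}. \emph{Lower bound.} First I would fix $f\in C(X)$ and insert $g:=T^-f\in USC_\sigma(X)$ into the extended duality formula (\ref{extLT}), which remains valid on $USC_\sigma(X)$; this gives $\T(\mu,\nu)\geq\int_X T^-f\,d\nu-\int_X T^-(T^-f)\,d\mu$. Since (\ref{Tr}) is equivalent to $\T\leq\T\star\T$, whose Kantorovich operator is $T^-\circ T^-$ by Proposition \ref{inf.tens}, one has the operator inequality $T^-\circ T^-\leq T^-$, hence $T^-(T^-f)\leq T^-f$ pointwise and $\int_X T^-(T^-f)\,d\mu\leq\int_X T^-f\,d\mu$. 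Thus $\T(\mu,\nu)\geq\int_X T^-f\,d(\nu-\mu)$, and taking the supremum over $f\in C(X)$ proves the first inequality (in fact for arbitrary $\mu,\nu$, the restriction $\nu\in{\cal A}$ not being needed here).

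\emph{First equality.} For the reverse bound I would use $\nu\in{\cal A}$. Evaluating the defining formula at $(\nu,\nu)$ gives $0=\T(\nu,\nu)=\sup_{g}\int_X(g-T^-g)\,d\nu$, so $\int_X T^-g\,d\nu\geq\int_X g\,d\nu$ for every $g\in C(X)$. Substituting this into the basic duality $\T(\mu,\nu)=\sup_g\{\int_X g\,d\nu-\int_X T^-g\,d\mu\}$ yields $\T(\mu,\nu)\leq\sup_g\int_X T^-g\,d(\nu-\mu)$, and together with the lower bound this gives $\T(\mu,\nu)=\sup_{f\in C(X)}\int_X T^-f\,d(\nu-\mu)$ for $\mu\in{\cal P}(X)$, $\nu\in{\cal A}$.

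\emph{Second equality.} Assuming now that $\T$ is also forward, Proposition \ref{few} supplies both the representation $\T(\mu,\nu)=\sup_f\{\int_X T^+f\,d\nu-\int_X T^-\circ T^+f\,d\mu\}$ and the conjugacy relation $T^-\circ T^+\circ T^-=T^-$. For the ``$\geq$'' half I would apply the ${\cal A}$-inequality for $\nu$ to $h=T^+f$, obtaining $\int_X T^+f\,d\nu\leq\int_X T^-\circ T^+f\,d\nu$, so the forward representation forces $\T(\mu,\nu)\leq\sup_f\int_X T^-\circ T^+f\,d(\nu-\mu)$. For ``$\leq$'' I would rerun the lower-bound computation with $T^+f$ in place of $f$: inserting $g:=T^-\circ T^+f$ into (\ref{extLT}) and using $T^-(T^-\circ T^+f)=(T^-\circ T^-)(T^+f)\leq T^-\circ T^+f$ gives $\int_X T^-\circ T^+f\,d(\nu-\mu)\leq\T(\mu,\nu)$. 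Combining the two halves yields the second equality; the identity $T^-f=T^-\circ T^+(T^-f)$ also shows directly that the families $\{T^-f\}$ and $\{T^-\circ T^+f\}$ generate the same supremum against $\nu-\mu$.

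The step I expect to be delicate is not the algebra but the bookkeeping of function classes. The compositions $T^+\circ T^-$ and $T^-\circ T^+$ entering Proposition \ref{few} require the Kantorovich operators extended to upper/lower semicontinuous functions as in Section 4, and I must verify that the extended duality (\ref{extLT}), the ${\cal A}$-inequality $\int T^-g\,d\nu\geq\int g\,d\nu$, and the operator bound $T^-\circ T^-\leq T^-$ all persist when applied to the non-continuous arguments that arise (e.g. $T^+f\in LSC(X)$ and $T^-g\in USC(X)$). This rests on the monotone-continuity of the extended operators recorded in Lemma \ref{monotone}, and checking its applicability in each of these compositions is where the genuine work lies.
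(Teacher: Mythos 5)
Your proof is correct, and although it draws on the same ingredients as the paper's --- the Section 4 extension of $T^-$ to $USC_\sigma$, the operator inequality $T^-\circ T^-\leq T^-$ encoding (\ref{Tr}), the ${\cal A}$-inequalities $\int_X T^+h\,d\rho\leq \int_X h\,d\rho\leq \int_X T^-h\,d\rho$, and Proposition \ref{few} --- it closes the two equalities by a genuinely different, and in fact tighter, mechanism. The paper's proof substitutes into the two representations of $\T$ from Proposition \ref{few} the two-sided identities $\int_X T^+\circ T^-g\,d\rho=\int_X T^-g\,d\rho$ and $\int_X T^-\circ T^+f\,d\rho=\int_X T^+f\,d\rho$ for $\rho\in{\cal A}$, asserting that these follow from the pointwise inequalities $T^+\circ T^-g\geq g$ and $T^-\circ T^+f\leq f$; but only one half of each identity follows that way (every direct combination of those pointwise facts with the ${\cal A}$-inequalities yields the same direction, e.g. $\int_X T^+\circ T^-g\,d\rho\leq\int_X T^-g\,d\rho$), and the harder halves in fact require the part-1 inequality itself, through the conjugate formula $\int_X T^+h\,d\rho=\inf_\sigma\{\int_X h\,d\sigma+\T(\sigma,\rho)\}$. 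Your argument sidesteps these identities entirely: you obtain the first equality by feeding $\int_X g\,d\nu\leq\int_X T^-g\,d\nu$ directly into the backward duality (so it needs neither the forward structure nor Proposition \ref{few}), and the second by using only the easy direction $\int_X T^+f\,d\nu\leq\int_X T^-\circ T^+f\,d\nu$ together with a rerun of your lower-bound computation at the test function $T^-\circ T^+f$. This buys you strictly more than the stated proposition: part 1 holds for all $\mu,\nu\in{\cal P}(X)$, and both equalities hold for arbitrary $\mu$ as soon as $\nu\in{\cal A}$. Finally, the bookkeeping you flag is real but resolves exactly as you predict: bounded lower semi-continuous functions belong to $USC_\sigma(X)$, so $T^+f$ is a legitimate argument for the extended $T^-$, and both $T^-\circ T^-\leq T^-$ and $\int_X T^-h\,d\nu\geq\int_X h\,d\nu$ persist on $USC_\sigma$ arguments by monotone approximation through Lemma \ref{monotone}; the paper performs the same manipulations without comment, so your write-up is, if anything, the more careful of the two.
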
 
\noindent {\bf Proof:} The proof is straightforward since for every $\nu \in {\cal A}$, we have 
\begin{equation}\label{oneside}
\int_XT^-g\, d\nu \geq \int_Xg \, d\nu \hbox{\, \, for every $g\in C(X).$}
\end{equation}
 while if $\T$ satisfies (\ref{Tr}), then $\int_X(T^-)^2g\, d\mu \leq \int_XT^-g \, d\mu$ for every $\mu \in {\cal P}(X)$. 
 
 If now $\T$ is also a forward transfer, then  $
\int_XT^+ f \, d \nu =\inf\{\int_X f d\sigma +\T(\sigma, \nu);\sigma \in {\mathcal P}(X)\}$, hence for every $\mu\in {\cal A}$, 
\[
\hbox{$ \int_XT^+g\, d\mu \leq \int_Xg \, d\mu\leq \int_XT^-g\, d\mu$ for every $g\in C(X)$.}
 \]
Since by (\ref{compare}) of Proposition \ref{few}, we  have  
\begin{equation}
\hbox{$T ^+\circ T ^-g(y) \geq g(y)$ for $y\in Y,$ \quad  \quad $T ^-\circ T ^+f(x) \leq f(x)$ for $x\in X$,}
\end{equation}
it follows that for every $\mu \in {\cal A}$  
 \begin{equation*}
\hbox{$ \int_XT^+\circ T^-g\, d\mu =\int_XT^-g \, d\mu$ for every $g\in C(X)$.}
 \end{equation*}
 and 
  \begin{equation*}
\hbox{$ \int_XT^-\circ T^+f\, d\mu =\int_XT^+f \, d\mu$  for every $g\in C(X)$.}
 \end{equation*}
Assertion (\ref{rubin}) follows by recalling from Proposition \ref{few} that 
  \begin{eqnarray*}
{\mathcal T}(\mu, \nu)&=&\sup\big\{\int_{Y}T ^+\circ T ^-g(y)\, d\nu(y)-\int_{X}T^-g\, d\mu(x);\,  g \in C(Y)\big\}\label{tau-convex}\\
&=&\sup\big\{\int_{Y}T^+f(y)\, d\nu(y)-\int_{X}T^-\circ T^+f\, d\mu(x);\,  f \in C(X)\big\}. 
  \end{eqnarray*}
 The above proposition states that the maps $\mu \to \int_XT^-f \, d \mu$ and $\mu \to \int_XT^+\circ T^-g \, d \mu$ are $1$-Lipschitz for the metric-like $\T$ on the subset $\cal A$ of Wasserstein space. We now show the converse, that is all Lipschitz maps on $\cal A$ are of this form.

 \begin{thm}\label{nice} Suppose  ${\mathcal T}: {\mathcal P}(X)\times {\mathcal P}(X) \to \R\cup \{+\infty\}$ is bounded below, weak$^*$-lower semi-continuous and convex  metric-like functional such that ${\cal A}:=\{\mu \in {\cal P}(X); \T(\mu, \mu)=0\}$ is non-empty. Assume in addition that for any $\mu, \nu \in {\cal P}(X)$, we have
\begin{equation}\label{C3}
\T(\mu, \nu)=\inf\{\T (\mu, \sigma)+\T(\sigma, \nu); \sigma \in {\cal A}\}.
\end{equation}
Then the following hold: 
\begin{enumerate}
\item For any functional $\Phi: {\cal A}\to \R$  that is $\T$-Lipschitz, there exists $f\in C(X)$ such that 
\begin{equation}
\Phi(\mu)=\int_X f d\mu \hbox{\,  for every  $\mu\in \cal A$. }
\end{equation} 
\item If $\T$ is also a backward linear transfer, then 
\begin{equation}
\Phi(\mu)=\int_X f d\mu=\int_X T^-f d\mu \hbox{\,  for every  $\mu\in \cal A$. }
\end{equation} 
\item If in addition $\T$ is also a forward linear transfer, then 
\begin{equation}
\Phi(\mu)=\int_X f d\mu=\int_X T^-f d\mu=\int_X T^+\circ T^-f d\mu \hbox{\,  for every  $\mu\in \cal A$. }
\end{equation} 
Note that the functions  $\psi_0:=T^-f$ and $\psi_1:=T^+\circ T^-f $ are conjugate in the sense that $\psi_0=T^-\psi_1$ and $\psi_1=T^+\psi_0$.
 
\item Moreover, if $g$ is a function in $C(X)$ such that $\int_X g d\mu=\Phi(\mu)$  for all $\mu \in \cal A$, then 
\begin{equation}\label{unique}
\psi_0 \leq T^-g \quad \hbox{and}\quad \psi_1 \geq T^+g.
\end{equation}
 \end{enumerate}
\end{thm}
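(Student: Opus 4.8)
The plan is to produce $f$ by extending $\Phi$ off $\mathcal{A}$ to a weak$^*$-continuous \emph{affine} functional on $\mathcal{M}(X)$ and then representing it by a continuous density. Concretely, I would reduce assertion (1) to showing that $\Phi$ is (i) weak$^*$-continuous and (ii) affine on the convex compact set $\mathcal{A}$ (convex and closed because $\sigma\mapsto\T(\sigma,\sigma)$ is convex, lower semicontinuous, and $\ge 0$ with value $0$ exactly on $\mathcal{A}$). Granting (i)–(ii), an affine weak$^*$-continuous function on a compact convex subset of $\mathcal{M}(X)$ is the restriction of some $\rho\mapsto\int_X f\,d\rho+c$ with $f\in C(X)$, by extending the underlying continuous linear functional via Hahn–Banach; absorbing $c$ into $f$ gives $\Phi(\mu)=\int_X f\,d\mu$ on $\mathcal{A}$, which is (1). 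Continuity of $\Phi$ on $\mathcal{A}$ I would deduce from the two-sided $\T$-Lipschitz bounds together with the lower semicontinuity of $\T$.

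The heart of the matter is affineness. The mechanism I would exploit is a geodesic identity inside $\mathcal{A}$: if $\mu,\nu\in\mathcal{A}$ and $\mu_t=(1-t)\mu+t\nu$, then $\mu_t\in\mathcal{A}$, and feeding the triangle inequality (\ref{Tr}) into the convexity of $\T$ in each argument forces $\T(\mu,\mu_t)=t\,\T(\mu,\nu)$ and $\T(\mu_t,\nu)=(1-t)\,\T(\mu,\nu)$ together with their reverses. Evaluating the $\T$-Lipschitz inequalities along the pairs $(\mu_s,\mu_t)$ then squeezes $t\mapsto\Phi(\mu_t)$ between two affine functions of $t$. To close this squeeze to genuine affineness I would pass to the two canonical extensions $u^+(\rho)=\inf_{\sigma\in\mathcal{A}}[\Phi(\sigma)+\T(\sigma,\rho)]$ and $u^-(\rho)=\sup_{\sigma\in\mathcal{A}}[\Phi(\sigma)-\T(\rho,\sigma)]$, verify $u^-\le u^+$ on $\mathcal{P}(X)$ with equality on $\mathcal{A}$ (triangle inequality plus Lipschitzness), and use (\ref{C3}) with the joint convexity of $\T$ to show $u^+$ is convex and $u^-$ concave; a convex–concave sandwich (Hahn–Banach separation in $\mathcal{M}(X)\times\R$) then yields an affine weak$^*$-continuous function lying between them, necessarily equal to $\Phi$ on $\mathcal{A}$. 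I expect the convexity of $u^+$ (equivalently affineness of $\Phi|_{\mathcal{A}}$) to be the main obstacle: this is precisely where the joint convexity of $\T$ and condition (\ref{C3}) must be combined, and it is the step that fails for a naive one-sided Lipschitz notion, so the exact (two-sided, calibrated) meaning of $\T$-Lipschitz is indispensable here.

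For (2), with $\T$ a backward linear transfer, inequality (\ref{oneside}) already gives $\int_X T^- f\,d\mu\ge\int_X f\,d\mu$ for $\mu\in\mathcal{A}$. The reverse I would obtain from the extended Legendre formula (\ref{extLT}), writing $\int_X T^- f\,d\mu=\sup_{\sigma}[\int_X f\,d\sigma-\T(\mu,\sigma)]$ and using (\ref{C3}) to reduce the supremum to one over $\sigma\in\mathcal{A}$, where $\int_X f\,d\sigma=\Phi(\sigma)$ and the Lipschitz bound $\Phi(\sigma)-\Phi(\mu)\le\T(\mu,\sigma)$ forces the bracket to be $\le\Phi(\mu)=\int_X f\,d\mu$; hence equality on $\mathcal{A}$. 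For (3), when $\T$ is also forward, Proposition~\ref{few} supplies $T^-\circ T^+\circ T^-=T^-$ and $T^+\circ T^- g\ge g$ (\ref{compare}), so $\psi_0:=T^- f$ and $\psi_1:=T^+\circ T^- f$ are conjugate ($T^-\psi_1=\psi_0$, $\psi_1=T^+\psi_0$), and the identity $\int_X T^+\circ T^- f\,d\mu=\int_X T^- f\,d\mu$ on $\mathcal{A}$ (derived in the proof of Proposition~\ref{Lips}) upgrades the representation to $\Phi(\mu)=\int_X\psi_1\,d\mu$.

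Finally, for (4) let $g\in C(X)$ satisfy $\int_X g\,d\mu=\Phi(\mu)$ on $\mathcal{A}$. Using (\ref{extLT}) once more, $T^- g(x)=\sup_{\sigma}[\int_X g\,d\sigma-\T(\delta_x,\sigma)]\ge\sup_{\sigma\in\mathcal{A}}[\Phi(\sigma)-\T(\delta_x,\sigma)]$, while the same calibration argument (the supremum in $T^- f$ being attained within $\mathcal{A}$, via (\ref{C3})) gives $\psi_0(x)=\sup_{\sigma\in\mathcal{A}}[\Phi(\sigma)-\T(\delta_x,\sigma)]$; comparing the two displays yields $\psi_0\le T^- g$. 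The estimate $\psi_1\ge T^+ g$ follows dually from the forward Legendre representation and the monotonicity of $T^+$. This identifies $(\psi_0,\psi_1)$ as the extremal conjugate pair representing $\Phi$ and completes the argument.
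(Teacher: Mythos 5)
Your plan for part (1) is, in substance, the paper's own proof. Your canonical extensions $u^-(\rho)=\sup_{\sigma\in\mathcal{A}}\{\Phi(\sigma)-\T(\rho,\sigma)\}$ and $u^+(\rho)=\inf_{\sigma\in\mathcal{A}}\{\Phi(\sigma)+\T(\sigma,\rho)\}$ are exactly the functions $\Phi_0$ and $\Phi_1$ of the paper; the chain ``$u^-\le u^+$ on $\mathcal{P}(X)$, with $u^-=\Phi=u^+$ on $\mathcal{A}$'' is obtained there the same way (triangle inequality plus the directed Lipschitz bound), the concave/convex Hahn--Banach sandwich is the paper's final step in (1), and your arguments for (2)--(4) track the paper's conjugacy computations (routing (3) through Propositions \ref{Lips} and \ref{few} is a harmless variant). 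Two cautions on wording: your preliminary geodesic squeeze only traps $t\mapsto\Phi(\mu_t)$ between two affine functions with different slopes, which does not give affineness (you concede this); and in (2) and (4) the ``reduction of the supremum to $\sigma\in\mathcal{A}$'' is not a consequence of (\ref{C3}) alone --- it requires the global inequality $\int_X f\,d\sigma\le u^+(\sigma)$ furnished by the separation step (this is how the paper argues, via (\ref{BB22})), so it is available only for the constructed $f$ and not for an arbitrary representative $g$, which is exactly why (4) yields only inequalities.

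The genuine gap is the one you flag yourself: the convexity of $u^+$ and concavity of $u^-$. You propose to extract it from joint convexity of $\T$ together with (\ref{C3}); the paper simply asserts it. Neither works. Testing convexity of $u^+$ at $\lambda\rho_1+(1-\lambda)\rho_2$ with near-optimal $\sigma_1,\sigma_2\in\mathcal{A}$ and invoking joint convexity of $\T$ reduces the claim precisely to $\Phi(\lambda\sigma_1+(1-\lambda)\sigma_2)\le\lambda\Phi(\sigma_1)+(1-\lambda)\Phi(\sigma_2)$, i.e.\ to convexity of $\Phi$ on $\mathcal{A}$, and dually concavity of $u^-$ requires concavity of $\Phi$; so the sandwich presupposes the affineness of $\Phi|_{\mathcal{A}}$ that it is meant to produce. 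Worse, no argument from the stated hypotheses can close this step: on $X=\{0,1\}$ take $\T=W_1$, the Kantorovich--Rubinstein transport of Example 2.10, so that $\mathcal{A}=\mathcal{P}(X)$ and (\ref{Tr}), (\ref{C3}) hold trivially, and let $\Phi(\mu)=\lvert\mu(\{1\})-\tfrac{1}{2}\rvert$. This $\Phi$ satisfies the directed Lipschitz inequality $\Phi(\sigma)-\Phi(\tau)\le\T(\tau,\sigma)$ for all $\sigma,\tau\in\mathcal{A}$ --- the only property of $\Phi$ that either you or the paper ever uses --- yet it is not of the form $\mu\mapsto\int_X f\,d\mu$. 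So this step is not merely ``the main obstacle'': it is impassable unless ``$\T$-Lipschitz'' is given a strictly stronger meaning than the one actually used, or $\Phi$ is additionally assumed convex (resp.\ affine and weak$^*$-continuous) on $\mathcal{A}$. Your closing remark that the exact meaning of $\T$-Lipschitz is indispensable here is therefore exactly right, but as written both your proposal and the paper's proof break at the same point.
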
 

 \noindent {\bf Proof:} Let $\Phi$ be such that $\mu \to \Phi (\mu)$ is $\T$-Lipschitz on $\cal A$ and define 
$$\Phi_0(\mu)=\sup\limits_{\sigma \in {\cal A}} \{\Phi (\sigma) -\T (\mu, \sigma)\} \quad \hbox{and \quad $\Phi_1(\nu)=\inf\limits_{\sigma \in {\cal A}} \{\Phi (\sigma) +\T (\sigma, \nu)$\}}.
$$ 
 It is clear that 
\begin{equation}\label{on.A}
\Phi_1(\mu) \leq \Phi (\mu) \leq \Phi_0 (\mu) \hbox{\quad for all $\mu\in {\cal A}$}. 
\end{equation}
We now show that 
 \begin{equation}\label{ineq.on.X}
 \Phi_0(\mu) \leq  \Phi_1(\mu) \hbox{\quad for all $\mu\in {\cal P}(X)$}.
 \end{equation}
For that  note that (\ref{Tr}) and the fact that $\mu \to \Phi (\mu)$ is $\T$-Lipschitz on $\cal A$ yield
\begin{align*}
\Phi_0(\mu)-\Phi_1(\mu)&=\sup\limits _{\sigma, \tau \in {\cal A}}\{\Phi (\sigma) -\T (\mu, \sigma) -\Phi (\tau) -\T (\tau, \mu)\}\\
&\leq \sup\limits _{\sigma, \tau \in {\cal A}}\{\Phi (\sigma) - \Phi (\tau) -\T(\tau, \sigma)\} \leq 0. 
\end{align*}
This combined with (\ref{on.A}) shows that 
\[
\Phi_1(\mu) = \Phi (\mu) = \Phi_0(\mu) \hbox{\quad for all $\mu\in {\cal A}$}.  
\]
We now show that for every $\mu \in {\cal P}(X)$, 
\begin{equation}\label{BB22}
\Phi_0(\mu)=\sup \{\Phi_1(\sigma)-\T(\mu, \sigma); \sigma \in {\cal P}(X)\}.
\end{equation}
For every $\mu \in {\cal P}(X)$, we have 
\begin{align*}
\Phi_0(\mu)=\sup\limits_{\sigma \in {\cal A}} \{ \Phi (\sigma) -\T (\mu, \sigma)\}
=\sup\limits_{\sigma \in {\cal A}} \{ \Phi_1(\sigma) -\T (\mu, \sigma)\} 
\leq \sup\limits_{\sigma \in {\cal P}(X)} \{  \Phi_1(\sigma) -\T (\mu, \sigma)\}.   
\end{align*}
On the other hand, for any $\nu, \mu \in {\cal P}(X)$, we have 
\begin{align*}
\Phi_1(\nu)-\Phi_0(\mu)&=\inf\limits _{\sigma, \tau \in {\cal A}}\{\Phi (\sigma) +\T(\sigma, \nu)- \Phi (\tau) +\T(\mu, \tau)\} \\
&\leq \inf\limits _{\sigma, \tau \in {\cal A}}\{\T(\sigma, \nu) +\T (\tau, \sigma) +\T(\mu, \tau)\} \\
&\leq \inf\limits _{\sigma \in {\cal A}}\{\T(\sigma, \nu) +\T (\mu, \sigma)\} \\
&=\T (\mu, \nu).
\end{align*}
This shows (\ref{BB22}).  The other conjugate formula 
\begin{equation}
\Phi_1(\nu)=\inf \{\Phi_0(\sigma)+\T(\sigma, \nu); \sigma \in {\cal P}(X)\}
\end{equation}
can be proved in a similar fashion. 

Note now that $\Phi_0$ is a  concave weak$^*$-upper semi-continuous  function on ${\cal P}(X)$, while $\Phi_1$ is a convex weak$^*$-lower semi-continuous. Since $\Phi_0 \leq \Phi_1$ on ${\cal P}(X)$, there exists $f \in C(X)$ such that
\begin{equation}
\Phi_0 (\mu) \leq \int_Xf d\mu \leq \Phi_1(\mu) \quad \hbox{for all  $\mu\in {\cal P}(X)$,}
\end{equation} 
hence
\begin{equation}
\Phi_0 (\mu) =\int_Xf d\mu =\Phi_1(\mu)=\Phi (\mu)  \quad \hbox{for all  $\mu\in {\cal A}$.}
\end{equation} 
2) Suppose now $\T$ is also a backward linear transfer with $T^-$ as a Kantorovich operator, then 
\[
\int_XT^-f d\mu =\sup\limits_{\sigma \in {\cal P}(X)} \{\int_Xf d\sigma -\T(\mu, \sigma)\}\leq \sup\limits_{\sigma \in {\cal P}(X)} \{\Phi_1(\sigma) -\T(\mu, \sigma)\}=\Phi_0(\mu).
\]
On the other hand,  if $\mu\in {\cal A}$, 
\[
\int_XT^-f d\mu \geq \sup\limits_{\sigma \in {\cal A}} \{\int_Xf d\sigma -\T(\mu, \sigma)\}\geq \int_Xf d\mu -\T(\mu, \mu)= \int_Xf d\mu.
 \]
3) Suppose in addition that $\T$ is a forward linear transfer with $T^+$ as a Kantorovich operator, then 
\[
\int_XT^+\circ T^-f d\mu=\inf\limits _{\sigma \in {\cal P}(X)} \{\int_XT^-f d\sigma +\T(\sigma, \mu)\}\leq \inf\limits _{\sigma \in {\cal P}(X)} \{\Phi_0(\sigma )+\T(\sigma, \mu)\}=\Phi_1(\mu).
\]
On the other hand, $T^+\circ T^- f \geq f$ in such a way that 
\[
\int_XT^+\circ T^-f d\mu \geq \int_X f d\mu \geq \Phi_0(\mu).
\]
In other words,
$T^-f$ and $T^+\circ T^-f$ are two conjugate functions verifying
\[
\int_XT^+\circ T^-f d\mu=\int_XT^-f d\mu=\Phi (\mu) \quad \hbox{for all $\mu\in {\cal A}$}.  
\]
4) To prove (\ref{unique}), first note that
\begin{align*}
\int_XT^-f d\mu &\leq \Phi_0(\mu)=\sup \{\Phi (\sigma) -\T(\mu, \sigma); \sigma \in {\cal A}\}\\
&\leq \sup \{\int_X gd\sigma -\T(\mu, \sigma); \sigma \in {\cal P}(X)\}\\
&=\int_X T^-g d \mu.
\end{align*}
On the other hand, 
\begin{align*}
\int_XT^+\circ T^-f d\mu &=\inf\{\int_XT^-f d\sigma +\T(\sigma, \mu); \sigma \in {\cal P}(X)\}\\
&=\inf\{\int_XT^-f d\sigma +\T(\sigma, \lambda)+\T(\lambda, \mu); \lambda \in {\cal A}, \sigma \in {\cal P}(X)\}\\
&=\inf\{\int_XT^+\circ T^-f d\lambda  +\T(\lambda, \mu); \lambda \in {\cal A}\}\\
&=\inf\{\int_Xg d\lambda  +\T(\lambda, \mu); \lambda \in {\cal A}\}\\
&=\inf\{\int_Xg d\lambda  +\T(\lambda, \mu); \lambda \in {\cal A}\}\\
&\geq \inf\{\int_Xg d\lambda  +\T(\lambda, \mu); \lambda \in  {\cal P}(X)\}\\
&=\int_XT^+g d\mu, 
\end{align*}
which completes the proof of the theorem. \qed

\subsection{Examples of  idempotent transfers} 

In the next sections, we shall associate to any backward or forward linear transfer an idempotent linear transfer. For now, we give a few examples of some transfers that are readily idempotent. 

\begin{enumerate}

\item If $I$ is any bounded below convex lower semi-continuous functional on Wasserstein space ${\mathcal P} (Y)$, and $m=\inf\{I(\sigma); \sigma \in {\mathcal P}(Y)$, then $\T(\mu, \nu)= I(\nu)-m$ is an idempotent backward linear transfer with an idempotent Kantorovich map $T^-f= I^*(f) +m$. 

\item  Any transfer induced by a bounded positive linear operator $T$ with $T^2=T$ and $T1=1$, and in particular, any point transformation $\sigma$ such that $\sigma^2=\sigma$ as per Example 3. 2.

\item The balayage transfer ${\mathcal B}$ since its Kantorovich map is $Tf=\hat f$, where for example in the case of  balayage with convex functions, $\hat f$ is the concave envelope of $f$. 

\item  If $\T_c$ is an optimal mass transport associated to a cost function $c$, then $\T_c$ is idempotent if $c(x,x)=0$ for every $x\in X$ and $c$ satisfies the triangular inequality
\begin{equation}
c(x,z) \leq c(x, y) +c (y, z) \quad \hbox{for all $x, y, z$ in $X$,}
\end{equation}
in which case 
\begin{equation}
\T_c(\mu, \nu)=\sup\{\int_XT_cf \, d (\nu-\mu); f\in C(X)\}. 
\end{equation}
A typical example is the Rubinstein-Kantorovich optimal mass transport associated to any metric -such as in the original Monge problem- since the latter satisfies the triangular inequality and is zero on the diagonal. If $c_p(x,y) = |x-y|^p$ and $0 < p \leq 1$,  then the corresponding optimal mass transport is idempotent since  $c_p$ again satisfies the triangular inequality. 
 $c_p\star c_p(x,y) = c_p(x,y)$, so $T^2 f(x) = T f(x)$, i.e. $T$ is idempotent.
\end{enumerate}

 \noindent {\bf Example 6.7: An idempotent optimal Skorohod embedding}
 
The following transfer  was considered in Ghoussoub-Kim-Palmer \cite{G-K-P3}. 
	\begin{align} \label{eqn:Skorokhod_cost}
		{\mathcal T}(\mu,\nu) :=  \inf
		\Big\{\mathbb{E}\Big[ \int_0^\tau L(t,B_t)dt\Big];\ \tau \in {\mathcal S}(\mu, \nu)  
		\Big\},  
	\end{align}
where ${\mathcal S}(\mu, \nu)$ denotes the set of --possibly randomized-- stopping times with finite expectation such that $\nu$ is realized by the distribution of $B_\tau$ (i.e, $B_\tau \sim\nu$ in our notation), where $B_t$ is Brownian motion starting with $\mu$ as a source distribution, i.e., $B_0\sim \mu$. Note that  ${\mathcal T}(\mu,\nu)=+\infty$ if ${\mathcal S}(\mu, \nu)=\emptyset$, which is the case if and only if $\mu$ and $\nu$ are not in subharmonic order. In this case, It has been proved in \cite{G-K-P3} that under suitable conditions, the backward linear transfer is given by $T^-\psi=J_\psi(0, \cdot)$,  
 where $J_\psi:\R^+\times \R^d\rightarrow \R$ is defined via the 
	dynamic programming principle
	\begin{align}\label{eqn:dynamic_programming}
		J_\psi(t,x) := \sup_{\tau \in \mathcal{R}^{t,x}}\Big\{\mathbb{E}^{t,x}\Big[\psi(B_\tau)-\int_t^\tau L(s,B_s)ds\Big]\Big\},
	\end{align}
	where the expectation superscripted with $t,x$ is with respect to the Brownian motions satisfying $B_t=x$, and the minimization is over all finite-expectation stopping times $\mathcal{R}^{t,x}$ on this restricted probability space such that $\tau \ge t$.
	$J_\psi(t, x)$ is actually a ``variational solution" for 
the quasi-variational Hamilton-Jacobi-Bellman equation:
	\begin{align} 
		 \min\left\{\begin{array}{r} J(t,x) -\psi(x)\\ -\frac{\partial}{\partial t}J(t,x)-\frac{1}{2}\Delta J(t,x)+L(t,x)\end{array}\right\}=0.
	\end{align}
	Note that $J_\psi (t, x) \geq \psi (x)$, that is $T^-\psi \geq \psi$ for every $\psi$.
	
Assume now $t\to L(t, x)$ is decreasing, which yields that $t\to J(t, x)$ is increasing (see \cite{G-K-P3}). if $\psi (x)=T^-\phi =J_\phi (0, x)$ for some $\phi$, then for each $\epsilon>0$, there is $\tau$ such that 
\begin{align*}
J_\psi (0, x)&\leq \mathbb{E}^{t,x}\Big[\psi(B_\tau)-\int_t^\tau L(s,B_s)ds\Big]+\epsilon\\
&\leq \mathbb{E}^{t,x}\Big[J_\phi(t, B_\tau)-\int_t^\tau L(s,B_s)ds\Big]+\epsilon\\
&\leq J_\phi (0,x)+\epsilon.  
\end{align*}
where the last inequality uses the supermartingale property of the process $t\to J_\phi(t, B_\tau)-\int_t^\tau L(s,B_s)ds$. It follows that 
$$T^-\phi (x)\leq (T^-)^2\phi (x)=J_\psi(0, x)\leq  J_\phi (0, x)=T^-\phi(x), $$
and $T^-$ is therefore idempotent.

\section{Ergodic properties of equicontinuous semigroups of transfers}

Let $X$ be a compact space. Our main purpose is to associate to any backward linear transfer $\T$ on $\mcal{P}(X) \times \mcal{P}(X)$, an idempotent backward linear transfer $\T_\infty$ with the properties listed in Theorem \ref{fixedpointcontinuous} below. For that, we shall associate to $\T$, the semi-group of transfers $(\T_n)_n$ defined  
for each $n\in \N$, as ${\cal T}_n= {\cal T}\star {\cal T}\star ....\star {\cal T}$ $n$-times and study its limit as $n\to \infty$. This section deals with the case where $\T$ is continuous, hence the sequence of transfers $(\T_n)_n$ is equicontinuous for the Wasserstein metric. We shall prove the following.

 \begin{thm}[Fixed point of weak$^*$ continuous backward linear transfers]\label{fixedpointcontinuous} Suppose $\T$ is a backward linear transfer on $\mcal{P}(X) \times \mcal{P}(X)$ that is weak$^*$-continuous on ${\cal M}(X)$, and let $T^-$ be the corresponding backward Kantorovich operator that maps $C(X)$ into $C(X)$. Then, there exists a constant $ c = c(\T)\in \R$, an idempotent backward linear transfer $\T^-_\infty$ on $\mcal{P}(X) \times \mcal{P}(X)$ with Kantorovich operator $T^-_\infty: C(X) \to C(X)$ such that, 
 \begin{enumerate}
 \item  The constant $c(\T)=\inf\{{\cal T}(\mu, \mu); \mu\in {\cal P}(X)\}$;
\item For every $f\in C(X)$ and $x\in X$, $\lim\limits_{n\to +\infty} \frac{(T^-)^nf(x)}{n}=-c$:
\item  $\T_\infty
 = (\T - c) \star \T_\infty$ and $T^-\circ T^-_\infty f + c = T^-_\infty f$ for all $f \in C(X)$;
\item The set ${\cal A}:=\{\mu \in {\cal P}(X); {\cal T}_\infty(\mu, \mu)=0\}$ is non-empty and for every 
$\mu, \nu \in {\cal P}(X)$, we have 
\begin{equation}
{\cal T}_\infty(\mu, \nu)=\inf\{ {\cal T}_\infty(\mu, \sigma)+{\cal T}_\infty(\sigma, \nu), \sigma \in {\cal A}\}.
\end{equation}
\end{enumerate}
  \end{thm}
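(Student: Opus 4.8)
The plan is to upgrade the abstract Man\'e constant of Theorem \ref{One.1} into genuine convergence by exploiting continuity, and then to realize the effective transfer $\T_\infty$ as the transfer envelope of $\liminf_n(\T_n-nc)$.

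First, since $\T$ is weak$^*$-continuous on the compact set $\mcal{P}(X)\times\mcal{P}(X)$, it is bounded and uniformly continuous; let $\omega$ be a modulus of continuity for the Wasserstein distance, so that $|\T(\delta_x,\sigma)-\T(\delta_{x'},\sigma)|\le\omega(d(x,x'))$ uniformly in $\sigma$. Writing $T:=T^-$ and using $Tg(x)=\sup_\sigma\{\int g\,d\sigma-\T(\delta_x,\sigma)\}$, I get immediately that every $Tg$, and hence every iterate $T^ng$ with $n\ge 1$, has modulus of continuity at most $\omega$; moreover properties (a) and (c) make $T$ non-expansive for $\|\cdot\|_\infty$. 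Part (1) is then exactly Theorem \ref{One.1}, giving $c=c(\T)=\inf_\mu\T(\mu,\mu)$. For part (2) I set $v_n:=T^n g+nc$, the Kantorovich operator of $\T_n-nc$. From affineness on constants, $a_n:=\min_x T^n0$ is super-additive and $b_n:=\max_x T^n0$ is sub-additive, while $b_n-a_n\le\omega(\mathrm{diam}\,X)=:M$; hence $a_n/n$ and $b_n/n$ converge to a common limit, which a Sion minimax computation identifies with $-c$ (indeed $\inf_{\mu,\nu}\T_n(\mu,\nu)=-b_n$). Consequently $\min v_n\le 0\le\max v_n$ and $\mathrm{osc}(v_n)\le M$, so $\{v_n\}$ is uniformly bounded in $[-M,M]$ and equicontinuous; since $T$ is non-expansive the same holds for arbitrary $g$. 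Dividing by $n$ gives $T^ng/n\to -c$ uniformly, which is (2).

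Next I would build $\T_\infty$. The recursion $\T_n(\mu,\nu)=\inf_\sigma\{\T(\mu,\sigma)+\T_{n-1}(\sigma,\nu)\}$ together with the uniform modulus of $\T$ shows that the whole family $\{\T_n-nc\}_n$ is equicontinuous in each variable, and the previous step shows it is bounded below. I then define $\T_\infty:=\overline{\liminf_n(\T_n-nc)}$, the transfer envelope furnished by Propositions \ref{prop.two}--\ref{prop.three}: concretely its Kantorovich operator is $T_\infty g(x)=\sup_\sigma\{\int g\,d\sigma-\liminf_n(\T_n-nc)(\delta_x,\sigma)\}$, which is monotone, convex and affine on constants by its very form, and maps $C(X)\to C(X)$ because of the uniform modulus $\omega$. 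To obtain (3), I use the semigroup identity $(\T-c)\star(\T_n-nc)=\T_{n+1}-(n+1)c$ and pass to the $\liminf$: the inequality $(\T-c)\star\T_\infty\ge\T_\infty$ is a routine interchange of $\inf_\sigma$ with $\liminf_n$, and the reverse inequality uses weak$^*$ compactness of $\mcal{P}(X)$ to extract a limiting intermediate measure, together with lower semicontinuity and the equicontinuity of $\{\T_n-nc\}$ to pass to the limit. Translating through Proposition \ref{inf.tens} (convolution of transfers versus composition of Kantorovich operators) yields $T^-\circ T_\infty f+c=T_\infty f$; the analogous diagonal $\liminf$ argument applied to $(\T_n-nc)\star(\T_m-mc)=\T_{n+m}-(n+m)c$ gives $\T_\infty\star\T_\infty=\T_\infty$, i.e. $T_\infty\circ T_\infty=T_\infty$.

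Finally, for part (4): idempotency of $\T_\infty$ forces the triangle inequality $\T_\infty(\mu,\nu)\le\T_\infty(\mu,\sigma)+\T_\infty(\sigma,\nu)$, while the fixed-point relation in (3) shows every $u=T_\infty f$ is a backward weak KAM solution ($Tu+c=u$); the Mather measures of Theorem \ref{One.1}, where $\inf_\mu\T(\mu,\mu)=c$ is attained by weak$^*$ compactness and lower semicontinuity, give points of $\mcal{A}=\{\mu:\T_\infty(\mu,\mu)=0\}$, so $\mcal{A}\neq\emptyset$. The decomposition $\T_\infty(\mu,\nu)=\inf_{\sigma\in\mcal{A}}\{\T_\infty(\mu,\sigma)+\T_\infty(\sigma,\nu)\}$ with attained infimum then follows exactly as in Theorem \ref{nice}: the convolution infimum defining $\T_\infty=\T_\infty\star\T_\infty$ is attained at some $\bar\sigma$ by compactness and lower semicontinuity, and one checks $\bar\sigma$ may be taken in $\mcal{A}$ using the fixed-point structure. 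I expect the main obstacle to be precisely the pair of $\liminf$/infimal-convolution interchanges in the third paragraph, namely establishing $(\T-c)\star\T_\infty=\T_\infty$ and $\T_\infty\star\T_\infty=\T_\infty$, since these are the points where the bare Man\'e constant is not enough and one must convert the uniform modulus of continuity and the weak$^*$ compactness of $\mcal{P}(X)$ into an honest passage to the limit; the equicontinuity of $\{\T_n-nc\}$ established in the second paragraph is the tool that makes this work.
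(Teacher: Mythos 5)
Your overall strategy (normalize by the Man\'e constant, pass to a limit of the iterates, read off an idempotent transfer) is the paper's strategy, and your part (2) is essentially the paper's Proposition \ref{estimation}/Lemma \ref{Kant}. But there is a genuine gap at exactly the step you flag as the main obstacle, and it is not repairable by equicontinuity and compactness alone. Your candidate $\T_\infty$ is the transfer whose Kantorovich operator is $\bar{T}f:=\limsup_n (T^nf+nc)$ (your formula $T_\infty g(x)=\sup_\sigma\{\int g\,d\sigma-\liminf_n(\T_n-nc)(\delta_x,\sigma)\}$ is precisely this operator). For this object only one of the two inequalities in (3) is available: monotone continuity of $T$ along decreasing sequences (Lemma \ref{monotone}) gives $T\bar{T}f+c\geq \bar{T}f$, i.e.\ $(\T-c)\star\T_\infty\leq\T_\infty$. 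The reverse inequality, $T\bar{T}f+c\leq\bar{T}f$, is blocked by a Fatou obstruction: $T$ integrates its argument against arbitrary $\sigma\in\mcal{P}(X)$, and
$\int \limsup_n (T^nf+nc)\,d\sigma \geq \limsup_n \int (T^nf+nc)\,d\sigma$
goes the wrong way; the failure mode is oscillation in $n$ (the iterates need not converge, only their Ces\`aro/limsup behaviour is controlled), which spatial equicontinuity does not remove. Your "routine interchange" argument for $(\T-c)\star\T_\infty\geq\T_\infty$ is valid at the level of the pointwise liminf $S:=\liminf_n(\T_n-nc)$, but $S$ is not a linear transfer (a liminf of convex functionals need not be convex, so Propositions \ref{prop.two}--\ref{prop.three} do not apply to it as you invoke them), and the transfer actually generated by $\bar{T}$ lies \emph{below} $S$, not above it, which destroys the envelope-minimality argument. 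This is precisely why the paper does not stop at $\bar{T}$: it observes only the inequality $T\bar{T}f+c\geq\bar{T}f$, uses monotonicity to conclude that $n\mapsto T^n\bar{T}f+nc$ is increasing and uniformly bounded, and defines $T_\infty f:=\lim_n (T^n\bar{T}f+nc)$; this increasing limit is an exact fixed point by Lemma \ref{monotone}(2), and $\T_\infty$ is then defined dually from $T_\infty$. That second, increasing iteration is the missing idea in your proof; without it your object is the paper's intermediate $\bar{T}$, for which (3) is unproven (and in general false).

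Two secondary points. First, citing Theorem \ref{One.1} for part (1) is circular in the paper's logical order: the identity $c(\T)=\inf_\mu\T(\mu,\mu)$ for general transfers is deduced in the paper \emph{from} the present theorem via the regularization Lemma \ref{c_epsilon}; in the continuous case it is proved inside Theorem \ref{weakKAMthm}(5) by a Ces\`aro-average argument on measures forming an optimal telescoping chain, an argument your proposal does not supply. Second, part (4) does not "follow exactly as in Theorem \ref{nice}": that theorem \emph{assumes} the decomposition over ${\cal A}$ as hypothesis (\ref{C3}). Establishing it requires the telescoping-sequence argument of Theorem \ref{weakKAMthm}(4) (generate $\sigma_1,\sigma_2,\dots$ with additive splittings, extract a weak$^*$ limit $\bar\sigma$, and use lower semicontinuity together with an $m\to\infty$ estimate to force $\T_\infty(\bar\sigma,\bar\sigma)=0$); attainment of the convolution infimum at a single intermediate measure does not by itself place that measure in ${\cal A}$.
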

This will follow from the following more general result. But first, we mention that there is an analogous result for the case when $\T$ is a forward linear transfer with operator $T^+$. The same statements hold as above, the only difference being that  
\begin{equation}
\hbox{$\lim\limits_{n\to +\infty} \frac{(T^+)^nf(x)}{n}=c$\quad for every $f\in C(X)$ and $x\in X$, }
\end{equation}
and 
\begin{equation}
\hbox{$T^+\circ T^+_\infty f - c = T^+_\infty f$ \quad for all $f \in C(X)$.}
\end{equation}
If now $\T$ is simultaneously  a backward and forward transfer, then we have the following,

\begin{cor} Suppose $\T$ is a backward  and forward linear transfer on $\mcal{P}(X) \times \mcal{P}(X)$ that is  continuous for the Wasserstein metric, then the  associated effective transfer $\T_\infty$ is also a backward  and forward linear transfer on $\mcal{P}(X) \times \mcal{P}(X)$, with $T_\infty^-$ (resp., $T_\infty^+$) as corresponding backward (resp., forward) effective Kantorovich operator. Moreover, 
 The associated effective transfer $\T_\infty$ can be expressed as 
\begin{eqnarray}\label{KAM.duals}
{\mathcal T}_\infty(\mu, \nu)=\sup\big\{\int_{X}f^+\, d\nu-\int_{X}f^-\, d\mu;\, (f^-, f^+)\in {\cal I}\big\},
\end{eqnarray}
where 
$${\cal I}=\big\{(f^-, f^+); \hbox{ $f^-$ (resp., $f^+$) is a backward (resp., forward) solution and $f^-=f^+$ on ${\cal A}$\big\}.}
$$

\end{cor}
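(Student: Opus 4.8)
The plan is to deduce everything from the backward effective transfer $\T_\infty$ produced by Theorem \ref{fixedpointcontinuous}, exploiting the fact that $\T$ being simultaneously backward and forward is inherited by every iterate $\T_n=\T^{\star n}$. First I would note that by Proposition \ref{inf.tens} each $\T_n$ is at once a backward linear transfer with operator $(T^-)^n$ and a forward linear transfer with operator $(T^+)^n$. Since $\T$ is Wasserstein-continuous, the families $\{(T^-)^n+nc\}_n$ and $\{(T^+)^n-nc\}_n$ are equicontinuous, so the construction underlying Theorem \ref{fixedpointcontinuous} applies verbatim in both directions and yields, from one and the same limiting procedure on $\T_n-nc$, an idempotent functional $\T_\infty$ carrying both Kantorovich representations, with operators $T^-_\infty$ and $T^+_\infty$ obtained as (subsequential) limits of $(T^-)^n+nc$ and $(T^+)^n-nc$. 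In particular $\T_\infty$ is simultaneously a backward and a forward linear transfer; by Proposition \ref{few} applied to $\T_\infty$ the operators $T^-_\infty,T^+_\infty$ are conjugate, and by Theorem \ref{fixedpointcontinuous}(3) together with its forward analogue, $T^-_\infty g$ (resp. $T^+_\infty g$) is a backward (resp. forward) weak KAM solution for every $g$.

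For the representation (\ref{KAM.duals}) I would prove two inequalities. For ``$\le$'', apply the bidirectional formula (\ref{tau-convex}) of Proposition \ref{few} to $\T_\infty$: for each $g\in C(X)$ set $f^-:=T^-_\infty g$ and $f^+:=T^+_\infty\circ T^-_\infty g=T^+_\infty f^-$. Then $f^-$ is a backward and $f^+$ a forward weak KAM solution, and, exactly as in the proof of Proposition \ref{Lips} for the idempotent $\T_\infty$, one has $\int_X f^-\,d\mu=\int_X f^+\,d\mu$ for every $\mu\in{\cal A}$, i.e. $f^-=f^+$ on ${\cal A}$; hence $(f^-,f^+)\in{\cal I}$. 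Since (\ref{tau-convex}) expresses $\T_\infty(\mu,\nu)$ as the supremum of $\int_X f^+\,d\nu-\int_X f^-\,d\mu$ over precisely these pairs, we obtain $\T_\infty(\mu,\nu)\le\sup_{(f^-,f^+)\in{\cal I}}\{\int_X f^+\,d\nu-\int_X f^-\,d\mu\}$.

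For ``$\ge$'' the crucial ingredient is a domination estimate: every backward weak KAM solution $u$ satisfies $\int_X u\,d\sigma-\int_X u\,d\mu\le\T_\infty(\mu,\sigma)$, and symmetrically for forward solutions. This holds because $u$ solves $T^-u+c=u$, whence $(T^-)^nu+nc=u$ for all $n$; since $T^-_\infty u$ is a limit point of this constant sequence, $T^-_\infty u=u$, and the backward representation gives $\T_\infty(\mu,\sigma)\ge\int_X u\,d\sigma-\int_X T^-_\infty u\,d\mu=\int_X u\,d\sigma-\int_X u\,d\mu$. Likewise a forward solution $v$ obeys $T^+_\infty v=v$ and $\int_X v\,d\nu-\int_X v\,d\sigma\le\T_\infty(\sigma,\nu)$. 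Now take any $(f^-,f^+)\in{\cal I}$ and $\sigma\in{\cal A}$: adding the backward estimate for $f^-$ on $(\mu,\sigma)$ to the forward estimate for $f^+$ on $(\sigma,\nu)$ and cancelling $\int_X f^-\,d\sigma=\int_X f^+\,d\sigma$ yields $\int_X f^+\,d\nu-\int_X f^-\,d\mu\le\T_\infty(\mu,\sigma)+\T_\infty(\sigma,\nu)$. Taking the infimum over $\sigma\in{\cal A}$ and invoking the decomposition $\T_\infty(\mu,\nu)=\inf_{\sigma\in{\cal A}}\{\T_\infty(\mu,\sigma)+\T_\infty(\sigma,\nu)\}$ of Theorem \ref{fixedpointcontinuous}(4) closes the reverse inequality.

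The main obstacle I anticipate is precisely the domination step, namely showing that \emph{every} backward (resp. forward) weak KAM solution---not merely those manifestly in the range of $T^-_\infty$ (resp. $T^+_\infty$)---is a fixed point of the idempotent operator; this is where the equicontinuity and the precise limiting construction of $T^\pm_\infty$ in Theorem \ref{fixedpointcontinuous} must be used, and it is also what guarantees that the backward-effective and forward-effective transfers literally coincide rather than only bound one another. Everything else---the conjugacy of $T^\pm_\infty$, membership of the constructed conjugate pairs in ${\cal I}$, and the telescoping over ${\cal A}$---is a routine assembly of Propositions \ref{few}, \ref{Lips} and Theorem \ref{nice}.
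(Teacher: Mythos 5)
Your proof is correct, but your argument for the reverse inequality in (\ref{KAM.duals}) follows a genuinely different route from the paper's. The inequality $\T_\infty(\mu,\nu)\leq\sup\big\{\int_X f^+\,d\nu-\int_X f^-\,d\mu;\,(f^-,f^+)\in{\cal I}\big\}$ is handled identically in both texts (Proposition \ref{few} applied to $\T_\infty$, plus Proposition \ref{Lips} to check that the conjugate pairs $(T^-_\infty g,\,T^+_\infty\circ T^-_\infty g)$ lie in ${\cal I}$). For the converse, the paper views a pair $(f^-,f^+)\in{\cal I}$ as defining a $\T_\infty$-Lipschitz functional on ${\cal A}$ and invokes Theorem \ref{nice} --- in particular the comparison (\ref{unique}) --- to produce a function $\chi$ whose conjugate pair sandwiches $(f^-,f^+)$, namely $T^-_\infty\chi\leq f^-$ and $T^+_\infty\circ T^-_\infty\chi\geq f^+$, and then concludes from the conjugate-pair representation of $\T_\infty$. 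You instead use the fixed-point property of weak KAM solutions under the idempotent operators ($T^-_\infty f^-=f^-$ and $T^+_\infty f^+=f^+$, which is exactly what Theorem \ref{weakKAMthm}(1) records, so you may simply cite it rather than re-derive it), extract the two one-sided domination estimates directly from the backward and forward representations of $\T_\infty$, and telescope through an arbitrary $\sigma\in{\cal A}$ via the decomposition $\T_\infty(\mu,\nu)=\inf\{\T_\infty(\mu,\sigma)+\T_\infty(\sigma,\nu);\,\sigma\in{\cal A}\}$ of Theorem \ref{fixedpointcontinuous}(4). Your route is leaner: it bypasses the sandwiching argument inside Theorem \ref{nice} altogether, and since that theorem requires the ${\cal A}$-decomposition (\ref{C3}) as a hypothesis anyway, both proofs ultimately rest on the same two ingredients, which you use bare-handed; what the paper's detour buys is the extra structural fact that every pair in ${\cal I}$ is dominated by an exact conjugate pair. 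One caveat applies equally to both proofs: the identification of the backward-effective and forward-effective constructions as a single functional $\T_\infty$ carrying both operators $T^{\pm}_\infty$ --- which is needed before Proposition \ref{few} or your domination estimates can be applied to $\T_\infty$ in both directions --- is asserted without argument in the paper's opening sentence, and your first paragraph, while offering a plausible sketch via the common limiting procedure on $\T_n-nc$, does not discharge it rigorously either; you are no worse off than the paper on this point, but it is where any remaining work lies.
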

 
\noindent{\bf Proof:} Since $T_\infty^+$ and $T_\infty^-$ are the Kantorovich opeartors for $\T_\infty$, we can use  (\ref{tau-convex}) of Proposition \ref{few} to write 
\begin{eqnarray}
{\mathcal T}_\infty(\mu, \nu)&=&\sup\big\{\int_{X}T_\infty ^+\circ T_\infty^-g\, d\nu -\int_{X}T_\infty^-g\, d\mu;\,  g \in C(X)\big\}\\
&=&\sup\big\{\int_{X}T_\infty^+f\, d\nu-\int_{X}T_\infty^-\circ T_\infty^+f\, d\mu;\,  f \in C(X)\big\}.
\end{eqnarray}
Note now that $f^-=T^-_\infty g$ (resp., $f^+=T^+_\infty f^-$) is a backward (resp., forward) weak KAM solution for $\T$ and in view of Proposition \ref{Lips},  $\int_Xf^-\, d\mu=\int_Xf^+\, d\mu$ for every $\mu \in  {\cal A}$. It follows that 
\begin{eqnarray}
{\mathcal T}_\infty(\mu, \nu)\leq \sup\big\{\int_{X}f^+\, d\nu-\int_{X}f^-\, d\mu;\, (f^-, f^+)\in {\cal I}\big\},
\end{eqnarray}
 For the reverse inequality, note first that  if $f^-, f^+ \in {\cal I}$, then since $f^-=T^-f^-$ and $f^+=T^+f^+$, the functions $\mu \to \int_Xf^- d\mu$ and $\mu \to \int_Xf^+ d\mu$ are $\T_\infty$-Lipschitz on the set $\cal A$. Hence Theorem \ref{nice} applies and we get a function $\chi$ such that $T^-\chi \leq T^-f^-=f^-$ and $T^+\circ T^-\chi \geq T^+f^+=f^+$. This readily implies the reverse inequality, hence that (\ref{KAM.duals}) hold.

\subsection{Effective Kantorovich operator associated to a semi-group of linear transfers}

Let $\{\T_{t}\}_{t \geq 0}$ be a family of backward linear transfers on $\mcal{P}(X)\times \mcal{P}(X)$ with associated Kantorovich operators $\{T_t\}_{t \geq 0}$, where $\T_0$ is the identity transfer, 
\as{
\T_0(\mu,\nu) = 
\begin{cases}
0 & \text{if } \mu = \nu \in \mcal{P}(X)\\
+\infty & \text{otherwise.}
\end{cases}
}

We make the following assumptions:
\enum{
\item[(H0)] The family $\{\T_{t}\}_{t \geq 0}$  is a semi-group under inf-convolution: $\T_{t+s} = \T_t\star \T_s$ for all $s,t\geq 0$.
\item[(H1)] For every $t > 0$, the transfer $\T_t$ is weak$^*$-continuous, and the Dirac measures are contained in $D_1(\T_t)$.
\item[(H2)] For any $\epsilon > 0$, $\{\T_t\}_{t \geq \epsilon}$ has common modulus of continuity $\delta$ (possibly depending on $\epsilon$).
}

The hypotheses (H1) and (H2) amount to an equi-continuity assumption for the family $\{T_tf\}_{t \geq 0}$ for each $f$, and is an artifact to ensure that we remain within the class of continuous functions in the limit $t \to +\infty$ (thanks to Arzela-Ascoli). It is likely these hypotheses can be weakened. Note in relation to (H2) that the semi-group property (H0) implies that a modulus of continuity for $\T_{t}$ is also one for $\T_{Nt}$, $N \in \N$. In the following, where we will be concerned with taking limits as $t \to +\infty$, it suffices to take $\epsilon = 1$.

\begin{prop}\label{estimation}
Under condition $(H0)$, there exists a finite constant $c$ and a positive constant $C > 0$ such that
\eqs{
\lf|\T_{t}(\mu,\nu) - tc\rt| \leq C,\quad \text{for every $t \geq 1$ and all $\mu,\nu \in \mcal{P}(X)$}.
}
In particular,
\eqs{
c = \lim_{t \to + \infty} \frac{\inf\{\T_t(\mu,\nu)\,;\, \mu,\nu \in \mathcal{P}(X)\}}{t}.
}
\end{prop}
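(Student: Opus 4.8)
The plan is to collapse the two-variable problem to a one-variable one: set $m(t):=\inf\{\T_t(\mu,\nu)\,;\,\mu,\nu\in\mcal{P}(X)\}$ and $M(t):=\sup\{\T_t(\mu,\nu)\,;\,\mu,\nu\in\mcal{P}(X)\}$, prove that $m$ is ``almost additive'' for the semigroup law, and read off both the limit and the uniform bound from this. The first ingredient I would record is an oscillation estimate coming from equicontinuity and compactness: since $X$ is compact, $\mcal{P}(X)$ has finite Wasserstein diameter $D$, and by (H2) there is a common modulus of continuity $\delta$ for all $\T_t$ with $t\geq 1$. Hence the oscillation of $\T_t$ over $\mcal{P}(X)\times\mcal{P}(X)$ is bounded uniformly in $t\geq 1$ by a constant $C_0$ controlled by $\delta(D)$; in particular $M(t)-m(t)\leq C_0$ for every $t\geq 1$, and for any fixed $\sigma$ the partial infima $\inf_\mu\T_t(\mu,\sigma)$ and $\inf_\nu\T_t(\sigma,\nu)$ each differ from $m(t)$ by at most $C_0$.

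Next I would convert the inf-convolution semigroup law (H0) into almost additivity of $m$. The lower bound is immediate: from $\T_{t+s}(\mu,\nu)=\inf_\sigma\{\T_t(\mu,\sigma)+\T_s(\sigma,\nu)\}$ one gets $\T_{t+s}(\mu,\nu)\geq m(t)+m(s)$ for all $\mu,\nu$, so $m(t+s)\geq m(t)+m(s)$ and $m$ is superadditive. For the reverse estimate I would fix any single $\sigma$ and use $m(t+s)\leq \inf_\mu\T_t(\mu,\sigma)+\inf_\nu\T_s(\sigma,\nu)$; the oscillation estimate bounds each term by $m(t)+C_0$ and $m(s)+C_0$ respectively, giving $m(t+s)\leq m(t)+m(s)+2C_0$ for all $t,s\geq 1$. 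Thus $m$ is superadditive and $m+2C_0$ is subadditive on $[1,\infty)$.

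Then I would finish with Fekete's lemma in both its superadditive and subadditive forms. Superadditivity yields $\lim_{t\to\infty}m(t)/t=\sup_{t\geq 1}m(t)/t=:c$, while subadditivity of $m+2C_0$ yields $\lim_{t\to\infty}(m(t)+2C_0)/t=\inf_{t\geq 1}(m(t)+2C_0)/t$. Since these two limits differ only by the vanishing term $2C_0/t$, they coincide; and because the subadditive limit is an infimum it is at most $m(1)+2C_0<+\infty$ (finiteness of $m(1)$ following from properness and (H1)), while the superadditive limit is at least $m(1)>-\infty$ by the lower bound on $\T_1$, so $c$ is finite. Reading off the two one-sided estimates gives $ct-2C_0\leq m(t)\leq ct$, i.e. $|m(t)-ct|\leq 2C_0$ for $t\geq 1$. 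Combining with $m(t)\leq \T_t(\mu,\nu)\leq M(t)\leq m(t)+C_0$ produces $|\T_t(\mu,\nu)-ct|\leq C$ with $C:=2C_0$, uniformly in $(\mu,\nu)$, and the asserted formula $c=\lim_{t\to\infty}\inf\{\T_t(\mu,\nu)\}/t$ is then immediate.

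I expect the main obstacle to be the real-variable bookkeeping needed to run Fekete's lemma in continuous time: without assuming continuity of $t\mapsto m(t)$ one must keep the super/subadditivity restricted to arguments $\geq 1$ and check that $m$ remains finite and does not blow up on bounded intervals, which I would control via (H1), properness, and the oscillation bound. I would also flag that although the statement is phrased ``under (H0)'', the uniform boundedness genuinely relies on the equicontinuity (H2) together with compactness of $\mcal{P}(X)$: dropping it leaves only the superadditive half of the argument, in which case $c$ is still well defined as a supremum but could a priori equal $+\infty$ and the two-sided bound fails.
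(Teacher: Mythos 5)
Your proof is correct and follows essentially the same route as the paper: the semigroup law (H0) gives super/subadditivity, the equicontinuity hypothesis (H2) gives a uniform oscillation bound $M(t)-m(t)\leq C_0$ over $\mcal{P}(X)\times\mcal{P}(X)$, and a Fekete-type argument produces the finite constant $c$ and the two-sided estimate. The only cosmetic difference is that the paper runs Fekete separately on the subadditive maximum $M_t$ and the superadditive minimum $m_t$ and then identifies the two normalized limits via the oscillation bound, whereas you run it on $m(t)$ alone by converting the oscillation bound into approximate subadditivity $m(t+s)\leq m(t)+m(s)+2C_0$; both arguments equally rely on (H2) beyond the stated (H0) (a point you rightly flag), and both leave the same continuous-time bookkeeping implicit.
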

We shall call the constant $c(\T)$ in Proposition \ref{estimation} the \textit{Ma\~n\'e critical value}, while the solutions $u \in C(X)$ of the functional equation $T_tu +ct = u$ for all $t \geq 0$, will be called \textit{backward weak KAM solutions}.\\

\noindent{\bf Proof:} 
Define $M_t := \max_{\mu,\nu}\T_t(\mu,\nu)$ and $M := \inf_{t\geq 1}\{\frac{M_t}{t}\} > -\infty$. The sequence $\{M_t\}_{t \geq 1}$ is subadditive, that is $M_{t+s} \leq M_t +M_s$, hence it is well known (see e.g. \cite{Bowen}) that $\{\frac{M_t}{t}\}_{t \geq 1}$ decreases to its infimum $M$ as $t \to \infty$. Indeed, 
fix $t > 0$ and write for any $s$, the decomposition $s = nt + r$, where $0 \leq r < t$. The subadditivity of $M_t$ implies
\eqs{
\frac{M_s}{s}  = \frac{M_{nt+r}}{nt+r} \leq \frac{M_{nt}}{nt} + \frac{M_r}{nt} \leq \frac{M_t}{t} + \frac{M_r}{nt}.
}
It follows that $\limsup_{s \to \infty}\frac{M_s}{s} \leq \frac{M_t}{t}$. On the other hand, $\inf_{t \geq 1}\frac{M_t}{t} \leq \liminf_{t \to \infty}\frac{M_t}{t}$. Therefore, $\frac{M_t}{t}$ converges to $M$ as $t \to \infty$.\\
On the other hand, if $m_t := \min_{\mu,\nu}\T_t(\mu,\nu)$, then the above applied to $-m_t$ yields that $\lim_{t \to \infty}\frac{m_t}{t} = m$.\\
  We now show that $m = M$. The uniform modulus of continuity $\delta$ implies the existence of a constant $C > 0$, such that $M_t - m_t \leq C$ for every $t > 0$. Then, we obtain the string of inequalities
\eqs{
tM - C \leq M_t - C \leq m_t \leq \T_t(\mu,\nu) \leq M_t \leq m_t + C \leq tm + C.
}
The left-most and right-most inequalities imply $M \leq m$ upon sending $t \to \infty$, hence $m  = M$. \qed

From Property 1) of Kantorovich operators and Proposition \ref{estimation}, we can deduce the following.

\begin{lem}\label{Kant} Under conditions $(H0)$, $(H1)$, and $(H2)$, and with the notation of Proposition \ref{estimation}, the following properties hold.
\begin{enumerate}
\item For any $f \in C(X)$, we have $|T_t f(x) + c t - \sup_{X}f| \leq C$ for all $t \geq 1$ and all $x \in X$.
\item The semi-group of operators $\{T_t\}_{t \geq 1}$ has the same modulus of continuity $\delta$ as $\{\T_t\}_{t \geq 1}$.
\item If $k < c$, then $T_tf + kt \to -\infty$, while if $k > c$, $T_tf + kt \to +\infty$, as $t \to \infty$, for any $f \in C(X)$.
\end{enumerate}
 
\end{lem}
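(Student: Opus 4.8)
The plan is to reduce every assertion to two facts already in hand: the representation formula (\ref{formula}), which under (H1) holds for each $T_t$, $t>0$, namely
\[
T_t f(x) = \sup\Big\{\textstyle\int_X f\, d\nu - \T_t(\delta_x, \nu)\,;\, \nu \in \mcal{P}(X)\Big\},
\]
and the two-sided bound $|\T_t(\mu,\nu) - tc| \leq C$ (valid for $t \geq 1$) supplied by Proposition \ref{estimation}. With these two ingredients the lemma is essentially bookkeeping, and I would treat the three items in order.

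For (1) I would estimate the supremum above and below. Since $\int_X f\, d\nu \leq \sup_X f$ for every $\nu \in \mcal{P}(X)$ and $\T_t(\delta_x, \nu) \geq tc - C$, the formula gives $T_t f(x) \leq \sup_X f - tc + C$. For the opposite inequality, pick (by compactness of $X$ and continuity of $f$) a point $x_0$ with $f(x_0) = \sup_X f$ and test the supremum against the single measure $\nu = \delta_{x_0}$; using $\T_t(\delta_x, \delta_{x_0}) \leq tc + C$ yields $T_t f(x) \geq \sup_X f - tc - C$. Together these give $|T_t f(x) + ct - \sup_X f| \leq C$ for all $t \geq 1$ and all $x \in X$.

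For (2) I would use that a difference of suprema is at most a supremum of differences. Fix $x, x' \in X$ and $\varepsilon > 0$, and choose $\nu$ nearly optimal in the formula for $T_t f(x)$; inserting the same $\nu$ into the formula for $T_t f(x')$ and subtracting gives $T_t f(x) - T_t f(x') \leq \T_t(\delta_{x'}, \nu) - \T_t(\delta_x, \nu) + \varepsilon$. Applying the common modulus from (H2) in the first variable, together with $W(\delta_x, \delta_{x'}) = d(x,x')$, bounds the right-hand side by $\delta(d(x,x')) + \varepsilon$. Letting $\varepsilon \to 0$ and swapping $x$ and $x'$ shows $|T_t f(x) - T_t f(x')| \leq \delta(d(x,x'))$, so $T_t$ inherits the modulus $\delta$.

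Finally, for (3) I would combine (1) with the sign of $k-c$. Writing $T_t f(x) + kt = \big(T_t f(x) + ct\big) + (k-c)t$ and using the bounds $\sup_X f - C \leq T_t f(x) + ct \leq \sup_X f + C$ from (1) gives, uniformly in $x$,
\[
\sup_X f - C + (k-c)t \;\leq\; T_t f(x) + kt \;\leq\; \sup_X f + C + (k-c)t .
\]
If $k < c$ the upper bound tends to $-\infty$, and if $k > c$ the lower bound tends to $+\infty$, as $t \to \infty$. The only point demanding any care is (2), where one must verify that a modulus of continuity for the bivariate functional $\T_t$ (in the Wasserstein metric) descends to a modulus for the operator $T_t$ (in the metric of $X$) through the isometric embedding $x \mapsto \delta_x$; beyond this there is no genuine obstacle.
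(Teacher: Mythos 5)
Your proposal is correct and follows essentially the same route as the paper: all three items are derived from the Legendre-type representation $T_t f(x) = \sup_{\sigma}\{\int_X f\, d\sigma - \T_t(\delta_x,\sigma)\}$ combined with the two-sided bound $|\T_t - tc| \leq C$ of Proposition \ref{estimation}, with (2) obtained by bounding a difference of suprema by the supremum of differences and invoking the common modulus $\delta$ from (H2). Your write-up merely spells out details the paper leaves implicit (the choice $\nu = \delta_{x_0}$ for the lower bound in (1), and the $\varepsilon$-optimal measure in (2)); there is no substantive difference.
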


\noindent{\bf Proof:}  1) By Proposition \ref{estimation} and since $T_t f(x) + ct = \sup_{\sigma}\{\int f\d\sigma - \lf(\T_t(\delta_x, \sigma)-ct\rt)\}$, we have $\sup_{X}f - C \leq T_t f(x) + ct \leq \sup_{X}f + C$.

For 2) we note that 
\begin{align*}
T_t f(x) &= \sup_{\sigma}\{\int f\d\sigma - \T_t(\delta_x, \sigma)\} \\
&\leq \sup_{\sigma}\{\int f\d\sigma - \T_t(\delta_y, \sigma)\} + \sup_{\sigma}\{\T_t(\delta_y, \sigma) - \T_t(\delta_x, \sigma)\}\\
&= T_t f(y) + \delta(d(x,y)).
\end{align*}
We now interchange $x$ and $y$ to obtain the reverse inequality.

3) follows from 1) since $\sup_{X}f - C + (k-c)t \leq T_t f(x) + kt \leq \sup_{X}f + C + (k-c)t$.

\begin{thm} \label{weakKAMthm} Given a semi-group of backward linear transfers $(\T_t)_{t\geq 0}$ satisfying conditions $(H0)$, $(H1)$, and $(H2)$, there exist a backward linear transfer $\T_\infty$, an associated Kantorovich operator $T_\infty: C(X) \to C(X)$ and a constant $c \in \R$ such that: 
\begin{enumerate}
\item For every $f \in C(X)$, $T_\infty f$ is a backward weak KAM solution, and $T_\infty$ is idempotent. In particular, backward weak KAM solutions are fixed points of $T_\infty$.
\item The backward linear transfer $\T_\infty$  
satisfies,  
\begin{equation}
\hbox{$\T_\infty  = (\T_t - ct) * \T_\infty $ for every $t \geq 0$,\quad and \quad $\T_\infty = \T_\infty * \T_\infty$.}
\end{equation}

\item For every $\mu,\nu \in \mcal{P}(X)$, we have 
\begin{equation}
\sup\lf\{\int T_\infty f\d (\nu-\mu)\,;\, f \in C(X)\rt\} \leq\T_\infty(\mu,\nu) \leq \liminf_{t\to\infty}(\T_t(\mu,\nu)- ct).
\end{equation}
\item The set ${\cal A}:=\{\sigma \in {\cal P}(X); {\cal T}_\infty(\sigma, \sigma)=0\}$ is non-empty, and for every 
$\mu, \nu \in {\cal P}(X)$, we have 
\begin{equation}
{\cal T}_\infty(\mu, \nu)=\inf\{ {\cal T}_\infty(\mu, \sigma)+{\cal T}_\infty(\sigma, \nu), \sigma \in {\cal A}\},
\end{equation}
and the infimum on ${\cal A}$ is attained. 
\item We also have 
\begin{equation}\lbl{infimum}
c = \inf\{{\cal T}_1(\mu, \mu); \mu\in {\cal P}(X)\},
\end{equation}
and the infimum is attained by a measure $\bar{\mu} \in \mathcal{A}$ such that 
\begin{equation}
(\bar{\mu}, \bar{\mu}) \in \mathcal{D}:= \{ (\mu,\nu) \in \mathcal{P}(X)\times\mathcal{P}(X)\,:\, \T_1(\mu,\nu) + \T_\infty(\nu,\mu) = c\}.
\end{equation}
 Moreover, every measure which attains the infimum in \refn{infimum} belongs to $\mcal{A}$.
 \end{enumerate}
\end{thm}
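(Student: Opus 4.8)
The plan is to run the whole construction on the operator side, where the fixed-point phenomenon lives, and then read off $\T_\infty$ as the induced backward linear transfer. Write $h_t := T_t + ct$ for the shifted semi-group. By Proposition \ref{estimation} and Lemma \ref{Kant} each $h_t$ is monotone, affine on constants, non-expansive for $\|\cdot\|_\infty$, shares the modulus of continuity $\delta$, the family $\{h_t f : t\geq 1\}$ is uniformly bounded in $C(X)$, and $h_{t+s}=h_t\circ h_s$ follows from $(H0)$ together with property (c). For $f\in C(X)$ I would set $v_f := \inf_{\tau\geq 1}\sup_{t\geq\tau} h_t f = \limsup_{t\to\infty} h_t f$. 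As a decreasing limit of the convex, equicontinuous functions $\sup_{t\geq\tau}h_t f$, the function $v_f$ is continuous (inheriting $\delta$) and is convex, monotone and affine on constants in $f$.

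The crux is to show $v_f$ is \emph{super-invariant}, i.e. $h_s v_f \geq v_f$ for all $s\geq 0$. Applying the monotone $h_s$ to $h_t f\leq \sup_{t'\geq\tau}h_{t'}f$ and using the semi-group law gives $h_s\bigl(\sup_{t'\geq\tau}h_{t'}f\bigr)\geq \sup_{r\geq s+\tau}h_r f$; letting $\tau\to\infty$, the decreasing convergence $\sup_{t'\geq\tau}h_{t'}f\searrow v_f$ is \emph{uniform} by equicontinuity and boundedness, so non-expansiveness of $h_s$ passes the limit through and yields $h_s v_f\geq v_f$. Super-invariance forces $t\mapsto h_t v_f$ to be nondecreasing and bounded above, hence (again by equicontinuity) uniformly convergent to some $u_f\in C(X)$, and the semi-group law makes $u_f$ a genuine fixed point, $h_s u_f=u_f$ for all $s$, i.e. a backward weak KAM solution. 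I define $T_\infty f:=u_f$. Writing $u_f=\sup_t h_t v_f$ as an increasing limit of convex functions shows $T_\infty$ is a Kantorovich operator, and since $T_\infty f$ is a fixed point one checks $v_{T_\infty f}=T_\infty f$ and hence $T_\infty(T_\infty f)=T_\infty f$, giving idempotency. Letting $\T_\infty$ be the backward linear transfer induced by $T_\infty$, the equivalence between operator and transfer idempotency gives $\T_\infty\star\T_\infty=\T_\infty$, and the range of $T_\infty$ is exactly the set of weak KAM solutions, proving (1). I expect this limit-exchange step, and the analogous exchange of $\liminf_t$ with inf-convolution below, to be the main obstacle, as both lean essentially on $(H2)$.

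For (2) I use the dictionary between inf-convolution and composition (Proposition \ref{inf.tens}) together with the fact that $\T_t-ct$ has Kantorovich operator $T_t+ct=h_t$: then $(\T_t-ct)\star\T_\infty$ has operator $h_t\circ T_\infty$, and $h_t(T_\infty g)=h_t u_g=u_g=T_\infty g$, so this operator equals $T_\infty$; since a backward linear transfer is determined by its Kantorovich operator (Proposition \ref{prop.one}), $(\T_t-ct)\star\T_\infty=\T_\infty$, and $\T_\infty\star\T_\infty=\T_\infty$ is the idempotency already shown. For (3) the lower bound is immediate from the duality formula for $\T_\infty$ by taking $g=T_\infty f$ and using $T_\infty(T_\infty f)=T_\infty f$, giving $\T_\infty(\mu,\nu)\geq\int T_\infty f\,d(\nu-\mu)$. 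For the upper bound I estimate, for any $g$, $\int T_\infty g\,d\mu\geq\int v_g\,d\mu\geq\limsup_t\int h_t g\,d\mu$ (reverse Fatou, the family being uniformly bounded); since $\int h_t g\,d\mu=(\T_t)^*_\mu(g)+ct\geq\int g\,d\nu-(\T_t(\mu,\nu)-ct)$, taking $\limsup_t$ yields $\int T_\infty g\,d\mu\geq\int g\,d\nu-\liminf_t(\T_t(\mu,\nu)-ct)$, and rearranging then supremizing over $g$ gives $\T_\infty(\mu,\nu)\leq\liminf_t(\T_t(\mu,\nu)-ct)$.

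Finally, for (5) I apply Theorem \ref{One.1} to the single transfer $\T_1$, whose iterates are $\T_n=\T_1^{\star n}$ by $(H0)$; this gives $c=\inf_{\mu}\T_1(\mu,\mu)$ once the two critical constants are identified (both equal $\lim_n\frac1n\inf_{\mu,\nu}\T_n(\mu,\nu)$), and the infimum is attained at some $\bar\mu$ by lower semicontinuity on the compact $\mcal{P}(X)$. To place $\bar\mu$ in $\mathcal{A}$, idempotency gives $\T_\infty(\mu,\mu)=\inf_\sigma[\T_\infty(\mu,\sigma)+\T_\infty(\sigma,\mu)]\leq 2\T_\infty(\mu,\mu)$, so $\T_\infty\geq 0$ on the diagonal, while bound (3) along integer times with $\T_n(\bar\mu,\bar\mu)\leq n\T_1(\bar\mu,\bar\mu)=nc$ gives $\T_\infty(\bar\mu,\bar\mu)\leq 0$; hence $\T_\infty(\bar\mu,\bar\mu)=0$, so $\mathcal{A}\neq\emptyset$, $(\bar\mu,\bar\mu)\in\mathcal{D}$, and the same computation shows every minimizer lies in $\mathcal{A}$. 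For the decomposition in (4), the identity over all of $\mcal{P}(X)$ is the idempotency $\T_\infty=\T_\infty\star\T_\infty$ with the infimum attained by lower semicontinuity and compactness, and that an optimal intermediate measure can be chosen in $\mathcal{A}$ follows exactly as in the distance-like setting of Theorem \ref{nice}, using that $\T_\infty$ satisfies the triangle inequality and vanishes on the diagonal of $\mathcal{A}$.
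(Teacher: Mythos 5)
Your construction in parts (1)--(3) is sound and is essentially the paper's own: you form $\bar Tf=\limsup_{t\to\infty}(T_tf+ct)$, prove super-invariance, pass to the increasing limit to define the idempotent Kantorovich operator $T_\infty$, and deduce (2) and (3) from the operator identities by duality. The one methodological difference is harmless: where the paper exchanges $T_{t'}$ with the monotone limits via Lemma \ref{monotone}, you use Dini's theorem (equicontinuity makes the monotone convergence uniform) together with non-expansiveness of the operators; both are legitimate under (H2).

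The proof breaks down at (4) and (5). For (4), you invoke Theorem \ref{nice}, but that theorem \emph{assumes} the decomposition $\T(\mu,\nu)=\inf\{\T(\mu,\sigma)+\T(\sigma,\nu);\,\sigma\in\mathcal{A}\}$ as its hypothesis (\ref{C3}); it cannot deliver it. Idempotency only produces an optimal intermediate measure $\sigma\in\mathcal{P}(X)$, and the triangle inequality gives the inequality $\leq$ for every $\sigma$, not equality at some $\sigma\in\mathcal{A}$; nothing in your argument shows $\mathcal{A}\neq\emptyset$ either. This is precisely where the paper does real work: it iterates the splitting to get $\T_\infty(\mu,\nu)=\sum_{i}\T_\infty(\sigma_i,\sigma_{i+1})$, extracts a weak$^*$ cluster point $\bar\sigma$ of the $\sigma_k$, and uses lower semi-continuity together with the bound $\T_\infty(\mu,\nu)\geq\T_\infty(\mu,\bar\sigma)+m\,\T_\infty(\bar\sigma,\bar\sigma)+\T_\infty(\bar\sigma,\nu)-(m+2)\epsilon$ to force $\T_\infty(\bar\sigma,\bar\sigma)=0$, which yields simultaneously $\mathcal{A}\neq\emptyset$ and the decomposition with attainment. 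For (5), your appeal to Theorem \ref{One.1}(2) is circular: in the paper, the identity $c(\T)=\inf_\mu\T(\mu,\mu)$ is proved \emph{after} the present theorem and \emph{by means of} it (through Theorem \ref{fixedpointcontinuous} and the regularization Lemma \ref{c_epsilon}). At this stage only the easy inequality $c\leq\inf_\mu\T_1(\mu,\mu)$ is available, from subadditivity along the diagonal; the reverse inequality is exactly the content of (5), and the paper obtains it constructively from (4): build $\{\mu_k\}\subset\mathcal{A}$ with $(\mu_k,\mu_{k+1})\in\mathcal{D}$, pass to Cesaro averages (using convexity of $\T_1$ and $\T_\infty$, hence of $\mathcal{D}$), and extract a weak$^*$ limit $\bar\mu$ satisfying $\T_1(\bar\mu,\bar\mu)\leq c$. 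Since your verification that $\bar\mu\in\mathcal{A}$ also presupposes $\T_1(\bar\mu,\bar\mu)=c$, both halves of (5), and with them the non-emptiness of $\mathcal{A}$ that you feed back into (4), rest on the unproved claim.
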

The backward linear transfer $\T_\infty$ is an analog of the \textit{Peierls barrier}, and the set $\mcal{A}$ is an analog of the \textit{projected Aubry set}. \\

\noindent{\bf Proof:} 
1)  Given $f \in C(X)$, define $\bar{T}f(x) := \limsup_{t \to \infty}(T_tf(x) +c t)$. By (H2), $\bar{T}f$ has modulus of continuity $\delta$, and $\|\bar{T}f\|_\infty \leq \sup_{X} f + C$.

Noting that $\sup_{s \geq t} \{T_s f(x) + cs\}$ is a sequence of continuous functions that decrease monotonically to $\bar{T}f(x)$ as $t \to \infty$, we may apply Lemma \ref{monotone} to deduce for any $t' \geq 0$,
\as{
T_{t'} \bar{T}f(x) &= \lim_{t \to \infty} T_{t'} \lf[\sup_{s \geq t}\{T_s f(x) + c s\}\rt]\\
&\geq \lim_{t \to \infty} \sup_{s \geq t}\{T_{t'+s}f(x) + c s\}\\
&=\lim_{t \to \infty} \sup_{s \geq t}\{T_{t'+s}f(x) + c(t'+s)\} -c t'\\
&= \bar{T}f(x) -c t'.
}
Therefore, $T_{t'} \bar{T}f(x) + c t' \geq \bar{T}f(x)$. By monotonicity of the operators $T_t$, this inequality implies 
\eqs{
T_{t}\bar{T}f(x) + ct \geq T_{s}\bar{T}f(x) + c s
}
whenever $t \geq s \geq 0$, i.e. $\{T_{t}\bar{T}f + ct\}_{t \geq 1}$ is a monotone increasing sequence of continuous functions. In addition, we have from Corollary \ref{Kant} the uniform in time bound
\begin{equation*}
\|T_{t}\bar{T}f(x) + ct\|_\infty \leq \|\bar{T}f\|_\infty + C \leq \|f\|_\infty + 2C.
\end{equation*}
We may therefore define $T_\infty : C(X) \to C(X)$ via the formula,
\eqs{
T_\infty f(x) := \lim_{t \to \infty}T_t\bar{T}f(x) + c t,
}
and from Lemma \ref{monotone} deduce
\as{
T_t T_\infty f(x) + ct &= \lim_{s \to \infty}T_t \lf[T_{s}\bar{T}f(x) + c s\rt] + ct\\
 &= \lim_{s \to \infty}\lf\{T_{t+s}\bar{T}f(x) + c (t+s)\rt\}\\
  &= T_\infty f(x).
}
This further implies that $T_\infty T_\infty f(x) = T_\infty f(x)$ so $T_\infty$ is idempotent. It is straightforward to see that in the construction of $T_\infty$, properties 1)-4) of Proposition \ref{basicK} are preserved, and hence $T_\infty$ is a Kantorovich operator.

Finally we note that if $u$ satisfies $T_t u + ct = u$, then $T_\infty u = u$ from the defintion of $T_\infty$.

\noindent 2) $T_\infty $ is a Kantorovich operator, thus we may define
\eqs{
\T_\infty(\mu,\nu) := \sup\lf\{\int f\d\nu - \int T_\infty f\d\mu\,;\, f \in C(X)\rt\}
}
and it is a backward linear transfer; from $T_t T_\infty f + c t = T_\infty f$, it satisfies
\eqs{
\T_\infty(\mu,\nu) = (\T_t - c t)\star\T_\infty(\mu,\nu),\quad \text{for all $t \geq  0$,}
}
and from $T_\infty T_\infty u(x) = T_\infty u(x)$, it satisfies
\eqs{
\T_\infty (\mu,\nu) = \T_\infty\star\T_\infty(\mu,\nu), \text{for all $\mu,\nu$.}
}
3)  Note from $1$ that $T_\infty f(x) \geq \limsup_{t \to \infty}(T_tf(x) +c t)$, so 
\begin{align*}
\int_{X} T_\infty f \d\mu &\geq \int_{X}\limsup_{t \to \infty}(T_tf(x) +c t)\d\mu\\
&\geq \limsup_{t \to \infty}\int_{X}(T_tf(x) +c t)\d\mu.
\end{align*}
Hence
\begin{align*}
\T_\infty(\mu,\nu) 
&\leq \sup\liminf_{t \to \infty}\left\{\int_{X}f\d\nu - \int_{X}T_t f\d\mu - ct\,;\, f \in C(X)\right\}\\
&\leq \liminf_{t \to \infty}\sup\left\{\int_{X}f\d\nu - \int_{X}T_t f\d\mu - c t\,;\, f \in C(X)\right\}\\
&= \liminf_{t \to \infty}(\T_t(\mu,\nu)- c t).
\end{align*}
On the other hand, from $T_\infty \circ T_\infty f = T_\infty f$,
\begin{align*}
\T_\infty(\mu,\nu) &= \sup\lf\{\int_{X}f\d\nu - \int_{X}T_\infty f\d\mu\,;\, f \in C(X)\rt\}\\
&\geq \sup\lf\{\int_{X}T_\infty f\d(\nu-\mu)\,;\, f \in C(X)\rt\}.
\end{align*}
4) The proof of this result relies solely on the property $\T_\infty = \T_\infty \star \T_\infty$, and the argument is a minor adaption of the one given in \cite{B-B1}.  

Fix $\mu, \nu \in \mathcal{P}(X)$. From $\T_\infty = \T_\infty \star \T_\infty$, there exists $\sigma_1 \in \mathcal{P}(X)$ such that 
\begin{equation*}
\T_\infty(\mu,\nu) = \T_\infty(\mu,\sigma_1) + \T_\infty(\sigma_1, \nu).
\end{equation*}
Similarly, there exists a $\sigma_2$ such that 
\begin{equation*}
\T_\infty(\sigma_1, \nu) = \T_\infty(\sigma_1, \sigma_2)+ \T_\infty(\sigma_2, \nu).
\end{equation*}
Combining the above two equalities, we obtain
\begin{equation*}
\T_\infty(\mu,\nu) = \T_\infty(\mu,\sigma_1) + \T_\infty(\sigma_1, \sigma_2)+ \T_\infty(\sigma_2, \nu).
\end{equation*}
Note also that
\begin{equation}\lbl{split}
\T_\infty(\mu,\sigma_1) + \T_\infty(\sigma_1, \sigma_2) = \T_\infty(\mu,\sigma_2).
\end{equation}
This follows from 
\begin{align*}
\T_\infty(\mu,\nu) &= \T_\infty(\mu,\sigma_1) + \T_\infty(\sigma_1, \sigma_2)+ \T_\infty(\sigma_2, \nu)\\
&\geq \T_\infty\star\T_\infty(\mu, \sigma_2) + \T_\infty(\sigma_2, \nu)\\
&= \T_\infty(\mu, \sigma_2) + \T_\infty(\sigma_2, \nu)\\
&\geq \T_\infty\star\T_\infty(\mu,\nu)\\
&= \T_\infty(\mu,\nu).
\end{align*}
Hence all the inequalities are equalities; in particular (\ref{split}).

After $k$ times we have
\begin{equation*}
\T_\infty(\mu,\nu) = \sum_{i = 0}^{k}\T_\infty(\sigma_{i},\sigma_{i+1}) 
\end{equation*}
where $\sigma_0 := \mu$ and $\sigma_{k+1} := \nu$. This inductively generates a sequence $\{\sigma_{k}\}$ with the property 
\begin{equation*}
\sum_{i = \ell}^{m} \T_\infty(\sigma_{i}, \sigma_{i+1}) =  \T_\infty(\sigma_{\ell}, \sigma_{m+1})
\end{equation*}
whenever $0 \leq \ell < m \leq k$. In particular, for any subsequence $\sigma_{k_j}$, we have
\begin{equation}\label{sebseq}
\T_\infty(\mu,\sigma_{k_1}) + \sum_{j = 1}^{m} \T_\infty(\sigma_{k_j}, \sigma_{k_{j+1}}) + \T_\infty(\sigma_{k_{m+1}}, \nu) =  \T_\infty(\mu, \nu).
\end{equation}

Extract a weak$^*$ convergent subsequence $\{\sigma_{k_j}\}$ to some $\bar{\sigma} \in \mathcal{P}(X)$. By weak-$*$ l.s.c. of $\T_\infty$, we have
\begin{equation*}
\liminf_{j}\T_\infty(\sigma_{k_j}, \sigma_{k_{j+1}}) \geq \T_\infty(\bar{\sigma},\bar{\sigma}).
\end{equation*}
In particular, given $\epsilon > 0$, for all but finitely many $j$,
\begin{equation}\label{biggerthan}
\T_\infty(\sigma_{k_j}, \sigma_{k_{j+1}}) \geq \T_\infty(\bar{\sigma},\bar{\sigma}) - \epsilon.
\end{equation}
Therefore, by refining to a further (non-relabeled) subsequence if necessary, we obtain a subsequence $\{\sigma_{k_j}\}$ satisfying (\ref{biggerthan}) for all $j$. By further refinement, we may also assume,
\begin{equation}\label{boundarybigger}
\T_\infty(\mu, \sigma_{k_1}) \geq \T_\infty(\mu, \bar{\sigma}) - \epsilon.
\end{equation}
Therefore, by refining to a further (non-relabeled) subsequence if necessary, we obtain a subsequence $\{\sigma_{k_j}\}$ with properties (\ref{sebseq}), (\ref{biggerthan}), and (\ref{boundarybigger}).

Moreover, for all $m$ large enough (depending on $\epsilon$), we have
\begin{equation}\label{tailend}
\T_\infty(\sigma_{k_{m+1}}, \nu) \geq \T_\infty(\bar{\sigma}, \nu) - \epsilon
\end{equation}

Applying the inequalities of (\ref{biggerthan}), (\ref{boundarybigger}), and (\ref{tailend}), to (\ref{sebseq}), we obtain
\begin{equation*}
\T_\infty(\mu,\nu) \geq \T_\infty(\mu,\bar{\sigma}) + m\T_\infty(\bar{\sigma},\bar{\sigma}) + \T_\infty(\bar{\sigma},\nu)- (m+2)\epsilon
\end{equation*}
for large enough $m$. From the fact that $\T_\infty = \T_\infty * \T_\infty$, the above inequality is only possible if 
\begin{equation*}
\T_\infty(\bar{\sigma},\bar{\sigma}) \leq \frac{m+2}{m}\epsilon \leq 2\epsilon.
\end{equation*}
As $\epsilon$ is arbitrary, we obtain $\T_\infty(\bar{\sigma},\bar{\sigma}) \leq 0$, and consequently $\T_\infty(\bar{\sigma},\bar{\sigma}) = 0$ (the reverse inequality following from $\T_\infty = \T_\infty \star \T_\infty$). 

Finally, we note that $\T_\infty (\mu,\nu) = \T_\infty(\mu,\sigma_{k_j}) + \T_\infty(\sigma_{k_j},\nu)$ for all $j$, so at the $\liminf$, we find
\begin{equation*}
\T_\infty(\mu,\nu) \geq \T_\infty(\mu,\bar{\sigma}) + \T_\infty(\bar{\sigma},\nu).
\end{equation*}
The reverse inequality is immediate from $\T_\infty = \T_\infty \star \T_\infty$.\\

\noindent 5) First, we observe that $\T_1(\mu,\mu) \geq c$ for all $\mu$. This follows from
\eqs{
c = \lim_{t \to \infty}\min_{\mu,\nu}\frac{\T_t(\mu,\nu)}{t} = \lim_{n \to \infty}\min_{\mu,\nu}\frac{\T_n(\mu,\nu)}{n} \leq \T_1(\mu,\mu).
}
To achieve the reverse inequality, we construct inductively a sequence $\{\mu_k\} \subset \mathcal{A}$ such that $(\mu_k,\mu_{k+1}) \in \mathcal{D}$. The set $\mathcal{D}$ is convex by convexity of both $\T_1$ and $\T_\infty$. Therefore, the Cesaro averages belong to $\mathcal{D}$,
\begin{equation*}
(\frac{1}{n}\sum_{k = 1}^{n}\mu_k,\frac{1}{n}\sum_{k = 1}^{n}\mu_{k+1}) \in \mathcal{D}.
\end{equation*}
Denoting $\nu_n := \frac{1}{n}\sum_{k = 1}^{n}\mu_k$, we have
\begin{equation}\lbl{projectedMatherequality}
\T_1(\nu_n, \nu_n + \frac{1}{n}(\mu_{n+1} - \mu_1)) + \T_\infty(\nu_n, \nu_n + \frac{1}{n}(\mu_{n+1} - \mu_1))= c.
\end{equation}

Extract a weak$^*$-convergent subsequence $\nu_{n_j}$ with limit $\bar{\mu} \in \mathcal{A}$. Then by weak-$*$ lower semi-continuity of $\T_1$ (resp. $\T_\infty$), \refn{projectedMatherequality} yields at the limit,
\eqs{
\T_1(\bar{\mu},\bar{\mu}) = \T_1(\bar{\mu},\bar{\mu}) + \T_\infty(\bar{\mu},\bar{\mu}) \leq  c.
}
Hence, $c = \T_1(\bar{\mu},\bar{\mu})$, and $(\bar{\mu}, \bar{\mu}) \in \mathcal{D}$. 

Conversely, if $\mu$ is a measure which realises $c = \T_1(\mu,\mu)$, then by Property $3$ and $4$, we have
\eqs{
0 \leq \T_\infty(\mu,\mu) \leq \liminf_{t \to \infty}(\T_t(\mu,\mu)-ct) \leq \liminf_{n \to \infty}(\T_n(\mu,\mu)-cn) \leq 0,
}
so $\mu \in \mcal{A}$.

Similar results hold with appropriate changes for forward linear transfers. 

\subsection{Optimal transports corresponding to a semi-group of cost functionals}

We now identify the effective transfer and Kantorovich map associated to a semi-group of linear transfers given by mass transports.

\begin{prop} Suppose $c_t(x, y)$ is a semi-group of equicontinuous cost functions on a compact space $X\times X$, that is 
\begin{equation}
c_{t+s}(x, y)=c_t\star c_s (x, y):=\inf\{c_t(x, z)+c_s(z, y); z\in X\}, 
\end{equation}
and consider the associated optimal mass transports 
\begin{equation}
\T_t(\mu,\nu) = \inf\{\int_{X\times X} c_t(x,y)\d\pi(x,y)\,;\, \pi \in \mcal{K}(\mu,\nu)\}.  
\end{equation}
\begin{enumerate} 
\item The family $(\T_t)_t$ then forms a semi-group of linear transfers for the convolution operation i.e., $\T_{t+s}=\T_t\star \T_s$ for any $s, t \geq 0$ that  is equicontinuous on ${\mathcal P}(X)\times {\mathcal P}(X)$, hence one can associate its effective Transfer $\T_\infty$ and the corresponding Kantorovich operator $T_\infty$. 
\item Letting $c_\infty(x,y) := \liminf_{t \to \infty}c_t(x,y)$, then : 
 \begin{equation}
\T_\infty (\mu, \nu)=\T_{c_\infty} (\mu, \nu):=\inf\{\int_{X\times X}c_\infty(x,y)\d\pi(x,y)\,;\, \pi \in \mcal{K}(\mu,\nu)\},  
\end{equation}
\begin{equation*}
T^-_\infty f(x) = \sup\{f(y) - c_\infty (x,y)\,;\, y \in X\}\, \text{and }\, T^+_\infty f(y) = \inf\{f(x) + c_\infty (x,y)\,;\, x \in X\}
\end{equation*}
\item The following holds
\begin{equation}\label{mather1000}
c=\inf\{{\cal T}_1(\mu, \mu); \mu\in {\cal P}(X)\}= \min \{ \int_{X \times X}c_1(x,y)\d \pi ; \pi \in {\mathcal P} (X\times X), \pi_1=\pi_2\}
\end{equation}
\item The set ${\mathcal A}:=\{\sigma \in {\mathcal P}(X); {\T}_\infty (\sigma, \sigma)=0\}$ consists of those $\sigma \in {\mathcal P}(X)$ supported on the set $A=\{x\in X; c_\infty (x, x)=0\}$. 
\item The minimizing measures in (\ref{mather1000}) are all supported on the set 
$$D:= \{ (x,y) \in X \times X\,;\, c_1(x,y) + c_\infty(y,x) = c\}.$$

\end{enumerate} 
\end{prop}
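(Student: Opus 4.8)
The plan is to deduce the entire proposition from the abstract ergodic theory of Theorem \ref{weakKAMthm} together with Monge--Kantorovich duality, once the hypotheses of that theorem are verified. For part (1) I would invoke Proposition \ref{lifting}: the convolution of two optimal transports is the optimal transport for the inf-convolved cost, so $\T_t\star\T_s=\T_{c_t\star c_s}=\T_{c_{t+s}}=\T_{t+s}$, which is (H0). Each $\T_t$ is weak$^*$-continuous with Diracs in $D_1$ (H1), and a common Wasserstein modulus of continuity for $\{c_t\}_{t\ge1}$ transfers to $\{\T_t\}_{t\ge1}$ (H2) by the standard estimate controlling $|\T_t(\mu,\nu)-\T_t(\mu',\nu')|$ by the modulus of $c_t$ and the distances between marginals. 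Theorem \ref{weakKAMthm} then supplies $\T_\infty$, $T_\infty$ and $c$. A convenient reduction, which is really what gives meaning to the stated $c_\infty$, is to replace $c_t$ by $c_t-ct$: this is again an equicontinuous convolution semigroup, now with Ma\~n\'e constant $0$, under which $\liminf_t(c_t-ct)$ becomes $\liminf_t c_t$ and, by Proposition \ref{estimation}, the family $\{c_t\}_{t\ge1}$ is uniformly bounded and equicontinuous (so $c_\infty$ is a bounded lower semicontinuous cost).

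For part (2) the goal is the identification $\T_\infty=\T_{c_\infty}$; granting it, the stated formulas for $T^-_\infty$ and $T^+_\infty$ are exactly the Monge--Kantorovich operators of $\T_{c_\infty}$. I would prove the identification at the level of Kantorovich operators, showing $T^-_\infty f(x)=\sup_y\{f(y)-c_\infty(x,y)\}$ for every $f\in C(X)$ (and dually for $T^+_\infty$). One direction is soft: since $T^-_\infty f$ is a weak KAM solution it is a fixed point of every $T^-_t$, forcing $T^-_\infty f(y)-T^-_\infty f(x)\le c_\infty(x,y)$, i.e. a calibration inequality. The reverse direction is where the real work lies: it requires commuting the time-limit defining $T^-_\infty$ (Theorem \ref{weakKAMthm}(1), a monotone limit to which Lemma \ref{monotone} applies) with the supremum over $y$ in $T^-_t f(x)=\sup_y\{f(y)-c_t(x,y)\}$ --- equivalently, interchanging $\liminf_t$ with the infimum over transport plans. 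I expect this interchange to be the main obstacle, since the pointwise $\liminf$ defining $c_\infty$ need not be realized along a single subsequence uniformly in $y$. The uniform boundedness and common modulus of continuity of $\{c_t\}_{t\ge1}$ are precisely what make it tractable, via Arzel\`a--Ascoli for $\{T^-_t f\}$ and weak$^*$ precompactness of optimal plans.

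Once $\T_\infty=\T_{c_\infty}$ is known, the rest follows by combining it with Theorem \ref{weakKAMthm}. For (3), Theorem \ref{weakKAMthm}(5) gives $c=\inf_\mu\T_1(\mu,\mu)$; rewriting $\T_1(\mu,\mu)=\inf_{\pi\in\mathcal{K}(\mu,\mu)}\int c_1\,d\pi$ and noting that ranging $\mu$ over $\mathcal{P}(X)$ and $\pi$ over $\mathcal{K}(\mu,\mu)$ is the same as ranging $\pi$ over all of $\mathcal{P}(X\times X)$ with $\pi_1=\pi_2$ yields the claimed minimization, attained by weak$^*$ compactness and lower semicontinuity. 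For (4), I would first record two consequences of idempotency of $\T_\infty=\T_{c_\infty}$: evaluating at Diracs gives $c_\infty(x,x)=\T_\infty(\delta_x,\delta_x)\ge0$, and $\T_\infty\star\T_\infty=\T_\infty$ gives the triangle inequality $c_\infty(x,y)\le c_\infty(x,z)+c_\infty(z,y)$. The inclusion "$\operatorname{supp}\sigma\subseteq A\Rightarrow\sigma\in\mathcal{A}$" is then immediate from the diagonal coupling; for the converse I would use a weak KAM solution $u$ (a fixed point of $T^-_\infty$), which calibrates $c_\infty$ via $u(y)-u(x)\le c_\infty(x,y)$, so that any $c_\infty$-optimal self-coupling of $\sigma\in\mathcal{A}$ is concentrated on $\{c_\infty(x,y)=u(y)-u(x)\}$, and a recurrence/Ces\`aro argument as in the proof of Theorem \ref{weakKAMthm}(4) forces $\sigma$ onto $A$.

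Finally, for (5), let $\pi$ minimize \refn{mather1000}, so $\pi_1=\pi_2=\bar\mu$ with $\bar\mu\in\mathcal{A}$ (by part (3) and Theorem \ref{weakKAMthm}(5)) and $(\bar\mu,\bar\mu)\in\mathcal{D}$. Using a weak KAM solution $u$, the two calibration inequalities $c_1(x,y)\ge u(y)-u(x)+c$ and $c_\infty(y,x)\ge u(x)-u(y)$ give $c_1(x,y)+c_\infty(y,x)\ge c$ everywhere, so $D$ is exactly the equality set. Since the marginals of $\pi$ coincide, $\int[u(y)-u(x)]\,d\pi=0$, whence $\int c_1\,d\pi=c$ forces $c_1(x,y)=u(y)-u(x)+c$ on $\operatorname{supp}\pi$; the remaining point is to saturate the second inequality there, which I would obtain from $\bar\mu\in\mathcal{A}$ by matching $\operatorname{supp}\pi$ with the support of a $c_\infty$-optimal self-coupling of $\bar\mu$ through the composed transport $\T_1\star\T_\infty$, whose value at $(\bar\mu,\bar\mu)$ equals $c$. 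This saturation of the Aubry-set calibration is the delicate step here, closely parallel to the obstacle in (2).
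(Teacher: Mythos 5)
Your route is essentially the paper's: part (1) through Proposition \ref{lifting} and verification of (H0)--(H2) so that Theorem \ref{weakKAMthm} applies; part (2) by identifying $T_\infty$ with $T_{c_\infty}$, with the interchange of the $t$-limit and the supremum over $y$ handled by compactness of $X$ and equicontinuity of the $c_t$'s (this is exactly the paper's argument: pick maximizers $y_t$ of $\sup_y\{f(y)-c_t(x,y)\}$, extract a convergent subsequence, and pass to the limit using the common modulus of continuity); part (3) immediately from Theorem \ref{weakKAMthm}(5); and parts (4)--(5) by calibration arguments \`a la Bernard--Buffoni, which is all the paper itself offers (it states these ``follow from an adaptation of the results of Bernard-Buffoni''), so there your sketches actually supply more detail than the text. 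Your normalization $c_t \mapsto c_t - ct$ is also a clean way to make sense of $c_\infty$ that the paper leaves implicit.

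Two points in your part (2), however, need repair before this is a proof. First, your ``soft direction'' proves the wrong inequality: the fixed-point property $T_t(T_\infty f) + ct = T_\infty f$ yields the calibration $T_\infty f(y) - T_\infty f(x) \le c_\infty(x,y)$, which bounds increments of $T_\infty f$ by $c_\infty$ (equivalently $T_{c_\infty}(T_\infty f) \le T_\infty f$) but compares nothing to $T_{c_\infty}f = \sup_y\{f(y) - c_\infty(x,y)\}$. The easy direction you actually need is the trivial pointwise bound $T_t f(x) \ge f(y) - c_t(x,y)$ for each fixed $y$; taking $\limsup_t$ (with $c=0$ after your normalization) and then $\sup_y$ gives $\bar{T}f := \limsup_t T_t f \ge T_{c_\infty} f$. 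Second, your interchange argument, once executed, identifies $\bar{T} f$ with $T_{c_\infty} f$ --- but in Theorem \ref{weakKAMthm} the operator $T_\infty$ is defined by a \emph{second}, increasing limit, $T_\infty f = \lim_t \lf(T_t \bar{T} f + ct\rt)$, so a priori $T_\infty f \ge \bar{T}f$ with possibly strict inequality. The paper closes this by proving $c_t \star c_\infty = c_\infty$ (by the same compactness/equicontinuity argument), which shows $T_t(T_{c_\infty} f) + ct = T_{c_\infty}f$, i.e. $\bar{T}f$ is already a fixed point and the second limit is constant. Without this invariance (or the equivalent idempotency $c_\infty \star c_\infty = c_\infty$), the identification $\T_\infty = \T_{c_\infty}$ --- and with it parts (2), (4) and (5) --- is not yet established.
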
 
\noindent{\bf Proof:} The  Kantorovich operator for $\T_t$ is given by  $T_t f(x) = \sup\{ f(y) - c_t(x,y)\,;\, y \in X\}$ and as shown in Proposition \ref{lifting}, we have $\T_{s+t}=\T_{c_t\star c_s}=\T_{c_t}\star \T_{c_s}=\T_{t}\star \T_s$, and $T_{t+s}=T_t\circ T_s$ for every $s, t$. It remains to show that the effective Kantorovich map $T_\infty$ associated to $(T_t)_t$ is equal to $T_{c_\infty}f:=\sup\{ T f(y) - c_\infty(x,y)\,;\, y \in X\}$. For that, we first note that 

\begin{equation}
T_\infty f\geq \limsup_{t}T_t f(x) \geq \sup_{y}\{f(y) - c_\infty(x,y)\}= T_{c_\infty}f(x).
\end{equation}
On the other hand, let $y_n$ achieve the supremum for $T_n f(x) = \sup\{f(y) - c_n(x,y)\,;\, y \in X\}$, and let $(n_j)_j$ be a subsequence   such that $\lim_{j \to \infty}T_{n_j}f(x) = \limsup_{n}T_n f(x)$. By refining to a further subsequence, we may assume by compactness of $X$, that $y_{n_j} \to \bar{y}$ as $j \to \infty$. Then by equi-continuity of the $c_n$'s, we deduce that
\begin{equation}
\limsup_{n}T_n f(x) = \lim_{j \to \infty}T_{n_j}f(x) = f(\bar{y}) - \liminf_{j}c_{n_j}(x, \bar{y}).
\end{equation}
As $\liminf_{j}c_{n_j}(x, \bar{y}) \geq \liminf_{n}c_n (x,\bar{y}) = c_\infty(x, \bar{y})$, we obtain
\begin{equation}
\limsup_{n}T_n f(x) \leq f(\bar{y}) - c_\infty(x, \bar{y}) \leq \sup_{y}\{f(y) - c_\infty(x,y)=T_{c_\infty}f(x).
\end{equation}
It follows that $\limsup_{n}T_n f(x) = T_{c_\infty}f(x)$, and since the same happens for every sequence $(n_k)_k$ going to $\infty$, we deduce that 
$\limsup_{t}T_t f =T_{c_\infty}f$ for every $f\in C(X)$. 

Finally, we note that $T (\limsup_{t}T_t f)(x) = \limsup_{t}T_t f(x)$ thanks to the fact that $c_t\star c_\infty = c_\infty$. This implies that $T_\infty f(x) = \limsup_{t}T_t f(x) = \sup_{y}\{f(y) - c_\infty(x,y)\}$, so  that $T_\infty=T_{c_\infty}$ and $\T_\infty=\T_{c_\infty}.$

Properties (1), (2) and (3) follow then immediately. Properties (4) and (5) now follow from an adaptation of the results of Bernard-Buffoni \cite{B-B1}.

\subsection{Fathi-Mather weak KAM theory}

Let $L$ be a time-independent \textit{Tonelli Lagrangian} on a compact Riemanian manifold $M$,  
and consider  $\T_t$ to be the cost minimizing transport 
$$\T_t(\mu,\nu) = \inf\{\int_{M\times M} c_t(x,y)\d\pi(x,y)\,;\, \pi \in \mcal{K}(\mu,\nu)\},$$ 
where 
$$c_t(x,y) := \inf\{\int_{0}^{t}L(\gamma(s), \dot{\gamma}(s))\d s\,;\, \gamma \in C^1([0,t];M); \gamma (0)=x, \gamma (t)=y\}.$$
As mentioned in the introduction, 
the Lax-Oleinik semi-group $S_t^-$, $t > 0$ is defined by the formula
\eqs{
S_t^- u(x) := \inf\{ u(\gamma(0)) + \int_{0}^{t}L(\gamma(s), \dot{\gamma}(s))\d s\,;\, \gamma \in C^1([0,t]; M), \gamma(t) = x\},
}
and a function $u \in C(M)$ is said to be a \textit{negative weak KAM solution} if $S_t^-u - ct = u$ for all $t \geq 0$. \\
Another semigroup $S^+_t$ is defined in terms of $S_t^-$ via the formula $S_t^+ u = - \hat{S}_t^-(-u)$, where $\hat{S}_t^-$ is the Lax-Oleinik semi-group  of the Lagrangian $\hat{L}(x,v) := L(x,-v)$. It turns out that
\eqs{
S_t^+ u(x) = \sup\{ u(\gamma(t)) - \int_{0}^{t}L(\gamma(s), \dot{\gamma}(s))\d s\,;\, \gamma \in C^1([0,t]; M), \gamma(0) = x\}.
}
Analogous to the negative weak KAM solutions, \textit{positive weak KAM solutions} are those $u$ satisfying $S_t^+u +ct = u$ for all $t \geq 0$. The semi-groups $S_t^-$ and $S_t^+$ are intimately connected with Hamilton-Jacobi equations, and Aubry-Mather theory. 

\begin{thm} Under the above conditions on $L$, there exists a unique constant  $c \in \R$ such that the following hold:
\begin{enumerate}
\item {\rm (Fathi \cite{Fa})} There exists a function $u_-: M \to \R$ (resp. $u_+$) such that $S_t^-u_- - c t = u_-$ (resp. $S_t^+u_- + c t = u_-$) for each $t \geq 0$.
\item {\rm (Bernard-Buffoni \cite{B-B1})} Let $c_\infty(x,y) := \liminf_{t \to \infty}c_t(x,y)$ denote the \textit{Peierls barrier function}. The following duality then holds:
\eqs{
\inf\{\int_{M\times M}c_\infty(x,y)\d\pi(x,y)\,;\, \pi \in \mcal{K}(\mu,\nu)\} = \sup_{u_+, u_-}\{\int_{M}u_+\d\nu - \int_{M}u_-\d\mu\},
}
where the supremum ranges over all $u_+, u_- \in C(M)$ such that $u_+$ (resp. $u_-$) is a positive (resp. negative) weak KAM solution, and such that $u_+ = u_-$ on the set $\mcal{A} := \{x \in M\,;\, c_\infty(x,x) = 0\}$. Moreover, $c_\infty(x,y) = \min_{z \in \mcal{A}}\{c_\infty(x,z) + c_\infty(z,y)\}$.
\item {\rm (Bernard-Buffoni \cite{B-B2})} The constant $c$ satisfies
\eqs{
c = \min_{\pi} \int_{M \times M}c_1(x,y)\d \pi(x,y), 
}
where the minimum is taken over all $\pi \in \mcal{P}(M\times M)$ with equal first and second marginals. The minimizing measures are all supported on $\mcal{D} := \{ (x,y) \in M \times M\,;\, c_1(x,y) + c_\infty(y,x) = c\}$.
\item {\rm  (Mather \cite{Mat})} The constant $c = \inf_{m}\int_{TM}L(x,v)\d m(x,v)$ where the infimum is taken over all measures $m \in \mcal{P}(TM)$ which are invariant under the Euler-Lagrange flow (generated by $L$).
\item {\rm (Fathi \cite{Fa})} A continuous function $u: M \to \R$ is a viscosity solution of $H(x, \nabla u(x)) = c[0]$ if and only if it is Lipschitz and $u$ is a negative weak KAM solution (i.e. $T_t^-u + c[0]t = u$). In particular, the statement is false if $c[0]$ is replaced with any other constant $c$. 
\end{enumerate}
\end{thm}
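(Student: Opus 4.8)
The plan is to recognize the family $(\T_t)_{t\ge 0}$ as the semi-group of linear transfers generated by the optimal mass transports with costs $c_t$, and then read off the five assertions from the abstract machinery already built for equicontinuous semi-groups. First I would verify hypotheses $(H0)$, $(H1)$, $(H2)$ in this setting. The semi-group property $(H0)$, $\T_{t+s}=\T_t\star\T_s$, follows from Proposition \ref{lifting} together with the identity $c_{t+s}=c_t\star c_s$, itself a consequence of the additivity of the action along concatenated curves for a Tonelli Lagrangian. Weak$^*$-continuity and $\{\delta_x\}\subset D_1(\T_t)$ in $(H1)$ hold because $c_t$ is a finite continuous cost on the compact manifold $M$; the common modulus of continuity $(H2)$ for $t\ge 1$ comes from the standard equi-Lipschitz/semiconcavity a priori bounds for the value functions $c_t$ of a Tonelli Lagrangian away from $t=0$. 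With these verified, the backward Kantorovich operator of $\T_t$ is exactly the Lax-Oleinik operator $S_t^-$, and its forward counterpart is $S_t^+$.

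Granting the hypotheses, assertions (1), (2), (3) are translations of the general theory. Part (1) is Theorem \ref{weakKAMthm}(1): $u_-:=T_\infty f=\lim_t(S_t^- f+ct)$ is a fixed point, i.e. a negative weak KAM solution (up to the sign convention for $c$), and the forward construction yields a conjugate $u_+$. Part (3) is Theorem \ref{weakKAMthm}(5) combined with the preceding Proposition on optimal transports for a semi-group of cost functions, which identifies $\T_\infty=\T_{c_\infty}$ with $c_\infty=\liminf_t c_t$ the Peierls barrier and gives $c=\min_\pi\int c_1\,d\pi$ over couplings with equal marginals, supported on $\mcal{D}$. Part (2) follows by specializing the bidirectional duality formula (\ref{KAM.duals}) to $\T_\infty=\T_{c_\infty}$: being simultaneously a forward and backward transfer, its effective transfer is represented through conjugate pairs $(u_-,u_+)$ of weak KAM solutions agreeing on $\mcal{A}=\{x:c_\infty(x,x)=0\}$, while $c_\infty(x,y)=\min_{z\in\mcal{A}}\{c_\infty(x,z)+c_\infty(z,y)\}$ is the Dirac-measure reading of $\T_\infty=\T_\infty\star\T_\infty$ (Theorem \ref{weakKAMthm}(4)) together with the description of $\mcal{A}$ in that same Proposition.

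The genuinely new content, and the step I expect to be the main obstacle, is the identification in part (4) of the transport value $c=\inf_\mu\T_1(\mu,\mu)=\min_\pi\int_{M\times M}c_1\,d\pi$ with Mather's variational value $\inf_m\int_{TM}L\,dm$ over Euler-Lagrange invariant probability measures, since here the abstract framework must be fed genuine dynamical information. For $\inf_m\int L\,dm\le c$, I would start from an optimal coupling supported on $\mcal{D}$, realize it by the minimizing extremals joining $x$ to $y$ in unit time, and lift it to a closed (holonomic) measure on $TM$; Mather-Ma\~n\'e theory then identifies minimizing holonomic measures with flow-invariant ones, producing an $m$ with $\int L\,dm=c$. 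For the reverse inequality, any flow-invariant $m$ projects under the time-one map to a coupling with equal marginals whose $c_1$-cost is at most $\int_{TM}L\,dm$, whence $c\le\int L\,dm$. The delicate points are the closedness/invariance correspondence and the regularity of minimizers, precisely the classical ingredients of Mather-Ma\~n\'e theory that the transfer formalism does not by itself supply.

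Finally, part (5) is imported from Fathi's weak KAM theorem on the PDE side: fixed points $u$ of the map $u\mapsto S_t^- u+c[0]t$ are exactly the Lipschitz viscosity solutions of the stationary equation $H(x,\nabla u)=c[0]$, and no other constant admits such solutions because, by part (1) and Proposition \ref{estimation}, $c[0]$ is the unique value for which $S_t^- u+ct$ stays bounded. I would close this by invoking the standard viscosity-solution characterization of the Lax-Oleinik semi-group, observing that the uniqueness of the constant is already the transfer-theoretic statement contained in Theorem \ref{weakKAMthm} and Proposition \ref{estimation}.
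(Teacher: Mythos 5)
Your proposal is correct, but it is worth noting that the paper itself does not prove this theorem at all: it is stated as a survey of classical results, attributed item by item to Fathi, Bernard--Buffoni and Mather, and the paper's own contribution is confined to the dictionary given immediately after the statement (namely $T^+_t f = S^-_t f$, $T^-_t f = S^+_t f$, the identification $\T_\infty = \T_{c_\infty}$, and the remark that the duality formula (\ref{KAM.duals}) ``is exactly the statement 2''). Your verification of (H0)--(H2) and your derivation of items (1)--(3) from Theorem \ref{weakKAMthm}, the corollary containing (\ref{KAM.duals}), and the proposition on semi-groups of cost functionals is precisely that dictionary fleshed out into an argument, so on these items you are doing what the paper intends, only more explicitly; watch the sign conventions, since the backward Kantorovich operator is $S^+_t$ rather than $S^-_t$ (the paper itself flags the ``unfortunate signs''), so $T^-_\infty f$ is a \emph{positive} weak KAM solution in Fathi's terminology and the negative ones come from the forward construction. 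One caveat on item (3): the point-level support statement (every minimizing $\pi$ is supported on the set $\mathcal{D}$ of pairs $(x,y)$) does not follow from Theorem \ref{weakKAMthm}(5) alone, which only yields the measure-level statement that some minimizer $\bar\mu$ satisfies $(\bar\mu,\bar\mu)\in\mathcal{D}$; the paper's own proposition defers exactly this point to ``an adaptation of the results of Bernard--Buffoni,'' and your proposal inherits the same deferral. Finally, your recognition that items (4) and (5) require genuinely dynamical and PDE input -- the holonomic-measure/invariant-measure correspondence of Mather--Ma\~n\'e theory and Fathi's viscosity-solution characterization -- which the transfer formalism cannot supply by itself, is accurate and matches the paper, which simply cites Mather and Fathi for these two items.
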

In the language of transfers, the cost-minimizing transport is both a forward and backward linear transfer, with forward (resp. backward) Kantorovich operators given by $T^+_t f(x) = V_f(t,x)$  and $T^-_t g(y) =  W_g^t(0,y)$, where
\eqs{
V_f(t',x) = \inf\{ f(\gamma(0)) + \int_{0}^{t'} L(\gamma(s), \dot{\gamma}(s))\d s\,;\, \gamma \in C^1([0,t'), M), \gamma(t') = x\}
}
and $W_g^t(t',y)$ the value functional
\eqs{
W_g^t(t',y) = \sup\{ g(\gamma(t')) - \int_{t'}^{t} L(\gamma(s), \dot{\gamma}(s))\d s\,;\, \gamma \in C^1([0,t'), M), \gamma(0) = x\}.
}
Observe that $V_f(t,x) = S^- f(x)$, while $W_g^t(0,y) = S^+ g(y)$. Hence (with unfortunate signs), $T^+_t f = S^-_t f(x)$, while $T_t^- f(x) = S^+_t f(x)$. Note also the translation of terminology in this setting: Our backward weak KAM solutions are Fathi's positive weak KAM solutions, while the analogous forward weak KAM solutions are Fathi's negative weak KAM solutions.

One can proceed with the construction outlined above to construct the negative (resp. positive) weak KAM solutions as the image of the Kantorovich operators $T^+_\infty$ (resp. $T^-_\infty$), and they will be given by
\eqs{
T^-_\infty f(x) = \sup\{f(y) - c_\infty (x,y)\,;\, y \in M\}\quad \text{and}\quad T^+_\infty f(y) = \inf\{f(x) + c_\infty (x,y)\,;\, x \in M\}
}
where $c_\infty(x,y) := \liminf_{t \to \infty}c_t(x,y)$. 

The backward (resp. forward) generalised Peierls barrier associated to $T^-_\infty$ (resp. $T^+_\infty$) are the same and is the cost-minimizing transport with cost $c_\infty$,  which by duality we can write as
\eqs{
\inf\{\int_{M\times M}c_\infty(x,y)\d\pi(x,y)\,;\, \pi \in \mcal{K}(\mu,\nu)\}=\sup\{\int_{M}T^+_\infty f\d\nu - \int_{M}T^-_\infty\circ T^+_\infty f \d\mu\,;\, f\in C(M)\}.
}
It can be checked this is exactly the statement 2 in the above theorem. 

 \subsection{The Schr\"odinger semigroup}
Recall the Schr\"odinger bridge of Example 4.5. Let $M$ be a compact Riemannian manifold and fix some reference non-negative measure $R$ on path space $\Omega=C([0,\infty], M)$. Let $(X_t)_t$ be a random process on $M$ whose law is $R$, and denote by $R_{0t}$ the joint law of the initial position $X_0$ and the position $X_t$ at time $t$, that is $R_{0t}=(X_0, X_t)_\#R$. Assume $R$ is the reversible Kolmogorov continuous Markov process associated with the generator $\frac{1}{2}(\Delta -\nabla V\cdot \nabla)$ and the initial probability measure $m=e^{-V(x)}dx$ for some function $V$.

For probability measures $\mu$ and $\nu$ on $M$, define
\begin{equation}\label{schrotrans}
\T_{t}(\mu,\nu) := \inf\{ \int_{M} \mathcal{H}(r_t^x, \pi_x)d \mu(x)\,;\, \pi \in \mathcal{K}(\mu,\nu),\, d \pi(x,y) = d \mu( x) d\pi_x(y)\}
\end{equation}
where $d R_{0t}(x, y) = d m(x)d r_t^x(y)$ is the disintegration of $R_{0t}$ with respect to its initial measure $m$.

\begin{prop}
The collection $\{\T_t\}_{t \geq 0}$ is a semigroup of backward linear transfers with Kantorovich operators $T_t f(x) := \log S_t e^f (x)$ where $(S_t)_t$ is the semi-group associated to $R$; in particular, 
\begin{equation}\label{dualitySchro}
\T_t(\mu,\nu) = \sup\left\{\int_{M} f d\nu - \int_{M}\log S_t e^f d\mu\,;\, f \in C(M)\right\}.
\end{equation} 
The corresponding idempotent backward linear transfer is $\T_\infty(\mu,\nu) = \mathcal{H}(m, \nu)$, and its effective Kantorovich map is $T_\infty f(x) := \log S_\infty e^f$, where $S_\infty g  := \int g \d m$.
\end{prop}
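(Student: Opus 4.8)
The plan is to prove the three assertions in turn: that each $\T_t$ is a backward linear transfer with Kantorovich operator $T_t f = \log S_t e^f$, that $(\T_t)_t$ is a semigroup for inf-convolution, and that the effective objects produced by the ergodic construction of Section 8 are exactly $\mathcal{H}(m,\cdot)$ and $\log S_\infty e^{(\cdot)}$.

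First I would identify $\T_t$ as the weak transport of Proposition \ref{prop.two} associated to the generalized cost $c_t(x,\sigma)=\mathcal{H}(r_t^x,\sigma)$, where $dR_{0t}(x,y)=dm(x)\,dr_t^x(y)$ so that $r_t^x$ is the law of $X_t$ given $X_0=x$. Since $\sigma\mapsto\mathcal{H}(r_t^x,\sigma)$ is convex, weak$^*$-lower semicontinuous and bounded below by $0$, that proposition applies and yields that $\T_t$ is a backward linear transfer with Kantorovich operator $T_t f(x)=\sup\{\int_M f\,d\sigma-\mathcal{H}(r_t^x,\sigma)\,;\,\sigma\in\mathcal{P}(M)\}$. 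The Gibbs (Donsker--Varadhan) variational principle for relative entropy evaluates this supremum as $T_t f(x)=\log\int_M e^{f}\,dr_t^x=\log S_t e^f(x)$, and formula (\ref{dualitySchro}) is then just the definition of the transfer dual to $T_t$.

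For the semigroup property I would appeal to Proposition \ref{inf.tens}(1): $\T_t\star\T_s$ is the backward linear transfer whose Kantorovich operator is $T_t\circ T_s$. Since $e^{T_s f}=S_s e^f$, multiplicativity of the exponential and the semigroup law $S_tS_s=S_{t+s}$ of the reference process give $T_t\circ T_s f=\log S_t(S_s e^f)=\log S_{t+s}e^f=T_{t+s}f$. Because a linear transfer is determined by its Kantorovich operator (Proposition \ref{prop.one}), this forces $\T_t\star\T_s=\T_{t+s}$, which is hypothesis (H0) of Section 8. For the effective transfer I would then run the construction of Theorem \ref{weakKAMthm}. Taking $\mu=\nu=m$ with the plan $\pi_x=r_t^x$ gives $\T_t(m,m)=\int_M\mathcal{H}(r_t^x,r_t^x)\,dm=0$ by invariance of $m$, while $\T_t\geq 0$; hence the Ma\~n\'e constant is $c=0$. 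The decisive analytic input is the ergodicity of the reversible process: under the stated hypotheses on $V$, the spectral gap on the compact manifold gives $S_t g\to\int_M g\,dm=:S_\infty g$, so applied to $g=e^f$ this yields $\bar T f=\limsup_t(T_tf+ct)=\lim_t\log S_t e^f=\log\int_M e^f\,dm$, a constant function. Since $T_t$ fixes constants ($T_t a=\log(e^aS_t1)=a$, using $S_t1=1$), we obtain $T_\infty f=\lim_t(T_t\bar T f+ct)=\log\int_M e^f\,dm=\log S_\infty e^f$. This $T_\infty$ is manifestly idempotent, and its dual transfer is $\T_\infty(\mu,\nu)=\sup_f\{\int_M f\,d\nu-\int_M T_\infty f\,d\mu\}=\sup_f\{\int_M f\,d\nu-\log\int_M e^f\,dm\}=\mathcal{H}(m,\nu)$, the last equality being the variational representation of the logarithmic entropy recorded earlier, and the value being independent of $\mu$ because $T_\infty f$ is constant.

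The main obstacle is the genuinely analytic part: verifying the equicontinuity hypothesis (H2) and the weak$^*$-continuity in (H1) for the family $\{\T_t\}$, together with the uniform ergodic convergence $S_t g\to\int_M g\,dm$. All of these rest on the smoothing and mixing properties of the reference semigroup $(S_t)$ and hence on the regularity/ergodicity conditions imposed on $V$; the explicit identification of the limit as a constant function does let one bypass the Arzel\`a--Ascoli step of the abstract argument, but the convergence itself and the common modulus of continuity still have to be extracted from the structure of $R$. Everything else is formal manipulation built on the Gibbs variational principle and the semigroup law $S_tS_s=S_{t+s}$.
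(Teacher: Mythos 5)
Your proof is correct and follows essentially the same route as the paper: identify the Kantorovich operator as $T_t f = \log S_t e^f$ (the paper does this via the weak-transport representation of Example 4.6, you via Proposition \ref{prop.two} together with the Gibbs variational principle --- the same computation), obtain the semigroup law from $S_tS_s = S_{t+s}$ through Proposition \ref{inf.tens}, and identify $T_\infty$ and $\T_\infty = \mathcal{H}(m,\cdot)$ from the uniform ergodic convergence $S_t g \to \int_M g\, dm$. The only difference is one of detail: you run the construction of Theorem \ref{weakKAMthm} explicitly (computing the Man\'e constant $c=0$ from $\T_t(m,m)=0$ and noting the limit operator is constant-valued, which bypasses the Arzel\`a--Ascoli step), whereas the paper verifies the fixed-point equation $T_t \circ T_\infty f = T_\infty f$ directly from the $1$-Lipschitz property of $T_t$ and the uniform convergence.
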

\noindent{\bf Proof:} It is easy to see that for each $t$, $T_t$ is monotone, 1-Lipschitz and convex, and also satisfies $T_t(f + c) = T_t f + c$ for any constant $c$. It follows that $\T_{t, \mu}^*(f)=\int_MT_tf\, d\mu$ for each $t$ by Proposition \ref{basicK}. The semigroup property then follows from the semigroup $(S_t)_t$ and the property that $\T_t \star \T_s$ is a backward linear transfer with Kantorovich operator $T_t \circ T_s f(x) = \log S_tS_s e^f (x) = \log S_{s+t}e^f (x) = T_{t+s} f(x)$ by Proposition \ref{inf.tens}.

Now we remark that it is a standard property of the semigroup $(S_t)_t$ on a compact Riemannian manifold, that under suitable conditions on $V$, $S_t e^f \to S_\infty e^f$, uniformly on $M$, as $t \to \infty$, for any $f \in C(M)$. This immediately implies by definition of $T_t$, that $T_t f \to T_\infty f$ uniformly as $t \to \infty$ for any $f \in C(M)$. We then deduce from the $1$-Lipschitz property, that $T_t \circ T_\infty f(x) = T_\infty f(x)$. We conclude that $T_\infty$ is a Kantorovich operator from Theorem \ref{weakKAMthm}. Finally we see that $\T_\infty(\mu,\nu)$ is
\begin{align*}
\T_\infty(\mu,\nu) &:= \sup\{\int f\d\nu - \int T_\infty f\d\mu\,;\, f \in C(M) \}\\
&= \sup\{\int f\d\nu - \log \int e^f \d m\,;\, f \in C(M)\}\\
&= \mathcal{H}(m, \nu),
\end{align*}
(see  Section 9, for the last equality).

 \section{Weak KAM solutions for non-continuous transfers}
 
 We now deal with cases where  $\T$ is not necessarily weak$^*$-continuous on ${\cal M}(X)$. 
 \subsection{The case of non-continuous transfers with bounded oscillation} 
 
  We now consider situations where $\T$ is not equicontinuous, but there is some control on the oscillation of the transfers $\T^n$. 
 
\begin{lem} \label{limits}  Let $X$ be a compact space and let $\T$ be a backward linear transfer such that $\mathcal{D}_1(\T)$ contains the Dirac measures. Assume that   
\begin{equation}\label{inv}
\T(\mu_0, \mu_0) <+\infty \hbox{ for some $\mu_0 \in {\cal P} (X)$. }
\end{equation}
 Then, the following properties hold:
\begin{enumerate}
\item $c(\T):=  \sup_{n} \frac{\inf\{\T_n(\mu,\nu)\,;\, \mu,\nu \in \mcal{P}(X)\}}{n}\leq \inf\{\T(\mu,\mu)\,;\, \mu \in \mcal{P}(X)\} < +\infty$.
\item For each $f\in C(X)$ and $x\in X$, we have 
\begin{equation}
\limsup_n\frac{T^nf(x)}{n} \leq -c(\T).
\end{equation}
\item For each $f\in C(X)$ and $\mu \in {\cal P}(X)$, we have 
\begin{equation}
\liminf_n\frac{1}{n}\int_XT^nf \, d\mu \geq - \T(\mu, \mu).
\end{equation}

\item For each $n \in N$, we have
\begin{equation}
 \sup_{\mu \in {\cal P}(X)} \int (T^nf(x) +nc)\, d\mu  \geq \inf_{y\in X}f(y).
\end{equation}
\item If for some $K>0$, we have \begin{equation}\label{stronger}
\liminf_n \{\inf_{\mu \in {\cal P}(X)}\T^n (\mu, \mu)-\inf_{\mu, \nu \in {\cal P}(X)}\T^n (\mu, \nu)\} \leq K, 
\end{equation}
then, 
\begin{equation}
\sup_{x\in X}\liminf_n T^nf(x) +nc \leq \sup_{y\in X}f(y) +K.
\end{equation}
\end{enumerate}
\end{lem}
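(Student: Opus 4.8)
The argument rests on two ingredients: an upper bound for $T^nf$ coming from the duality representation of $\T_n$, and a matching lower bound for $\T_n$ restricted to the diagonal. Throughout I would write $a_n := \inf\{\T_n(\mu,\nu)\,;\,\mu,\nu\in\mathcal{P}(X)\}$ and $b_n := \inf\{\T_n(\mu,\mu)\,;\,\mu\in\mathcal{P}(X)\}$, so that the hypothesis (\ref{stronger}) reads $\liminf_n(b_n-a_n)\le K$, and the quantity to be controlled is $T^nf(x)+nc$.

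First I would record the upper bound. Since $\T_n$ is the $n$-fold inf-convolution of $\T$, Proposition \ref{inf.tens} shows $\T_n$ is a backward linear transfer whose Kantorovich operator is $T^n$; hence, for every $x\in X$ and $f\in C(X)$,
\[
T^nf(x)=\sup\Big\{\int_X f\,d\sigma-\T_n(\delta_x,\sigma)\,;\,\sigma\in\mathcal{P}(X)\Big\}.
\]
Bounding $\int_X f\,d\sigma\le\sup_X f$ (as $\sigma$ is a probability measure) and $\T_n(\delta_x,\sigma)\ge a_n$ term by term gives immediately $T^nf(x)\le\sup_X f-a_n$ for all $x\in X$.

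The crux is the diagonal lower bound $b_n\ge nc$. To get it I would first note that $(a_n)$ is \emph{superadditive}: from $\T_{m+n}=\T_m\star\T_n$ one has $\T_{m+n}(\mu,\nu)=\inf_\sigma[\T_m(\mu,\sigma)+\T_n(\sigma,\nu)]\ge a_m+a_n$, whence $a_{m+n}\ge a_m+a_n$; since $a_n/n$ is bounded above by item (1), Fekete's lemma upgrades the identity $c=\sup_n a_n/n$ of item (1) to $c=\lim_n a_n/n$. Next, taking $\sigma=\mu$ repeatedly in the convolution yields the diagonal subadditivity $\T_{kn}(\mu,\mu)\le k\,\T_n(\mu,\mu)$, so that $a_{kn}\le k\,\T_n(\mu,\mu)$ and therefore $a_{kn}/(kn)\le\T_n(\mu,\mu)/n$. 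Letting $k\to\infty$ and using $a_{kn}/(kn)\to c$ produces $\T_n(\mu,\mu)\ge nc$ for every $\mu$, i.e.\ $b_n\ge nc$.

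Finally I would combine the two estimates. Adding $nc$ to the upper bound and using $nc\le b_n$,
\[
T^nf(x)+nc\le\sup_X f+(nc-a_n)\le\sup_X f+(b_n-a_n)
\]
for every $x$ and $n$. Taking $\liminf_n$ of both sides (monotonicity of $\liminf$) and invoking $\liminf_n(b_n-a_n)\le K$ gives $\liminf_n(T^nf(x)+nc)\le\sup_X f+K$ for each fixed $x$; a supremum over $x\in X$ then concludes. The one genuinely delicate point is the diagonal estimate $b_n\ge nc$: the $\sup$-characterization of $c$ in item (1) alone does not suffice, since the subsequence $(kn)_k$ need not realize that supremum, and one must pass through the $\lim$-characterization (Fekete) to ensure $a_{kn}/(kn)\to c$.
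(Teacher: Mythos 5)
Your argument for item (5) --- the substantive part of the lemma --- is correct and follows the same skeleton as the paper's proof: the duality bound $T^nf(x)\le \sup_X f - a_n$ (with $a_n:=\inf_{\mu,\nu}\T_n(\mu,\nu)$), combined with hypothesis (\ref{stronger}). The genuine difference is that you prove the diagonal bound $b_n:=\inf_{\mu}\T_n(\mu,\mu)\ge nc$ explicitly (superadditivity of $(a_n)$ and Fekete to get $a_n/n\to c$, then diagonal subadditivity $\T_{kn}(\mu,\mu)\le k\,\T_n(\mu,\mu)$ along multiples of $n$). That bound is exactly what is needed to convert $\liminf_n(b_n-a_n)\le K$ into $\limsup_n(a_n-nc)\ge -K$, and the paper's own computation uses it silently: its final inequality, $\sup_X f-\limsup_n\{a_n-nc\}\le\sup_X f+K$, does not follow from (\ref{stronger}) alone unless one already knows $nc\le b_n$; in the paper that fact (equivalent to $c=\inf_n b_n/n$, asserted in Theorem \ref{One.1}) is only established later, in Lemma \ref{c_epsilon}, via regularization. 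So on this point your write-up is more self-contained than the paper's. Your bookkeeping at the end (bounding pointwise in $x$, taking $\liminf_n$, then $\sup_x$) also avoids the paper's interchange $\sup_x\liminf_n\le\liminf_n\sup_x$; both are valid.

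As a proof of the whole lemma, however, the proposal is incomplete. Items (3) and (4) are never addressed, and items (1)--(2) are cited rather than proved (your diagonal subadditivity plus hypothesis (\ref{inv}) is precisely what item (1) needs: $a_n\le \T_n(\mu_0,\mu_0)\le n\,\T(\mu_0,\mu_0)$, hence $\sup_n a_n/n\le\inf_\mu\T(\mu,\mu)<+\infty$; item (2) then follows from your recorded upper bound together with $a_n/n\to c$). The two missing items are short duality computations that you should include: for (3), $\int_X T^nf\,d\mu=\sup_\sigma\{\int_X f\,d\sigma-\T_n(\mu,\sigma)\}\ge\int_X f\,d\mu-\T_n(\mu,\mu)\ge\int_X f\,d\mu-n\,\T(\mu,\mu)$, and divide by $n$; for (4), $\sup_\mu\int_X(T^nf+nc)\,d\mu=\sup_{\mu,\sigma}\{\int_X f\,d\sigma-\T_n(\mu,\sigma)\}+nc\ge\inf_X f-a_n+nc\ge\inf_X f$, using $a_n\le nc$.
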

In the next section, we shall prove that actually, 
\[
c(\T)=\inf\{\T(\mu,\mu)\,;\, \mu \in \mcal{P}(X)\}.
\]
  \noindent{\bf Proof:} 1) Let $\T$ be a backward linear transfer and consider for each $n\in \N$, ${\cal T}_n= {\cal T}\star {\cal T}\star ....\star {\cal T}$ the backward linear transfer obtained by iterating its convolution $n$ times. The sequence $m_n:=\inf \{\T_n(\mu,\nu)\,;\, \mu,\nu \in \mathcal{P}(X)\}$ is superadditive, that is $m_{n+k}\geq m_n + m_k$ for all positive integers, $n, k$. Since 
$$\frac{m_n}{n} \leq \sup_n \frac{1}{n}\T_n(\mu_0, \mu_0) \leq \T(\mu_0, \mu_0)<+\infty, $$
it follows that there exists a number $c(\T) \in \R$ such that 
  \begin{equation}
 \lim_n\frac{m_n}{n} := \lim_n\frac{1}{n}\inf\left\{\T_n(\mu,\nu)\,;\, \mu,\nu \in \mathcal{P}(X)\right\}=\sup_n\inf\limits_{\mu, \nu}\frac{\T_n(\mu,\nu)}{n}=c(\T) <+\infty.
\end{equation}
2) follows from 1) since 
\begin{align*}
T^nf(x)&=\sup\{\int_Xf\ d\sigma -\T_n(x, \sigma)\,;\, \sigma \in \mcal{P}(X)\}\\
&\leq \sup f -\inf\{\T_n(x, \sigma)\,;\, \sigma \in \mcal{P}(X)\}\\
&\leq \sup f -\inf \{\T_n(\mu, \sigma)\,;\, \mu, \sigma \in \mcal{P}(X)\}.
\end{align*}
For 3) note that 
\begin{align*}
\int_XT^nf \d\mu &=\sup\{\int_Xf\ d\sigma -\T_n(\mu, \sigma)\,;\, \sigma \in \mcal{P}(X)\}\\
&\geq \int_X f \, d\mu -\T_n(\mu, \mu)\\
&\geq \int_X f \, d\mu -n\T (\mu, \mu).
\end{align*}
  4) Write 
   \begin{align*}
  \sup_{\mu \in {\cal P}(X)} \int (T^nf(x) +nc)\, d\mu  
 &=  \sup_{\mu\in \mcal{P}(X)}\sup_{\sigma \in \mcal{P}(X)} \{\int fd\sigma -\T_n(\mu, \sigma) +nc\}\\ 
      & = \inf_{X} f  - \inf_{\sigma, \mu} \T_n(\mu, \sigma) +nc\\ 
                          &\geq \inf_X f. 
  \end{align*}
The latter inequality follows from Lemma \ref{regularise} since $\inf_{ \mu,\sigma} \T_n(\mu, \sigma) \leq nc$. \\
For 5),
write 
 \begin{align*}
  \sup_{x\in X}\liminf_n T^nf(x) +nc &\leq \liminf_n  \sup_{x\in X} T^nf(x) +kc\\
   &=  \liminf_n \sup_{x\in X} \sup_{\sigma \in {\cal P}(X)}\{\int_X fd \sigma -\T^n(x, \sigma)  +nc \}\\
      &\leq   \liminf_n \sup_{x\in X} \sup_{\sigma \in {\cal P}(X)}\{\sup f -\T^k(x, \sigma)  +kc \}\\
      &=  \sup f-\limsup_n \inf_{x\in X} \inf_{\sigma \in {\cal P}(X)}\{\T^n(x, \sigma)  -nc \}\\
       &\leq  \sup f-\limsup_n \inf_{\mu,  \sigma \in {\cal P}(X)}\{\T^n(\mu, \sigma)  -nc \}\\
         &\leq  \sup f-\limsup_n \inf_{\mu,  \sigma \in {\cal P}(X)}\{\T^n(\mu, \sigma)  -nc \}\\
         &\leq  \sup f +K.
   \end{align*}
  
Now we can prove the following.
 \begin{thm}\label{gen} Suppose $\T$ is a backward linear transfer on $\mcal{P}(X) \times \mcal{P}(X)$ such that $\mathcal{D}_1(\T)$ contains all Dirac measures. Assume (\ref{inv}), (\ref{stronger}) and 
     \begin{equation}\label{T1b}
   \sup_{x\in X}\inf_{\sigma \in {\mathcal P}(X)} \T(x, \sigma) <+\infty. 
   \end{equation}
  If $T: C(X) \to USC(X)$, where $T$ is the backward Kantorovich operator associated to $\T$, then there exists $h\in USC_\sigma (X)$ such that $Th+c=h$ on $X$. 
  \end{thm}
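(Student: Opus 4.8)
The plan is to realize $h$ as a fixed point of the (extended) nonlinear operator $u \mapsto Tu + c$ on $USC_\sigma(X)$, exploiting the structure of $T$ from Section 4: $T$ extends to a monotone, constant‑commuting map $USC_\sigma(X) \to USC_\sigma(X)$, and it is continuous along monotone sequences in the two senses of Lemma \ref{monotone}. The engine is the following observation. If $w \in USC_\sigma(X)$ is a \emph{subsolution} at the critical level, i.e. $Tw + c \ge w$, then applying the monotone operator $T$ repeatedly and invoking property (c) shows inductively that $w_n := T^n w + nc$ is nondecreasing: from $w_n \ge w_{n-1}$ one gets $T^{n+1}w + nc \ge T^n w +(n-1)c$, i.e. $w_{n+1} \ge w_n$. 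Hence $h := \sup_n w_n$ is an increasing limit of functions in $USC_\sigma(X)$, so $h \in USC_\sigma(X)$, and Lemma \ref{monotone}(2) promotes the inequality to the equation: since $Tw_n = T^{n+1}w + nc$, one finds $Th + c = \lim_n Tw_n + c = \lim_n (T^{n+1}w + (n+1)c) = h$. Everything thus reduces to (i) producing a subsolution at level $c$ inside $USC_\sigma(X)$, and (ii) guaranteeing that the increasing sequence $w_n$ stays bounded above, so that $h$ is finite.

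For the boundedness (ii) I would use the two quantitative inputs. Writing $T^n w(x) + nc = \sup_\sigma\{\int w\,d\sigma - \T_n(\delta_x,\sigma)\} + nc$ and bounding $\int w\,d\sigma \le \sup_X w$, one gets $\sup_X(T^n w + nc) \le \sup_X w - m_n + nc$, where $m_n := \inf_{\mu,\nu}\T_n(\mu,\nu)$. By Theorem \ref{One.1} one has $m_n \le nc \le d_n$ with $d_n := \inf_\mu \T_n(\mu,\mu)$, so $0 \le nc - m_n \le d_n - m_n = K(n)$, whence
\[
\sup_X (T^n w + nc) \le \sup_X w + K(n).
\]
Hypothesis (\ref{stronger}), $\liminf_n K(n) \le K$, then bounds $w_n$ uniformly along a subsequence; as $(w_n)_n$ is nondecreasing, a bound on a subsequence is a bound for the whole sequence, so $h$ is finite. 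The remaining hypotheses enter as follows: (\ref{T1b}) says exactly that $T1$ (equivalently $T0$) is bounded below, so $T$ is proper and the $w_n$ are not $\equiv -\infty$; (\ref{inv}) yields finiteness of $c$ through the superadditivity of $m_n$ recorded in Lemma \ref{limits}(1); and Lemma \ref{limits}(4), $\sup_X(T^n f + nc) \ge \inf_X f$ for $f \in C(X)$, prevents the construction from collapsing to $-\infty$.

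The main obstacle is step (i): manufacturing a subsolution at the \emph{exact} critical level $c$ while remaining inside $USC_\sigma(X)$. The naive candidate $\bar T w(x) := \limsup_n (T^n w(x) + nc)$, which works in the equicontinuous setting of Theorem \ref{weakKAMthm}, is here a countable \emph{infimum} of countable suprema of $USC$ functions, hence a priori only of class $USC_{\sigma\delta}(X)$; on that class, as the Remark following Lemma \ref{monotone} warns, $T$ need not be defined and the decreasing monotone continuity fails. One must therefore avoid $\limsup$ and generate $h$ by the increasing procedure above. Using (\ref{T1b}), the constant $0$ is readily a subsolution at the level $A := \sup_x \inf_\sigma \T(\delta_x,\sigma) \ge c$, and the iterates $T^n 0 + nA$ increase; the real difficulty is to \emph{descend} from $A$ to the critical value $c$. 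I would carry this out by a normalization/extraction argument: along the good subsequence $n_j$ (where $K(n_j)$ is bounded) the functions $T^{n_j}0 + n_j c$ are uniformly bounded above by the display and, by Lemma \ref{limits}(4), do not drop uniformly below $\inf_X 0$; passing to a limit through the weak$^*$ compactness of $\mcal{P}(X)$ and the monotone continuity of Lemma \ref{monotone}, and keeping every limit an \emph{increasing} $USC_\sigma$ limit, should yield a genuine subsolution $w \in USC_\sigma(X)$ with $Tw + c \ge w$. Feeding this $w$ into the increasing iteration then produces the desired $h \in USC_\sigma(X)$ with $Th + c = h$. This extraction at the critical level, conducted so as never to leave $USC_\sigma(X)$, is the delicate technical core of the proof.
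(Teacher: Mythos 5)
Your ``engine'' (a proper subsolution $w\in USC_\sigma(X)$ with $Tw+c\geq w$ yields increasing iterates $T^nw+nc$, bounded above via $nc-m_n\leq K(n)$ and (\ref{stronger}), whose increasing limit $h$ satisfies $Th+c=h$ by Lemma \ref{monotone}(2)) is sound and coincides with the second half of the paper's argument. But the step you yourself flag as ``the delicate technical core'' --- manufacturing a proper subsolution at the \emph{exact} level $c$ --- is precisely what is missing, and the extraction you sketch cannot be completed in general. Properness of any liminf-type limit of $T^{n_j}0+n_jc$ requires a \emph{single} point $x$ at which infinitely many iterates stay bounded below; Lemma \ref{limits}(4) only controls $\sup_X(T^nf+nc)$, and the points where that supremum is nearly attained may wander with $n$, so the candidate limit can be identically $-\infty$. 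No amount of subsequence refinement or weak$^*$ compactness of $\mcal{P}(X)$ repairs this. Moreover, your rejection of lim-type candidates was based only on $\limsup$: you missed that $\liminf_n(T^nf+nc)=\sup_n\inf_{k\geq n}(T^kf+kc)$ \emph{does} stay in $USC_\sigma(X)$, since arbitrary infima of $USC$ functions are $USC$ and the outer supremum is an increasing limit.

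The paper resolves the difficulty by a dichotomy that your single-track plan cannot reproduce. Case 2: if there exists (for the given $f$) a point $x$ with $T^nf(x)+nc\geq f(x)$ for all $n$, then $\tilde f:=\liminf_n(T^nf+nc)$ is proper at that point, lies in $USC_\sigma(X)$, is bounded above by $\sup f+K$ via (\ref{stronger}), and serves as the seed for the increasing iteration --- this is where your engine applies. Case 1: if instead for \emph{every} $x$ some iterate drops strictly below $f(x)$, then by upper semicontinuity and compactness of $X$ finitely many iterates suffice, and $g_r:=\inf_{1\leq i\leq r}(T^if+ic)$ is a function in $USC(X)$ (finite infima of $USC$ are $USC$), bounded below by (\ref{T1b}), satisfying the \emph{supersolution} inequality $Tg_r+c\leq g_r$. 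The paper then runs the dual engine: the iterates $T^ng_r+nc$ \emph{decrease}, remain in $USC(X)$ (so the decreasing continuity of Lemma \ref{monotone}(1) legitimately applies --- this is exactly the regularity trap you identified, avoided here because one never leaves $USC(X)$), and their limit $h$ is shown to be proper using Lemma \ref{limits}(4) together with compactness. Without this second, supersolution-based branch, your proposal has no way to handle the regime where no proper subsolution arises from the orbit of a continuous function, so the proof as proposed is incomplete.
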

 \noindent{\bf Proof:} Note that condition (\ref{T1b}) means that $T^if$ is bounded below for any $f\in C(X)$ and any $i \in N$. We distinguish two cases:\\
 
 \noindent {\bf Case 1:} Assume the following: 
 \begin{equation}
\hbox{There is $f\in C(X)$ so that $\forall x\in X$, there exists $n\in \N$ with $T^nf(x)+nc<f(x)$. }
\end{equation}
 Since $T^nf$ is in $USC(X)$, then for each $x\in X$, there exists $n\in \N$ such that $T^nf +nc<f$ on a neighborhood of $x$, and since $X$ is compact, there is a finite number $r$ of iterates of $T$ such that $\inf_{0\leq i \leq r}(T^if +ic) <f$.\\
  Set $g_r=\inf_{1\leq i \leq r}(T^if +ic)$ and note that 
 $g_r \in USC (X)$, $\inf g_r>-\infty$ because of (\ref{T1b}), and $g_r <f$. Note now that 
 \[
 Tg_r+c\leq \inf_{2\leq i \leq r+1}\{T^if +ic\}.
 \]
On the other hand, $Tg_r +c \leq Tf +c$, hence 
\[
 Tg_r+c\leq \inf_{1\leq i \leq r}\{T^if +ic\} =g_r.
 \]
It follows that the sequence $\{T^ng_r+nc\}_n$ is decreasing to some function $h \in USC(X)$. Note that $h\leq g_r$ hence is bounded above. 

Now we show that $h$ is proper, that is not identically $-\infty$. Indeed, if it was, then for every $x$, the sequence $g_n(x)=T^ng_r(x)+nc$ will be decreasing to $-\infty$. It follows that for each $x\in X$, there exists $i$, such that $g_i (x)< \inf g_r -1$, hence on a neighborhood of $x$ since $g_i$ is in $USC(X)$. By compactness and since the $(g_n)_n$ is decreasing, we get a function $g_N$ such that $g_N < \inf g_r -1$ on $X$. On the other hand, the preceding lemma yields that  $\sup_{\mu \in {\cal P}(X)} \int g_N\, d\mu  \geq \inf g_r$.  It follows that 
\[
\inf g_r  \leq \sup_{\mu \in {\cal P}(X)} \int g_N\, d\mu \leq \inf g_r -1,
\]
which is a contradiction, hence $h$ is proper.

Finally, note that by Lemma \ref{monotone}, we have 
$$Th+c=T(\lim_n T^ng_r+nc)+c=\lim_n T^{n+1}g_r+(n+1)c=h.$$

 \noindent {\bf Case 2:} We now assume that for any $f\in C(X)$, there exists $x\in X$ such that 
 \begin{equation}\label{case2}
 T^n f(x)+nc\geq f(x) \quad \hbox{for all $n\in \N.$}
 \end{equation}
 We now consider for each $f\in C(X)$, the function $\tilde f:=\liminf_nT^nf+nc$. It is clear that $\tilde f \in USC_\sigma (X)$, and by our assumption, there exists $x\in X$ such that  $\tilde f (x) \geq f(x)>-\infty$, and hence it is proper. On the other, we have by Lemma \ref{limits}, that $\sup_{x\in X} \tilde f \leq \sup_{x\in X} f(x)+K$.  Moreover, by Lemma \ref{monotone},  
 \[
 T\tilde f +c=T(\liminf_nT^nf+nc) +c \geq \liminf_n T^{n+1}f +(n+1)c=\tilde f.
 \]
It follows that the sequence $\{T^n\tilde f +nc\}_n$ is increasing to a function $h\in USC_\sigma (X)$. 
Note that $h\geq \tilde f$, hence it is proper. On the other hand, by Lemma \ref{limits}, we have $h\leq \sup \tilde f \leq \sup f +K<+\infty$ and we are done.

 \begin{cor} Let $X$ is a compact space and let $\T$ be a backward linear transfer such that $\mathcal{D}_1(\T)$ contains the Dirac measures. If $\T$ is bounded above on ${\cal P}(X)\times {\cal P}(X)$, then 
 \begin{equation}
\frac{\T_n(\mu, \nu)}{n} \to c \quad \hbox{uniformly on ${\cal P}(X)\times {\cal P}(X)$.}
\end{equation}
Moreover, there exists an idempotent operator $T_\infty: C(X)\to USC_\sigma(X)$ such that for each $f\in C(X)$,  $T_\infty f$ is a backward weak KAM solution. 
\end{cor}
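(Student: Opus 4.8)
The plan is to treat the two assertions separately: the uniform convergence by a Fekete-type argument in which the boundedness hypothesis controls the oscillation of the iterates, and then the construction of $T_\infty$ by imitating Theorem \ref{weakKAMthm}, with boundedness supplying the uniform bounds that equicontinuity provided there.

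\emph{Uniform convergence.} Set $B := \sup\{\T(\mu,\nu)\,;\,\mu,\nu\in\mcal{P}(X)\}<+\infty$, and write $M_n := \sup_{\mu,\nu}\T_n(\mu,\nu)$, $m_n := \inf_{\mu,\nu}\T_n(\mu,\nu)$. From the convolution defining $\T_{n+k}=\T_n\star\T_k$, inserting any fixed $\sigma_0$ gives $\T_{n+k}(\mu,\nu)\le \T_n(\mu,\sigma_0)+\T_k(\sigma_0,\nu)$, whence $M_{n+k}\le M_n+M_k$; dually $m_{n+k}\ge m_n+m_k$. Since $\T$ is also bounded below, the subadditive (Fekete) lemma yields $M_n/n\searrow M:=\inf_n M_n/n$ and $m_n/n\nearrow m:=\sup_n m_n/n=c(\T)$, both finite. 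The boundedness from above controls the oscillation: choosing a near-minimizer $(\sigma^*,\tau^*)$ of $\T_n$ and inserting it gives $\T_{n+2}(\mu,\nu)\le \T(\mu,\sigma^*)+\T_n(\sigma^*,\tau^*)+\T(\tau^*,\nu)\le m_n+2B$, so $M_{n+2}\le m_n+2B$. Dividing by $n+2$ and sending $n\to\infty$ gives $M\le m$, and as $M\ge m$ trivially, $M=m=:c$. The sandwich $m_n/n\le \T_n(\mu,\nu)/n\le M_n/n$, whose outer terms converge to $c$ uniformly in $(\mu,\nu)$, establishes the first assertion. The same estimates bound $M_n-nc$ and $nc-m_n$, hence the oscillation $M_n-m_n$, uniformly in $n$.

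\emph{Uniform bounds on iterates and existence.} Since $T^nf(x)=\sup_\sigma\{\int f\,d\sigma-\T_n(\delta_x,\sigma)\}$ and $\T_n(\delta_x,\sigma)\in[m_n,M_n]$, the bounds above give $\inf_X f-C\le T^nf(x)+nc\le \sup_X f+C$ for a constant $C$ independent of $n$ and $x$ (in particular $T^nf/n\to -c$ uniformly). I would also record that the hypotheses of Theorem \ref{gen} hold here: (\ref{inv}) and (\ref{T1b}) are immediate from boundedness, while the uniform bound on $M_n-m_n$ yields (\ref{stronger}); this already produces a backward weak KAM solution at the level $c$.

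\emph{The operator $T_\infty$ and idempotency.} Following the iteration of Theorem \ref{weakKAMthm}, for $f\in C(X)$ I set $\bar T f:=\limsup_n(T^nf+nc)$, which is well defined and bounded by the previous step. Because the extended operator maps $USC(X)$ into $USC(X)$ and, by Lemma \ref{monotone}, is continuous along monotone sequences, one shows $T\bar T f+c\ge \bar T f$; monotonicity then makes $\{T^n\bar T f+nc\}_n$ increasing and, being bounded above, convergent in $USC_\sigma(X)$ to a function $T_\infty f$, while Lemma \ref{monotone} passes $T$ through the increasing limit to give $T(T_\infty f)+c=T_\infty f$, i.e. $T_\infty f$ is a backward weak KAM solution. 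Idempotency is then automatic: any weak KAM solution $u$ satisfies $T^nu+nc=u$ for all $n$, so $\bar T u=u$ and $T_\infty u=u$; applied to $u=T_\infty f$ this gives $T_\infty\circ T_\infty=T_\infty$. The main obstacle is exactly the semicontinuity bookkeeping in the absence of equicontinuity: in the continuous setting of Theorem \ref{weakKAMthm} the truncated suprema $\sup_{m\ge n}(T^mf+mc)$ are continuous, so Lemma \ref{monotone}(1) applies verbatim to their decreasing limit, whereas here they are only countable suprema of $USC$ functions, and one must organize every passage to the limit as a genuinely monotone limit of functions in the class $USC_\sigma(X)$ on which $T$ is defined (recall the Remark that $T$ need not extend to $USC_{\sigma\delta}(X)$). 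This is precisely why $T_\infty$ takes values in $USC_\sigma(X)$ rather than $C(X)$, and it is the reason one may have to use the supersolution/$\liminf$ dichotomy of the proof of Theorem \ref{gen} to ensure the iterated sequence has the correct semicontinuity type.
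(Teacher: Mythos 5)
Your first assertion is proved correctly, and by a genuinely different route from the paper's. The paper obtains uniform convergence \emph{after} existence: it verifies (\ref{inv}), (\ref{T1b}) and (\ref{stronger}) (the last by the same insertion of near-minimizers of $\T_n$ into $\T_{n+2}=\T\star\T_n\star\T$ that you use), applies Theorem \ref{gen} to produce a proper, bounded-above weak KAM solution $h$ with $Th+c=h$, and then tests $\T_n$ against $h$ to trap $\T_n(\mu,\nu)-nc$ between two constants. Your argument runs the other way: Fekete applied to $M_n:=\sup\T_n$ and $m_n:=\inf\T_n$, combined with $M_{n+2}\le m_n+2B$, forces $M=m=c$ and yields the uniform sandwich $m_n/n\le \T_n(\mu,\nu)/n\le M_n/n$ with no existence theory at all; the resulting bound $\sup_n(M_n-m_n)<\infty$ then gives (\ref{stronger}) as a by-product, after which Theorem \ref{gen} produces the weak KAM solution exactly as in the paper. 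This decoupling is cleaner than the paper's order of argument, and both are sound for this half of the statement.

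The genuine gap is in the construction of $T_\infty$, and your own diagnosis of it is accurate --- but the repair you point to does not close it. Writing $\bar Tf:=\limsup_n(T^nf+nc)=\lim_N h_N$ with $h_N:=\sup_{m\ge N}(T^mf+mc)$, the inequality $T\bar Tf+c\ge\bar Tf$ requires passing $T$ through a \emph{decreasing} limit; each $h_N$ lies only in $USC_\sigma(X)$, Lemma \ref{monotone}(1) applies only to decreasing sequences in $USC(X)$, and by the Remark following that lemma $\bar Tf$ may not even lie in $USC_\sigma(X)$, the domain of the extended operator. Your fallback --- switching to the $\liminf$ construction of Theorem \ref{gen} --- meets the opposite problem: for a monotone operator the Fatou inequality goes the wrong way. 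Indeed, with $g_N:=\inf_{m\ge N}(T^mf+mc)\nearrow \hat Tf:=\liminf_n(T^nf+nc)$, monotonicity gives $Tg_N+c\le g_{N+1}$, and Lemma \ref{monotone}(2) (increasing limits, which \emph{is} available in $USC_\sigma$) passes to the limit to give $T\hat Tf+c\le \hat Tf$: a \emph{sub}solution, whose iterates $T^n\hat Tf+nc$ decrease, so that closing the fixed-point equation again requires continuity of $T$ along a decreasing sequence of $USC_\sigma$ functions --- the identical obstruction. You should be aware that this is also the weak point of the paper's own proof: both the corollary's proof and Case 2 of Theorem \ref{gen} assert $T\hat Tf+c\ge\hat Tf$ for the $\liminf$, citing Lemma \ref{monotone}, whereas the lemma yields the reverse inequality (for instance, for the null-transfer operator $Tg\equiv\sup_Xg$ one has $T(\liminf_n g_n)\le\liminf_n Tg_n$, possibly strictly). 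What is missing, in your proposal and in the paper alike, is either a function class stable under the relevant decreasing limits on which $T$ remains monotonically continuous, or an argument exploiting the uniform bound $\|T^nf+nc\|_\infty\le\|f\|_\infty+C$ to reorganize all passages to the limit as monotone limits inside $USC(X)$ and $USC_\sigma(X)$. Until that is supplied, the existence of $T_\infty$ is not established; your closing observation that idempotency is automatic once $T_\infty$ exists (weak KAM solutions are fixed points of $\bar T$, hence of $T_\infty$) is correct, and is in fact more than the paper records.
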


 \noindent{\bf Proof:} Note that conditions (\ref{inv}) and (\ref{T1b}) are readily satisfied. To prove  (\ref{stronger}), one can easily see that for any $\mu, \nu \in {\cal P}(X)$,  
 \[
 \inf\limits_{{\cal P}\times {\cal P}} \T_n + 2\inf\limits_{{\cal P}\times {\cal P}}  \T \leq \T_{n+2}(\mu, \nu) \leq 2\sup\limits_{{\cal P}\times {\cal P}}  \T +\inf\limits_{{\cal P}\times {\cal P}}  \T_n, 
 \]
from which follows that 
\begin{align*}
\sup\limits_{{\cal P}\times {\cal P}}  \T_{n+2} -\inf\limits_{{\cal P}\times {\cal P}}  \T_{n+2} &\leq 2\sup\limits_{{\cal P}\times {\cal P}}  \T +\inf\limits_{{\cal P}\times {\cal P}}  \T_n- \inf\limits_{{\cal P}\times {\cal P}} \T_n - 2\inf\limits_{{\cal P}\times {\cal P}}  \T\\
&=2\sup\limits_{{\cal P}\times {\cal P}}  \T - 2\inf\limits_{{\cal P}\times {\cal P}}  \T\\
&=:K < \infty.
\end{align*}
Theorem \ref{gen} then applies to get a weak KAM solution $h$ in $USC_\sigma (X)$. 

Note now that since $h$ is bounded above and is proper, i.e., $h(x_0)>-\infty$ for some $x_0\in X$, we have
\[
h(x_0) -\sup\limits_{{\cal P}\times {\cal P}}\T_n  \leq T_nh(y)=\sup\limits_{\sigma \in {\cal P}}\{\int_Xh\ d\sigma -\T_n(y, \sigma\} \leq \sup\limits_X  h -\inf\limits_{{\cal P}\times {\cal P}} \T_n, 
\]
hence
\[
\inf\limits_{{\cal P}\times {\cal P}} \T_n -\sup\limits_X h    \leq nc-h(x_0) \leq  \sup\limits_{{\cal P}\times {\cal P}}\T_n -h(x_0) \leq \inf\limits_{{\cal P}\times {\cal P}} \T_n +K-h(x_0), 
\]
and 
\begin{equation}\label{up}
-K\leq \T_n(\mu, \nu)-nc \leq K+\sup\limits_X h -h(x_0),
\end{equation}
from which follows that 
\begin{equation}
\frac{\T_n(\mu, \nu)}{n} \to c \quad \hbox{uniformly on ${\cal P}(X)\times {\cal P}(X)$. }
\end{equation}
Note now that (\ref{up}) yields that for every $f\in C(X)$, there is $C>0$ such that 
\begin{equation}
\|T^nf+nc\|_\infty \leq \|f\|_\infty +C,
\end{equation}
from which follows that $\hat T f:=\liminf_n T^nf+nc$ is bounded, belongs to $USC(X)$ and satisfies $T (\hat T f) + c \geq {\hat T}f$. The sequence 
$(T^n (\hat T f) + nc)_n$ is therefore increasing to a function $T_\infty f$ in $USC_\sigma (X)$ such that $T\circ T_\infty f +c=T_\infty f$. 
\qed

Here is another situation where we can obtain weak KAM solutions. It will be relevant for the stochastic Mather theory. 

\begin{prop}\label{hor1} Suppose $\T$ is a backward linear transfer on $\mcal{P}(X) \times \mcal{P}(X)$ such that $\mathcal{D}_1(\T)$ contains all Dirac measures and that (\ref{inv}), (\ref{T1b}) hold. If there exists $u, v \in USC(X)$ that are bounded below such that  
\begin{equation}\label{hor}
T^nu +nv=u \quad \hbox{ for all $n\in \N$},
\end{equation}
  then there exists $h\in USC(X)$ such that $Th+c=h$ on $X$, where $T$ is the backward Kantorovich operator associated to $\T$.
\end{prop}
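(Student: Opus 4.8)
The plan is to use the hypothesis $T^nu + nv = u$ to manufacture a monotone sequence of upper semicontinuous functions whose limit is the sought weak KAM solution at level $c = c(\T)$. The engine is the observation that, since $T$ is affine on the constants, $w_n := T^nu + nc$ satisfies $Tw_n + c = T^{n+1}u + (n+1)c = w_{n+1}$, so that a fixed point of $T + c$ is obtained as soon as $(w_n)$ converges monotonically to a proper function, via the continuity of $T$ along monotone sequences (Lemma \ref{monotone}).

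First I would show $v \geq c$ pointwise. Using the variational representation $T^nu(x) = \sup\{\int_X u\, d\sigma - \T_n(\delta_x,\sigma)\,;\, \sigma \in {\cal P}(X)\}$, which is valid for $u \in USC(X) \subseteq USC_\sigma(X)$ by the extension formula \refn{extLT}, and bounding $\int_X u\, d\sigma \leq \sup_X u$ together with $\T_n(\delta_x,\sigma) \geq m_n := \inf_{\mu,\nu}\T_n(\mu,\nu)$, I get $T^nu(x) \leq \sup_X u - m_n$. Since $u \in USC(X)$ is bounded (bounded above because $X$ is compact, bounded below by assumption), substituting $T^nu = u - nv$, dividing by $n$, and letting $n \to \infty$ while invoking $m_n/n \to c$ from Lemma \ref{limits} yields $-v(x) \leq -c$, i.e. $v(x)\geq c$. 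Consequently $w_n = u - n(v-c)$ is a non-increasing sequence in $USC(X)$ (the extension maps $USC(X)$ into itself under the standing assumption $T:C(X)\to USC(X)$ of this subsection), and $h := \inf_n w_n = \lim_n w_n$ is again in $USC(X)$.

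The main obstacle is proving that $h$ is proper, i.e. $h \not\equiv -\infty$, equivalently that the set $\{x : v(x) = c\}$ is non-empty. I would argue by contradiction, adapting Case 1 of the proof of Theorem \ref{gen}: if $w_n \searrow -\infty$ everywhere, then for each $x$ there is $n$ with $w_n(x) < \inf_X u - 1$, and since each $w_n$ is upper semicontinuous this inequality persists on a neighborhood; compactness of $X$ together with monotonicity of $(w_n)$ then furnishes a single $N$ with $w_N < \inf_X u - 1$ on all of $X$, whence $\sup_{\mu}\int_X w_N\, d\mu \leq \inf_X u - 1$. On the other hand, the $USC$ analogue of Lemma \ref{limits}(4) gives $\sup_{\mu}\int_X (T^Nu + Nc)\, d\mu \geq \inf_X u$: indeed $\int_X T^Nu\, d\mu = (\T_N)^*_\mu(u)$ by \refn{extLT}, so $\sup_{\mu}\int_X(T^Nu+Nc)\,d\mu = \sup_{\mu,\sigma}\{\int_X u\, d\sigma - \T_N(\mu,\sigma)\} + Nc \geq \inf_X u - m_N + Nc \geq \inf_X u$, using $m_N \leq Nc$. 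This forces $\inf_X u \leq \inf_X u - 1$, the desired contradiction, so $h$ is proper.

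Finally, with $w_n \in USC(X)$ and $w_n \searrow h \in USC(X) \subseteq USC_\sigma(X)$, Lemma \ref{monotone}(1) gives $\lim_n T w_n = Th$, and therefore $Th + c = \lim_n (Tw_n + c) = \lim_n w_{n+1} = h$, which is exactly the claimed weak KAM equation with $h \in USC(X)$. I expect the verification of $v \geq c$ and the monotone-limit passage to be routine given the earlier lemmas, with the properness step being the only genuinely delicate point.
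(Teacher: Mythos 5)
Your proof is correct and takes essentially the same route as the paper's: deduce $v \geq c$ from the sublinear bound $T^nu(x) \leq \sup_X u - m_n$ (Lemma \ref{limits}), conclude that $n \mapsto T^nu + nc$ is decreasing, prove properness of the limit by the compactness/contradiction argument of Case 1 of Theorem \ref{gen}, and pass to the limit through $T$ via Lemma \ref{monotone}. The only (welcome) difference is that you spell out the details the paper delegates to Theorem \ref{gen}, including the needed extension of the Legendre-transform identity to $u \in USC(X)$ via (\ref{extLT}).
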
 
\noindent{\bf Proof:} Note that (\ref{hor}) and  Lemma  \ref{limits} yield that necessarily $-v(x) \leq -c$, from which follows that 
\[
T^nu +nc \leq u \quad \hbox{ for all $n\in \N$}. 
\]
Applying $T^m$ and using the linearity of $T^m$ with respect to constants, we find $T^{m+n}u + cn \leq T^mu$, and hence
\eqs{
T^{m+n}u + c(m+n) \leq T^mu + cm
}
So $n \mapsto T^nu + cn$ is decreasing. The same reasoning as in Case (1) of the proof of Theorem \ref{gen} yields that $(T^nu + cn)_n$ decreases to a proper function $h\in USC (X)$ such that $Th+c=h$ on $X$.

\subsection{Weak KAM solutions associated to non-continuous optimal mass transports}

The following extends a result established by Bernard-Buffoni \cite{B-B2} in the case where the cost function $c(x, y)$ is continuous.
\begin{cor} Let $c$ be a bounded lower semi-continuous cost functional on a compact space $X$ and consider the associated optimal mass transport 
\begin{equation}
\T(\mu,\nu) = \inf\{\int_{X\times X} c(x,y)\d\pi(x,y)\,;\, \pi \in \mcal{K}(\mu,\nu)\}. 
\end{equation}
 Let $c_\infty(x,y) := \liminf_{n \to \infty}c_n(x,y)$, where for each $n\in \N$, 
\[
c_n(y, x)=\inf\left\{c(y, x_1) +c(x_1, x_2) ....+c(x_{n-1}, x); \, x_1, x_2, x_{n-1}\in X\right\}. 
\]
Then, the following hold: 
\begin{enumerate}
\item The corresponding effective transfer is given by
\begin{equation}
\T_\infty (\mu, \nu)=\T_{c_\infty} (\mu, \nu):=\inf\{\int_{X\times X}c_\infty(x,y)\d\pi(x,y)\,;\, \pi \in \mcal{K}(\mu,\nu)\},  
\end{equation}
and the associated effective Kantorovich maps are given by
\begin{equation}
T^-_\infty f(x) = \sup\{f(y) - c_\infty (x,y)\,;\, y \in X\}\, \text{and}\, \, T^+_\infty f(y) = \inf\{f(x) + c_\infty (x,y)\,;\, x \in X\}.
\end{equation}
\item The following also holds
\begin{equation}\label{mather100}
c(\T)=\inf\{{\cal T}(\mu, \mu); \mu\in {\cal P}(X)\}= \min_{\mu\in {\cal P}(X)} \int_{X \times X}c(x,y)\d \pi(x,y); \pi \in \mcal{K}(\mu,\nu)\}.. 
\end{equation}
\item The set ${\mathcal A}:=\{\sigma \in {\mathcal P}(X); {\T}_\infty (\sigma, \sigma)=0\}$ consists of those $\sigma \in {\mathcal P}(X)$ supported on the set $A=\{x\in X; c_\infty (x, x)=0\}$. 
\item The minimizing measures in (\ref{mather100}) are all supported on the set 
$$D:= \{ (x,y) \in X \times X\,;\, c (x,y) + c_\infty(y,x) = c(\T)\}.$$

\end{enumerate} 
\end{cor}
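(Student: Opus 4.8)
The plan is to reduce the statement to the general machinery of this section together with the convolution calculus of Section 5, and then carry out the Aubry--Mather bookkeeping as in Bernard--Buffoni. First I would record that $\T=\T_c$ is simultaneously a backward and a forward linear transfer (Example 2.8), with $T^-f(x)=\sup_y\{f(y)-c(x,y)\}$ and $T^+f(y)=\inf_x\{c(x,y)+f(x)\}$. Since $c$ is bounded, $\T$ is bounded on $\mcal P(X)\times\mcal P(X)$, so hypotheses (\ref{inv}), (\ref{T1b}) and (\ref{stronger}) hold automatically and the Corollary following Theorem \ref{gen} applies, giving $\tfrac1n\T_n\to c:=c(\T)$ uniformly and an idempotent operator $T_\infty\colon C(X)\to USC_\sigma(X)$ whose range consists of backward weak KAM solutions. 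The essential structural input is Proposition \ref{lifting}: the $n$-fold convolution is the transport with the inf-convolved cost, $\T_n=\T_{c_n}$, with $(T^-)^nf(x)=\sup_y\{f(y)-c_n(x,y)\}$ and dually $(T^+)^n=T^+_{c_n}$.

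Statement (2) I would obtain directly: Theorem \ref{One.1}(2) gives $c(\T)=\inf_\mu\T(\mu,\mu)$, and for the transport $\T(\mu,\mu)=\inf\{\int c\,d\pi:\pi\in\mcal K(\mu,\mu)\}$, so $\inf_\mu\T(\mu,\mu)=\inf\{\int c\,d\pi:\pi_1=\pi_2\}$. The set of balanced plans is weak$^*$-compact and $\pi\mapsto\int c\,d\pi$ is weak$^*$-lower semicontinuous because $c$ is lsc, so the infimum is attained, which is (2).

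The heart is statement (1): identifying the effective data with the $c_\infty$-transport. Setting $u_n(x):=(T^-)^nf(x)+nc$, the lower bound $T_\infty f\ge T^-_{c_\infty}f$ is immediate, since $u_n(x)\ge f(y)-\bigl(c_n(x,y)-nc\bigr)$ for every $y$, whence taking $\liminf_n$ and then $\sup_y$ gives $T_\infty f(x)\ge\sup_y\{f(y)-c_\infty(x,y)\}$ (the Man\'e shift $-nc$ being absorbed into $c_\infty$, exactly as in the equicontinuous Proposition of Section 8.2). The reverse bound is where lower semicontinuity of $c$ bites: in the continuous-cost case one selects maximizers $y_{n_j}\to\bar y$ and passes to the limit inside $c_{n_j}$ using equicontinuity, but here the $c_n$ are merely lsc and not equi-lsc, so $\limsup_j c_{n_j}(x,y_{n_j})$ is not controlled by $c_\infty(x,\bar y)$. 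I would reroute the argument through monotone $USC_\sigma$-limits: $\sup_{m\ge n}u_m$ decreases to some $\bar Tf\in USC_\sigma(X)$, and Lemma \ref{monotone} permits commuting $T^-$ with this limit; combined with the semigroup relation $c_k\star c_\infty=c_\infty+kc$ this forces $T^-(\bar Tf)+c=\bar Tf$ and pins down $T_\infty f=T^-_{c_\infty}f$. Dually $T^+_\infty f=T^+_{c_\infty}f$, and substituting these into the definition of $\T_\infty$ yields $\T_\infty=\T_{c_\infty}$. This reverse inequality is the main obstacle of the whole argument: absent equicontinuity of the iterated costs $c_n$ one cannot pass to the limit in the maximizers, and the proof must run through the monotone-limit continuity of the extended Kantorovich operator in the $USC_\sigma$ class, which is precisely the regularization device this section supplies.

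Finally, statements (3) and (4) are the projected-Aubry and Aubry-set descriptions, obtained by adapting Bernard--Buffoni \cite{B-B1,B-B2}. For (3), idempotency $c_\infty\star c_\infty=c_\infty$ gives $c_\infty(x,x)\ge0$, so $\T_\infty(\sigma,\sigma)=\inf_{\pi\in\mcal K(\sigma,\sigma)}\int c_\infty\,d\pi=0$ forces any optimal balanced $\pi$ to charge only $\{c_\infty=0\}$, and the idempotency (triangle) inequality for $c_\infty$ then concentrates $\sigma=\pi_1$ on $A=\{x:c_\infty(x,x)=0\}$; conversely the diagonal plan of any $\sigma$ supported in $A$ is a zero-cost competitor. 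For (4), if $\pi$ attains $c(\T)=\min\{\int c\,d\pi:\pi_1=\pi_2\}$, I would use the pointwise inequality $c(x,y)+c_\infty(y,x)\ge c(\T)$, integrate it against $\pi$, and exploit $\pi_1=\pi_2$ together with $\int c_\infty(y,x)\,d\pi(x,y)=\int c_\infty\,d\tilde\pi$ (where $\tilde\pi$ is the image of $\pi$ under $(x,y)\mapsto(y,x)$) to see the total slack vanishes, so $\pi$ is supported on $D=\{(x,y):c(x,y)+c_\infty(y,x)=c(\T)\}$.
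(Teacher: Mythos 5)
Your skeleton is the one the paper intends: the paper states this corollary without a written proof, as an application of the boundedness of $\T_c$ (so that (\ref{inv}), (\ref{T1b}), (\ref{stronger}) hold and the corollary following Theorem \ref{gen} applies), of the identification $\T_n=\T_{c_n}$ from Proposition \ref{lifting}, and of an adaptation of Bernard--Buffoni; your treatment of statement (2) (Theorem \ref{One.1}(2) plus weak$^*$-compactness of balanced plans and lower semicontinuity of $\pi\mapsto\int c\,d\pi$) is correct. The gaps are in statement (1), which you yourself single out as the heart of the matter. First, a direction error: from $u_n(x)\ge f(y)-(c_n(x,y)-nc)$, taking $\liminf_n$ gives $\liminf_n u_n(x)\ge f(y)-\limsup_n\,(c_n(x,y)-nc)$, i.e.\ a bound by the \emph{limsup}-barrier, which is weaker than $f(y)-c_\infty(x,y)$ when $c_\infty$ is defined through $\liminf$; the barrier bound requires $\limsup_n u_n$, and since the operator $T_\infty$ furnished by the Section 9.1 corollary is built from $\hat Tf=\liminf_n u_n$ followed by \emph{increasing} iteration, even your "immediate" lower bound $T_\infty f\ge T^-_{c_\infty}f$ does not follow as written. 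Second, and fatally for your reverse inequality: the functions $\sup_{m\ge n}u_m$ are countable suprema of $USC$ functions, hence lie in $USC_\sigma(X)$ and not in $USC(X)$, and Lemma \ref{monotone}(1) covers only decreasing sequences of $USC$ functions --- indeed the remark immediately following that lemma states that this decreasing-limit continuity of $T$ cannot be extended to sequences $f_n\in USC_\sigma(Y)$. So the commutation of $T^-$ with your decreasing limit is exactly the step the paper warns is false in general; this is precisely why the paper's construction in Section 9.1 runs entirely through increasing limits, to which Lemma \ref{monotone}(2) does apply.

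Third, the "semigroup relation" $c_k\star c_\infty=c_\infty+kc$ that you invoke to force $T^-(\bar Tf)+c=\bar Tf$ is not an available ingredient: it is equivalent to the fixed-point identity $T^-\circ T^-_{c_\infty}f+c=T^-_{c_\infty}f$ that statement (1) asserts. One half, $c_\infty(x,y)\le\inf_z\{c(x,z)-c(\T)+c_\infty(z,y)\}$, is easy (pass $\liminf_n$ through the pointwise bound $c_{n+1}(x,y)\le c(x,z)+c_n(z,y)$ for fixed $z$); the other half requires passing the $\liminf$ through the infimum over intermediate points $z$, which is exactly the equi-lower-semicontinuity of the family $\{c_n\}$ whose absence you correctly identified as the obstruction for merely lsc costs. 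Your argument is therefore circular at its key joint, and a genuinely new device (a replacement for the maximizer-extraction of the equicontinuous case) still has to be supplied. A smaller but real gap occurs in (4): integrating the (valid) pointwise inequality $c(x,y)+c_\infty(y,x)\ge c(\T)$ against an optimal balanced $\pi$ only yields $\int c_\infty(y,x)\,d\pi(x,y)\ge 0$; to conclude that the slack vanishes and $\pi$ is carried by $D$ you also need $\int c_\infty(y,x)\,d\pi(x,y)\le 0$, which you assert but never establish --- in Bernard--Buffoni this is obtained by pairing the optimal plan with backward weak KAM solutions (equivalently by a gluing/Fatou argument on iterated plans), and that step must be written out.
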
 

\noindent {\bf Example 7.1: Iterates of power costs:}  Let $c_p(x,y) = |x-y|^p$ for $p > 0$, then, 
$c_p\star c_p(x,y) = \inf\{ |x-z|^p + |z-y|^p\,;\, z \in X\}$ is minimised at some point $z = (1-\lambda)x + \lambda y$ on the line between $x$ and $y$, so that $c_p\star c_p(x,y) = \left(\lambda^p + (1-\lambda)^p\right)|x-y|^p$. For $p > 1$, the optimal $\lambda$ is $\frac{1}{2}$. Hence, by considering $\T_p$ to be the optimal mass transport associated to $c_p$ with its corresponding Kantorovich operator, $T_p f(x) = \sup\{f(y) - c_p(x,y)\,;\, y \in X\}$, we then  have 
$$(T_p)^n f(x) =\sup\{ f(y) - \frac{1}{n^{p-1}}|x-y|^p\}.$$ 
Hence when $n \to \infty$, $(T_p)^n f(x) \to \sup_{x}f(x)=T_\infty f (x)$   
and $c(\T_p)=0$. 

\section{Regularizations of linear transfers and applications}

We continue to deal with cases where  $\T$ is not necessarily weak$^*$-continuous on ${\cal M}(X)$ and may even have infinite values. 
The strategy now is to reduce the situation to the  bounded and continuous case via a regularization procedure.  
\subsection{Regularization and weak KAM solutions for unbounded transfers}

  \begin{lem}[Regularisation of a backward linear transfer]\label{regularise} Let $(X, d)$ be a complete metric space and let ${\cal W}_{d}(\mu,\nu)$ be the cost minimising optimal transport associated to the cost $d(x,y)$. 
For a given  backward linear transfer $\T: \mcal{P}(X)\times \mcal{P}(X) \to \R \cup \{+\infty\}$, we associate for each $\epsilon > 0$ the functional \eqs{
\T_\epsilon(\mu,\nu) := \inf\{ \frac{1}{\epsilon}W_1(\mu,\sigma_1) + \T(\sigma_1, \sigma_2) + \frac{1}{\epsilon}W_1(\sigma_2, \nu)\,;\, \sigma_1, \sigma_2 \in \mcal{P}(X)\}.
}
Then, $\T_\epsilon$ has the following properties:
\begin{enumerate}

\item $\T_\epsilon$ is a weak$^*$ continuous backward linear transfer.
\item $\inf\{\T_\epsilon(\mu,\nu)\,;\, \mu, \nu \in \mcal{P}(X)\} = \inf\{\T(\mu,\nu)\,;\, \mu, \nu \in \mcal{P}(X)\}$.
\item $\T_\epsilon(\mu,\nu) \leq \T(\mu,\nu)$ and $\T_\epsilon(\mu,\nu) \uparrow \T(\mu,\nu)$ as $\epsilon \to 0$.
\item $\T_\epsilon$ $\Gamma$-converges to $\T$ as $\epsilon \to 0$. 
\item If $T_\epsilon$, $T$, denote the backward Kantorovich operators associated to $\T_\epsilon$, $\T$, respectively, then  for any $f \in USC(X)$, $T_{\epsilon} f(x) \searrow T f(x)$ as $\epsilon \to 0$ .

\end{enumerate}
\end{lem}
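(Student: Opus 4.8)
The plan is to recognize $\T_\epsilon$ as a double inf-convolution and then read off all five properties from the calculus of transfers developed in Sections 3--6, together with the monotone continuity of Kantorovich operators from Lemma \ref{monotone}. First I would observe that
$$\T_\epsilon = \Big(\tfrac{1}{\epsilon}W_1\Big)\star \T \star \Big(\tfrac{1}{\epsilon}W_1\Big),$$
where $W_1 = \mathcal{W}_d$ is the Kantorovich--Rubinstein transfer of Example 2.10. That example, combined with the scalar-multiplication rule $(a\T)^- f = a\,T^-(f/a)$, shows $\tfrac1\epsilon W_1$ is a backward linear transfer whose Kantorovich operator is the sup-convolution
$$R_\epsilon f(x) := \sup_{y\in X}\Big\{f(y) - \tfrac1\epsilon d(x,y)\Big\}.$$
By the convolution-stability of backward linear transfers (Proposition \ref{inf.tens}(1)), $\T_\epsilon$ is again a backward linear transfer with Kantorovich operator $T_\epsilon = R_\epsilon\circ T\circ R_\epsilon$. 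A one-line triangle-inequality computation gives that $R_\epsilon f$ is $\tfrac1\epsilon$-Lipschitz for $d$, and the same triangle inequality applied to the two outer $W_1$ factors yields $|\T_\epsilon(\mu',\nu') - \T_\epsilon(\mu,\nu)| \le \tfrac1\epsilon\big(W_1(\mu,\mu') + W_1(\nu,\nu')\big)$; since $W_1$ metrizes the weak$^*$ topology on $\mcal{P}(X)$ (compact $X$), this is precisely weak$^*$ continuity, giving item (1).

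For items (2) and (3) I would argue directly from the variational formula. Taking $\sigma_1=\mu$, $\sigma_2=\nu$ and using $W_1(\mu,\mu)=0$ gives $\T_\epsilon\le\T$, while dropping the two nonnegative $W_1$ terms gives $\T_\epsilon(\mu,\nu)\ge \inf\T$; together these yield item (2). Monotonicity is immediate since $\tfrac1{\epsilon'}W_1\ge\tfrac1\epsilon W_1$ pointwise for $\epsilon'<\epsilon$, so $\T_{\epsilon'}\ge\T_\epsilon$. To identify the increasing limit with $\T$, fix $(\mu,\nu)$ and choose near-minimizers $\sigma_1^\epsilon,\sigma_2^\epsilon$; since $\T$ is bounded below and $\T_\epsilon\le\T(\mu,\nu)$, the terms $\tfrac1\epsilon W_1(\mu,\sigma_1^\epsilon)$ and $\tfrac1\epsilon W_1(\sigma_2^\epsilon,\nu)$ stay bounded, forcing $\sigma_1^\epsilon\to\mu$, $\sigma_2^\epsilon\to\nu$ weak$^*$. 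Lower semicontinuity of $\T$ then gives $\liminf_\epsilon\T_\epsilon(\mu,\nu)\ge\T(\mu,\nu)$, hence equality (the case $\T(\mu,\nu)=+\infty$ follows by the same tightness argument run by contradiction). Item (4) is then essentially free: the constant recovery sequence settles the $\limsup$ half of $\Gamma$-convergence by item (3), and the $\liminf$ inequality along a general weak$^*$-convergent sequence $(\mu_\epsilon,\nu_\epsilon)\to(\mu,\nu)$ is the same near-minimizer/tightness/lsc argument, now using that $W_1(\mu_\epsilon,\sigma_1^\epsilon)\to0$ and $\mu_\epsilon\to\mu$ force $\sigma_1^\epsilon\to\mu$ (and similarly for $\nu$).

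The delicate point, and the main obstacle, is item (5), for which I would use $T_\epsilon = R_\epsilon\circ T\circ R_\epsilon$. Monotonicity of $T_\epsilon f$ in $\epsilon$ follows from $R_{\epsilon'}g\le R_\epsilon g$ together with the monotonicity of both $T$ and $R_\epsilon$, and $T_\epsilon f\ge Tf$ because $R_\epsilon g\ge g$ pointwise. The convergence $T_\epsilon f\searrow Tf$ requires combining two regularization effects. First, for $f\in USC(X)$ bounded above one has $R_\epsilon f\searrow f$ (the sup-convolution collapses to $f$ by upper semicontinuity), so Lemma \ref{monotone}(1) gives $g_\epsilon := T(R_\epsilon f)\searrow Tf$. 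Then I must control the outer $R_\epsilon$ applied to the moving function $g_\epsilon$: writing $\epsilon_n\to0$ and exploiting $g_{\epsilon_n}\le g_{\epsilon_m}$ for $n\ge m$, I get $R_{\epsilon_n}g_{\epsilon_n}\le R_{\epsilon_n}g_{\epsilon_m}\to g_{\epsilon_m}$ as $n\to\infty$ for each fixed $m$ (again by upper semicontinuity of the fixed $g_{\epsilon_m}$), and letting $m\to\infty$ yields $\limsup_n R_{\epsilon_n}g_{\epsilon_n}(x)\le\inf_m g_{\epsilon_m}(x)=Tf(x)$. Combined with $T_\epsilon f\ge Tf$, this gives $T_\epsilon f\searrow Tf$ pointwise. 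The care needed in this diagonal argument---keeping every function in $USC$ so that Lemma \ref{monotone} and the sup-convolution limit both apply, while passing limits in two coupled parameters---is where the real work lies.
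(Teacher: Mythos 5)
Your proof is correct, and on the two substantive items it takes a genuinely different route from the paper's. Items (2)--(4) are handled essentially as in the paper, except for the $\Gamma$-$\liminf$, where the paper is slightly slicker: it uses the monotonicity $\T_\epsilon \geq \T_{\epsilon'}$ for $\epsilon \leq \epsilon'$ together with the weak$^*$ lower semi-continuity of the \emph{fixed} functional $\T_{\epsilon'}$, and then sends $\epsilon' \to 0$ via item (3), avoiding a second tightness argument. For item (1), the paper argues qualitatively: lower semi-continuity gives one inequality, and the interchange $\limsup_n \inf_{\sigma_1,\sigma_2} \leq \inf_{\sigma_1,\sigma_2}\limsup_n$ plus continuity of $W_1$ gives the other. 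Your estimate $|\T_\epsilon(\mu',\nu')-\T_\epsilon(\mu,\nu)| \leq \frac{1}{\epsilon}\left(W_1(\mu,\mu')+W_1(\nu,\nu')\right)$ is quantitative and therefore stronger: it produces a uniform modulus of continuity for $\T_\epsilon$ and all of its iterates, which is exactly what the equicontinuity hypotheses (H1)--(H2) and Lemma \ref{c_epsilon} require later, so your version buys something the paper must in effect re-derive. For item (5), the paper never uses the factorization $T_\epsilon = R_\epsilon \circ T \circ R_\epsilon$: it works directly with the dual formula $T_\epsilon f(x) = \sup_\sigma\{\int f\,d\sigma - \T_\epsilon(\delta_x,\sigma)\}$, picks maximizers $\sigma^{\epsilon_j}$ (attained because $f \in USC(X)$ and $\mathcal{P}(X)$ is weak$^*$ compact), extracts a weak$^*$ limit $\sigma^*$, and closes with upper semi-continuity of $f$ together with the $\Gamma$-convergence of item (4). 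Your argument is independent of item (4) and rests instead on the sup-convolution collapse $R_\epsilon h \searrow h$ on $USC(X)$, Lemma \ref{monotone}, and the diagonal bound; both are valid.

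One point you should make explicit: Proposition \ref{inf.tens} yields $T_\epsilon = R_\epsilon\circ T\circ R_\epsilon$ only as operators on $C(X)$, whereas your diagonal argument applies this identity to $USC$ arguments. To justify it there, invoke the extension results of Section 4, specifically formula (\ref{extLT}), namely ${\mathcal T}^*_\mu(g) = \int T^- g\,d\mu$ for $g \in USC_\sigma$, applied to each of the three factors, together with the observation that $R_\epsilon f$ is $\frac{1}{\epsilon}$-Lipschitz and hence continuous, so that the middle operator $T$ is only ever evaluated on $C(X)$. This is a citation gap rather than a mathematical one; with it filled, your proof stands.
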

\prf{ First note that since $d$ is continuous, the linear transfer ${\cal W}_{d}$ is weak-$*$ continuous on ${\mathcal P}(X)$ (See e.g., \cite{Sant}, Theorem 1.51, p.40).

1. We know that for each fixed $\epsilon > 0$, $\T_\epsilon$ is a weak$^*$ lower semi-continuous linear backward transfer. To prove that it is continuous, 
assume $\mu_n \to \mu$ and $\nu_n \to \nu$. By the lower semi-continuity, we have 
$\liminf_{n}\T_\epsilon(\mu_n,\nu_n) \geq \T_\epsilon(\mu,\nu)$. On the other hand, from the fact that $\limsup_{n} \inf_{\sigma_1,\sigma_2} \leq \inf_{\sigma_1,\sigma_2} \limsup_n$, we have 
\as{
\limsup_{n}\T_\epsilon(\mu_n,\nu_n) &\leq \inf\{ \limsup_{n}\frac{1}{\epsilon}W_1(\mu_n,\sigma_1) + \T(\sigma_1, \sigma_2) + \limsup_{n}\frac{1}{\epsilon}W_1(\sigma_2, \nu_n)\,;\, \sigma_1, \sigma_2 \in \mcal{P}(X)\}\\
&= \inf\lf\{ \frac{1}{\epsilon}W_1(\mu,\sigma_1) + \T(\sigma_1, \sigma_2) + \frac{1}{\epsilon}W_1(\sigma_2, \nu)\,;\, \sigma_1, \sigma_2 \in \mcal{P}(X)\rt\}\\
&= \T_\epsilon(\mu,\nu),
}
which shows that $\T_\epsilon(\mu_n,\nu_n) \to \T_\epsilon(\mu,\nu)$ as $n \to \infty$.

2. Observe from the definition of $\T_\epsilon$, that
\begin{equation}
\inf_{\mu,\nu}\{\T_\epsilon(\mu,\nu)\} = \inf_{\sigma, \sigma', \mu, \nu}\{\frac{1}{\epsilon}W_1(\mu, \sigma) + \T(\sigma, \sigma') + \frac{1}{\epsilon}W_1(\sigma', \nu)\}
\end{equation}
and it is clear that for fixed $\sigma, \sigma'$, the minimal $\mu$ is $\sigma$, and $\nu$ is $\sigma'$, and the transport cost $W_1(\mu,\sigma) = 0 = W_1(\sigma', \nu)$.

3. The inequality $\T_\epsilon(\mu,\nu) \leq \T(\mu,\nu)$ holds by selecting $\sigma_1 = \mu$ and $\sigma_2 = \nu$ and noting that ${\cal W}_{d}(\sigma, \sigma)=0$ for every $\sigma \in {\cal P}(X)$. The monotone property of $\epsilon \mapsto \T_\epsilon(\mu,\nu)$ is immediate by definition.
Let now $\sigma^\epsilon_1$, $\sigma^\epsilon_2$ realise the infimum
\begin{equation}\label{infachieve}
\T_{\epsilon}(\mu,\nu) = \frac{1}{\epsilon}W_1(\mu,\sigma^1_\epsilon) + \T(\sigma^1_\epsilon, \sigma^2_\epsilon) + \frac{1}{\epsilon}W_1(\sigma^2_\epsilon, \nu).
\end{equation}
By refining if necessary, we may assume that $\sigma^1_\epsilon \to \overline{\sigma}_1$ and $\sigma^2_\epsilon \to \overline{\sigma}_2$ as $\epsilon \to 0$. If $\sup_{\epsilon > 0}\T_\epsilon(\mu,\nu) < \infty)$, then $W_1(\mu,\sigma^1_\epsilon) \to 0$ and $W_1(\sigma^2_\epsilon, \nu) \to 0$ as $\epsilon \to 0$, hence $\overline{\sigma}_1 = \mu$ and $\overline{\sigma}_2 = \nu$. Then (\ref{infachieve}) and weak-$*$ lower semi-continuity of $\T$ implies
\eqs{
\liminf_{\epsilon \to 0}\T_\epsilon(\mu,\nu) \geq \liminf_{\epsilon \to 0}\T(\sigma^1_\epsilon, \sigma^2_\epsilon) \geq \T(\mu,\nu).
}

4. First recall that for $\Gamma$-convergence, one needs to prove the $\Gamma$-$\liminf$ inequality: For every sequence $(\mu^\epsilon, \nu^\epsilon) \to (\mu,\nu)$, it holds that $\liminf_{\epsilon \to 0}\T_\epsilon(\mu^\epsilon, \nu^\epsilon) \geq \T(\mu,\nu)$, and the $\Gamma$-$\limsup$ inequality: There exists a sequence $(\mu^\epsilon, \nu^\epsilon) \to (\mu,\nu)$ such that $\limsup_{\epsilon \to 0}\T_\epsilon(\mu^\epsilon, \nu^\epsilon) \leq \T(\mu,\nu)$. \\
The $\Gamma$-$\limsup$ inequality is immediate: Take $(\mu^\epsilon, \nu^\epsilon) = (\mu,\nu)$, and the inequality follows from $\T_\epsilon \leq \T$.\\
 For the $\Gamma$-$\liminf$ inequality, we can assume without loss that $\liminf_{\epsilon \to 0}\T_{\epsilon}(\mu^\epsilon,\nu^\epsilon) < +\infty$, since otherwise there is nothing to prove. Now by monotonicity, we have $\T_{\epsilon}(\mu^\epsilon, \nu^\epsilon) \geq \T_{\epsilon'}(\mu^\epsilon, \nu^\epsilon)$ for $\epsilon \leq \epsilon'$. The weak-$*$ lower semi-continuity of $\T_{\epsilon'}$ therefore implies
\as{
\liminf_{\epsilon \to 0}\T_{\epsilon}(\mu^\epsilon, \nu^\epsilon) \geq \liminf_{\epsilon \to 0}\T_{\epsilon'}(\mu^\epsilon, \nu^\epsilon)\geq \T_{\epsilon'}(\mu,\nu).
}
By 3) and letting $\epsilon' \to 0$, we obtain $\liminf_{\epsilon \to 0}\T_{\epsilon}(\mu^\epsilon, \nu^\epsilon) \geq \T(\mu,\nu)$.

5. First note that the monotonicity of $T_\epsilon f(x)$ is immediate from the expression 
$$T_\epsilon f(x) = \sup\lf\{\int f\d\sigma - \T_\epsilon(\delta_x,\sigma)\rt\},$$ 
and the monotonicity of $\T_\epsilon$. We immediately have $\liminf_{\epsilon \to 0}T_\epsilon f(x) \geq T f(x)$. On the other hand, let $\epsilon_j$ be a sequence such that $T_{\epsilon_j}f(x) \to \limsup_{\epsilon \to 0}T_\epsilon f(x)$. Then
\as{
T_{\epsilon_j} f(x) = \sup_{\sigma}\lf\{\int f\d\sigma - \T_\epsilon(\delta_x, \sigma)\rt\}
= \int f\d\sigma^{\epsilon_j} - \T_{\epsilon_j}(\delta_x, \sigma^{\epsilon_j}).
}
By refining to a further subsequence if necessary, we may assume $\sigma^{\epsilon_j} \to \sigma^*$. Then we obtain with $j \to \infty$,
\as{
\limsup_{\epsilon \to 0}T_\epsilon f(x) &\leq \int f \d\sigma^* - \liminf_{j \to \infty}\T_{\epsilon_j}(\delta_x, \sigma^{\epsilon_j})\\
 &\leq \int f \d\sigma^* - \T(\delta_x, \sigma^*)\\
&\leq \sup_{\sigma}\lf\{\int f\d\sigma - \T(\delta_x, \sigma)\rt\} = T f(x),
}
where the second inequality was obtained from the $\Gamma$-convergence.
}
\begin{lem}\label{c_epsilon}
Let $X$ be a compact metric space and let $\T$ be a backward linear transfer such that $\mathcal{D}_1(\T)$ contains the Dirac measures. Assume hypothesis (\ref{inv}) and 
 let $\T_\epsilon$ be the regularisation of $\T$ according to Lemma \ref{regularise}. Then, the following properties hold:
\begin{enumerate}
\item $c(\T_\epsilon)$  
 is the unique constant such that $|(\T_\epsilon)_n(\mu,\nu) - nc_\epsilon| \leq C_\epsilon$, for all $n$ and all $\mu,\nu$.
\item $c(\T_\epsilon) \uparrow c(\T)$ as $\epsilon \to 0$.
\item $c(\T)=  
\inf\{\T(\mu,\mu)\,;\, \mu \in \mcal{P}(X)\}$.
\end{enumerate}
\end{lem}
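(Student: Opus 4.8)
The plan is to prove everything by transporting the continuous theory back to the possibly discontinuous $\T$ through the regularised family $\T_\epsilon$ of Lemma \ref{regularise}, then letting $\epsilon \to 0$. First I would record that $\T_\epsilon$ is bounded on $\mcal{P}(X)\times\mcal{P}(X)$: it is bounded below because $\T$ is and $W_1 \geq 0$, and bounded above by taking $\sigma_1 = \sigma_2 = \mu_0$ in its definition and using hypothesis (\ref{inv}) together with $W_1 \leq \mathrm{diam}(X)$ on the compact $X$. By Lemma \ref{regularise}(1), $\T_\epsilon$ is a weak$^*$-continuous backward linear transfer.

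For part (1), the key observation is that the kernel $\tfrac1\epsilon W_1$ is idempotent under inf-convolution, $\tfrac1\epsilon W_1 \star \tfrac1\epsilon W_1 = \tfrac1\epsilon W_1$ (triangle inequality plus vanishing on the diagonal). Writing $\T_\epsilon = \tfrac1\epsilon W_1 \star \T \star \tfrac1\epsilon W_1$ and using associativity, each iterate therefore simplifies to $(\T_\epsilon)_n = \tfrac1\epsilon W_1 \star \T \star \tfrac1\epsilon W_1 \star \cdots \star \T \star \tfrac1\epsilon W_1$, i.e. it still begins and ends with a factor $\tfrac1\epsilon W_1$. Since inf-convolution against such a $\tfrac1\epsilon$-Lipschitz kernel is itself $\tfrac1\epsilon$-Lipschitz, the whole family $\{(\T_\epsilon)_n\}_n$ shares the modulus $\tfrac1\epsilon$ (with respect to $W_1$) in each variable; in particular $\max(\T_\epsilon)_n - \min(\T_\epsilon)_n \leq 2\,\mathrm{diam}(X)/\epsilon =: C_\epsilon$ uniformly in $n$. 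The iterates form a discrete semigroup under $\star$, so the subadditivity/superadditivity argument of Proposition \ref{estimation} (with discrete time $t=n$) applies and yields $c_\epsilon := c(\T_\epsilon)$ with $|(\T_\epsilon)_n(\mu,\nu) - nc_\epsilon| \leq C_\epsilon$ for all $n,\mu,\nu$; uniqueness follows by dividing a competing bound by $n$ and sending $n\to\infty$.

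The substance is in parts (2) and (3), which I would prove together. The crucial identity comes from minimising first in the central measure: since $\inf_\mu\{W_1(\mu,\sigma_1)+W_1(\mu,\sigma_2)\} = W_1(\sigma_1,\sigma_2)$ (achieved at $\mu=\sigma_1$), one gets $\inf_\mu \T_\epsilon(\mu,\mu) = D_\epsilon := \inf_{\sigma_1,\sigma_2}\{\T(\sigma_1,\sigma_2)+\tfrac1\epsilon W_1(\sigma_1,\sigma_2)\}$. Because $\T_\epsilon$ is weak$^*$-continuous and its Kantorovich operator maps $C(X)$ into $C(X)$ (again from the $\tfrac1\epsilon$-Lipschitz bound, a sup of uniformly Lipschitz functions being Lipschitz), Theorem \ref{fixedpointcontinuous}(1) identifies $c_\epsilon = \inf_\mu \T_\epsilon(\mu,\mu) = D_\epsilon$. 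Now $D_\epsilon \leq \inf_\mu\T(\mu,\mu)$ (take $\sigma_1=\sigma_2$), and $D_\epsilon$ increases as $\epsilon\downarrow 0$; conversely, choosing minimisers $(\sigma_1^\epsilon,\sigma_2^\epsilon)$ of the lower semicontinuous functional $\T+\tfrac1\epsilon W_1$ on the compact $\mcal{P}(X)^2$, the boundedness of $D_\epsilon$ and of $\T$ from below forces $\tfrac1\epsilon W_1(\sigma_1^\epsilon,\sigma_2^\epsilon) = O(1)$, hence $W_1(\sigma_1^\epsilon,\sigma_2^\epsilon) = O(\epsilon)\to 0$, so every weak$^*$-limit point has the form $(\sigma^*,\sigma^*)$, and weak$^*$-lower semicontinuity of $\T$ gives $\liminf_\epsilon D_\epsilon \geq \T(\sigma^*,\sigma^*) \geq \inf_\mu\T(\mu,\mu)$. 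Thus $D_\epsilon \uparrow \inf_\mu\T(\mu,\mu)$.

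Finally I would close the loop. Monotonicity of $\star$ together with $\T_\epsilon \leq \T$ (Lemma \ref{regularise}(3)) gives $(\T_\epsilon)_n \leq \T_n$, whence $c_\epsilon = \lim_n \tfrac1n\inf(\T_\epsilon)_n \leq c(\T)$, while Lemma \ref{limits}(1) already supplies $c(\T) \leq \inf_\mu\T(\mu,\mu)$. Combining with the previous paragraph,
\[
\inf_\mu\T(\mu,\mu) = \lim_{\epsilon\to 0} D_\epsilon = \lim_{\epsilon\to 0} c_\epsilon \leq c(\T) \leq \inf_\mu\T(\mu,\mu),
\]
forcing equality throughout: this is part (3), and the same squeeze gives $c_\epsilon \uparrow c(\T)$, which is part (2). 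The main obstacle is precisely this limiting step: for the possibly discontinuous $\T$ one only has the inequality $c(\T)\leq \inf_\mu\T(\mu,\mu)$ from Lemma \ref{limits}, so the diagonal infimum cannot be reached directly and the argument must be routed through the continuous surrogates $\T_\epsilon$; the delicate point is verifying that the penalisation $\tfrac1\epsilon W_1$ neither alters the diagonal infimum in the limit nor destroys the concentration of minimisers onto the diagonal that lower semicontinuity then exploits.
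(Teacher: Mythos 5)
Your proof is correct, and its overall skeleton coincides with the paper's: regularise, apply the continuous theory (Theorem \ref{fixedpointcontinuous}(1), giving $c_\epsilon=\inf_\mu\T_\epsilon(\mu,\mu)$), note $c_\epsilon\leq c(\T)$ by monotonicity of inf-convolution, recall $c(\T)\leq\inf_\mu\T(\mu,\mu)$ from diagonal subadditivity (Lemma \ref{limits}(1)), and squeeze. Where you genuinely diverge is the mechanism for the central limit $\lim_{\epsilon\to 0}c_\epsilon=\inf_\mu\T(\mu,\mu)$. The paper obtains it from the $\Gamma$-convergence statement of Lemma \ref{regularise}(4): it takes minimisers $\mu_\epsilon$ of $\nu\mapsto\T_\epsilon(\nu,\nu)$, extracts a cluster point $\bar\mu$, and uses the $\Gamma$-liminf inequality to get $c_\epsilon=\T_\epsilon(\mu_\epsilon,\mu_\epsilon)\to\T(\bar\mu,\bar\mu)=\inf_\mu\T(\mu,\mu)$. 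You instead prove the exact identity
\[
c_\epsilon=\inf_{\mu}\T_\epsilon(\mu,\mu)=\inf_{\sigma_1,\sigma_2}\big\{\T(\sigma_1,\sigma_2)+\tfrac1\epsilon W_1(\sigma_1,\sigma_2)\big\},
\]
a Moreau--Yosida-type penalisation of $\T$ obtained by collapsing the two outer $W_1$ factors through the triangle inequality, and then show directly that penalised minimisers satisfy $W_1(\sigma_1^\epsilon,\sigma_2^\epsilon)=O(\epsilon)$, so they concentrate on the diagonal and lower semicontinuity of $\T$ yields the limit. This bypasses Lemma \ref{regularise}(4) entirely, and it also sharpens part (1): your observation that $\tfrac1\epsilon W_1$ is idempotent under $\star$, so that $(\T_\epsilon)_n$ is sandwiched between $W_1$-kernels and is uniformly $\tfrac1\epsilon$-Lipschitz with explicit constant $C_\epsilon=2\,\mathrm{diam}(X)/\epsilon$, makes quantitative what the paper only asserts qualitatively (that a modulus of continuity for $\T_\epsilon$ is inherited by all iterates $(\T_\epsilon)_n$). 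The trade-off is length: the paper's route is shorter because the $\Gamma$-convergence machinery was already in place, while yours is more self-contained and produces explicit bounds.
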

\noindent{\bf Proof:} 
 Use Lemma \ref{regularise} to regularise $\T$ to $\T_\epsilon$ , and let $\delta_\epsilon$ be the modulus of continuity for $\T_\epsilon$, which is also    the modulus of continuity for $(\T_\epsilon)_n$, the $n$-fold inf-convolution of $\T_\epsilon$. 
 Use now Corollary \ref{fixedpointcontinuous} for each $\epsilon$ to find $c_\epsilon = c (\T_\epsilon)$ with the properties stated there.  
Note,  in particular that $c (\T_\epsilon) \leq c(\T)$.
It follows that  $c (\T_\epsilon)$ converges  as $\epsilon \to 0$.  We let $K (\T)$ be this limit. Note that  $K(\T) \leq c(\T)$. We shall prove that 
\begin{equation}\label{constants}
c(\T)=\inf\{{\cal T}(\mu, \mu); \mu\in {\cal P}(X)\}=K(\T).
\end{equation}
This follows from the $\Gamma$-convergence, since 
\[
c (\T_\epsilon)=\inf \{\T_\epsilon(\mu, \mu); \mu \in {\cal P}(X)\}=\T_\epsilon (\mu_\epsilon, \mu_\epsilon)  \]
for some $\mu_\epsilon$, then if $\bar \mu$ is a cluster point for $(\mu_\epsilon)$ as $\epsilon \to 0$, the $\Gamma$-convergence of $\T_\epsilon$ implies that $c (\T_\epsilon)=\T_\epsilon (\mu_\epsilon, \mu_\epsilon) \to \T(\bar \mu, \bar \mu)$. If now $\nu$ is any other probability measure, then 
$\T (\nu, \nu) \geq \T_\epsilon (\nu, \nu) \geq  \T_\epsilon (\mu_\epsilon, \mu_\epsilon)$, hence  $\T (\nu, \nu) \geq \T ( \bar \mu, \bar \mu) =K(\T)$ and $$K(\T)=\inf\{{\cal T}(\mu, \mu); \mu\in {\cal P}(X)\}.$$  
On the other hand, for every $\mu$, 
\[
c(\T)=\sup_n\inf\limits_{\sigma, \nu}\frac{\T_n(\sigma,\nu)}{n}\leq \sup_n\frac{\T_n(\mu,\mu)}{n}\leq \T(\mu, \mu),
\]
since $\T_n(\mu,\mu)$ is subadditive on the diagonal, hence $c(\T) \leq K(\T)$ and (\ref{constants}) follows.  \qed

The above theorem has the following useful corollary, which implies the uniqueness of the level $c$, where weak KAM solutions occur. 

\begin{cor}\label{useful} Suppose $\T$ is a backward linear transfer on $\mcal{P}(X) \times \mcal{P}(X)$ such that $\mathcal{D}_1(\T)$ contains all Dirac measures and that (\ref{inv}), (\ref{T1b}) hold. Then,
\begin{enumerate}
\item If $Tu+d  \leq u$ for some $d\in \R$ and some $u\in USC(X)$, then $d\leq c$. 
\item If $Tv+d  \geq v$ for some $d\in \R$ and some $v\in USC(X)$, then $d\geq c$. 
\end{enumerate}
\end{cor}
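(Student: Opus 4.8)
The plan is to derive both inequalities from the characterization
\[
c=c(\T)=\sup_{n}\frac1n\inf_{\mu,\nu\in\mcal{P}(X)}\T_n(\mu,\nu)=\inf_{\mu\in\mcal{P}(X)}\T(\mu,\mu),
\]
which is exactly the content of Theorem \ref{One.1} together with Lemma \ref{c_epsilon}. The first common step is to promote each one-step inequality to an $n$-step inequality using only that $T$ is monotone (property (a)) and affine on constants (property (c)). Indeed, from $Tu+d\le u$ and property (c), $T(Tu)+d=T(Tu+d)$; by (a) this is $\le Tu$, so $T^2u+2d\le Tu+d\le u$, and by induction $T^nu+nd\le u$ for all $n$. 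The same reasoning gives $T^nv+nd\ge v$ in case (2). Throughout, $T$ denotes its extension to $USC(X)$ (resp.\ $USC_\sigma(X)$) constructed in Section 4, for which the representation $T^nf(x)=\sup_{\sigma\in\mcal{P}(X)}\{\int_Xf\,d\sigma-\T_n(\delta_x,\sigma)\}$ and the extended Legendre identity \eqref{extLT} remain valid.

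\textbf{Part (2).} Here I would take a supremum rather than integrate, since a supersolution pairs with the upper bound $v\le M':=\sup_Xv<+\infty$ afforded by $v\in USC(X)$ (assumed proper). Taking the supremum over $x$ in $T^nv+nd\ge v$ gives $\sup_XT^nv\ge M'-nd$. On the other hand, inserting $\int_Xv\,d\sigma\le M'$ into the representation yields
\[
T^nv(x)=\sup_{\sigma}\Big\{\int_Xv\,d\sigma-\T_n(\delta_x,\sigma)\Big\}\le M'-\inf_{\sigma}\T_n(\delta_x,\sigma)\le M'-\inf_{\mu,\sigma}\T_n(\mu,\sigma),
\]
so $\sup_XT^nv\le M'-\inf_{\mu,\sigma}\T_n(\mu,\sigma)$. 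Comparing the two bounds gives $\inf_{\mu,\sigma}\T_n(\mu,\sigma)\le nd$ for every $n\ge1$, and hence $c=\sup_n\frac1n\inf_{\mu,\nu}\T_n(\mu,\nu)\le d$.

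\textbf{Part (1).} For the subsolution the natural dual move is to integrate, since $Tu+d\le u$ pairs with the \emph{lower} bound on $T$ coming from \eqref{extLT}. Fix $\mu\in\mcal{P}(X)$. Integrating against $\mu$ gives $\int_XTu\,d\mu+d\le\int_Xu\,d\mu$, while \eqref{extLT} (applied with $\sigma=\mu$) gives $\int_XTu\,d\mu\ge\int_Xu\,d\mu-\T(\mu,\mu)$. Subtracting the common term $\int_Xu\,d\mu$ produces $d\le\T(\mu,\mu)$, and minimizing over $\mu$ would give $d\le\inf_\mu\T(\mu,\mu)=c$. (Equivalently, one may use the $n$-step inequality and Lemma \ref{limits}(3), but the one-step version already suffices.)

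\textbf{Main obstacle.} The delicate point is precisely the cancellation of $\int_Xu\,d\mu$ in Part (1). As an element of $USC(X)$, $u$ is bounded above, so $\int_Xu\,d\mu<+\infty$; but $u$ need not be integrable from below, and if $\int_Xu\,d\mu=-\infty$ the inequality degenerates into $-\infty\ge-\infty$ and yields nothing. Thus one cannot integrate against an arbitrary $\mu$: to extract the sharp bound $d\le c$ one must integrate against a measure that (nearly) attains $\inf_\mu\T(\mu,\mu)$. I would therefore use the minimizer $\bar\mu$ with $\T(\bar\mu,\bar\mu)=c$ produced in the proof of Lemma \ref{c_epsilon}, which reduces the whole matter to verifying $\int_X u\,d\bar\mu>-\infty$. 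This holds for proper (real-valued) subsolutions and in particular for the weak KAM solutions constructed in Theorem \ref{gen}, which are bounded above and proper; establishing this integrability against a near-optimal measure (or reducing the general $USC$ case to it by truncation/approximation) is the one genuinely technical ingredient of the argument.
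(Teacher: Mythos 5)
Your Part (2) is correct, and it is actually an addition to the paper: the paper's own proof of Corollary \ref{useful} treats only Part (1), leaving the supersolution half implicit. Your supremum argument --- comparing $\sup_X T^n v \geq \sup_X v - nd$ with $\sup_X T^n v \leq \sup_X v - \inf_{\mu,\sigma}\T_n(\mu,\sigma)$ to get $\inf_{\mu,\sigma}\T_n(\mu,\sigma) \leq nd$, hence $c=\sup_n \tfrac{1}{n}\inf_{\mu,\nu}\T_n(\mu,\nu)\leq d$ --- is clean, and the cancellation of $M'=\sup_X v$ is legitimate because elements of $USC(X)$ are by definition bounded above (with the standing convention $v\not\equiv-\infty$). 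It uses exactly the right ingredients: the $USC$-extension of Section 4 and Lemma \ref{limits}(1) together with Lemma \ref{c_epsilon}(3).

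Part (1) is where you depart from the paper, and where your acknowledged gap is genuine rather than a routine verification. The paper never integrates $u$ itself: it argues by contradiction, assuming $d>c$; the sequence $n\mapsto T^nu+nd$ is decreasing, its limit $\tilde u$ is shown to be proper by the compactness argument of Case 1 of Theorem \ref{gen} (which rests on Lemma \ref{limits}(4)), and Lemma \ref{monotone} gives the exact eigen-equation $T\tilde u + d=\tilde u$; iterating yields $\tfrac{1}{n}\int_X T^n\tilde u\, d\bar\mu = -d+\tfrac{1}{n}\int_X\tilde u\, d\bar\mu$, while Lemma \ref{limits}(3) at a Mather measure $\bar\mu$ with $\T(\bar\mu,\bar\mu)=c$ forces this rate to be at least $-c$, a contradiction. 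Your direct cancellation $d\leq \T(\mu,\mu)$ instead requires $\int_X u\, d\mu>-\infty$ at (near-)minimizing $\mu$, and neither of your proposed repairs supplies this: properness, indeed finiteness everywhere, of a $USC$ function does not imply integrability (on $X=[0,1]$, the function $u(0)=0$, $u(x)=-1/x$ for $x>0$ is real-valued, $USC$, bounded above, yet $\int_0^1 u\,dx=-\infty$), and truncation from below does not preserve the subsolution property: $u_k:=\max(u,-k)\geq u$ gives $Tu_k\geq Tu$ by monotonicity, but there is no reason that $Tu_k+d\leq u_k$, since $T$ is nonlocal and $\int u_k\,d\sigma$ can be finite where $\int u\,d\sigma=-\infty$.

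To be fair, your diagnosis pinpoints precisely the step at which the paper's own argument is also thin: its final appeal to Lemma \ref{limits}(3) --- stated there for $f\in C(X)$ --- is made for the $USC$ function $\tilde u$, and that application again needs $\int_X \tilde u\, d\bar\mu>-\infty$, which properness of $\tilde u$ alone does not give; similarly, the properness claim imported from Theorem \ref{gen} uses a lower bound on the initial function that a general $USC$ subsolution need not have. Both proofs close without difficulty when $u$ is bounded below (in particular for $u\in C(X)$, or for the weak KAM solutions actually produced in Theorem \ref{gen}, which is what the uniqueness assertion of Theorem \ref{One} requires). So the verdict is: Part (2) correct and complementary to the paper; Part (1) takes a different and more elementary route than the paper's fixed-point construction, but as written it is incomplete, and the missing integrability is not something properness or truncation can deliver.
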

\noindent{\bf Proof:} If $Tu+d  \leq u$ for some $d\in \R$ and $u\in USC(X)$, then $T^nu +nd \leq u$ for all $n\in \N$. 
Applying $T^m$ and using the linearity of $T^m$ with respect to constants, we find $T^{m+n}u + dn \leq T^mu$, and hence
\eqs{
T^{m+n}u + d(m+n) \leq T^mu + dm
}
So $n \mapsto T^nu + dn$ is decreasing to a function $\tilde u$. But if $d>c$, then $T^nu + dn \geq T^nu + cn$ and $\tilde u$ is proper by the first part of the proof of Theorem \ref{gen} and $T{\tilde u}+d={\tilde u}$ on $X$. It follows that for any $\mu \in {\cal P}(X)$, 
\[
\int_X T^n{\tilde u}\, d\mu = -nd +\int_X {\tilde u}\, d\mu.
\]
On the other hand, let $\bar \mu$ be such that $\T(\bar \mu, \bar \mu)=\inf_{\mu \in {\cal P}(X)}\T(\mu, \mu)=c$.
Then, from Lemma \ref{limits} we have 
\[
 \liminf_n\frac{1}{n}\int_XT^nf \, d {\bar \mu} \geq - \T({\bar \mu}, {\bar \mu})=-c.
\]
It follows that $-c \leq -d$, which is a contradiction. \qed

 Here is a case where we can associate to $\T$  an effective Kantorovich operator without the equicontinuity assumption.

\begin{thm}\label{fixedpointgeneral}
Let $X$ is a compact metric space and let $\T$ be a backward linear transfer such that $\mathcal{D}_1(\T)$ contains the Dirac measures. Assume (\ref{inv}) and 
that for some $\epsilon>0$, we have 
\begin{equation}\label{cepsilon}
c(\T_\epsilon) = c(\T),
\end{equation}
where $\T_\epsilon$ be the regularisation of $\T$ according to Lemma \ref{regularise}.
 Then, there exists an idempotent backward linear transfer $\T_\infty$ on $\mcal{P}(X) \times \mcal{P}(X)$, with a Kantorovich operator $T^\infty: C(X) \to USC (X)$  such that 
$T\circ T_\infty f +c= T_\infty f $ for all $f \in C(X)$.

\end{thm}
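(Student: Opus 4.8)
The plan is to reduce to the weak$^*$-continuous case already handled in Theorem \ref{fixedpointcontinuous} by passing through the regularisation $\T_\epsilon$, and then to iterate the original Kantorovich operator $T$ to repair the loss of continuity. First I would record, using Proposition \ref{inf.tens} together with the explicit Kantorovich operator of the Kantorovich--Rubinstein transfer $\frac1\epsilon W_1$ (namely the $\frac1\epsilon$-Lipschitz sup-regularisation $S_\epsilon g(x) = \sup_y\{g(y) - \frac1\epsilon d(x,y)\}$), that $\T_\epsilon$ is a weak$^*$-continuous backward linear transfer with Kantorovich operator $T_\epsilon = S_\epsilon \circ T \circ S_\epsilon$. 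Two elementary facts about $S_\epsilon$ drive everything: $S_\epsilon \geq \mathrm{id}$, and $S_\epsilon$ is idempotent with range the $\frac1\epsilon$-Lipschitz functions. Consequently $T_\epsilon \geq T$ pointwise, and every fixed point of $g \mapsto T_\epsilon g + c$ is $\frac1\epsilon$-Lipschitz.

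Next, Theorem \ref{fixedpointcontinuous} applied to $\T_\epsilon$ yields an idempotent Kantorovich operator $(T_\epsilon)_\infty : C(X)\to C(X)$ with $T_\epsilon (T_\epsilon)_\infty f + c_\epsilon = (T_\epsilon)_\infty f$, and the hypothesis $c(\T_\epsilon) = c(\T) =: c$ fixes the level. For $f\in C(X)$ I set $u_f := (T_\epsilon)_\infty f$; being a fixed point it is $\frac1\epsilon$-Lipschitz, so $S_\epsilon u_f = u_f$ and the fixed-point equation reads $S_\epsilon(Tu_f)+c = u_f$. Since $S_\epsilon \geq \mathrm{id}$ this gives the subsolution inequality $Tu_f + c \leq u_f$; applying $T$ and using monotonicity and property (c), the sequence $T^n u_f + nc$ is decreasing, and extending $T$ to $USC(X)$ as in Section 4 I would define $T_\infty f := \lim_n (T^n u_f + nc) = \inf_n (T^n u_f + nc) \in USC(X)$. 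Properness of $T_\infty f$ (i.e.\ $\not\equiv -\infty$) follows from Lemma \ref{limits}(4), $\sup_\mu \int (T^n u_f + nc)\,d\mu \geq \inf_X u_f$, combined with a Dini-type compactness argument on the open sublevel sets of the $USC$ iterates, while $T\circ T_\infty f + c = T_\infty f$ follows from the monotone continuity of $T$ in Lemma \ref{monotone}(1). Monotonicity, convexity and affineness on constants pass to the limit, so $T_\infty$ is a Kantorovich operator and $\T_\infty(\mu,\nu) := \sup\{\int f\,d\nu - \int T_\infty f\,d\mu : f\in C(X)\}$ is a backward linear transfer with operator $T_\infty : C(X)\to USC(X)$.

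The main obstacle is idempotency, $\T_\infty \star \T_\infty = \T_\infty$, which by Proposition \ref{inf.tens} reduces to $T_\infty \circ T_\infty = T_\infty$ once $(T_\epsilon)_\infty$ and $T_\infty$ are extended to $USC(X)$. Fixing $f$ and writing $h := T_\infty f$, which is a weak KAM solution so that $T^n h + nc = h$ for all $n$, I would prove $T_\infty h = h$ by two inequalities. For the lower bound, $T_\epsilon \geq T$ gives $(T_\epsilon)^n h \geq T^n h$, hence $(T_\epsilon)_\infty h \geq \limsup_n\big((T_\epsilon)^n h + nc\big) \geq h$, and then $T^n (T_\epsilon)_\infty h + nc \geq T^n h + nc = h$, so $T_\infty h \geq h$. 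For the upper bound, $h = \inf_n(T^n u_f + nc) \leq u_f = (T_\epsilon)_\infty f$, so monotonicity and idempotency of $(T_\epsilon)_\infty$ give $(T_\epsilon)_\infty h \leq (T_\epsilon)_\infty u_f = u_f$, whence $T_\infty h \leq \lim_n(T^n u_f + nc) = h$. Thus $T_\infty h = h$, so $T_\infty$ is idempotent. I expect the genuinely delicate points to be verifying that the $USC$-extension of $(T_\epsilon)_\infty$ retains monotonicity and idempotency (so the comparison chain in the idempotency step is legitimate) and running the properness/Dini argument without the bounded-below hypothesis (\ref{T1b}); both should be dispatched by the extension results of Section 4 and the estimates of Lemma \ref{limits}.
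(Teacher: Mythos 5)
Your proposal takes a genuinely different route from the paper's, and its preliminary steps are sound: $T_\epsilon=S_\epsilon\circ T\circ S_\epsilon$ by Proposition \ref{inf.tens}, fixed points of $g\mapsto T_\epsilon g+c$ lie in the range of $S_\epsilon$ and are therefore $\frac{1}{\epsilon}$-Lipschitz, so $u_f:=(T_\epsilon)_\infty f$ indeed satisfies $Tu_f+c\leq S_\epsilon(Tu_f)+c=u_f$ once (\ref{cepsilon}) pins $c_\epsilon=c$. The gap sits exactly at the two points you call ``delicate'', and they are not dispatched by Section 4. Theorem \ref{fixedpointgeneral}, unlike Theorems \ref{One} and \ref{gen}, does \emph{not} assume that $T$ maps $C(X)$ into $USC(X)$; for a general backward linear transfer the Kantorovich operator only takes values in $USC_\sigma(X)$, and the extension results of Section 4 then only give $T^nu_f\in USC_\sigma(X)$. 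This breaks both of your key steps. First, the Dini/compactness argument for properness needs the sets $\{T^nu_f+nc<\inf_X u_f-1\}$ to be open, i.e.\ genuine upper semicontinuity of the iterates. Second, and more seriously, the identity $T\circ T_\infty f+c=T_\infty f$ requires passing $T$ through the decreasing limit $T_\infty f=\lim_n(T^nu_f+nc)$, i.e.\ Lemma \ref{monotone}(1); that lemma is stated for decreasing sequences in $USC(X)$, and the remark immediately following it warns that this continuity property \emph{cannot} be extended to sequences in $USC_\sigma(X)$. Monotonicity alone only yields the inequality $T\circ T_\infty f+c\leq T_\infty f$. (Your idempotency step has a further unresolved dependence: it invokes an $USC$-extension of $(T_\epsilon)_\infty$ whose monotonicity and idempotency are never established, and your $T_\infty$ a priori depends on the chosen $\epsilon$.)

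This is precisely why the paper's proof never iterates $T$ on non-continuous functions. It uses (\ref{cepsilon}) (which, by Lemma \ref{c_epsilon}(2), then holds for all smaller $\epsilon$) to get monotonicity in $\epsilon$ of the \emph{continuous} weak KAM solutions $T_\epsilon^\infty f$, defines $T^\infty f:=\lim_{\epsilon\to 0}T_\epsilon^\infty f$ as a decreasing limit of continuous functions (hence automatically $USC$), proves properness by integrating against a weak$^*$ cluster point $\bar\mu$ of the Mather measures $\bar\mu_\epsilon$ of $\T_\epsilon$ (via $\T_{\epsilon,\infty}(\bar\mu_\epsilon,\bar\mu_\epsilon)=0$ from Theorem \ref{weakKAMthm}), and obtains the fixed-point equation by applying Lemma \ref{monotone}(1) to the continuous decreasing family $T_\epsilon^\infty f$ together with $T_{\epsilon'}\searrow T$ from Lemma \ref{regularise}(5). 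For what it is worth, your properness step can be repaired without upper semicontinuity: by Lemma \ref{c_epsilon}(3), $c=\inf_\mu\T(\mu,\mu)$, and this infimum is attained at some $\bar\mu$ by weak$^*$ lower semicontinuity and compactness; since $\T_n(\bar\mu,\bar\mu)\leq n\T(\bar\mu,\bar\mu)=nc$, duality gives $\int_X(T^nu_f+nc)\,d\bar\mu\geq\int_X u_f\,d\bar\mu$ for all $n$, and monotone convergence then shows $T_\infty f\not\equiv-\infty$. I see no comparable repair for the fixed-point equality; that is where the paper's $\epsilon$-limit structure does essential work. If you add the hypothesis that $T$ maps $C(X)$ into $USC(X)$ (as Theorem \ref{gen} does), your argument does go through, but it then proves a strictly less general statement than the one claimed.
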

  \noindent{\bf Proof:}  
Consider the regularisation $\T_\epsilon$ of $\T$. By Corollary \ref{fixedpointcontinuous}, there exists a Kantorovich operator $T^\infty_\epsilon: C(X) \to C(X)$, such that $T_\epsilon \circ T^\infty_\epsilon f + c_\epsilon = T^\infty_\epsilon f$ for all $f \in C(X)$, and an idempotent transfer $\T_{\epsilon, \infty}$.
  We have the following properties: Under the assumption that $c_\epsilon = c$,

\begin{enumerate}
\item  $T_\epsilon^\infty f \leq T_{\epsilon'}^\infty f$ for all $f$, whenever $\epsilon < \epsilon'$.

\item There exists a $\bar{\mu} \in \mcal{P}(X)$ such that $\T(\bar{\mu}, \bar{\mu}) = c$,  and $\int_X T^\infty f \d\bar{\mu} \geq \int_X f \d\bar{\mu}$. 
\end{enumerate} 

To see that property 1 holds, observe that from monotonicity in $\epsilon$ for $T_\epsilon$ (see Lemma \ref{regularise}), we obtain monotonicity in $\epsilon$ for $\bar{T}_\epsilon f(x) := \limsup_{n} (T^n_\epsilon f(x) + n c_\epsilon)$ under the assumption that $c_\epsilon = c$. Hence by definition of $T^\infty_\epsilon f(x) = \lim_{n \to \infty}(T^n_\epsilon \circ \bar{T}_\epsilon f(x) + nc_\epsilon)$, we deduce monotonicity for $T^\infty_\epsilon$.

For Property 2, let $\bar{\mu}_\epsilon$ achieve $c_\epsilon = \T_\epsilon(\bar{\mu}_\epsilon,\bar{\mu}_\epsilon)$. By Theorem \ref{weakKAMthm}, we have $\T_{\epsilon, \infty}(\bar{\mu}_\epsilon,\bar{\mu}_\epsilon) = 0$, which implies
$\int_X f \d\bar{\mu}_\epsilon \leq \int_X T^\infty_\epsilon f \d\bar{\mu}_\epsilon.$
 
 On the other hand, extract a subsequence $\epsilon_j$ of the $\bar{\mu}_\epsilon$ so that $\bar{\mu}_{\epsilon_j} \to \bar{\mu}$. Then for any $\epsilon > 0$, eventually, $\epsilon_j < \epsilon$. It then follows the monotonicity of Property 1 that $\int_X T^\infty_{\epsilon_j} f \d\bar{\mu}_{\epsilon_j} \leq \int_X T^\infty_{\epsilon} f \d\bar{\mu}_{\epsilon_j}$. Let $j \to \infty$ to obtain
\begin{equation*}
\int_X f \d\bar{\mu}\leq \int_X T^\infty_{\epsilon} f \d\bar{\mu}.
\end{equation*}

The monotonicity of $T^\infty_\epsilon f$ and the above lower bound ensures that for $\bar{\mu}$-a.e. $x$, the limit $\lim_{\epsilon \to 0}T_\epsilon^\infty f(x)$ exists as a real number and is not $-\infty$. In particular, we deduce that $T^\infty f(x) := \lim_{\epsilon \to 0} T_{\epsilon}^{\infty}  f(x)$, satisfies $\int_X T^\infty f\d\bar{\mu}\geq \int_X f\d\bar{\mu}$; in particular, it is not identically $-\infty$, and belongs to $USC(X)$. 
 
By Lemma \ref{monotone}, we have
\[
T \circ T^\infty f(x) + c = \lim_{\epsilon \to 0}T \circ T_{\epsilon}^{\infty}  f(x) +c_\epsilon \leq  \lim_{\epsilon \to 0}T_\epsilon \circ T_{\epsilon}^{\infty} f(x) +c_\epsilon=\lim_{\epsilon \to 0}T_{\epsilon}^{\infty}  f(x) =T^\infty f .
\]  
On the other hand, the monotonicity in $\epsilon$ gives 
\eqs{
T_{\epsilon}^{\infty}  f = T_\epsilon \circ T_{\epsilon}^{\infty} f+c_\epsilon  \leq T_{\epsilon' }\circ T_{\epsilon}^{\infty}  f+c
}
 for any $\epsilon' > \epsilon$. By Lemma \ref{monotone} applied to $T_{\epsilon'}$ and the sequence $T_{\epsilon}^{\infty}  f$, we can pass the limit in $\epsilon$ through $T_\epsilon'$ to obtain
\as{
T^{\infty} f(x) = \lim_{\epsilon \to 0}T_{\epsilon}^{\infty}  f(x) \leq \lim_{\epsilon \to 0}T_{\epsilon'} \circ T_{\epsilon}^{\infty}  f(x)+c= T_{\epsilon'} \circ T^\infty f(x)+c.
}
Now we let $\epsilon' \to 0$ and use Property 4 of Lemma \ref{regularise} to obtain
$T_\infty f(x)\leq T \circ T^{\infty} f(x)+c,$
and thus obtaining equality.

\subsection{Weak KAM solutions for unbounded transfers}  

The following lemma shows that the above hypothesis $c (\T_\epsilon)=c(\T)$ is not vacuous as it occurs in many examples.

\begin{prop}
Let $\T$ be a backward linear transfer with Kantorovich operator $T$. In any of the following cases,
\begin{enumerate}
\item $\inf\{\T(\mu,\nu); \mu,\nu \in \mcal{P}(X)\} = \inf\{\T(\mu,\mu)\,;\, \mu \in \mcal{P}(X)\}$,
\item $\T$ is symmetric and for some $\epsilon_0 > 0$, $T^-$ maps every continuous function to a $1/\epsilon_0$-Lipschitz function,
\end{enumerate}
we have $c(\T_\epsilon) = c(\T)$ for all small enough $\epsilon > 0$.
\end{prop}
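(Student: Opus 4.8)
The plan is to prove $c(\T_\epsilon)\ge c(\T)$ in each of the two cases; the reverse inequality $c(\T_\epsilon)\le c(\T)$ holds in general by Lemma~\ref{c_epsilon}(2). Throughout I use that $\T_\epsilon$ is a weak$^*$-continuous backward linear transfer (Lemma~\ref{regularise}(1)), so that applying Lemma~\ref{c_epsilon}(3) to $\T_\epsilon$ itself gives $c(\T_\epsilon)=\inf_{\mu}\T_\epsilon(\mu,\mu)$.

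For case (1) the argument is immediate and works for every $\epsilon>0$: since $c(\T_\epsilon)=\inf_\mu\T_\epsilon(\mu,\mu)\ge \inf_{\mu,\nu}\T_\epsilon(\mu,\nu)$, and the last quantity equals $\inf_{\mu,\nu}\T(\mu,\nu)$ by Lemma~\ref{regularise}(2), the hypothesis $\inf_{\mu,\nu}\T(\mu,\nu)=\inf_\mu\T(\mu,\mu)=c(\T)$ (the last equality by Lemma~\ref{c_epsilon}(3)) yields $c(\T_\epsilon)\ge c(\T)$.

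For case (2) I would first unwind the Kantorovich operator of $\T_\epsilon$. Writing $R_\epsilon$ for the backward Kantorovich operator of $\frac1\epsilon W_1=\mathcal{W}_{d/\epsilon}$, namely $R_\epsilon f(x)=\sup_{y}\{f(y)-\tfrac1\epsilon d(x,y)\}$, the identity $\T_\epsilon=(\tfrac1\epsilon\mathcal{W}_d)\star\T\star(\tfrac1\epsilon\mathcal{W}_d)$ and Proposition~\ref{inf.tens}(1) give $T_\epsilon=R_\epsilon\circ T^-\circ R_\epsilon$. I then record three elementary properties of $R_\epsilon$: (i) $h\le R_\epsilon h\le \sup_X h$ pointwise; (ii) $R_\epsilon$ is idempotent, because $\tfrac1\epsilon d\star\tfrac1\epsilon d=\tfrac1\epsilon d$; and (iii) $R_\epsilon$ fixes every $L$-Lipschitz function whenever $L\le 1/\epsilon$. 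Now fix $\epsilon\le\epsilon_0$. By hypothesis every function in the range of $T^-$ is $\tfrac1{\epsilon_0}$-Lipschitz, hence fixed by $R_\epsilon$ by (iii); together with (ii) this lets one absorb each interior factor $R_\epsilon$ against the Lipschitz output of the adjacent $T^-$, and an induction gives $(T_\epsilon)^n f=R_\epsilon\,(T^-)^n R_\epsilon f$. Combining with (i) yields $(T_\epsilon)^n f(x)\le \sup_X (T^-)^n(R_\epsilon f)$.

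To finish, I would invoke the uniform growth bound hidden in the proof of Lemma~\ref{limits}(2): for every $g\in C(X)$ one has $\sup_X (T^-)^n g\le \sup_X g-m_n$, where $m_n=\inf_{\mu,\nu}\T_n(\mu,\nu)$ satisfies $m_n\le n\,c(\T)$ (from $c(\T)=\sup_n m_n/n$ in Lemma~\ref{limits}(1)); dividing by $n$ gives $\limsup_n\frac1n\sup_X(T^-)^n g\le -c(\T)$. Applying this to $g=R_\epsilon f\in C(X)$ and using the growth formula $\lim_n\frac1n (T_\epsilon)^n f(x)=-c(\T_\epsilon)$ of Theorem~\ref{fixedpointcontinuous}(2) for the continuous transfer $\T_\epsilon$, I obtain $-c(\T_\epsilon)\le -c(\T)$, i.e.\ $c(\T_\epsilon)\ge c(\T)$, completing the proof. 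The main obstacle is the bookkeeping collapse $(T_\epsilon)^n=R_\epsilon(T^-)^nR_\epsilon$ — making precise that $R_\epsilon$ really is absorbed against each Lipschitz output of $T^-$ — together with verifying that the estimate of Lemma~\ref{limits}(2) is genuinely \emph{uniform} in $x$, so that it survives the outer $\sup_X$ and the outer $R_\epsilon$; the Lipschitz-regularizing hypothesis on $T^-$ (for which symmetry is the natural structural setting) is precisely what makes this absorption possible.
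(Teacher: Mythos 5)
Your proof of case (1) is exactly the paper's argument. For case (2) your proof is correct, but it follows a genuinely different route from the paper's. The paper stays at the level of transfers and gives a finite, non-asymptotic computation: inserting $\pm\inf_{\sigma}\T(\sigma,\sigma)$ it bounds $\T_\epsilon(\mu,\mu)\geq \bigl(\tfrac1\epsilon W_1\bigr)\star(-\T)\star\T\star\bigl(\tfrac1\epsilon W_1\bigr)(\mu,\mu)+c(\T)$, and then shows that this four-fold convolution vanishes identically, using the dual representation $(-\T)\star\T(\mu,\nu)=\inf_f\bigl\{\int_X T^-f\,d\mu+\int_X T^+(-f)\,d\nu\bigr\}$ together with the fact that $S_\epsilon^-$ (your $R_\epsilon$) fixes $\tfrac{1}{\epsilon_0}$-Lipschitz functions when $\epsilon\leq\epsilon_0$; the symmetry hypothesis enters precisely through the identity $-T^+(-f)=T^-f$. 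You instead work at the level of Kantorovich operators and iterate: the collapse $(T_\epsilon)^n f=R_\epsilon(T^-)^nR_\epsilon f$ — which is correct, by idempotence of $R_\epsilon$ and absorption of $R_\epsilon$ against the $\tfrac{1}{\epsilon_0}$-Lipschitz outputs of $T^-$ — reduces the statement to a comparison of linear growth rates of iterates, i.e.\ of Man\'e constants, via Theorem \ref{fixedpointcontinuous}(2) for $\T_\epsilon$ and Lemma \ref{limits} for $\T$. Both proofs hinge on the same elementary fixed-point property of $R_\epsilon$ on Lipschitz functions, but yours never invokes the symmetry of $\T$, so it actually establishes case (2) under the weaker hypothesis that $T^-$ is $\tfrac{1}{\epsilon_0}$-Lipschitz-regularizing; what the paper's route buys in exchange is a one-step convolution identity (the exact vanishing of the regularized defect) that requires none of the asymptotic machinery of Sections 7--9.

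One small repair is needed in your last step: you justify $\limsup_n\frac1n\sup_X(T^-)^ng\leq -c(\T)$ from the bound $\sup_X(T^-)^ng\leq\sup_Xg-m_n$ by citing $m_n\leq n\,c(\T)$, but that inequality points the wrong way (it only makes the right-hand side larger). What you need is $\liminf_n m_n/n\geq c(\T)$, and this does hold: $(m_n)_n$ is superadditive, so by Fekete's lemma $\lim_n m_n/n=\sup_n m_n/n=c(\T)$, which is exactly what the proof of Lemma \ref{limits} records. With that citation corrected, your argument is complete.
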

\noindent{\bf Proof:}  To see 1) note that $\inf\{\T(\mu,\nu); \mu, \nu \in \mcal{P}(X)\}=\inf\{\T_\epsilon (\mu,\nu); \mu, \nu \in \mcal{P}(X)\}$ for every $\epsilon>0$. By property $2$ of Lemma \ref{regularise}, and property $3$ of Lemma \ref{c_epsilon}, we get 
\[
c(\T_\epsilon)\leq c (\T)= \inf\limits_{\mu \in \mcal{P}(X)}\T(\mu,\mu)= \inf\{\T(\mu,\nu); \mu, \nu \in \mcal{P}(X)\}=\inf\{\T_\epsilon (\mu,\nu); \mu, \nu \in \mcal{P}(X)\}\leq c(\T_\epsilon).  
\]
For 2) write
\begin{align*}
\T_\epsilon(\mu,\mu) &= \inf_{\sigma_1, \sigma_2}\{ \frac{1}{\epsilon}W(\mu, \sigma_1) + \T(\sigma_1, \sigma_2) + \frac{1}{\epsilon}W(\sigma_2, \mu)\}\\
&\geq \inf_{\sigma_1, \sigma_2}\{ \frac{1}{\epsilon}W(\mu, \sigma_1) - \T(\sigma_1, \sigma_1) + \T(\sigma_1, \sigma_2)  + \frac{1}{\epsilon}W(\sigma_2, \mu)\} + \inf\{\T(\sigma_1, \sigma_1)\,;\, \sigma_1\}\\
&\geq \inf_{\sigma_1, \sigma_2, \sigma_3}\{ \frac{1}{\epsilon}W(\mu, \sigma_1) - \T(\sigma_1, \sigma_3)+ \T(\sigma_3, \sigma_2)  + \frac{1}{\epsilon}W(\sigma_2, \mu)\} + \inf\{\T(\sigma_1, \sigma_1)\,;\, \sigma_1\}\\
&= (\frac{1}{\epsilon}W)\star(-\T)\star\T\star(\frac{1}{\epsilon}W)(\mu,\mu) + c.
\end{align*}
It suffices to show that $(\frac{1}{\epsilon}W)\star(-\T)\star\T\star(\frac{1}{\epsilon}W)(\mu,\mu) \geq 0$. Note that we can write
\begin{align*}
(-\T)\star\T(\mu,\nu) &= \inf_{\sigma}\{-\T(\mu, \sigma) + \T(\sigma, \nu)\}\\
&= \inf_{f}\inf_{\sigma}\{\int_X T f\d\mu - \int_X f\d\sigma + \T(\sigma, \nu)\}\\
&= \inf_{f}\{\int_X T^-f \d\mu + \int_X T^+(-f)\d\nu\}.
\end{align*}
Then with the notation that $S_\epsilon^-$ is the backward Kantorovich operator for $\frac{1}{\epsilon}W$, we arrive at
\begin{align*}
(\frac{1}{\epsilon}W)\star(-\T)\star\T\star(\frac{1}{\epsilon}W)(\mu,\mu) &= \inf_{\sigma_1, \sigma_2}\{\frac{1}{\epsilon}W(\mu, \sigma_1)+ (-\T)\star \T(\sigma_1, \sigma_2) + \frac{1}{\epsilon}W(\sigma_2,\mu)\}\\
&= \inf_{f}\inf_{\sigma_1, \sigma_2}\{\frac{1}{\epsilon}W(\mu, \sigma_1) + \int_X T^-f \d\sigma_1 + \int_X T^+(-f)\d\sigma_2 + \frac{1}{\epsilon}W(\sigma_2,\mu)\}\\
&= \inf_{f}\{ -\int_X S^-_\epsilon(-T^- f) \d\mu - \int_X S_\epsilon^-(-T^+(-f)) \d\mu\}\\
&= \inf_{f}\{ -\int_X S^-_\epsilon(-T^- f) \d\mu - \int_X S_\epsilon^-(T^- f) \d\mu\}\\
&= \inf_{f}\{ -\int_X (-T^- f)\d\mu - \int_X T^- f\d\mu\}\\
&= 0,
\end{align*}
where the second-last equality follows from the fact that whenever $g$ is $\frac{1}{\epsilon_0}$-Lipschitz, then $S_\epsilon^- g = g$.

\begin{prop}\label{perturbedtransfer}
Let $S: C(X) \to C(Y)$ be a Markov operator (i.e., a bounded linear positive operator such that $T1=1$) and let $S^*: \mathcal{M}(X) \to \mathcal{M}(Y)$ be its adjoint. Given a backward linear transfer $\T: \mathcal{P}(X)\times \mathcal{P}(Y)$ and $\lambda \in (0,1)$, define
\begin{equation}
\tilde{\T}(\mu,\nu) := \T(\mu, \lambda S^*\mu + (1-\lambda)\nu).
\end{equation}
Then, $\tilde{\T}$ is a backward linear transfer with Kantorovich operator 
\begin{equation}
\tilde{T}^-f(x) := T^- \lf(\frac{1}{1-\lambda}f\rt)(x) - \frac{\lambda}{1-\lambda}Sf(x)
\end{equation}
\end{prop}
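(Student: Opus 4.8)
The plan is to reduce everything to the dual characterisation of $\T$ together with a single change of variables. First I would check that $\tilde\T$ is a well-posed backward coupling. Since $S$ is a Markov operator, its adjoint carries probability measures to probability measures (positivity together with $S1=1$ give $\langle S^*\mu,1\rangle=\langle\mu,S1\rangle=1$), so for $(\mu,\nu)\in\mathcal{P}(X)\times\mathcal{P}(Y)$ the measure $\lambda S^*\mu+(1-\lambda)\nu$ is again a probability measure and $\tilde\T(\mu,\nu)=\T(\mu,\lambda S^*\mu+(1-\lambda)\nu)$ is defined; setting it to $+\infty$ off $\mathcal{P}(X)\times\mathcal{P}(Y)$ gives $D(\tilde\T)\subset\mathcal{P}(X)\times\mathcal{P}(Y)$. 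Because $(\mu,\nu)\mapsto(\mu,\lambda S^*\mu+(1-\lambda)\nu)$ is affine and weak$^*$-continuous, $\tilde\T$ inherits joint convexity and weak$^*$-lower semicontinuity from $\T$, and each partial map $\tilde\T_\mu$ is convex and lower semicontinuous on $\mathcal{P}(Y)$.

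The heart of the proof is the identity $\tilde\T=\T_{\tilde T^-}$. Starting from $\T(\mu,\rho)=\sup_{g\in C(Y)}\{\int_Y g\,d\rho-\int_X T^-g\,d\mu\}$ with $\rho=\lambda S^*\mu+(1-\lambda)\nu$, I would split $\int_Y g\,d\rho=\lambda\int_Y g\,dS^*\mu+(1-\lambda)\int_Y g\,d\nu$ and replace $\int_Y g\,dS^*\mu$ by $\int_X Sg\,d\mu$ via the defining relation $\langle S^*\mu,g\rangle=\langle\mu,Sg\rangle$ (so that $Sg\in C(X)$). Then the invertible substitution $h=(1-\lambda)g$ on $C(Y)$, together with the linearity of $S$ (so $S(\tfrac{h}{1-\lambda})=\tfrac{1}{1-\lambda}Sh$), gives
\begin{align*}
\tilde\T(\mu,\nu)&=\sup_{h\in C(Y)}\Big\{\int_Y h\,d\nu-\int_X\big[T^-(\tfrac{1}{1-\lambda}h)-\tfrac{\lambda}{1-\lambda}Sh\big]\,d\mu\Big\}\\
&=\sup_{h\in C(Y)}\Big\{\int_Y h\,d\nu-\int_X\tilde T^-h\,d\mu\Big\},
\end{align*}
which is precisely formula (\ref{back}) for $\tilde T^-$. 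Hence $\tilde\T$ is a backward linear coupling generated by $\tilde T^-$, and $\tilde T^-$ maps $C(Y)$ into $USC(X)$ since $T^-(\tfrac{h}{1-\lambda})\in USC(X)$ while $Sh\in C(X)$.

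It remains to see that $\tilde T^-$ is a genuine backward Kantorovich operator, so that $\tilde\T$ is a transfer and not merely a coupling. Properties (b) and (c) are immediate: convexity follows from convexity of $T^-$ since the extra term $-\tfrac{\lambda}{1-\lambda}S(\cdot)$ is linear, and affineness on constants follows from property (c) of $T^-$ together with $S1=1$ (one checks $\tilde T^-(h+c)=\tilde T^-h+\tfrac{c}{1-\lambda}-\tfrac{\lambda c}{1-\lambda}=\tilde T^-h+c$). The delicate point — which I expect to be the main obstacle — is monotonicity (a), equivalently the identification of $\tilde T^-$ with the canonical operator $g\mapsto(\tilde\T_{\delta_x})^*(g)$ attached to $\tilde\T$. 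I would route this through Proposition \ref{prop.one}(2): the functional $g\mapsto\int_X\tilde T^-g\,d\mu$ is convex and lower semicontinuous on $C(Y)$ (its first summand is convex lsc because $T^-$ is convex and integration is monotone, and the second summand $g\mapsto\tfrac{\lambda}{1-\lambda}\int_X Sg\,d\mu$ is linear and continuous), so that $\tilde\T^*_\mu(g)=\int_X\tilde T^-g\,d\mu$; combined with the lower semicontinuity of $\tilde\T_\mu$, this places $\tilde\T$ in the scope of Proposition \ref{prop.one}(2) and forces $\tilde T^-$ to be a Kantorovich operator.

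I emphasise that this last step must be carried out with care, because as a bare formula $\tilde T^-$ is not monotone: the negative term $-\tfrac{\lambda}{1-\lambda}Sf$ pulls against the monotone term $T^-(\tfrac{f}{1-\lambda})$. Monotonicity is recovered only through the identification $\tilde T^-g(x)=\sup_{\sigma\in\mathcal{P}(Y)}\{\int_Y g\,d\sigma-\tilde\T(\delta_x,\sigma)\}$, i.e. by showing the biconjugate of $g\mapsto\tilde T^-g(x)$ is recovered as a supremum over \emph{probability} measures. This in turn relies on the standing properness hypothesis $\{\delta_x;x\in X\}\subset D_1(\T)$ being transported to $\tilde\T$, so that the Dirac measures remain in $D_1(\tilde\T)$; verifying this properness at Diracs, using the Markov structure of $S$, is exactly where the argument needs to be done carefully rather than formally.
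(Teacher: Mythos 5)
Your middle step is correct, and it is in fact sounder than the paper's own argument. Since $g\mapsto (1-\lambda)g$ is a bijection of $C(Y)$, your substitution gives exactly
\begin{equation*}
\tilde{\T}(\mu,\nu)=\sup\Big\{\int_Y h\, d\nu-\int_X \tilde{T}^-h\, d\mu\,;\; h\in C(Y)\Big\},
\end{equation*}
so $\tilde{\T}$ is the backward linear \emph{coupling} generated by $\tilde{T}^-$. The paper instead substitutes on the measure side: as $\sigma$ runs over $\mathcal{P}(Y)$, the variable $\tilde{\sigma}=\lambda S^*\mu+(1-\lambda)\sigma$ runs only over the proper subset $\lambda S^*\mu+(1-\lambda)\mathcal{P}(Y)$ of $\mathcal{P}(Y)$, yet the supremum is then evaluated as though $\tilde{\sigma}$ were unconstrained, i.e. as $\T^*_\mu(\tfrac{f}{1-\lambda})$. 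Replacing a constrained supremum by an unconstrained one is only an inequality, so your function-side route is the correct way to obtain the dual representation.

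The genuine gap is your last step, and it cannot be repaired. Convexity and lower semicontinuity of $\Gamma(g):=\int_X\tilde{T}^-g\, d\mu$ give $\Gamma^{**}=\Gamma$, but to conclude $\tilde{\T}^*_\mu=\Gamma$ (the hypothesis of Proposition \ref{prop.one}(2)) you need $\tilde{\T}_\mu=\Gamma^*$ on all of $\mathcal{M}(Y)$, whereas your coupling identity gives this only on $\mathcal{P}(Y)$: off $\mathcal{P}(Y)$ the functional $\tilde{\T}_\mu$ is $+\infty$ by convention, while $\Gamma^*$ need not be. Proving that $\Gamma^*=+\infty$ on signed measures of mass one is exactly where the proof of Proposition \ref{prop.one}(1) uses monotonicity of the generating operator --- the property you are trying to establish --- so the argument is circular. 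The discrepancy is real: for the null transfer ($T^-g\equiv\sup_Y g$) one has $\tilde{\T}\equiv 0$, hence $\tilde{\T}^*_\mu(g)=\sup_Y g$, while $\Gamma(g)=\tfrac{1}{1-\lambda}\sup_Y g-\tfrac{\lambda}{1-\lambda}\int_X Sg\, d\mu$; these differ for generic $S$, $g$, $\mu$.

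Worse, the statement itself is false at this level of generality, so no argument can close the gap without extra hypotheses. Take $X=Y=\{a,b\}$, let $\T$ be the identity transfer ($T^-f=f$) and $S$ the swap operator $Sf(a)=f(b)$, $Sf(b)=f(a)$. Then $\tilde{\T}(\mu,\nu)<+\infty$ if and only if $\mu\geq\lambda S^*\mu$ and $(1-\lambda)\nu=\mu-\lambda S^*\mu$; in particular $\mu_0:=\tfrac12(\delta_a+\delta_b)\in D_1(\tilde{\T})$, while $\delta_a\notin D_1(\tilde{\T})$, because $\lambda S^*\delta_a+(1-\lambda)\nu$ places mass at least $\lambda$ at $b$ and so never equals $\delta_a$. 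But no Kantorovich operator $K$ can generate such a functional: $\T_K(\mu_0,\mu_0)=0$ forces $\int_X Kg\, d\mu_0\geq\int_X g\, d\mu_0>-\infty$ for every $g$, hence $Kg$ is finite at both points; properties (a) and (c) then make $g\mapsto Kg(a)$ $1$-Lipschitz and property (b) makes it convex, so its conjugate is proper and, again by (a) and (c), equals $+\infty$ off $\mathcal{P}(Y)$; it is therefore finite at some probability measure, i.e. $\delta_a\in D_1(\T_K)$, a contradiction. So your closing worry about ``properness at the Diracs'' is precisely the crux, but it is not a verification to be carried out carefully: it genuinely fails. Both your proof and the paper's require an additional assumption --- for instance one guaranteeing that Dirac masses remain in $D_1(\tilde{\T})$, equivalently that the constraint $\tilde{\sigma}\geq\lambda S^*\mu$ never binds in the supremum defining $\T^*_\mu$ --- under which the non-monotonicity of the formula for $\tilde{T}^-$ would also disappear.
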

\noindent {\bf Proof:}
Write
\begin{align*}
(\tilde{\T}_\mu)^*(f) &= \sup_{\sigma}\{ \int_{X}f\d\sigma - \tilde{\T}(\mu,\sigma)\}\\
&= \sup_{\sigma}\{ \int_{X}f\d\sigma - \T(\mu, \lambda S^*\mu + (1-\lambda)\sigma)\}
\end{align*}
with $\tilde{\sigma} := \lambda S^*\mu + (1-\lambda)\sigma$, we obtain $\sigma = \frac{1}{1-\lambda}\tilde{\sigma} - \frac{\lambda}{1-\lambda}S^*\mu$. Hence after substitution we obtain
\begin{align*}
(\tilde{\T}_\mu)^*(f) &= \sup_{\tilde{\sigma}}\{\int_{X}\frac{1}{1-\lambda}f\d \tilde{\sigma} - \T(\mu, \tilde{\sigma})\} -\frac{\lambda}{1-\lambda} \int_{Y}Sf \d\mu\\
&= \int_{X}\lf[T^- \lf(\frac{1}{1-\lambda}f\rt) - \frac{\lambda}{1-\lambda}Sf\rt]\d\mu.
\end{align*}

\begin{thm}
Let $\T$ be a backward linear transfer on $\mcal{P}(X)\times \mcal{P}(X)$. Then, for every $\lambda \in (0, 1)$, there exists a convex function $\phi_\lambda$ such that the linear transfer given by
\begin{equation}
\tilde{\T}_\lambda (\mu,\nu) := \T(\mu, \lambda (\nabla \phi_\lambda)_\#\mu + (1-\lambda)\nu)
\end{equation}
is such that 
\[
\inf\limits_{\mu. \nu \in {\cal P}(X)} {\tilde \T}_\lambda (\mu, \nu)=\inf\limits_{\mu \in {\cal P}(X)}{\tilde \T}_\lambda(\mu, \mu).
\] 
In particular, ${\tilde \T}_\lambda$ admits weak KAM solutions, that is there exists $g\in USC(X)$ and $c\in \R$ such that 
\begin{equation}
T^-g   +c= \lambda \, g (\nabla \phi) +(1-\lambda)\, g.
\end{equation}
\end{thm}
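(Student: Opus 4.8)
The strategy is to view $\tilde\T_\lambda$ as a perturbed transfer in the sense of Proposition~\ref{perturbedtransfer}, to choose the convex potential $\phi_\lambda$ by a fixed-point argument so that the global infimum of $\tilde\T_\lambda$ is attained on the diagonal, and then to feed this into the regularization machinery of this section to obtain a weak KAM solution which, after a rescaling, yields the stated identity. Concretely, I would set $Sf:=f\circ\nabla\phi_\lambda$, a Markov operator with adjoint $S^\ast\mu=(\nabla\phi_\lambda)_\#\mu$. Proposition~\ref{perturbedtransfer} then shows immediately that $\tilde\T_\lambda(\mu,\nu)=\T(\mu,\lambda(\nabla\phi_\lambda)_\#\mu+(1-\lambda)\nu)$ is a backward linear transfer whose Kantorovich operator is \[ \tilde T^-f=T^-\!\big(\tfrac{1}{1-\lambda}f\big)-\tfrac{\lambda}{1-\lambda}\,f(\nabla\phi_\lambda). \]

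Granting that $\phi_\lambda$ can be chosen so that $\inf_{\mu,\nu}\tilde\T_\lambda(\mu,\nu)=\inf_{\mu}\tilde\T_\lambda(\mu,\mu)$, the conclusion follows by assembling earlier results. Indeed, this is precisely Case~1 of the Proposition immediately preceding this theorem applied to $\tilde\T_\lambda$, so $c\big((\tilde\T_\lambda)_\epsilon\big)=c(\tilde\T_\lambda)$ for all small $\epsilon$. Theorem~\ref{fixedpointgeneral} then produces a constant $c$ and an effective Kantorovich operator, hence a weak KAM solution $\tilde g\in USC(X)$ with $\tilde T^-\tilde g+c=\tilde g$. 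Writing out $\tilde T^-$ and setting $g:=\tfrac{1}{1-\lambda}\tilde g\in USC(X)$, the equation $\tilde T^-\tilde g+c=\tilde g$ becomes \[ T^- g+c=\lambda\,g(\nabla\phi_\lambda)+(1-\lambda)\,g, \] which is the assertion. The standing hypotheses of Theorem~\ref{fixedpointgeneral}, namely that $\mathcal D_1(\tilde\T_\lambda)$ contains the Dirac measures and that $(\ref{inv})$ holds, I would check directly: properness and lower boundedness of $\T$ together with the diagonal-infimum identity give a finite diagonal value, and the Dirac condition is inherited from $\T$.

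The heart of the matter is the construction of $\phi_\lambda$. Since the diagonal is contained in $\mcal{P}(X)\times\mcal{P}(X)$, one always has $\inf_\mu\tilde\T_\lambda(\mu,\mu)\ge\inf_{\mu,\nu}\tilde\T_\lambda(\mu,\nu)$, and, because $\tilde\T_\lambda$ is jointly convex and weak$^\ast$-lower semicontinuous on the weak$^\ast$-compact set $\mcal{P}(X)\times\mcal{P}(X)$, equality holds if and only if $\tilde\T_\lambda$ admits a minimizer $(\bar\mu,\bar\nu)$ with $\bar\mu=\bar\nu$ (matching the value of $\T$ at the minimizing target against its value on the diagonal shows that a minimizer lands on the diagonal exactly when its marginals coincide). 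Thus it suffices to choose $\phi_\lambda$ so that $\tilde\T_\lambda$ has a minimizer with equal marginals. To this end I would set up a Kakutani--Fan--Glicksberg fixed point on a weak$^\ast$-compact convex set of admissible potentials and measures whose fixed points are exactly the pairs $(\phi,\bar\mu)$ for which $(\bar\mu,\bar\mu)$ minimizes $\tilde\T_\lambda$. The enabling fact is Brenier's theorem: for an absolutely continuous $\mu$, \emph{every} probability measure on $X$ may be written as $(\nabla\phi)_\#\mu$ for a convex $\phi$, so the pushforward update places no restriction beyond absolute continuity of the source. Since $X$ is a bounded domain in $\R^n$, the relevant Brenier potentials have uniformly bounded gradients and, after normalization, form a compact convex subset of $C(X)$ by Arzel\`a--Ascoli; the best-response correspondence sends an admissible pair to the minimizers of the associated perturbed transfer together with the convex potential realizing the updated coupling, and a fixed point closes the self-consistency loop.

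The main obstacle is precisely this fixed-point step, and two points require care. First, Brenier's theorem requires the source marginal to be absolutely continuous, so I would regularize---for instance by convolving the candidate marginals, or by inserting a small quadratic transfer $\tfrac1\epsilon W_1$ as in Lemma~\ref{regularise}---and pass to the limit, controlling the minimizers through the $\Gamma$-convergence established there. Second, one must verify that the best-response correspondence is convex-valued with closed graph in the weak$^\ast$ topology; this reduces to the joint lower semicontinuity of $\T$ (for stability of minimizers) and to the weak$^\ast$ continuity of $\phi\mapsto(\nabla\phi)_\#\mu$, where the equicontinuity afforded by the bounded domain $X$ is used. Once the fixed point and the attendant diagonal minimizer are secured, the remaining steps are the routine bookkeeping indicated above.
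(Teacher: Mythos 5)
Your outer scaffolding coincides with the paper's: identify the Kantorovich operator of $\tilde\T_\lambda$ via Proposition \ref{perturbedtransfer}, verify the diagonal-infimum identity so that Case 1 of the proposition preceding the theorem gives $c\big((\tilde\T_\lambda)_\epsilon\big)=c(\tilde\T_\lambda)$, invoke Theorem \ref{fixedpointgeneral}, and rescale $g=\frac{1}{1-\lambda}f$. The genuine gap is in what you yourself call the heart of the matter: the construction of $\phi_\lambda$. Your Kakutani--Fan--Glicksberg scheme is never actually executed: the best-response correspondence is not precisely defined, its convex-valuedness and closed graph (the two properties the fixed point theorem lives on) are left as items ``to verify'', and the absolute-continuity problem is deferred to an unspecified regularization-plus-$\Gamma$-convergence limit. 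More importantly, the whole self-consistency loop is unnecessary, and this is the idea you missed: the potential should be anchored to a minimizer of the \emph{original} transfer $\T$, not of the perturbed one, so no circularity ever arises. The paper takes $(\mu_0,\nu_0)$ attaining $\inf_{\mu,\nu}\T(\mu,\nu)$ (weak$^*$ compactness plus lower semi-continuity) and chooses $\phi$ by Brenier's theorem so that
\begin{equation*}
(\nabla\phi)_\#\mu_0=\lf(1-\tfrac{1}{\lambda}\rt)\mu_0+\tfrac{1}{\lambda}\nu_0,
\qquad\text{hence}\qquad
\lambda(\nabla\phi)_\#\mu_0+(1-\lambda)\mu_0=\nu_0 .
\end{equation*}
Then $\tilde\T_\lambda(\mu_0,\mu_0)=\T(\mu_0,\nu_0)=\inf\T$, while every value of $\tilde\T_\lambda$ is by definition a value of $\T$, so
\begin{equation*}
\inf_{\mu,\nu}\tilde\T_\lambda(\mu,\nu)\ \geq\ \inf_{\mu,\nu}\T(\mu,\nu)\ =\ \tilde\T_\lambda(\mu_0,\mu_0)\ \geq\ \inf_{\mu}\tilde\T_\lambda(\mu,\mu)\ \geq\ \inf_{\mu,\nu}\tilde\T_\lambda(\mu,\nu),
\end{equation*}
forcing equality throughout, with no fixed point and no minimizer of $\tilde\T_\lambda$ sought in advance.

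A further caution: the enabling fact you cite (for absolutely continuous $\mu$, every probability measure is a Brenier pushforward of $\mu$) is exactly the one the paper uses, and it carries the same caveats in both approaches: the source $\mu_0$ must be absolutely continuous, and, specific to this construction, the Brenier target $\lf(1-\tfrac{1}{\lambda}\rt)\mu_0+\tfrac{1}{\lambda}\nu_0$ must be a nonnegative measure, i.e. $\nu_0\geq(1-\lambda)\mu_0$, since the coefficient $1-\tfrac{1}{\lambda}$ is negative for $\lambda\in(0,1)$. The paper passes over both points silently; your proposal neither notices them nor avoids them (your iteration would need absolute continuity of the candidate marginal at every step, which is why you are forced into the regularization detour). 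So even granting all the bookkeeping, your argument as written does not close, whereas the paper's one-step construction does, modulo these implicit regularity assumptions on the minimizing pair.
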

\noindent {\bf Proof:} Let $\T(\mu_0, \nu_0) = \inf_{\mu,\nu}\T(\mu,\nu) < +\infty$ for some $\mu_0$ and $\nu_0$, and use Brenier's theorem to find a convex function $\phi$ such that $\nabla \phi_\#\mu_0 = (1 - \frac{1}{\lambda})\mu_0 + \frac{1}{\lambda}\nu_0$.

Consider now the backward linear transfer $\tilde{\T}(\mu,\nu) := \tilde{\T}(\mu, \lambda \nabla \phi\#\mu + (1-\lambda)\nu)$ 
and note that ${\cal \T}(\mu_0, \mu_0)=\T(\mu_0, \nu_0) < +\infty$. Moreover,  
\begin{align*}
\inf_{\mu,\nu}\tilde{\T}(\mu,\nu) \geq \inf_{\mu,\nu}\T(\mu,\nu) = \T(\mu_0, \nu_0) = \tilde{\T}(\mu_0, \mu_0) \geq \inf_{\mu}\tilde{\T}(\mu,\mu),
\end{align*} 
hence 
$
\inf_{\mu,\nu}\tilde{\T}(\mu,\nu) = \inf_{\mu}\tilde{\T}(\mu,\mu).
$, and in particular, $\tilde{\T}$ satisfies the hypotheses of Theorem \ref{fixedpointgeneral}, and admits weak KAM solutions for its Kantorovich operator, which is given by $\tilde{T}^- f = T^- (\frac{1}{1-\lambda}f) - \frac{\lambda}{1-\lambda}f \circ \nabla \phi $. In other words, by setting $g:=\frac{1}{1-\lambda}f$, we have 
\[
T^-g   +c= \lambda \, g (\nabla \phi) +(1-\lambda)\, g.
\]

\subsection{The heat semi-group and other examples}

Assumption \refn{cepsilon} is actually satisfied by a large number of our transfer examples.\\

1) Let $\T$ be the backward transfer associated to a convex lower semi-continuous functional $I$ on Wasserstein space, that is $\T (\mu, \nu):=I(\nu)$. Assumption (\ref{cepsilon}) then holds trivially as $c(\T)=\inf I$ in this case, and the associated idempotent transfer is $\T_\infty (\mu, \nu):=I(\nu) -c$, while the corresponding idempotent operator is $T_\infty f=I^*(f) +c$, where $I^*$ is the Legendre transform of $I$. \\

 2) Assumption (\ref{cepsilon}) clearly holds for any transfer that is $\{0, +\infty\}$-valued provided (\ref{inv}) is satisfied. Note that if $T$ is a Markov operator, then assumption (\ref{inv}) means that $T$ has an invariant measure. In this case, $c(\T)=0$, and for every $f$, $T_\infty f$ is an invariant function under $f$.
 
3)  If $T$ is induced by a continuous point transformation, i.e., $Tf(x)=f(\sigma  (x)$ for a continuous map $\sigma: X \to X$, then by a Theorem of  Bogolyubov and Krylov, $T$ has an invariant measure and the above applies. The operator $T_\infty$ is then given by 
 \[
 T^\infty f (x)=f(\limsup_{m \to \infty}\sigma^m(x)):=f(\sigma^\infty(x)).
 \]
  However, the regularity of the invariant functions $T_\infty f$ can vary widely. For example, 
 \begin{itemize}
\item If  one takes $X = [0,1]$ and $\sigma(x) = x^2$, then $\sigma^\infty(x) = 0$, if $x \in [0,1)$ and $1$ if $x = 1$. In this case, $T^\infty f$ only belongs to $USC_\sigma(X)$. 

\item On the other hand,  if $\sigma(x) = 1-x$, then $\sigma^\infty (x) = \max\{x, 1-x\}$ is continuous.
\end{itemize}

  4) {\it The heat semi-group:} Recall the Skorokhod transfer of Example 3. 4. Instead of considering a stopping time $\tau$, we let $\tau = t > 0$ to be deterministic, and define, for measures $\mu$, $\nu$, on a compact Riemannian manifold $M$, the linear transfer,
\begin{equation*}
\T_t(\mu,\nu) = \begin{cases}
0 & \text{if $B_0 \sim \mu$ and $B_t \sim \nu$}\\
+\infty & \text{otherwise.}
\end{cases}
\end{equation*}
Then $T_tf(x)= \E^x[f(B_t)] = P_t f(x)$, where $P_t$ is the heat semigroup. Note that since the volume measure $\lambda_M$, i.e., the uniform probability measure on $M$, is invariant, we have $\T_t(\lambda_M,\lambda_M)=0$, hence  Condition (\ref{inv}) is satisfied. We now have the following easy proposition, which puts our asymptotic result in the following classical context.  
\begin{prop}
The collection $\{\T_t\}_{t > 0}$ is a semigroup of backward linear transfers with Kantorovich operators $\{T_t\}_{t > 0}$. The corresponding idempotent backward linear transfer $\T_\infty(\mu,\nu) = \sup\{ \int_M fd\nu - \int_{M}f d\lambda_M \,;\, f \in C(M)\}$ with Kantorovich operator $T_\infty f(x) = \int_{M}f\d\lambda_M$.  
\end{prop}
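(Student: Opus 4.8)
The plan is to separate the semigroup structure from the identification of the effective (idempotent) transfer. For the first statement, I would start from the observation that for each $t>0$ the heat operator $P_t$ is a Markov operator on $C(M)$: it is linear, positivity-preserving, satisfies $P_t1=1$, and maps $C(M)$ into $C(M)$ by the smoothing of the heat kernel. Hence, by the discussion of Markov operators in Section 2, the $\{0,+\infty\}$-valued functional $\T_t$ is exactly the backward linear transfer $\T_{P_t}$ attached to $P_t$, with Kantorovich operator $T_t=P_t$. The semigroup identity $\T_{t+s}=\T_t\star\T_s$ then follows from Proposition \ref{inf.tens}(1): the convolution $\T_t\star\T_s$ is a backward linear transfer whose Kantorovich operator is $T_t\circ T_s=P_t\circ P_s=P_{t+s}=T_{t+s}$, and since a backward linear transfer is recovered from its Kantorovich operator through the duality formula (Proposition \ref{prop.one}), this forces $\T_t\star\T_s=\T_{t+s}$; the case $t=0$ yields the identity transfer since $P_0=\mathrm{Id}$.

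For the idempotent transfer, I would first record that the volume (uniform) probability measure $\lambda_M$ is invariant, so $\T_t(\lambda_M,\lambda_M)=0$ and condition (\ref{inv}) holds; since $\T_t\ge 0$ this also gives $c(\T)=\inf_\mu\T(\mu,\mu)=0$. Because $\T_t$ is $\{0,+\infty\}$-valued, the criterion for (\ref{cepsilon}) established in Section 10 applies, so Theorem \ref{fixedpointgeneral} produces an idempotent backward linear transfer $\T_\infty$ with effective Kantorovich operator $T_\infty:C(M)\to USC(M)$ satisfying $T\circ T_\infty f+c=T_\infty f$ with $c=0$.

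It then remains to compute $T_\infty$ explicitly, and this is where the analytic input enters. The key fact is the convergence to equilibrium for the heat semigroup on a compact (connected) Riemannian manifold: the generator $\tfrac12\Delta$ has a spectral gap with the constants as the only invariant functions, and combined with the ultracontractivity of $P_t$ this yields $P_tf\to\int_M f\,d\lambda_M$ uniformly on $M$ for every $f\in C(M)$. Since $c=0$, the effective operator is built from $\bar Tf=\limsup_t T_tf=\limsup_t P_tf=\int_M f\,d\lambda_M$ (a constant function), and because $P_t$ fixes constants one gets $T_\infty f=\lim_t P_t\bar Tf=\int_M f\,d\lambda_M$; in particular $T_\infty$ is idempotent and constant-valued, hence visibly a Kantorovich operator (it is affine, monotone, and affine on constants).

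Finally, substituting into the duality formula and using that $\mu$ is a probability measure (so $\int_M T_\infty f\,d\mu=\int_M f\,d\lambda_M$) gives
\[
\T_\infty(\mu,\nu)=\sup\Big\{\int_M f\,d\nu-\int_M f\,d\lambda_M\,;\,f\in C(M)\Big\},
\]
as claimed. The main obstacle is purely the convergence-to-equilibrium step, namely establishing the uniform (rather than merely $L^2$) limit $P_tf\to\int_M f\,d\lambda_M$ and checking that this limit coincides with the object produced by the abstract construction of Theorem \ref{fixedpointgeneral}; everything else is a direct verification that $T_\infty$ is an idempotent Kantorovich operator together with a routine dualization.
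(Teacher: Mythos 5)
Your proposal is correct, and its analytic core coincides with the paper's: both hinge on the identification $T_t=P_t$ through the Legendre-duality formula and on the standard uniform convergence $P_tf\to\int_M f\,\d\lambda_M$ for $f\in C(M)$. The difference is structural, in how the explicit operator is certified to be the idempotent one. The paper is direct: with the candidate $T_\infty f:=\int_M f\,\d\lambda_M$ in hand, it uses the $1$-Lipschitz property of $T_t$ (equivalently, that $P_t$ fixes constants) to verify $T_t\circ T_\infty f=T_\infty f$, so $T_\infty$ is visibly an idempotent Kantorovich operator and $\T_\infty$ follows by dualization; your more detailed treatment of the semigroup identity $\T_{t+s}=\T_t\star\T_s$ via composition of Kantorovich operators is a fine supplement that the paper leaves implicit. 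You instead route existence through the Section 10 machinery, i.e.\ Theorem \ref{fixedpointgeneral}, checking hypothesis (\ref{cepsilon}) by the $\{0,+\infty\}$-valued criterion together with (\ref{inv}); this is legitimate (it is exactly item 2 of the paper's list of examples preceding the proposition) and buys an existence statement before any computation. One caution, however: your formula $T_\infty f=\lim_t P_t\bar{T}f$ with $\bar{T}f=\limsup_t T_tf$ is the construction of the \emph{equicontinuous} theory of Section 7, whose hypotheses (H0)--(H2) fail here -- the transfers $\T_t$ are $\{0,+\infty\}$-valued, hence not weak$^*$-continuous, so (H1) does not hold. This is harmless in your write-up only because the uniform convergence makes the direct verification immediate: the clean way to finish (and to identify the abstract operator produced by Theorem \ref{fixedpointgeneral} with $\int_M f\,\d\lambda_M$) is the paper's direct check that $f\mapsto\int_M f\,\d\lambda_M$ is a Kantorovich operator fixed under composition with each $T_t$ and idempotent, rather than quoting the Section 7 construction.
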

\noindent{\bf Proof:} It is immediate to verify that 
\begin{align*}
(\T_{t,\mu})^*(f) = \sup\{\int_{M}f d\nu\,;\, \nu \text{ such that }B_t \sim \nu, B_0 \sim \mu\} = \int_M P_t f(x)d \mu(x).
\end{align*}
Moreover, it is a standard property of the heat semigroup, that $P_t f \to P_\infty f = \int_{M}f\d\lambda_M$, uniformly on $M$, as $t \to \infty$, for any $f \in C(M)$. By the $1$-Lipschitz property of $T_t$, we conclude $T_t \circ T_\infty f = T_\infty f$.

 \section{Stochastic weak KAM on the Torus}

In this section, we are interested in making the connection between our general notion of linear transfers, stochastic mass transports, and existing work on stochastic weak KAM theory, in particular, by Gomes \cite{Gom}. We shall therefore restrict our setting to $M = \mathbb{T}^d := \R^d/\Z^d$, the $d$-dimensional flat torus. Note that, unlike the deterministic Mather theory, this does not fall under the Monge-Kantorovich setting. 

First, we introduce the stochastic mass transport of a probability measure $\mu$ to a probability measure $\nu$ on $\mathcal{P}(M)$ in time $t > 0$ (see e.g. \cite{M-T} when the space is $\R^d$ and $t = 1$). Define $\T_t(\mu,\nu): \mcal{P}(M)\times \mcal{P}(M) \to \R \cup \{+\infty\}$ via the formula,
\begin{equation}\label{stoctrans}
\T_{t}(\mu,\nu) := \inf\lf\{\E \int_{0}^{t} L(X(s), \beta_X(s,X))\d s\,;\, X(0) \sim \mu, X(t) \sim \nu, X \in \mcal{A}_{[0,t]}\rt\},
\end{equation}
where $L: TM \to [0,\infty)$ is a given Lagrangian function which we detail below, and $X$ is a continuous semi-martingale with an associated drift $\beta_X$, belonging to a class of stochastic processes $\mcal{A}_{[0,t]}$ defined below.

Stochastic transport has a dual formulation (first proven in Mikami-Thieullin \cite{M-T} for the space $\R^d$) that permits it to be realised as a backward linear transfer. In fact, by introducing the operator $T_t: C(M) \to USC(M)$ via the formula
\begin{equation}\label{stochasticop}
T_{t} f(x) := \sup_{X \in \mcal{A}_{[0,t]}} \lf\{\E \lf[f(X(t))|X(0) = x\rt] - \E\lf[\int_{0}^{t} L(X(s),\beta_X(s,X))\d s|X(0) = x\rt]\rt\},
\end{equation}
the duality relation between $\T_t$ and $T_t$ can be readily detailed, see Proposition \ref{backwardlintrans}. The operator $T_t$ connects to the work of Gomes via a Hamilton-Jacobi-Bellman equation (\ref{timedep}),

Concerning the assumptions on $L$, we make the following hypotheses.
 
\enum{
\item[(A0)] $L$ is continuous, non-negative, $L(x,0) = 0$, and $D^2_v L(x,v)$ is positive definite for all $(x,v) \in TM$ (in particular $v \mapsto L(x,v)$ is convex).
 
\item[(A1)] There exists a function $\gamma = \gamma(|v|): \R^n \to [0,\infty)$ such that $\lim_{|v| \to \infty} \frac{L(x,v)}{\gamma(v)} = +\infty$ and $\lim_{|v| \to \infty}\frac{|v|}{\gamma(v)} = 0$.
}
To complete the definition for $\T_t$, we need to define the set of processes $\mcal{A}_{[0,t]}$.  As in \cite{M-T},  let $(\Omega, \mcal{F}, \P)$ be a complete probability space with normal filtration $\{\mcal{F}_t\}_{t \geq 0}$, and define $\mcal{A}_{[0,t]}$ to be the set of continuous semi-martingales $X: \Omega \times [0,t] \to M$ such that there exists a Borel measurable drift $\beta_X: [0,t] \times C([0,t]) \to \R^d$ for which
\enum{
\item $\omega \mapsto \beta_X(s,\omega)$ is $\mcal{B}(C([0,s]))_{+}$-measurable for all $s \in [0,t]$, where $\mcal{B}(C([0,s]))$ is the Borel $\sigma$-algbera of $C[0,s]$.
\item $W_X(s) := X(s) - X(0) - \int_{0}^{s}\beta_{X}(s', X)\d s'$ is a $\sigma(X(s)\,;\, 0 \leq s \leq t)$  
$M$-valued Brownian motion. 
}
 An adaptation of their proofs to the case of a compact torus yields the following.  

\begin{prop}\label{backwardlintrans} Under the above hypothesis on $L$, the following assertions hold:
\begin{enumerate}
\item For each $t > 0$, $\T_t$ is a backward linear transfer with Kantorovich operator $T_t$, and the family $\{\T_t\}_{t > 0}$  is a semi-group of transfers under convolutions.

\item For any $\mu,\nu \in \mcal{P}(M)$ for which $\T_t(\mu,\nu) < \infty$, there exists a minimiser $\bar{X} \in \mcal{A}_{[0,t]}$ for $\T_t(\mu,\nu)$. For every $f \in C(M)$ and $x \in M$, there exists a maximiser for  $T_tf(x)$.
\item Fix $t_1 > 0$, and $u\in C(M)$, the function $U(t,x) := T_{t_1-t} u(x)$ defined for $0 \leq t \leq t_1$  
 is the unique viscosity solution of 
\begin{equation}\label{timedep}
\frac{\partial U}{\partial t}(t,x) + \frac{1}{2}\Delta_x U(t,x) + H(x, \nabla_x U(t,x)) = 0,\quad (t,x) \in [0,t_1)\times M, 
\end{equation}
with $U(t_1,x) = u(x)$.
\item If $f \in C^\infty(M)$ and $t > 0$,  $U(t',x) := T_{t-t'} f \in C^{1,2}([0,t]\times M)$ and $U$ is a classical solution to the Hamilton-Jacobi-Bellman equation \refn{timedep}. 
The maximiser $\bar{X}$ satisfies
\eqs{
\beta_{\bar{X}}(s,\bar{X}) = D_p H(\bar{X}(s), D_x U(s,\bar{X}(s))).
}
\end{enumerate} 
\end{prop}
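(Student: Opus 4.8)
The plan is to transcribe the $\R^d$ theory of Mikami--Thieullin \cite{M-T} to the compact flat torus $M=\mathbb{T}^d$, exploiting that compactness of the state space removes the growth-in-$x$ difficulties of the Euclidean case, so that only the velocity variable must be controlled---which is exactly what the superlinearity hypothesis (A1) provides. For \textbf{Part (1)} I would first check directly from \refn{stochasticop} that $T_t$ is a Kantorovich operator: monotonicity is inherited from that of $f\mapsto \E[f(X(t))\mid X(0)=x]$; convexity holds because $T_tf$ is a supremum over $X\in\mcal{A}_{[0,t]}$ of functionals affine in $f$; and $T_t(f+c)=T_tf+c$ since adding a constant shifts $\E[f(X(t))\mid X(0)=x]$ by that constant independently of $X$. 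The duality identity
\eqs{
\T_t(\mu,\nu)=\sup\lf\{\int_M u\,d\nu-\int_M T_tu\,d\mu\,;\,u\in C(M)\rt\}
}
then realises $\T_t$ as the backward linear transfer with Kantorovich operator $T_t$ in the sense of Proposition \ref{prop.one}. The inequality ``$\geq$'' is elementary: for admissible $X$ with $X(0)\sim\mu$, $X(t)\sim\nu$ and any $u$, the definition of $T_t$ gives $\int u\,d\nu-\int T_tu\,d\mu\leq \E\int_0^tL(X,\beta_X)\,ds$, and one optimises over $X$. The reverse inequality (absence of a duality gap) is the analytic crux, discussed below. The semi-group property $\T_{t+s}=\T_t\star\T_s$ is equivalent, by Proposition \ref{inf.tens}, to $T_{t+s}=T_t\circ T_s$, which is the dynamic programming principle for \refn{stochasticop}: the Markov structure of the semimartingales in $\mcal{A}$ lets one split the horizon at an intermediate time and re-optimise the tail conditionally on the state there.

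\textbf{Part (2)} is the direct method of the calculus of variations transplanted to path space. For $T_tf(x)$, take a maximising sequence $X^n$ with $X^n(0)=x$; the actions $\E\int_0^tL(X^n,\beta^n)\,ds$ are bounded, so (A1) yields a uniform bound and equi-integrability of the drifts $\beta^n$, whence tightness of the laws of $X^n$ on $C([0,t];M)$ (the state part is automatically tight because $M$ is compact). Passing to a weak limit and using convexity of $v\mapsto L(x,v)$ together with (A1) to secure weak lower semicontinuity of the action, the limit process is admissible and attains the supremum. The same argument, now retaining the two weakly closed marginal constraints $X(0)\sim\mu$ and $X(t)\sim\nu$, produces a minimiser for $\T_t(\mu,\nu)$ whenever it is finite.

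For \textbf{Parts (3) and (4)}, the function $U(t,x)=T_{t_1-t}u(x)$ is the value function of a finite-horizon control problem, and the dynamic programming principle of Part (1) shows it is a viscosity solution of \refn{timedep} with terminal data $U(t_1,\cdot)=u$, where $H(x,p)=\sup_v\{p\cdot v-L(x,v)\}$ is the Legendre transform of $L$. Uniqueness follows from the comparison principle for \refn{timedep}: the equation is uniformly parabolic thanks to the nondegenerate $\tfrac12\Delta$ term, there is no boundary on the compact torus, and $H$ is continuous and coercive in $p$, so standard second-order viscosity theory applies. For smooth data $f\in C^\infty(M)$ the nondegeneracy upgrades the viscosity solution to a classical one: parabolic Schauder estimates give $U\in C^{1,2}([0,t]\times M)$, as in Gomes \cite{Gom}. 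With $U$ classical, a verification argument---applying It\^o's formula to $s\mapsto U(s,X(s))$ and using that the pointwise maximiser in the Hamiltonian is $v=D_pH(x,\nabla_xU)$---identifies the optimal feedback drift $\beta_{\bar X}(s,\bar X)=D_pH(\bar X(s),D_xU(s,\bar X(s)))$ and shows the closed-loop SDE $dX=D_pH(X,\nabla_xU)\,ds+dW$ produces the minimiser.

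\textbf{The main obstacle} is the ``no duality gap'' half of Part (1), i.e.\ the inequality $\T_t\leq\sup_u\{\cdots\}$. In $\R^d$ this is the main theorem of \cite{M-T}, proved either by a Fenchel--Rockafellar argument on measures over path space---with (A1) supplying the coercivity that closes the duality---or by routing through the HJB equation and verifying equality via It\^o. Adapting it to $\mathbb{T}^d$ is mostly a matter of replacing growth-at-infinity estimates in the $x$-variable by compactness of $M$; the one delicate point is that the competitor processes and relative-entropy/Girsanov estimates of Mikami--Thieullin use the linear structure of $\R^d$, and one must check these lift to the periodic setting, which is routine since every construction can be carried out on the universal cover $\R^d$ and projected. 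In contrast with the deterministic Fathi--Mather theory, the nondegeneracy of the diffusion is here an asset: it is precisely what furnishes the regularity needed in Part (4).
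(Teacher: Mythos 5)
Your proposal is correct and takes essentially the same route as the paper, which in fact offers no written proof at all: it simply asserts that ``an adaptation of the proofs of Mikami--Thieullin \cite{M-T} to the case of a compact torus yields the following,'' relying implicitly on standard viscosity/HJB theory (as in \cite{FS} and \cite{Gom}) for parts (3) and (4). Your sketch fleshes out precisely that adaptation --- the easy duality inequality, the Fenchel--Rockafellar/coercivity argument for the absence of a duality gap, the direct method on path space for attainment, and the comparison-principle and Schauder/It\^o verification steps --- so it supplies detail the paper omits rather than diverging from it.
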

 In order to define the Man\'e constant $c(\T)$ and develop a corresponding Mather theory, we need to establish that there exists a probability measure $\mu \in {\cal P}(M)$ such that $\T_1(\mu, \mu)<+\infty$. Such a measure can be obtained as the first marginal of a probability measure $m$ on phase space $TM$ that is flow invariant, that is one that satisfies
\begin{equation}
\int_{TM}A^v \vphi(x)\d m(x,v) = 0 \hbox{ for all $\vphi \in C^2(M)$ where $A^v\vphi := \frac{1}{2}\Delta \vphi + v\cdot \nabla \vphi$. }
\end{equation} 
To this end, let $\mcal{P}_\gamma(M)$ denote the set of probability measures on $TM$ such that $$\int_{TM}\gamma(v)\d m(x,v) < +\infty,$$ and denote by ${\cal N}_0$ the class of such probability measures $m$, that is,
\begin{equation*}
\mcal{N}_0 := \{ m \in \mcal{P}_\gamma(TM)\,;\, \int_{TM}A^v \vphi(x)\d m(x,v) = 0 \text{ for all }\vphi \in C^2(M)\}.
\end{equation*}

\begin{prop} The set ${\cal N}_0$ of `flow-invariant' probability measures $m$ on $TM$ is non-empty and 
\begin{equation}
c := \inf\{\T_1(\mu,\mu)\,;\, \mu \in \mcal{P}(M)\}=\inf \{\int_{TM} L(x,v)\d m(x,v);\, m \in {\cal N}_0\}. 
\end{equation}
Moreover, the infimum over  ${\cal N}_0$  
is attained by a measure $\bar m$, that we call a {\it stochastic Mather measure.} Its projection  $\mu_{\bar m}$  on $\mcal{P}(M)$ is a minimiser for $\T_1$. 

Conversely, every minimizing measure $\bar \mu$ of $\T_1(\mu,\mu)$ induces a stochastic Mather measure $m_{\bar \mu}$.
\end{prop}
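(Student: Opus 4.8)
The plan is to prove the equality of the two infima by establishing each inequality with a separate construction, and then to deduce attainment together with the two correspondences by a direct-method argument built on these constructions. \emph{Non-emptiness and the bound $\inf_{\mathcal N_0}\int L\,dm\le c$.} First I would check $\mathcal N_0\neq\emptyset$ by taking driftless Brownian motion started from the uniform measure $\lambda_M$ on $M$: since $\lambda_M$ is invariant for $\tfrac12\Delta$, the measure $m_0:=\lambda_M\otimes\delta_0$ satisfies $\int_{TM}A^v\vphi\,dm_0=\tfrac12\int_M\Delta\vphi\,d\lambda_M=0$ and $\int_{TM}L\,dm_0=\int_M L(x,0)\,d\lambda_M=0$ by (A0), so $m_0\in\mcal{P}_\gamma(M)\cap\mathcal N_0$. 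For the inequality, fix any $\mu$ with $\T_1(\mu,\mu)<+\infty$ and use the existence of an optimal process (Proposition \ref{backwardlintrans}) to obtain $\bar X\in\mcal{A}_{[0,1]}$ with $\bar X(0)\sim\mu$, $\bar X(1)\sim\mu$ and $\T_1(\mu,\mu)=\E\int_0^1 L(\bar X(s),\beta_{\bar X}(s,\bar X))\,ds$. Define the occupation measure $m_{\bar X}$ on $TM$ by $\int_{TM}g\,dm_{\bar X}:=\E\int_0^1 g(\bar X(s),\beta_{\bar X}(s,\bar X))\,ds$, so that $\int_{TM}L\,dm_{\bar X}=\T_1(\mu,\mu)$. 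Applying It\^o's formula to $\vphi(\bar X(s))$ for $\vphi\in C^2(M)$, taking expectations to annihilate the martingale part, and using $\bar X(0)\sim\mu\sim\bar X(1)$ yields $\int_{TM}A^v\vphi\,dm_{\bar X}=\E[\vphi(\bar X(1))]-\E[\vphi(\bar X(0))]=0$; the superlinearity (A1) and finiteness of $\int L\,dm_{\bar X}$ place $m_{\bar X}\in\mcal{P}_\gamma(M)$, hence $m_{\bar X}\in\mathcal N_0$. Minimizing over $\mu$ gives $\inf_{\mathcal N_0}\int L\,dm\le c$.

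\emph{The reverse inequality.} Fix $m\in\mathcal N_0$ and disintegrate $m=\mu\otimes m_x$ along its projection $\mu:=\pi_\# m$ on $M$, setting $\bar b(x):=\int_{T_xM}v\,dm_x(v)$, which is $\mu$-integrable since $\int|v|\,dm<+\infty$. The flow-invariance identity then reads $\int_M\big(\tfrac12\Delta\vphi+\bar b\cdot\nabla\vphi\big)\,d\mu=0$ for all $\vphi\in C^2(M)$, i.e. $\mu$ is a stationary solution of the Fokker--Planck equation for the generator $\tfrac12\Delta+\bar b\cdot\nabla$. Realizing this stationary law as a process $X\in\mcal{A}_{[0,1]}$ with drift $\bar b$ and $X(s)\sim\mu$ for every $s$, I obtain $\T_1(\mu,\mu)\le\E\int_0^1 L(X(s),\bar b(X(s)))\,ds=\int_M L(x,\bar b(x))\,d\mu(x)$. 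Jensen's inequality in the fibre variable, using convexity of $v\mapsto L(x,v)$ from (A0), gives $L(x,\bar b(x))\le\int_{T_xM}L(x,v)\,dm_x(v)$, so $\T_1(\mu,\mu)\le\int_{TM}L\,dm$; taking the infimum over $m\in\mathcal N_0$ yields $c\le\inf_{\mathcal N_0}\int L\,dm$, and with the first paragraph this proves the equality. The delicate point here—the main obstacle of the whole proof—is precisely this realization step: since $\bar b$ is only measurable and $\mu$-integrable, producing an admissible $X\in\mcal{A}_{[0,1]}$ with the prescribed stationary law is a superposition-principle statement, giving a weak solution of the martingale problem for $\tfrac12\Delta+\bar b\cdot\nabla$ from the stationary Fokker--Planck equation, with the integrability needed to stay in $\mcal{A}_{[0,1]}$ and to justify Jensen coming from (A1).

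\emph{Attainment and the two correspondences.} For attainment I would run the direct method on $m\mapsto\int L\,dm$ over $\mathcal N_0$. A minimizing sequence $m_n$ has $\sup_n\int L\,dm_n<+\infty$; by (A1) one has $\gamma(v)\le\tfrac1R L(x,v)+C_R$, so $\sup_n\int\gamma\,dm_n<+\infty$, and superlinearity of $\gamma$ forces uniform integrability of the velocity marginals and tightness. Passing to a weak-$*$ limit $\bar m$, lower semicontinuity of $\int L\,dm$ preserves minimality while uniform integrability of $\int|v|\,dm_n$ lets the linear constraint $\int A^v\vphi\,dm=0$ pass to the limit, so $\bar m\in\mathcal N_0$ attains the infimum, equal to $c$. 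Applying the second-paragraph construction to $\bar m$ gives $\T_1(\mu_{\bar m},\mu_{\bar m})\le\int L\,d\bar m=c$, and since $\T_1(\mu,\mu)\ge c$ always, $\mu_{\bar m}$ is a minimizer of $\T_1$. Conversely, if $\bar\mu$ minimizes $\T_1(\mu,\mu)=c$, the occupation measure $m_{\bar\mu}$ of its optimal process (first paragraph) lies in $\mathcal N_0$ with $\int L\,dm_{\bar\mu}=\T_1(\bar\mu,\bar\mu)=c$, so it minimizes over $\mathcal N_0$ and is therefore a stochastic Mather measure.
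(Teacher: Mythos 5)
Your first direction (occupation measure of the optimal process, then It\^o's formula to verify flow-invariance) is exactly the paper's argument, and your non-emptiness observation ($m_0=\lambda_M\otimes\delta_0\in\mcal{N}_0$, which under (A0) even gives $\int_{TM} L\,dm_0=0$) is a clean addition that the paper leaves implicit. The divergence is in the reverse inequality $c\le\inf\{\int_{TM} L\,dm\,;\,m\in\mcal{N}_0\}$, and this is where your proof has a genuine gap. The paper never constructs a process out of a flow-invariant measure $m$: instead it tests the constraint $\int_{TM} A^v\vphi\,dm=0$ against smooth solutions $\vphi$ of the Hamilton--Jacobi--Bellman equation $\partial_t\vphi+\tfrac12\Delta\vphi+H(x,\nabla\vphi)=0$, uses the pointwise Legendre inequality $v\cdot\nabla\vphi(x,t)-H(x,\nabla\vphi(x,t))\le L(x,v)$ to obtain
\begin{equation*}
\int_{M}[\vphi(x,1)-\vphi(x,0)]\,d\mu_m(x)\ \le\ \int_{TM}L(x,v)\,dm(x,v),
\end{equation*}
and then invokes the dual representation of $\T_1$ (the Mikami--Thieullen duality recorded earlier in that section) to conclude $\T_1(\mu_m,\mu_m)\le\int_{TM}L\,dm$. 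This is a pure verification argument that needs nothing beyond duality.

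Your route --- disintegrate $m=\mu\otimes m_x$, average the fibres into a drift $\bar b\in L^1(\mu)$, apply Jensen, and realize the stationary Fokker--Planck solution $\mu$ as a process in $\mcal{A}_{[0,1]}$ with drift $\bar b$ --- stalls at exactly the step you yourself flag: producing an admissible process from a merely measurable, $\mu$-integrable drift is a superposition-principle theorem (of Figalli/Trevisan type), not something available in the paper's toolkit, and you assert it rather than prove it or cite a precise statement; one must also check that the weak solution it yields satisfies the adaptedness requirements defining $\mcal{A}_{[0,1]}$. Since the entire content of the reverse inequality sits in that step, the proof as written is incomplete; it can be repaired either by quoting such a superposition theorem, or --- more economically --- by replacing that paragraph with the paper's duality argument, which also supplies the inequality $\T_1(\mu_{\bar m},\mu_{\bar m})\le\int_{TM} L\,d\bar m$ that your attainment and correspondence claims rely on. Your direct-method attainment over $\mcal{N}_0$ (coercivity from (A1), uniform integrability of $|v|$ to pass the linear constraint to the limit) is correct and is a legitimate alternative to the paper's implicit route, which instead obtains a minimizer $\bar\mu$ of $\mu\mapsto\T_1(\mu,\mu)$ from weak$^*$ compactness of $\mcal{P}(M)$ and lower semi-continuity, and then transfers it to $\mcal{N}_0$ via the occupation measure.
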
 
\noindent {\bf Proof:} 
Given $\mu \in \mcal{P}(M)$, consider $X \in \mcal{A}_{[0,1]}$ that realises the infimum for $\T_1(\mu,\mu)$, that is
\eqs{
\T_1(\mu,\mu) = \E \int_{0}^{1}L(X(s), \beta_X(s,X))\d s.
}
Define a probability measure $m = m_\mu \in \mcal{P}_\gamma(TM)$ via its action on the subset of continuous functions $\psi :TM \to \R$ with $\sup_{(x,v) \in TM}\lf|\frac{\psi(x,v)}{\gamma(v)}\rt| < +\infty$ and $\lim_{|(x,v)| \to \infty}\frac{\psi(x,v)}{\gamma(v)} \to 0$ via the formula
\begin{equation}\label{definemathermeasure}
\int_{TM}\psi(x,v)\d m(x,v) := \E \int_{0}^{1}\psi(X(s), \beta_X(s,X))\d s.
\end{equation}
We claim that $\int_{TM}A^v \vphi(x)\d m(x,v) = 0$ for every $\vphi \in C^2(M)$. 
Indeed, by the definition of $m$,
\as{
\int_{TM}A^v\vphi(x)\d m(x,v) &= \E \int_{0}^{1}A^{\beta_X(s,X)}\vphi(X(s))\d s\\
 &= \E \int_{0}^{1}\frac{\d }{\d s}[\vphi(X(s))]\d s\quad \text{(It\^o's lemma)}\\
 &= \E \vphi(X(1)) - \E\vphi(X(0))\\
 &= 0,\quad\quad \text{($X(0) \sim \mu \sim X(1)$).}
}
This implies that $m \in \mcal{N}_0$, so that
\begin{align}
\T_1(\mu,\mu) = \E \int_{0}^{1}L(X(s), \beta_X(s,X))\d s
 = \int_{TM}L(x,v)\d m(x,v)\lbl{equalitymathermane}
 \geq \inf_{m \in \mcal{N}_0}\int_{TM}L(x,v)\d m(x,v), 
\end{align}
hence
\eqs{
\inf_{\mu \in \mcal{P}(M)}\T_1(\mu,\mu) \geq \inf_{m \in \mcal{N}_0}\int_{TM}L(x,v)\d m(x,v).
}
Conversely, suppose $m \in \mcal{N}_0$, and let $\vphi(x,t)$ be a smooth solution to the Hamilton-Jacobi-Bellman equation. Since  $\int_{TM}A^v \vphi(x,t)\d m(x,v)= 0$ for every $t$, it follows that $\mu_m := \pi_M\# m$ satisfies
\as{
\int_{M}[\vphi(x,1) - \vphi(x,0)]\d \mu_m(x) &= \int_{[0,1]}\frac{\d}{\d t}\lf[\int_{M}\vphi(x,t)\d \mu_m\rt] \d t\\
&= \int_{0}^{1}\int_{TM}\partial_t \vphi(x,t)\d m(x,v)\d t\\
&= \int_{0}^{1}\int_{TM}[v\cdot \nabla \vphi(x,t) - H(x, \nabla_x \vphi(x,t))]\d m(x,v)\d t.
}
Since $H(x,p) := \sup_{v}\lf\{\langle p, v\rangle - L(x,v)\rt\}$, 
\as{
v\cdot \nabla \vphi(x,t) - H(x, \nabla_x \vphi(x,t)) \leq L(x,v),
}
hence combining the above two displays implies
\eqs{
\int_{M}[\vphi(x,1) - \vphi(x,0)]\d \mu_m(x) \leq \int_{TM}L(x,v)\d m(x,v)
}
for every Hamilton-Jacobi-Bellman solution $\vphi$ on $[0,1)\times M$ with $\vphi(\cdot,1) \in C^\infty(M)$. Taking the supremum over all such solutions $\vphi$ yields
\eqs{
\sup\lf\{\int_{M}[\vphi(x,1) - \vphi(x,0)]\d \mu_m(x)\,;\, \vphi(\cdot, 1) \in C^\infty(M)\rt\} \leq \int_{TM}L(x,v)\d m(x,v).
}
By duality, $\T_1(\mu_m,\mu_m) = \sup\lf\{\int_{M}[\vphi(x,1) - \vphi(x,0)]\d \mu_m(x)\,;\, \vphi(\cdot, 1) \in C^\infty(M)\rt\}$, so that 
\begin{equation}\label{optimalprojection}
\T_1(\mu_m,\mu_m) \leq \int_{TM}L(x,v)\d m(x,v)
\end{equation}
and therefore
$\inf_{\mu \in \mcal{P}(M)}\T_1(\mu,\mu) \leq \int_{TM}L(x,v)\d m(x,v),$ and we are done. \qed

\noindent The following summarizes the main asymptotic properties of  $\{\T_t\}_{t > 0}$.
 
\begin{prop} Let $\{\T_t\}_{t \geq 0}$ be the family of stochastic transfers defined via \refn{stoctrans} with associated backward Kantorovich operators $\{T_t\}_{t \geq 0}$ given by \refn{stochasticop}. Let $c$ be the critical value obtained in the last proposition. Then, 
 
\begin{enumerate}
\item The equation
\eqs{
T_t u + kt = u,\quad t \geq 0, \quad u \in C(M),
}
has solutions (the \textit{backward weak KAM solutions}) if and only if $k = c$.
 
\item The backward weak KAM solutions are exactly the viscosity solutions of the stationary Hamilton-Jacobi-Bellman equation
\begin{equation}\label{stat-HJB}
\frac{1}{2}\Delta u + H(x, D_x u) = -c.
\end{equation}
 \end{enumerate}
\end{prop}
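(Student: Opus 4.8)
The plan is to obtain Part~(1) from the abstract semigroup weak KAM theory of Section~7, namely Theorem~\ref{weakKAMthm}, and to obtain Part~(2) from the evolutionary viscosity characterization already recorded in Proposition~\ref{backwardlintrans}(3), through an affine-in-time ansatz. The whole point is that, unlike the deterministic Lax--Oleinik case, the $\frac{1}{2}\Delta$ term regularizes, so the abstract continuous-semigroup machinery applies once its hypotheses are checked against Gomes' estimates.

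For Part~(1) I would first verify that $\{\T_t\}_{t\geq 0}$ satisfies hypotheses (H0)--(H2). The semigroup property (H0), $\T_{t+s}=\T_t\star\T_s$, is exactly Proposition~\ref{backwardlintrans}(1). For (H1) and (H2) I would exploit the smoothing of the Laplacian: parts~(3)--(4) of Proposition~\ref{backwardlintrans}, together with Gomes' a priori estimates, show that $T_t$ carries $C(M)$ into $C(M)$ (into $C^{1,2}$ for smooth data) and, crucially, that the gradients of $T_t f$ are bounded uniformly for $t\geq\epsilon$. These uniform gradient bounds furnish the common modulus of continuity demanded by (H2), while continuity of $T_t$ gives the weak$^*$-continuity in (H1). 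Once (H0)--(H2) hold, Theorem~\ref{weakKAMthm} supplies a constant $c(\T)$, an idempotent operator $T_\infty$, and backward weak KAM solutions as the fixed points $T_\infty f$; its Property~(5) identifies $c(\T)=\inf\{\T_1(\mu,\mu)\}$, which is precisely the critical value $c$ of the preceding proposition, so solutions exist at the level $k=c$. For the ``only if'' direction I would read off the uniqueness of the level from Lemma~\ref{Kant}(3): if $T_t u+kt=u$ for a bounded $u\in C(M)$, then $T_t u+kt$ stays bounded as $t\to\infty$, whereas $T_t f+kt\to-\infty$ when $k<c$ and $T_t f+kt\to+\infty$ when $k>c$; hence necessarily $k=c$.

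For Part~(2) I would use that, for fixed $t_1>0$, the function $U(t,x):=T_{t_1-t}u(x)$ is the unique viscosity solution of the evolutionary equation (\ref{timedep}) with terminal datum $U(t_1,\cdot)=u$ (Proposition~\ref{backwardlintrans}(3)). If $u$ is a backward weak KAM solution, then $T_s u=u-cs$, so $U(t,x)=u(x)+c(t-t_1)$; inserting this affine-in-$t$ profile into (\ref{timedep}) collapses it to $c+\frac{1}{2}\Delta u+H(x,\nabla u)=0$ in the viscosity sense, i.e. $u$ solves (\ref{stat-HJB}). Conversely, if $u$ solves (\ref{stat-HJB}), then $\tilde U(t,x):=u(x)+c(t-t_1)$ solves (\ref{timedep}) with terminal datum $u$, and uniqueness forces $\tilde U(t,\cdot)=T_{t_1-t}u$; setting $s=t_1-t$ yields $T_s u+cs=u$ for all $s\geq 0$, so $u$ is a backward weak KAM solution.

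I expect the main obstacle to be the verification of (H2): the equicontinuity, and hence the $C(M)$-compactness, of the iterates is exactly what keeps the limiting weak KAM solutions continuous, and it is not automatic for the stochastic transfer --- it must be imported from the uniform-in-time gradient estimates for (\ref{timedep}) established in Gomes' stochastic Mather theory, which is where the genuine PDE analysis resides. A secondary technical point is the careful transfer of the sub-/super-solution test-function inequalities between the stationary and evolutionary viscosity formulations under the affine-in-time reduction, which one must check preserves the viscosity notion in both directions.
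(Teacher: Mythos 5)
Your Part (2) is essentially the paper's own argument (affine-in-time profiles in the evolutionary equation (\ref{timedep}) combined with uniqueness/comparison for viscosity solutions), so that half is fine. The genuine gap is in Part (1): the existence step rests on Theorem \ref{weakKAMthm}, whose hypotheses provably fail here. The stochastic transfers $\T_t$ are the paper's prototypical example of transfers that are \emph{not} weak$^*$-continuous on $\mcal{P}(M)\times\mcal{P}(M)$: they take the value $+\infty$, e.g.\ $\T_t(\mu,\delta_y)=+\infty$ for a Dirac target, since forcing a process $dX=\beta\,dt+dW$ to terminate a.s.\ at a point requires a bridge-type drift whose action diverges under the superlinearity assumption (A1). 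Hence (H1) and (H2) cannot hold; indeed (H0) together with a common modulus of continuity would force, via Proposition \ref{estimation}, the bound $|\T_t(\mu,\nu)-tc|\leq C$ for \emph{all} $\mu,\nu$, contradicting these infinite values. No gradient estimate on $T_tf$ imported from Gomes can repair this, because the obstruction is the equicontinuity (in fact finiteness) of $\T_t$ on Wasserstein space, not the regularity of $T_tf$ --- this is precisely why the paper develops the non-continuous theory of Sections 9--10. Your ``only if'' direction has the same defect: Lemma \ref{Kant}(3) is proved under (H0)--(H2) and so is unavailable; the correct tool is Corollary \ref{useful}, which holds under (\ref{inv}), (\ref{T1b}) and Dirac masses in $D_1(\T)$ (all verified in this setting, e.g.\ $\inf_\sigma\T_1(\delta_x,\sigma)=0$ by taking zero drift and using $L(x,0)=0$), and which yields $k\leq c$ from $T_1u+k\leq u$ and $k\geq c$ from $T_1u+k\geq u$.

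What the paper actually does for existence is a discounted approximation in place of your (H0)--(H2) verification: it takes the infinite-horizon discounted value function $u_\alpha$, which solves the discounted Hamilton--Jacobi--Bellman equation $\alpha u_\alpha-\tfrac12\Delta u_\alpha+H(x,D_xu_\alpha)=0$ and satisfies the operator identity $T_tu_\alpha+t\alpha u_\alpha=u_\alpha$; this places it squarely in the hypotheses of Proposition \ref{hor1} (with $v=\alpha u_\alpha$), which produces $h\in USC(M)$ with $T_1h+c=h$, at the level $c=c(\T)=\inf_\mu\T_1(\mu,\mu)$ identified in the preceding proposition. The passage from integer times to all $t\geq 0$ is then exactly the comparison argument you sketch for Part (2): the function $U(t,x):=T_{1-t}h(x)+c(1-t)$ solves the evolutionary equation with $U(0,\cdot)=U(1,\cdot)$, and the comparison theorem forces $U(t,\cdot)\equiv U(1,\cdot)$, whence $T_\alpha h+c\alpha=h$ for all $0\leq\alpha<1$. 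If you keep your overall structure but substitute this discounted route (plus Corollary \ref{useful} for uniqueness of the level) for the equicontinuous-semigroup machinery, the proof goes through.
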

\begin{proof} The fact that there are solutions for (\ref{stat-HJB}) was established by Gomes \cite{Gom}. We give a proof based on Proposition \ref{hor1} that clarifies the relationship between such solutions and the notion of backward weak KAM solutions.

Let $\alpha >0$ and consider 
$$u_\alpha(x):= \inf\lf\{\E \int_{0}^{+\infty} e^{-s}L(X(s), \beta_X(s,X))\d s\,; \, X \in \mcal{A}_{[0,t]}, \,  X(0)=x\rt\}.$$
It is well known that one then has
\[
u_\alpha(x)=\inf\lf\{\E \int_{0}^{t} e^{-s}L(X(s), \beta_X(s,X))\d s + e^{-\alpha t}u_\alpha (X(t))\,; \, X \in \mcal{A}_{[0,t]}, \,  X(0)=x\rt\},
\]
and
\[
\alpha u_\alpha -\frac{1}{2}\Delta u_\alpha + H(x, D_x u_\alpha)=0. 
\]
It is straightforward to check that this implies that 
\[
T_t u_\alpha +t \alpha u_\alpha = u_\alpha. 
\]
Proposition \ref{hor1} applies to get the result with $t=n$. Note that for constructing a viscosity solution for  \refn{stat-HJB}, it suffices to find a weak KAM solution for $T_1$. Indeed, 
 suppose there exists a function $f \in C(M)$ such that $T_1 f(x) + c = f(x)$ for all $x \in M$, we need to show that $T_t f(x) + ct = f(x)$ for all $t > 0$.
But note that from the semi-group property, the claim is true for $t = n \in \N$. For other $t > 0$, by writing uniquely $t = n + \alpha$ where $n \in \N$ and $0 \leq \alpha < 1$, it then suffices to prove that $T_\alpha f(x) + \alpha c = f(x)$.
Note that the function $U(t,x) := T_{1-t}f(x) + c(1-t)$ satisfies the Hamilton-Jacobi-Bellman equation
\begin{equation}\label{viscoinitial}
\begin{cases}
\frac{\partial U}{\partial t}(t,x) + \frac{1}{2}\Delta U(t,x) + H_c(x, \nabla U(t,x)) = 0,\quad t \in [0,1), x \in M\\
U(1,t) = f(x).
\end{cases}
\end{equation}
where $H_c(x,p) := H(x,p) + c$, with the additional property that $U(0,x) = U(1,x)$. We may then apply a comparison result for Hamilton-Jacobi-Bellman (see e.g. \cite{FS}, Section V.8, Theorem 8.1) to deduce that in fact the condition $U(0,x) = U(1,x)$ implies $U(t,x) = U(1,x)$ for every $t \in [0,1]$. In particular, at $t = 1-\alpha$, we deduce that $T_\alpha f(x) + c\alpha = f(x)$.

 As to the relationship between 1) and 2) observe that if $u$ is a backward weak KAM solution,  
then $U(t,x) := T_{t_1-t}u(x) + c(t_1 - t)= u(x)$ is a viscosity solution to (\ref{viscoinitial}) where the final time is $t_1$.
Hence $u$ is a viscosity solution of \refn{stat-HJB}.

Conversely, suppose $u$ is a viscosity solution to \refn{stat-HJB}.Then, $(x,t) \mapsto u(x)$ is a viscosity solution to \refn{viscoinitial}.
 On the other hand,  
$T_{t_1-t} u +c(t_1 - t)$ is also a viscosity solution of \refn{viscoinitial}. By the uniqueness of such solutions,  
it follows that $T_{t_1-t}u(x) + c(t_1-t) = u(x) $. As $t_1 > 0$ is arbitrary, this shows that $u$ is a backward weak KAM solution.

\end{proof}

We finish this section with the following characterization of the Man\'e value, motivated by the work of Fathi \cite{Fa} in the deterministic case. Let $u \in C(M)$ and $k \in \R$, and  say that {\it $u$ is dominated by $L-k$} and write $u \prec L - k$ if for every $t > 0$, it holds for every $X \in \mcal{A}_{[0,t]}$ and every $x\in M$, 
\begin{equation}
\E [u(X(t))|X(0) = x] - u(x) \leq \E \lf[\int_{0}^{t}L(X(s), \beta_X(s,X)\d s|X(0) = x\rt] - kt.
\end{equation}
\begin{prop}
The Ma\~n\'e critical value satisfies
\eqs{
c = \sup\lf\{k \in \R\,:\, \exists u \quad \hbox{such that}\quad  u \prec L - k\rt\}.
}
\end{prop}

\begin{proof}
By the above,  
there exists a $u$ such that $T_t u + c t = u$, so that by definition of $T_t$,
\eqs{
u(x) - ct \geq \E [u(X(t))|X(0) = x] - \E \lf[\int_{0}^{t} L(X(s),\beta_X(s,X)\d s|X(0) = x\rt]
} 
for every $X \in \mcal{A}_{[0,t]}$. This shows that $u \prec L - c$, so $c$ is itself admissible in the supremum.

On the other hand, if $k \in \R$ is such that $u \prec L - k$, then it is easy to see that $T_tu(x) \leq u(x) - kt$ for all $t$. In particular, applying $T_s$ and using the linearity of $T_s$ with respect to constants, we find $T_{s+t}u + kt \leq T_su$, and hence
\eqs{
T_{s+t}u + k(t+s) \leq T_su + ks
}
So $t \mapsto T_tu + kt$ is decreasing and the result follows from Corollary \ref{useful}.
 
\end{proof}

\section{Convex couplings and convex and Entropic Transfers}
First, recall that the  {\it increasing Legendre transform} (resp., {\it decreasing Legendre transform)} of a function $\alpha: \R^+\to \R$ (resp., $\beta: \R^+ \setminus \{0\} \to \R$) is defined as
    \begin{equation}
  \alpha^{\oplus}(t)=\sup\{ts-\alpha(s); s\geq 0\} \quad\hbox{resp.,  $\beta^{\ominus}(t)=\sup\{-ts-\beta(s); s > 0\}$ }
   \end{equation}
By extending $\alpha$ to the whole real line by setting $\alpha (t)=+\infty$ if $t<0$, and using the standard Legendre transform, one can easily show that $\alpha$ is convex increasing on $\R^+$ if and only if  $\alpha^{\oplus}$ is convex and increasing on $\R^+$. We then have the following reciprocal formula
  \begin{equation}
\alpha (t)=\sup\{ts-\alpha^{\oplus} (s); s\geq 0\}.
  \end{equation}
Similarly, if $\beta$ is convex decreasing on $\R^+\setminus \{0\}$, we have
 \begin{equation}
\beta (t)=\sup\{-ts-\beta^{\ominus} (s); s\geq 0\}.
  \end{equation}

 \subsection{Convex couplings}
 
  We now give a few examples of convex couplings, which are not necessarily convex transfers.   
  
  \begin{prop} Let $\alpha: \R^+\to \R$ (resp., $\beta: \R^+ \setminus \{0\} \to \R$) be a convex (resp., concave) increasing functions.
  If ${\cal T}$ is a linear backward (resp., forward) transfer with  Kantorovich operator $T^-$ (resp.,  Kantorovich operator $T^+$), then $\alpha ({\cal T})$ is a  backward convex  (resp.,  forward convex) coupling associated to a family of (Kantorovich) operators $(T_s^-)_{s\geq 0}$ (resp.,$(T_s^+)_{s\geq 0}$, where
\begin{equation}
T_s^-f=sT^-(\frac{f}{s})-\alpha^\oplus(s)\quad \hbox{(resp., $T_s^+f=sT^+(\frac{f}{s})-\alpha^\oplus(s)$.}
\end{equation}
In particular, for any $p\geq 1$, $\T^p$ is a forward (resp., backward) convex coupling. 
 
    \end{prop}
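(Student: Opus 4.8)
The plan is to reduce everything to the reciprocal (Legendre) formula for $\alpha$ together with the defining dual representation of the linear transfer $\mathcal{T}$, essentially reorganizing into the language of convex couplings the computation already carried out for the envelope $\overline{\alpha(\mathcal{T})}$ in the preceding Corollary. First I would record that $\alpha\circ\mathcal{T}$ is an admissible candidate: since $\alpha$ is convex, increasing and lower semi-continuous on $\R^+$ (extended by $+\infty$ on the negative axis) while $\mathcal{T}$ is proper, convex and weak$^*$ lower semi-continuous with $D(\mathcal{T})\subset\mathcal{P}(X)\times\mathcal{P}(Y)$, the composition $\alpha(\mathcal{T})$ is again proper, convex and weak$^*$ lower semi-continuous and equals $+\infty$ off $\mathcal{P}(X)\times\mathcal{P}(Y)$. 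This places it among the functionals on which the definition of backward convex coupling can be tested.

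The key computation comes next. Using the reciprocal formula $\alpha(t)=\sup\{st-\alpha^{\oplus}(s);\, s\geq 0\}$ recalled at the beginning of this section, and substituting the backward representation $\mathcal{T}(\mu,\nu)=\sup\{\int_Y f\, d\nu-\int_X T^-f\, d\mu;\, f\in C(Y)\}$ for a pair $(\mu,\nu)\in\mathcal{P}(X)\times\mathcal{P}(Y)$, I obtain
\[
\alpha(\mathcal{T}(\mu,\nu))=\sup_{s\geq 0}\sup_{f\in C(Y)}\Big\{s\int_Y f\, d\nu-s\int_X T^-f\, d\mu-\alpha^{\oplus}(s)\Big\}.
\]
I then homogenize by the substitution $h=sf$ for $s>0$ and absorb the constant $\alpha^{\oplus}(s)$ into the $\mu$-integral, which is legitimate because $\mu$ is a probability measure, so $\alpha^{\oplus}(s)=\int_X\alpha^{\oplus}(s)\, d\mu$. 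This yields
\[
\alpha(\mathcal{T}(\mu,\nu))=\sup_{s>0}\sup_{h\in C(Y)}\Big\{\int_Y h\, d\nu-\int_X\big[\,sT^-(\tfrac{h}{s})+\alpha^{\oplus}(s)\,\big]\, d\mu\Big\},
\]
which is exactly the defining form of a backward convex coupling with $T_s^-h:=sT^-(h/s)+\alpha^{\oplus}(s)$; note that in the backward case the constant folded into the $\mu$-marginal appears with a plus sign, in agreement with the envelope formula of the earlier Corollary. A one-line check shows each $T_s^-$ is in fact a Kantorovich operator: it is monotone and convex by $s\geq 0$ and the corresponding properties of $T^-$, and affine on constants since $sT^-(\tfrac{h+c}{s})+\alpha^{\oplus}(s)=sT^-(\tfrac{h}{s})+c+\alpha^{\oplus}(s)$, which justifies the parenthetical ``(Kantorovich)''. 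The forward assertion is the order-reversed mirror: starting from $\mathcal{T}(\mu,\nu)=\sup\{\int_Y T^+f\, d\nu-\int_X f\, d\mu;\, f\in C(X)\}$, the same substitution with the roles of the two marginals exchanged folds $\alpha^{\oplus}(s)$ into the $\nu$-integral and gives $T_s^+h=sT^+(h/s)-\alpha^{\oplus}(s)$. For the final claim I take $\alpha(t)=t^p$ with $p\geq 1$, which is convex increasing on $\R^+$, so that $\mathcal{T}^p=\alpha(\mathcal{T})$ is a backward (resp., forward) convex coupling by the foregoing.

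The one genuinely delicate point—the \emph{main obstacle}—is the boundary index $s=0$ together with the values where $\mathcal{T}(\mu,\nu)=+\infty$. The substitution $h=sf$ degenerates at $s=0$, so one must verify that the $s=0$ contribution (which encodes only the lower bound $\alpha(0)$, and in homogeneous form invokes the recession operator $T_r^-$ of the recession Corollary) does not affect the supremum, and that when $\mathcal{T}(\mu,\nu)=+\infty$ the right-hand side still returns $\alpha(+\infty)=\lim_{t\to\infty}\alpha(t)$ — finite when $\alpha$ is bounded, in which case $\alpha^{\oplus}(s)=+\infty$ for large $s$ automatically truncates the admissible range of $s$. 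I expect to dispatch this either by enlarging the index set to $s\geq 0$ with $T_0^-h:=T_r^-h+\alpha^{\oplus}(0)$, or more simply by a monotonicity/lower-semicontinuity argument letting $s\downarrow 0$ and $t\uparrow+\infty$.
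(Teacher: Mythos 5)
Your proposal is correct and takes essentially the same route as the paper: the reciprocal formula $\alpha(t)=\sup_{s\geq 0}\{ts-\alpha^{\oplus}(s)\}$, insertion of the dual representation of $\mathcal{T}$, and the homogenizing substitution $h=sf$, from which the family $(T_s^{\pm})_{s}$ is read off --- the paper performs exactly this computation (in two lines, for the forward case) and states the backward case by symmetry. Your additions (checking that each $T_s^-$ is a Kantorovich operator, and treating $s=0$ and the set where $\mathcal{T}=+\infty$) go beyond what the paper records, and your sign $+\alpha^{\oplus}(s)$ in the backward operator is indeed the correct one, consistent with the envelope formula (\ref{mir}), whereas the minus sign in the statement's backward formula reflects the forward-case convention.
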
 
\noindent{\bf Proof:}
It suffices to write 
\begin{eqnarray*}
\alpha ({\mathcal T}(\mu, \nu))
&=& 
\sup\big\{s\int_{Y}{T^+}f\, d\nu-s\int_{X}f\, d\mu -\alpha^\oplus(s);\, s\in \R^+,  f \in C(X) \big\}\\
&=& 
\sup\big\{\int_{Y}sT^+(\frac{h}{s})\, d\nu  -\alpha^\oplus(s)-\int_{X}h\, d\mu;\, s\in \R^+,  h \in C(X) \big\},
\end{eqnarray*}
which means that $\alpha ({\cal T})$ is a forward convex coupling corresponding to the family of (Kantorvich) operators $T_s^+f=sT^+(\frac{h}{s})-\alpha^\oplus(s)$.\\

\noindent{\bf Example 11.1): A mean-field planning problem} (Orrieri-Porretta-Savar\'e \cite{Sav})

Let $L:\R^d\times \R^d \to \R$ be a Tonelli Lagrangian and $F:\R^d \times L^\infty([0, T];{\cal P}(\R^d)) \to \R$ be a functional that is convex in the second variable,  and consider the following mean-field planning problem between two probability measures $\mu$ and $\nu$,    
   \begin{equation} 
{\cal T}(\mu, \nu)=:\min
\left\{\int_0^T\int_{\R^d} L(x,\vv)\,\rho(t, dx)\, dt+
\int_0^TF(x,\rho(t, dx))\,\d t; \,\, \vv\in
L^2(\rho (t, dx)\, dt)\right\},
\end{equation}
subject to $\rho$ and $v$ satisfying
\begin{eqnarray}
 \label{con} 
\partial_t \rho+\nabla\cdot (\rho\,\vv )=0, \quad
\rho(0,\cdot)=\mu\,, \rho(T,\cdot)=\nu.  
\end{eqnarray}
Then, ${\cal T}$ is  both a forward and backward convex coupling. 
 
Indeed, following Orrieri-Porretta-Savar\'e \cite{Sav}, we consider for each $\ell\in C([0, T], \R^d)$ the Kantorovich operator defined on $C(\R^d)$ via
\[
T_\ell (u)= u_\ell (T, x)-   \iint_Q F^*(x, \ell(t,x))\,\d x
\]
 where $u_\ell(t, x)$ is a solution of the Hamilton-Jacobi equation 
\begin{eqnarray}\label{HJN}
 -\partial_t u+H(x,Du)&=&\ell\quad\text{in } Q:=(0, T)\times \R^d, \\
 u(0, x)&=&u(x).
 \end{eqnarray}
and  $F^*(x, \ell)=\sup \left\{\langle \ell, \rho\rangle -F(x, \rho); m\in L^\infty([0, T];{\cal P}(\R^d))\right\}$. 

A standard min-max argument then yields that
\begin{eqnarray*} 
{\cal T}(\mu, \nu)&=&\sup \left\{\int_{\R^d}T_\ell u\, d\nu-\int_{\R^d}u \, d\mu; u\in C(\R^d), \ell \in C([0, T], \R^d)\right\}\\
&=&\sup \left\{\int_{\R^d}u_\ell (T, x) d\nu-
  \int_{\R^d}u_\ell (0, x)\, d\mu (x)-
                     \iint_Q F^*(x, \ell(t,x))\,\d x; \, u_\ell\, {\rm solves}\,\, (\ref{HJN})  
                 \right\}.
\end{eqnarray*}

\begin{rmk} Another convex -but only backward- coupling can be defined as 
 \begin{equation} 
{\cal T}(\mu, \nu)=:\min
\int_0^T\int_{\R^d} L(x,\vv)\,\rho(t, dx)\, dt+
\int_0^TF(x,\rho(t, dx))\,\d t;\,  \vv\in
L^2(\rho (t, dx)\, dt),
\end{equation}
subject to $\rho$ and $v$ satisfying
\begin{eqnarray} \label{con.bis} 
\partial_t \rho-\Delta \rho+\nabla\cdot (\rho\,\vv )=0, \, 
\rho(0,\cdot)=\mu\,, \rho(T,\cdot)=\nu.  
\end{eqnarray}
\end{rmk}
We do not know whether $\T$ is a convex transfer. This is equivalent to the question whether $\ell \to T_\ell$ is concave, or equivalently whether the map $\ell \to u_\ell (T, x)$ is concave. \\

  \noindent {\bf Example 11.2: A backward convex coupling which is not a convex transfer}
  
Let $\Omega \subset \R^d$ be a Borel measurable subset with $1 < |\Omega| < \infty$, $\lambda := \frac{1}{|\Omega|}$, and define for any two given probability measures $\mu$, $\nu$ on $\Omega$, the correlation,
\begin{equation}
\T_\lambda(\mu,\nu) = 
\begin{cases}
0 & \text{if }\nu \in \mcal{C}_\lambda(\mu)\\
+\infty & \text{otherwise,}
\end{cases}
\end{equation}
where $\mcal{C}_\lambda(\mu) := \{ \nu \in \mcal{P}(\Omega)\,;\, \lambda\lf|\frac{\d\nu}{\d\mu}\rt| \leq 1\, \mu\text{-a.e.}\}$. Note that when $\mu = \lambda \d x|_{\Omega}$ (the uniform measure on $\Omega$), 
\begin{equation}
\T_\lambda(\lambda \d x|_{\Omega},\nu) =
\begin{cases}
0 & \text{if $\lf|\frac{\d\nu}{\d x}\rt| \leq 1$ Lebesgue-a.e.}\\
+\infty & \text{otherwise.}
\end{cases}
\end{equation}
We claim that $\T_\lambda$ is a  backward convex coupling but not a convex transfer. Indeed, for the first claim, consider $\alpha_m(t) := (\lambda t)^m\log (\lambda t)$ for $m \geq 1$ and $t \geq 0$, and define
\begin{equation}
\T_m(\mu,\nu) := 
\begin{cases}
\int_{\Omega}\alpha_m\lf(\lf|\frac{\d\nu}{\d\mu}\rt|\rt)\d \mu, & \text{if } \nu << \mu,\\
+\infty & \text{otherwise.}
\end{cases}
\end{equation}
By Example 11.1, $\T_m$ is a backward convex transfer and 
\begin{equation}
(\T_{m,\mu})^*(f) = \inf\{\int_{\Omega}[\alpha_m^{\oplus}(f(x) + t) - t]\d\mu(x)\,;\, t \in \R\}.
\end{equation}
The function $\alpha_m^{\oplus}$ can be explicitly computed as 
\begin{equation}
\alpha^{\oplus}_m(t)= 
\begin{cases} e^{-1 + \frac{1}{m-1} W\lf(\beta_m t \rt) }\lf[ \beta_m t + \frac{1}{m} e^{W\lf(\beta_m t \rt)}\rt] & \text{if } t \geq -\frac{\lambda}{m-1} e^{-1},\\
0 & \text{if }  t < -\frac{\lambda}{m-1} e^{-1}.
\end{cases}
\end{equation}
where $\beta_m := \frac{m-1}{\lambda m}e^{\frac{m-1}{m}}$, and $W$ is the \textit{Lambert-W} function. It is easy to see that $\T_\lambda(\mu,\nu) = \sup_{m}\T_m(\mu,\nu)$; hence it is a  backward convex coupling (as a supremum of  backward convex transfers). 

However, $\T_\lambda$ is not a  backward convex  transfer, since 
$$(\T_{\lambda, \mu})^*(f) = (\sup_{m} \T_{m,\mu})^*(f) \leq \inf_{m} \T_{m,\mu}^*(f) = \int \frac{f}{\lambda}\d \mu,$$
with the inequality being in general strict.

Note that this also implies that the Wasserstein projection on the set ${\cal C}_\mu$, that is 
\begin{equation}
W_2^2(P_1[\nu], \nu ) =\inf\{W_2^2(\sigma, \nu); \lf|\frac{\d\sigma}{\d x}\rt| \leq 1\}= \inf\{ \T_\lambda(\lambda\d x|_{\Omega}, \sigma) + W_2^2(\sigma, \nu)\,;\, \sigma \in \mcal{P}(\Omega)\}
\end{equation}
is in fact an inf-convolution of a backward convex coupling $\T_\lambda$ with the linear transfer $W_2^2$, and no duality formula can then be extracted.

 \subsection{Convex and entropic  transfers}
 
 \begin{prop} Let $\alpha: \R^+\to \R$ (resp., $\beta: \R^+ \setminus \{0\} \to \R$) be a convex (resp., concave) increasing functions.
    \begin{enumerate}

   \item If ${\mathcal E}$ is a $\beta$-entropic backward transfer with Kantorovich operator $E ^-$, then it 
is a backward convex transfer with Kantorovich family $(T^-_s)_{s>0}$ given by
\begin{equation}
T_s^-f=sT^-f+(-\beta)^\ominus(s).
\end{equation}

\item Similarly, if ${\mathcal E}$ is an $\alpha$-entropic forward transfer with Kantorovich operator $E ^+$, then it 
is a forward convex transfer with Kantorovich family $(T^+_s)_{s>0}$ given by
\begin{equation}
T_s^+f=sT^+f-\alpha^\oplus(s).
\end{equation}
   \end{enumerate}
   \end{prop}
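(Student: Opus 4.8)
The plan is to deduce both statements directly from the reciprocal Legendre--transform identities recorded at the start of this section, using only that $\mu$ and $\nu$ are probability measures, so that any additive constant passes through the integral unchanged. Throughout I identify the operator written $T^-$ (resp.\ $T^+$) in the statement with the Kantorovich operator $E^-$ (resp.\ $E^+$) of the entropic transfer $\mathcal{E}$. In each case the goal is simply to rewrite the prescribed Legendre transform $\mathcal{T}^*_\mu(g)$ (resp.\ $\mathcal{T}^*_\nu(f)$) as an infimum over $s>0$ of expressions affine in $s$, thereby matching the definition of a convex transfer with the announced Kantorovich family.

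For part (1) I would begin from the defining property of a $\beta$-entropic backward transfer, $\mathcal{T}^*_\mu(g)=\beta\bigl(\int_X E^- g\,d\mu\bigr)$ for $\mu\in D_1(\mathcal{T})$ and $g\in C(Y)$. Since $\beta$ is concave increasing, $-\beta$ is convex decreasing on $\R^+\setminus\{0\}$, so the reciprocal formula for the decreasing Legendre transform applies to $-\beta$ and gives $\beta(t)=\inf_{s>0}\{\,st+(-\beta)^\ominus(s)\,\}$. Setting $t=\int_X E^- g\,d\mu$ and noting that $(-\beta)^\ominus(s)$ is a constant while $\mu$ is a probability measure, I rewrite $st+(-\beta)^\ominus(s)=\int_X\bigl(sE^- g+(-\beta)^\ominus(s)\bigr)\,d\mu=\int_X T_s^- g\,d\mu$. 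Hence $\mathcal{T}^*_\mu(g)=\inf_{s>0}\int_X T_s^- g\,d\mu$, which is exactly the identity characterising a backward convex transfer with Kantorovich family $(T_s^-)_{s>0}$.

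Part (2) is the sign-dual computation. Starting from $\mathcal{T}^*_\nu(f)=-\alpha\bigl(\int_Y E^+(-f)\,d\nu\bigr)$, I first compute $(-T_s^+)(-f)=-sE^+(-f)+\alpha^\oplus(s)$, integrate against the probability measure $\nu$ to get $\int_Y(-T_s^+)(-f)\,d\nu=-s\,t+\alpha^\oplus(s)$ with $t:=\int_Y E^+(-f)\,d\nu$, and then invoke the reciprocal formula $\alpha(t)=\sup_{s\ge 0}\{\,st-\alpha^\oplus(s)\,\}$. This yields $\inf_{s>0}\int_Y(-T_s^+)(-f)\,d\nu=-\sup_{s>0}\{st-\alpha^\oplus(s)\}=-\alpha(t)=\mathcal{T}^*_\nu(f)$, matching the forward convex transfer definition.

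I expect no genuine analytic obstacle: the content is a bookkeeping of signs together with the biconjugacy already established for $\oplus$ and $\ominus$. The only points deserving care are whether the infimum runs over $s>0$ or $s\ge 0$, which is harmless because $\alpha$ and $\beta$ are increasing so the endpoint $s=0$ never improves the extremum, and the observation that the convex-transfer definition does not require each $T_s^\pm$ to itself be a Kantorovich operator, so it suffices to verify the displayed Legendre identity rather than properties (a)--(c).
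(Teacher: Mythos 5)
Your proof is correct and follows essentially the same route as the paper: invoke the reciprocal formula $\beta(t)=\inf_{s>0}\{st+(-\beta)^\ominus(s)\}$ coming from the convex-decreasing function $-\beta$, substitute $t=\int_X E^- g\,d\mu$, and absorb the constant $(-\beta)^\ominus(s)$ into the integral against the probability measure to match the convex-transfer definition. The paper only writes out part (1) explicitly, leaving the forward case implicit; your part (2) is the correct sign-dual computation using $\alpha(t)=\sup_{s\geq 0}\{st-\alpha^\oplus(s)\}$, so no gap remains.
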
 
\noindent{\bf Proof:}
Use the fact that $(-\beta)$ is convex decreasing to write that for any $g\in C(Y)$, 
\[
\beta \big(\int_XT^-g\, d\mu)=\inf\{ s\int_XT^-g\, d\mu +(-\beta)^\ominus(s); s>0\},
\]
hence $\E$ is a backward convex transfer with Kantorovich family   given by $
T_s^-f=sT^-f+(-\beta)^\ominus(s)$.\\

\noindent{\bf Example 11.4: General entropic functionals are convex transfers}

 Consider the following {\it generalized entropy,}
 \begin{equation}
 {\cal E}_\alpha (\mu, \nu)=\int_X \alpha (|\frac{d\nu}{d\mu}|)\,  d\mu, \quad \hbox{if $\nu<<\mu$ and $+\infty$ otherwise,}
 \end{equation}
 where  $\alpha$ is any strictly convex lower semi-continuous superlinear (i.e., $\lim\limits_{t\to +\infty}\frac{\alpha (t)}{t}=+\infty$) real-valued function on $\R^+$. 
 It is then easy to show \cite{GRS}  that 
 \begin{equation}
  ({\cal E}_\alpha)_\mu^*(f)=\inf\{\int_X[\alpha^\oplus(f(x)+t)-t]\, d\mu(x); t\in \R\}, 
 \end{equation}
In other words, ${\cal E}_\alpha$ is a backward convex transfer with Kantorovich family
\[
T_t^-f(x)=\alpha^\ominus(f(x)+t)-t.
\]
  
\noindent{\bf Example 11.5: The logarithmic entropy is a $\log$-entropic backward transfer}

 The relative logarithmic entropy ${\cal H}(\mu, \nu)$ 
  is defined as
\[
\hbox{${\cal H}(\mu, \nu):=\int_X\log (\frac {d\nu}{d\mu})\, d\nu$ if $\nu<<\mu$ and $+\infty$ otherwise.}
\]
It can also be written as 
\[
\hbox{${\cal H}(\mu, \nu):=\int_X h(\frac {d\nu}{d\mu})\, d\mu$ if $\nu<<\mu$ and $+\infty$ otherwise,}
\]
where $h(t)=t\log t -t+1$, which is strictly convex and positive. Since $h^*(t)=e^t-1$, it follows that 
 \[
  {\cal H}_\mu^*(f)=\inf\{\int_X(e^te^{f(x)}-1-t)\, d\mu(x); t\in \R\}=\log\int_Xe^f\, d\mu.
 \]
 In other words, 
${\cal H}(\mu, \nu)=\sup\{\int_X f\, d\nu-\log\int_Xe^f\, d\mu; f\in C(X)\}$, 
 and ${\cal H}$ is therefore  {\it a  $\beta$-entropic backward transfer} with $\beta (t)=\log t$, and $E ^-f=e^{f}$ is a Kantorovich operator. \\
 ${\cal H}$ is a backward convex transfer since for any $f\in C(X)$, 
 \[
 \log\int_Xe^f\, d\mu=\inf\{s\int_Xe^f\, d\mu +\beta^\ominus(s); s>0\}.
 \]
 In other words, it is a backward convex transfer with Kantorovich family $T^-_sf= se^f+\beta^\ominus(s)$ where $s>0$. \\

\noindent{\bf Example 11.6: The Fisher-Donsker-Varadhan information is a backward convex transfer} \cite{DV}

Consider an $\XX$-valued time-continuous Markov process $(\Omega, \FF,
(X_t)_{t\ge0}, (\pp_x)_{x\in \XX})$  with an invariant probability
measure $\mu.$ Assume the transition semigroup, denoted
$(P_t)_{t\ge0},$ to be completely continuous on
$L^2(\mu):=L^2(\XX,\BB,\mu)$. Let $\LL$ be its generator with
domain $\dd_2(\LL)$ on $L^2(\mu)$ and assume the corresponding Dirichlet form
$
\EE(g,g):=\<-\LL g, g\>_{\mu} $ for $g\in \dd_2(\LL)
$
is closable in $L^2(\mu),$ with closure $(\EE, \dd(\EE))$. The Fisher-Donsker-Varadhan information of $\nu$ with
respect to $\mu$ is defined by
\begin{equation}
{\cal I}(\mu|\nu):=\begin{cases}\EE(\sqrt{f}, \sqrt{f}), \ \ &\text{ if }\ \nu=f\mu, \sqrt{f}\in\dd(\EE)\\
+\infty, &\text{ otherwise.}
\end{cases}
\end{equation}
Note that when $(P_t)$ is $\mu$-symmetric, $\nu\mapsto I(\mu|\nu)$ is
exactly the Donsker-Varadhan entropy i.e.\! the rate function
governing the large deviation principle of the empirical measure
$
L_t:=\frac 1t\int_0^t \delta_{X_s} ds
$
for large time $t$.  The corresponding Feynman-Kac semigroup on $L^2(\mu)$
\begin{equation}\label{Feynman-Kac} P_t^u g(x):=\ee^x g(X_t)
\exp\left(\int_0^tu(X_s)\,ds\right).
\end{equation}
It has been proved in \cite{Wu} that ${\cal I}_\mu^*(f)=\log\|P_1^f\|_{L^2(\mu)}$, which yields that $\cal I$ is a backward convex  transfer.  
\begin{eqnarray*}
{\cal I}_\mu^*(f)=\log\|P_1^f\|_{L^2(\mu)}=\frac{1}{2}\log\|P_1^f\|^2_{L^2(\mu)}=\frac{1}{2}\log \sup\{\int | P_{_1}^fg|^2\, d\mu; \|g\|_{L^2(\mu)}\leq 1\}.
\end{eqnarray*}
In other words, with $\beta (t)=\log t$, we have 
\begin{eqnarray*}
{\cal I}(\mu, \nu)&=&\sup\{\int_Y f\, d\nu-\frac{1}{2}\log \sup\{\int_X | P_1^fg|^2\, d\mu; \|g\|_{L^2(\mu)}\leq 1\}; f\in C(X)\}\\
&=&\sup\{\int_Y f\, d\nu+\sup_{s>0}\sup\limits_{\|g\|_{L^2(\mu)}\leq 1}\frac{1}{2}\{\int_X (-s| P_1^fg|^2-\beta^\ominus(s))\, d\mu  \}; f\in C(X)\}\\
&=&\sup\{\int_Y f\, d\nu-\inf_{s>0}\inf\limits_{\|g\|_{L^2(\mu)}\leq 1}\frac{1}{2}\{\int_X (s| P_1^fg|^2+\beta^\ominus(s))\, d\mu  \}; f\in C(X)\}\\
&=&\sup\{\int_Y f\, d\nu-\int_X T^-_{s, g}f\, d\mu \, ;s \in \R^+, \|g\|_{L^2(\mu)}\leq 1, f\in C(X)\}.
 \end{eqnarray*}
Hence, it is a backward convex transfer, with Kantorovich family $(T^-_{s, g})_{s,g}$ defined by $T^-_{s, g}f=\frac{s}{2}| P_{_1}^fg|^2+\frac{1}{2}\beta^\ominus(s)$.\\

\subsection{Operations on convex and entropic transfers}

The class of backward (resp., forward) convex couplings and transfers
satisfy the following permanence properties. The most important being that the inf-convolution with linear transfers  generate many new  examples of convex and entropic transfers..

\begin{prop}\label{con.con} Let ${\mathcal F}$ be a backward convex coupling (resp., transfer) with  Kantorovich family $(F)_i ^-$, Then,
 \begin{enumerate}
 
 \item If $a\in \R^+\setminus \{0\}$, then  $a{\mathcal F}$ is a backward convex coupling (resp., transfer) with Kantorovich family given by 
$F _{a, i}^-(f)=aF_i ^-(\frac{f}{a}).$
 
\item If %
${\cal T}$ is a backward linear transport on $Y\times Z$ with Kantorovich operator $T^-$, and ${\mathcal F}$ is a backward convex  transfer,   then ${\mathcal F}\star{\mathcal T}$ is a backward convex transfer with Kantorovich family given by $F _{i}^-\circ T ^-$. 
\end{enumerate}
 \end{prop}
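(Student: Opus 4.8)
The statement to prove is Proposition \ref{con.con}: if $\mathcal{F}$ is a backward convex coupling (resp.\ transfer) with Kantorovich family $(F_i^-)_{i\in I}$, then (1) $a\mathcal{F}$ is a backward convex coupling (resp.\ transfer) with family $F_{a,i}^-(f)=aF_i^-(f/a)$, and (2) if $\mathcal{T}$ is a backward linear transfer with operator $T^-$, then $\mathcal{F}\star\mathcal{T}$ is a backward convex transfer with family $(F_i^-\circ T^-)_{i\in I}$.

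\textbf{Approach for (1).} The plan is to compute the Legendre transform $(a\mathcal{F})_\mu^*$ directly from the definitions. Recall that for a convex coupling, $\mathcal{F}(\mu,\nu)=\sup_{i,g}\{\int_Y g\,d\nu-\int_X F_i^-g\,d\mu\}$, which by the reciprocal formula $\alpha(t)=\sup\{ts-\alpha^\oplus(s)\}$ with the linear scaling $\alpha(t)=at$ gives $a\mathcal{F}(\mu,\nu)=\sup_{i,g}\{a\int_Y g\,d\nu-a\int_X F_i^-g\,d\mu\}$. Substituting $h=ag$ turns this into $\sup_{i,h}\{\int_Y h\,d\nu-\int_X aF_i^-(h/a)\,d\mu\}$, exhibiting $F_{a,i}^-(f)=aF_i^-(f/a)$ as the new family. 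The \emph{transfer} refinement requires checking that $(a\mathcal{F})_\mu^*(f)=\inf_i\int_X F_{a,i}^-f\,d\mu$; this follows because the Legendre transform of the scaled partial map $(a\mathcal{F})_\mu=a(\mathcal{F}_\mu)$ satisfies $(a\mathcal{F}_\mu)^*(f)=a(\mathcal{F}_\mu)^*(f/a)$, and the convex-transfer hypothesis on $\mathcal{F}$ gives $(\mathcal{F}_\mu)^*(f/a)=\inf_i\int_X F_i^-(f/a)\,d\mu$. Mirror the one-line $a>0$ scaling argument already used for linear transfers in the Scalar Multiplication proposition.

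\textbf{Approach for (2).} First I would verify that $\mathcal{F}\star\mathcal{T}$ is a convex coupling by computing its Legendre transform, imitating the convolution computation in Proposition \ref{inf.tens}(1). For $g\in C(Z)$, write
\begin{align*}
(\mathcal{F}\star\mathcal{T})_\mu^*(g)
&=\sup_{\nu\in\mathcal{P}(Z)}\sup_{\sigma\in\mathcal{P}(Y)}\Big\{\int_Z g\,d\nu-\mathcal{F}(\mu,\sigma)-\mathcal{T}(\sigma,\nu)\Big\}\\
&=\sup_{\sigma\in\mathcal{P}(Y)}\Big\{\mathcal{T}_\sigma^*(g)-\mathcal{F}(\mu,\sigma)\Big\}
=\sup_{\sigma\in\mathcal{P}(Y)}\Big\{\int_Y T^-g\,d\sigma-\mathcal{F}(\mu,\sigma)\Big\}
=(\mathcal{F}_\mu)^*(T^-g),
\end{align*}
where the second-to-last step uses that $\mathcal{T}$ is a \emph{linear} transfer so $\mathcal{T}_\sigma^*(g)=\int_Y T^-g\,d\sigma$. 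Now invoke the convex-transfer hypothesis on $\mathcal{F}$: $(\mathcal{F}_\mu)^*(h)=\inf_i\int_X F_i^-h\,d\mu$. Applying this with $h=T^-g$ yields $(\mathcal{F}\star\mathcal{T})_\mu^*(g)=\inf_i\int_X F_i^-\circ T^-g\,d\mu$, which is exactly the convex-transfer identity with Kantorovich family $(F_i^-\circ T^-)_i$. I would also note joint convexity and weak$^*$ lower semicontinuity of $\mathcal{F}\star\mathcal{T}$ pass through as in Proposition \ref{inf.tens}.

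\textbf{Main obstacle.} The delicate point is justifying the interchange of suprema and, in part (2), the validity of the chain $(\mathcal{F}\star\mathcal{T})_\mu^*(g)=(\mathcal{F}_\mu)^*(T^-g)$, which silently requires that $\mathcal{F}$ and $\mathcal{T}$ vanish outside probability measures so that the outer and inner suprema over $\mathcal{M}$ reduce to suprema over $\mathcal{P}(Y)$, $\mathcal{P}(Z)$; this is where the standing hypotheses (Dirac measures in the relevant partial domains, boundedness below) are used. One must confirm $T^-g\in C(Y)$ so that the convex-transfer formula for $\mathcal{F}$ applies to $h=T^-g$ — this is precisely why $\mathcal{T}$ is required to be a \emph{linear transfer} (so $T^-$ is a genuine Kantorovich operator mapping into a space on which the family-infimum identity for $\mathcal{F}$ holds), rather than merely a convex transfer. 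Everything else is the same bookkeeping as in the linear case and reduces to the two displayed calculations above.
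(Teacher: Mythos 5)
Your proof of part (2) is exactly the paper's argument: the same chain of equalities computing $(\mathcal{F}\star\mathcal{T})_\mu^*(g)$ as a double supremum, collapsing the inner supremum to $\mathcal{T}_\sigma^*(g)=\int_Y T^-g\,d\sigma$ via linearity of $\mathcal{T}$, recognizing the outer supremum as $(\mathcal{F}_\mu)^*(T^-g)$, and then invoking the convex-transfer identity for $\mathcal{F}$. Part (1), which the paper dismisses as ``Immediate,'' is correctly filled in by your scaling substitution $h=ag$, so the proposal is correct and follows essentially the same route as the paper.
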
 
\noindent{\bf Proof:} Immediate. For 2) we calculate the Legendre dual of $({\mathcal F}\star T)_\mu$ at $g\in C(Z)$ and obtain,
  \begin{eqnarray*}
( {\mathcal F}\star T)_\mu^*(g)&=&\sup\limits_{\nu \in {\mathcal P}(Z)}\sup\limits_{\sigma \in  {\mathcal P}(Y)} 
 \left\{\int_{Z} g\, d\nu -  {\mathcal F}(\mu, \sigma) -{\mathcal T}(\sigma, \nu) \right\}\\
&=&  \sup\limits_{\sigma \in  {\mathcal P}(Y)} 
 \left\{{\mathcal T}_\sigma^* (g)-{\mathcal F}(\mu, \sigma) \right\}\\
&=& \sup\limits_{\sigma \in  {\mathcal P}(Y)} 
 \left\{\int_{Y} T^-g\, d\sigma-{\mathcal F}(\mu, \sigma) \right\}\\
&=&({\mathcal F})_\mu^* (T^-(g))\\
 &=& \inf\limits_{i\in I}\int_{X}F_i^-\circ T^-g(x))\, d\mu(x). 
 \end{eqnarray*} 
 The same properties hold for entropic transfers.
That  we will denote by ${\cal E}$ as opposed to ${\cal T}$ to distinguish them from the linear transfers. We shall use $E^+$ and $E^-$ for their Kantorovich operators.

 \begin{prop}\rm  Let $\beta: \R\to \R$ be a concave increasing function and let ${\mathcal E}$ be a backward  $\beta$-entropic transfer with Kantorovich operator $E ^-$. Then,
 \begin{enumerate}

\item If $\lambda \in \R^+\setminus \{0\}$, then  $\lambda{\mathcal E}$  is a backward $(\lambda \beta)$-entropic transfer   with Kantorovich operator $E _\lambda^-(f)=  E ^-(\frac{f}{\lambda})$.

\item  $\tilde {\mathcal E}$ is a forward $((-\beta)^\ominus)^\oplus$-entropic transfer with Kantorovich operator ${\tilde E}^+h=-E^-(-h)$.

\item If ${\mathcal T}$  is a backward linear transfer  on  $Y\times Z$  with Kantorovich operator $T^-$, then ${\mathcal E}\star{\mathcal T}$ is a  a backward  $\beta$-entropic transfer on  $X\times Z$ with Kantorovich operator equal to $E^-\circ T^-$. In other words,  
 \begin{equation}
{\mathcal E}\star{\mathcal T}\, (\mu, \nu)= 
\sup\big\{\int_{Z}g(y)\, d\nu(y)-\beta (\int_{X}E^-\circ T^-g(x))\, d\mu(x));\, g\in C(Z) \big\}. 
\end{equation}

\end{enumerate}
\end{prop}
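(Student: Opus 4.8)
The plan is to establish each of the three assertions by computing the appropriate partial Legendre transform and matching it against the form that \emph{defines} a $\beta$-entropic backward transfer (respectively an $\alpha$-entropic forward transfer). Throughout I would use the elementary bookkeeping identities for $(\cdot)^*_\mu$ and $(\cdot)^*_\nu$ already exploited in Propositions \ref{inf.tens} and \ref{con.con}; the only genuinely new ingredient is a biconjugation identity linking the two one-sided Legendre transforms $\oplus$ and $\ominus$.

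For part (1) I would simply rescale. Since $(\lambda{\mathcal E})_\mu(\nu)=\lambda{\mathcal E}_\mu(\nu)$, one has $(\lambda{\mathcal E})^*_\mu(g)=\lambda\,{\mathcal E}^*_\mu(g/\lambda)$ for every $g\in C(Y)$. Feeding in ${\mathcal E}^*_\mu(h)=\beta\big(\int_X E^-h\,d\mu\big)$ with $h=g/\lambda$ gives $(\lambda{\mathcal E})^*_\mu(g)=\lambda\beta\big(\int_X E^-(g/\lambda)\,d\mu\big)=(\lambda\beta)\big(\int_X E_\lambda^- g\,d\mu\big)$ with $E_\lambda^- g:=E^-(g/\lambda)$. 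As $\lambda>0$ keeps $\lambda\beta$ concave increasing and $E_\lambda^-$ maps $C(Y)$ into $USC(X)$, this is precisely a $(\lambda\beta)$-entropic backward transfer. Part (3) is the entropic analogue of Proposition \ref{con.con}(2): for $g\in C(Z)$,
\begin{align*}
({\mathcal E}\star{\mathcal T})^*_\mu(g)&=\sup_{\nu}\sup_{\sigma}\Big\{\int_Z g\,d\nu-{\mathcal E}(\mu,\sigma)-{\mathcal T}(\sigma,\nu)\Big\}\\
&=\sup_{\sigma\in{\mathcal P}(Y)}\Big\{{\mathcal T}^*_\sigma(g)-{\mathcal E}(\mu,\sigma)\Big\}={\mathcal E}^*_\mu(T^- g)=\beta\Big(\int_X E^-\circ T^- g\,d\mu\Big),
\end{align*}
where the third equality uses that ${\mathcal T}$ is a linear transfer (so ${\mathcal T}^*_\sigma(g)=\int_Y T^- g\,d\sigma$) and the last uses the $\beta$-entropic property of ${\mathcal E}$. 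Joint convexity and weak$^*$ lower semicontinuity of ${\mathcal E}\star{\mathcal T}$ are inherited from the inf-convolution exactly as in Proposition \ref{inf.tens}.

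The interesting step is part (2). Writing $\tilde{\mathcal E}(\mu,\nu)={\mathcal E}(\nu,\mu)$ with $\mu\in{\mathcal P}(Y)$ and $\nu\in{\mathcal P}(X)$, the partial map obtained by freezing the second variable $\nu$ is just ${\mathcal E}$ with frozen first variable, so $\tilde{\mathcal E}^*_\nu(f)={\mathcal E}^*_\nu(f)=\beta\big(\int_X E^- f\,d\nu\big)$ for $f\in C(Y)$. With the candidate operator $\tilde E^+ h:=-E^-(-h)$ one computes $\int_X \tilde E^+(-f)\,d\nu=-\int_X E^- f\,d\nu=:-s$, so the forward-entropic target $-\alpha\big(\int_X\tilde E^+(-f)\,d\nu\big)$ equals $-\alpha(-s)$. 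Hence it suffices to prove $\beta(s)=-\alpha(-s)$, i.e. $\alpha(u)=-\beta(-u)$, for $\alpha:=((-\beta)^\ominus)^\oplus$. Here the duality enters: $-\beta$ is convex decreasing, so the reciprocal formula of the preamble gives $-\beta(t)=\sup_{s\geq 0}\{-ts-(-\beta)^\ominus(s)\}$; evaluating at $t=-u$ and reading the right-hand side as an increasing Legendre transform yields $-\beta(-u)=\sup_{s\geq0}\{us-(-\beta)^\ominus(s)\}=((-\beta)^\ominus)^\oplus(u)$, as desired. A short check ($\alpha'(u)=\beta'(-u)>0$, $\alpha''(u)=-\beta''(-u)\geq 0$) confirms $\alpha$ is convex increasing, and $\tilde E^+=-E^-(-\,\cdot\,)$ maps $C(Y)$ into $LSC(X)$, so $\tilde{\mathcal E}$ is a genuine $\alpha$-entropic forward transfer.

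I expect the main obstacle to be regularity bookkeeping rather than any algebraic identity. In part (3) the function $T^- g$ need only lie in $USC(Y)$, whereas the defining relation ${\mathcal E}^*_\mu(h)=\beta\big(\int_X E^- h\,d\mu\big)$ is stated for $h\in C(Y)$; I would close this gap by extending that identity to $h\in USC_\sigma(Y)$ via monotone approximation, exactly as was done for linear transfers in the corollary following Lemma \ref{monotone} (continuity of $\beta$ lets one pass the monotone limits through $\beta$). In the continuous case, where $T^-$ maps into $C(Y)$, this subtlety disappears. Secondarily, in part (2) one must keep the domains of the two one-sided transforms straight so that the reciprocal formula is applied on the correct half-line; this is a matter of care, not difficulty.
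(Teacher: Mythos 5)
Your proof is correct and follows essentially the same route as the paper's: part (1) by rescaling, part (3) by the iterated-supremum computation of $({\mathcal E}\star{\mathcal T})^*_\mu(g)={\mathcal E}^*_\mu(T^-g)$, and part (2) via the reciprocal formula for the convex decreasing function $-\beta$ --- the paper performs that expansion inline inside the sup-representation and substitutes $h=-g$, whereas you isolate it as the scalar identity $((-\beta)^\ominus)^\oplus(u)=-\beta(-u)$, which is the same computation packaged more modularly. Your explicit flagging of the regularity gap in part (3) (the entropic identity is stated for $h\in C(Y)$ while $T^-g$ is only $USC(Y)$, requiring a monotone-approximation extension with continuity of $\beta$) addresses a point the paper passes over silently, and is a welcome refinement rather than a deviation.
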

\noindent {\bf Proof:} 1) is trivial. For 2) note that since $\beta$ is concave and increasing, then 
\begin{eqnarray*}
\tilde {\mathcal T}(\nu, \mu))&=&{\mathcal T}(\mu, \nu))\\
&=&\sup\{\int_Yg\, d\nu-\beta \big(\int_XT^-g\, d\mu); g\in C(Y)\}\\
&=&
\sup\{\int_Y g\, d\nu+ \sup_{s>0}\{\int_X-sT^-g\, d\mu -(-\beta)^\ominus(s)\}; g\in C(X)\}\\
&=&
\sup\{\int_Y g\, d\nu- s\int_XT^-g\, d\mu -(-\beta)^\ominus(s); s>0, g\in C(X)\}\\
&=&
\sup\{s\int_X-T^-(-h)\, d\mu -(-\beta)^\ominus(s)-\int_Y h\, d\nu; s>0, g\in C(X)\\
&=&
\sup\{((-\beta)^\ominus)^\oplus(\int_X-T^-(-h)\, d\mu)-\int_Y h\, d\nu; s>0, h\in C(X)\}.
\end{eqnarray*}
In other words, $\tilde {\mathcal T}$ is a $(\beta^\ominus)^\oplus$-entropic forward  transfer. \\
For 3) we  calculate the Legendre dual of $({\mathcal E}\star T)_\mu$ at $g\in C(Z)$ and obtain,
 \begin{eqnarray*}
( {\mathcal E}\star T)_\mu^*(g)&=&\sup\limits_{\nu \in {\mathcal P}(Z)}\sup\limits_{\sigma \in  {\mathcal P}(Y)} 
 \left\{\int_{Z} g\, d\nu -  {\mathcal E}(\mu, \sigma) -{\mathcal T}(\sigma, \nu) \right\}\\
&=&  
\sup\limits_{\sigma \in  {\mathcal P}(Y)} 
 \left\{{\mathcal T}_\sigma^* (g)-{\mathcal E}(\mu, \sigma) \right\}\\
&=& \sup\limits_{\sigma \in  {\mathcal P}(Y)} 
 \left\{\int_{Y} T^-g\, d\sigma-{\mathcal E}(\mu, \sigma) \right\}\\
&=&({\mathcal E})_\mu^* (T^-(g))\\
 &=& \beta \Big(\int_{X}E^-\circ T^-g(x))\, d\mu(x)). 
 \end{eqnarray*} 
A similar statement holds for forward $\alpha$-entropic  transfers where $\alpha$ is now a convex increasing function on $\R^+$. But we then have to reverse the orders. For example, if ${\mathcal T}$ (resp., ${\mathcal E}$) is a forward linear transfer  on  $Z\times X$ (resp., a forward  $\alpha$-entropic transfer on $X\times Y$) with Kantorovich operator $T^+$ (resp., $E^+$), then ${\mathcal T}\star{\mathcal E}$ is a forward  $\alpha$-entropic transfer on  $Z\times Y$ with Kantorovich operator equal to $E^+\circ T^+$. In other words, 
 \begin{equation}\label{E1}
{\mathcal T}\star{\mathcal E}\, (\mu, \nu)= 
\sup\big\{\alpha\big(\int_{Y}E^+\circ T^+f(y))\, d\nu(y)\big)-\int_{X}f(x)\, d\mu(x);\,  f\in C(X) \big\}.
\end{equation}

\subsection{Subdifferentials of linear and convex transfers}

If  $ \mathcal{T}$ is a linear transfer, then both $ \mathcal{T}_\mu$ and $ \mathcal{T}_\nu$ are convex weak$^*$ lower semi-continuous and one can therefore consider their (weak$^*$) subdifferential $\partial \T_{\mu}$ (resp., $\partial \T_{\nu}$) in the sense of convex analysis. In other words,  
\[
\hbox{$g \in \partial \T_{\mu}(\nu)$ if and only if  $\T(\mu,\nu') \geq \T(\mu,\nu) + \int_Y g\d (\nu' - \nu)$ \,\, for any $\nu'\in {\mathcal P}(Y)$.}
\]
In other words, $g \in \partial \T_{\mu}(\nu)$ if and only if $\T_{\mu}(\nu) + \T^*_\mu (g) = \langle g, \nu\rangle.$ Since $\T_{\mu}(\nu) = \T(\mu,\nu)$ and $\T^*_\mu (g) = \int T^{-}g \d\mu$, we then obtain the following characterization of the subdifferentials. 

 \begin{prop}Let $\T$ be a backward (resp., forward) linear transfer. Then the subdifferential of $\T_\mu :\mathcal{P}(Y) \to \R\cup\{+\infty\}$ at $\nu \in \mathcal{P}(Y)$ (resp., $\T_\nu:\mathcal{P}(X) \to \R\cup\{+\infty\}$ at $\mu \in \mathcal{P}(X)$) is given by
\begin{equation}
\partial \T_{\mu}(\nu) = \left\{g \in C(Y)\,:\, \int_{Y}g(y)\d\nu(y) - \int_{X}T^- g(x)\d\mu(x) = \T(\mu,\nu)\right\}
\end{equation}
respectively,
\begin{equation}
\partial \T_{\nu}(\mu) = \lf\{f \in C(X)\,:\, \int_{Y}T^{+}f(y)\d\nu(y) - \int_{X} f(x)\d\mu(x) = \T(\mu,\nu)\rt\}
\end{equation}
In other words, the subdifferential of $\T_\mu$ at $\nu$ (resp., $\T_\nu$ at $\mu$) is exactly the set of maximisers for the dual formulation of $\T(\mu,\nu)$. 
\end{prop}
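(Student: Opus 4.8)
The claim is a direct application of the Fenchel--Young equality to the convex lower semi-continuous functions $\T_\mu$ and $\T_\nu$, so the plan is essentially to unwind the definition of the subdifferential and substitute the known formula for the Legendre transform of a linear transfer.

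\medskip

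\noindent\textbf{Plan.} First I would recall that since $\T$ is a backward linear transfer, for each fixed $\mu \in D_1(\T)$ the partial map $\T_\mu : \nu \mapsto \T(\mu,\nu)$ is proper, convex, and weak$^*$ lower semi-continuous on $\mathcal{P}(Y)$ (this was noted right after the definition of linear transfers, and again in Proposition \ref{prop.one}). Hence the classical Fenchel--Young identity applies: for $g \in C(Y)$ and $\nu \in \mathcal{P}(Y)$,
\begin{equation*}
g \in \partial \T_\mu(\nu) \quad \Longleftrightarrow \quad \T_\mu(\nu) + \T^*_\mu(g) = \int_Y g\, d\nu,
\end{equation*}
which is exactly the equality case in the defining inequality $\T^*_\mu(g) \geq \int_Y g\, d\nu - \T_\mu(\nu)$. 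This step is purely the standard convex-analytic equivalence between the subgradient inclusion and the Fenchel--Young equality, valid because $\T_\mu$ equals its own biconjugate.

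\medskip

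\noindent\textbf{Substitution.} The second step is to replace $\T^*_\mu(g)$ by its explicit value. By Proposition \ref{prop.one} (equivalently (1)$\Leftrightarrow$(2) in Theorem \ref{prop.zero}), since $\T$ is a backward linear transfer with Kantorovich operator $T^-$, we have $\T^*_\mu(g) = \int_X T^- g(x)\, d\mu(x)$ for every $g \in C(Y)$. Substituting this and $\T_\mu(\nu) = \T(\mu,\nu)$ into the Fenchel--Young equality rearranges immediately to
\begin{equation*}
\int_Y g(y)\, d\nu(y) - \int_X T^- g(x)\, d\mu(x) = \T(\mu,\nu),
\end{equation*}
which is precisely the description of $\partial\T_\mu(\nu)$ asserted in the proposition. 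The forward case is entirely parallel: $\T_\nu$ is convex and weak$^*$ lower semi-continuous on $\mathcal{P}(X)$, and by \eqref{upper} together with the duality for forward transfers one has $\T^*_\nu(f) = -\int_Y T^+(-f)\, d\nu$; here one must be slightly careful with the sign convention, writing the Fenchel--Young equality for $\T_\nu$ at $\mu$ against the test function $f$ and using $T^+f$ in place of $-T^+(-f)$ after adjusting signs, so that the equality collapses to $\int_Y T^+ f\, d\nu - \int_X f\, d\mu = \T(\mu,\nu)$.

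\medskip

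\noindent\textbf{Main obstacle.} There is no substantive obstacle here, as the result is a formal consequence of already-established duality formulae; the only point requiring a little care is the bookkeeping of signs in the forward case, where the Legendre transform is naturally expressed through $-T^+(-f)$ rather than $T^+f$. I would verify that substituting $f$ for $-f$ and using property (c) (affine on constants) reconciles the two expressions, and then note the final sentence of the proposition — that $\partial\T_\mu(\nu)$ coincides with the set of maximisers in the dual (sup) formulation of $\T(\mu,\nu)$ — follows by observing that the defining equality says exactly that $g$ attains the supremum in $\T(\mu,\nu) = \sup_{g}\{\int_Y g\, d\nu - \int_X T^- g\, d\mu\}$.
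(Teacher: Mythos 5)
Your proposal is correct and follows essentially the same route as the paper: the paper likewise identifies $g \in \partial \T_\mu(\nu)$ with the Fenchel--Young equality $\T_\mu(\nu) + \T^*_\mu(g) = \langle g,\nu\rangle$ and then substitutes $\T^*_\mu(g) = \int_X T^- g\, d\mu$, the forward case being parallel. Your extra remark about the sign bookkeeping via $-T^+(-f)$ in the forward case is a point the paper passes over silently, but it does not change the argument.
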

It is easy to see that the same expressions hold - with the necessary modifications - for  backward  convex (resp., forward)  transfers, as well as backward $\beta$-entropic (resp., forward $\alpha$-entropic) transfers.

In the following, we observe some elementary consequences for elements in the subdifferential.

 \begin{prop} Suppose $\T$ is a linear backward transfer such that the Dirac masses are contained in $D_1(\T)$. Fix $\mu \in \mcal{P}(X)$ and $\nu \in \mcal{P}(Y)$. Then, there exists $\bar{\pi} \in {\cal K}(\mu, \nu)$ such that  for each $\bar{f} \in \partial \T_\mu(\nu)$, we have
\begin{equation}
T^{-}\bar{f}(x) = \int_{Y}\bar{f}(y)\d\bar{\pi}_x(y) - \T(x,\bar{\pi}_x), \quad \text{for $\mu$-a.e. $x \in X$, 
}
\end{equation}
where $\bar{\pi}_x$ is a disintegration of $\bar{\pi}$ with w.r.t. $\mu$.

Conversely, if $\nu \mapsto \T(\mu,\nu)$ is strictly convex and $\bar{f} \in\partial \T_\mu(\nu)$ for some $\nu \in \mcal{P}(Y)$. If $x\to \sigma_x$ is any selection such that 
$$T^- \bar{f}(x) = \sup_{\sigma}\lf\{\int \bar{f}\d\sigma - \T(\delta_x,\sigma)\rt\}=\int_Y \bar{f}\d\sigma_x - \T(\delta_x,\sigma_x),$$
then $\T(\mu, \nu)$ is attained by the measure $\bar \pi=\int_X\sigma_xd\mu(x)$. 
  \end{prop}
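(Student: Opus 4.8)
The statement has two halves. The forward direction asserts that for fixed $\mu,\nu$ there is a single optimal plan $\bar\pi\in\mcal{K}(\mu,\nu)$ that simultaneously certifies $T^-\bar f(x)$ for \emph{every} $\bar f\in\partial\T_\mu(\nu)$. The converse says that under strict convexity of $\nu\mapsto\T(\mu,\nu)$, given $\bar f\in\partial\T_\mu(\nu)$ and any pointwise selection $\sigma_x$ achieving $T^-\bar f(x)$, the glued measure $\bar\pi=\int_X\sigma_x\,d\mu(x)$ attains $\T(\mu,\nu)$.

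**Plan for the forward direction.** My plan is to start from the subdifferential characterization in the preceding proposition: $\bar f\in\partial\T_\mu(\nu)$ means exactly $\int_Y\bar f\,d\nu-\int_X T^-\bar f\,d\mu=\T(\mu,\nu)$. Separately, by Theorem \ref{prop.zero} (the weak-transport representation) and its optimizing plan, there exists $\bar\pi\in\mcal{K}(\mu,\nu)$ attaining $\T(\mu,\nu)=\int_X c(x,\bar\pi_x)\,d\mu(x)$, where $c(x,\sigma)=\T(\delta_x,\sigma)$. The idea is then to combine these. For \emph{any} $f\in C(Y)$ and \emph{any} plan, the Kantorovich inequality $T^-f(x)\ge\int_Y f\,d\bar\pi_x-\T(\delta_x,\bar\pi_x)$ holds pointwise (this is the defining sup formula \refn{ext}). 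Integrating against $\mu$ and using that $\bar\pi$ has second marginal $\nu$ gives
\begin{equation*}
\int_X T^-\bar f\,d\mu\;\ge\;\int_X\Big(\int_Y\bar f\,d\bar\pi_x\Big)d\mu-\int_X\T(\delta_x,\bar\pi_x)\,d\mu=\int_Y\bar f\,d\nu-\T(\mu,\nu).
\end{equation*}
But the subdifferential identity says the right-hand side equals $\int_X T^-\bar f\,d\mu$, so equality holds in the integrated inequality. Since the integrand inequality $T^-\bar f(x)\ge\int_Y\bar f\,d\bar\pi_x-\T(\delta_x,\bar\pi_x)$ is pointwise and its $\mu$-integral is an equality, the integrand must be an equality $\mu$-a.e. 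This is precisely the claimed formula. The key point making this work for \emph{all} $\bar f$ at once is that $\bar\pi$ is chosen from the weak-transport optimality of $\T(\mu,\nu)$ and does not depend on $\bar f$.

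**Plan for the converse.** Here I would run the computation in reverse and exploit strict convexity for uniqueness. Given $\bar f\in\partial\T_\mu(\nu)$ and a selection $\sigma_x$ with $T^-\bar f(x)=\int_Y\bar f\,d\sigma_x-\T(\delta_x,\sigma_x)$, set $\bar\pi=\int_X\sigma_x\,d\mu(x)$ and let $\bar\nu$ be its second marginal. Integrating the selection identity gives $\int_X T^-\bar f\,d\mu=\int_Y\bar f\,d\bar\nu-\int_X\T(\delta_x,\sigma_x)\,d\mu$. On the other hand, by the weak-transport representation $\int_X\T(\delta_x,\sigma_x)\,d\mu\ge\T(\mu,\bar\nu)$, and combining with the subdifferential identity (now used to compare $\nu$ and $\bar\nu$) I expect to deduce $\int_Y\bar f\,d\nu-\T(\mu,\nu)=\int_Y\bar f\,d\bar\nu-\T(\mu,\bar\nu)$, i.e. $\bar f$ also attains the supremum in $\T^*_\mu(\bar f)$ against $\bar\nu$, so $\bar f\in\partial\T_\mu(\bar\nu)$ as well. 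Strict convexity of $\nu\mapsto\T(\mu,\nu)$ forces the subdifferential $\partial\T^*_\mu(\bar f)$ to be a singleton, whence $\bar\nu=\nu$ and $\bar\pi\in\mcal{K}(\mu,\nu)$ attains $\T(\mu,\nu)$.

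**Main obstacle.** The measurability and gluing step is the delicate one: I must ensure $x\mapsto\sigma_x$ is a genuine measurable selection so that $\bar\pi(A\times B)=\int_A\sigma_x(B)\,d\mu(x)$ defines a legitimate element of $\mcal{K}(\mu,\bar\nu)$, exactly as invoked in the proof of Proposition \ref{prop.two} (''use your favorite selection theorem''). The second subtlety is justifying that strict convexity of $\T_\mu$ translates into uniqueness of the maximizer of the dual problem, i.e. single-valuedness of $\partial\T^*_\mu(\bar f)$; this is the standard convex-analytic duality between strict convexity of a function and differentiability (single-valued subdifferential) of its conjugate, applied to $\T_\mu$ and $\T^*_\mu$ on the weak$^*$-compact set $\mcal{P}(Y)$. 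Once these two technical points are in place, both directions reduce to the integrated-equality argument already sketched.
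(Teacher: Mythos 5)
Your proof is correct and follows essentially the same route as the paper's: the forward direction combines an optimal weak-transport plan with the subdifferential identity and concludes by noting that a pointwise non-negative quantity with zero $\mu$-integral vanishes $\mu$-a.e., and the converse glues the pointwise maximizers $\sigma_x$, integrates to deduce $\bar f\in\partial\T_\mu(\bar\nu)$ for the second marginal $\bar\nu$, and invokes strict convexity to force $\bar\nu=\nu$. The only caveat is that the existence of your ``optimizing plan'' is not part of Theorem \ref{prop.zero} itself (which only gives an infimum); the paper justifies this attainment by citing \cite{BBP}, so your argument is complete once that reference (or an equivalent compactness/lower semi-continuity argument) is supplied.
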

\noindent {\bf Proof:} By a recent result \cite{BBP}, there exists $\bar{\pi} \in {\cal K}(\mu, \nu)$ such that 
\[
\T(\mu, \nu)= \int_{X}\T(x,\bar{\pi}_x)\d\mu(x). 
\]
If $\bar{f} \in \partial \T_\mu(\nu)$, then by definition
\eqs{\label{equality}
\int_{Y}\bar{f}(y)\d\nu(y) - \int_{X}T^- \bar{f}(x)\d\mu(x) = \T(\mu,\nu) = \int_{X}\T(x,\bar{\pi}_x)\d\mu(x), 
}
that is
$
\int_{X}\lf[T^- \bar{f}(x) - \int_{Y}\bar{f}(y)\d\bar{\pi}_x(y) + \T(x,\bar{\pi}_x)   \rt]\d\mu = 0.
$
Since $T^- \bar{f}(x) = \sup_{\sigma}\lf\{\int \bar{f}\d\sigma - \T(x,\sigma)\rt\}$, the quantity in the brackets  is non-negative and we get our claim.

Conversely, If $\bar{f}\in \partial \T_\mu(\nu)$ is non-empty for some $\nu \in \mcal{P}(Y)$, then  
 $
\int \bar{f}\d\nu - \int T^{-}\bar{f}\d\mu = \T(\mu,\nu). $
From the expression $T^- \bar{f}(x) = \sup_{\sigma}\lf\{\int_Y \bar{f}\d\sigma - \T(\delta_x,\sigma)\rt\}$, we know the supremum will be achieved by some $\sigma_x$. Defining $\tilde{\pi}$ by $\d\tilde{\pi}(x,y) = \d\mu(x)\d\sigma_x(y)$, and the right marginal of $\tilde{\pi}$ by $\tilde{\nu}$, we integrate against $\mu$ to achieve
\eqs{
\int T^- \bar{f}\d\mu = \int \bar{f}\d\tilde{\nu} - \int \T(\delta_x, \sigma_x)\d\mu.
}
This shows that $\T(\mu,\tilde{\nu}) = \inf_{\pi \in \Gamma(\mu,\tilde{\nu})}\int \T(\delta_x,\pi_x)\d\mu = \int \T(\delta_x, \sigma_x)\d\mu$, 
and consequently, $\bar{f} \in \partial \T_\mu(\tilde{\nu})$. But by strict convexity, this can only be true if $\tilde{\nu} = \nu$.\qed

While the attainment in the primal problem $\T(\mu, \nu)$ holds in full generality as shown in \cite{BBP}, the attainment in the dual problem depends heavily on the problem at hand \cite{G-K-L1}. However, since this is equivalent to the sub-differentiability of the partial functional $\T_\mu$, we can use general existence results such as the Brondsted-Rockafellar theorem \cite{Phelps}, to state that $\partial \T_\mu (\nu)$ exist for a weak$^*$-dense set of $\nu \in {\cal P}(Y)$, and therefore the dual problem is generically attained. 

 \begin{cor} Suppose $\T$ is a linear backward transfer on $\mcal{P}(X)\times \mcal{P}(Y)$ such that the Dirac masses are contained in $D_1(\T)$. Assume $Y$ is metrizable. Fix $\mu \in \mcal{P}(X)$, then for every $\nu \in \mcal{P}(Y)$ and every $\epsilon >0$, there exists $\nu_\epsilon \in \mcal{P}(Y)$ such that $W_2(\nu, \nu_\epsilon) <\epsilon$ and the dual problem for $\T(\mu, \nu_\epsilon)$ is attained. 

\end{cor}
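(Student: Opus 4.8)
The plan is to derive this corollary as a direct application of the Brøndsted–Rockafellar theorem to the convex lower semi-continuous functional $\T_\mu$ on $\mcal{P}(Y)$, exactly as suggested by the remark preceding the statement. First I would recall from the earlier development that since $\T$ is a backward linear transfer with Dirac masses in $D_1(\T)$, the partial functional $\nu \mapsto \T_\mu(\nu) = \T(\mu,\nu)$ is a proper, bounded below, weak$^*$ lower semi-continuous convex functional on $\mcal{P}(Y)$. The key link is the proposition immediately above: the subdifferential $\partial \T_\mu(\nu)$ is precisely the set of maximizers in the dual formulation of $\T(\mu,\nu)$. Hence ``the dual problem for $\T(\mu,\nu_\epsilon)$ is attained'' is equivalent to the statement $\partial \T_\mu(\nu_\epsilon) \neq \emptyset$.

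**The main argument.**

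The strategy is then to show that the set of points where $\T_\mu$ is subdifferentiable is weak$^*$-dense in $D(\T_\mu)$. Since $Y$ is metrizable and compact, $\mcal{P}(Y)$ equipped with the weak$^*$ topology is itself metrizable, and the Wasserstein metric $W_2$ metrizes this topology. I would embed the situation into the Banach-space framework required by Brøndsted–Rockafellar by viewing $\T_\mu$ as a convex lower semi-continuous function on the space $C(Y)^*$ (or on $\mcal{M}(Y)$ in duality with $C(Y)$), extended to $+\infty$ off $\mcal{P}(Y)$. The Brøndsted–Rockafellar theorem \cite{Phelps} then asserts: given any $\nu$ in the effective domain and any $\epsilon > 0$, together with an $\epsilon$-subgradient, there exist a nearby point $\nu_\epsilon$ (close in the dual norm) and an exact subgradient $f_\epsilon \in \partial \T_\mu(\nu_\epsilon)$. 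The approximation ``$\nu_\epsilon$ close to $\nu$ in the dual norm'' translates, via metrizability and the compatibility of $W_2$ with the weak$^*$ topology on the compact set $\mcal{P}(Y)$, into the bound $W_2(\nu,\nu_\epsilon) < \epsilon$ after rescaling the tolerance.

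**The central obstacle.**

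The main technical point to handle carefully is the passage from the dual-norm estimate furnished by Brøndsted–Rockafellar to the $W_2$-estimate in the statement. Brøndsted–Rockafellar naturally produces a point close in the norm of $\mcal{M}(Y)$ (the total variation or the dual-of-$C(Y)$ norm), whereas the corollary demands closeness in the Wasserstein-$2$ metric. On a compact metrizable space $Y$, all of these metrize the same weak$^*$ topology on $\mcal{P}(Y)$, and $\mcal{P}(Y)$ is weak$^*$-compact; one must argue that the Brøndsted–Rockafellar perturbation, which can be taken arbitrarily small in norm, can be forced to be arbitrarily small in $W_2$ as well. The cleanest route is to note that both metrics induce the same topology on the compact set $\mcal{P}(Y)$, so a sufficiently fine norm-neighborhood of $\nu$ sits inside the prescribed $W_2$-ball $\{ \sigma : W_2(\nu,\sigma) < \epsilon\}$; applying Brøndsted–Rockafellar with the tolerance adapted to this neighborhood yields the desired $\nu_\epsilon$. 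I would remark that metrizability of $Y$ is invoked precisely here, to guarantee $W_2$ is a genuine metric generating the weak$^*$ topology, and that no quantitative rate is claimed — only existence within an arbitrarily small $W_2$-ball — so the qualitative topological comparison suffices and no explicit modulus between the two metrics is needed.
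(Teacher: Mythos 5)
Your overall route is exactly the paper's intended one: by the proposition immediately preceding the corollary, attainment of the dual problem at $\nu_\epsilon$ is equivalent to $\partial\T_\mu(\nu_\epsilon)\neq\emptyset$, and the paper justifies the corollary by nothing more than the remark that the Br{\o}ndsted--Rockafellar theorem gives a dense set of such points, which is then rephrased in terms of $W_2$. However, as written, your central step has a genuine gap. Applying Br{\o}ndsted--Rockafellar to $\T_\mu$ viewed as a convex lower semi-continuous function on the Banach space $\mcal{M}(Y)$ produces an exact subgradient in the dual of \emph{that} space, namely in $C(Y)^{**}$, not in $C(Y)$; but the subdifferential in the paper's sense --- the one equivalent to dual attainment --- consists of elements $g\in C(Y)$, and a bidual subgradient is not a maximizer of the dual problem. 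The repair is to run Br{\o}ndsted--Rockafellar on the predual side: set $h:=\T_\mu^*$, a proper convex lower semi-continuous function on the Banach space $C(Y)$ whose dual is $\mcal{M}(Y)$, and recall that $h^*=\T_\mu$ by Fenchel--Moreau, since $\T_\mu$ is weak$^*$ lower semi-continuous. Given $\nu\in D(\T_\mu)$ and $\epsilon>0$, pick $g_0\in C(Y)$ with $\int_Y g_0\,d\nu-h(g_0)\geq \T_\mu(\nu)-\epsilon^2$, i.e. $\nu\in\partial_{\epsilon^2}h(g_0)$; Br{\o}ndsted--Rockafellar then furnishes $g_1\in C(Y)$ and $\nu_\epsilon\in\partial h(g_1)\subset\mcal{M}(Y)$ with $\|\nu_\epsilon-\nu\|_{TV}\leq\epsilon$, and the relation $\nu_\epsilon\in\partial h(g_1)$ means $h(g_1)+\T_\mu(\nu_\epsilon)=\int_Y g_1\,d\nu_\epsilon$, i.e. $g_1$ attains the dual problem for $\T(\mu,\nu_\epsilon)$; moreover $\T_\mu(\nu_\epsilon)<+\infty$ forces $\nu_\epsilon\in D(\T_\mu)\subset\mcal{P}(Y)$. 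Note also that this argument (like the paper's) only yields density \emph{within} $D(\T_\mu)$: for $\nu$ at positive $W_2$-distance from $D(\T_\mu)$, the dual supremum at every nearby measure is identically $+\infty$ and cannot be attained, so the statement must be read with $\nu$ ranging over the effective domain rather than all of $\mcal{P}(Y)$.

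A second, more minor, inaccuracy is your justification for converting the norm estimate into a $W_2$ estimate. It is false that the total variation norm and $W_2$ ``induce the same topology on the compact set $\mcal{P}(Y)$'': the norm topology is strictly finer ($\delta_{x_n}\to\delta_x$ in $W_2$ whenever $x_n\to x$, while $\|\delta_{x_n}-\delta_x\|_{TV}=2$ for $x_n\neq x$), and $\mcal{P}(Y)$ is not norm-compact. What saves the argument is that only one implication is needed, and it is the true one: transporting the common mass of $\nu$ and $\nu_\epsilon$ by the identity map gives a coupling concentrated on the diagonal except for mass $\tfrac12\|\nu-\nu_\epsilon\|_{TV}$, whence $W_2(\nu,\nu_\epsilon)^2\leq \mathrm{diam}(Y)^2\,\|\nu-\nu_\epsilon\|_{TV}$. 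It is this quantitative inequality, not a topological identification, that turns the Br{\o}ndsted--Rockafellar perturbation into the required $W_2$-bound; metrizability of $Y$ enters only to make $W_2$ a well-defined metric on $\mcal{P}(Y)$.
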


The following can be seen as Euler-Lagrange equations for variational problems on spaces of measures, and follows closely \cite{FGJ}.

\begin{prop} Let $\T_\alpha(\mu,\nu) := \int_{X} \alpha\lf(\frac{\d\nu}{\d\mu}\rt)\d\mu$ be the generalised entropy transfer considered in Example 11.1, and let $\T$ be any linear backward transfer. For a fixed $\mu$, consider  the functional  
$
I_\mu(\nu) := \T_\alpha(\mu,\nu) - \T(\mu,\nu),
$
and assume $\bar{\nu}$ realises $\inf_{\nu \in \mcal{P}(X)}I_\mu(\nu)$. Then, there exists $\bar{f} \in \partial \T_\mu(\bar{\nu})$ such that the following Euler-Lagrange equation holds for $\bar{\nu}-$a.e. $x \in X$,
\eqs{
\alpha'\lf(\frac{\d\bar{\nu}}{\d\mu}\rt) = \bar{f} + C,  
}
where $C$ is a constant. 

If $\T_\alpha$ is replaced with the logarithmic entropic transfer $\mcal{H}(\mu,\nu) = \int \log(\frac{\d\nu}{\d\mu})\d\nu$, then 
\eqs{
\log \lf(\frac{\d\bar{\nu}}{\d\mu}\rt) = \bar{f} + C.  
}
\end{prop}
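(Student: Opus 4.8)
The functional to be minimised is a difference of two convex functionals: the entropy $\nu\mapsto\T_\alpha(\mu,\nu)=\int_X\alpha(\tfrac{\d\nu}{\d\mu})\,\d\mu$, which is convex because $\alpha$ is convex, and the partial transfer $\T_\mu=\T(\mu,\cdot)$, which is convex and weak$^*$ lower semi-continuous because $\T$ is a backward linear transfer. Since $I_\mu(\bar\nu)$ is finite we have $\bar\nu\ll\mu$, $\T_\alpha(\mu,\bar\nu)<+\infty$ and $\bar\nu\in D(\T_\mu)$; write $\bar\rho:=\tfrac{\d\bar\nu}{\d\mu}$. The plan is to linearise the subtracted term $\T_\mu$ at $\bar\nu$ by one of its subgradients and thereby reduce the (non-convex) problem to a genuinely convex one, for which the Euler--Lagrange equation is classical.

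The heart of the argument is the following reduction. Suppose $\bar f\in\partial\T_\mu(\bar\nu)$, so that $\T_\mu(\nu)-\T_\mu(\bar\nu)\ge\int_X\bar f\,\d(\nu-\bar\nu)$ for every $\nu$. Then for the convex functional $\Psi(\nu):=\T_\alpha(\mu,\nu)-\int_X\bar f\,\d\nu$ one computes, using the subgradient inequality and then the global minimality of $\bar\nu$ for $I_\mu$,
\[
\Psi(\nu)-\Psi(\bar\nu)\ \ge\ \bigl[\T_\alpha(\mu,\nu)-\T_\alpha(\mu,\bar\nu)\bigr]-\bigl[\T_\mu(\nu)-\T_\mu(\bar\nu)\bigr]\ =\ I_\mu(\nu)-I_\mu(\bar\nu)\ \ge\ 0 ,
\]
so $\bar\nu$ minimises the convex functional $\Psi$ over $\mcal{P}(X)$. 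Writing $\nu=\rho\mu$, minimising $\int_X[\alpha(\rho)-\bar f\rho]\,\d\mu$ subject to $\rho\ge0$ and $\int_X\rho\,\d\mu=1$, introducing a Lagrange multiplier $C$ for the mass constraint and minimising pointwise the strictly convex integrand $\rho\mapsto\alpha(\rho)-(\bar f+C)\rho$, gives $\alpha'(\bar\rho)=\bar f+C$ on the set $\{\bar\rho>0\}$, i.e. $\bar\nu$-almost everywhere (the complementary set carries no $\bar\nu$-mass). For the logarithmic entropy $\mcal{H}$ the integrand is $\rho\log\rho$, whose derivative is $\log\rho+1$, so the same computation yields $\log(\tfrac{\d\bar\nu}{\d\mu})=\bar f+C$ after absorbing the additive constant.

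The genuine obstacle is to produce a subgradient $\bar f\in\partial\T_\mu(\bar\nu)$ in the first place, since a convex l.s.c.\ functional need not be subdifferentiable at a boundary point of its domain (the one-sided variation $\int_X\alpha'(\bar\rho)\,\d(\sigma-\bar\nu)\ge D^+\T_\mu(\bar\nu;\sigma-\bar\nu)$ coming from first-order optimality is not by itself enough, as it controls $\T_\mu$ from the wrong side). Here I would invoke the characterisation established earlier that $\partial\T_\mu(\bar\nu)$ is exactly the set of maximisers in the dual formula $\T(\mu,\bar\nu)=\sup_{g}\{\int_X g\,\d\bar\nu-\int_X T^-g\,\d\mu\}$, so the task is the attainment of this dual problem at $\bar\nu$. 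Following \cite{FGJ}, this attainment is secured from the minimality of $\bar\nu$ together with the superlinearity of $\alpha$: equivalently, by Toland duality the value $\inf_\nu(\T_\alpha(\mu,\cdot)-\T_\mu)$ equals $\inf_g(\int_X T^-g\,\d\mu-(\T_\alpha)_\mu^*(g))$, and the coercivity supplied by $(\T_\alpha)_\mu^*$ on the function side yields a dual minimiser $\bar f$, which by the Toland optimality relations lies simultaneously in $\partial\T_\mu(\bar\nu)$ and in $\partial\T_\alpha(\mu,\bar\nu)$; the latter membership is precisely the Euler--Lagrange identity $\alpha'(\bar\rho)=\bar f+C$. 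Once $\bar f$ is in hand, the reduction of the previous paragraph finishes the proof, and the logarithmic case is the specialisation $\alpha(t)=t\log t$.
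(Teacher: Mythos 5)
Your second paragraph is correct, and it is exactly the mechanism underlying the paper's own proof: granted $\bar f\in\partial\mathcal{T}_\mu(\bar\nu)$, the subgradient inequality for $\mathcal{T}_\mu$ together with minimality of $\bar\nu$ shows that $\bar\nu$ minimizes the convex functional $\nu\mapsto\mathcal{T}_\alpha(\mu,\nu)-\int_X\bar f\,d\nu$, i.e. $\bar f\in\partial(\mathcal{T}_\alpha)_\mu(\bar\nu)$, and the equality case in Fenchel's inequality (this is how the paper phrases your pointwise multiplier step, via the identity $\alpha^{\oplus}(\alpha'(\bar\rho))=\bar\rho\,\alpha'(\bar\rho)-\alpha(\bar\rho)$) then forces $\alpha'(\bar\rho)=\bar f+C$ $\bar\nu$-a.e. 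You have also correctly isolated where the real work lies: producing some $\bar f\in\partial\mathcal{T}_\mu(\bar\nu)$ at the given minimizer.

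That is precisely where your argument has a genuine gap. The Toland dual $\inf_g\{\int_X T^-g\,d\mu-(\mathcal{T}_\alpha)^*_\mu(g)\}$ is a minimization over a function space in which no compactness is available: for a general backward linear transfer, $T^-$ maps $C(X)$ into $USC(X)$ with no modulus of continuity, so minimizing sequences of potentials need not converge in any usable sense, and coercivity cannot substitute for compactness in this non-reflexive setting; note also that $(\mathcal{T}_\alpha)^*_\mu$ enters the dual objective with a minus sign, so its growth works against, not for, coercivity of the quantity being minimized. The paper itself stresses, immediately after the subdifferential characterization you invoke, that dual attainment for $\mathcal{T}(\mu,\cdot)$ is only known generically (via Brondsted--Rockafellar), not at a prescribed measure. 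Moreover, even granting a dual minimizer, Toland's extremality relations pair it with \emph{some} primal minimizer, not necessarily with the given $\bar\nu$: since $I_\mu$ is not convex, its minimizers need not be unique. The adaptation of Theorem 2.2 of \cite{FGJ} that the paper has in mind avoids dual attainment altogether, and in fact the one-sided inequality you dismissed as controlling $\mathcal{T}_\mu$ ``from the wrong side'' is the way in: for perturbations $\xi=\nu-\bar\nu$ with $\nu$ of bounded density with respect to $\bar\nu$, both $\xi$ and $-\xi$ are admissible directions (since $\bar\nu-t\xi\ge 0$ for small $t>0$), so minimality plus differentiability of the entropy term along such segments gives the two-sided bound $D^{+}\mathcal{T}_\mu(\bar\nu;\pm\xi)\le\int_X\alpha'(\bar\rho)\,d(\pm\xi)$, where $D^{+}$ denotes the one-sided directional derivative. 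Sublinearity of directional derivatives of the convex functional $\mathcal{T}_\mu$, namely $D^{+}\mathcal{T}_\mu(\bar\nu;\xi)+D^{+}\mathcal{T}_\mu(\bar\nu;-\xi)\ge 0$, then upgrades this to
\[
\int_X\alpha'(\bar\rho)\,d\xi\;\le\;D^{+}\mathcal{T}_\mu(\bar\nu;\xi)\;\le\;\mathcal{T}_\mu(\nu)-\mathcal{T}_\mu(\bar\nu),
\]
so that $\bar f:=\alpha'(\bar\rho)$ itself is the desired subgradient against all bounded-density perturbations, and an approximation argument removes that restriction. In other words, the subgradient is constructed explicitly from the minimizer, rather than extracted from an abstract attainment theorem; this is what makes the proposition work for transfers whose dual problem need not be attained.
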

\noindent  {\bf Proof:} Recall that $\T_\alpha(\mu,\nu) := \int_{X}\alpha(|\frac{\d\nu}{\d\mu}|) \d\mu$ if $\nu << \mu$ (and $+\infty$ otherwise) is a backward convex transferwith
\eqs{
\T_{\mu}^*(f) = \inf\lf\{\int_{X}[\alpha^{\oplus}(f(x) + t) - t]\d\mu(x)\,;\, t \in \R\rt\},
}
where 
$
T_t^- f(x) := \alpha^{\ominus}(f(x) + t) - t.
$
are the corresponding Kantorovich transfers. Here $\alpha \in C^1$, is strictly convex and superlinear. It follows that  
\eqs{
\alpha'(|\frac{\d\nu}{\d\mu}|) \in \partial \T_{\mu}(\nu).
}
We can see this either directly from the subdifferential definition, or from observing 
$$\alpha^{\oplus}(\alpha'(|\frac{\d\nu}{\d\mu}|)) = \frac{\d\nu}{\d\mu}\alpha'(|\frac{\d\nu}{\d\mu}|) - \alpha(|\frac{\d\nu}{\d\mu}|).$$
In particular,
\as{
\T_{\mu}^*\lf(\alpha'(|\frac{\d\nu}{\d\mu}|)\rt) = \int_{X}\alpha^{\oplus}\lf(\alpha'(|\frac{\d\nu}{\d\mu}|)\rt) \d\mu .
}
The rest is an easy adaptation of Theorem 2.2 in \cite{FGJ}.
 
\section{Inequalities between transfers}
Let ${\mathcal T}$ be a linear or convex coupling, and let ${\mathcal E}_1$, ${\mathcal E}_2$ be entropic transfers on $X\times X$. Standard Transport-Entropy or Transport-Information inequalities are usually of the form 
\begin{equation}\label{one}	
\hbox{${\mathcal T}(\sigma, \mu) \leq \lambda_1 {\mathcal E}_1(\mu, \sigma)$ \quad for all $\sigma \in {\cal P}(X)$,} 
\end{equation}
\begin{equation}\label{two}	
\hbox{${\mathcal T}(\mu, \sigma) \leq \lambda_2 {\mathcal E}_2(\mu, \sigma)$ \quad for all $\sigma \in {\cal P}(X)$,} 
\end{equation}
\begin{equation}\label{three}
\hbox{${\mathcal T}(\sigma_1, \sigma_2) \leq \lambda_1 {\mathcal E}_1(\sigma_1, \mu)+\lambda_2 {\mathcal E}_2( \sigma_2, \mu)$ \quad for all $\sigma_1, \sigma_2 \in {\cal P}(X)$,} 
\end{equation}
where $\mu$ is a fixed measure, and $\lambda_1$, $\lambda_2$ are two positive reals.
In our terminology, Problem \ref{one} (resp., \ref{two}), (resp., \ref{three}) amount to find $\mu$,  $\lambda_1$, and $\lambda_2$ such that 
\begin{equation}
(\lambda_1{\mathcal E}_1)\star (-{\mathcal T}) \, (\mu, \mu)\geq 0,
\end{equation} 
\begin{equation}(\lambda_2{\mathcal E}_2)\star (-\tilde {\mathcal T})\,  (\mu, \mu)\geq 0,
\end{equation} 
 \begin{equation}(\lambda_1\tilde {\mathcal E}_1)\star (-{\mathcal T})\star (\lambda_2{\mathcal E}_2) \, (\mu, \mu)\geq 0,
\end{equation} 
where $\tilde {\mathcal T} (\mu, \nu)={\mathcal T} (\nu, \mu)$. Note for example that 
\[
\tilde {\mathcal E}_1\star  (-{\mathcal T})\star  {\mathcal E}_2\, (\mu, \nu)= \inf\{\tilde{\mathcal E}_1(\mu, \sigma_1)-{\mathcal T}_2(\sigma_1, \sigma_2)+{\mathcal E}_2(\sigma_2, \nu);\, \sigma_1, \sigma_2 \in {\mathcal P}(Z)\}.
\]
We shall therefore write duality formulas for the transfers ${\mathcal E}_1\star (-{\mathcal T})$, ${\mathcal E}_2\star (-\tilde {\mathcal T})$ and $\tilde {\mathcal E}_1\star (-{\mathcal T})\star {\mathcal E}_2$ between any two measures $\mu$ and $\nu$, where ${\mathcal T}$ is any convex transfer, while ${\mathcal E}_1$, ${\mathcal E}_2$ are entropic transfers.

\subsection{Backward convex coupling to backward convex  transfer inequalities}

 We would like to prove inequalities such as 
\begin{equation}\label{one.main}	
\hbox{${\mathcal F}_2(\sigma, \mu) \leq {\mathcal F}_1(\mu, \sigma)$ \quad for all $\sigma \in {\cal P}(X)$,} 
\end{equation}
where  ${\mathcal F}_1$ is a backward convex transfer and ${\mathcal F}_2$ is a  backward convex coupling. We then apply it to Transport-Entropy inequalities of the form 
\begin{equation}\label{one.revised}	
\hbox{${\mathcal F}(\sigma, \mu) \leq \lambda {\mathcal E}\star {\cal T}(\mu, \sigma)$ \quad for all $\sigma \in {\cal P}(X)$,} 
\end{equation}
where ${\mathcal F}$ is a backward  convex coupling, while ${\mathcal E}$ is a $\beta$-entropic transfer and ${\cal T}$ is a backward linear transfer. 

\begin{prop} \label{back-back} 
Let ${\mathcal F}_1$ be a  backward convex  transfer with Kantorovich operator $(F^-_{1,i})_{i\in I}$ on $X_1\times X_2$, and ${\mathcal F}_2$ is a  backward convex coupling on $X_2\times X_3$ with Kantorovich operator $(F^-_{2, j})_{j\in J}$. 
\begin{enumerate}

\item The following duality formula hold: 
\begin{equation}
{\mathcal F}_1\star -{\mathcal F}_2\, (\mu, \nu)=\inf\limits_{f\in C(X_3)}\inf\limits_{j\in J}\sup\limits_{i\in I}\left\{-\int_{X_1} F_{1,i}^-\circ F _{2,j}^-f\, d\mu-\int_{X_3} f\, d\nu\right\}.
\end{equation}
\item If ${\mathcal F}_1$ is a backward $\beta$-entropic transfer on $X_1\times X_2$ with Kantorovich operator $E_1^-$, 
 then
\begin{equation}
{\mathcal F}_1\star -{\mathcal F}_2\, (\mu, \nu)=\inf\limits_{f\in C(X_3)}\inf\limits_{j\in J}\left\{-\beta (\int_{X_1} E_1^-\circ F_{2,j}^-f\, d\mu)-\int_{X_3} f\, d\nu  
\right\}.
\end{equation}
 \end{enumerate}
\end{prop}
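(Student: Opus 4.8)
The plan is to compute the Legendre-type dual of the inf-convolution $\mathcal{F}_1 \star (-\mathcal{F}_2)$ directly from the definitions, using the duality formulae established for convex couplings and transfers. Recall that $\mathcal{F}_1 \star (-\mathcal{F}_2)(\mu,\nu) = \inf_{\sigma \in \mathcal{P}(X_2)}\{\mathcal{F}_1(\mu,\sigma) - \mathcal{F}_2(\sigma,\nu)\}$. The key observation is that since $\mathcal{F}_1$ is a backward convex \emph{transfer} (with the sharp identity $\mathcal{F}_{1,\mu}^*(g) = \inf_i \int_{X_1} F_{1,i}^- g\,d\mu$), while $\mathcal{F}_2$ is only a backward convex \emph{coupling} (written as a supremum $\mathcal{F}_2 = \sup_{j\in J}\mathcal{F}_{2,j}$ of linear couplings induced by the $F_{2,j}^-$), the two play asymmetric roles, which is precisely why the $\inf_j\sup_i$ ordering appears rather than a symmetric expression.

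\textbf{Key steps.} First I would unwind the inner term $-\mathcal{F}_2(\sigma,\nu)$. Since $\mathcal{F}_2(\sigma,\nu) = \sup_{j}\sup_{f\in C(X_3)}\{\int_{X_3} f\,d\nu - \int_{X_2} F_{2,j}^- f\,d\sigma\}$, negating gives $-\mathcal{F}_2(\sigma,\nu) = \inf_j \inf_{f}\{\int_{X_2} F_{2,j}^- f\,d\sigma - \int_{X_3} f\,d\nu\}$. Substituting into the inf-convolution, I obtain
\begin{equation*}
\mathcal{F}_1 \star (-\mathcal{F}_2)(\mu,\nu) = \inf_{\sigma}\inf_j \inf_f \Big\{\mathcal{F}_1(\mu,\sigma) + \int_{X_2} F_{2,j}^- f\,d\sigma - \int_{X_3} f\,d\nu\Big\}.
\end{equation*}
The next step is to interchange the infimum over $\sigma$ with those over $j$ and $f$ (all being infima, this is unconditionally valid), pulling the $\inf_\sigma$ innermost. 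For fixed $j$ and $f$, the inner minimization over $\sigma$ reads $\inf_\sigma\{\mathcal{F}_1(\mu,\sigma) + \int_{X_2} g\,d\sigma\}$ with $g := F_{2,j}^- f$, which equals $-\mathcal{F}_{1,\mu}^*(-g) = -\sup_\sigma\{\int_{X_2}(-g)\,d\sigma - \mathcal{F}_1(\mu,\sigma)\}$. Because $\mathcal{F}_1$ is a backward convex transfer, this Legendre transform is computed sharply by the defining identity, yielding $\mathcal{F}_{1,\mu}^*(-g) = \inf_{i\in I}\int_{X_1} F_{1,i}^-(-g)\,d\mu$; hence $-\mathcal{F}_{1,\mu}^*(-g) = \sup_{i}\{-\int_{X_1} F_{1,i}^-(-F_{2,j}^- f)\,d\mu\}$. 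Collecting the outer infima over $f$ and $j$ together with this $\sup_i$ produces exactly the stated $\inf_f \inf_j \sup_i$ formula, after identifying the composite operator $F_{1,i}^- \circ F_{2,j}^-$ (this composition being the natural one since convolution of transfers composes Kantorovich operators, as in Proposition \ref{inf.tens}).

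\textbf{Main obstacle.} The delicate point is the sharp evaluation of the Legendre transform $\mathcal{F}_{1,\mu}^*$ via the family $(F_{1,i}^-)_i$: this is exactly where the hypothesis that $\mathcal{F}_1$ is a \emph{transfer} (not merely a coupling) is essential, since only then does $\mathcal{F}_{1,\mu}^*(g) = \inf_i \int F_{1,i}^- g\,d\mu$ hold with equality rather than inequality. For part (2), I would simply specialize: when $\mathcal{F}_1$ is $\beta$-entropic with operator $E_1^-$, the defining relation $\mathcal{F}_{1,\mu}^*(g) = \beta(\int_{X_1} E_1^- g\,d\mu)$ replaces the $\inf_i\int F_{1,i}^-$ step, so the $\sup_i$ collapses and $F_{1,i}^- \circ F_{2,j}^-$ becomes $E_1^- \circ F_{2,j}^-$ inside $\beta(\cdot)$, giving the second formula. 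The only real care needed throughout is bookkeeping of signs and the validity of the infimum interchanges, both of which are routine; no minimax exchange (which would require a convexity/compactness argument) is actually invoked, since every interchange here is between two infima.
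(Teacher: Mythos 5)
Your proposal is correct and follows essentially the same route as the paper's own proof: expand $-\mathcal{F}_2$ as a double infimum, pull $\inf_\sigma$ inside, recognize the inner minimization as $-(\mathcal{F}_1)_\mu^*(-F_{2,j}^-f)$, and evaluate that Legendre transform sharply via the convex-transfer identity (respectively the $\beta$-entropic identity for part (2)), turning $-\inf_i$ into $\sup_i$. Your observations that the transfer (versus mere coupling) hypothesis on $\mathcal{F}_1$ is exactly what makes the Legendre evaluation an equality, and that no genuine minimax interchange is needed, are both accurate and consistent with the paper's computation.
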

\noindent{\bf Proof:} Write
\begin{eqnarray*}
{\mathcal F}_1\star -{\mathcal F}_2\, (\mu, \nu)&=& \inf\{{\mathcal F}_1(\mu, \sigma)-{\mathcal F}_2(\sigma, \nu);\, \sigma \in {\mathcal P}({X_2}) \}\\
 &=&
\inf\limits_{\sigma \in {\mathcal P}({X_2})}\left\{{\mathcal F}_1(\mu, \sigma)- \sup\limits_{f\in C(X_3)}\sup\limits_{j\in J}\left\{\int_{X_3} f\, d\nu-\int_{X_2} F _{2,j}^-f\, d\sigma\right\}\right\}\\
&=&
\inf\limits_{\sigma \in {\mathcal P}({X_2})}\inf\limits_{f\in C(X_3)}\inf\limits_{j\in J}\left\{{\mathcal F}_1(\mu, \sigma)- \int_{X_3} f\, d\nu+\int_{X_2} F _{2,j}^-f\, d\sigma \right\}\\
&=&\inf\limits_{f\in C(X_3)}\inf\limits_{j\in J}\left\{-\sup\limits_{\sigma \in {\mathcal P}({X_2})}\{-\int_{X_2} F _{2,j}^-f\, d\sigma    -{\mathcal F}_1(\mu, \sigma)\}- \int_{X_3} f\, d\nu  \right\}\\
&=&\inf\limits_{f\in C(X_3)}\inf\limits_{j\in J}\left\{-({\mathcal F}_{1})_\mu^*(-F _{2,j}^-f)-\int_{X_3} f\, d\nu  
\right\}\\
&=&\inf\limits_{f\in C(X_3)}\inf\limits_{j\in J}\left\{-\inf\limits_{i\in I}\int_{X_1} F_{1,i}^-\circ -F _{2,j}^-f\, d\mu-\int_{X_3} f\, d\nu\right\}\\
&=&\inf\limits_{f\in C(X_3)}\inf\limits_{j\in J}\sup\limits_{i\in I}\left\{-\int_{X_1} F_{1,i}^-\circ -F _{2,j}^-f\, d\mu-\int_{X_3} f\, d\nu\right\}.
\end{eqnarray*}
2) If ${\mathcal F}_1$ is a backward $\beta$-entropic transfer on $X_1\times X_2$ with Kantorovich operator $E_1^-$, 
 then use in the above calculation that $({\mathcal F}_{1})_\mu^*(-F _{2,j}^-f)=\beta (\int_{X_1} E_1^-\circ -F_{2,j}^-f\, d\mu)$.

\begin{cor} Let  ${\mathcal F}$ be a  backward convex coupling on $Y_2\times X_2$ with Kantorovich family $(F^-_i)_{i\in I}$ and let ${\mathcal E}$ be a backward $\beta$-entropic transfer on $X_1\times Y_1$ with Kantorovich operator $E^-$. Let ${\cal T}$ be a backward linear transfer  on $Y_1\times Y_2$ with Kantorovich operator $T^-$ and $\lambda>0$. Then, for any fixed pair of probability measures $\mu\in {\cal P}( X_1)$ and $\nu\in {\cal P}(X_2 )$, the following are equivalent:
\begin{enumerate}
\item For all $\sigma \in {\cal P}(Y_2)$, we have ${\mathcal F}(\sigma, \nu) \leq \lambda \, {\mathcal E}\star {\mathcal T}\, (\mu, \sigma)$.
 
\item  For all $g\in C(X_2)$ and $i\in I$, we have $\beta \big( \int_{X_1} E^-\circ T^-\circ \frac{-1}{\lambda}F_i^-(\lambda g)\, d\mu\big)+\int_{X_2} g\, d\nu \leq 0$. 
 \end{enumerate}
\end{cor}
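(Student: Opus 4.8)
The plan is to recognise both conditions as the two sides of the duality formula in Proposition \ref{back-back}, applied to a suitable pair of transfers, after absorbing the factor $\lambda$ into an entropic function. First I would rewrite condition (1). Saying that $\mathcal{F}(\sigma,\nu)\le \lambda\,\mathcal{E}\star\mathcal{T}(\mu,\sigma)$ holds for every $\sigma\in\mathcal{P}(Y_2)$ is the same as
\[
\inf_{\sigma\in\mathcal{P}(Y_2)}\big\{\lambda\,\mathcal{E}\star\mathcal{T}(\mu,\sigma)-\mathcal{F}(\sigma,\nu)\big\}\ge 0,
\]
and by the definition of inf-convolution this infimum is exactly $\big(\lambda(\mathcal{E}\star\mathcal{T})\big)\star(-\mathcal{F})(\mu,\nu)$, where the intermediate marginal runs over $\mathcal{P}(Y_2)$.

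The key structural step is to identify $\lambda(\mathcal{E}\star\mathcal{T})$ as a backward entropic transfer. Since $\mathcal{E}$ is a backward $\beta$-entropic transfer on $X_1\times Y_1$ with operator $E^-$ and $\mathcal{T}$ is a backward linear transfer on $Y_1\times Y_2$ with operator $T^-$, part (3) of the proposition on operations on entropic transfers gives that $\mathcal{E}\star\mathcal{T}$ is a backward $\beta$-entropic transfer on $X_1\times Y_2$ with Kantorovich operator $E^-\circ T^-$. Then, by the scaling property (part (1) of the same proposition), $\lambda(\mathcal{E}\star\mathcal{T})$ is a backward $(\lambda\beta)$-entropic transfer with operator $h\mapsto E^-\circ T^-(h/\lambda)$. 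This places us exactly in the hypotheses of Proposition \ref{back-back} with $\mathcal{F}_1=\lambda(\mathcal{E}\star\mathcal{T})$ (entropic, hence a genuine backward convex transfer for which the Legendre identity holds with equality), with $\mathcal{F}_2=\mathcal{F}$ the backward convex coupling, and with the spaces $X_1$, $Y_2$, $X_2$ playing the roles of $X_1$, $X_2$, $X_3$.

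Applying part (2) of Proposition \ref{back-back} (in the form established in its proof, with the sign $-F_i^-f$ inside) then yields
\[
\big(\lambda(\mathcal{E}\star\mathcal{T})\big)\star(-\mathcal{F})(\mu,\nu)=\inf_{f\in C(X_2)}\inf_{i\in I}\Big\{-\lambda\beta\Big(\int_{X_1}E^-\circ T^-\big(\tfrac{-1}{\lambda}F_i^-f\big)\,d\mu\Big)-\int_{X_2}f\,d\nu\Big\}.
\]
Finally I would substitute $f=\lambda g$ with $g\in C(X_2)$, so that $\tfrac{-1}{\lambda}F_i^-f=\tfrac{-1}{\lambda}F_i^-(\lambda g)$ and $\int_{X_2}f\,d\nu=\lambda\int_{X_2}g\,d\nu$, and factor out $\lambda>0$. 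Because $\lambda>0$, nonnegativity of the whole infimum is equivalent to nonnegativity of each summand, which after dividing by $\lambda$ and rearranging reads
\[
\beta\Big(\int_{X_1}E^-\circ T^-\big(\tfrac{-1}{\lambda}F_i^-(\lambda g)\big)\,d\mu\Big)+\int_{X_2}g\,d\nu\le 0\qquad\text{for all }g\in C(X_2),\ i\in I.
\]
This is precisely condition (2), closing the equivalence.

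The main obstacle I expect is bookkeeping rather than conceptual: correctly tracking the minus sign inside $F_i^-$ together with the factor $1/\lambda$ as it passes through the scaling of the entropic function $\beta$ and the substitution $f=\lambda g$, and being careful that Proposition \ref{back-back} is invoked with $\mathcal{F}_1$ a genuine \emph{transfer} (so the duality $(\mathcal{F}_1)^*_\mu(g)=\lambda\beta(\int_{X_1}E^-\circ T^-(g/\lambda)\,d\mu)$ holds with equality) rather than merely a coupling; this exactness is exactly what the entropic-operations proposition secures, and it is what lets the passage from the infimum formula to the pointwise-in-$(g,i)$ inequality go through.
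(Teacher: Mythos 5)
Your proof is correct and is precisely the argument the paper intends: rewrite condition (1) as nonnegativity of $\big(\lambda(\mathcal{E}\star\mathcal{T})\big)\star(-\mathcal{F})(\mu,\nu)$, identify $\lambda(\mathcal{E}\star\mathcal{T})$ as a backward $(\lambda\beta)$-entropic transfer with operator $h\mapsto E^-\circ T^-(h/\lambda)$ via the entropic-operations proposition, and then invoke Proposition \ref{back-back}(2) followed by the substitution $f=\lambda g$. You were also right to use the version of the duality formula with $-F_i^-f$ inside the operator, as established in the proof of Proposition \ref{back-back} (its displayed statement drops that minus sign), which is exactly what makes the result match condition (2).
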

In particular, if we apply the above in the case where $\cal E$ is the logarithmic entropy, that is 
  \begin{equation}
\hbox{
${\cal H}(\mu, \nu)=\int_X\log (\frac {d\nu}{d\mu})\, d\nu$ if $\nu<<\mu$ and $+\infty$ otherwise,}
\end{equation}
 which is a backward $\beta$-entropic transfer with $\beta (t)=\log t$ and $E ^-f=e^f$ as a backward Kantorovich operator.
 
  \begin{cor} Let  ${\mathcal F}$ be a backward convex coupling on $X_2\times Y_2$ with Kantorovich family $(F^-_i)_{i\in I}$ and let ${\mathcal E}$ be a backward $\beta$-entropic transfer on $X_1\times Y_1$. with Kantorovich operator $E^-$. Let ${\cal T}$ be a backward linear transfer  on $Y_1\times Y_2$ with Kantorovich operator $T^-$ and $\lambda>0$. Then, for any fixed pair of probability measures $\mu\in {\cal P}( X_1)$ and $\nu\in {\cal P}(X_2 )$, the following are equivalent:
 
 \begin{enumerate}
 \item  For all $\sigma \in {\cal P}(Y)$, we have ${\mathcal F}(\sigma, \nu) \leq \lambda\,  {\mathcal H}\star {\mathcal T}\, (\mu, \sigma)$.

\item For all $g\in C(X_2)$, we have  $\sup\limits_{i\in I} \int_{X_1} e^{T^-\circ \frac{-1}{\lambda}F_i^-(\lambda g)}\, d\mu \leq e^{-\int_{X_2} g\, d\nu}$.
\end{enumerate}
In particular, if  ${\cal T}$ is the identity transfer and ${\cal F}$ is a backward linear transfer, then the following are equivalent:
\begin{enumerate}
\item ${\mathcal F}(\sigma, \nu) \leq \lambda\,   {\mathcal H}\, (\sigma, \mu)$ for all $\sigma \in {\cal P}(Y)$  
\item $ \int_{X_1} e^{-F^-(\lambda g)}\, d\mu \leq e^{-\frac{1}{\lambda}}e^{-\int_{X_2} g\, d\nu}$ \quad  for all $g\in C(X_2)$. 
 
  \end{enumerate}
  \end{cor}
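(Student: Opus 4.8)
The plan is to obtain this statement as a direct specialization of the immediately preceding Corollary, taking ${\cal E}$ to be the logarithmic entropy ${\cal H}$. Recall from Example 11.5 that ${\cal H}$ is a backward $\beta$-entropic transfer with $\beta(t)=\log t$ and backward Kantorovich operator $E^-f=e^{f}$. With these choices, condition (2) of the preceding Corollary reads
\[
\beta\Big(\int_{X_1} E^-\circ T^-\circ \tfrac{-1}{\lambda}F_i^-(\lambda g)\, d\mu\Big)+\int_{X_2} g\, d\nu\leq 0\qquad\text{for all }g\in C(X_2),\ i\in I.
\]
Substituting $\beta=\log$ and $E^-=\exp$ turns the bracket into $\log\int_{X_1} e^{T^-\circ \frac{-1}{\lambda}F_i^-(\lambda g)}\, d\mu$, so the inequality becomes $\log\int_{X_1} e^{T^-\circ \frac{-1}{\lambda}F_i^-(\lambda g)}\, d\mu\leq -\int_{X_2}g\, d\nu$.

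First I would note that, since $t\mapsto e^{t}$ is continuous and strictly increasing, this is equivalent to its exponentiated form $\int_{X_1} e^{T^-\circ \frac{-1}{\lambda}F_i^-(\lambda g)}\, d\mu\leq e^{-\int_{X_2}g\, d\nu}$, with no loss of information in either direction. Because the preceding Corollary demands the inequality for every $i\in I$ while the right-hand side $e^{-\int_{X_2}g\, d\nu}$ is independent of $i$, quantifying over all $i$ is the same as bounding the supremum over $i$ of the left-hand side. This produces exactly $\sup_{i\in I}\int_{X_1} e^{T^-\circ \frac{-1}{\lambda}F_i^-(\lambda g)}\, d\mu\leq e^{-\int_{X_2}g\, d\nu}$, i.e. condition (2) of the present statement, and its equivalence with condition (1) is inherited verbatim. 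This first part is entirely routine.

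For the ``in particular'' assertion I would specialize further: take ${\cal T}$ to be the identity transfer and ${\cal F}$ to be a backward linear transfer. The identity transfer has Kantorovich operator $T^-=\mathrm{id}$, so by Proposition \ref{inf.tens} the convolution ${\cal H}\star{\cal T}$ has Kantorovich operator $E^-\circ T^-=E^-$ and hence ${\cal H}\star{\cal T}={\cal H}$; and ${\cal F}$ linear means the family $(F_i^-)_{i\in I}$ collapses to a single $F^-$, so the supremum over $i$ disappears. The genuine work that remains is the rescaling by $\lambda$, for which I would feed $\lambda$ into ${\cal H}$ through the scalar-multiplication rule for entropic transfers (the Proposition giving $\lambda{\cal H}$ as a $(\lambda\beta)$-entropic transfer with operator $E_\lambda^-f=E^-(f/\lambda)$), while carrying along the Legendre-transform constant $(-\beta)^{\ominus}$ attached to $\beta=\log$.

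The step I expect to be the main (though still minor) obstacle is precisely this constant bookkeeping, together with the entropy-ordering swap ${\cal H}(\sigma,\mu)$ versus ${\cal H}(\mu,\sigma)$. The factor $e^{-1/\lambda}$ on the right of the final condition (2) is invisible in the $\beta$-entropic form and only surfaces once one passes through the convex-transfer representation of ${\cal H}$, where the additive term $(-\beta)^{\ominus}(s)=-1-\log s$ enters (so that $(-\beta)^{\ominus}(1)=-1$); after the $\lambda$-rescaling and exponentiation I expect this to contribute exactly the prefactor $e^{-1/\lambda}$ and to move the exponent from $-\frac{1}{\lambda}F^-(\lambda g)$ to $-F^-(\lambda g)$. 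Once these constants are matched and the transposed entropy is handled, condition (1) in the form ${\cal F}(\sigma,\nu)\leq\lambda{\cal H}(\sigma,\mu)$ is seen to be equivalent to condition (2), completing the argument.
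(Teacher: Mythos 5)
Your treatment of the first equivalence is correct and is exactly the paper's (implicit) proof: the statement is obtained from the preceding corollary by taking ${\mathcal E}={\mathcal H}$, $\beta=\log$, $E^-f=e^{f}$, exponentiating the resulting inequality, and replacing the quantifier over $i\in I$ by a supremum. No issues there.

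The gap is in the ``in particular'' part, and it is genuine: the entire content of that step is the ``constant bookkeeping'' you say you \emph{expect} to work, and it cannot be made to work. If one reads the entropy there as ${\mathcal H}(\mu,\sigma)$, so that the first equivalence applies verbatim, then the specialization $T^-=\mathrm{id}$, $I=\{1\}$ yields the condition $\int_{X_1}e^{-\frac{1}{\lambda}F^-(\lambda g)}\,d\mu\leq e^{-\int_{X_2}g\,d\nu}$: the factor $\frac{1}{\lambda}$ stays inside the exponent and no prefactor $e^{-1/\lambda}$ ever appears. Your proposed mechanism --- the constant $(-\beta)^{\ominus}(1)=-1$ from the convex-transfer representation of ${\mathcal H}$ --- does not produce one, because in that representation one must keep the infimum over the whole family $T^-_s f=se^{f}-1-\log s$, $s>0$, and that infimum collapses back to the logarithm, reproducing the same condition. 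If instead one reads ${\mathcal H}(\sigma,\mu)$ literally, the corollary you invoke does not apply at all: $\sigma$ then sits in the first (non-dual) slot of ${\mathcal H}$, and the transpose $\tilde{\mathcal H}(\mu,\sigma)={\mathcal H}(\sigma,\mu)$ is only a \emph{forward} entropic transfer, whose Legendre transform in $\sigma$ is not of the form $\beta\big(\int E^-(\cdot)\,d\mu\big)$; that situation is governed by Proposition \ref{forward-back} and the Maurey-type theorem, not by the backward-to-backward corollary. In fact no bookkeeping can reconcile the printed conditions: for ${\mathcal F}$ the null transfer (so $F^-g=\sup g$), condition (1) holds trivially for every $\mu,\nu,\lambda$ under either reading of ${\mathcal H}$, while condition (2) fails already at $g\equiv 0$, where it demands $1\leq e^{-1/\lambda}$. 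So the printed ``in particular'' statement is itself off (the paper offers no proof of it), and your proposal --- trusting that the constants will match --- does not and cannot close this step; what your route actually establishes is the corrected assertion with dual condition $\int_{X_1}e^{-\frac{1}{\lambda}F^-(\lambda g)}\,d\mu\leq e^{-\int_{X_2}g\,d\nu}$.
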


\subsection{Forward convex coupling to backward convex transfer inequalities}

 We are now interested in inequalities such as 
\begin{equation}\label{one.main}	
\hbox{${\mathcal F}_2(\nu, \sigma) \leq {\mathcal F}_1(\mu, \sigma)$ \quad for all $\sigma \in {\cal P}(X)$,} 
\end{equation}
where both ${\mathcal F}_1$ and ${\mathcal F}_2$ are convex backward  transfers, and in particular,  Transport-Entropy inequalities of the form 
\begin{equation}\label{one.revised}	
\hbox{${\mathcal F}(\nu, \sigma) \leq \lambda {\mathcal E}\star {\cal T}(\mu, \sigma)$ \quad for all $\sigma \in {\cal P}(X)$,} 
\end{equation}
where  
${\mathcal E}$ is a $\beta$-entropic transfer and ${\cal T}$ is a backward linear transfer. But we can write (\ref{one.main}) as 
\begin{equation}\label{one.main}	
\hbox{$\tilde {\mathcal F}_2(\sigma, \nu) \leq {\mathcal F}_1(\mu, \sigma)$ \quad for all $\sigma \in {\cal P}(X)$,} 
\end{equation}
where now $\tilde {\mathcal F}_2(\sigma, \nu)={\mathcal F}_2(\nu, \sigma)$ is a convex forward  transfer. So, we need to establish the following type of duality.

\begin{prop} \label{forward-back}  
\label{B-F} Let ${\mathcal F}_1$ be a backward convex   transfer with Kantorovich operator $(F^-_{1,i})_{i\in I}$ on $X_1\times X_2$, and let ${\mathcal F}_2$ be a convex forward coupling on $X_2\times X_3$ with Kantorovich operator $(F^+_{2, j})_{j\in J}$. 
\begin{enumerate}

\item The following duality formula then holds: 
\begin{equation}
{\mathcal F}_1\star -{\mathcal F}_2\, (\mu, \nu)=\inf\limits_{g\in C(X_2)}\inf\limits_{j\in J}\sup\limits_{i\in I}\left\{-\int_{X_1} F_{1,i}^-(-g)\, d\nu-\int_{X_3} F _{2,j}^+(g)\, d\nu\right\}.
\end{equation}
\item If ${\mathcal F}_1$ is a backward $\beta$-entropic transfer on $X_1\times X_2$ with Kantorovich operator $E_1^-$, 
 then
\begin{equation}
{\mathcal F}_1\star -{\mathcal F}_2\, (\mu, \nu)=\inf\limits_{g\in C(X_2)}\inf\limits_{j\in J}\left\{-\beta (\int_{X_1} E_1^-(-g)\, d\mu)-\int_{X_3} F _{j}^+(g)\, d\nu\right\}.
\end{equation}
\item If ${\mathcal F}_1$ is a backward $\beta$-entropic transfer with Kantorovich operator $E ^-_1$, and ${\mathcal F}_2$ is a forward $\alpha$-entropic transfer with Kantorovich operator $E ^+_2$, then
\begin{equation}
{\mathcal F}_1\star -{\mathcal F}_2\, (\mu, \nu)=\inf\limits_{g\in C(X_2)}\left\{ -\beta (\int_{X_1} E_1^-(-g)\, d\mu)-\alpha (\int_{X_3} E_2^+g\, d\nu)\right\}.
\end{equation}
\item In particular, if ${\mathcal E}$ is a backward $\beta$-entropic transfer with Kantorovich operator $E ^-$, and ${\mathcal T}$ is a forward linear transfer with Kantorovich operator $T ^+$, then
\begin{equation}
{\mathcal E}\star -{\mathcal T}\, (\mu, \nu)=\inf\limits_{g\in C(X_2)}\left\{ -\beta (\int_{X_1} E^-(-g)\, d\mu)-\int_{X_3} T^+g\, d\nu\right\}.
\end{equation}

\end{enumerate}
\end{prop}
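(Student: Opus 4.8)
The plan is to follow the template of the proof of Proposition \ref{back-back} verbatim, the only structural change being that the dual representation of the forward coupling ${\mathcal F}_2$ now uses test functions living in $C(X_2)$ (the ``middle'' space) rather than in $C(X_3)$. First I would write out the inf-convolution
\[
{\mathcal F}_1\star -{\mathcal F}_2\,(\mu,\nu)=\inf\{{\mathcal F}_1(\mu,\sigma)-{\mathcal F}_2(\sigma,\nu)\,;\,\sigma\in{\mathcal P}(X_2)\},
\]
and substitute the defining sup-representation of the forward convex coupling,
\[
{\mathcal F}_2(\sigma,\nu)=\sup\{\textstyle\int_{X_3}F_{2,j}^+g\,d\nu-\int_{X_2}g\,d\sigma\,;\,g\in C(X_2),\,j\in J\}.
\]
Because this supremum enters with a minus sign it becomes an infimum over $g$ and $j$, and since $\inf_\sigma\inf_{g,j}=\inf_{g,j}\inf_\sigma$, I can pull the minimization over $\sigma$ to the front.

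The key step is to recognize the resulting inner quantity $\sup_{\sigma}\{-\int_{X_2}g\,d\sigma-{\mathcal F}_1(\mu,\sigma)\}$ as the Legendre transform $({\mathcal F}_1)_\mu^*(-g)$ of the partial functional $\sigma\mapsto{\mathcal F}_1(\mu,\sigma)$ evaluated at $-g$. Here is the one place where the hypothesis that ${\mathcal F}_1$ is a backward convex \emph{transfer} (and not merely a coupling) is essential: by definition of a backward convex transfer we have the \emph{equality} $({\mathcal F}_1)_\mu^*(h)=\inf_{i\in I}\int_{X_1}F_{1,i}^-h\,d\mu$, whereas for a coupling only the inequality ``$\le$'' holds. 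Substituting $h=-g$ and observing that the outer minus sign exchanges $-\inf_i$ with $\sup_i$ yields part (1),
\[
{\mathcal F}_1\star -{\mathcal F}_2\,(\mu,\nu)=\inf_{g\in C(X_2)}\inf_{j\in J}\sup_{i\in I}\{-\textstyle\int_{X_1}F_{1,i}^-(-g)\,d\mu-\int_{X_3}F_{2,j}^+g\,d\nu\}.
\]
No transfer property is needed for ${\mathcal F}_2$, since we used only its sup-representation, which holds by the very definition of a coupling.

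Parts (2)--(4) are then immediate specializations, obtained by replacing $({\mathcal F}_1)_\mu^*(-g)$ and, where relevant, the dual of ${\mathcal F}_2$ by their explicit entropic or linear forms. For part (2) I would use that a $\beta$-entropic backward transfer satisfies $({\mathcal F}_1)_\mu^*(h)=\beta\big(\int_{X_1}E_1^-h\,d\mu\big)$, which replaces the $\inf_i$ expression by $\beta\big(\int_{X_1}E_1^-(-g)\,d\mu\big)$. For part (3) I would additionally replace the dual of ${\mathcal F}_2$ by the $\alpha$-entropic forward form ${\mathcal F}_2(\sigma,\nu)=\sup_g\{\alpha(\int_{X_3}E_2^+g\,d\nu)-\int_{X_2}g\,d\sigma\}$, so the term $\int_{X_3}F_{2,j}^+g\,d\nu$ becomes $\alpha\big(\int_{X_3}E_2^+g\,d\nu\big)$ and the index $j$ disappears. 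Part (4) is then the case $\alpha(t)=t$ of part (3), a forward linear transfer being precisely a forward $\alpha$-entropic transfer with $\alpha=\mathrm{id}$ and a single Kantorovich operator $T^+$.

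The computation is essentially mechanical, so I do not expect a genuine obstacle; the only points requiring care are the sign bookkeeping through the Legendre transform (ensuring $-g$, not $g$, is the argument, and that the global minus sign correctly turns $\inf_i$ into $\sup_i$) and verifying that the test function is correctly placed in $C(X_2)$ rather than $C(X_3)$---a consequence of ${\mathcal F}_2$ being forward rather than backward---so that it pairs with $\sigma\in{\mathcal P}(X_2)$ in the Legendre duality.
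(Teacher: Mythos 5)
Your proof follows exactly the same route as the paper's: expand the inf-convolution, substitute the sup-representation of ${\mathcal F}_2$ over test functions $g\in C(X_2)$, interchange the infima, identify $\sup_{\sigma}\{-\int_{X_2} g\,d\sigma-{\mathcal F}_1(\mu,\sigma)\}$ as the Legendre transform $({\mathcal F}_1)_\mu^*(-g)$, and invoke the transfer equality $({\mathcal F}_1)_\mu^*(h)=\inf_{i\in I}\int_{X_1}F_{1,i}^-h\,d\mu$ so that the outer minus sign turns $\inf_i$ into $\sup_i$, with parts (2)--(4) obtained by the same entropic/linear specializations the paper uses. The argument is correct, and your final formula (with $d\mu$ on the $X_1$ integral) in fact corrects a typographical slip ($d\nu$) in the paper's displayed equation.
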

\noindent{\bf Proof:} 1) Assume ${\mathcal F}_1$ is a backward convex   transfer with Kantorovich operator $F ^-_{1,i}$, and ${\mathcal F}_2$ is a forward convex  coupling with Kantorovich operator $F^+_{2, j}$, then
\begin{eqnarray*}
{\mathcal F}_1\star -{\mathcal F}_2\, (\mu, \nu)&=& \inf\{{\mathcal F}_1(\mu, \sigma)-{\mathcal F}_2(\sigma, \nu);\, \sigma \in {\mathcal P}({X_2}) \}\\
&=&
\inf\limits_{\sigma \in {\mathcal P}({X_2})}\left\{{\mathcal F}_1(\mu, \sigma)- \sup\limits_{g\in C(X_2)}\left\{\sup\limits_{j\in J}(\int_{X_3} F _{2,j}^+g\, d\nu) -\int_{X_2} g\, d\sigma\right\}\right\}\\
&=&
\inf\limits_{\sigma \in {\mathcal P}({X_2})}\inf\limits_{g\in C(X_2)}\inf\limits_{j\in J}\left\{{\mathcal F}_1(\mu, \sigma)- \int_{X_3} F _{2,j}^+g\, d\nu +\int_{X_2} g\, d\sigma \right\}\\
&=&\inf\limits_{g\in C(X_2)}\inf\limits_{j\in J}\left\{-\sup\limits_{\sigma \in {\mathcal P}({X_2})}\{-\int_{X_2} g\, d\sigma   -{\mathcal F}_1(\mu, \sigma)\}- \int_{X_3} F _{2,j}^+g\, d\nu)  \right\}\\
&=&\inf\limits_{g\in C(X_2)}\inf\limits_{j\in J}\left\{-({\mathcal F}_{1})_\mu^*(-g)-\int_{X_3} F _{2,j}^+(g)\, d\nu\right\}\\
&=&\inf\limits_{g\in C(X_2)}\inf\limits_{j\in J}\left\{-(\inf\limits_{i\in I}\int_{X_1} F_{1,i}^-(-g)\, d\nu-\int_{X_3} F _{2,j}^+(g)\, d\nu\right\}\\
&=&\inf\limits_{g\in C(X_2)}\inf\limits_{j\in J}\sup\limits_{i\in I}\left\{-\int_{X_1} F_{1,i}^-(-g)\, d\nu-\int_{X_3} F _{2,j}^+(g)\, d\nu\right\}
\end{eqnarray*}
2) If  ${\mathcal F}_1$ is a backward $\beta$-entropic transfer with Kantorovich operator $E ^-$, it suffices to note in the above proof that $({\mathcal F_1})_\mu^*(g)=\beta (\int_X E_1^-(-g)\, d\mu).$\\
 3) If now  ${\mathcal F}_2$ is a forward $\alpha$-entropic transfer with Kantorovich operator $E ^+_2$, then it suffices to note in the above proof that $({\mathcal F}_2)_\nu^*(g)=\alpha (\int_X E_2^+g\, d\nu).$\\
 4) corresponds to when $\alpha (t)=t$.
 
\begin{cor} Let  ${\mathcal F}$ be a convex backward coupling on $X_2\times Y_2$ with Kantorovich family $(F^-_i)_{i\in I}$ and let ${\mathcal E}$ be a backward $\beta$-entropic transfer on $X_1\times Y_1$ with Kantorovich operator $E^-$.  Let ${\cal T}$ be a backward linear transfer  on $Y_1\times Y_2$ with Kantorovich operator $T^-$ and $\lambda>0$. Then, for any fixed pair of probability measures $\mu\in {\cal P}( X_1)$ and $\nu\in {\cal P}(X_2 )$, the following are equivalent:
\begin{enumerate}
\item For all $\sigma \in {\cal P}(Y_2)$, we have ${\mathcal F}(\nu, \sigma) \leq \lambda \, {\mathcal E}\star {\mathcal T}\, (\mu, \sigma)$. 
 \item  For all $g\in C(X_2)$, we have $\beta  \big( \int_{X_1} E^-\circ T^-g)\, d\mu\big)\leq \inf\limits_{i\in I}\frac{1}{\lambda}\int_{X_2} F_i^-(\lambda g) d\nu$.
 \end{enumerate}
 In particular, if ${\mathcal E}_2$ is a backward $\beta_2$-entropic transfer on $X_2\times Y_2$ with Kantorovich operator $E^-_2$, and ${\mathcal E}_1$ is a backward $\beta_1$-entropic transfer on $X_1\times Y_1$ with Kantorovich operator $E^-_1$, then the following are equivalent:
 \begin{enumerate}
\item  For all $\sigma \in {\cal P}(Y_2)$, we have ${\mathcal E}_2(\nu, \sigma) \leq \lambda \, {\mathcal E}_1\star {\mathcal T}\, (\mu, \sigma)$.
 
\item  For all $g\in C(X_2)$ and $i\in I$, we have $\beta_1  \big( \int_{X_1} E_1^-\circ T^-g)\, d\mu\big)\leq \frac{1}{\lambda}\beta_2(\int_{X_2} E_2^-(\lambda g) d\nu)$.  
\end{enumerate}
 \end{cor}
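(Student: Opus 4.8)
The plan is to read inequality (1) as the statement that a single inf-convolution is nonnegative, and then to evaluate that convolution with the entropic duality of Proposition \ref{forward-back}; every step will be an equivalence, so both implications come out at once. Write $\mathcal{G}:=\lambda(\mathcal{E}\star\mathcal{T})$ and $\tilde{\mathcal{F}}(\sigma,\nu):=\mathcal{F}(\nu,\sigma)$. Inequality (1) says $\mathcal{G}(\mu,\sigma)-\mathcal{F}(\nu,\sigma)\geq 0$ for all $\sigma\in\mathcal{P}(Y_2)$, and taking the infimum over $\sigma$ shows—exactly as in the reformulation of transfer inequalities at the beginning of this section—that (1) is equivalent to
\[
\mathcal{G}\star(-\tilde{\mathcal{F}})(\mu,\nu)=\inf_{\sigma\in\mathcal{P}(Y_2)}\{\mathcal{G}(\mu,\sigma)-\mathcal{F}(\nu,\sigma)\}\geq 0 .
\]
Here the coupling variable of the convolution is $\sigma\in\mathcal{P}(Y_2)$, the common space on which $\mathcal{F}$ and $\mathcal{E}\star\mathcal{T}$ are defined, so $\mathcal{G}\star(-\tilde{\mathcal{F}})$ is a functional of $(\mu,\nu)\in\mathcal{P}(X_1)\times\mathcal{P}(X_2)$ and (1) is indeed a statement about the fixed pair $(\mu,\nu)$.

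The next step identifies the entropic structure of $\mathcal{G}$. By the inf-convolution rule for entropic transfers in Section 12, $\mathcal{E}\star\mathcal{T}$ is a backward $\beta$-entropic transfer on $X_1\times Y_2$ with Kantorovich operator $E^-\circ T^-$; by the scalar-multiplication rule, $\mathcal{G}=\lambda(\mathcal{E}\star\mathcal{T})$ is then backward $(\lambda\beta)$-entropic with operator $g\mapsto (E^-\circ T^-)(g/\lambda)$, so that its partial Legendre transform is $\mathcal{G}^*_\mu(g)=\lambda\beta\big(\int_{X_1}(E^-\circ T^-)(g/\lambda)\,d\mu\big)$ for $g\in C(Y_2)$. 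I would then expand $-\mathcal{F}(\nu,\sigma)$ by the backward-coupling representation of $\mathcal{F}$, i.e. $-\mathcal{F}(\nu,\sigma)=\inf_{g\in C(Y_2),\,i\in I}\{-\int_{Y_2}g\,d\sigma+\int_{X_2}F_i^-(g)\,d\nu\}$. Both the infimum over $\sigma$ and the infimum over $(g,i)$ are genuine infima, so they commute with no minimax theorem needed—this is the same bookkeeping as in the proof of Proposition \ref{forward-back}—and the inner $\inf_\sigma\{\mathcal{G}(\mu,\sigma)-\int g\,d\sigma\}$ equals $-\mathcal{G}^*_\mu(g)$. Hence
\[
\mathcal{G}\star(-\tilde{\mathcal{F}})(\mu,\nu)=\inf_{g\in C(Y_2),\,i\in I}\Big\{-\lambda\beta\Big(\int_{X_1}(E^-\circ T^-)(g/\lambda)\,d\mu\Big)+\int_{X_2}F_i^-(g)\,d\nu\Big\}.
\]

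To finish I convert nonnegativity of this infimum into the pointwise inequality (2). The bracket is $\geq 0$ for every $g$ and every $i$ iff $\beta\big(\int_{X_1}(E^-\circ T^-)(g/\lambda)\,d\mu\big)\leq \inf_{i}\frac1\lambda\int_{X_2}F_i^-(g)\,d\nu$, and the substitution $g\mapsto \lambda g$ (allowed since $g$ ranges over all of $C(Y_2)$, which is the correct domain of the test functions—the `$C(X_2)$' written in (2) should read $C(Y_2)$) sends $(E^-\circ T^-)(g/\lambda)$ to $E^-\circ T^- g$ and $F_i^-(g)$ to $F_i^-(\lambda g)$, producing statement (2) verbatim. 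For the `in particular' case $\mathcal{F}=\mathcal{E}_2$, the only change is that $-\mathcal{E}_2(\nu,\sigma)$ is expanded by the $\beta_2$-entropic backward representation $-\mathcal{E}_2(\nu,\sigma)=\inf_{g\in C(Y_2)}\{-\int_{Y_2}g\,d\sigma+\beta_2(\int_{X_2}E_2^-g\,d\nu)\}$ instead of by a coupling representation (equivalently, this is item 3 of Proposition \ref{forward-back} applied to $\mathcal{G}$ and the forward conjugate of $\mathcal{E}_2$). The term $\int_{X_2}F_i^-(g)\,d\nu$ is thereby replaced by $\beta_2(\int_{X_2}E_2^-g\,d\nu)$, and the same change of variables yields $\beta_1\big(\int_{X_1}E_1^-\circ T^- g\,d\mu\big)\leq \frac1\lambda\beta_2(\int_{X_2}E_2^-(\lambda g)\,d\nu)$. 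The whole argument is routine; the single point demanding care is the bookkeeping of the scalar $\lambda$ through the entropic structure, where it appears simultaneously as the outer factor $\lambda\beta$ and as the $1/\lambda$ inside the operator, and which the final substitution $g\mapsto\lambda g$ is precisely designed to absorb.
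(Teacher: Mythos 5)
Your proof is correct and takes essentially the same route as the paper: the paper reduces (1) to the nonnegativity of $(\mathcal{E}\star\mathcal{T})\star(-\frac{1}{\lambda}\tilde{\mathcal{F}})(\mu,\nu)$ and invokes part 2) of Proposition \ref{forward-back} with the backward $\beta$-entropic transfer $\mathcal{E}\star\mathcal{T}$ (Kantorovich operator $E^-\circ T^-$), which is precisely your computation with the scalar $\lambda$ parked on the entropic side as $\lambda(\mathcal{E}\star\mathcal{T})$ rather than as $\frac{1}{\lambda}\tilde{\mathcal{F}}$, the two bookkeepings being reconciled by the same substitution $g\mapsto\lambda g$; the commuting of genuine infima you carry out is exactly the argument inside the proof of that proposition, and the treatment of the entropic case $\mathcal{F}=\mathcal{E}_2$ matches the paper's ``similar argument''. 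You are also right that the test functions in (2) should range over $C(Y_2)$ rather than $C(X_2)$, since $F_i^-$ and $T^-$ act on $C(Y_2)$ in the corollary's notation.
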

\noindent{\bf Proof:} Note that here, we need the formula for $({\mathcal E}\star {\mathcal T})\star (- {\tilde {\cal F}})(\mu, \nu)$. Since $ {\tilde {\cal F}}$ is now a convex forward  transfer with Kantorovich operators equal to $\tilde F_i^+(g)=-F_i^-(-g)$, we can apply Part 2) of Proposition \ref{B-F} to ${\cal F}_2=\frac{1}{\lambda} {\tilde{ \mathcal F}}$ and ${\cal F}_1={\mathcal E}\star {\mathcal T}$, which is a backward $\beta$-entropic transfer with Kantorovich operator  $E^-\circ T^-$, to obtain
\[
({\mathcal E}\star {\mathcal T})\star (- {\tilde {\cal F}})(\mu, \nu)=\inf\limits_{g\in C(X_2)}\inf\limits_{j\in J}\left\{-\beta (\int_{X_1} E^-\circ T^-g\, d\mu)+\frac{1}{\lambda}\int_{X_3} F _{j}^-(\lambda g)\, d\nu\right\}.
\]
 A similar argument applies for 2).\\
We now apply the above to the case where $\cal E$ is the backward logarithmic transfer to obtain,
  
\begin{cor} Let  ${\mathcal F}$ be a backward convex   transfer on $X_2\times Y_2$ with Kantorovich family $(F^-_i)_{i\in I}$, and  
 let ${\cal T}$ be a backward linear transfer  on $Y_1\times Y_2$ with Kantorovich operator $T^-$ and $\lambda>0$. Then, for any fixed pair of probability measures $\mu\in {\cal P}( X_1)$ and $\nu\in {\cal P}(X_2 )$, the following are equivalent:
\begin{enumerate}
\item For all $\sigma \in {\cal P}(Y_2)$, we have ${\mathcal F}(\nu, \sigma) \leq \lambda \, {\mathcal H}\star {\mathcal T}\, (\mu, \sigma)$ 
 
 \item  For all $g\in C(X_2)$, we have $\log  \big( \int_{X_1} e^{T^-g}\, d\mu\big)\leq \inf\limits_{i\in I}\frac{1}{\lambda}\int_{X_2} F_i^-(\lambda g) d\nu$.
 \end{enumerate}
\end{cor}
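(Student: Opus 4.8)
The plan is to derive this as a direct application of the last stated corollary (Backward convex coupling to backward convex transfer inequalities) specialized to the logarithmic entropy $\mathcal{H}$, using the fact that $\mathcal{H}$ is a $\beta$-entropic backward transfer with $\beta(t)=\log t$ and Kantorovich operator $E^-f=e^f$. First I would recall from Proposition \ref{B-F}(4), the duality formula
\[
\mathcal{E}\star(-\mathcal{T})\,(\mu,\nu)=\inf_{g\in C(X_2)}\left\{-\beta\Big(\int_{X_1}E^-(-g)\,d\mu\Big)-\int_{X_3}T^+g\,d\nu\right\},
\]
and the analogous formula needed here, where one inf-convolves the $\beta$-entropic transfer $\mathcal{H}$ with the backward linear transfer $\mathcal{T}$ to produce again a backward $\beta$-entropic transfer with Kantorovich operator $E^-\circ T^-=\exp\circ\,T^-$, by the composition rule (part 3 of the entropic-transfer operations proposition).

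Next I would encode the inequality $\mathcal{F}(\nu,\sigma)\le \lambda\,\mathcal{H}\star\mathcal{T}(\mu,\sigma)$, valid for all $\sigma\in\mathcal{P}(Y_2)$, as the non-negativity of the inf-convolution $(\mathcal{H}\star\mathcal{T})\star(-\tfrac{1}{\lambda}\tilde{\mathcal{F}})(\mu,\nu)\ge 0$, where $\tilde{\mathcal{F}}(\sigma,\nu)=\mathcal{F}(\nu,\sigma)$ is the associated forward convex transfer with Kantorovich operators $\tilde F_i^+(g)=-F_i^-(-g)$. Applying Proposition \ref{forward-back}(2) to $\mathcal{F}_1=\mathcal{H}\star\mathcal{T}$ (a backward $\beta$-entropic transfer with Kantorovich operator $E^-\circ T^-$) and $\mathcal{F}_2=\tfrac{1}{\lambda}\tilde{\mathcal{F}}$, and using the scalar-multiplication rule for the Kantorovich family, the non-negativity condition becomes
\[
\inf_{g\in C(X_2)}\inf_{i\in I}\left\{-\log\Big(\int_{X_1}e^{T^-g}\,d\mu\Big)+\frac{1}{\lambda}\int_{X_2}F_i^-(\lambda g)\,d\nu\right\}\ge 0,
\]
which, after negating and rearranging the innermost infimum over $i$, is exactly the claimed equivalence
\[
\log\Big(\int_{X_1}e^{T^-g}\,d\mu\Big)\le \inf_{i\in I}\frac{1}{\lambda}\int_{X_2}F_i^-(\lambda g)\,d\nu\quad\text{for all }g\in C(X_2).
\]

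The main work is bookkeeping: one must verify that substituting $\beta(t)=\log t$ and $E_1^-=\exp$ into the general formula of Proposition \ref{forward-back}(2) produces the stated expression, and that the two infima (over $g$ and over $i\in I$) can be separated so that the clause ``for all $g$'' and ``$\inf_i$'' land on the correct sides of the inequality. The step I expect to require the most care is confirming that $\mathcal{H}\star\mathcal{T}$ genuinely remains a $\beta$-entropic (not merely convex) transfer with Kantorovich operator $\exp\circ\,T^-$, since only then does the sharp duality of Proposition \ref{forward-back}(2) apply with equality rather than an inequality; this is precisely what part 3 of the entropic operations proposition guarantees, and invoking it is the crux of reducing the corollary to the already-established duality formulae.
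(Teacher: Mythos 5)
Your proposal is correct and takes essentially the same route as the paper: the paper obtains this corollary by specializing the immediately preceding $\beta$-entropic corollary---whose proof applies Proposition \ref{forward-back}(2) to $\mathcal{F}_1=\mathcal{E}\star\mathcal{T}$ (a backward $\beta$-entropic transfer with Kantorovich operator $E^-\circ T^-$, by the composition rule for entropic transfers) and $\mathcal{F}_2=\frac{1}{\lambda}\tilde{\mathcal{F}}$---to the case $\mathcal{E}=\mathcal{H}$, $\beta=\log$, $E^-=\exp$, which is exactly the computation you outline. The only slip is cosmetic: the machinery you actually invoke (and correctly so) is the forward-coupling-to-backward-transfer duality of Proposition \ref{forward-back}, not the ``backward convex coupling to backward convex transfer'' inequalities named in your opening parenthetical.
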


\begin{rmk} \rm An immediate application of (4) in Proposition \ref{forward-back} is the following result in \cite{C-K} 
\begin{equation}
\inf\{{\overline{\mathcal W}}_2(\mu, \sigma)+{\mathcal H}(dx, \sigma); \sigma \in {\cal P}(\R^d)\}=\inf\{-\log \int e^{-f^*}\, dx +\int f\, d\mu; f\in {\cal C}(\R^d)\},
\end{equation} 
where ${\cal C}onv(\R^d)$ is the cone of convex functions on $\R^d$, and ${\overline{\mathcal W}}_2(\mu, \sigma)=-{\mathcal W}_2(\sigma, {\bar \mu})$, the latter being the Brenier transfer of  Example 3.12 and $\bar \mu$ is defined as $\int f(x) d \bar \mu(x)=\int f(-x) d \mu(x)$.  Note that in this case, $T^+f (x)=-f^*(-x)$, $E^-f= e^f$ and $\beta (t)=\log t$, and since $g^{**} \leq g$, 
\begin{eqnarray*}
\inf\{{\overline{\mathcal W}}_2(\mu, \sigma)+{\mathcal H}(dx, \sigma); \sigma \in {\cal P}(\R^d)\}
&=&{\mathcal H}\star (-{\mathcal W}_2)(dx, \bar \mu)\\
&=&\inf\{-\log \int e^{-g}\, dx +\int g^*(x)\, d\mu; g\in C(\R^d)\}\\
&=& \inf\{-\log \int e^{-f^*}\, dx +\int f\, d\mu; f\in {\cal C}onv(\R^d)\}.
\end{eqnarray*} 
What is remarkable in the result of Cordero-Erausquin and Klartag \cite{C-K} is the characterization of those measures $\mu$ (the moment measures) for which there is attainment in both minimization problems. 
\end{rmk}  

\subsection{Maurey-type inequalities}
We are now interested in inequalities of the following type:
For all $\sigma_1\in {\mathcal P}(X_1), \sigma_2\in {\mathcal P}(X_2)$, we have 
\begin{equation}
{\mathcal F}(\sigma_1, \sigma_2) \leq \lambda_1 {\mathcal T}_1\star {\mathcal H}_1(\sigma_1, \mu)+\lambda_2 {\mathcal T}_2\star {\mathcal H}_2( \sigma_2, \nu).
\end{equation}
This will requires a duality formula for the expression
$
\tilde {\mathcal E}_1\star (-{\mathcal F})\star  {\mathcal E}_2,
$
where ${\cal F}$ is a backward convex transfer and $ {\mathcal E}_1$,  ${\mathcal E}_2$ are forward entropic transfers.

\begin{thm} Assume ${\mathcal F}$ is a backward convex  coupling on $Y_1\times Y_2$ with Kantorovich family $(F^-_i)_{i\in I}$, ${\mathcal E}_1$ (resp., ${\mathcal E}_2$) is a forward $\alpha_1$-entropic transfer on $Y_1\times X_1$ (resp., a  forward $\alpha_2$-entropic transfer on $Y_2\times X_2$) with Kantorovich operator  $E_1^+$ (resp., $E_2^+$), then  for any $(\mu, \nu)\in {\cal P}(X_1)\times {\cal P}(X_2)$, we have
\begin{equation}
\tilde {\mathcal E}_1\star (-{\mathcal F}) \star  {\mathcal E}_2 \, (\mu, \nu)=\inf\limits_{i\in I} \inf\limits_{f\in C(X_3)} \left\{\alpha_1 \big(\int_{X_1} E_1^+\circ F_{i}^-f\, d\mu)+ \alpha_2 (\int_{X_2}E_2^+(f)\, d\nu)\right\}.
\end{equation}
 \end{thm}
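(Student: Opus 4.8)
The plan is to unwind the definition of the triple inf-convolution and apply the duality formulas already established in Proposition \ref{forward-back} twice, once for each entropic transfer. First I would recall that by definition,
\begin{equation*}
\tilde{\mathcal E}_1\star(-{\mathcal F})\star{\mathcal E}_2\,(\mu,\nu)=\inf\left\{\tilde{\mathcal E}_1(\mu,\sigma_1)-{\mathcal F}(\sigma_1,\sigma_2)+{\mathcal E}_2(\sigma_2,\nu);\,\sigma_1\in{\mathcal P}(Y_1),\,\sigma_2\in{\mathcal P}(Y_2)\right\}.
\end{equation*}
The key observation is that $\tilde{\mathcal E}_1(\mu,\sigma_1)={\mathcal E}_1(\sigma_1,\mu)$, so that $\tilde{\mathcal E}_1$ is a backward coupling whose partial Legendre transform in the first variable is governed by $E_1^+$ through the $\alpha_1$-entropic relation ${\mathcal E}_{1,\sigma_1}^*(h)=-\alpha_1(\int E_1^+(-h)\,d\mu)$. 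The structure of the proof will mirror exactly the calculation in Proposition \ref{forward-back}: I would first group the expression to isolate the inf-convolution $(-{\mathcal F})\star{\mathcal E}_2$, treating it as a backward $\alpha_2$-entropic-type object with Kantorovich data $F_i^-$ composed with $E_2^+$, and then convolve with $\tilde{\mathcal E}_1$.

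Concretely, I would proceed as follows. The middle piece $(-{\mathcal F})\star{\mathcal E}_2(\sigma_1,\nu)$ is handled by Part (3) of Proposition \ref{forward-back} (or its forward analogue), since ${\mathcal F}$ is a backward convex coupling with family $(F_i^-)_{i\in I}$ and ${\mathcal E}_2$ is a forward $\alpha_2$-entropic transfer with operator $E_2^+$; this produces, after introducing a test function $f\in C(X_2)$, a term $\alpha_2(\int_{X_2}E_2^+(f)\,d\nu)$ together with $\inf_{i}\int_{Y_1}F_i^-(f)\,d\sigma_1$-type terms against $\sigma_1$. Next I would convolve this result with $\tilde{\mathcal E}_1$, applying the $\alpha_1$-entropic duality to extract the term $\alpha_1(\int_{X_1}E_1^+\circ F_i^-f\,d\mu)$, exactly as the composition rule $E_1^+\circ F_i^-$ appears in the statement. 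The interchange of the $\inf_{\sigma_1}$ (resp. $\inf_{\sigma_2}$) with the $\sup$ defining the partial conjugates is the mechanical heart of the computation, and it is justified because taking the Legendre transform $({\mathcal E}_1)^*_{\sigma_1}$ precisely absorbs the $\sigma_1$-supremum, just as in the displayed chains of equalities in Propositions \ref{back-back} and \ref{forward-back}.

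\textbf{Main obstacle.} The delicate point is the bookkeeping of the two entropic nonlinearities $\alpha_1,\alpha_2$ and the \emph{order} in which the operators compose: one must verify that after eliminating $\sigma_2$ via ${\mathcal E}_2$ and then $\sigma_1$ via $\tilde{\mathcal E}_1$, the test function that survives is a single $f\in C(X_2)$ pushed through $F_i^-$ and then $E_1^+$, rather than two independent test functions. This is forced by the fact that the coupling ${\mathcal F}$ links $\sigma_1$ and $\sigma_2$ through the \emph{same} dual variable $f$, so that the intermediate supremum over functions in $C(Y_1)$ collapses onto the image $F_i^-f$. The other point requiring care is the sign and direction conventions: since $\tilde{\mathcal E}_1$ is the transpose of a forward transfer, I must track that its relevant Kantorovich operator is $E_1^+$ acting on $-h$, and confirm that the minus signs from the $(-{\mathcal F})$ factor align so that no spurious negation appears on the $\alpha_1$ term. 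Apart from these sign and composition consistency checks, which follow the template of Proposition \ref{forward-back} verbatim, the remaining manipulations are the routine Legendre-transform identities for entropic transfers recorded earlier in Section 12.
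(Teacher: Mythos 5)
Your proposal is correct and rests on the same machinery as the paper's proof --- associativity of the inf-convolution plus iterated partial Legendre transforms that absorb the intermediate measures --- but you group the triple convolution in the opposite order. The paper computes the \emph{left} pair first: it transposes $\mathcal{E}_1$ into a backward entropic transfer (with Kantorovich operator $g\mapsto -E_1^+(-g)$ and transformed nonlinearity $(\alpha_1^{\oplus})^{\ominus}$), applies Proposition \ref{back-back} to get $\tilde{\mathcal{E}}_1\star(-\mathcal{F})(\mu,\sigma)=\inf_{i,f}\{\alpha_1(\int_{X_1}E_1^+\circ F_i^-f\,d\mu)-\int f\,d\sigma\}$, and only then eliminates $\sigma$ by recognizing $\sup_\sigma\{\int f\,d\sigma-\mathcal{E}_2(\sigma,\nu)\}=({\mathcal E}_2)^*_\nu(f)=-\alpha_2(\int_{X_2}E_2^+(-f)\,d\nu)$. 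You instead eliminate the $\mathcal{E}_2$-variable first and the $\mathcal{E}_1$-variable last; by associativity this produces the identical chain of equalities, so the difference is purely bookkeeping and neither route buys anything over the other. Two caveats. First, Part (3) of Proposition \ref{forward-back} does not literally apply to your middle piece $(-\mathcal{F})\star\mathcal{E}_2$: that part treats $\mathcal{F}_1\star(-\mathcal{F}_2)$ with \emph{both} factors entropic and the negation on the \emph{forward} factor, whereas your piece carries the negation on the backward convex coupling. What you actually need is either the one-line inf/sup swap done by hand --- which is what your description amounts to, and it yields $\inf_{i,g}\{\int_{Y_1}F_i^-g\,d\sigma_1+\alpha_2(\int_{X_2}E_2^+(-g)\,d\nu)\}$ --- or Part (2) applied after transposing both factors; your hedge ``or its forward analogue'' papers over this, so make the transposition explicit. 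Second, your worry about signs is justified: carried through carefully, the $\alpha_2$-term comes out as $\alpha_2(\int_{X_2}E_2^+(-f)\,d\nu)$, exactly as in the final display of the paper's own proof; the $E_2^+(f)$ in the theorem's statement is a sign slip in the paper itself, and substituting $f\mapsto -f$ does not repair it, since that would simultaneously turn $F_i^-f$ into $F_i^-(-f)$ in the $\alpha_1$-term.
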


\noindent{\bf Proof:} If ${\mathcal E}_1$  a forward $\alpha_1$-entropic transfer on $Y_1\times X_1$, then ${\tilde{\mathcal E}}_1$ is a backward $-(\alpha_1^\oplus)^\ominus$-entropic transfer on $X_1\times Y_1$ with Kantorovich operator $\tilde {E_1^-}g=-E_1^+(-g)$. Apply Proposition \ref{back-back} with ${\cal F}_1={\tilde{\mathcal E}}_1$, and ${\cal F}_2={\cal F}$ to get 
\begin{eqnarray*}
\tilde {\mathcal E}_1\star (-{\mathcal F})\, (\mu, \nu)&=&\inf\limits_{f\in C(X_3)}\inf\limits_{i\in I}\left\{(\alpha_1^\oplus)^\ominus \big(\int_{X_1} -E_1^+\circ F_{i}^-f\, d\mu)-\int_{X_3} f\, d\nu \right\}\\
&=&\inf\limits_{f\in C(X_3)}\inf\limits_{i\in I}\left\{\alpha_1 \big(\int_{X_1} E_1^+\circ F_{i}^-f\, d\mu)-\int_{X_3} f\, d\nu \right\}.
\end{eqnarray*}
Write now, 
 \begin{eqnarray*}
\tilde {\mathcal E}_1\star (-{\mathcal F}) \star  {\mathcal E}_2 \, (\mu, \nu)&=&\inf\left\{\tilde {\mathcal E}_1\star (-{\mathcal F}) (\mu, \sigma)+{\mathcal E}_2(\sigma, \nu);\, \sigma \in {\mathcal P}(Y_2)\right\}\\
&=&\inf\limits_{\sigma\in {\mathcal P}(Y_2)} \inf\limits_{f\in C(X_3)}\inf\limits_{i\in I}\left\{\alpha_1 \big(\int_{X_1} E_1^+\circ F_{i}^-f\, d\mu)-\int_{X_3} f\, d\sigma+{\mathcal E}_2(\sigma, \nu)\right\}\\
&=& \inf\limits_{i\in I}\inf\limits_{f\in C(X_3)} \left\{\alpha_1 \big(\int_{X_1} E_1^+\circ F_{i}^-f\, d\mu)-\sup\limits_{\sigma \in {\mathcal P}(Y_2)}\{\int_{Y_2} f\, d\sigma-{\mathcal E}_2(\sigma, \nu)\} \right\}\\
&=&\inf\limits_{i\in I} \inf\limits_{f\in C(X_3)} \left\{\alpha_1 \big(\int_{X_1} E_1^+\circ  F_{i}^-f\, d\mu)+ \alpha_2 (\int_{X_2}E_2^+(-f)\, d\nu)\right\}.
\end{eqnarray*}

 \begin{cor} Assume ${\mathcal E}_1$ (resp., ${\mathcal E}_2$) is a forward $\alpha_1$-entropic transfer on $Z_1\times X_1$ (resp., $\alpha_2$-entropic transfer on $Z_2\times X_2$) with Kantorovich operator $E_1^+$ (resp., $E_2^+$). Let ${\mathcal T}_1$ (resp., ${\mathcal T}_2$) be forward linear transfers on $Y_1\times Z_1$ (resp., $Y_2\times Z_2$) with Kantorovich operator $T_1^+$ (resp., $T_2^+$), and let ${\mathcal F}$ be a  backward convex coupling on $Y_1 \times Y_2$ with Kantorovich family $(F_i^-)_i$. Then, for any given $\lambda_1, \lambda_2\in \R^+$ and $(\mu, \nu)\in  {\mathcal P}(X_1) \times {\mathcal P}(X_2)$, the following are equivalent:
\begin{enumerate}
\item For all $\sigma_1\in {\mathcal P}(Y_1), \sigma_2\in {\mathcal P}(Y_2)$, we have 
\begin{equation}
{\mathcal F}(\sigma_1, \sigma_2) \leq \lambda_1 {\mathcal T}_1\star {\mathcal E}_1(\sigma_1, \mu)+\lambda_2 {\mathcal T}_2\star {\mathcal E}_2( \sigma_2, \nu).
\end{equation}
 \item For all $g\in C(Y_2)$ and all $i\in I$, we have 
\begin{equation}
\lambda_1\alpha_1 \big(\int_{X_1} E_1^+\circ T_1^+\circ (\frac{1}{\lambda_1}F_i^-g)\, d\mu\big)+ \lambda_2\alpha_2 (\int_{X_2} E_2^+\circ T_2^+(\frac{-1}{\lambda_2}g)\, d\nu)\geq 0.
\end{equation}
\end{enumerate}
\end{cor}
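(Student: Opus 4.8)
The plan is to recognize the inequality in (1) as the non-negativity of a suitable transfer evaluated on the diagonal pair $(\mu,\nu)$, and then to dualize using the Maurey-type duality theorem just proved. First I would rewrite (1) in transfer-calculus form. The stated inequality
$$
{\mathcal F}(\sigma_1, \sigma_2) \leq \lambda_1 {\mathcal T}_1\star {\mathcal E}_1(\sigma_1, \mu)+\lambda_2 {\mathcal T}_2\star {\mathcal E}_2( \sigma_2, \nu) \quad \text{for all } \sigma_1,\sigma_2
$$
is equivalent, by the definitions of $\star$, of $\tilde{\cal T}(\mu,\nu)={\cal T}(\nu,\mu)$, and of scalar multiplication (Proposition on scalar multiplication in Section 6), to the single scalar condition
$$
\widetilde{(\lambda_1 {\mathcal T}_1\star {\mathcal E}_1)}\star(-{\mathcal F})\star(\lambda_2 {\mathcal T}_2\star {\mathcal E}_2)\,(\mu,\nu)\geq 0.
$$
Indeed, infimizing over $\sigma_1,\sigma_2$ of the difference (right minus left) is exactly this iterated inf-convolution evaluated at $(\mu,\nu)$, and requiring the difference to be non-negative for all $\sigma_1,\sigma_2$ is precisely requiring that infimum to be $\geq 0$. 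This is the conceptual heart of the argument and I would state it carefully, tracking which variable is the first and which is the second marginal at each inf-convolution so that the tildes land correctly.

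Next I would identify the composite transfers appearing in this expression so as to apply the preceding Theorem. By Proposition \ref{con.con} (part on inf-convolution with a linear transfer), $\lambda_i {\mathcal T}_i\star {\mathcal E}_i$ is a forward $\alpha_i$-entropic transfer whose Kantorovich operator is the composite $E_i^+\circ T_i^+$ (after absorbing the scalar $\lambda_i$ via the scalar-multiplication rule, which replaces the entropic function $\alpha_i$ by $\lambda_i\alpha_i(\cdot/\lambda_i)$ and correspondingly rescales the operator). Thus the expression above is exactly of the form $\tilde{\mathcal E}_1'\star(-{\mathcal F})\star{\mathcal E}_2'$ with ${\mathcal E}_i'$ forward entropic transfers having Kantorovich operators $E_i^+\circ T_i^+$ and entropic functions the rescaled $\lambda_i\alpha_i$. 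Applying the Maurey duality Theorem proved just above then yields
$$
\tilde{\mathcal E}_1'\star(-{\mathcal F})\star{\mathcal E}_2'\,(\mu,\nu)=\inf_{i\in I}\inf_{g}\Big\{\lambda_1\alpha_1\big(\textstyle\int_{X_1}E_1^+\circ T_1^+\circ(\tfrac{1}{\lambda_1}F_i^- g)\,d\mu\big)+\lambda_2\alpha_2\big(\textstyle\int_{X_2}E_2^+\circ T_2^+(\tfrac{-1}{\lambda_2}g)\,d\nu\big)\Big\},
$$
where the argument variable has been relabelled $g\in C(Y_2)$ and the scalar rescalings produce the factors $\tfrac{1}{\lambda_1},\tfrac{-1}{\lambda_2}$ inside the operators. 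The non-negativity of the left-hand side for the fixed pair $(\mu,\nu)$ is then equivalent to the non-negativity of the infimand for every $g$ and every $i$, which is exactly statement (2).

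The main obstacle I anticipate is bookkeeping rather than any deep difficulty: getting the order of the operators, the placement of the minus sign and the tilde, and the scalar rescalings all consistent. In particular I must be careful that multiplying a forward $\alpha$-entropic transfer by $\lambda$ sends $\alpha\mapsto\lambda\alpha(\cdot/\lambda)$ and the Kantorovich operator $E^+\mapsto E^+(\cdot/\lambda)$ (the analogue, in the forward entropic category, of the scalar-multiplication rules in Sections 6 and 12), so that the $\tfrac{1}{\lambda_i}$ factors emerge precisely where claimed; an off-by-a-reciprocal error here is the most likely pitfall. I would also verify that the composite $E_i^+\circ T_i^+$ is the correct Kantorovich operator for the forward inf-convolution ${\mathcal T}_i\star{\mathcal E}_i$ (note the order: for forward transfers convolution composes operators in the reverse order, as remarked after Proposition \ref{inf.tens}), matching the orientation $Y_i\to Z_i\to X_i$ of the data. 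Once these orientations and rescalings are pinned down, the equivalence of (1) and (2) follows immediately from the displayed duality formula, since an infimum is $\geq 0$ iff each of its terms is.
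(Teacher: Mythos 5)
Your proposal is correct and is essentially the paper's own (one-line) proof: the paper likewise recasts (1) as the non-negativity of $\widetilde{(\lambda_1\mathcal{T}_1\star\mathcal{E}_1)}\star(-\mathcal{F})\star(\lambda_2\mathcal{T}_2\star\mathcal{E}_2)\,(\mu,\nu)$ and applies the preceding Maurey duality theorem to the forward $\lambda_i\alpha_i$-entropic transfers $\lambda_i\,\mathcal{T}_i\star\mathcal{E}_i$, whose Kantorovich operators are $g\mapsto E_i^+\circ T_i^+\big(\tfrac{g}{\lambda_i}\big)$. One bookkeeping correction to your prose: multiplying a forward $\alpha_i$-entropic transfer by $\lambda_i$ sends $\alpha_i\mapsto\lambda_i\alpha_i$ (argument unrescaled) when the operator is taken to be $E_i^+\circ T_i^+(\cdot/\lambda_i)$, whereas your stated rule $\alpha_i\mapsto\lambda_i\alpha_i(\cdot/\lambda_i)$ would force the operator $\lambda_i\,E_i^+\circ T_i^+(\cdot/\lambda_i)$; your final displayed formula in fact uses the former (the paper's) convention, so the conclusion is unaffected.
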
 
\noindent {\bf Proof:} It suffices to apply the above with the forward $\lambda_i\alpha_i$-transfers ${\cal F}_i:=\lambda_i {\mathcal T}_i\star {\mathcal E}_i$, whose Kantorovich operators are $F_i(g)=E_i^+\circ T_i^+(\frac{g}{\lambda_i})$ for $i=1,2$.\\

By applying the above to ${\cal E}_i(\mu, \nu)=:\cal{H}$ the forward logarithmic entropy where $\alpha_i(t)=-\log(-t)$ and Kantorovich operator $E^+f=e^{-f}$, we get the following extension of a celebrated result of Maurey \cite{Mau}.

\begin{cor} Assume ${\mathcal F}$ is a convex backward coupling on $Y_1\times Y_2$ with Kantorovich family $(F^-_i)_{i\in I}$, and  
let ${\mathcal T}_1$ (resp., ${\mathcal T}_2$) be forward linear transfer on $Y_1\times X_1$ (resp., $Y_2\times X_2$) with Kantorovich operator  $T_1^+$ (resp., $T_2^+$), then  for any given $\lambda_1, \lambda_2\in \R^+$ and $(\mu, \nu)\in {\cal P}(X_1)\times {\cal P}(X_2)$, the following are equivalent:
\begin{enumerate}
\item For all $\sigma_1\in {\mathcal P}(X_1), \sigma_2\in {\mathcal P}(X_2)$, we have 
\begin{equation}
{\mathcal F}(\sigma_1, \sigma_2) \leq \lambda_1 {\mathcal T}_1\star {\mathcal H}(\sigma_1, \mu)+\lambda_2 {\mathcal T}_2\star {\mathcal H}( \sigma_2, \nu).
\end{equation}

\item For all $g\in C(Y_2)$ and all $i\in I$, we have 
\begin{equation}
(\int_{X_1} e^{-T_1^+\circ \frac{1}{\lambda_1}F_i^-g}\, d\mu\big)^{\lambda_1} (\int_{X_2}e^{-T_2^+(\frac{1}{-\lambda_2}g)}\, d\nu)^{\lambda_2} \leq 1.
\end{equation}
\end{enumerate}
If ${\mathcal T}_1={\mathcal T}_2$ are the identity transfer, then the above is equivalent to saying that for all $g\in C(Y_2)$ and all $i\in I$, we have 
\begin{equation}
(\int_{X_1} e^{\frac{-1}{\lambda_1}F_i^-g}\, d\mu\big)^{\lambda_1} (\int_{X_2}e^{\frac{1}{\lambda_2}g}\, d\nu)^{\lambda_2} \leq 1.
\end{equation}

\end{cor}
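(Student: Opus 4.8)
The plan is to read this corollary off the immediately preceding Corollary (the general Maurey-type duality for a backward convex coupling $\mathcal{F}$ inf-convolved between two forward entropic transfers $\mathcal{E}_1,\mathcal{E}_2$) by specializing both entropic transfers to the forward logarithmic entropy $\mathcal{H}$. The first preparatory step is to record $\mathcal{H}$ as a \emph{forward} $\alpha$-entropic transfer. Since $\mathcal{H}$ was already identified as a backward $\beta$-entropic transfer with $\beta(t)=\log t$ and Kantorovich operator $E^-f=e^{f}$, I would invoke the transposition rule for entropic transfers (the assertion that $\tilde{\mathcal{E}}$ is a forward $((-\beta)^\ominus)^\oplus$-entropic transfer with $\tilde{E}^+h=-E^-(-h)$). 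A one-line Legendre computation gives $(-\beta)^\ominus(s)=-1-\log s$ and then $((-\beta)^\ominus)^\oplus(t)=-\log(-t)$ for $t<0$, together with $\tilde{E}^+h=-e^{-h}$. Thus $\mathcal{H}$ is a forward $\alpha$-entropic transfer with $\alpha(t)=-\log(-t)$ (defined on $t<0$) and Kantorovich operator $E^+f=-e^{-f}$.

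Next I would substitute $\alpha_1=\alpha_2=\alpha$ and $E_1^+=E_2^+=E^+$ into the two conditions of the preceding Corollary; since that Corollary asserts their equivalence, the equivalence claimed here follows once each condition is matched. Condition (1) there is $\mathcal{F}(\sigma_1,\sigma_2)\le \lambda_1 \mathcal{T}_1\star\mathcal{E}_1(\sigma_1,\mu)+\lambda_2\mathcal{T}_2\star\mathcal{E}_2(\sigma_2,\nu)$, which becomes verbatim condition (1) of the present statement upon setting $\mathcal{E}_i=\mathcal{H}$. For condition (2), the argument of $\alpha_1$ is $\int_{X_1} E^+\circ T_1^+\circ(\frac{1}{\lambda_1}F_i^- g)\,d\mu=-\int_{X_1} e^{-T_1^+\circ\frac{1}{\lambda_1}F_i^- g}\,d\mu$, which is a negative number; writing $B_1:=\int_{X_1} e^{-T_1^+\circ\frac{1}{\lambda_1}F_i^- g}\,d\mu>0$ we get $\alpha(-B_1)=-\log B_1$, and similarly $\alpha$ applied to the second term gives $-\log B_2$ with $B_2:=\int_{X_2}e^{-T_2^+(\frac{-1}{\lambda_2}g)}\,d\nu$. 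Hence the inequality $\lambda_1\alpha_1(\cdots)+\lambda_2\alpha_2(\cdots)\ge 0$ is equivalent to $\lambda_1\log B_1+\lambda_2\log B_2\le 0$, i.e. $B_1^{\lambda_1}B_2^{\lambda_2}\le 1$, which is precisely condition (2). The displayed special case is then immediate: taking $\mathcal{T}_1=\mathcal{T}_2$ to be the identity transfer, whose forward Kantorovich operator is $T^+f=f$, we get $e^{-T_1^+\circ\frac{1}{\lambda_1}F_i^- g}=e^{-\frac{1}{\lambda_1}F_i^- g}$ and $e^{-T_2^+(\frac{-1}{\lambda_2}g)}=e^{\frac{1}{\lambda_2}g}$, yielding the stated product inequality.

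The only genuinely delicate point, and the step I would verify most carefully, is the bookkeeping of signs. Because $\alpha(t)=-\log(-t)$ is defined only for $t<0$, one must confirm that the correct forward Kantorovich operator is $E^+f=-e^{-f}$ rather than $+e^{-f}$, precisely so that the quantities fed into $\alpha_1$ and $\alpha_2$ are negative; it is then the two cancelling minus signs (one from $\alpha$, one from $E^+$) that convert the additive inequality of the preceding Corollary into the multiplicative ``$\le 1$'' form here. Everything else is a routine transcription through the $\log$–$\exp$ pairing, so no new analytic input beyond the preceding Corollary is required.
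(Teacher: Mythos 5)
Your proposal is correct and is essentially the paper's own argument: the paper obtains this corollary in one line by applying the preceding Maurey-type corollary with $\mathcal{E}_1=\mathcal{E}_2=\mathcal{H}$ regarded as a forward $\alpha$-entropic transfer with $\alpha(t)=-\log(-t)$, exactly as you do, the additive inequality turning into the multiplicative one through the $\log$--$\exp$ pairing. Your sign bookkeeping is in fact more careful than the paper's: the text writes the forward Kantorovich operator as $E^+f=e^{-f}$, whereas your $E^+f=-e^{-f}$ is the one consistent with the transposition rule $\tilde{E}^+h=-E^-(-h)$ and with $\alpha$ being defined only on negative arguments, so your computation $\alpha\bigl(-\int e^{-T^+\circ\frac{1}{\lambda_1}F_i^-g}\,d\mu\bigr)=-\log\int e^{-T^+\circ\frac{1}{\lambda_1}F_i^-g}\,d\mu$ is the correct reading of the paper's intended proof.
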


\end{document}